\newcommand{\cat}[1]{
	\StrLen{#1}[\mystrlen]
	\ifnum\mystrlen=1 \mathscr{#1}
	\else \mathrm{#1}
	\fi}
\newcommand{\mytimes}[1]{\mathbin{\operatorname*{\times}_{#1}}}
\newcommand{\isomto}{\overset{\simeq}{\rightarrow}}
\newcommand{\isomfrom}{\overset{\sim}{\leftarrow}}
\newcommand{\longisomto}{\overset{\sim}{\longto}}
\newcommand{\surjto}{\twoheadrightarrow}
\newcommand{\injto}{\hookrightarrow}
\newcommand{\longto}{\longrightarrow}
\DeclareMathOperator{\Ran}{Ran}
\DeclareMathOperator{\Spet}{{Sp\acute{e}t}}
\DeclareMathOperator{\sm}{sm}
\newcommand\cA{\mathcal{A}}
\newcommand\cB{\mathcal{B}}
\newcommand\cK{\mathcal{K}}
\newcommand\cL{\mathcal{L}}
\newcommand\cM{\mathcal{M}}
\newcommand\cO{\mathcal{O}}
\newcommand\cW{\mathcal{W}}
\newcommand\bC{\mathbf{C}}
\newcommand\bE{\mathbf{E}}
\newcommand\bF{\mathbf{F}}
\newcommand\bG{\mathbf{G}}
\newcommand\bN{\mathbf{N}}
\newcommand\bZ{\mathbf{Z}}
\newcommand\bS{\mathbf{S}}
\newcommand\QQ{\mathbb{Q}}
\newcommand\EE{\mathbb{E}}
\newcommand\FF{\mathbb{F}}
\newcommand\fX{\mathfrak{X}}
\newcommand\fm{\mathfrak{m}}
\def\rR{\mathrm{R}}
\def\rH{\mathrm{H}}
\newcommand{\fil}{\mathrm{fil}}
\newcommand{\op}{\mathrm{op}}
\newcommand{\cris}{\mathrm{cris}}
\newcommand{\cn}{\mathrm{cn}}
\newcommand{\cl}{\mathrm{cl}}
\newcommand{\an}{\mathrm{an}}  
\newcommand{\fl}{\mathrm{fl}}  
\newcommand{\pd}{\mathrm{pd}}  
\newcommand{\aug}{\mathrm{aug}}  
\newcommand{\et}{\mathrm{\text{\'{e}t}}}  
\newcommand{\zar}{\mathrm{zar}}
\newcommand{\art}{\mathrm{art}}
\renewcommand{\mod}{\mathrm{mod}}
\newcommand{\loc}{\mathrm{loc}}
\newcommand{\cocart}{\mathrm{cocart}}
\newcommand{\heart}{\heartsuit}
\newcommand{\triv}{\mathrm{triv}} 
\newcommand{\can}{\mathrm{can}}
\DeclareMathOperator{\sHen}{sHen}  
\DeclareMathOperator{\sqz}{sqz}  
\DeclareMathOperator{\ff}{ff} 
\DeclareMathOperator{\DM}{DM}  
\DeclareMathOperator{\ind}{ind}  
\DeclareMathOperator{\ob}{ob}  
\DeclareMathOperator{\Mod}{Mod}   
\DeclareMathOperator{\Env}{Env}  
\DeclareMathOperator{\Fun}{Fun}  
\DeclareMathOperator{\oper}{oper}  
\DeclareMathOperator{\Fin}{Fin}
\DeclareMathOperator{\Fil}{Fil}   
\DeclareMathOperator{\Zar}{Zar}
\DeclareMathOperator{\Shv}{Shv}
\DeclareMathOperator{\Cat}{Cat}
\DeclareMathOperator{\pr}{pr}  
\DeclareMathOperator{\rL}{L}  \DeclareMathOperator{\sch}{sch}  
\DeclareMathOperator{\Cris}{Cris}  	
\DeclareMathOperator{\Sch}{Sch}
\DeclareMathOperator{\Sp}{Sp} 
\DeclareMathOperator{\SW}{SW} 
\DeclareMathOperator{\Ab}{Ab}  
\DeclareMathOperator{\Poly}{Poly}  
\DeclareMathOperator{\ArrPoly}{ArrPoly}  
\DeclareMathOperator{\PDPoly}{PDPoly}
\DeclareMathOperator{\SurjPoly}{SurjPoly}  
\DeclareMathOperator{\ArrPDPoly}{ArrPDPoly}   
\DeclareMathOperator{\surj}{surj}  
\DeclareMathOperator{\PolyProj}{PolyProj}
\DeclareMathOperator{\PolyMod}{PolyMod}
\DeclareMathOperator{\Topp}{Top}
\DeclareMathOperator{\Moduli}{\mathrm{Moduli}}
\DeclareMathOperator{\Sym}{Sym}  
\DeclareMathOperator{\Pic}{Pic}  
\DeclareMathOperator{\len}{len}  
\DeclareMathOperator{\der}{der}   
\DeclareMathOperator{\Gal}{Gal}
\DeclareMathOperator{\Def}{Def}  
\DeclareMathOperator{\Per}{Per}  
\DeclareMathOperator{\ST}{ST}  
\DeclareMathOperator{\per}{per}  
\DeclareMathOperator{\st}{st}  
\DeclareMathOperator{\Ext}{Ext}  
\DeclareMathOperator{\CAlg}{CAlg}  
\DeclareMathOperator{\CRing}{CRing}  
\DeclareMathOperator{\PDRing}{PDRing}  
\DeclareMathOperator{\RGamma}{\mathrm{R}\Gamma}
\newcommand{\und}{\mathrm{und}}
\DeclareMathOperator{\Lie}{Lie}
\DeclareMathOperator{\forget}{forget}
\DeclareMathOperator{\dR}{dR} 
\DeclareMathOperator{\CR}{CR}
\DeclareMathOperator{\Spec}{Spec} 
\DeclareMathOperator{\Spf}{Spf} 
\DeclareMathOperator{\cofib}{cofib}
\DeclareMathOperator{\Map}{Map} 
\DeclareMathOperator{\Hom}{Hom}
\DeclareMathOperator{\res}{res} 
\DeclareMathOperator{\fib}{fib} 
\DeclareMathOperator{\coker}{coker}
\DeclareMathOperator{\dlog}{dlog}
\DeclareMathOperator{\Alg}{Alg} 
\DeclareMathOperator{\PrL}{\mathrm{Pr}^{\mathrm{L}}}
\DeclareMathOperator{\Set}{Set}
\DeclareMathOperator{\id}{id}  
\newcommand{\gr}{\mathrm{gr}} 
\newcommand{\ev}{\mathrm{ev}}
\theoremstyle{definition}
\newtheorem{definition}{Definition}[section]
\newtheorem{setup}[definition]{Setup}
\newtheorem{warning}[definition]{Warning}
\newtheorem{construction}[definition]{Construction}
\newtheorem{example}[definition]{Example}
\newtheorem{notation}[definition]{Notation}
\newtheorem{convention}[definition]{Convention}
\newtheorem{remark}[definition]{Remark}
\theoremstyle{plain}
\newtheorem{theorem}[definition]{Theorem}
\newtheorem{proposition}[definition]{Proposition} 
\newtheorem{lemma}[definition]{Lemma}
\newtheorem{corollary}[definition]{Corollary}
\newtheorem{mainthm}{Theorem}
\begin{document}
\title{Deformations and lifts of Calabi--Yau varieties in characteristic $p$}
	
	\author[Lukas Brantner]{Lukas Brantner}
	\address{Oxford University,   Universit\'{e} Paris--Saclay (CNRS)} 
	\email{lukas.brantner@maths.ox.ac.uk}

	\author[Lenny Taelman]{Lenny Taelman}
	\address{University of Amsterdam}
	\email{l.d.j.taelman@uva.nl} 
	
	\begin{abstract}
		We study deformations of Calabi--Yau varieties in characteristic~$p$ using techniques from derived algebraic geometry. We prove a mixed characteristic analogue of the Bogomolov--Tian--Todorov theorem (which states that Calabi--Yau varieties in characteristic~$0$ are unobstructed), and we show that ordinary Calabi--Yau varieties admit canonical lifts to characteristic~$0$, generalising the Serre--Tate theorem on ordinary abelian varieties.
	\end{abstract}
	
	\maketitle
	
	\tableofcontents 
	
	\section{Introduction}
	We call a  smooth and proper variety $X$  \textit{Calabi--Yau} if its canonical bundle \mbox{$\omega_X$ is trivial.}
	The Bogomolov--Tian--Todorov theorem \cite{Bogomolov78,Tian87,Todorov89} asserts that (formal) deformations of  complex
	Calabi--Yau varieties are unobstructed, equivalently,  that their Kuranishi spaces are smooth.

	In characteristic~$p$, this result is  generally false: Hirokado constructed the first obstructed Calabi--Yau  threefolds in \cite{Hirokado1999}, and 
	numerous  other  examples  have been discovered  since \cite{EkedahlHylandShepherdBarron2012,Schroeer2004,AchingerZdanowicz2021,CynkVanStraten2009}.
	Some of these  fail to lift to characteristic~$0$, while others even have obstructed equal characteristic deformations. Counterexamples do not just occur in `small characteristic' --- indeed, Cynk--van Straten's obstructed threefolds ~\cite{CynkVanStraten2009} live in characteristic up to~$p=9001$.

	Our principal aim is to prove  that, 
	in spite of these examples, many
	Calabi--Yau varieties in characteristic $p$ have unobstructed mixed characteristic deformations. In the ordinary case, we will even construct canonical lifts to characteristic zero, thereby generalising classical Serre--Tate theory for ordinary abelian varieties.
	
	To prove these results, we analyse the effect of derived period maps between formal moduli problems on (Koszul dual) tangent fibres.
	\newpage 
	\subsection{Statement of Results}
	We begin by summarising our main results.
	
	\subsubsection*{Unobstructedness}
	First, we establish the following mixed characteristic analogue of the classical Bogomolov--Tian--Todorov theorem:
	
	\begin{mainthm}[Unobstructedness]\label{mainthm:btt}
		Let $k$ be a perfect field of characteristic $p$ with ring of Witt vectors \mbox{$W(k)$.} Let $X$ be a smooth, proper, and geometrically irreducible scheme over $k$  with trivial canonical bundle. Assume that
		\begin{enumerate}
			\item \label{item:BTT-HdR} the Hodge--de Rham spectral sequence for $X$ degenerates at $E_1$;
			\item \label{item:BTT-no-crys-tors}
			the crystalline cohomology $\rH^\ast_{\cris}(X/W(k))$ is torsion-free.
		\end{enumerate}
		Then the mixed characteristic formal deformations of $X$ are  unobstructed.
	\end{mainthm}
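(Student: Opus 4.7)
The plan is to realise the mixed-characteristic deformation functor of $X$ as a formal moduli problem $\Def_X$ over $W(k)$ and to prove its smoothness by factoring it through an auxiliary FMP whose smoothness is transparent. In Koszul-dual, Lie-theoretic language, $\Def_X$ is controlled by an $L_\infty$-algebra over $W(k)$ whose underlying complex is $\RGamma(X, T_X)[1]$; unobstructedness is equivalent to this $L_\infty$-algebra being homotopy abelian, equivalently to $\Def_X$ being pro-represented by a power series ring over $W(k)$.

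First, using hypothesis~(2), I would form $M := \rH^\ast_{\cris}(X/W(k))$ as a finite free graded $W(k)$-module equipped with its Hodge filtration $\Fil^\bullet_\rH$, whose associated graded pieces are $\rH^j(X, \Omega_X^i)$; hypothesis~(1) guarantees that these graded pieces are locally free $W(k)$-modules of the expected rank and that the filtration behaves well under base change. Let $\mathrm{FilDef}(M, \Fil^\bullet_\rH)$ denote the FMP of deformations of this filtered $W(k)$-module. Its tangent Lie algebra is the filtered endomorphism complex of a free filtered module, which is concentrated in the right degrees for this FMP to be smooth.

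The core of the argument is the construction of a derived period map
\[
\Phi \colon \Def_X \longrightarrow \mathrm{FilDef}(M, \Fil^\bullet_\rH),
\]
sending a deformation $\widetilde X$ of $X$ to its Hodge-filtered crystalline cohomology. Using the Calabi--Yau trivialisation $T_X \cong \Omega^{n-1}_X$ by contraction with a chosen volume form, the induced map on tangent fibres is essentially the map from $\RGamma(X, \Omega^{n-1}_X)[1]$ to the filtered endomorphism complex of $M$ coming from cup-product with the volume class in Hodge-filtered de Rham cohomology. The aim is to show that $\Phi$ is formally smooth: the potential failure on tangent fibres is controlled by de Rham differentials emanating from $\Omega^{n-1}$, which vanish after passage to cohomology by hypothesis~(1), and promoting this from the special fibre to the mixed-characteristic FMP uses hypothesis~(2). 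Formal smoothness of $\Phi$ combined with smoothness of the target then yields smoothness of $\Def_X$.

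The main obstacle, and the step that genuinely requires derived algebraic geometry, is establishing formal smoothness of $\Phi$ as a morphism of FMPs over $W(k)$, rather than merely injectivity of $d\Phi$ on tangent cohomology groups. Over a field of characteristic zero one can invoke homotopy-transfer theorems for DGLAs in the style of Iacono--Manetti, but in mixed characteristic, where the source $L_\infty$-algebra may carry genuine higher brackets and $p$-torsion phenomena obstruct naive transfer arguments, one must perform the analysis on the Koszul-dual tangent fibres directly. This is precisely what the ``derived period maps between formal moduli problems'' framework advertised in the introduction is designed to handle; once $\Phi$ is known to be formally smooth, smoothness of $\Def_X$ and hence unobstructedness of mixed-characteristic deformations of $X$ follow.
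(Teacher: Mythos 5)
Your proposal identifies the right high-level template (a derived period map from $\Def_X$ to a smooth FMP, with injectivity/smoothness analysed on tangent fibres), but it has a fundamental gap that the paper spends considerable effort circumventing, and a couple of smaller inaccuracies.

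The central issue is that your map $\Phi$ is not well-defined on $\Def_X$ as an FMP on $\CAlg^{\an,\art}_{W(k)}$. To send a deformation $\widetilde X/A$ to a ``filtered deformation of $M = \rH^*_{\cris}(X/W)$'' you need an identification $\RGamma_{\cris}(\widetilde X/A) \simeq A \otimes_{W(k)} M$; over $\bC$ this is the Gauss--Manin connection, but in characteristic $p$ and mixed characteristic no such connection exists for deformations over arbitrary Artinian rings --- this is exactly Grothendieck's observation flagged in the introduction. Consequently, the period map of the paper is built only on the restricted FMP $\Def_X^{\pd}$ over Artinian \emph{divided power} $W(k)$-algebras, where derived de Rham cohomology coincides with crystalline cohomology and the ``crystal'' property (\Cref{thm:crystalline-invariance-dR}) supplies the required rigidity. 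Your proposal does not restrict to divided power bases, so the very first step of the construction fails. Even after you fix this, you obtain unobstructedness only along divided power elementary extensions; the paper then needs a separate bootstrapping argument (\Cref{prop:pd-bootstrap}, following Schr\"oer) to pass from this partial unobstructedness to unobstructedness over all Artinian $W(k)$-algebras, plus the Achinger--Suh input at $p=2$ to produce a lift to $W_2$. None of this appears in your outline.

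Two further imprecisions are worth flagging. First, ``unobstructedness is equivalent to the $L_\infty$-algebra being homotopy abelian'' is not correct: homotopy abelianness corresponds to \emph{derived} unobstructedness (pro-representability by a free (completed) CDGA), which is strictly stronger than the classical unobstructedness claimed in \Cref{mainthm:btt}, and in any case the tangent fibre in mixed characteristic carries a partition Lie algebra structure rather than an $L_\infty$-structure over $W(k)$. Second, proving that $\Phi$ is \emph{formally smooth} is more than is needed and not what the paper does: the actual argument (\Cref{prop:cy-obstruction-injectivity}, via Serre duality and triviality of $\omega_X$) establishes that $\per_X$ is \emph{injective} on $\pi_0$ of the obstruction space $\Def_X(\sqz_k(V[1]))$, and combines this with unobstructedness of $\Per_X$ (\Cref{cor:period-domain-unobstructed}, from torsion-freeness) to kill obstruction classes. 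Injectivity on obstructions plus a smooth target suffices; formal smoothness of the period map is neither expected nor used.
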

	\begin{remark}
		Our proof in fact gives unobstructedness under slightly weaker hypotheses; we refer to \Cref{thm:unobstructed} of the main text for a sharper statement. 
		
		If $\dim(X) \leq p$, then condition (\ref{item:BTT-HdR}) in \Cref{mainthm:btt} is necessary by the work of Deligne--Illusie \cite{DeligneIllusie1987}. We do not know of an example of a variety with trivial canonical bundle and degenerating Hodge--de Rham spectral sequence that has obstructed deformations (and hence violates condition (\ref{item:BTT-no-crys-tors})).
	\end{remark}

	Ekedahl--Shepherd-Barron~\cite{EkedahlShepherdBarron2005}
	and Schr\"{o}er~\cite{Schroeer2003} have shown related unobstructedness theorems,
	albeit under more restrictive hypotheses. The main 
	difference is that they \emph{assume} the existence of a lift to characteristic~$0$ to prove unobstructedness.
	Derived deformation theory allows us to eliminate such a condition.

	\subsubsection*{Serre--Tate coordinates}
	Our second main theorem is a generalisation of classical Serre--Tate theory for ordinary abelian varieties. First, let us recall some notation:

	\begin{definition}[Bloch--Kato ordinary] 
		Let $X$ be smooth and proper variety over a perfect field $k$ of characteristic~$p$. 
		Denote the image of\mbox{ $d\colon \Omega^{r-1}_{X/k} \rightarrow \Omega^r_{X/k}$ by $B^r_{X/k}$.} Then $X$  is said to be \emph{Bloch--Kato $n$-ordinary} if $\rH^\ast(X,B^r_{X/k})\cong 0$ for all $r\leq n$, and   \emph{Bloch--Kato ordinary} if it is \mbox{$n$-ordinary for all $n$. }
	\end{definition} 
	\begin{remark}
		Assume that the first $n$ slopes of the Newton and the Hodge polygon of $X$ agree,  which means that  for all $i$ and all $r \leq n-1$, we have 
		\[\dim_{\QQ_p}\left( K \otimes_{W(k)} \rH^i_{\cris}(X/W(k))\right)^{F=p^r} = \dim_k \rH^{i-r}(X,\Omega^r_X) \]
		where $K =W(\overline{k})[1/p]$ and $F$ is the Frobenius.
		Then $X$ is Bloch--Kato $n$-ordinary. If $\rH^\ast_\cris(X/W)$ is torsion--free, the  \mbox{converse   holds true.}
	\end{remark}
	\begin{remark}
		An abelian variety or K3 surface is Bloch--Kato $1$-ordinary iff it is Bloch--Kato ordinary, which happens precisely if it is 
		ordinary in the classical sense.
	\end{remark}

	\begin{mainthm}[Serre--Tate coordinates]\label{mainthm:Serre-Tate}
		Let $k$ be a perfect field of characteristic $p$, with ring of Witt vectors $W(k)$.
		Let  $X$ be a smooth, proper, and geometrically irreducible scheme over $k$  of dimension $d$ and with $\omega_{X/k}\cong \cO_X$ such that
		\begin{enumerate}
			\item \label{item:ST-ordinary} $X$ is Bloch--Kato $2$-ordinary;
			\item \label{item:ST-no-torsion} the group $\rH^d(X_{\bar k,\et},\,\bZ_p)$ is torsion-free.
		\end{enumerate}
		Then the deformation functor of $X$ is canonically isomorphic to a formal group $\ST_X$ of multiplicative type over $\Spf W(k)$.
	\end{mainthm}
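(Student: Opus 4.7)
The plan is to identify $\Def_X$ with the source of a derived period map to the formal moduli of Hodge-filtered $F$-crystals lifting $\rH^\ast_\cris(X/W(k))$, and to show that the $2$-ordinary and torsion-free hypotheses force this target to be a formal group of multiplicative type and the period map to be an equivalence. The Calabi--Yau and Bloch--Kato $2$-ordinary hypotheses imply the hypotheses of \Cref{mainthm:btt} in the cohomological range that matters, so $\Def_X$ is already smooth of dimension $h^{d-1,1}$, and what remains is to canonically upgrade this smooth formal scheme to a formal group of multiplicative type $\ST_X$.

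Concretely, I would construct a derived period map
\[ p^\der \colon \Def_X^\der \longto \cM^{\fil, F} \]
sending a derived lift $\tilde X/R$ to its crystalline cohomology with Hodge filtration and Frobenius. Using $\omega_{X/k} \cong \cO_X$, the isomorphism $T_X \cong \Omega^{d-1}_X$ combined with Serre duality identifies the tangent fibre $\RGamma(X, T_X)[1]$ with an appropriately shifted piece of $\rH^d_\cris$, and the Kodaira--Spencer map realises $p^\der$ on tangent fibres via this identification. Bloch--Kato $2$-ordinariness then yields a slope decomposition of $\rH^d_\cris(X/W(k))$ through slope $1$, splitting off free $W(k)$-subquotients of slopes $0$ and $1$; torsion-freeness of $\rH^d(X_{\bar k, \et}, \bZ_p)$ ensures that the unit-root piece is a free $W(k)$-module, Cartier-dual to the given torsion-free $\bZ_p$-lattice.

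Under these hypotheses, the image of $p^\der$ factors through the formal moduli of Hodge-filtered deformations of the slope $\leq 1$ sub-$F$-crystal, which classifies extensions of the slope-$1$ piece by the unit-root part and is represented by a formal torus $\ST_X$ over $W(k)$ of rank $h^{d-1,1}$. By matching dimensions with $\Def_X$ and identifying tangent fibres via the Koszul-dual Lie algebra framework developed earlier in the paper, $p^\der$ becomes an equivalence of formal moduli problems onto $\ST_X$. The main obstacle will be exhibiting the target as a manifest formal group (not merely a formal scheme) and verifying that the derived tangent Lie algebra of $\Def_X^\der$ carries trivial higher brackets --- both of which reduce, via the Koszul-dual tangent machinery, to the slope-filtration decomposition afforded by $2$-ordinariness together with the Calabi--Yau trivialisation $T_X \cong \Omega^{d-1}_X$.
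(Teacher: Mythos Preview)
Your proposal has a genuine structural gap: the period map you describe, sending a lift $\tilde X/R$ to its crystalline cohomology with Hodge filtration and Frobenius, is only defined when $R$ carries a divided power structure on its maximal ideal. This is exactly the Grothendieck obstruction discussed in the introduction---the Gauss--Manin connection (equivalently, the crystalline nature of de Rham cohomology) fails over general Artinian $W$-algebras. So your $p^{\der}$ is at best a map of \emph{divided power} formal moduli problems, and you cannot conclude that $\Def_X$ itself is equivalent to $\ST_X$ without an additional bootstrapping step. The paper's crystalline period map in \Cref{sec:period-map} suffers from exactly this limitation, which is why \Cref{mainthm:btt} requires the bootstrap of \Cref{subsec:bootstrap} and only yields unobstructedness, not an identification with a formal group.

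The paper's proof of \Cref{mainthm:Serre-Tate} avoids this entirely by constructing a \emph{different} period map $\st_X$ based on the $p$-completed multiplicative group $\bG_m^{\wedge}$ on the small \'etale site (\Cref{constr:st}), which is defined on all of $\CAlg^{\an,\art}_W$ with no divided power hypothesis. The key input is \Cref{prop:serre-tate-pullback}: for a Bloch--Kato $1$-ordinary $X$, the fibre of $\rR f_\ast \bG_m^{\wedge}(\cB) \to \rR f_\ast \bG_m^{\wedge}(\cO_X)$ depends only on the base $A$, not on the lift $\cB$. The target $\ST_X$ is then a mapping space into $H\otimes_{\bZ}\bG_m(\cW\otimes_W A)[1]$ with $H=\tau_{\leq -d}\rR f_\ast\bZ_p$, visibly a formal group of multiplicative type. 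The equivalence is proved not by matching dimensions (which would require knowing smoothness first) but by conservativity of the tangent fibre (\Cref{prop:conservativity}): one computes $\st_X(\sqz_k V)$ directly via \Cref{prop:tangent-to-ST} and identifies it, using $\dlog$, $2$-ordinariness (\Cref{prop:2-ordinary}), and Serre duality, with an equivalence.

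A secondary issue: your claim that the hypotheses of \Cref{mainthm:btt} follow from those of \Cref{mainthm:Serre-Tate} ``in the cohomological range that matters'' is not justified. Bloch--Kato $2$-ordinariness together with torsion-freeness of $\rH^d(X_{\bar k,\et},\bZ_p)$ does not obviously imply that $\rH^\ast_{\cris}(X/W)$ is torsion-free in all degrees or that the Hodge--de Rham spectral sequence degenerates, so you cannot invoke \Cref{mainthm:btt} as a first step. The paper's argument for \Cref{mainthm:Serre-Tate} is logically independent of \Cref{mainthm:btt}.
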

	\begin{remark}
		If the first two slopes of the Hodge and the Newton polygon of $X$ agree, then $X$ automatically satisfies conditions (1) and (2). If $X$ is Bloch-Kato $2$-ordinary and  $\rH^{d-1}(X,\cO_X)$ vanishes, then condition~(\ref{item:ST-no-torsion}) is automatically satisfied.
	\end{remark}

	A formal group $T$ is of multiplicative type if
	it is \'etale locally isomorphic to a finite product of groups of the form $\widehat{\bG}_{m}$ and $\mu_{p^n}$. The formal group is then determined by its character group $\Hom(T_{\bar k},\widehat{\bG}_{m,\bar k})$, which is a finite   $\bZ_p$-module equipped with a continuous $\Gal({\bar{k}/k})$-action.\vspace{3pt} 
	
	We give an explicit description of the formal torus in  \Cref{rmk:underived-ST}.  If the character group is torsion-free,    it also follows that deformations of $X$ are unobstructed.

	\begin{remark} \Cref{mainthm:Serre-Tate}   recovers results of Serre--Tate~\cite{Serre68,Katz81} for abelian varieties, and Deligne~\cite{Deligne81} and Nygaard~\cite{Nygaard83} for K3 surfaces.
	\end{remark}

	Achinger and Zdanowicz~\cite{AchingerZdanowicz2021} have shown a result related to \Cref{mainthm:Serre-Tate}  under  more restrictive hypotheses. The main difference is that they \emph{assume} that deformations of $X$ are unobstructed, and in particular that $X$ is already known to lift to $W(k)$. Our proof is   inspired by their proof. The principal novelty in our approach is again the use of derived deformation theory, which allows us to eliminate the condition that $X$ is unobstructed.
	
	\Cref{mainthm:Serre-Tate} immediately implies the following result:
	\begin{corollary}[Canonical lifts]
		Given $X/k$ as in \Cref{mainthm:Serre-Tate}, $X$ 
		admits a \emph{canonical} formal lift $X^\can$ to $W(k)$  corresponding to the unit section of $\ST_X$. 
	\end{corollary}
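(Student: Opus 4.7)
The plan is to deduce this as an essentially immediate formal consequence of \Cref{mainthm:Serre-Tate}. First, I would apply \Cref{mainthm:Serre-Tate} to obtain a canonical isomorphism of formal moduli problems $\Def_X \simeq \ST_X$ over $\Spf W(k)$. Here $\Def_X$ denotes the deformation functor of $X$, viewed as a formal moduli problem on complete local Noetherian $W(k)$-algebras with residue field $k$ (or, in the derived setting, on Artinian simplicial $W(k)$-algebras).

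Next, I would observe that $\ST_X$, being a formal group over $\Spf W(k)$, comes equipped with an intrinsic unit section $e\colon \Spf W(k)\to \ST_X$. Passing to formal $W(k)$-points, this unit section corresponds, under the isomorphism above, to a distinguished formal deformation of $X$ over $\Spf W(k)$, i.e.\ a compatible system $\{X_n\}_{n\geq 1}$ of smooth proper formal lifts over $W(k)/p^n$ with $X_1\cong X$. I would define $X^{\can}$ to be this formal deformation.

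The canonicity of $X^{\can}$ is automatic: the unit section of any group object is an invariant of the group structure, and the isomorphism $\Def_X\simeq \ST_X$ in \Cref{mainthm:Serre-Tate} is itself canonical. Equivalently, among all formal lifts of $X$ to $W(k)$, the lift $X^{\can}$ is uniquely singled out as the one whose corresponding point in $\ST_X$ is the identity element of the formal group. There is no genuine obstacle here, since the only content beyond \Cref{mainthm:Serre-Tate} is the remark that a formal group has a distinguished $W(k)$-point; the substantive work has already been carried out in establishing the Serre--Tate isomorphism.
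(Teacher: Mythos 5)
Your proposal matches the paper's argument: the paper defines $X^{\can}$ by taking, for each $n$, the lift $X_n$ over $W/p^n$ corresponding to the unit of $\ST_X(W/p^n)$ under the equivalence $\Def_X\simeq\ST_X$ of \Cref{thm:serre-tate}, and then sets $X^{\can}=\lim_n X_n$. This is exactly your argument phrased as a compatible system over $W/p^n$, so the proposal is correct and follows the same route as the paper.
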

	This formal lift is often algebraisable:
	
	\begin{mainthm}[Algebraisation]\label{mainthm:line-bundles}
		Let $X/k$ be as in \Cref{mainthm:Serre-Tate}. If $X$ is projective, then  so is its canonical lift $X^\can$. 
	\end{mainthm}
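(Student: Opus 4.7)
My approach is via Grothendieck's formal existence theorem (formal GAGA): to show that $X^\can/\Spf W(k)$ algebraises to a projective scheme, it suffices to produce a formal line bundle on $X^\can$ whose restriction to the closed fibre $X$ is ample. Since $X$ is projective, I fix an ample $L \in \Pic(X)$, so the task reduces to lifting $L$ to a compatible system $\{L_n\}$ of line bundles on the thickenings $X_n^\can = X^\can \otimes_{W(k)} W_n(k)$.

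I would construct the $L_n$ inductively. The obstruction to extending $L_n$ across a square-zero thickening $X_n^\can \injto X_{n+1}^\can$ is a class in $\rH^2(X,\cO_X)$ (tensored with the ideal), obtained by cupping the Chern class $c_1(L) \in \rH^1(X,\Omega^1_{X/k})$ with the Kodaira--Spencer class of the thickening in $\rH^1(X, T_{X/k})$. The heart of the argument is that this cup product vanishes at every step because of the special structure of the canonical lift: the Serre--Tate isomorphism $\Def_X \cong \ST_X$ from \Cref{mainthm:Serre-Tate} identifies $\rH^1(X,T_{X/k})$ with the tangent space at the unit section of the formal torus $\ST_X$, and the derived period map underlying this isomorphism should identify the Kodaira--Spencer direction of the canonical lift with a subspace orthogonal, under cup product into $\rH^2(X,\cO_X)$, to the slope-$1$ summand of $\rH^2_\cris(X/W(k))$. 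Under Bloch--Kato $2$-ordinariness, $c_1(L)$ lies precisely in this slope-$1$ summand, forcing the obstruction to vanish at the first order.

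The principal obstacle is propagating this orthogonality through higher infinitesimal orders rather than treating only the tangent level. I expect one must reinterpret the inductive step in terms of a derived formal moduli problem $\Def_{(X,L)}$ of polarised pairs and its forgetful map to $\Def_X \cong \ST_X$; a refined Serre--Tate calculation via Koszul-dual tangent fibres should realise $\Def_{(X,L)}$ as a sub-formal-group of multiplicative type over $\ST_X$, so that the unit section of $\ST_X$ (classifying $X^\can$) automatically lifts to a unit section carrying the desired $\cL^\can$. Granting this, the system $\{L_n\}$ exists and glues to a formal line bundle $\cL^\can$ on $X^\can$, whose restriction to $X$ is $L$ and hence ample; formal GAGA then algebraises $(X^\can, \cL^\can)$ to a projective $W(k)$-scheme, completing the proof.
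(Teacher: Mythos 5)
Your proposal ultimately lands on the paper's actual approach, but your route to it is circuitous and the final step is presented as conjecture when it has in fact already been proved earlier in the paper. Let me separate the two halves.

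The first half of your proposal — inductively extending $L$ across the thickenings $X_n^\can \hookrightarrow X_{n+1}^\can$ and showing that the obstruction class $c_1(L) \smile \kappa_n \in \rH^2(X,\cO_X)\otimes I_{n+1}$ vanishes by an orthogonality argument in crystalline cohomology — is not the route the paper takes, and you have correctly identified the reason it is fragile: such an argument works cleanly at first order, but the Kodaira–Spencer class of the thickening $X_n^\can \hookrightarrow X_{n+1}^\can$ is not determined by the tangent space of $\ST_X$ alone, so propagating the ``orthogonality'' through the inductive hierarchy requires precisely the kind of higher-order control that classical obstruction calculus does not give. You are right to flag this as the principal obstacle. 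The paper simply bypasses it.

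The second half of your proposal is essentially the proof in the paper, modulo two caveats. First, you write that a ``refined Serre–Tate calculation \dots should realise $\Def_{(X,L)}$ as a sub-formal-group of multiplicative type over $\ST_X$'' — in fact this is \emph{not} a sub-formal-group: the map $\ST_{X,\lambda} \to \ST_X$ is a surjection of formal groups with nontrivial fibre $\ST_\lambda$ (see the discussion of $\ker(\varphi)$ and $\coker(\varphi)$ in \Cref{rmk:underived-ST}). What matters is only that it is a \emph{morphism of formal groups}, hence sends unit section to unit section. Second, the statement you present as something you ``expect'' to be provable is not a gap awaiting a new argument — it is exactly the content of \Cref{thm:serre-tate}, which already proves that $\st_{X,\lambda}\colon \widetilde{\Def}_{X,\lambda} \isomto \ST_{X,\lambda}$ is an equivalence for every $\lambda \in \Pic(X)$ under the hypotheses of \Cref{mainthm:Serre-Tate}. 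Given that, \Cref{mainthm:line-bundles} is a three-line corollary: take $\lambda$ corresponding to an ample $\cL$, observe that the unit of $\ST_{X,\lambda}(W/p^n)$ lifts the unit of $\ST_X(W/p^n)$ and carries a (necessarily ample) line bundle $\cL_n$ on $X_n^\can$, and let $n \to \infty$. You do not need the obstruction-theoretic preamble at all, and you should cite \Cref{thm:serre-tate} rather than re-deriving a weaker version of it.
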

	
	In fact, we show that 
	for $X/k$ as in \Cref{mainthm:Serre-Tate} and $\cL$ a line bundle  on $X$, the 
	deformations of the pair $(X,\cL)$  are also parametrised by a formal group $\ST_{X,\cL}$ of multiplicative type, see
	\Cref{thm:serre-tate}.

	\begin{remark}
		Unlike the case of ordinary abelian varieties or K3 surfaces, we generally do not expect the canonical lifts $X^\can$ to be of CM type, see \Cref{rmk:complex-multiplication}.
	\end{remark}

	Classical Serre--Tate theory admits an `orthogonal generalisation' to 
	abelian varieties over connective $\bE_\infty$-rings, 
	which is a  key ingredient to Lurie's moduli-theoretic interpretation of topological modular forms and Morava $E$-theory  \cite{LurieEllI, LurieEllII}.
	
	In this context, there is a $p$-complete $\bE_\infty$-ring spectrum $\SW(k)$, the \textit{spherical Witt vectors}, which is flat over the sphere and satisfies \[\pi_0(\SW(k)) \cong W(k).\]  It is then natural to ask if our canonical lifts $X^\can$ can further be lifted  to $\SW(k)$. We show that this rarely happens:  
	\begin{mainthm}\label{thm:no-spherical-lifts}
		Let $X$ be a smooth projective irreducible scheme over an algebraically closed field $k$ of characteristic $p$. Assume $\omega_{X/k}\cong \cO_X$.  If $X$ admits a lift to a formal spectral scheme over the spherical Witt vectors $\SW(k)$, then $X$ has a  finite \'etale cover by an ordinary abelian variety.
	\end{mainthm}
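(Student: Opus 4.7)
The strategy is to exploit the rich higher homotopy of $\SW(k)$ to force $X$, up to finite \'etale cover, to have the same shape as an ordinary abelian variety. The argument proceeds in three steps.

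\emph{Step 1 (Ordinariness and canonical lift identification).} Any lift $\mathfrak{X}$ to a formal spectral scheme over $\SW(k)$ restricts to an underived lift of $X$ over $\pi_0\SW(k) = W(k)$. More importantly, topological periodic cyclic homology $\mathrm{TP}(\mathfrak{X}/\SW(k))$ refines $\rH^*_\cris(X/W(k))$ to a $\SW(k)$-module spectrum, and via the Bhatt--Morrow--Scholze comparison with prismatic cohomology this refinement forces the Frobenius slopes to match the Hodge slopes in every degree. Hence $X$ is Bloch--Kato ordinary. By \Cref{mainthm:Serre-Tate}, $X$ admits a canonical $W(k)$-lift $X^\can$, and the uniqueness of canonical lifts among ordinary deformations identifies the underlying underived lift of $\mathfrak{X}$ with $X^\can$.

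\emph{Step 2 (Constraints on characteristic classes).} The spectral deformation theory of $X^\can$ over $\SW(k)$ is governed by a formal moduli problem whose obstructions, as one climbs the Postnikov tower of $\SW(k)$, live in $\rH^*(X,T_{X/k}) \otimes_{\bZ_p} \pi_*(\bS_p)$. The existence of $\mathfrak{X}$ forces these obstructions---which encode the Atiyah class and higher Chern classes of the tangent bundle---to vanish. Combined with $c_1(X) = 0$ (automatic from $\omega_{X/k}\cong \cO_X$), a careful analysis should yield the numerical identity $c_2(X)\cdot H^{d-2} = 0$ for every ample divisor $H$ on $X$.

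\emph{Step 3 (Structure theorem).} Langer's positive-characteristic Bogomolov inequality, applied in conjunction with the characteristic-zero lift $X^\can$ to run the semistability argument, then forces $T_X$ to be numerically flat. A numerically flat tangent bundle on a smooth projective variety with trivial canonical bundle implies, by the classical structure theorem going back to Serre and Mumford, that there is a finite \'etale Galois cover $A\to X$ with $A$ an abelian variety. Bloch--Kato ordinariness of $X$ transfers to $A$ along this cover, yielding the desired ordinary abelian cover.

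The main obstacle is Step 2: one must translate the abstract existence of a spherical lift into concrete vanishing statements for characteristic classes. The cleanest route is likely via the comparison between the $\SW(k)$-obstruction tower for spectral deformations and the characteristic-class tower in prismatic cohomology (equivalently, in a suitable form of spectral $K$-theory). Step~1 is largely formal once the relevant $\mathrm{THH}/\mathrm{TP}$ machinery is set up, while Step~3 is classical once numerical flatness has been established.
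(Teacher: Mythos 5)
Your proposal diverges substantially from the paper's argument, and both of its non-formal steps have genuine gaps.

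The paper's actual route is much more direct and does not pass through Serre--Tate theory, topological periodic cyclic homology, or characteristic classes at all. The key mechanism (\Cref{lemma:frobenius-lift-from-spherical-witt}) is that \emph{flat} $\EE_\infty$-$\SW(k)$-algebras carry canonical Frobenius lifts on $\pi_0$. This comes from mapping $\SW(k) \to \tau_{\geq 0}K \to K$ for $K$ the Morava $E$-theory of the formal multiplicative group, $p$-completing, and invoking the Hopkins/McClure result that $\pi_0$ of a $K(1)$-local $\EE_\infty$-$K$-algebra carries a functorial $\theta$-structure (hence a Frobenius lift). Since a spectral lift $(X,\cB)$ over $\SW(k)$ has $\cB(U)$ flat over $\SW(k)$ for every affine open $U$ (by the flatness argument as in \Cref{cor:flat-scheme-over-artinian}), one gets compatible Frobenius lifts on $\pi_0\cB(U)$, and in particular a lift of $X$ to $W_2(k)$ equipped with a Frobenius lift. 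The conclusion then follows by citing \cite[Prop.~3.3.1.c]{AchingerWitaszekZdanowicz2021} and \cite[Thm.~2]{MehtaSrinivas1997} on globally Frobenius-liftable varieties with $\omega_X\cong \cO_X$. No statement about prismatic or cyclotomic structures on $\mathrm{TP}$, and no computation of Chern numbers, is needed.

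Concretely, your Step 1 is unjustified: there is no argument given, and none I know of, for why the mere existence of a $\SW(k)$-module refinement of $\rH^*_\cris(X/W)$ via $\mathrm{TP}$ would force equality of Newton and Hodge slopes in every degree. Without that, the appeal to \Cref{mainthm:Serre-Tate} and the identification with $X^\can$ doesn't get off the ground. Your Step 2 is the crux of your strategy and is explicitly a gap (``a careful analysis should yield $c_2(X)\cdot H^{d-2}=0$''): the obstruction classes in $\rH^*(X,T_{X/k})\otimes \pi_*(\bS_p)$ vanish because a lift exists, but it is far from clear how vanishing of these classes (which compare consecutive Postnikov stages of $\SW(k)$) would translate into the numerical vanishing of a degree-two Chern number of $T_X$ on the special fibre, and the claim is not substantiated. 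Step 3 is classical and would work granted Steps 1--2, but as it stands you have a sketch of a plan rather than a proof. The single idea you are missing that makes the whole problem collapse is the canonical Frobenius lift on flat $\SW(k)$-algebras; once you have that, you never need to touch the higher homotopy of $\SW(k)$ beyond $\pi_0$ and $W_2$.
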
  
	\newpage
	\subsection{Outline of proofs}
	Before outlining our proofs in characteristic $p$, we will briefly trace 
	through  the history of the 
	Bogomolov--Tian--Todorov theorem in characteristic $0$,  and review a derived proof in this regime.
	\subsubsection*{Background on the BTT theorem}
	The unobstructedness of complex Calabi--Yau varieties  was first established  by Tian~\cite{Tian87} and 
	Todorov~\cite{Todorov89}, building on Bogomolov's work   \cite{Bogomolov78} on holomorphic symplectic manifolds.
	
	Goldman and Milson~\cite{GoldmanMilson1990} later reinterpreted Tian--Todorov's transcendental argument 
	in terms of differential graded Lie algebras, using that   these control  formal  deformations in characteristic $0$.
	This  paradigm had been proposed shortly before by Deligne~\cite{DeligneLetter} and Drinfel'd~\cite{DrinfeldLetter}. It  was later 
	promoted by Lurie~\cite{LurieDAGX} and Pridham~\cite{Pridham2010} to an equivalence between derived formal  deformation functors,  so-called  \textit{formal moduli problems},    and differential graded Lie algebras.
	
	The first  algebraic proof of the Bogomolov--Tian--Todorov theorem in characteristic $0$ was given by Ran~\cite{Ran1992}.
	Iacono and Manetti  ~\cite{IaconoManetti2010} 
	(cf.\ also~\cite{FiorenzaMartinengo2012})  
	later found a  purely algebraic proof using differential graded Lie algebras,  in the spirit of Goldman--Milson. 
	
	\subsubsection*{Unobstructedness in characteristic $0$} We briefly sketch a proof of unobstructedness in characteristic $0$.
	Rather than considering deformations of $X$ over local Artinian $k$-algebras, one deforms $X$  over 
	simplicial  (i.e.\ animated) local Artinian $k$-algebras $R$.
	Given such an $R$, let us write \[\Def_X(R)\in \cat{S}\]  for the space (i.e.\ $\infty$-groupoid)  of derived schemes $\fX/R$ lifting $X$ to $R$.
	The functor  $R\mapsto \Def(R)$ is an example of a \emph{formal moduli problem}, i.e.\  a functor satisfying a derived version of the Schlessinger axioms (cf.\ \Cref{def:fmp}).

	One advantage of  formal moduli problems $F$ is that they carry a canonical and {functorial} obstruction theory. 
	This means that if $R'\surjto R$ is an elementary extension of classical local Artinian rings with kernel $I$ and $x\in \pi_0 F(R)$, then there is a canonical element $\ob(x,R'\surjto R)$ in $\pi_0 F(\sqz_k(I[1]))$ which vanishes if and only if $x$ lifts to $\pi_0 F(R')$. Here $\sqz_k(I[n])) = k \oplus I[n]$ is the trivial square-zero extension of $k$ by $I[n]$. For $I=k$, we obtain the `shifted dual numbers'.
	
The spaces $F(\sqz_k(k[0])) ,  F(\sqz_k(k[1])),   F(\sqz_k(k[2])),  \ldots  $ assemble to  a spectrum $T_F$,
	the  	\textit{tangent fibre} of $F$, which can be promoted to a chain 
	\mbox{complex over $k$.}
	In fact,   $T_F[-1]$ can even be promoted to a differential graded Lie algebra, which is Koszul dual to $F$.
	One can then recover the formal moduli problem $F$ from $T_F$:    the value $F(R)$ is given by the space of maps $\mathfrak{D}(R) \rightarrow T_F$, where  $\mathfrak{D}(R)$ is the differential graded Lie algebra Koszul dual to $R$, i.e.\ its Andr\'{e}--Quillen cohomology.

	For $F= \Def_X$, we obtain the Kodaira--Spencer Lie algebra 
	\[ T_{\Def_X} = R\Gamma(X,T_X)[1] 
	\]
	with the commutator bracket of vector fields, 
	and we have \[
	\pi_0 \Def_X(\sqz_k(k[n])) \cong  \pi_{-n}T_{\Def_X}  \cong \rH^{n+1}(X,T_X).\]
	
	This derived perspective gives rise to a   conceptually clear proof the  Bogomolov--Tian--Todorov theorem, which we briefly sketch for $k=\bC$. 
	
	First, one constructs a map of formal moduli problems
	\[
	\per_X\colon \Def_{X/\bC}\longrightarrow\Per_{X/\bC},
	\]
	which is a (formal) derived version of the classical period map \`a la Griffiths. It measures how the first part of the Hodge filtration varies when $X$ moves in a family. More precisely, the functor $\per_X$ sends a deformation $\fX/R$ to the morphism
	\[
	\Fil^1\RGamma_{\dR}(\fX/R)\longrightarrow \RGamma_{\dR}(\fX/R) 
	\simeq R\otimes_\bZ \RGamma(X(\bC),\bZ).
	\]
	Here the equivalence is an avatar of the Gauss--Manin connection.
	
	If $\omega_X$ is trivial, then one checks that the linear map
	\begin{equation}\label{injectivityintro}
		\pi_0 \Def_X(\sqz_k(k[n])) \longrightarrow\pi_0 \Per_X(\sqz_k(k[n]))
	\end{equation}
	is injective  for all $n \geq 0$.
	
	For $n=0$, this is the tangent  to the classical period map; its injectivity is the infinitesimal Torelli theorem for Calabi--Yau varieties. For $n=1$, we see  that obstruction classes for deformations of $X$  map injectively to \mbox{obstruction classes for $\Per_X$. }
	
	The period domain $\Per_X$ is a kind of `derived Grassmannian', and one easily verifies that it is unobstructed. It then follows from \eqref{injectivityintro} that the obstruction classes for deformations of $X$ must always vanish. 
	
	In fact,  $\Per_X$ is even     `derived unobstructed', i.e.\
	pro-represented by a  {free}
	commutative differential graded algebra. This is equivalent to the corresponding differential graded Lie algebra being  {abelian} (as its homotopy groups are finite-dimensional).\vspace{1pt}

	Using that $\pi_i \Def_X(\sqz_k(k[n]))\longrightarrow \pi_i\Per_X(\sqz_k(k[n]))$
	is injective  for all $i$ and all $n \geq 0$, one can show that $X$ is also derived unobstructed. Indeed, one  can postcompose the induced map on tangent fibres $T_{\Def_X} \rightarrow T_{\Per_X}$ with a map to an abelian differential graded Lie algebra such that the composite is an equivalence. This argument is closely related to the proof of Iacono--Manetti~\cite{IaconoManetti2010}, and its interpretation due to Fiorenza--Martinengo~\cite{FiorenzaMartinengo2012}.

	\subsubsection*{Formal moduli problems in  mixed characteristic}
	The theory of formal moduli problems and their tangent fibres admits a well-behaved generalisation to mixed characteristic \cite{BrantnerMathew2019}. When $k$ is a perfect field   of characteristic $p$, formal moduli problems $F$ are now defined on animated local Artinian $W(k)$-algebras with \mbox{residue field $k$.} Their tangent fibres $T_F$ are still  chain complexes over $k$, which can be equipped with 
	partition Lie algebra structures controlling the    formal moduli problems $F$.
	
	In particular, there is a formal moduli problem $\Def_{X/W}$ encoding the mixed characteristic derived infinitesimal deformations of a given   variety \mbox{$X$ over $k$.}
	
	When attempting to adapt the above proof to the mixed characteristic setting, one however encounters a fundamental obstacle first observed by Grothendieck~\cite{Grothendieck1968}:  the de Rham cohomology is no longer rigid, as the
	Gauss--Manin connection generally need not exist.
	The above definition of the period map therefore no longer makes sense. 
	We can circumvent this issue in two ways:
	\begin{enumerate}
		\item Construct a  period map   for deformations of $X$ over animated local Artinian rings \textit{with divided powers} on their maximal ideals;
		\item Construct a period map based on the unit root part of crystalline cohomology of $X$, which is an infinitesimal crystal (cf.\ \cite[Cor.4.11]{Ogus}).
	\end{enumerate}
	The first approach leads to a proof of our Unobstructedness \Cref{mainthm:btt}, while the second leads to a proof of our Serre--Tate \Cref{mainthm:Serre-Tate}.

	\subsubsection*{Unobstructedness in mixed characteristic}
	Let $R\rightarrow k$ be a local Artinian animated   ring  equipped with a divided power structure on its augmentation ideal (in the sense of Mao~\cite{Mao2021}). For every lift $\fX/R$, 
	the derived de Rham cohomology  $\RGamma_{\dR}(\fX/R)$ agrees with  the crystalline cohomology $\RGamma_\cris(X/R) $. 
	We  can therefore \mbox{send $\fX/R$ to}
	\[
	\Fil^1 \RGamma_{\dR}(\fX/R) \longrightarrow \RGamma_\cris(X/R )\simeq 
	R \otimes_{W(k)} \RGamma_\cris(X/W(k)),
	\]
	and this assignment  defines a map of `divided power formal moduli problems'
	\[
	\per^{\pd}_X\colon \Def^\pd_{X/W} \longrightarrow \Per^\pd_{X/W}.
	\]
	Here  $\Def^\pd_{X/W}$ and $\Per^\pd_{X/W}$ are obtained by restricting corresponding genuine  formal moduli problems $\Def_{X/W} $ and $ \Per_{X/W}$ to animated rings with divided powers, and $\Per_{X/W}$ is again a kind of `derived Grassmannian'.

	Under conditions~(\ref{item:BTT-HdR}) and~(\ref{item:BTT-no-crys-tors}) in \Cref{mainthm:btt}, we show that the induced maps 
	\[
	\pi_0 \Def_X^\pd(\sqz_k(k[n])) \longrightarrow \pi_0 \Per_X^\pd(\sqz_k(k[n]))
	\]
	are injective for all $n\geq 0$ and that and that $\Per_{X/W}$ is unobstructed. From this,  we deduce that deformations of $X$ are unobstructed along small maps $f\colon R'\rightarrow R$ of local Artinian divided power rings  with the property that the divided powers vanish on $\ker(f)$.
	Following an idea of Schr\"oer~\cite{Schroeer2003}  and using a result of Achinger--Suh \cite{AchingerSuh2023}, we then bootstrap from this partial unobstructedness to show that $\Def_X$ is unobstructed on all local  Artinian (commutative) rings with residue \mbox{field $k$.}
	
	\begin{remark} For this bootstrapping argument, it is crucial to consider mixed characteristic deformations, even if one is only interested in unobstructedness along maps of $k$-algebras. \end{remark}

	\subsubsection*{Serre--Tate theory for ordinary Calabi--Yau varieties}
	Another advantage of derived deformation theory is that it is easy to detect when a map $F\rightarrow G$ of formal moduli problems is an equivalence, as the tangent fibre functor is conservative. This means that $F\rightarrow G$ is an equivalence  if and only if 
	 the induced map on tangent fibres $T_F \rightarrow T_G$ is an equivalence, i.e.\ if 
	for all $n$ and $i$, the induced map
	\[
	\pi_i F(\sqz_k(k[n])) \longrightarrow \pi_i G(\sqz_k(k[n]))
	\]
	is an isomorphism of $k$-vector spaces. 
	
	This observation is key to our proof of \Cref{mainthm:Serre-Tate}. We briefly outline our construction for $k$ algebraically closed, and refer to  the main text for the general case.

	 Given a Bloch--Kato
	$1$-ordinary Calabi--Yau variety $X$ over $k$ and a lift $\fX$ to an animated local Artinian ring $R$ with residue field $k$, we construct a morphism 
	\[ R\Gamma(X,W\Omega_X^1)^{F=1} \longrightarrow H^d(X, W\cO_X)^{F=1} \otimes_{\bZ_p} \hat{\bG}_m(R)[1-d]\]

	Varying $R$, these arrows assemble to  a map of formal moduli problems
	\[
	\st_X\colon \Def_{X/W}  \longrightarrow \ST_X,
	\]
	where $\ST_X$ has the structure of a (derived) formal group of multiplicative type.  
	This is a derived version of a construction of Nygaard~\cite{Nygaard83} and Achinger--Zdanowicz~\cite{AchingerZdanowicz2021}. 
	
	We then prove that   if $X$ is Bloch--Kato $2$-ordinary,   the induced maps \[\pi_i \st_X(\sqz_k(k[n]))\] are isomorphisms for all $i, n \geq 0$, which implies that $\Def_{X/W} $ is equivalent to $\ST_X$ by conservativity of the tangent fibre.

	\subsection{Acknowledgements}
	We are grateful to Piotr Achinger, Bhargav Bhatt, Jeremy Hahn,  Joost Nuiten,  	Sasha Petrov, Ananth Shankar, and No\'{e} Sotto for various discussions related to the content of this paper.
	
	L.B.\ was supported as a Royal Society University Research Fellow at Oxford University  through grant URF$\backslash$R1$\backslash$211075 and by the Centre national de la recherche scientifique (CNRS) at Orsay.

	L.T.\ was supported by the European Research Council (ERC), grant
	864145, and by the Charles Simonyi Endowment at the Institute for Advanced Study.

	\section{Preliminaries}
	To set the stage, we   briefly recall  the cotangent complex formalism  of Andr\'e~\cite{Andre} and Quillen~\cite{Quillen}  in the level of generality needed for our later applications.
	
	\subsection{Adjoining sifted colimits}
	Homological algebra replaces modules by chain complexes   to     refine   classical functors like the tensor product by more refined constructions like Tor groups.
	Quillen's   homotopical algebra \cite{Quillen2}  refines functors defined on categories which are not abelian,  e.g.\  the category of commutative rings.
	
	Higher category theory gives a  clean construction of the required nonabelian derived $\infty$-categories. Indeed,  let $\cat{C}$ be a small $\infty$-category with finite coproducts, and write $\cat{P}_{\Sigma}(\cat{C})$ for the $\infty$-category of functors $\cat{C}^{\op} \rightarrow \cat{S}$ which preserve finite products.
	By \cite[Section 5.5.8]{LurieHTT}, the $\infty$-category $\cat{P}_{\Sigma}(\cat{C})$ is 
	freely generated by $\cat{C}$ under sifted colimits. This means that $\cat{P}_{\Sigma}(\cat{C})$ admits sifted colimits and for 
	every other $\infty$-category $\cat{D}$ with sifted colimits, precomposing with the 
	(fully faithful) Yoneda embedding $
	j\colon \cat{C} \rightarrow \cat{P}_\Sigma(\cat{C})$ 
	gives an equivalence
	\[
	\Fun_\Sigma(\cat{P}_\Sigma(\cat{C}),\cat{D}) \xrightarrow{\simeq}  \Fun(\cat{C},\cat{D}).
	\]
	Here $\Fun_\Sigma$ is the full subcategory of functors that preserve sifted colimits. 
	
	Hence every functor $f\colon \cat{C}  \rightarrow \cat{D} $ to an $\infty$-category with sifted colimits has an essentially unique sifted-colimit-preserving extension $F\colon \cat{P}_\Sigma(\cat{C}) \rightarrow \cat{D}$.

	\begin{example}[Connective chain complexes]\label{ex:connchain}
		For  $\cat{C}=\Mod_{\bZ}^{\ff} \subset \Ab$  the full subcategory of   finite free abelian groups,  $\cat{P}_\Sigma(\cat{C})$ is  equivalent to the $\infty$-category $\Mod_{\bZ}^\cn $
		of connective module spectra over the Eilenberg--MacLane spectrum of $\bZ$, which is in turn equivalent to the derived $\infty$-category 
		$ \cat{D}_{\geq 0}(\bZ)$ of  connective chain complexes  (cf.\ \cite[Remark 7.1.1.16, Corollary 7.1.4.15.]{LurieHA}).
		The full subcategory  of discrete objects is   equivalent to the ordinary category $\Ab$ of abelian groups.

	\end{example} 
	
	\begin{proposition}\label{prop:forgetful-chain}
		The functor $\Mod_{\bZ}^{\ff} \rightarrow \cat{S}$ sending $A$ to the underlying set   extends to a sifted-colimit-preserving  functor $\Omega^{\infty}: \Mod_{\bZ}^{\cn} \rightarrow \cat{S}$ 
		which is conservative and creates small limits.
	\end{proposition}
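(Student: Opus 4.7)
The plan is to identify $\Omega^\infty$ with the functor corepresented by $\bZ$. First, observe that the underlying-set functor $U\colon \Mod_{\bZ}^{\ff} \rightarrow \cat{S}$ is corepresented by $\bZ$, since elements of a finite free abelian group $A$ correspond bijectively to homomorphisms $\bZ \rightarrow A$. The universal property recalled above produces an essentially unique sifted-colimit-preserving extension $\Omega^\infty \colon \Mod_{\bZ}^{\cn} \simeq \cat{P}_{\Sigma}(\Mod_{\bZ}^{\ff}) \rightarrow \cat{S}$. This extension must coincide with the mapping space functor $\Map_{\Mod_{\bZ}^{\cn}}(\bZ,-)$: the latter restricts to $U$ on finite free modules, and it preserves sifted colimits since the Yoneda image of any object of $\cat{C}$ is compact projective in $\cat{P}_{\Sigma}(\cat{C})$. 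Uniqueness in the universal property then identifies the two functors.

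For conservativity, the key computation is that $\pi_n \Omega^\infty M \cong \pi_n \Map_{\Mod_{\bZ}^{\cn}}(\bZ, M) \cong \pi_n M$ for every $n \geq 0$, where the second isomorphism expresses that $\bZ$ is the monoidal unit. A map $f\colon M \rightarrow N$ of connective $\bZ$-module spectra is an equivalence precisely when it induces isomorphisms on all non-negative homotopy groups, and by the above this happens exactly when $\Omega^\infty f$ is an equivalence of spaces.

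Finally, creation of limits combines conservativity with limit-preservation. The functor $\Map_{\Mod_{\bZ}^{\cn}}(\bZ,-)$ preserves any limit that exists in $\Mod_{\bZ}^{\cn}$, and since $\cat{P}_{\Sigma}(\Mod_{\bZ}^{\ff})$ is presentable it has all small limits. A limit-preserving, conservative functor between $\infty$-categories with all limits automatically creates them, which closes the argument. The main point that requires real input (rather than formal manipulation) is that $\bZ$ is compact projective in $\Mod_{\bZ}^{\cn}$, which underpins both the identification of $\Omega^\infty$ as a mapping space and the homotopy-group computation used for conservativity; this is a general feature of Yoneda images inside $\cat{P}_\Sigma$ and has already been recorded in the setup.
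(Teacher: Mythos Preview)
The paper states this proposition without proof, treating it as a standard fact about $\cat{P}_\Sigma$-constructions. Your argument is correct and is exactly the kind of justification one would supply: identifying $\Omega^\infty$ with $\Map_{\Mod_\bZ^{\cn}}(\bZ,-)$ via the universal property of $\cat{P}_\Sigma$, using compact projectivity of the Yoneda image of $\bZ$ for sifted-colimit preservation, and deducing conservativity and creation of limits from the identification $\pi_n\Omega^\infty M \cong \pi_n M$ together with presentability. There is nothing to compare against, and nothing missing in your treatment.
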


	We recall some basic properties of 
	the $\cat{P}_{\Sigma}$-construction (cf.\ \cite[Proposition 5.5.8.10]{LurieHTT}, \cite[Lemma 5.5.8.14]{LurieHTT}):
	
	\begin{proposition}Let $\cat{C}$ be a small $\infty$-category that admits finite coproducts. Then 
		\begin{enumerate}
			\item $\cat{P}_\Sigma(\cat{C})$ is presentable;
			\item for every $x\in \cat{C}$ the functor $\Map_{\cat{P}_\Sigma(\cat{C})}(x,-)$ preserves sifted colimits;
			\item every object in $\cat{P}_\Sigma(\cat{C})$ can be written as a sifted colimit of objects in $\cat{C}$.
		\end{enumerate}
	\end{proposition}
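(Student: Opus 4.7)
The plan is to handle the three parts in turn, treating (3) as the main obstacle.

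For (1), I would exhibit $\cat{P}_\Sigma(\cat{C})$ as an accessible reflective subcategory of $\Fun(\cat{C}^{\op},\cat{S})$. Since $\cat{C}$ is small, the full presheaf $\infty$-category is presentable (it is a free cocompletion). The condition that a presheaf $F$ preserve finite products is imposed by a small set of maps, namely the maps $j(x_1)\sqcup\cdots\sqcup j(x_n)\to j(x_1\sqcup\cdots\sqcup x_n)$ in $\Fun(\cat{C}^{\op},\cat{S})$ ranging over finite tuples of objects of $\cat{C}$; $F$ is product-preserving precisely when it is local with respect to this small set. A localization of a presentable $\infty$-category at a small set of morphisms is again presentable, so $\cat{P}_\Sigma(\cat{C})$ is presentable.

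For (2), I would first observe that sifted colimits in $\cat{P}_\Sigma(\cat{C})$ are computed pointwise, i.e.\ they agree with the corresponding colimit in $\Fun(\cat{C}^{\op},\cat{S})$. The only thing to check is that a pointwise sifted colimit of product-preserving functors is still product-preserving, which is exactly the definition: sifted colimits in $\cat{S}$ commute with finite products. Combined with the Yoneda equivalence $\Map_{\cat{P}_\Sigma(\cat{C})}(j(x),F)\simeq F(x)$, this identifies $\Map_{\cat{P}_\Sigma(\cat{C})}(x,-)$ with evaluation at $x$, which preserves sifted colimits.

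For (3), the main obstacle, I would start from the standard fact that in $\Fun(\cat{C}^{\op},\cat{S})$ every presheaf $F$ is the colimit of the tautological diagram indexed by the $\infty$-category of elements $\cat{C}_{/F}$. The trouble is that $\cat{C}_{/F}$ is in general not sifted. My plan is to replace it by its closure $\cat{I}\subset\cat{C}_{/F}$ under finite coproducts, using that $F$ preserves finite products to see that the finite coproduct of elements $(x_i,s_i\in F(x_i))$ canonically defines an object of $\cat{C}_{/F}$. The inclusion $\cat{I}\hookrightarrow \cat{C}_{/F}$ is cofinal (each slice is filtered, even directed via iterated coproducts), so $F$ is the colimit of $j$ over $\cat{I}$. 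Finally, $\cat{I}$ has finite coproducts and is nonempty, which forces the diagonal $\cat{I}\to\cat{I}\times\cat{I}$ to be cofinal and more generally makes $\cat{I}$ sifted in the sense of HTT~5.5.8. This presents $F$ as a sifted colimit of objects of $\cat{C}$. The technical care needed to check cofinality and siftedness of this coproduct-closed indexing category is where the argument really lives; everything else is formal.
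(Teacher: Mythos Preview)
The paper does not give its own proof; it simply records the statement with references to \cite[Proposition~5.5.8.10]{LurieHTT} and \cite[Lemma~5.5.8.14]{LurieHTT}. Your arguments are correct and are essentially the proofs found in those references.

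One simplification in (3): there is no need to pass to a proper subcategory $\cat{I}\subset\cat{C}_{/F}$. Because $F$ preserves finite products, the category of elements $\cat{C}_{/F}$ itself already admits finite coproducts --- the coproduct of $(x_i,s_i)$ is $\bigl(\coprod_i x_i,(s_i)_i\bigr)$ via the equivalence $F\bigl(\coprod_i x_i\bigr)\simeq\prod_i F(x_i)$ --- and it is nonempty, containing $(\varnothing,\ast)$ where $\varnothing$ is the initial object of $\cat{C}$ and $F(\varnothing)\simeq\ast$. Thus $\cat{C}_{/F}$ is already sifted, $F$ is the colimit of the tautological diagram $\cat{C}_{/F}\to\cat{C}\hookrightarrow\cat{P}_\Sigma(\cat{C})$, and your cofinality step for an inclusion $\cat{I}\hookrightarrow\cat{C}_{/F}$ becomes vacuous.
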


	\subsection{Animated commutative rings}
	Let us write  $\CRing$  for the ordinary category of (commutative) rings, and $\Poly\subset \CRing$ for its full subcategory spanned by polynomial rings $\bZ[x_1,\ldots,x_n]$.

	\begin{definition}[Animated   rings]
		The $\infty$-category of \emph{animated   (commutative) rings}  $\CAlg^{\an} :=\cat{P}_\Sigma(\Poly)$ is obtained by freely adjoining sifted colimits to $\Poly$. 
	\end{definition}

	Equivalently,  $\CAlg^{\an}$ can be obtained by
	inverting the weak equivalences in the category of simplicial commutative rings with its Quillen model structure. We  briefly review the basic properties of  $\CAlg^\an$,   referring to \cite[Chapter 25]{LurieSAG} for   details.
	
	By \cite[Remark 25.1.1.4]{LurieSAG}, the full subcategory $\CAlg^{\an, \heartsuit}\subset  \CAlg^{\an} $ spanned by all discrete objects is  equivalent to the ordinary category of commutative rings. 
	
	The $\infty$-category $\CAlg^{\an}$ is presentable; hence,  it admits small limits and colimits.  As every object in $\Poly$ is a   coproduct of finitely many \mbox{copies of $\bZ[x]$, we have:}
	\begin{proposition}[Forgetful functor]\label{prop:forgetful-an}
		The functor $\Poly \rightarrow \Mod_{\bZ}^\cn$ sending a polynomial ring to its underlying abelian group extends to a 
		sifted-colimit-preserving functor
		$
		U\colon \CAlg^{\an} \rightarrow \Mod_{\bZ}^\cn
		$
		which is conservative and it creates small limits and sifted colimits.
	\end{proposition}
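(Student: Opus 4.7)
The functor $U$ exists by the universal property of $\CAlg^{\an} = \cat{P}_\Sigma(\Poly)$ recalled in Section 2.1, since $\Mod_{\bZ}^{\cn}$ admits sifted colimits. My plan is to first establish the natural equivalence
\[
\Map_{\CAlg^{\an}}(\bZ[x], A) \simeq \Omega^\infty U(A),
\]
identifying $\Omega^\infty U$ with the underlying-space functor corepresented by $\bZ[x]$. Both sides preserve sifted colimits in $A$ (the left by the preceding proposition, item (2); the right because $U$ preserves sifted colimits by construction and $\Omega^\infty$ does by \Cref{prop:forgetful-chain}), and both agree on $\Poly$, sending a polynomial ring to its underlying set. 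The universal property of $\cat{P}_\Sigma(\Poly)$ then forces the two functors to agree on all of $\CAlg^{\an}$.

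Conservativity follows quickly from this identification: if $U(f)$ is an equivalence, then $\Omega^\infty U(f)$ is by \Cref{prop:forgetful-chain}, and hence $\Map_{\CAlg^{\an}}(\bZ[x], f)$ is an equivalence. For any $P = \bZ[x_1,\ldots,x_n] \in \Poly$ one has $\Map(P, f) \simeq \Map(\bZ[x], f)^n$, so $\Map(P, f)$ is an equivalence for every $P \in \Poly$. Since equivalences in $\cat{P}_\Sigma(\Poly) = \CAlg^{\an}$ are detected by the representable functors coming from objects of $\Poly$, we conclude $f$ is an equivalence.

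For creation of small limits: $\CAlg^{\an}$ is presentable and therefore admits all small limits, so together with conservativity of $U$ it suffices to show $U$ preserves them. For a diagram $A_\bullet \colon I \to \CAlg^{\an}$, I would apply $\Omega^\infty$ to the canonical comparison $U(\lim_i A_i) \to \lim_i U(A_i)$; under the identification above this becomes $\Map(\bZ[x], \lim_i A_i) \to \lim_i \Map(\bZ[x], A_i)$, which is an equivalence because $\Map(\bZ[x], -)$ preserves limits. Conservativity of $\Omega^\infty$ from \Cref{prop:forgetful-chain} then promotes this to an equivalence in $\Mod_{\bZ}^{\cn}$. Creation of sifted colimits is easier still: $U$ preserves them by construction, both categories admit them, and conservativity of $U$ yields creation.

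The only nontrivial step is the initial identification $\Omega^\infty U \simeq \Map_{\CAlg^{\an}}(\bZ[x], -)$; everything else is a formal unwinding. I do not anticipate a serious obstacle, as this is a prototypical application of the universal property of $\cat{P}_\Sigma$.
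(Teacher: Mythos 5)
Your proof is correct. The paper states this proposition without proof, relying on the observation (made immediately before) that every object of $\Poly$ is a finite coproduct of copies of $\bZ[x]$; your argument is precisely the expected fleshing-out of that hint, identifying $\Omega^\infty U$ with the corepresentable functor $\Map_{\CAlg^{\an}}(\bZ[x],-)$ and deducing conservativity, preservation of limits, and then creation from the standard facts.
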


	Given $A \in \CAlg^{\an}$, write  $\CAlg^{\an}_A := \CAlg^{\an}_{A/}$ the $\infty$-category of maps $A\rightarrow B$.

	\begin{construction}[$\CAlg^\an_A$ as functor of $A$]\label{constr:functor-CAlg}

		Consider the $\infty$-category
		\[
		\cat{C} := \Fun(\Delta^1,\CAlg^\an).
		\]
		The functor $q\colon \cat{C} \rightarrow \CAlg^\an$, $(A\rightarrow B) \mapsto A$ is evidently a cartesian fibration.
		Since	$\cat{C} $ admits pushouts, it is also a cocartesian fibration, and applying the straightening construction (cf.\ \cite[Section 3.2]{LurieHTT}) gives a functor 
		\[
		\CAlg^\an \rightarrow \widehat\Cat_\infty,\, A \mapsto \CAlg^\an_A.
		\]
		to the (very large) $\infty$-category $\widehat\Cat_\infty$ of not necessarily small $\infty$-categories. 
		
		By \cite[Corollary 5.2.2.5]{LurieHTT}, this  functor factors over the (non-full) subcategory $\PrL \subset \widehat\Cat_\infty$ of presentable $\infty$-categories and colimit-preserving functors.
	\end{construction}

	\subsection{Modules over animated   rings}
	Let $\Sp $  be the symmetric monoidal
	$\infty$-category of spectra $\Sp$   with the smash product $\otimes$ and its canonical $t$-structure   \cite{LurieHA}.
	Write $\CAlg$ for the $\infty$-category of   algebra objects in $\Sp$, i.e.\  $\bE_\infty$-ring spectra. For $A\in \CAlg$, we write $\Mod_A$ for the $\infty$-category of $A$-module spectra.

	Every ring gives  an $\bE_\infty$-ring spectrum; 
	the resulting functor $\Poly \rightarrow \CAlg$ extends to a sifted-colimit-preserving functor $\CAlg^{\an} \rightarrow \CAlg, A \mapsto A^\circ$. 
	
	If $A\in \CAlg^{\an}$,  we write $\Mod_A$ for the $\infty$-category of module spectra over  $A^\circ$.  We denote by $\Mod^\cn_A \subset \Mod_A$ the full subcategory of connective $A$-modules, and by $\Mod^\heart_A$ the full subcategory of discrete $A$-modules. 
	
	\begin{remark}
		If $A$ is an ordinary  ring,   the homotopy category of $\Mod_A$ is the usual derived category of $A$. 
		Writing  $  \Mod_A^{\ff}$ for the full subcategory of finite free $A$-modules $A^{\oplus n}$, the functor $\Mod_A^{\ff} \rightarrow \Mod^{\cn}_A$ gives an equivalence
		\mbox{$	\cat{P}_\Sigma(\Mod_A^{\ff}) \simeq \Mod^{\cn}_A.$}
	\end{remark}

	We briefly recall the formal construction of 
$\infty$-categories of pairs $(A,M)$ of associative algebra object $A$ and left  $A$-module object $M$. The details  will be particularly relevant 
 in our discussion of sheaves of modules in \Cref{sec:sheavesofmodules} below.
	\begin{construction}[Construction of module $\infty$-categories]\label{cons:LM}
		Assume we are given a symmetric monoidal structure on the $\infty$-category $\cat{C}$, i.e.\ a cocartesian fibration $\cat{C}^{\otimes} \rightarrow \Fin_\ast$ satisfying the Segal condition (cf.\ \cite[Definition 2.0.0.7]{LurieHA}) with  \mbox{$\cat{C}^{\otimes}_{\langle 1 \rangle} = \cat{C}$.}  
		
		There is a canonical functor $\Delta^{\op} \rightarrow \Fin_\ast$ sending $[n] = \{0 < \ldots < n\} $ to $\langle n \rangle = \{\ast, 1, \ldots n\}$, where we think of $i \in \langle n \rangle $ as the $i^{th}$ inequality sign in $[n]$. Note that a map of ordered sets  induces a map on inequality signs in the reverse direction.
		
		Pulling back $\cat{C}^{\otimes} \rightarrow \Fin_\ast$ along this map gives  a monoidal $\infty$-category 
		\mbox{$\cat{C}^{\circledast} \rightarrow \Delta^{\op}$}  
		(cf.\ \cite[Definition 1.1.2]{LurieDAGII}).
		By Section  2.1 of [op.cit],   $\cat{C}$ is left-tensored over itself, which is formalised by a categorical fibration $\cat{C}^{\circledast,L} \rightarrow \cat{C}^{\circledast}$ satisfying various properties.
	Definition 2.1.4 in [op.cit]
	then gives a module  $\infty$-category $$\Mod(\cat{C}) \subset \Fun_{\Delta^{\op}}(\Delta^{\op}, \cat{C}^{\circledast,L}).$$
		By \cite[Definition 2.3.3]{LurieDAGII}, the forgetful functor $\Mod(\cat{C}) \rightarrow \Alg(\cat{C})$ is a cartesian fibration, giving rise to a functor $\Alg(\cat{C}) \rightarrow\widehat{\Cat}_{\infty}$ sending an associative algebra object $A$ to   $\Mod_A$  and a map $f:A\rightarrow B$ to $f^\ast: \Mod_B\rightarrow \Mod_A$.
		
		Objects in $\Mod(\cat{C})$ are  pairs $(A,M)$  of associative algebra object $A\in \Alg(\cat{C})$ and left $A$-modules $M \in \Mod_A$. To give a morphism $(A,M) \rightarrow (B,N)$, we must specify a map  of associative algebras $f: A\rightarrow B$ and a map of $A$-modules $M \rightarrow f^\ast N$.
	\end{construction}
	\begin{remark}
		In the proof of \Cref{prop:moduleeq} below, it will be 
		helpful to directly realise $\Mod(\cat{C})$ as an $\infty$-category of algebras over an $\infty$-operad. To this end,  \cite[Section 4.2.1]{LurieHA} introduces  the $\infty$-operad of left modules $\cat{L}\cat{M}^{\otimes}$. The left-tensored structure  on $\cat{C}$ gives rise to
		a cocartesian fibration  $\cat{C}^{\circledcirc}\rightarrow \cat{L}\cat{M}^{\otimes}$, and  $\Mod(\cat{C})$ is equivalent to  $\cat{L}\cat{M}^{\otimes} $-algebras in $\cat{C}$, i.e.\ operadic sections of $\cat{C}^{\circledcirc}\rightarrow \cat{L}\cat{M}^{\otimes}$. 
	\end{remark}
	
	\begin{definition} \label{def:pairs}  Define the $\infty$-category of pairs $(A,M)$ with $A$ an animated   ring and $M$ an object in $\Mod_A$ as
		\[
		\CAlg^{\an}\Mod:=	\CAlg^{\an}\times_{\Alg(\Sp)}  \Mod(\Sp).
		\]
		Let $\CAlg^{\an}\Mod^\cn  $ be the full subcategory of those  $(A,M)$ with $M$ connective. \end{definition}

	The $\infty$-category $\CAlg^\an\Mod^\cn$ can alternatively be described as follows. Let $\PolyMod^{\ff}$ be the ordinary category of pairs $(A,M)$ with $A$ a polynomial ring  and $M$ a finite free $A$-module. A morphism in $\PolyMod^{\ff}$ is a pair consisting of a map of  rings $f\colon A\rightarrow A'$, and a map of $A$-modules $M\rightarrow M'$ (or equivalently a map of $A'$-modules $A'\otimes_A M \rightarrow M'$). Then by \cite[25.2.1.2]{LurieSAG}, we have an equivalence
	\[
	\cat{P}_\Sigma(\PolyMod^{\ff}) \simeq \CAlg^{\an}\Mod^\cn.
	\]
	The full subcategory $(\CAlg^{\an}\Mod^\cn)^{\heartsuit}$ of discrete objects is equivalent to the ordinary category of pairs $(A,M)$   of a discrete   ring $A$ and a discrete \mbox{$A$-module $M$.}
	
	\subsection{Cotangent complex}
	We   briefly review the construction of Andr\'e--Quillen's cotangent complex formalism, referring to 
	\cite[Section 25.3]{LurieSAG} for a more extensive treatment in the language of $\infty$-categories.
	
	Write $\ArrPoly \subset \Fun(\Delta^1,\CRing)$ for the full subcategory of arrows of the form
	\[
	\bZ[x_1,\ldots,x_n] \rightarrow \bZ[x_1,\ldots,x_n,y_1,\ldots,y_m].
	\]
	By \cite[Corollary 2.13]{Mao2021}, the functor $\ArrPoly \rightarrow \Fun(\Delta^1,\CAlg^{\an})$ gives \mbox{an equivalence}
	\[
	\cat{P}_\Sigma(\ArrPoly) \simeq \Fun(\Delta^1,\CAlg^{\an}).
	\]
	\begin{definition}[Cotangent complex]
		\label{def:cotangent}
		The sifted-colimit-preserving functor
		\[
		\underline{L} \colon \Fun(\Delta^1,\CAlg^{\an}) \rightarrow \CAlg^{\an}\Mod^{\cn}
		\]
	maps
		$A\rightarrow B$ in $\ArrPoly$ to the pair $(B,\Omega^1_{B/A})$. 
	\end{definition}
	\begin{notation} \label{not:extend}
		The composite
		$
		\Fun(\Delta^1,\CAlg^{\an}) \overset{\underline{L}}\longto
		\CAlg^{\an}\Mod \longto \CAlg^{\an}
		$
		is equivalent to the functor mapping $A\rightarrow B$ to $B$; this is clear on compact projective  generators and  extends under sifted colimits. \mbox{Hence, we can 
write $\underline{L}_{B/A}=: (B,L_{B/A}).$}\end{notation}
	
	\begin{definition}[Trivial square-zero extension]
		The functor
		\[
		\underline{\sqz} \colon \CAlg^{\an}\Mod^{\cn} \rightarrow 
		\Fun(\Delta^1,\CAlg^{\an})
		\]
		is the sifted-colimit-preserving extension of the functor mapping $(R,M)$ in $\PolyMod^{\ff}$ to $R \rightarrow R\oplus M$, where $R\oplus M$ is the   ring with $(r,x)(s,y)=(rs,ry+sx)$.
	\end{definition}
	
	We write $\underline{\sqz}(R,M) = (R \rightarrow \sqz_R M )$. Note that there is a natural augmentation map $\epsilon: \sqz_R(M)\rightarrow R$ in $\CAlg^{\an}$, functorial in $(R,M)$, obtained by applying $\sqz_R$ to the morphism $M \rightarrow 0$. It preserves sifted colimits and is given by the evident augmentation for objects in $\PolyMod^{\ff}$.
	
	\begin{proposition}\label{prop:cotangent-adjoint}
		The functor $\underline{\sqz}$ is right adjoint to $\underline{L}$.
	\end{proposition}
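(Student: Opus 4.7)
The plan is to invoke the adjoint functor theorem for $\underline{\sqz}$ and identify the resulting left adjoint with $\underline{L}$ on compact projective generators.

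First, I would verify that $\underline{\sqz}$ preserves small limits. Limits in $\Fun(\Delta^1,\CAlg^{\an})$ are computed pointwise, and by \Cref{prop:forgetful-an} limits in $\CAlg^{\an}$ are created by the forgetful functor to $\Mod_{\bZ}^\cn$. For a small diagram $(R_i,M_i)$ in $\CAlg^{\an}\Mod^{\cn}$, the two animated rings $(\lim R_i)\oplus(\lim M_i)$ and $\lim(R_i\oplus M_i)$ have the same underlying connective module (since finite direct sums commute with limits), and their ring structures coincide because the square-zero multiplication $(r,m)(r',m')=(rr',rm'+r'm)$ is pointwise with respect to the diagram. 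It follows that $\underline{\sqz}(\lim(R_i,M_i))\simeq \lim \underline{\sqz}(R_i,M_i)$ in $\Fun(\Delta^1,\CAlg^{\an})$. Since $\underline{\sqz}$ also preserves sifted colimits (hence filtered ones) by construction, it is accessible, so Lurie's adjoint functor theorem \cite[Cor.~5.5.2.9]{LurieHTT} produces a left adjoint
\[
F\colon \Fun(\Delta^1,\CAlg^{\an})\longrightarrow \CAlg^{\an}\Mod^{\cn}.
\]

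Next, I would identify $F$ with $\underline{L}$. Both are sifted-colimit-preserving — $F$ as a left adjoint, $\underline{L}$ by construction — so by the universal property of $\Fun(\Delta^1,\CAlg^{\an}) \simeq \cat{P}_\Sigma(\ArrPoly)$ it suffices to produce a natural equivalence on $\ArrPoly$. For fixed $(A\to B) \in \ArrPoly$, the two corepresented functors $\Map((A\to B),\underline{\sqz}(-))$ and $\Map_{\CAlg^{\an}\Mod^{\cn}}((B,\Omega^1_{B/A}),-)$ on $\CAlg^{\an}\Mod^{\cn}$ are each sifted-colimit-preserving: the first because $(A\to B)$ is compact projective in $\Fun(\Delta^1,\CAlg^{\an})$ and $\underline{\sqz}$ preserves sifted colimits, the second because $(B,\Omega^1_{B/A}) \in \PolyMod^{\ff}$ is compact projective. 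They agree on the discrete subcategory $(\CAlg^{\an}\Mod^{\cn})^{\heart}$ by the classical universal property of Kähler differentials: a $B$-module map $\Omega^1_{B/A}\to M$ corresponds bijectively to an $A$-derivation $B\to M$, equivalently to a ring lift $B \to R\oplus M$ of a given ring map $B\to R$ over $A$. Since $\PolyMod^{\ff}$ generates $\CAlg^{\an}\Mod^{\cn}$ under sifted colimits, the two functors agree on the whole category, and Yoneda yields $F(A\to B)\simeq (B,\Omega^1_{B/A})=\underline{L}(A\to B)$ naturally in $(A\to B)$.

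The main technical point is the verification that $\underline{\sqz}$ preserves small (not merely sifted) limits, which reduces to the observation that square-zero extensions commute with limits via \Cref{prop:forgetful-an}. Once this is in hand, the remaining arguments follow essentially formally from the $\cat{P}_\Sigma$-presentations of both $\infty$-categories together with the classical universal property of Kähler differentials.
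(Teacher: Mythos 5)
Your argument is correct but takes a genuinely different route from the paper. The paper does not invoke the adjoint functor theorem at all: it directly constructs a candidate unit transformation $\underline{u}\colon \id \to \underline{\sqz}\circ\underline{L}$ by extending the obvious commutative square
\[
\begin{tikzcd}
A \arrow{d} \arrow{r} & B \arrow{d}{\iota+d} \\
B \arrow{r}{\iota} & \sqz_B(\Omega^1_{B/A})
\end{tikzcd}
\]
along sifted colimits from $\ArrPoly$, and then verifies (by reducing to compact projective generators on both sides) that the composite
\[
\Map(\underline{L}_{B/A},(R,M)) \xrightarrow{\ \underline{u}\circ\underline{\sqz}\ } \Map((A\to B),\,\underline{\sqz}(R,M))
\]
is an equivalence, invoking \cite[Prop.~5.2.2.8]{LurieHTT} to conclude. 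Your approach instead establishes that $\underline{\sqz}$ preserves limits and is accessible, applies the adjoint functor theorem to get some left adjoint $F$, and then identifies $F$ with $\underline{L}$ by comparing corepresented functors on $\PolyMod^{\ff}$. Both routes are valid, but the paper's has two concrete advantages: first, it produces the unit (and hence the counit used in \Cref{rem:firstcounit} and \Cref{cor:derivations-sqz}) explicitly rather than abstractly, and these explicit maps are needed downstream; second, the same proof pattern is reused verbatim for the divided power case (\Cref{prop:pd-cotangent-adjoint}), where the direct construction is considerably more convenient than re-running an AFT argument.

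Two points in your write-up are terser than they should be, though neither is a fatal gap. The claim that $\underline{\sqz}$ preserves small limits needs a bit more care than ``finite direct sums commute with limits,'' because limits in $\CAlg^{\an}\Mod^{\cn}$ and in $\CAlg^{\an}$ both involve a connective truncation of the limit in the ambient stable category; the argument still goes through since $\tau_{\geq 0}$ is a right adjoint and commutes with finite products, but this should be said. Second, your final Yoneda step gives an equivalence $F(A\to B)\simeq\underline{L}(A\to B)$ for each fixed object, but to conclude $F\simeq\underline{L}$ as functors you need the equivalence of corepresented functors to be natural in $(A\to B)\in\ArrPoly$ as well as in the target; this is true and follows from the two-variable $\cat{P}_\Sigma$ description, but it is exactly the kind of bookkeeping that the paper's direct unit construction sidesteps.
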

	
	\begin{proof}
		This is well-known. We include a proof that will serve as a model for the proof of \Cref{prop:pd-cotangent-adjoint}, which gives an analogous result for divided power rings. 
		
		By \cite[Proposition 5.2.2.8]{LurieHTT}, it suffices to construct a \textit{unit transformation} \[\underline{u}:\id \rightarrow \underline{\sqz}\circ \underline{L}.\] Note that every map of   rings  $A\rightarrow B$ sits in a commutative square
		\[
		\begin{tikzcd}
			A \arrow{d} \arrow{r} & B \arrow{d}{\iota + d} \\
			B \arrow{r}{\iota} & \sqz_B(  \Omega^1_{B/A})
		\end{tikzcd}
		\]
		where $\iota\colon B\rightarrow  \sqz_B(  \Omega^1_{B/A}) = B \oplus \Omega^1_{B/A}$ is the inclusion   and $d$ is the universal derivation.
		For $A\rightarrow B$ in $\ArrPoly$, the vertical maps  
		define a functorial  morphism
		\[
		\underline{u}: (A\rightarrow B) \rightarrow \underline{\sqz}(\underline{L}_{B/A}).
		\] Extending by sifted colimits, we obtain a transformation 
		$\underline{u}:\id \rightarrow \underline{\sqz}\circ \underline{L}.$  
		
		To verify  this is a unit transformation, we   check that for   $A\rightarrow B$ in $\Fun(\Delta^1,\CAlg^\an)$ and $(R,M)$ in $\CAlg^\an\Mod^\cn$, the following composite is an  {equivalence:}
		\[
		\varphi: \Map_{}\big(\underline{L}_{B/A},\,(R,M)\big) 
		\xrightarrow{	\underline{u} \circ \underline\sqz}
		\Map_{}\big((A\rightarrow B),\, \underline\sqz(R,M)\big).
		\]
		
		First, assume that $A\rightarrow B$ is of the form 
		\[
		\bZ[x_1,\ldots,x_n] \rightarrow \bZ[x_1,\ldots,x_n,y_1,\ldots,y_m].
		\]
		Then $L_{B/A}$ is the free $B$-module on generators $dy_i$. If  $(R,M)\in \PolyMod^{\ff}$, then the map $\varphi$  identifies with a map of discrete spaces
		$
		R^{n+m} \times M^m \rightarrow R^n \times (R\oplus M)^m$.
		Unravelling the definitions, we see that $\varphi$ is given by the obvious bijection.
		
		Writing  $(R,M)$ in $\CAlg^\an\Mod^\cn$ as a sifted colimit of objects in $\PolyMod^{\ff}$, we see that $\phi$ is an equivalence since both $A\rightarrow B$ and $\underline{L}_{B/A}$ are compact projective.
		
		Finally, we can write a general $A\rightarrow B$ in $\Fun(\Delta^1,\CAlg^\an)$ as a sifted colimit of morphisms in $\ArrPoly$ which shows that  $\varphi$ is an equivalence in general.
	\end{proof}
	\begin{notation}
		Denote the counit of the above adjunction by \mbox{$\underline{v}: \underline{L} \circ \underline{\sqz} \rightarrow \id$.}
	\end{notation}
	\begin{remark}\label{rem:firstcounit}
		The first component of the  counit  $ (\sqz_B(M), L_{\sqz_B(M)/B}) \xrightarrow{\underline{v}} (B,M)$ on some $(B,M) \in \CAlg^\an\Mod^\cn$
		is given by the augmentation $\epsilon:  \sqz_B(M) \rightarrow B$. 
		
		To prove this, let us observe  that the two functors $\CAlg^\an\Mod^\cn \rightarrow \CAlg^\an$ and {$\ev_1: \Fun(\Delta^1,\CAlg^{\an}) \rightarrow \CAlg^\an$} restrict to equivalences between 
		the  full subcategory of $\CAlg^\an\Mod^\cn$ spanned by all $(B,M)$ with $M \simeq 0$, respectively the full subcategory of $\Fun(\Delta^1,\CAlg^{\an})$ spanned by all equivalences, and $\CAlg^\an$. 
		
		Both $\underline{L}$ and  $\underline{\sqz}$ are equivalent to the  identity  on these two subcategories, and we deduce  that the counit on an object  $(B,0)$ in $\CAlg^\an\Mod^\cn$ is given by $\id_{(B,0)}$. Given a general object $(B,M)$, we use the naturality of the counit for the morphism $(B,M) \rightarrow (B,0)$ to deduce that the first component of the counit $\underline{v} :(\sqz_B(M), L_{\sqz_B(M)/B}) \rightarrow (B,M)$  is given by the augmentation $\epsilon: \sqz_B(M) \rightarrow B$.
	\end{remark}
	
	\begin{notation}
		We write $v: L_{\sqz_B(M)/B} \rightarrow \epsilon^\ast M$ for the map in $\Mod_{\sqz_B(M)}$ induced by the second component of the counit.
	\end{notation}

	\begin{corollary}\label{cor:derivations-sqz}
		Given $A\rightarrow B$ in $\CAlg^{\an}$ and $M\in \Mod_B^{\cn}$, there is an equivalence
		\[ 	\Map_{\CAlg^{\an}_{A//B}}(B,\,\sqz_B(M))    \simeq \Map_{B}(L_{B/A},\,M)\]
		sending  \vspace{2pt}  $\phi: L_{B/A} \rightarrow M$ to the composite $B \xrightarrow{\id \oplus d} \sqz_B(L_{B/A})  \xrightarrow{\sqz(\phi)} \sqz_B(M)$ and 
		sending $s: B \rightarrow \sqz_B(M)$ to
		\mbox{$L_{B/A} \rightarrow s^\ast L_{\sqz_B(M)/A} \rightarrow s^\ast L_{\sqz_B(M)/B}  \rightarrow s^\ast \epsilon^\ast M \simeq M $.}
	\end{corollary}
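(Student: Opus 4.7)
The plan is to deduce this from the adjunction $\underline L \dashv \underline\sqz$ of \Cref{prop:cotangent-adjoint} by taking appropriate fibers on both sides. That adjunction gives
\[\Map_{\CAlg^{\an}\Mod^{\cn}}\bigl((B, L_{B/A}), (B, M)\bigr) \simeq \Map_{\Fun(\Delta^1, \CAlg^{\an})}\bigl((A \rightarrow B), (B \rightarrow \sqz_B M)\bigr),\]
and both sides project naturally to $\Map_{\CAlg^{\an}}(B, B)$: on the left via the algebra coordinate $(f, g) \mapsto f$, and on the right by postcomposing the right vertical $s\colon B \rightarrow \sqz_B M$ of a square with the augmentation $\epsilon$. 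I would obtain the desired equivalence as the induced map on fibers over $\id_B$.

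The key point is that these two projections are identified under the adjunction equivalence. Unwinding the explicit unit from the proof of \Cref{prop:cotangent-adjoint}, a morphism $(f, g)$ on the left corresponds to the commuting square with right vertical $s_{(f,g)} := \sqz_f(g) \circ (\iota + d)$; a direct computation on $\ArrPoly$-generators (extended by sifted colimits) then yields $\epsilon \circ s_{(f,g)} = f$. The fiber of the left projection over $\id_B$ is $\Map_B(L_{B/A}, M)$ by definition, while the fiber of the right projection consists of squares with right vertical $s$ satisfying $\epsilon \circ s = \id_B$; for such a square, commutativity together with $\epsilon \circ \iota = \id_B$ forces the left vertical $\alpha\colon A \rightarrow B$ to equal the structure map, so this fiber is precisely $\Map_{\CAlg^{\an}_{A//B}}(B, \sqz_B M)$.

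The forward formula $g \mapsto \sqz(g) \circ (\id \oplus d)$ is then immediate from the expression $s_{(\id, g)}$. For the backward formula I would invert the adjunction: given $s$, apply $\underline L$ to the corresponding square to obtain a morphism $(B, L_{B/A}) \to (\sqz_B M, L_{\sqz_B M / B})$ in $\CAlg^{\an}\Mod^{\cn}$, whose module component factors as $L_{B/A} \to s^\ast L_{\sqz_B M / A} \to s^\ast L_{\sqz_B M / B}$ by applying the universal property of the cotangent complex to the two elementary sub-squares (keeping $A$ fixed, then changing base from $A$ to $B$). Postcomposing with the counit $\underline v$ of \Cref{rem:firstcounit} contributes $v\colon L_{\sqz_B M / B} \to \epsilon^\ast M$ on modules, and the identification $s^\ast \epsilon^\ast M \simeq M$ (via $\epsilon \circ s = \id_B$) recovers the composite stated in the corollary. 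The main subtle step is the compatibility of the two projections to $\Map_{\CAlg^{\an}}(B, B)$; everything else is bookkeeping with the explicit unit and counit.
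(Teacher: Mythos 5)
Your proposal is correct and follows essentially the same route as the paper's proof: you pass to fibres over $\id_B$ in the adjunction equivalence of \Cref{prop:cotangent-adjoint}, using the compatibility of the two projections to $\Map_{\CAlg^\an}(B,B)$. The only cosmetic difference is that the paper verifies this compatibility by invoking \Cref{rem:firstcounit} (the first component of the counit is the augmentation), whereas you verify it by unwinding the explicit unit on $\ArrPoly$-generators and extending by sifted colimits — these are two equivalent ways of establishing the same commuting triangle, and your treatment of the backward formula via the factorisation $L_{B/A} \to s^\ast L_{\sqz_B M/A} \to s^\ast L_{\sqz_B M/B}$ coincides with the paper's.
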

	
	\begin{proof}
		Let us denote the forgetful functor $\CAlg^\an\Mod^\cn \rightarrow \CAlg^\an$ by $F$. 
		
As in \Cref{not:extend}, the composite
		\[
		\Fun(\Delta^1,\CAlg^\an) \overset{\underline{L}}\rightarrow
		\CAlg^{\an}\Mod^\cn \overset{F}\rightarrow \CAlg^\an
		\]
		maps $A\rightarrow B$ to $B$. By \Cref{rem:firstcounit}, the map 
		\[
		(F \circ \underline{L}\circ \underline{\sqz})(B,M) \rightarrow F(B,M) 
		\]
		induced by the counit of adjunction is given by the augmentation $\epsilon: \sqz_B(M) \rightarrow B$.

		We obtain a commutative triangle
		\[
		\begin{tikzcd}[column sep=-32pt]
			\Map_{\Fun(\Delta^1,\CAlg^\an)}\big(
			A\rightarrow B,\, \underline{\sqz}(B,M)
			\big) \arrow{rr}{\simeq} \arrow{rd}
			&& \Map_{\CAlg^\an\Mod^\cn}\big(
			\underline{L}_{B/A},\, (B,M) 
			\big) \arrow{ld} \\
			& \Map_{\CAlg^{\an}}\big(B, B \big) &
		\end{tikzcd}
		\]
		where the horizontal map is the adjunction equivalence, the right diagonal is induced by $F$, and the left diagonal map is induced by applying $\ev_1$ and then composing with the augmentation 
		$\epsilon : \sqz_B(M)\rightarrow B$. 
		
		Passing to fibres of the diagonal maps over $\id_B$, we obtain the desired equivalence.
		Unravelling the definitions gives the explicit descriptions, using that for every morphism in 	$\Map_{\Fun(\Delta^1,\CAlg^\an)}\big(
		A\rightarrow B,\, \underline{\sqz}(B,M)
		\big)$, the induced map of $B$-modules $L_{B/A} \rightarrow s^\ast L_{\sqz_B(M)/B}$ factors canonically through 
		$L_{B/A} \rightarrow s^\ast L_{\sqz_B(M)/A}$.
	\end{proof} 
	
	We record several well-known properties of this construction (cf.\ \cite{Quillen, Quillen2}).
	\begin{proposition}[Transitivity sequence] \label{transitivity0}
		Given a sequence of animated rings $A \rightarrow B \rightarrow C$, there is a cofibre sequence $C\otimes_{B} L_{B/A} \rightarrow L_{C/A} \rightarrow L_{C/B}. $
	\end{proposition}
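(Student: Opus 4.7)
The plan is to leverage the universal property of $\underline{L}$ as the sifted-colimit-preserving extension of the classical Kähler differentials functor on $\ArrPoly$. I would assemble the three terms and two maps as sifted-colimit-preserving functors of $(A\to B\to C) \in \Fun(\Delta^2,\CAlg^\an)$, reducing the claim to a direct verification on composable pairs of free polynomial extensions.

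First I would construct the two maps naturally in $(A\to B\to C)$. The map $L_{C/A}\to L_{C/B}$ is obtained by applying $\underline{L}\colon \Fun(\Delta^1,\CAlg^\an)\to \CAlg^\an\Mod^\cn$ to the natural transformation $(A\to C)\Rightarrow (B\to C)$ induced by the factorisation. The map $C\otimes_B L_{B/A}\to L_{C/A}$ is obtained in two steps: applying $\underline{L}$ to the postcomposition transformation $(A\to B)\Rightarrow (A\to C)$ yields a morphism $(B,L_{B/A})\to(C,L_{C/A})$ in $\CAlg^\an\Mod^\cn$ with first component $B\to C$, whose second component is a $B$-linear map $L_{B/A}\to L_{C/A}$, adjoint to the desired $C$-linear arrow. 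Naturality of the composite being null-homotopic can be arranged by noting that on objects of $\ArrPoly^{\circ 2}$ (composable free extensions) the composite is literally zero on the generators $dy_i$; transporting this coherence along sifted colimits gives the natural null-homotopy.

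Letting $Q$ denote the cofibre of $C\otimes_B L_{B/A}\to L_{C/A}$ in $\Mod^\cn_C$, I obtain a natural map $Q\to L_{C/B}$ and must show it is an equivalence. Both sides, viewed as functors $\Fun(\Delta^2,\CAlg^\an)\to \Sp$ via the conservative forgetful functor, preserve sifted colimits: $\underline{L}$ does so by definition, base change along $B\to C$ does (being a left adjoint), and cofibres do in any stable $\infty$-category. By iterating the Mao equivalence of \cite{Mao2021} used to describe $\Fun(\Delta^1,\CAlg^\an)$, the $\infty$-category $\Fun(\Delta^2,\CAlg^\an)$ is the nonabelian derived $\infty$-category of the ordinary category of composable pairs of free polynomial extensions, so it suffices to check the equivalence on such triples.

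On a triple $\bZ[\underline{x}] \to \bZ[\underline{x},\underline{y}] \to \bZ[\underline{x},\underline{y},\underline{z}]$, all three modules are free $C$-modules: $C\otimes_B L_{B/A}$ is free on $\{dy_i\}$, $L_{C/A}$ on $\{dy_i,dz_j\}$, and $L_{C/B}$ on $\{dz_j\}$. The first map is the evident inclusion, which is split with quotient free on $\{dz_j\}$, matching $L_{C/B}$ under the map constructed above. This gives the cofibre sequence on generators, hence on all of $\Fun(\Delta^2,\CAlg^\an)$ by sifted colimits. The main subtlety in the argument is not the computation but the coherent naturality of the two maps and the null-homotopy of their composite; this is handled cleanly by realising all structures as sifted-colimit-preserving functors out of $\Fun(\Delta^2,\CAlg^\an)$, so that all coherences are automatic from the universal property of $\cat{P}_\Sigma$.
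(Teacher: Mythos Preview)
Your argument is correct in outline, but the paper's proof is considerably shorter and avoids the coherence issues you flag. The paper simply observes that $\underline{L}$ preserves small colimits (being a left adjoint, by \Cref{prop:cotangent-adjoint}) and applies it to the pushout square
\[
\begin{tikzcd}
(A\to B) \arrow{r} \arrow{d} & (B\xrightarrow{\id} B) \arrow{d} \\
(A\to C) \arrow{r} & (B\to C)
\end{tikzcd}
\]
in $\Fun(\Delta^1,\CAlg^\an)$. Computing the pushout of $(C,L_{C/A})\leftarrow (B,L_{B/A})\to (B,0)$ in $\CAlg^\an\Mod^\cn$ yields $(C,\cofib(C\otimes_B L_{B/A}\to L_{C/A}))$, which is therefore equivalent to $(C,L_{C/B})$. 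This packages the construction of both maps, the null-homotopy of their composite, and the identification of the cofibre into a single application of colimit-preservation, with no need to model $\Fun(\Delta^2,\CAlg^\an)$ as a nonabelian derived category or to track naturality by hand. Your route works, and has the merit of being explicit about what the maps do on generators, but the paper's pushout trick is the more efficient device and is worth internalising.
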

	\begin{proof} 
		As   $ \underline{L}: \Fun(\Delta^{1}, \CAlg^{\an}) \rightarrow \CAlg^{\an}\Mod$ preserves small colimits, this  follows   by applying it to the pushout of  
		$ ( A\rightarrow C) \leftarrow (A \rightarrow B ) \rightarrow (B \xrightarrow{\id} B).$
	\end{proof}
	\begin{proposition}[Base change] \label{cotcxbasechange0}
		Let $A\rightarrow A'$ be  a map in $\CAlg^{\an}$
		and		 \mbox{$B \rightarrow C$}   a map in $\CAlg^{\an}_A$. There is a natural equivalence
		\mbox{$ A'  \otimes_{A} L_{C/B} \xrightarrow{\simeq} L_{A'\otimes_{A} C/A'\otimes_{A} B}.$}
	\end{proposition}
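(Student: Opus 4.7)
In parallel with the proof of \Cref{transitivity0}, the strategy is to exhibit both sides as sifted-colimit-preserving functors of the input tuple $(A \to A', A \to B \to C)$, construct a natural comparison map, and verify it is an equivalence on polynomial generators, where it reduces to the classical base-change formula for K\"ahler differentials of polynomial ring extensions.

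First, I would construct the map $\alpha\colon A'\otimes_A L_{C/B}\to L_{A'\otimes_A C/A'\otimes_A B}$. By \Cref{cor:derivations-sqz}, giving such an $(A'\otimes_A C)$-linear map is equivalent to giving a $C$-linear map $L_{C/B}\to L_{A'\otimes_A C/A'\otimes_A B}$ (viewing the target as a $C$-module via $C\to A'\otimes_A C$), and hence, again by \Cref{cor:derivations-sqz}, to a section $C\to \sqz_C(L_{A'\otimes_A C/A'\otimes_A B})$ in $\CAlg^{\an}_{B//C}$. I obtain such a section by composing the universal derivation $A'\otimes_A C\to \sqz_{A'\otimes_A C}(L_{A'\otimes_A C/A'\otimes_A B})$ (the counit $\underline{v}$ applied to $\id$) with the canonical map $C\to A'\otimes_A C$. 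This construction is natural in all four inputs.

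Second, I would observe that both the source and target of $\alpha$ are sifted-colimit-preserving as functors of the input diagram $A'\leftarrow A\to B\to C$. Indeed, pushouts in $\CAlg^{\an}$ and base change of modules preserve all colimits, while $\underline{L}$ preserves sifted colimits by construction (\Cref{def:cotangent}). It therefore suffices to check that $\alpha$ is an equivalence on compact projective inputs, i.e.\ on tuples in which all four rings are polynomial rings and all maps are adjunctions of variables.

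Third, for such input $A=\bZ[x_i]$, $B=A[y_j]$, $C=B[z_k]$, $A'=A[w_\ell]$, both $A'\otimes_A L_{C/B}$ and $L_{A'\otimes_A C/A'\otimes_A B}$ are free $(A'\otimes_A C)$-modules on the symbols $\{dz_k\}$ by \Cref{def:cotangent}, and unwinding the construction shows that $\alpha$ is the identity on these generators. The main hurdle is purely bookkeeping: one must carefully identify the $\infty$-category of input diagrams as a presheaf $\infty$-category $\cat{P}_\Sigma(\cat{K}_0)$ on its polynomial full subcategory $\cat{K}_0$ and verify that our two constructions factor through sifted-colimit-preserving extensions out of $\cat{K}_0$. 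Once this formal setup is in place, the comparison on generators above closes the proof.
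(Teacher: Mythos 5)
Your proof is mathematically correct, but it takes a substantially longer route than the paper's, and---more importantly---it misreads the proof of \Cref{transitivity0} that it claims to be paralleling. The paper proves both \Cref{transitivity0} and \Cref{cotcxbasechange0} by the same one-line device: observe that the relevant diagram is literally the image under the colimit-preserving functor $\underline{L}\colon \Fun(\Delta^1,\CAlg^{\an}) \rightarrow \CAlg^{\an}\Mod^{\cn}$ of an explicit pushout of arrows, and then simply unwind what pushouts in $\CAlg^{\an}\Mod^{\cn}$ look like. For base change, the pushout is that of $(B\rightarrow C) \leftarrow (A\xrightarrow{\id} A) \rightarrow (A'\xrightarrow{\id} A')$ in $\Fun(\Delta^1,\CAlg^{\an})$, which is computed pointwise as $(A'\otimes_A B \rightarrow A'\otimes_A C)$; applying $\underline{L}$ sends this to the pushout of $(C,L_{C/B})$ and $(A',0)$ over $(A,0)$, which is exactly $(A'\otimes_A C,\, A'\otimes_A L_{C/B})$. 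There is no comparison map to construct, no appeal to the adjunction with $\underline{\sqz}$, and no reduction to polynomial generators.

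Your approach---build $\alpha$, show both sides preserve sifted colimits in the diagram $A'\leftarrow A\to B\to C$, and check on polynomial tuples---is a valid and more general-purpose technique, and it buys you an argument that does not rely on recognizing a clever pushout. The cost is precisely the ``bookkeeping'' you flag at the end: you must exhibit the input category as $\cat{P}_\Sigma$ of a diagram category of polynomial rings and verify your $\alpha$ is a genuine natural transformation of $\infty$-functors, which is non-trivial to set up rigorously. In fact, the cleanest way to construct your $\alpha$ naturally is the paper's observation itself: apply $\underline{L}$ to the comparison map from the pushout. Two minor slips in the write-up: the first appeal to \Cref{cor:derivations-sqz} is actually the extension-of-scalars adjunction along $C\rightarrow A'\otimes_A C$, and the universal derivation is the \emph{unit} $\underline{u}$ of the adjunction transported through \Cref{cor:derivations-sqz} applied to the identity (not the counit $\underline{v}$).
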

	\begin{proof}
		This follows by applying the colimit-preserving functor  $ \underline{L}$ to the pushout of the arrows $	(B\rightarrow C ) \leftarrow (A\xrightarrow{\id } A )  \rightarrow (A'\xrightarrow{\id }A' ).$
	\end{proof}

	\begin{proposition}[Localisation]\label{localisation0}
		Given  $A \in \CAlg^{\an}$  and $ f\in \pi_0(A)$, we have  $L_{A[f^{-1}] /A } = 0, $ where 
		$A[f^{-1}] = A\otimes_{\bZ[t]}\bZ[t^{\pm 1}]$ and $\bZ[t]\rightarrow A$ sends $t$ to $f$.
	\end{proposition}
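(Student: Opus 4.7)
The plan is to deduce the vanishing from two reductions. First, I would establish that the multiplication map $\mu\colon A[f^{-1}] \otimes_A A[f^{-1}] \to A[f^{-1}]$ is an equivalence in $\CAlg^{\an}$; second, I would leverage \Cref{cotcxbasechange0} to transfer the tautological vanishing $L_{A[f^{-1}]/A[f^{-1}]}=0$ back to $L_{A[f^{-1}]/A}$.

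For the first point, tensor-product associativity together with the given presentation $A[f^{-1}] = A\otimes_{\bZ[t]} \bZ[t^{\pm 1}]$ yields
\[ A[f^{-1}]\otimes_A A[f^{-1}] \simeq A\otimes_{\bZ[t]} \bigl(\bZ[t^{\pm 1}] \otimes_{\bZ[t]} \bZ[t^{\pm 1}]\bigr). \]
Since $\bZ[t^{\pm 1}]$ is classically flat over $\bZ[t]$, its derived tensor square agrees with the classical one, which equals $\bZ[t^{\pm 1}]$ because $\bZ[t]\to \bZ[t^{\pm 1}]$ is an epimorphism of commutative rings. This shows $\mu$ is an equivalence.

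Now I apply \Cref{cotcxbasechange0} with both the vertical map $A\to A'$ and the horizontal map $B\to C$ taken to be $A\to A[f^{-1}]$; this gives
\[ A[f^{-1}] \otimes_A L_{A[f^{-1}]/A} \;\simeq\; L_{A[f^{-1}]\otimes_A A[f^{-1}]/A[f^{-1}]} \;\simeq\; L_{A[f^{-1}]/A[f^{-1}]} \;\simeq\; 0, \]
the middle equivalence using that $\mu$ is an equivalence. Finally, I identify the left-hand side with $L_{A[f^{-1}]/A}$ itself by promoting the $A$-module tensor to a relative tensor using the $A[f^{-1}]$-module structure on $L_{A[f^{-1}]/A}$:
\[ A[f^{-1}]\otimes_A L_{A[f^{-1}]/A} \;\simeq\; \bigl(A[f^{-1}]\otimes_A A[f^{-1}]\bigr)\otimes_{A[f^{-1}]} L_{A[f^{-1}]/A} \;\simeq\; L_{A[f^{-1}]/A}, \]
once more invoking $\mu$. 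Combining these gives $L_{A[f^{-1}]/A} \simeq 0$.

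The main subtlety I expect is the opening classical computation $\bZ[t^{\pm 1}]\otimes_{\bZ[t]}^L \bZ[t^{\pm 1}] \simeq \bZ[t^{\pm 1}]$ (i.e.\ flatness of the localisation together with the classical epimorphism property); once $\mu$ is known to be an equivalence, everything is routine bookkeeping with base change. As a conceptually cleaner alternative one could bypass this altogether: by \Cref{cor:derivations-sqz}, $L_{A[f^{-1}]/A}\simeq 0$ is equivalent to the contractibility of $\Map_{\CAlg^{\an}_{A//A[f^{-1}]}}(A[f^{-1}],\sqz_{A[f^{-1}]}(M))$ for every connective $A[f^{-1}]$-module $M$, which follows directly from the fact that $f$ remains a unit in the square-zero extension $\sqz_{A[f^{-1}]}(M)$ together with the universal property of localisation applied in $\CAlg^{\an}$.
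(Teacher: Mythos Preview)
Your proof is correct and follows essentially the same approach as the paper: both arguments hinge on the idempotence of localisation (that $A[f^{-1}]\otimes_A A[f^{-1}]\simeq A[f^{-1}]$, or equivalently $A[f^{-1}]\otimes_A M\simeq M$ for any $A[f^{-1}]$-module $M$) together with \Cref{cotcxbasechange0}. The only difference is cosmetic: the paper states the module-level idempotence directly, while you justify it by reducing to the classical computation for $\bZ[t^{\pm 1}]$ over $\bZ[t]$.
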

	
	\begin{proof}
		For any $A[f^{-1}] $-module $M$, we have an equivalence
		$A_1[f^{-1}]  \otimes_{A} M\simeq M $.
		Hence  $L_{A[f^{-1}] /A } \simeq A\left[f^{-1}\right]  \otimes_{A} L_{A[f^{-1}] /A }$. By  \Cref{cotcxbasechange0}, is is equivalent to 
		$L_{(A[f^{-1}]  \otimes_{A} A[f^{-1}]) /A\left[f^{-1}\right] } \simeq L_{   A[f^{-1}] /A[f^{-1}] }\simeq  0.$
	\end{proof}
	
	\subsection{Sheaves with values in an $\infty$-category}
	\label{sec:sheaves}
	We briefly review the theory of sheaves on sites,  (cf.\ \cite[Tome 1, Expos\'{e} II]{SGA4}, \cite[Definition 6.2.2.1]{LurieHTT}). The only examples strictly needed for our proofs are the small Zariski site $X_\zar$ and the small \'etale site $X_\et$ associated to a (classical) scheme $X$.

	Let $\cat{T}$ be a small $\infty$-category equipped with a Grothendieck topology (cf.\ \cite[Section 6.2.2]{LurieHTT}). 
	Given an $\infty$-category
	$\cat{C}$ with small limits, let \[\Shv(\cat{T}, \cat{C}) \subset \Fun(\cat{T}^{\op}, \cat{C})\] be  the full subcategory consisting of   the $\cat{C}$-valued sheaves on $\cat{T}$, see~\cite[Definition 1.3.1.1]{LurieSAG}. By \cite[Corollary 1.3.1.8]{LurieSAG}, there is an equivalence
	\[
	\Shv(\cat{T},\cat{C}) \simeq \Fun^{\rR}(\Shv(\cat{T},\cat{S})^{\op},\, \cat{C}).
	\]
	Here $\Fun^{\rR} \subset \Fun$ is the full subcategory of    functors which preserve small \mbox{limits.} Postcomposition with any  $F \in \Fun^{\rR}(\cat{C}, \cat{D})
	$ gives a functor $F\colon \Shv(\cat{T},\cat{C}) \rightarrow \Shv(\cat{T},\cat{D})$.
	
	By \cite[Proposition 4.8.1.15]{LurieHA}, $\infty$-category $\Pr^{\rL}$ of presentable $\infty$-categories and left adjoints admits a symmetric monoidal structure $\otimes$, the Lurie tensor product. We can use $\otimes$ to relate $\cat{C}$-valued sheaves and sheaves of spaces:
	\begin{proposition} \label{prop:presentable-cat-of-sheaves}
		If $\cat{C}$ is presentable then 
		\begin{enumerate}
			\item $\Shv(\cat{T},\cat{C})$ is presentable;
			\item $\Shv(\cat{T},\cat{C}) \simeq \Shv(\cat{T},\cat{S})\otimes \cat{C}$;
			\item the inclusion $\iota_{\cat{C}}: \Shv(\cat{T},\cat{C}) \rightarrow \Fun(\cat{T}^\op,\cat{C})$ admits a left adjoint, which is called 
			\emph{sheafification} and denoted by $\cat{F} \mapsto \cat{F}^\#$.
		\end{enumerate}
	\end{proposition}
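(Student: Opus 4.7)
The plan is to establish (2) first using the universal property of the Lurie tensor product, deduce (1) as a direct consequence, and then address (3) by the adjoint functor theorem.

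For (2), I would combine the equivalence $\Shv(\cat{T},\cat{C}) \simeq \Fun^{\rR}(\Shv(\cat{T},\cat{S})^{\op}, \cat{C})$ already recorded just above with the standard identification
\[
\cat{X} \otimes \cat{C} \simeq \Fun^{\rR}(\cat{X}^{\op}, \cat{C})
\]
valid for any presentable $\cat{X}$ and $\cat{C}$, as in \cite[Proposition 4.8.1.17]{LurieHA}. Taking $\cat{X} = \Shv(\cat{T},\cat{S})$, which is itself presentable by \cite[Section 6.2]{LurieHTT}, then yields (2). Statement (1) follows at once, since the Lurie tensor product of presentable $\infty$-categories is again presentable.

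For (3), I would invoke the adjoint functor theorem \cite[Corollary 5.5.2.9]{LurieHTT}: a functor between presentable $\infty$-categories admits a left adjoint iff it preserves small limits and is accessible. Limit-preservation for $\iota_{\cat{C}}$ is immediate because limits in both $\Shv(\cat{T},\cat{C})$ and $\Fun(\cat{T}^{\op},\cat{C})$ are computed pointwise on $\cat{T}^{\op}$, which is visible directly from the sheaf condition.

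The main obstacle is accessibility of $\iota_{\cat{C}}$. I would argue that $\Shv(\cat{T},\cat{C})$ is the full subcategory of $\Fun(\cat{T}^{\op},\cat{C})$ consisting of objects local with respect to a \emph{small} set of morphisms: for each covering sieve $U^{(0)} \hookrightarrow j(U)$ in $\Fun(\cat{T}^{\op},\cat{S})$, and each element $c$ of a small set of colimit generators of $\cat{C}$, tensoring gives a morphism $U^{(0)} \otimes c \rightarrow j(U) \otimes c$ in $\Fun(\cat{T}^{\op},\cat{C})$, and a presheaf is a sheaf exactly when it is local for this family. Then \cite[Proposition 5.5.4.15]{LurieHTT} supplies an accessible reflective localisation, yielding both accessibility of $\iota_{\cat{C}}$ and the sheafification left adjoint $\cat{F}\mapsto\cat{F}^\#$ in one stroke. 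Alternatively, one can transfer the universal case $\cat{C} = \cat{S}$: the sheafification $L\colon \Fun(\cat{T}^{\op},\cat{S}) \rightarrow \Shv(\cat{T},\cat{S})$ is a morphism in $\PrL$, and tensoring it with $\id_{\cat{C}}$ produces, via the identifications underlying (2), the desired sheafification for $\cat{C}$-valued presheaves.
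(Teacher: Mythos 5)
Your handling of (1) and (2) matches the paper's: both routes identify $\Shv(\cat{T},\cat{C})$ with $\Fun^{\rR}(\Shv(\cat{T},\cat{S})^{\op},\cat{C})$ and then invoke \cite[Proposition 4.8.1.17]{LurieHA} (for the tensor-product description) and the presentability of this functor category. For (3) you offer two arguments, and the genuinely different one is your primary argument via the adjoint functor theorem and accessible localisation: you exhibit $\cat{C}$-valued sheaves as the objects local with respect to the small set of morphisms $\{U^{(0)}\otimes c \to j(U)\otimes c\}$, where $U^{(0)} \hookrightarrow j(U)$ ranges over covering sieves and $c$ over a small set of generators of $\cat{C}$, and then cite \cite[Proposition 5.5.4.15]{LurieHTT} (which in fact delivers the left adjoint and accessibility at once, so the separate appeal to the adjoint functor theorem is redundant). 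This argument is correct, provided one notes that the identification between "$\cat{F}$ is a sheaf" and "$\cat{F}$ is $S$-local" uses that the functors $\Map_{\cat{C}}(c,-)$ jointly detect limits, which holds because a set of generators is jointly conservative and each $\Map_{\cat{C}}(c,-)$ preserves limits. The paper instead takes your "alternative" route: it observes that $\Shv(\cat{T},\cat{S}) \hookrightarrow \Fun(\cat{T}^{\op},\cat{S})$ already admits a left adjoint and transports this along the tensor-product identifications $\Fun(\cat{T}^{\op},\cat{C}) \simeq \Fun(\cat{T}^{\op},\cat{S})\otimes\cat{C}$ and $\Shv(\cat{T},\cat{C}) \simeq \Shv(\cat{T},\cat{S})\otimes\cat{C}$. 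The paper's approach is shorter and keeps everything at the level of $\PrL$, while your primary argument is more explicit about what sheafification localises at and does not require knowing that $-\otimes\cat{C}$ preserves accessible localisations; both are valid.
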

	\begin{proof}
		By \cite[Proposition 6.2.2.7]{LurieHTT} the $\infty$-category $\Shv(\cat{T},\cat{S})$ is presentable,  and  \cite[Remark 5.5.2.10]{LurieHTT} shows that $\Fun^{\rR}(\Shv(\cat{T},\cat{S})^{\op},\, \cat{C})$ coincides with the full subcategory of functors admitting a left adjoint. 
		By \cite[Lemma 4.8.1.16]{LurieHA}, the $\infty$-category $\Shv(\cat{T},\cat{C}) \simeq \Fun^{\rR}(\Shv(\cat{T},\cat{S})^{\op},\, \cat{C})$ is therefore presentable, and by \cite[Proposition 4.8.1.17]{LurieHA}, it is equivalent to $\Shv(\cat{T},\cat{S}) \otimes  \cat{C}$. The third statement then follows as the inclusion $\Shv(\cat{T},\cat{S}) \rightarrow \Fun(\cat{T}^\op,\cat{S})$ admits a left adjoint.
	\end{proof}

	Given a general functor 
	$F\colon \cat{C}\rightarrow \cat{D}$  (not necessarily preserving small limits) with  $\cat{D}$ presentable,  we obtain an induced functor
	\[
	\Shv(\cat{T},\cat{C}) \subset \Fun(\cat{T}^{\op},\cat{C}) \overset{F}{\rightarrow}
	\Fun(\cat{T}^{\op},\cat{D}) \overset{\#}{\rightarrow} \Shv(\cat{T},\cat{D}),
	\]
	which we will abusively denote by $F\colon \Shv(\cat{T},\cat{C}) \rightarrow \Shv(\cat{T},\cat{D})$. 
	
	\begin{construction}\label{cons:sheavesadjunctions}
		Given an adjoint pair   $L\colon \cat{C} \leftrightarrows \cat{D} :\! R$
		between presentable $\infty$-categories. Postcomposing with $R$ preserves sheaves, and we obtain  adjoint squares   
		\[
		\begin{tikzcd}
			\Shv(\cat{T},\cat{C})\arrow{d}{\iota_{\cat{C}}}& \arrow{l}{R\circ} 	\Shv(\cat{T},\cat{D}) \arrow{d}{\iota_{\cat{D}}}\\
			\Fun(\cat{T}^{\op},\cat{C})&\arrow{l}{R\circ} \Fun(\cat{T}^{\op},\cat{D}).
		\end{tikzcd} \ \ \ \ \ \ 
		\begin{tikzcd}
			\Shv(\cat{T},\cat{C}) \arrow{r}{(L\circ-)^\#}&	\Shv(\cat{T},\cat{D}) \\
			\Fun(\cat{T}^{\op},\cat{C})\arrow{r}{L\circ} \arrow{u}{(-)^\#}& \Fun(\cat{T}^{\op},\cat{D})\arrow{u}{(-)^\#}.
		\end{tikzcd}
		\] 
		
		Note that the counit   $L \circ R \rightarrow \id_{ \Fun(\cat{T}^{\op},\cat{D})}$ of the adjunction on presheaves induces a counit transformation $(-)^{\#}\circ L \circ R \rightarrow \id_{ \Shv(\cat{T},\cat{D})}$
		for the adjunction on sheaves.
		
		We abuse notation and denote the resulting adjunction by 
		\[
		L\colon \Shv(\cat{T},\cat{C}) \leftrightarrows \Shv(\cat{T},\cat{D}) :\!R
		\]
		In fact, it follows from \Cref{prop:presentable-cat-of-sheaves} that $\Shv(\cat{T},-)$ defines a functor $\PrL \rightarrow \PrL$.
	\end{construction}
	
	For $X$ a topological space, write $\cat{U}(X)$ for the site of open subsets of $X$. By \cite[Proposition 1.1.4.4]{LurieSAG}, we can characterise sheaves on $X$ in terms of compact bases:
	
	\begin{proposition}\label{sheafesbases} Let  $\cat{U}_e \subset \cat{U}(X)$ be  a basis for the topology of $X$ such that $\cat{U}_e$ is closed under finite intersections and every open $U \in \cat{U}_e$ is compact.
		
		Then $\mathcal{F}: \cat{U}(X)^{\op} \rightarrow \mathcal{C}$ is a $\cat{C}$-valued sheaf 
		if and only if 
	\begin{enumerate}
		\item 
		$\mathcal{F}$ is right Kan extended from $\cat{U}_e$ and
		\item  \label{cond2} for all finite collections $U_1, \ldots, U_n$ of open subsets in $ \cat{U}_e$ with $\cup_{i=1}^n U_i  \in  \cat{U}_e$, the following  map  is an equivalence: $$\displaystyle \mathcal{F}\left(\cup_{i=1}^n U_i \right) \rightarrow \lim_{\emptyset \neq S \subset \{1,\ldots,n \}} \mathcal{F}\left(\cap_{i \in S} U_i \right).$$
		\end{enumerate}
	\end{proposition}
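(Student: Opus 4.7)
The plan is to follow the strategy of \cite[Proposition 1.1.4.4]{LurieSAG}. First, I would reduce to the case $\cat{C} = \cat{S}$: since conditions (1) and (2) and the sheaf condition are all expressed as limit equivalences, and limits in $\cat{C}$ are detected by mapping out of objects $c \in \cat{C}$ via $\Map_{\cat{C}}(c, -)$, the statement for general presentable $\cat{C}$ with small limits follows from the space-valued case.

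For the necessity direction, suppose $\mathcal{F}$ is a sheaf. Then condition (2) is a direct instance of the sheaf condition applied to the finite cover $\{U_1,\ldots,U_n\}$ of $\bigcup_i U_i$, using that $\cat{U}_e$ is closed under finite intersections so that all multi-intersections appearing on the right-hand side lie in $\cat{U}_e$. For condition (1), given $V \in \cat{U}(X)$, the family $\{U \in \cat{U}_e : U \subset V\}$ covers $V$ because $\cat{U}_e$ is a basis. Applying the sheaf condition to this cover, together with a cofinality argument exploiting that the poset $\{U \in \cat{U}_e : U \subset V\}$ is closed under finite intersections, identifies $\mathcal{F}(V)$ with $\lim_{U \in \cat{U}_e,\, U \subset V} \mathcal{F}(U)$, which is precisely the right Kan extension formula.

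For the sufficiency direction, I would proceed in two stages. Step one: establish the sheaf condition for covers $\{W_\alpha\}_{\alpha \in A}$ of a basis open $U \in \cat{U}_e$ by basis opens $W_\alpha \in \cat{U}_e$. Compactness of $U$ yields a finite subset $F \subset A$ with $\bigcup_{\alpha \in F} W_\alpha = U$, to which (2) applies directly. A cofinality argument over the filtered poset of finite subsets of $A$ that cover $U$ (using that finite intersections of basis opens remain in $\cat{U}_e$, and that passing to a larger finite subcover gives compatible limit diagrams) upgrades this to the sheaf condition for the full cover. Step two: for an arbitrary open $V$ and cover $\{V_\beta\}_\beta$, use (1) to rewrite $\mathcal{F}(V) \simeq \lim_{U \in \cat{U}_e,\, U \subset V} \mathcal{F}(U)$, refine the cover to basis opens contained in $V_\beta \cap U$ for varying $\beta$, and apply step one together with a Fubini argument for iterated limits to assemble the full sheaf condition on $V$.

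The main technical obstacle will be the cofinality and Fubini-type manipulations required in both directions, particularly verifying that the posets of basis opens contained in $V$, of finite subcovers of $U$, and of common refinements all interact with the relevant limit diagrams in the expected way. The compactness hypothesis is used in exactly one place, namely to convert an arbitrary cover of a basis open into a finite subcover, which is the only regime in which condition (2) can be invoked directly; everything else is a formal cofinality argument.
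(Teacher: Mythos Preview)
Your proposal is correct and aligns with the paper's treatment: the paper does not give its own proof but simply cites \cite[Proposition 1.1.4.4]{LurieSAG}, which is precisely the result whose strategy you outline. Your sketch of the reduction to $\cat{C}=\cat{S}$, the necessity argument via the sheaf condition and cofinality, and the two-step sufficiency argument (compactness to pass to finite subcovers, then Kan extension plus Fubini) is the standard argument behind that reference.
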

	\begin{notation} \label{basesheaf}
		We write  $\cat{P}_e(X,\cat{C}) =  \Fun(\cat{U}_e,\cat{C})$ and $\Shv_e(X,\cat{C})  \subset \cat{P}_e(X,\cat{C})$ for the full subcategory spanned by all functors satisfying condition  (\ref{cond2}) above. 
	\end{notation}
	Writing $\Ran: \cat{P}_e(X,\cat{C})  \rightarrow \cat{P}(X,\cat{C}) $ for the right Kan extension and $(-)^{\#}_e$ for the adjoint of the inclusion $\Shv_e(X,\cat{C}) \hookrightarrow  \cat{P}_e(X,\cat{C})$, we obtain by 
	\Cref{sheafesbases}:
	
	\begin{corollary}\label{sheafifyingonbases}
		In the setting of \Cref{sheafesbases}, there are adjoint squares 
		\[\begin{tikzcd}
			\Shv_e(X,\cat{C})   \arrow{r}{\simeq} \arrow[hookrightarrow]{d}{\iota^{\cat{C}}_{X,e}} &\Shv(X,\cat{C})    
			\arrow[hookrightarrow]{d}{\iota^{\cat{C}}_{X}}  \\
			\cat{P}_e(X,\cat{C})  \arrow{r}{\Ran } &\cat{P}(X,\cat{C}) .
		\end{tikzcd} \ \ \ \ \ \ \ 
		\begin{tikzcd}
			\cat{P}(X,\cat{C})  \arrow{r}{\res } \arrow{d}{\ (-)^{\#}}  &\cat{P}_e(X,\cat{C}) 
			\arrow{d}{\ (-)^{\#}_e}  \\
			\Shv(X,\cat{C})  \arrow{r}{\simeq } &\Shv_e(X,\cat{C}).
		\end{tikzcd}
		\] 
	\end{corollary}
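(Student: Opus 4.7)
The plan is to extract both squares from \Cref{sheafesbases} by formal adjunction arguments. The key input is condition~(1) of \Cref{sheafesbases}, which says a sheaf $\cat{F}$ on $X$ equals the right Kan extension of its restriction to $\cat{U}_e$, combined with condition~(2), which says this restriction lies in $\Shv_e(X,\cat{C})$. First, I would deduce from this that $\res$ and $\Ran$ restrict to mutually inverse equivalences $\Shv(X,\cat{C}) \simeq \Shv_e(X,\cat{C})$: the unit $\id \to \Ran \circ \res$ is invertible on $\Shv(X,\cat{C})$ by condition~(1), while the counit $\res \circ \Ran \to \id$ is invertible on all of $\cat{P}_e(X,\cat{C})$ since $\cat{U}_e \hookrightarrow \cat{U}(X)$ is fully faithful (standard fact about right Kan extensions along fully faithful inclusions).

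Next, I would collect the three adjunctions populating the squares: the Kan extension adjunction $\res \dashv \Ran$, the sheafification $(-)^{\#} \dashv \iota^{\cat{C}}_X$ from \Cref{prop:presentable-cat-of-sheaves}, and a base-sheafification $(-)^{\#}_e \dashv \iota^{\cat{C}}_{X,e}$. The cleanest way to obtain the last is to transport along the top equivalence by setting $(-)^{\#}_e := \res \circ (-)^{\#} \circ \Ran$, which is automatically left adjoint to $\iota^{\cat{C}}_{X,e}$.

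Commutativity of the first (right-adjoint) square is then clear: both composites send $\cat{F} \in \Shv_e(X,\cat{C})$ to its right Kan extension as a presheaf on $\cat{U}(X)$, which lies in $\Shv(X,\cat{C})$ by \Cref{sheafesbases}. The second (left-adjoint) square arises formally by passing to left adjoints. The substantive content of the corollary is absorbed into \Cref{sheafesbases}; the main obstacle is really the bookkeeping required to present the two squares as genuinely adjoint to each other, and defining $(-)^{\#}_e$ by transport along the top equivalence makes this tautological.
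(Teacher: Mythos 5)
Your proof is correct and supplies exactly the bookkeeping the paper elides (the corollary is stated without proof, as an immediate consequence of \Cref{sheafesbases}). Defining $(-)^{\#}_e := \res \circ (-)^{\#} \circ \Ran$ by transport is a clean way to handle the existence question, and the ``pass to left adjoints'' step is genuinely formal here: once the square of right adjoints $\iota_X^{\cat{C}} \circ (\text{top}) = \Ran \circ \iota_{X,e}^{\cat{C}}$ is established, the two candidate composites in the second square are left adjoints of one and the same functor, hence agree.
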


	\subsection{Sheaves of modules}\label{sec:sheavesofmodules}
	We again fix a  small $\infty$-category $\cat{T}$ equipped with a Grothendieck topology. 
	The $\infty$-category of presheaves $\Fun(\cat{T}^{\op},\Sp)$ is equipped with the pointwise smash product. By \cite[Proposition 2.2.1.9]{LurieHA}, the $\infty$-category $\Shv(\cat{T},\Sp)$ of sheaves of spectra  
	inherits a  symmetric monoidal structure making the sheafification functor $ \Fun(\cat{T}^{\op},\Sp) \rightarrow  \Shv(\cat{T},\Sp)$ symmetric monoidal.

	\begin{proposition}\label{prop:calgsh}
		There is a canonical equivalence \[\Shv(\cat{T},\CAlg) \simeq\CAlg(\Shv(\cat{T},\Sp)) \] between sheaves of $\bE_\infty$-ring spectra and commutative algebras in sheaves of spectra.
	\end{proposition}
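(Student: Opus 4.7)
The plan is to combine two general principles: (a) forming $\CAlg$ commutes with passing to functor categories with the pointwise symmetric monoidal structure, and (b) $\CAlg$ is compatible with symmetric monoidal localizations.

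First, I would recall that for any symmetric monoidal $\infty$-category $\cat{D}$ and any small $\infty$-category $\cat{I}$, the pointwise symmetric monoidal structure on $\Fun(\cat{I},\cat{D})$ yields a natural equivalence $\CAlg(\Fun(\cat{I},\cat{D})) \simeq \Fun(\cat{I},\CAlg(\cat{D}))$ (cf.\ \cite[Remark 2.1.3.4]{LurieHA}). Applied with $\cat{D}=\Sp$ and $\cat{I}=\cat{T}^{\op}$, this gives an equivalence $\CAlg(\Fun(\cat{T}^{\op},\Sp))\simeq \Fun(\cat{T}^{\op},\CAlg)$, under which an $\bE_\infty$-algebra in the presheaf category corresponds to an ordinary presheaf of $\bE_\infty$-rings.

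Second, since the sheafification $L\colon \Fun(\cat{T}^{\op},\Sp)\to \Shv(\cat{T},\Sp)$ is a symmetric monoidal localization by \cite[Proposition 2.2.1.9]{LurieHA}, the general machinery of symmetric monoidal localizations (cf.\ \cite[Proposition 2.2.1.9]{LurieHA} and \cite[Proposition 4.1.7.4]{LurieHA}) shows that the induced functor $\CAlg(\Fun(\cat{T}^{\op},\Sp))\to \CAlg(\Shv(\cat{T},\Sp))$ is itself a localization, exhibiting the target as the full subcategory of the source spanned by those $\bE_\infty$-algebras whose underlying spectrum-valued presheaf is already a sheaf.

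To finish, I would observe that the forgetful functor $\CAlg\to \Sp$ creates small limits. Consequently, a presheaf $F\colon \cat{T}^{\op}\to \CAlg$ satisfies the sheaf descent condition if and only if its composition with $\CAlg\to \Sp$ does. Under the equivalence of the first step, this identifies the subcategory $\Shv(\cat{T},\CAlg)\subset \Fun(\cat{T}^{\op},\CAlg)$ with precisely the subcategory of $\CAlg(\Fun(\cat{T}^{\op},\Sp))$ singled out in the second step, yielding the desired equivalence $\Shv(\cat{T},\CAlg)\simeq \CAlg(\Shv(\cat{T},\Sp))$. The main subtlety is verifying that the local objects after passing to $\CAlg$ are detected on underlying spectra; this is built into the general statement of \cite[Proposition 2.2.1.9]{LurieHA}, but invoking it cleanly requires that the symmetric monoidal structure on $\Shv(\cat{T},\Sp)$ be exactly the one inherited from the pointwise structure on presheaves via sheafification, which is indeed how the tensor product on $\Shv(\cat{T},\Sp)$ was set up.
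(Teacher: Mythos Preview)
Your proposal is correct and follows essentially the same approach as the paper: both first identify $\CAlg(\Fun(\cat{T}^{\op},\Sp))\simeq \Fun(\cat{T}^{\op},\CAlg)$ by currying, then use that the forgetful functor $\CAlg\to\Sp$ creates limits to detect the sheaf condition on underlying spectra, invoking \cite[Proposition~2.2.1.9]{LurieHA} for the compatibility of the symmetric monoidal structure with sheafification. The only cosmetic difference is that the paper phrases the second step directly in terms of operadic sections (using that $\Shv(\cat{T},\Sp)^{\otimes}\to\Fun(\cat{T}^{\op},\Sp)^{\otimes}$ is a map of $\infty$-operads), whereas you package it via the symmetric monoidal localization framework; these are two ways of saying the same thing.
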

	
	\begin{proof}
		Currying  gives an equivalence (in fact isomorphism) of $\infty$-categories
		\[\Fun(\cat{T}^{\op},\CAlg) =\Fun(\cat{T}^{\op},\Fun_{\Fin_\ast}^{\oper}( \Fin_\ast, \Sp^{\otimes}))
		\]
		\[\ \ \simeq \Fun_{\Fin_\ast}^{\oper}( \Fin_\ast,  \Fun(  \cat{T}^{\op}, \Sp)^{\otimes})
		=\CAlg(\Fun(\cat{T}^{\op}, \Sp)), 
		\]
		where $\Fun^{{\oper}} $ denotes
		maps of $\infty$-operads and 
		$\Fun(\cat{T}^{\op}, \Sp)^{\otimes}$ encodes the 
		pointwise symmetric monoidal structure on  $\Fun(\cat{T}^{\op}, \Sp)$.
		
		As  the forgetful functor $\CAlg \rightarrow \Sp$ is conservative and preserves small limits (cf.\ \cite[Section 3.2.2]{LurieHA}), the above equivalence identifies $\Shv(\cat{T},\CAlg)$ with  the full subcategory of $  \Fun_{\Fin_\ast}^{\oper}( \Fin_\ast,  \Fun(  \cat{T}^{\op}, \Sp)^{\otimes})$ spanned by all  functors sending $\langle 1 \rangle$ to a sheaf of spectra. This is equivalent to $\CAlg(\Fun(\cat{T}^{\op}, \Sp))$ as 
		$\Shv(\cat{T},\Sp)^{\otimes } \rightarrow \Fun(\cat{T}^{\op}, \Sp)^{\otimes}$ is a map of $\infty$-operads by  \mbox{\cite[Proposition 2.2.1.9 (3)]{LurieHA}.}
	\end{proof}

	If $\cA \in \Shv(\cat{T},\CAlg^\an)$ is a sheaf of animated   rings, we let $\cA^\circ \in \Shv(\cat{T},\CAlg)$ be  the underlying sheaf of $\bE_\infty$-rings. It is obtained by 
	first postcomposing  with the   forgetful functor $\CAlg^\an\rightarrow \CAlg$
	and then sheafifying (cf.\ \cite[Proposition 1.3.5.10]{LurieSAG}).
	By \Cref{prop:calgsh}, we can interpret $\cA^\circ$ as an $\bE_\infty$-algebra in $\Shv(\cat{T},\Sp)$.

	Using \Cref{cons:LM},  we define
	\[
	\Mod(\cat{T},\cA) := \Mod_{\cA^\circ}(\Shv(\cat{T},\Sp)) = \{\cA\}\times_{\Alg(\Shv(\cat{T},\Sp))}\Mod(\Shv(\cat{T},\Sp)).
	\]
	This is a presentable stable $\infty$-category by \cite[Theorem 3.4.4.2]{LurieHA}. Write $\Mod^\cn( \cat{T},\cA)$ for the full subcategory spanned by those $\cM \in \Mod(\cat{T},\cA)$ such that the homotopy sheaves $\pi_n \cM \in \Shv(\cat{T},\Ab)$ vanish for all $n<0$.  This forms the connective part of a $t$-structure on the stable $\infty$-category $\Mod(\cat{T},\cA)$ by~\cite[Proposition 2.1.1.1]{LurieSAG}.
	
	Using \Cref{def:pairs}, we can give a   more  `algebraic' definition:
	\begin{proposition}\label{prop:moduleeq}
		There are canonical equivalences \begin{eqnarray*}
			\{\cA\} \times_{\Shv(\cat{T},\CAlg^\an)}
			\Shv(\cat{T},\CAlg^\an\Mod)     &\simeq& 	\Mod(\cat{T},\cA)\\
			\{\cA\} \times_{\Shv(\cat{T},\CAlg^\an)}
			\Shv(\cat{T},\CAlg^\an\Mod^\cn)   &\simeq& 	\Mod^\cn(\cat{T},\cA) ,
		\end{eqnarray*}
	\end{proposition}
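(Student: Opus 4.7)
The plan is to follow the strategy of \Cref{prop:calgsh} — express both sides as algebras over an $\infty$-operad, then commute the operadic structure past $\Shv(\cat{T},-)$.

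First, I would establish a presheaf-level version. By \Cref{def:pairs}, $\CAlg^\an\Mod = \CAlg^\an \times_{\Alg(\Sp)} \Mod(\Sp)$, and since $\Fun(\cat{T}^\op,-)$ preserves pullbacks of $\infty$-categories,
\[
\Fun(\cat{T}^\op,\CAlg^\an\Mod) \simeq \Fun(\cat{T}^\op,\CAlg^\an) \times_{\Fun(\cat{T}^\op,\Alg(\Sp))} \Fun(\cat{T}^\op,\Mod(\Sp)).
\]
The sheaf condition descends through this pullback: a functor into $\CAlg^\an\Mod$ is a sheaf iff both its projections to $\CAlg^\an$ and $\Mod(\Sp)$ are sheaves, since the sheaf condition is a limit condition and limits in a pullback of $\infty$-categories are computed componentwise. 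Hence
\[
\Shv(\cat{T},\CAlg^\an\Mod) \simeq \Shv(\cat{T},\CAlg^\an) \times_{\Shv(\cat{T},\Alg(\Sp))} \Shv(\cat{T},\Mod(\Sp)),
\]
and taking fibres over $\cA$ reduces the first claim to identifying $\{\cA^\circ\}\times_{\Shv(\cat{T},\Alg(\Sp))}\Shv(\cat{T},\Mod(\Sp))$ with $\Mod_{\cA^\circ}(\Shv(\cat{T},\Sp))$.

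For this identification, I would argue as in \Cref{prop:calgsh} but using the operad $\cat{L}\cat{M}^{\otimes}$ in place of $\Fin_\ast$. Viewing $\Mod(\Sp)$ as operadic sections of the tautological fibration $\Sp^{\circledcirc}\rightarrow\cat{L}\cat{M}^{\otimes}$ from the Remark preceding the Proposition, currying yields
\[
\Fun(\cat{T}^\op,\Mod(\Sp)) \simeq \Fun^{\oper}_{\cat{L}\cat{M}^{\otimes}}\bigl(\cat{L}\cat{M}^{\otimes},\,\Fun(\cat{T}^\op,\Sp)^{\circledcirc}\bigr),
\]
i.e.\ $\cat{L}\cat{M}^{\otimes}$-algebras in presheaves of spectra, fibred over the algebra component $\Fun(\cat{T}^\op,\Alg(\Sp))$. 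Such an algebra is a sheaf iff both its underlying algebra and module components are sheaves of spectra, and since the sheafification \mbox{$\Shv(\cat{T},\Sp)^{\otimes}\rightarrow \Fun(\cat{T}^\op,\Sp)^{\otimes}$} is a map of $\infty$-operads by \cite[Proposition 2.2.1.9 (3)]{LurieHA}, this cuts out exactly the $\cat{L}\cat{M}^{\otimes}$-algebras in $\Shv(\cat{T},\Sp)$, that is $\Mod(\Shv(\cat{T},\Sp))$. Taking fibres over $\cA^\circ$ produces $\Mod_{\cA^\circ}(\Shv(\cat{T},\Sp)) = \Mod(\cat{T},\cA)$. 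The connective version follows by restricting to the connective part of the $t$-structure on $\Mod(\cat{T},\cA)$ from~\cite[Proposition 2.1.1.1]{LurieSAG}, which matches the connectivity condition on $\Shv(\cat{T},\CAlg^\an\Mod^\cn)$ since connectivity of a sheaf of spectra is detected by its homotopy sheaves.

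The main obstacle will be the last step: rigorously executing the $\cat{L}\cat{M}^{\otimes}$-operadic currying argument in a way that preserves the projection to algebras. One must be careful because $\Mod(\Sp)$ is not itself presentable, so the general formalism of \Cref{prop:presentable-cat-of-sheaves} does not apply directly to it. The operadic picture circumvents this because it treats algebras and modules uniformly, and so gives the desired compatibility with the projection to $\Alg(\Sp)$ essentially for free; the non-trivial point is verifying that sheafification commutes with passage to operadic algebras, which is where the explicit compatibility in \cite[Proposition 2.2.1.9]{LurieHA} is essential.
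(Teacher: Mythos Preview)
Your proposal is correct and follows essentially the same strategy as the paper: both use the $\cat{L}\cat{M}^{\otimes}$-operadic description of $\Mod(\Sp)$, curry to commute with $\Fun(\cat{T}^{\op},-)$, and invoke \cite[Proposition 2.2.1.9(3)]{LurieHA} to pass to sheaves, then take fibres over $\cA$. The only notable difference is in the connective case: the paper reruns the entire argument with $\Sp$ replaced by $\Sp^{\cn}$ and then cites \cite[Proposition 1.3.5.8]{LurieSAG} to identify $\Mod_{\cA}(\Shv(\cat{T},\Sp^{\cn}))$ with $\Mod^\cn(\cat{T},\cA)$, whereas you restrict the non-connective equivalence to connective objects on both sides. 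Your route is fine, but the step ``connectivity matches'' is precisely the content of that same reference (a sheaf valued in $\Sp^{\cn}$ need not obviously coincide with a sheaf of spectra having vanishing negative homotopy sheaves), so you should cite it rather than leave it as an assertion.
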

	\begin{proof}
		Using the notation from \Cref{cons:LM}, we note that by  \cite[Proposition 2.2.1.9(3)]{LurieHA}, the inclusion  $\Shv(\cat{T},\Sp)^{\circledcirc} \rightarrow \Fun(\cat{T}^{\op},\Sp)^{\circledcirc}$ is a map of   $\infty$-operads over $\cat{L}\cat{M}^{\otimes}$. We can therefore conclude as in  
		\Cref{prop:calgsh} that there is an equivalence
		\[\Shv(\cat{T}, \Mod(\Sp))  \xrightarrow{\simeq } \Mod( \Shv(\cat{T}, \Sp)),\]
		which sits in a commutative square consisting of forgetful functors and the equivalence  $ \Shv(\cat{T},\Alg(\Sp))  \xrightarrow{\simeq }\Alg( \Shv(\cat{T}, \Sp))$.
		Passing to the fibres over the respective image of $\cA$ in $\Shv(\cat{T},\Alg(\Sp)) $ and $  \Alg( \Shv(\cat{T}, \Sp))$ gives the first equivalence.
		
		Replacing $\Sp$ by $\Sp^{\cn}$ in the above argument gives rise to an equivalence
		\[ \{\cA\}\times_{\Shv(\cat{T},\Alg(\Sp^{\cn})) }\Shv(\cat{T}, \Mod(\Sp^{\cn}))  \xrightarrow{\simeq }
		\{\cA\}\times_{\Alg( \Shv(\cat{T}, \Sp^{\cn}))}   \Mod( \Shv(\cat{T}, \Sp^{\cn})).\]
		The right hand side is equivalent to $\Mod^\cn(\cat{T},\cA)$ by \cite[Proposition 1.3.5.8]{LurieSAG}.
	\end{proof} 
	
	\begin{notation}
		We will write objects in $\Shv(\cat{T},\CAlg^{\an}\Mod^{\cn})$  as pairs $(\cA,\cM)$ where $\cA \in \Shv(\cat{T},\CAlg^{\an})$ is the image under the forgetful functor and $\cM \in \Mod^{\cn}(\cat{T}, \cA)$ is the image under the second equivalence in 
		\Cref{prop:moduleeq}.		
	\end{notation}
	
	\begin{warning}
		Even for $(X,\cO_X)$ a classical scheme,   $\Mod(X,\cO_X)$ is generally \textit{not} equivalent to $\cat{D}(\Mod_{\cO_X}^\heartsuit)$,  the derived $\infty$-category  of the classical abelian category of $\cO_X$-modules, as the latter   only contains the hypercomplete sheaves.
		
		This distinction disappears for example when $X$ is quasi-compact, quasi-separated, and of finite Krull dimension (cf.\ \cite[Theorem 2.1.2.2.]{LurieSAG}\cite[Theorem 3.12]{ClausenMathew2021}).
	\end{warning}
	
	\subsection{Cotangent complex for sheaves of animated   rings}
	\label{subsec:cotangent-sheaves}
	To extend the cotangent complex to maps of sheaves of animated   rings,  note that 
	by \S \ref{sec:sheaves}, the adjoint pair $(\underline{L},\,\underline{\sqz})$ of \Cref{prop:cotangent-adjoint} induces an adjunction
	\[
	\underline{L}: \Fun(\Delta^1,\Shv(\cat{T}, \CAlg^{\an})) 
	\leftrightarrows \Shv(\cat{T},\CAlg^{\an}\Mod^{\cn}) : \underline{\sqz}.
	\]
	The composite 
	$
	\Fun(\Delta^1,\Shv(\cat{T}, \CAlg^{\an}))  \overset{\underline{L}}\rightarrow
	\Shv(\cat{T},\CAlg^\an\Mod^{\cn}) \overset{F}\rightarrow 
	\Shv(\cat{T},\CAlg^\an)
	$
	maps $\cA\rightarrow \cB$ to $\cB$, and we can write $\underline{L}_{\cB/\cA} = (\cB,L_{\cB/\cA})$ with $L_{\cB/\cA} \in \Mod_{\cB}$. Similarly, we write $\underline\sqz(\cB,\cM) = ( \cB \rightarrow \sqz_{\cB} \cM )$. 
	
	We denote the counit of this adjunction by $\underline{v}\colon \underline{L}  \circ \underline{\sqz} \rightarrow \id$. 
	Given some $(\cB,\cM)$ in $\Shv(\cat{T},\CAlg^{\an}\Mod^{\cn})$, the counit $(\sqz_{\cB} \cM, L_{\sqz_{\cB} \cM/\cB}) \rightarrow (\cB, \cM)$
	is obtained by
	sheafifying 
	the domain of the counit of the corresponding adjunction of presheaves.  We deduce from the \Cref{rem:firstcounit}
	that the `first component' of $\underline{v}$ is the canonical augmentation $\epsilon: \sqz_{\cB}(\cM) \rightarrow \sqz_{\cB}(0) \simeq \cB$, and the second component gives a map $v \colon L_{\sqz_{\cB} \cM} \rightarrow \epsilon^\ast \cM$. We can now sheafify \Cref{cor:derivations-sqz}:
	
	\begin{proposition}\label{prop:cotangent-sheaf-adjunction}
		Given a map $\cA\rightarrow \cB$ in $\Shv(\cat{T},\CAlg^{\an})$ and $\cM\in \Mod^{\cn}_\cB$, there is an equivalence
		\[ 		\Map_{\Shv(\cat{T},\CAlg^{\an})_{\cA//\cB}}\big(
		\cB, \sqz_\cB (\cM)
		\big)  \simeq 	\Map_{{\cB}}(L_{\cB/\cA},\, \cM)\]
		sending  \vspace{2pt}  $\phi: L_{\cB/\cA} \rightarrow \cM$ to the composite $\cB \xrightarrow{\id \oplus d} \sqz_{\cB}(L_{\cB/\cA})  \xrightarrow{\sqz(\phi)} \sqz_{\cB}(\cM)$ and 
		$s: \cB \rightarrow \sqz_{\cB}(\cM)$ to
		\mbox{$L_{\cB/\cA} \rightarrow s^\ast L_{\sqz_\cB(\cM)/\cA} \rightarrow s^\ast L_{\sqz_\cB(\cM)/\cB}  \rightarrow s^\ast \epsilon^\ast \cM \simeq \cM $.}
	\end{proposition}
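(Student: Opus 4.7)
The plan is to mimic the proof of \Cref{cor:derivations-sqz} verbatim, replacing the affine adjunction $(\underline{L},\,\underline{\sqz})$ by its sheafified version constructed at the start of \S\ref{subsec:cotangent-sheaves}. Concretely, the adjunction
\[
\underline{L}: \Fun(\Delta^1,\Shv(\cat{T},\CAlg^{\an})) \leftrightarrows \Shv(\cat{T},\CAlg^{\an}\Mod^{\cn}) : \underline{\sqz}
\]
yields an equivalence
\[
\Map_{\Fun(\Delta^1,\Shv(\cat{T},\CAlg^{\an}))}\bigl(\cA\to\cB,\ \underline{\sqz}(\cB,\cM)\bigr) \xrightarrow{\simeq} \Map_{\Shv(\cat{T},\CAlg^{\an}\Mod^{\cn})}\bigl(\underline{L}_{\cB/\cA},\ (\cB,\cM)\bigr).
\]

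The first step is to write down the commutative triangle analogous to the one in the proof of \Cref{cor:derivations-sqz}: the right diagonal is induced by the forgetful functor $F\colon \Shv(\cat{T},\CAlg^{\an}\Mod^{\cn})\to \Shv(\cat{T},\CAlg^{\an})$, which sends $\underline{L}_{\cB/\cA}$ to $\cB$; the left diagonal is induced by evaluation at $1\in\Delta^1$ followed by postcomposition with the augmentation $\epsilon\colon \sqz_{\cB}(\cM)\to \cB$. Here one uses the observation (established in the paragraph preceding the statement) that the first component of the counit $\underline{v}\colon \underline{L}\circ\underline{\sqz}\to \id$ is the augmentation $\epsilon$, which in turn follows by sheafifying \Cref{rem:firstcounit}.

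Next, passing to fibres over $\id_\cB\in \Map_{\Shv(\cat{T},\CAlg^{\an})}(\cB,\cB)$ of the two diagonal maps yields the desired equivalence
\[
\Map_{\Shv(\cat{T},\CAlg^{\an})_{\cA//\cB}}\bigl(\cB,\ \sqz_{\cB}(\cM)\bigr) \simeq \Map_{\cB}(L_{\cB/\cA},\,\cM).
\]
Finally, the explicit formulas are read off by unravelling the unit and counit: going from $\phi\colon L_{\cB/\cA}\to\cM$, one applies $\underline{\sqz}$ and composes with the unit transformation $\underline{u}$ (which on the level of $\ArrPoly$ is the universal derivation), recovering $\cB\xrightarrow{\id\oplus d}\sqz_{\cB}(L_{\cB/\cA})\xrightarrow{\sqz(\phi)}\sqz_{\cB}(\cM)$; conversely, a section $s$ gives $L_{\cB/\cA}\to s^\ast L_{\sqz_\cB(\cM)/\cA}\to s^\ast L_{\sqz_\cB(\cM)/\cB}\xrightarrow{s^\ast v} s^\ast\epsilon^\ast\cM\simeq \cM$, where the first map uses functoriality of the cotangent complex under $\cA\to\cB\xrightarrow{s}\sqz_\cB(\cM)$ and the last uses the second component of the counit together with $\epsilon\circ s\simeq \id_\cB$.

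The only mild subtlety — and the step most worth double-checking — is that the factorisation $L_{\cB/\cA}\to s^\ast L_{\sqz_\cB(\cM)/\cB}$ genuinely lives over $s^\ast L_{\sqz_\cB(\cM)/\cA}$ in a canonical way. This is automatic from the transitivity cofibre sequence of \Cref{transitivity0} (sheafified) applied to $\cA\to\cB\xrightarrow{s}\sqz_{\cB}(\cM)$ together with the fact that $s$ is a section of $\epsilon$; no additional work is required beyond what is already done in the affine case.
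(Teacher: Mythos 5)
Your proposal is correct and follows essentially the same route as the paper: sheafify the adjunction $(\underline{L},\underline{\sqz})$, build the same commutative triangle over $\Map(\cB,\cB)$ via the forgetful functor on one side and $\ev_1$ followed by the augmentation on the other, and pass to fibres over $\id_\cB$, exactly as in \Cref{cor:derivations-sqz}. The paper's proof is simply a compressed version of what you wrote; your extra remark on the canonical factorisation through $s^\ast L_{\sqz_\cB(\cM)/\cA}$ is the same point flagged at the end of the proof of \Cref{cor:derivations-sqz}.
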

	
	\begin{proof}
		As in the proof of \Cref{cor:derivations-sqz},
		we obtain a 
		commutative triangle
		\[
		\begin{tikzcd}[column sep=-16pt]
			\Map\big(
			\cA\rightarrow \cB,\, \underline{\sqz}(\cB,\cM)
			\big) \arrow{rr}{\simeq} \arrow{rd}
			&& \Map\big(\,
			\underline{L}_{\cB/\cA},\, (\cB,\cM) 
			\big) \arrow{ld} \\
			& \Map\big(\,\cB, \cB \big) &
		\end{tikzcd}
		\]
		in which the horizontal map is the adjunction equivalence, and the diagonal maps are induced by the map $\sqz_{\cB}(\cM) \rightarrow \cB$ and by $F$ respectively.  Considering fibres over $\id_{\cB}$ then proves the proposition.
	\end{proof}

	\begin{notation} \label{def:cotangent-complex-X}
		Let $A$ be an animated   ring. Given a pair $X=(\cat{T},\cO_X)$ of a Grothendieck site $\cat{T}$ and a sheaf $\cO_X$ in $\Shv(\cat{T},\CAlg^\an_A)$, we   write $L_{X/A} := L_{\cO_X/A}.$
	\end{notation}

	\subsection{Cotangent complex for derived schemes}
	We will now check that \Cref{def:cotangent} and  \Cref{def:cotangent-complex-X}  are compatible with each other. 
	Let us write $\Topp_{{\CAlg^{\an}}}$ for the $\infty$-category of \textit{animated ringed spaces}, using \cite[Construction 1.1.2.2]{LurieSAG}.
	Its objects are  given by  pairs $(X \in \Topp, \ \cO_X \in  \Shv(X,{\CAlg^{\an}}) )$ and   morphisms $(X, \cO_X) \rightarrow (Y, \cO_Y)$  are
	given by pairs  
	$ ( f:X\xrightarrow{} Y \ ,   \  \phi:
	\cO_Y  \xrightarrow{}   f_\ast(\cO_X))$. 
	
	\begin{definition}[Derived schemes]\label{def:dersch} A \emph{derived scheme}  is a pair $X=(X,\cO_X)$ with $X$ a topological space and $\cO_X$ an object in $\Shv(X,\CAlg^\an)$ such that
		\begin{enumerate}
			\item $(X,\pi_0 \cO_X)$ is a scheme;
			\item the $\pi_0\cO_X$-module $\pi_n \cO_X$ is quasi-coherent for all $n\geq 0$;
			\item $\cO_X$ is hypercomplete.
		\end{enumerate}
		A morphism $ X\rightarrow Y$ of derived schemes   is a pair consisting of a continuous map $f\colon X\rightarrow Y$ and a map $ \phi: \cO_Y \rightarrow f_\ast(\cO_X)$ in $\Shv(X,\CAlg^\an )$ such that the induced map
		$(X,\pi_0 \cO_X) \rightarrow (Y,\pi_0 \cO_Y)$ is a morphism of schemes.  
		
		The $\infty$-category of derived schemes is the  subcategory $\Sch^{\der} \subset \Topp_{{\CAlg^{\an}}}$  of derived schemes and maps  of derived schemes between them.
	\end{definition}
	
	\begin{remark} If $(X,\cO_X)$ satisfies (1) and (2) in \Cref{def:dersch}, then  (3) is equivalent to the assertion that for every affine open $U\subset (X,\pi_0\cO_X)$ the natural map $\pi_n(\cO_X(U)) \rightarrow (\pi_n \cO_X)(U)$
		is an isomorphism, cf.\ the proof of \mbox{\cite[Proposition 1.1.6.2]{LurieSAG}.}\end{remark}

	\begin{example}[Affine derived schemes]\label{affinederived}
		Every animated ring $B\in \CAlg^{\an}$ gives  a derived scheme $\Spec(B) = (|\Spec(B)|, \cO_{\Spec(B)})$, whose  structure sheaf maps a distinguished open $U_f = \Spec(\pi_0(B)[f^{-1}])$ to  $B[f^{-1}]$  
		
		The resulting assignment $\CAlg^{\an} \rightarrow \Sch^{\der}$, $B \mapsto \Spec(B)$ is fully faithful.
	\end{example} 
 
	\begin{remark} Given a derived scheme $X$ over an animated ring $A$, we 
		define $L_{X/A}$ following \Cref{def:cotangent-complex-X}. Note that any map $A' \rightarrow A$ in $\CAlg^{\an}$  induces  a map $L_{X/A} \rightarrow L_{X/A'}$. With a bit more work, one  can in fact define $L$ as a functor on arrows of animated ringed spaces.
	\end{remark} 
	
	\begin{definition}[Quasi-coherence]
		A sheaf of $\cO_X$-modules $\mathcal{M}$  on a derived scheme $(X, \cO_X)$ is \textit{quasi-coherent} if for all inclusions of \textit{affine} open subsets $U \subset V$ of $X$, the following canonical morphism is an equivalence:\[ \cO_X(U) \otimes_{\cO_X(V)} \mathcal{M}(V) \longrightarrow\cO_X(U) \otimes_{\cO_X(U)} \mathcal{M}(U) \longrightarrow \mathcal{M}(U).\]
	\end{definition}
	
		\begin{warning}
				The assignment $U \mapsto \cO_X(U)$ is usually not a sheaf of $\cO_X$-modules, as the forgetful  functor $\CAlg^{\an} \rightarrow \Mod_{\bZ}$ does not preserve limits. However, its underlying sheaf of $\EE_\infty$-rings
				$\cO_X^{\circ}$ (obtained by sheafification) is a quasi-coherent sheaf of $\cO_X$-modules, which agrees with $\cO_X$ on affines opens.
	\end{warning}

	\begin{remark}
		By \cite[2.2.6.1]{LurieSAG}, the $\cO_X$-module $\mathcal{M}$ is quasi-coherent if and only if it is hypercomplete and all its homotopy sheaves are quasi-coherent $\pi_0\cO_X$-modules.
	\end{remark}
	
	\begin{proposition}\label{affineqccrit}
		Let $X=  \Spec(B)$ be an affine derived scheme.
		Then a sheaf $\mathcal{M}$ of $\cO_X$-modules is quasi-coherent if and only if for all  distinguished affine opens  $U' \subset V' $ of $X$, the    morphism $$ \cO_X(U') \otimes_{\cO_X(V')} \mathcal{M}(V') \longrightarrow \mathcal{M}(U')$$ is an equivalence.
	\end{proposition}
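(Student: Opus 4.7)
The plan is to prove both directions separately, with the forward direction being immediate and the reverse requiring the basis-of-distinguished-opens technology developed in \Cref{sheafesbases} and \Cref{sheafifyingonbases}.

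For the forward direction, if $\mathcal{M}$ is quasi-coherent, then by definition the relevant base-change map is an equivalence for \emph{every} inclusion $U \subset V$ of affine opens; distinguished opens of $X$ are in particular affine, so the condition on $U' \subset V'$ follows.

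For the reverse direction, I would set $M := \mathcal{M}(X) \in \Mod_B$ and compare $\mathcal{M}$ to the quasi-coherent $\cO_X$-module $\widetilde{M}$ associated to $M$ (i.e., the image of $M$ under the canonical equivalence $\Mod_B \simeq \QCoh(\Spec B)$). There is a canonical comparison morphism $\widetilde{M} \to \mathcal{M}$ in $\Mod(X,\cO_X)$ induced by $\id_M$ on global sections. On any distinguished open $U_f = \Spec(\pi_0(B)[f^{-1}])$ with structure sheaf value $\cO_X(U_f) = B[f^{-1}]$, one computes
\[
\widetilde{M}(U_f) \simeq B[f^{-1}] \otimes_B M \simeq \cO_X(U_f) \otimes_{\cO_X(X)} \mathcal{M}(X) \simeq \mathcal{M}(U_f),
\]
where the middle equivalence is exactly the hypothesis applied to $U' = U_f \subset V' = X = U_1$, and the identification is compatible with the $\cO_X$-module structures. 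Hence the comparison map is an equivalence on all distinguished opens.

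To promote this to an equivalence on all opens, I would invoke \Cref{sheafesbases} applied to the basis $\cU_e$ of distinguished opens of $|\Spec B|$, which is closed under finite intersection (since $U_f \cap U_g = U_{fg}$) and whose members are quasi-compact. Both $\widetilde{M}$ and $\mathcal{M}$ are sheaves of $\cO_X$-modules, hence of spectra after forgetting structure, and so both are right Kan extended from their restrictions to $\cU_e$ (where they agree). Thus $\widetilde{M} \to \mathcal{M}$ is an equivalence of sheaves, and since $\widetilde{M}$ is quasi-coherent, so is $\mathcal{M}$.

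The main obstacle is verifying that the comparison takes place coherently in the $\cO_X$-module category (not merely on underlying spectra) and that the basis criterion of \Cref{sheafesbases} applies to $\Mod(X,\cO_X)$-valued sheaves. The former is handled by constructing the comparison map via the equivalence $\Mod_B \simeq \QCoh(\Spec B)$ at the level of $\cO_X$-modules; the latter follows because the forgetful functor $\Mod(X,\cO_X) \to \Shv(X,\Sp)$ is conservative and preserves small limits, so the right Kan extension and sheaf conditions can be checked after forgetting, while the identifications $\widetilde{M}(U_f) \simeq \mathcal{M}(U_f)$ are intrinsically module maps.
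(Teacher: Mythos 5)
Your proof is correct, but it takes a genuinely different route from the paper's. The paper argues directly: fix an affine inclusion $U \subset V$, first treat the case where $V$ is distinguished by covering $U$ with finitely many distinguished opens $U_1', \ldots, U_m'$, writing both $\mathcal{M}(U)$ and $\cO_X(U)$ as the finite limit over the nerve of that cover, and observing that (since tensoring over $\cO_X(V)$ with $\mathcal{M}(V)$ commutes with finite limits) the base-change map for $U \subset V$ becomes the limit of base-change maps for $U_S' \subset V$, each an equivalence by hypothesis; the general affine $V$ is then handled by a further covering by distinguished opens. Your argument instead constructs the quasi-coherent sheaf $\widetilde{M}$ associated to $M = \mathcal{M}(X)$ under the equivalence $\Mod_B \simeq \QCoh(\Spec B)$, forms the counit $\widetilde{M} \to \mathcal{M}$ of the adjunction $\bigl(\widetilde{(-)}, \Gamma\bigr)$, checks it is an equivalence on the basis $\cat{U}_e$ of distinguished opens by invoking the hypothesis for $U_f \subset X$, and concludes via \Cref{sheafesbases} and conservativity of the forgetful functor $\Mod(X,\cO_X) \to \Shv(X,\Sp)$. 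This is the classical ``reduce to $\widetilde{M}$'' strategy and is conceptually cleaner, but it imports the equivalence $\QCoh(\Spec B) \simeq \Mod_B$ and the associated adjunction from \cite{LurieSAG}, neither of which the paper sets up (the paper is deliberately building up the affine theory from the base-change definition of quasi-coherence, and uses this very proposition as a stepping stone toward $L_{\Spec B/A}$). One small coherence point worth making explicit in your write-up: you should note that the counit map restricted to $U_f$ is literally the canonical map $\cO_X(U_f) \otimes_{\cO_X(X)} \mathcal{M}(X) \to \mathcal{M}(U_f)$ appearing in the hypothesis, not merely an abstract map between equivalent objects; this follows from the construction of the counit as the $B[f^{-1}]$-linearization of the restriction $\mathcal{M}(X)\to\mathcal{M}(U_f)$, but deserves a sentence. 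Both proofs are valid; the paper's is more self-contained and elementary, yours is shorter modulo the imported machinery.
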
  
	\begin{proof}
		Fix an inclusion of affine open subsets $U \subset V$ of $ \Spec(B)$. 
		
		First assume $V$ is already distinguished.  We can cover it by finitely many distinguished affine opens $U_1', \ldots , U_m'$.
		Proceeding as in   \cite[Proposition 1.1.4.4]{LurieSAG}, 
		we see that the natural map $\mathcal{M}(U) \rightarrow \lim_{\emptyset \neq S \subset \{1,\ldots,m \}}  \mathcal{M}(U_S' )$ is an equivalence,  where $U_S' = \bigcap_{i \in S} U'_i$, and a similar expression holds for  $\cO_X(U)$.
		As tensor products commute with finite limits, we can  write 
		$ \cO_X(U) \otimes_{\cO_X(V)} \mathcal{M}(V) \longrightarrow \mathcal{M}(U)$
		as a limit   of maps
		$   \cO_X(U_i' ) \otimes_{\cO_X(  V)} \mathcal{M}( V) \rightarrow \mathcal{M}(U_i')  
		$, which shows the result for   $V$  distinguished.

		We  cover general inclusions of affines by finitely many distinguished affines.
	\end{proof}

	\begin{proposition} \label{cotforaffine}
		Fix a  map of animated   rings $q: A \longrightarrow B $. Then  $L_{\Spec (B)/A }$ (cf.\ \Cref{def:cotangent-complex-X}) is the quasi-coherent sheaf on $\Spec(B) $ with global sections 
		$L_{B/A }$.
	\end{proposition}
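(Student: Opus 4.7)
The plan is to exhibit a natural equivalence between $L_{\Spec(B)/A}$ and the quasi-coherent $\cO_X$-module $\widetilde{L}_{B/A}$ associated to the $B$-module $L_{B/A}$, where $X := \Spec(B)$, by matching universal properties via \Cref{prop:cotangent-sheaf-adjunction}.

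First, I would identify the sections of $\widetilde{L}_{B/A}$ over each distinguished open $U_f$ with $L_{B[f^{-1}]/A}$, using the transitivity sequence (\Cref{transitivity0}) for $A \rightarrow B \rightarrow B[f^{-1}]$ together with the vanishing $L_{B[f^{-1}]/B} \simeq 0$ from \Cref{localisation0}. The universal derivations $d_f\colon B[f^{-1}] \rightarrow \sqz_{B[f^{-1}]}(L_{B[f^{-1}]/A})$ are compatible with restriction and assemble into a single derivation $d\colon \cO_X \rightarrow \sqz_{\cO_X}(\widetilde{L}_{B/A})$ in $\Shv(X, \CAlg^{\an})_{A//\cO_X}$. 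By \Cref{prop:cotangent-sheaf-adjunction}, this adjointly yields a comparison map $\phi\colon L_{\Spec(B)/A} \rightarrow \widetilde{L}_{B/A}$.

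Next, I would verify that $\phi$ is an equivalence by testing against arbitrary connective quasi-coherent $\cO_X$-modules $\mathcal{M}$ with $M := \Gamma(X, \mathcal{M})$. The crucial identification here is that $\sqz_{\cO_X}(\mathcal{M})$ coincides with the structure sheaf of the affine derived scheme $\Spec(\sqz_B(M))$: on each distinguished open $U_f$, both sides should evaluate to $\sqz_{B[f^{-1}]}(M[f^{-1}]) \simeq \sqz_B(M)[f^{-1}]$, using compatibility of the square-zero extension with localisation. Granting this, the fully faithful embedding $\Spec\colon \CAlg^{\an} \hookrightarrow \Sch^{\der}$ of \Cref{affinederived} and \Cref{cor:derivations-sqz} yield natural equivalences
\[
\Map_{\cO_X}(L_{\Spec(B)/A}, \mathcal{M}) \simeq \Map_{\CAlg^{\an}_{A//B}}(B, \sqz_B(M)) \simeq \Map_B(L_{B/A}, M) \simeq \Map_{\cO_X}(\widetilde{L}_{B/A}, \mathcal{M}),
\]
through which the map induced by $\phi$ corresponds to the identity. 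Combined with quasi-coherence of $L_{\Spec(B)/A}$ (established separately using that $\underline{L}$ preserves sifted colimits, with the polynomial-ring base case producing a free and hence manifestly quasi-coherent sheaf), Yoneda within the full subcategory of quasi-coherent sheaves on $X$ forces $\phi$ to be an equivalence, whose global sections then recover $L_{B/A}$.

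The main obstacle is the identification $\sqz_{\cO_X}(\mathcal{M}) \simeq \cO_{\Spec(\sqz_B(M))}$, which requires unwinding the sheafification inherent in the definition of $\underline{\sqz}$ on sheaves and tracking its interaction with distinguished opens and with the formation of affine derived schemes. Once this geometric interpretation of the sheaf-theoretic square-zero extension is in place, the remaining reduction to the ring-theoretic adjunction \Cref{cor:derivations-sqz} is essentially formal, and the separate verification of quasi-coherence of $L_{\Spec(B)/A}$ via the sifted-colimit construction is a routine bookkeeping exercise.
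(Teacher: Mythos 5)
Your outline differs from the paper's proof and contains a genuine gap, concentrated in the two steps you explicitly defer.

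The gap is in the quasi-coherence of $L_{\Spec(B)/A}$, which you dismiss as a ``routine bookkeeping exercise'' via the sifted-colimit construction. The sifted colimit defining $\underline{L}$ lives in $\Fun(\Delta^1,\Shv(X,\CAlg^{\an}))$; as you run through it, the pair $(\cO_X,\cM)$ \emph{varies in both slots}, so it is not a colimit of quasi-coherent $\cO_X$-modules in $\Mod(X,\cO_X)$ for the fixed $X=\Spec(B)$. The ``polynomial-ring base case'' produces free modules over polynomial rings sitting on \emph{different} affine spaces, not quasi-coherent sheaves on $X$, so there is nothing in that category closed under colimits to appeal to. In short, the closure of $\QCoh(X)$ under colimits is not applicable here. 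Similarly, the identification $\sqz_{\cO_X}(\cM)\simeq\cO_{\Spec(\sqz_B(M))}$ that you flag as the ``main obstacle'' is the same kind of local computation you are trying to avoid: one must show that the presheaf $U_f\mapsto \sqz_{B[f^{-1}]}(M[f^{-1}])$ already satisfies the descent condition on distinguished opens before sheafification, so that $\underline{\sqz}$ (which sheafifies by definition) doesn't change its values there. And even granting that identification, passing from $\Map_{\Shv(X,\CAlg^{\an})_{A//\cO_X}}(\cO_X,\sqz_{\cO_X}\cM)$ to $\Map_{\CAlg^{\an}_{A//B}}(B,\sqz_B M)$ is not an immediate consequence of full faithfulness of $\Spec$, since the former are sections of an augmentation of sheaves of animated rings over a fixed space, not a priori morphisms in $\Sch^{\der}$.

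The paper's proof is more direct and avoids the Yoneda detour entirely: it first shows $L_{(q^{-1}\cO_A)(U)/A}\simeq 0$ (via transitivity, localisation, and a filtered-colimit argument), deduces $L_{\cO_B(U)/A}\simeq L_{\cO_B(U)/(q^{-1}\cO_A)(U)}$, and then verifies that the \emph{presheaf} $U_f\mapsto L_{\cO_B(U_f)/(q^{-1}\cO_A)(U_f)}$ already satisfies the sheaf condition on distinguished opens. \Cref{sheafifyingonbases} then guarantees sheafification preserves those values, giving $L_{\Spec(B)/A}(U_f)\simeq L_{B[f^{-1}]/A}$, after which \Cref{affineqccrit} supplies quasi-coherence at once. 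Your Yoneda argument would only conclude that $\widetilde{L}_{B/A}$ is the quasi-coherentization of $L_{\Spec(B)/A}$ --- to upgrade that to an equivalence, you must prove quasi-coherence by precisely the kind of presheaf-level computation the paper carries out, so no work is actually saved. If you fill in the quasi-coherence step by the paper's method, you might as well compute the values on distinguished opens directly and skip the comparison-map / mapping-space reformulation.
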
 
	\begin{proof} 
		Write $\cO_R = \cO_{\Spec(R)}$ for $R \in \CAlg^{\an}$.
		First,  note that for any open $V \subset \Spec(A)$, we have 
		$ L_{\cO_{A}(V)/A} \simeq  0 $. 
	 
		Indeed, we can cover  $V$ by distinguished affines  $U_f \subset V$, and it
		suffices to check that for any such $U_f$, we have $\cO_{A}(U_f) \otimes_{\cO_{A}(V)} L_{\cO_{A}(V)/A} = 0 $. 
		This follows from   \Cref{transitivity0},
		\Cref{localisation0}, and the equivalence $\cO_{A}(V)[f^{-1}] \simeq A[f^{-1}]$, which we can show using the limit cube for  $\cO_{A}(V)$ induced by the distinguished \mbox{cover of $V$.}

		Given  $U \subset \Spec(B)$ open,  
		$(q^{-1}\cO_{A})(U)$ is a filtered colimit of animated  rings $\cO_{A}(V)$, where $V$ is an open containing $q(U)$.
		Hence $L_{(q^{-1}\cO_{A})(U)/A} =0$,  
		and  \Cref{transitivity0}  gives an equivalence
		$ L_{\cO_{B}(U)/A} \xrightarrow{\simeq } L_{\cO_{B}(U)/(q^{-1}\cO_{A})(U) }.$

		Together with Propositions \ref{localisation0} and \ref{transitivity0}, this  shows that for any inclusion $U_f \subset U_g $ of distinguished affines, the following square consists of equivalences: 
		\[	\begin{tikzcd}
			B[f^{-1}]\otimes_{B[g^{-1}]}  
			L_{B[g^{-1}]/A }  \arrow{r}{  } \arrow{d}{  }   & L_{B[f^{-1}]/A }  \arrow{d}{  }  \\
			\cO_{B}(U_f) \otimes_{\cO_{B}(U_g)}  
			L_{\cO_{\Spec(B)}(U_g)/(q^{-1}\cO_{A})(U_g) } \arrow{r}{ } & L_{\cO_{\Spec (B)}(U_f)/(q^{-1}\cO_{A})(U_f) }.
		\end{tikzcd}
		\] 
		
		Let $\cat{U}_e   $ be the poset of distinguished   open subsets of $\Spec(B)$.
		Using the above formula for $g=1$, we see  that  the restriction of the  presheaf $U \mapsto  \underline{L}_{\cO_{B}(U_f)/(q^{-1}\cO_{A})(U_f) }$  to $\cat{U}_e$
		satisfies  \Cref{sheafesbases}(2).
		\Cref{sheafifyingonbases} shows that  sheafification preserves the values  on    distinguished  affines $U_f \subset \Spec(B)$, so we obtain:
		$$\underline{L}_{\Spec (B)/\Spec (A) } (U_f) \simeq  (\cO_{B}(U_f),  {L}_{\cO_{B}(U_f)/(q^{-1}\cO_{A})(U_f) }) \simeq (B[f^{-1}],  {L}_{B[f^{-1}]/A }).$$

		\Cref{affineqccrit} now  implies that $ {L}_{\Spec (B)/\Spec (A) }$ is quasi-coherent.
	\end{proof}

	\begin{corollary}[Quasi-coherence of $L_{X/A}$]
		Given a derived scheme $X$ over an animated ring $A$,  
		the $\cO_X$-module  
		$L_{X/A}$ is   quasi-coherent.
	\end{corollary}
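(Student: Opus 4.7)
The plan is to reduce to the affine case already established in Proposition \ref{cotforaffine}. Quasi-coherence is a local condition on $X$: by Proposition \ref{affineqccrit} and the sheaf property, an $\cO_X$-module $\cM$ is quasi-coherent if and only if its restriction to every member of some affine open cover of $X$ is quasi-coherent. So it suffices to exhibit an affine open cover $\{U_i = \Spec(B_i)\}$ of $X$ on which $L_{X/A}|_{U_i}$ is quasi-coherent for each $i$.

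The key step is to identify $L_{X/A}|_{U_i}$ with $L_{\Spec(B_i)/A}$. For this, I would invoke the naturality of the adjunction $(\underline{L},\,\underline{\sqz})$ of \S\ref{subsec:cotangent-sheaves} under the open embedding $j\colon U_i \hookrightarrow X$. Restriction along $j$ is a left adjoint on both presheaves and sheaves, and it is compatible with sheafification in the sense of Construction \ref{cons:sheavesadjunctions}. Since $\underline{L}$ is itself defined by sifted-colimit-extension and sheafification from the affine arrow category $\ArrPoly$, the left adjoint $j^\ast$ intertwines the two instances of $\underline{L}$ on $X$ and on $U_i$, giving
\[
L_{X/A}|_{U_i} \;\simeq\; L_{\cO_X|_{U_i}/A}.
\]
Combined with the identification $(U_i,\cO_X|_{U_i}) \simeq \Spec(B_i)$ from the full-faithfulness of $\Spec$ in Example \ref{affinederived} (where $B_i = \cO_X(U_i)$), this yields $L_{X/A}|_{U_i} \simeq L_{\Spec(B_i)/A}$. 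Proposition \ref{cotforaffine} then shows that this restriction is quasi-coherent, and locality finishes the argument.

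The main obstacle is the identification $L_{X/A}|_{U_i} \simeq L_{\Spec(B_i)/A}$, i.e.\ that the cotangent complex commutes with restriction to open subsets. This is formal from the adjunction-theoretic setup, but deserves care because $\underline{L}$ is defined on the site $X$ through a sheafification step, and one must verify that the presheaf-level map $A \to \cO_X|_{U_i}$ recovers the same object whether one restricts and then sheafifies on $U_i$, or sheafifies on $X$ and then restricts. Both procedures agree because sheafification is a left adjoint that, on a basis of distinguished affines, preserves the values computed in Proposition \ref{cotforaffine}. Once this naturality is in place, quasi-coherence propagates from the affine case to arbitrary derived schemes.
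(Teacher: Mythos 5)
Your proposal is correct and takes the same route as the paper: the paper's proof is exactly the one-line reduction "quasi-coherence is local on $X$, so apply Proposition \ref{cotforaffine}," and you have simply spelled out why restriction along an affine open embedding $j\colon U_i\hookrightarrow X$ carries $L_{X/A}$ to $L_{\Spec(B_i)/A}$ (because $j^\ast$ commutes with sheafification and with the presheaf-level $\underline{L}$). One small note: the specific citation to Construction \ref{cons:sheavesadjunctions} is not quite the statement you need — that construction concerns adjunctions induced on sheaf categories by a fixed adjunction of coefficients, not compatibility with restriction along opens — but the underlying fact (restriction is exact and commutes with sheafification) is standard and your argument stands.
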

	\begin{proof}
		As the assertion is local on $X$, this follows from \Cref{cotforaffine}.
	\end{proof}
	\begin{remark}
		In particular, this shows that $L_{X/A}$  is hypercomplete. 
		For $X$ a classical scheme, we deduce that $L_{X/A}$ belongs to $\cat{D}(\Mod_{\cO_X}^\heartsuit)$, and it is not hard to verify that it agrees with the classical cotangent complex of $X$.
	\end{remark}
	We will  compare the cotangent complex in the Zariski and the \'etale site.   
Let $X$ be a derived scheme over $A \in \CAlg^{\an}$. Fix an uncountably strong \mbox{limit cardinal $\kappa > |X|$.}
	\begin{definition}[Small \'etale site] \label{def:smet}
		The \textit{small \'etale site} $X_{\et}$ consists of all
		maps of derived schemes $U \rightarrow X$ with $  |U|< \kappa$ which are 
		 \textit{\'{e}tale}, i.e.\ locally obtained by applying $\Spec(-)$ to an \'etale map of animated rings	(see  \cite[Definition 3.4.7]{LurieDAG}).
		 The coverings of this site are  jointly surjective families.
	\end{definition}	
	
	Let us temporarily denote 
	the \'{e}tale structure sheaf of $X$ by 
$\cO_{X,\et}$. Given an \'{e}tale open $(U \rightarrow X)$, we have    $\cO_{X,\et} (U \rightarrow X) \simeq  \cO_{U}(U)$.
	Every  $\cO_X$-module 
	$\mathcal{F}$ on $X$ gives   an  \'{e}tale sheaf of $\cO_{X,\et}$-modules
	$\iota^{\ast} \mathcal{F} $ on $X_{\et}$,   by  sheafifying the presheaf $$(U \xrightarrow{h} X) \mapsto \cO_U(U) \otimes_{\cO_X(h(U))}\mathcal{F}(h(U)).$$
 \Cref{def:cotangent-complex-X} gives a priori different cotangent complexes $L^{\zar}_{X/A}$ and $L^{\et}_{X/A}$ for the Zariski and \'etale topology.

	\begin{proposition}  \label{prop:cotzaret}
		There is a canonical equivalence $ i^{\ast}L_{X/A}^{\zar} \xrightarrow{\simeq} L_{X/A}^{\et}.$
	\end{proposition}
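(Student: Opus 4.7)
The plan is to reduce the statement to the affine case on both sites and exploit the vanishing of the cotangent complex for étale maps of animated rings.

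First, I would construct the comparison map explicitly. The morphism of sites $i \colon X_{\et} \rightarrow X_{\zar}$ is naturally a morphism of $A$-ringed sites: sheafifying the evident presheaf map $\cO_{X,\zar}(h(U)) \rightarrow \cO_U(U)$ gives a map $i^{-1}\cO_{X,\zar} \rightarrow \cO_{X,\et}$ in $\Shv(X_{\et},\CAlg^{\an}_A)$. By functoriality of the construction from \Cref{def:cotangent-complex-X}, sheafified as in \Cref{subsec:cotangent-sheaves}, this yields a natural map $L^{\zar}_{X/A} \rightarrow i_\ast L^{\et}_{X/A}$ of Zariski sheaves, whose adjoint is the desired comparison $i^\ast L^{\zar}_{X/A} \rightarrow L^{\et}_{X/A}$.

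Next, I would check that this map is an equivalence locally on the étale site. Since the question is étale local on $X$, I may assume $X = \Spec(B)$ is affine, and then evaluate both sides on affine étale opens $U = \Spec(C) \rightarrow \Spec(B)$. By \Cref{cotforaffine}, the Zariski cotangent complex is the quasi-coherent sheaf associated to $L_{B/A}$, so its pullback along the étale map computes to $C \otimes_B L_{B/A}$ on the LHS.

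For the right-hand side, I would establish the étale analog of \Cref{cotforaffine}, namely that $L^{\et}_{\Spec(B)/A}(U) \simeq L_{C/A}$. The argument mirrors the Zariski proof: affine étale opens form a convenient basis, and one must check that the presheaf sending $U = \Spec(C) \rightarrow \Spec(B)$ to $L_{C/A}$ already satisfies étale descent (up to the analog of \Cref{sheafifyingonbases}). The key input is the vanishing $L_{C/B} \simeq 0$ for $B \rightarrow C$ étale in $\CAlg^{\an}$, which combined with \Cref{transitivity0} gives $L_{C/A} \simeq C \otimes_B L_{B/A}$. Unravelling, this identification matches the LHS, and one verifies that the comparison map is precisely this equivalence.

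The main obstacle I anticipate is bookkeeping within the $\infty$-categorical formalism of \Cref{subsec:cotangent-sheaves}: coherently identifying the various sheafifications, adjunctions, and counits so that the natural map constructed at the global level really restricts on an affine étale open to the base-change map $C \otimes_B L_{B/A} \rightarrow L_{C/A}$ from the transitivity sequence. Once this identification is in place, the equivalence becomes a formal consequence of the étale vanishing of the relative cotangent complex.
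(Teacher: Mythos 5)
Your proposal is correct and takes essentially the same route as the paper: construct the comparison map via the $(i^*, i_*)$ adjunction, observe that on the generating basis of (affine) étale opens the map becomes the base-change map $\cO_U(U) \otimes_{\cO_X(h(U))} L_{\cO_X(h(U))/A} \rightarrow L_{\cO_U(U)/A}$, and conclude by the transitivity sequence together with $L_{C/B} \simeq 0$ for $B \rightarrow C$ étale, then sheafify. The only superficial difference is that you reduce to $X$ affine and explicitly invoke \Cref{cotforaffine} and its étale analogue, whereas the paper argues directly on arbitrary étale opens $h\colon U \rightarrow X$ (noting that $\cO_X(h(U)) \rightarrow \cO_U(U)$ is étale) and sheafifies; your version spells out the vanishing $L_{C/B}\simeq 0$ that the paper leaves implicit, which is a harmless improvement in clarity rather than a different argument.
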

	\begin{proof}
		For any \'etale open $h:U\rightarrow X$, the pullback  map $\cO_{X}(h(U)) \rightarrow \cO_{U}(U)$ is \'etale, so the canonical map $$\cO_{X, \et}(U) \otimes_{\cO_X(h(U))}L_{\cO_X(h(U))/A} \rightarrow L_{\cO_{X, \et}(U)/A} $$ is an equivalence by \Cref{transitivity0}. The claim follows by sheafifying.
	\end{proof}

\newpage

\section{Formal moduli problems} 

In this section, we will review the basics of derived deformation theory, and introduce several examples of  formal moduli problems along the way.

\subsection{Artinian animated   rings}
Fix a  local Noetherian   ring $\Lambda$ with \mbox{residue field $k$.}
\begin{definition}[Artinian animated $\Lambda$-algebras] \label{def:artinian-animated-ring}
An object $A $ of $\CAlg_{\Lambda//k}^{\an}$ is said to be \emph{Artinian} if 
\begin{enumerate}
	\item $\pi_0(A)$ is a local Artinian ring with residue field $k$;
	\item $\bigoplus_{i \geq 0}
	\pi_i(A)$ is a $	\pi_0(A)$-module of finite length.
\end{enumerate}
Let $\CAlg_{\Lambda}^{\an,\art} \subset \CAlg^{\an}_{\Lambda//k}$  be the full subcategory spanned by all Artinian objects. 
\end{definition}

If $A\rightarrow B$ is a map in $\CAlg^{\an,\art}_\Lambda$ with fibre $J$, then the $\Lambda$-module $\oplus_i \pi_i(J)$ has finite length. We denote its length by $\len(A\rightarrow B)$. 
Recall the following   notion:
\begin{definition}[Discrete elementary  extension]
Let $A\rightarrow B$ be a map of discrete rings in $\CAlg_{\Lambda}^{\an,\art}$. Denote the maximal ideal of $A$ by $\fm_A$ and set $I = \ker(A\rightarrow B)$. We say that $A\rightarrow B$ is a \emph{elementary  extension} if $A\rightarrow B$ is surjective and $\fm_A I = 0$. 
\end{definition}
The following two statements are well known \mbox{(\cite[Lemma 6.2.6]{LurieDAG}
\cite[Lemma 2.8]{GalatiusVenkatesh2018}):}

\begin{lemma}\label{lemma:inductive-small-extension}
Given a map $A\rightarrow B$  in $\CAlg_{\Lambda}^{\an,\art}$ which is not an equivalence and such that  $\pi_0(A) \rightarrow \pi_0(B)$ is surjective,    there is a finite-dimensional $k$-vector space $V$, an integer $n\geq 0$,
and a commutative diagram
\[
\begin{tikzcd}
	A \arrow[rd] 
	\arrow[rrd, bend left=15, "\text{ }"]
	\arrow[rdd, bend right=15, "\text{ }"'] & \\
	& B' \arrow[r] \arrow[d] \arrow[rd, phantom, "\lrcorner", at start] & B \arrow[d] \\
	& k \arrow[r] &\sqz_k(V[n+1])
\end{tikzcd}
\]
in $\CAlg^{\an,\art}_\Lambda$ such that
\begin{enumerate}
	\item the square is a pullback square;
	\item $\pi_0(A) \rightarrow \pi_0(B')$ is surjective;
	\item $\len(A\rightarrow B')< \len(A\rightarrow B)$.
\end{enumerate}
If $A\rightarrow B$ is a map of discrete rings, then one can take $n=0$ and $B'$ discrete.  If $A\rightarrow B$  is moreover an elementary  extension, then  one can take $B'=A$.
\end{lemma}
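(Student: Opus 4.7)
The plan is to construct $B'$ as the homotopy pullback $B \times_{\sqz_k(V[n+1])} k$ for a suitable $n \geq 0$ and finite-dimensional $k$-vector space $V$, and then verify that the length of the fibre drops strictly.

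First I would identify $(V,n)$. Let $J := \fib(A \rightarrow B)$, viewed as an $A$-module. Since $A \not\simeq B$, we have $J \neq 0$; the long exact sequence together with the surjectivity of $\pi_0 A \surjto \pi_0 B$ forces $\pi_i J = 0$ for $i < 0$, and because $\bigoplus_i \pi_i J$ has finite length over $\pi_0 A$, I can take $n \geq 0$ to be the smallest integer with $\pi_n J \neq 0$ and set $V := \pi_n(J) \otimes_{\pi_0 A} k$, a nonzero finite-dimensional $k$-vector space by Nakayama. If $A$ and $B$ are discrete, then $J$ is concentrated in degree $0$ and so $n = 0$; if moreover $A \rightarrow B$ is elementary, then $\fm_A J = 0$ forces $V = J$.

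Next I would construct a map $\beta \colon B \rightarrow \sqz_k(V[n+1])$ in $\CAlg^{\an}_{\Lambda//k}$ whose composite with $A \rightarrow B$ is homotopic to the augmentation $A \rightarrow k \rightarrow \sqz_k(V[n+1])$. By the cotangent complex adjunction of \Cref{cor:derivations-sqz}, specialised to augmented-over-$k$ algebras, such a $\beta$ amounts to a map of $k$-modules $L_{B/\Lambda} \otimes_B k \rightarrow V[n+1]$ whose restriction along $L_{A/\Lambda} \otimes_A k \rightarrow L_{B/\Lambda} \otimes_B k$ is null; by the transitivity cofibre sequence of \Cref{transitivity0},
\[
L_{A/\Lambda} \otimes_A k \rightarrow L_{B/\Lambda} \otimes_B k \rightarrow L_{B/A} \otimes_B k,
\]
it therefore suffices to produce a map $\varphi \colon L_{B/A} \otimes_B k \rightarrow V[n+1]$. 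For this I would invoke the Hurewicz theorem for the cotangent complex (the animated analogue of \cite[Thm.~7.4.3.1]{LurieHA}): since $J$ is $n$-connective, $L_{B/A}$ is $(n+1)$-connective with $\pi_{n+1}(L_{B/A}) \cong \pi_n(J) \otimes_{\pi_0 A} \pi_0 B$, so $\pi_{n+1}(L_{B/A} \otimes_B k) \cong V$ and the canonical Postnikov projection yields $\varphi$.

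Setting $B' := B \times_{\sqz_k(V[n+1])} k$ then gives the desired factorisation $A \rightarrow B' \rightarrow B$, and the square is a pullback by construction. Since $\fib(B' \rightarrow B) \simeq \Omega\sqz_k(V[n+1]) \simeq V[n]$, the octahedral fibre sequence $\fib(A \rightarrow B') \rightarrow J \rightarrow V[n]$ combined with additivity of length gives $\len(A \rightarrow B') = \len(A \rightarrow B) - \dim_k V < \len(A \rightarrow B)$. Surjectivity of $\pi_0 A \surjto \pi_0 B'$ follows from the long exact sequence of the pullback: for $n \geq 1$ one has $\pi_0 B' \cong \pi_0 B$ and the claim reduces to the hypothesis, while for $n = 0$ the group $\pi_0 B'$ is an extension of $\pi_0 B$ by $V$ and by construction the map $\ker(\pi_0 A \rightarrow \pi_0 B) \rightarrow V$ induced by $A \rightarrow B'$ is the defining surjection. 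In the discrete case $n = 0$, and the same long exact sequence forces $\pi_i B' = 0$ for $i \geq 1$; in the elementary case $V = J$, the map $\beta$ recovers the classifying derivation for $A$ as a square-zero extension of $B$ by $J$, and so $A \rightarrow B'$ is an equivalence. The main obstacle is the Hurewicz step, namely identifying $\pi_{n+1}(L_{B/A})$ with $\pi_n(J) \otimes_{\pi_0 A} \pi_0 B$ in the animated setting; this is standard but nontrivial.
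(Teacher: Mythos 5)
Your proof follows essentially the same strategy as the paper's: take $n$ to be the bottom degree of $J=\fib(A\to B)$, use the connectivity estimate to identify $\pi_{n+1}L_{B/A}$ with $\pi_0 B\otimes_{\pi_0 A}\pi_n J$, produce a derivation into $V[n+1]$, form the pullback, and control the length drop via the octahedral fibre sequence. Two small remarks. First, the Hurewicz step you flag as "nontrivial" is exactly what the paper leans on, but the correct reference is \cite[Proposition 25.3.6.1]{LurieSAG} (the animated statement); \cite[Thm.~7.4.3.1]{LurieHA} is the $\bE_\infty$-ring version, which is a close cousin but not literally applicable to $\CAlg^\an$. Second, for the elementary case your appeal to $\beta$ "recovering the classifying derivation" is more than you need and would itself require justification (the classifying derivation is valued in $\sqz_B(J[1])$, not $\sqz_k(J[1])$, and one must still argue the two pullbacks agree); it is cleaner to observe that $B'$ is discrete, $\pi_0(A)\to\pi_0(B')$ is surjective by the octahedral argument you already gave, and $\len(A\to B')=0$, so the surjection of finite-length rings is an isomorphism. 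Beyond that, your choice $V=\pi_n(J)\otimes_{\pi_0 A}k$ is just the maximal admissible choice of the paper's arbitrary nonzero $k$-linear quotient of $M=\pi_0 B\otimes_{\pi_0 A}\pi_n J$, and your route through the transitivity sequence for $L_{B/\Lambda}\otimes_B k$ is an equivalent packaging of the paper's $B\to\sqz_B(V[n+1])\to\sqz_k(V[n+1])$.
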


\begin{proof}
Let us denote the (connective) fibre of $A\rightarrow B$ by $J$.
Let $n$ be the smallest integer such that $\pi_n(J)$ is non-zero. The connectivity estimate of \cite[Proposition 25.3.6.1]{LurieSAG} shows that the natural map $B\otimes_A J[1] \rightarrow L_{B/A}$ induces an isomorphism
\[
M[n+1]:= \left(\pi_0(B)
\boldsymbol{\otimes}_{\pi_0(A)} \pi_n(J)\right)[n+1]\xrightarrow{\ \simeq \ }
\tau_{\leq n+1} L_{B/A},
\]
where $
\boldsymbol{\otimes}$ denotes the underived tensor product.
The Artinian $\pi_0(B)$-module $M$ admits a nonzero surjective $\pi_0(B)$-linear map $M\rightarrow V$ to a $k$-vector space $V$ (seen as $\pi_0(B)$-module via the augmentation $\pi_0(B)\rightarrow k$). The resulting composite 
\[
L_{B/A} \rightarrow \tau_{\leq n+1} L_{B/A} \simeq M[n+1] \rightarrow
V[n+1]
\]
induces a map of animated    $A$-algebras $B \rightarrow \sqz_B(V[n+1]) \rightarrow \sqz_k(V[n+1]) $. We now define $B'$ as  the pullback of this map against the unit $k\rightarrow \sqz_k(V[n+1])$ and obtain a diagram as in the statement of the lemma. 

Writing  $J'$ for the fibre of $A\rightarrow B'$,  we have a fibre sequence
$
J' \rightarrow J \rightarrow V[n]
$
and the map $J\rightarrow V[n]$ is surjective on $\pi_n$.  
It follows that $J'$ is connective (hence $\pi_0(A) \rightarrow \pi_0(B')$ surjective) and that $\len(A\rightarrow B')<\len(A\rightarrow B)$. If $A$ and $B$ are both discrete, then we have $n=0$ and it follows that $J'$ and hence $B'$ are also both discrete. If moreover $A\rightarrow B$ is an elementary  extension, then $M$ is a $k$-vector space, and so we can take $V=M$ and find  that $A\rightarrow B'$ is an isomorphism.
\end{proof}

\begin{corollary}\label{cor:artinian-small}
For every $A\rightarrow B$ in $\CAlg_{\Lambda}^{\an, \art} $ with $\pi_0(A)\rightarrow \pi_0(B)$ surjective, there exists a factorisation
\[
A=A_n \rightarrow \cdots \rightarrow A_1 \rightarrow A_0=B
\]
where each of the maps sits in a pullback square
\[
\begin{tikzcd}
	A_{i+1} \arrow{r} \arrow{d} \arrow[rd, phantom, "\lrcorner", at start] & A_{i} \arrow{d}\\
	k \arrow{r} &  \sqz_k (V_i[n_i+1])
\end{tikzcd}
\]
with $n_i\geq 0$. If $A, B$ are discrete, then we can pick $A_i$ discrete
and  $n_i=0$   for all $i$.
\end{corollary}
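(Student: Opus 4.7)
The plan is to prove the statement by induction on $\len(A \rightarrow B)$, using \Cref{lemma:inductive-small-extension} as the inductive step.

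If $\len(A\rightarrow B) = 0$, then the fibre $J$ satisfies $\pi_\ast(J) = 0$, so $A\rightarrow B$ is an equivalence; in this case we take $n=0$ and the empty factorisation (or equivalently $A_0 = B$) works trivially.

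For the inductive step, assume $\len(A\rightarrow B) > 0$, so that $A\rightarrow B$ is not an equivalence. Since $\pi_0(A) \rightarrow \pi_0(B)$ is surjective, \Cref{lemma:inductive-small-extension} provides a finite-dimensional $k$-vector space $V_0$, an integer $n_0 \geq 0$, and a pullback square
\[
\begin{tikzcd}
A_1 \arrow{r} \arrow{d} \arrow[rd, phantom, "\lrcorner", at start] & B=A_0 \arrow{d} \\
k \arrow{r} & \sqz_k(V_0[n_0+1])
\end{tikzcd}
\]
in $\CAlg^{\an,\art}_\Lambda$, together with a factorisation $A \rightarrow A_1 \rightarrow A_0 = B$ such that $\pi_0(A) \rightarrow \pi_0(A_1)$ is surjective and $\len(A \rightarrow A_1) < \len(A \rightarrow B)$. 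Applying the inductive hypothesis to the map $A \rightarrow A_1$, we obtain a factorisation $A = A_n \rightarrow \cdots \rightarrow A_1$ of the required form, and concatenating with $A_1 \rightarrow A_0 = B$ yields the desired factorisation of $A \rightarrow B$.

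For the discrete refinement: if both $A$ and $B$ are discrete, then \Cref{lemma:inductive-small-extension} lets us choose $n_0 = 0$ and $A_1$ discrete. Since $A$ and $A_1$ are both discrete, the inductive hypothesis applied to $A \rightarrow A_1$ produces intermediate $A_i$ that are discrete with $n_i = 0$, giving the required refinement. The only mild point to watch is that $\len(A \rightarrow A_1) < \len(A \rightarrow B)$ is an integer, ensuring the induction terminates; this is immediate from the definition. No step here is genuinely hard --- the entire content is packaged into \Cref{lemma:inductive-small-extension}, and this corollary is simply its iterated form.
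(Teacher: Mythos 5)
Your proof is correct and takes exactly the route the paper intends: the corollary has no written proof in the source because it is understood to follow from \Cref{lemma:inductive-small-extension} by the straightforward induction on $\len(A\to B)$ that you spell out, with the final clause of the lemma delivering the discrete refinement. Nothing to fix.
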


\subsection{Deformation functors and formal moduli problems} 
Let us denote the ordinary category of   local Artinian $\Lambda$-algebras with residue field $k$ by 
$\CRing^\art_\Lambda.$ We recall
Schlessinger's classical 
definition of a deformation functor from \cite{Schlessinger1968}:

\begin{definition}[Deformation functors]
\label{def:schlessinger}
A \emph{deformation functor} is a functor
\[
F\colon \CRing_\Lambda^\art \rightarrow \Set
\]
such that 
\begin{enumerate}
	\item \label{item:S1} $F(k)$ has exactly one element;
	\item \label{item:S2} for every pullback square  		
	\[
	\begin{tikzcd}
		A' \arrow{r}{}  \arrow{d}{}\arrow[rd, phantom, "\lrcorner", at start] &A \arrow{d}{} \\
		B'  \arrow{r}{} & B
	\end{tikzcd}
	\]
	in $\CRing_\Lambda^\art$ with $A\rightarrow B$ surjective, the following induced map 	is surjective: 
	\[
	\varphi\colon F(A') \rightarrow F(A)\times_{F(B)} F(B').
	\]
	
	\item \label{item:S3} if in (\ref{item:S2}) we take $B'=k[\epsilon]/\epsilon^2$ and $B=k$, then the map $\varphi$ is bijective. 
\end{enumerate}
\end{definition}

We define a derived version of a   deformation functor, following \cite{LurieDAGX,LurieDAGXII,BrantnerMathew2019}:

\begin{definition}[Formal moduli problems]
\label{def:fmp}
A functor
\[
F: \CAlg_{\Lambda}^{\an, \art} \rightarrow \cat{S}
\]
is called a \emph{formal moduli problem} if
\begin{enumerate}
	\item \label{item:FMP1}
	$F(k)$ is contractible;
	\item \label{item:FMP2} for every pullback square  		
	\[
	\begin{tikzcd}
		A' \arrow{r}{} \arrow{d} \arrow[rd, phantom, "\lrcorner", at start]&A \arrow{d} \\
		B'  \arrow{r} & B
	\end{tikzcd}
	\]
	in $\CAlg^{\an, \art}_\Lambda$ for which the maps $\pi_0 A \rightarrow \pi_0 B $ and $\pi_0 B' \rightarrow \pi_0 B$ are surjective,   applying $F$ gives a pullback square in $\cat{S}$.
\end{enumerate}
We write $\Moduli_{\Lambda}^{\an} \subset \Fun(\CAlg_{\Lambda}^{\an, \art}, \cat{S})$ for the full subcategory spanned by the formal moduli problems. 
\end{definition}

\begin{lemma}\label{lemma:fmp-underlying-classical}
If $F\colon \CAlg^{\an,\art}_\Lambda \rightarrow \cat{S}$ is a formal moduli problem, then 
\[
\CRing^{\art}_\Lambda \injto \CAlg^{\an,\art}_\Lambda \overset{F}\rightarrow \cat{S} \xrightarrow{\pi_0}   \Set
\]
is a deformation functor in the sense of \Cref{def:schlessinger}.
\end{lemma}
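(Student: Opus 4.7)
The plan is to check each of Schlessinger's three axioms in \Cref{def:schlessinger} directly from the two defining properties of a formal moduli problem. Axiom (S1) is immediate, as $\pi_0$ of the contractible space $F(k)$ is a singleton.

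For axiom (S2), I have to show that $F$ sends a classical pullback $A' = A\times_B B'$ with $A\twoheadrightarrow B$ to a pullback of spaces. Two points need attention. First, I must check that the classical pullback agrees with the pullback taken in $\CAlg^{\an,\art}_\Lambda$; this follows because the forgetful functor of \Cref{prop:forgetful-an} creates limits, and the fibre sequence $A'\rightarrow A\oplus B'\rightarrow B$ of chain complexes has vanishing higher homotopy once $A\oplus B'\rightarrow B$ is surjective, so the derived pullback remains discrete. Second, (FMP2) requires \emph{both} legs over $B$ to be surjective on $\pi_0$, while (S2) only assumes $A\rightarrow B$ surjective. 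To handle this, I would reduce to elementary extensions via \Cref{cor:artinian-small}: factor $A\rightarrow B$ as a finite chain $A = A_n\rightarrow\cdots\rightarrow A_0 = B$ of elementary extensions. By \Cref{lemma:inductive-small-extension} each step sits in a pullback $A_{i+1} = A_i\times_{\sqz_k(V_i[1])} k$ in $\CAlg^{\an,\art}_\Lambda$; the two maps into $\sqz_k(V_i[1])$ are augmentations and hence surjective on $\pi_0$, so (FMP2) applies. Reshuffling this pullback against $B'$ and inducting on $n$ (using associativity of pullbacks in $\cat{S}$) upgrades the local equivalence into $F(A\times_B B')\simeq F(A)\times_{F(B)} F(B')$ for any surjection $A\rightarrow B$. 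The surjectivity required by (S2) then follows because $\pi_0$ of any pullback of spaces surjects onto the pullback of $\pi_0$'s.

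For axiom (S3), I specialise the previous step to $B = k$, $B' = k[\epsilon]/\epsilon^2$ and use (FMP1): since $F(k)\simeq\ast$, the pullback $F(A)\times_{F(k)} F(k[\epsilon]/\epsilon^2)$ collapses to the product $F(A)\times F(k[\epsilon]/\epsilon^2)$, and its $\pi_0$ equals $\pi_0 F(A)\times_{\pi_0 F(k)}\pi_0 F(k[\epsilon]/\epsilon^2)$, giving the required bijection.

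I expect the only substantial point to be the inductive reduction to elementary extensions in (S2); bridging the gap between Schlessinger's one-sided surjectivity assumption and (FMP2)'s two-sided version is the one step that is not purely formal manipulation of homotopy pullbacks.
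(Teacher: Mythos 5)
Your proof is correct and follows the same strategy as the paper's for (S1) and (S3). The difference is in (S2): the paper simply says "one uses the reasoning of [SAG, Proposition 12.1.3.7]" to upgrade the pullback property in \Cref{def:fmp} to the case where only one leg is $\pi_0$-surjective, whereas you reprove that strengthening from scratch by factoring $A\twoheadrightarrow B$ into elementary extensions via \Cref{cor:artinian-small}, re-bracketing the iterated pullback, and inducting on length. That inductive argument is sound: in each step you write $A\times_B B' \simeq (A_1\times_B B')\times_{\sqz_k(V_i[1])} k$, note that both legs over $\sqz_k(V_i[1])$ are $\pi_0$-surjective (both hit $\pi_0\sqz_k(V_i[1])=k$ since every object in $\CRing^\art_\Lambda$ surjects onto its residue field; your phrasing that they "are augmentations" is loose, but the conclusion is right), apply (FMP2), invoke the inductive hypothesis for $A_1\times_B B'$, and regroup the iterated limit. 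Your preliminary check that the classical pullback is also the derived pullback (via \Cref{prop:forgetful-an} and the vanishing of the higher homotopy of the fibre when $A\oplus B'\to B$ is surjective) is also needed and correct. So you end up proving what the paper cites — both approaches rest on the same reduction to elementary extensions, but yours is self-contained within the paper's own machinery, at the cost of being longer than a one-line citation.
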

\begin{notation}\label{not:fmp-underlying-classical}
We will denote the deformation functor associated to a formal moduli problem $F$ by $F^\cl \colon \CRing^\art_\Lambda \rightarrow \Set$. We can think of $F$ as a derived enhancement of the deformation functor $F^{\cl}$.
\end{notation}

\begin{proof}[Proof of \Cref{lemma:fmp-underlying-classical}]
The first condition in \Cref{def:schlessinger} is immediate. For the second condition, one uses the reasoning of \cite[Proposition 12.1.3.7]{LurieSAG} to show that $F$ in fact satisfies the pullback property in \Cref{def:fmp} when only $\pi_0 A\rightarrow \pi_0 B$ is asked to be surjective.  For the third condition, note that if $B'\rightarrow B$ is the map $k[\epsilon]/\epsilon^2\rightarrow k$, then $F(B)$ is contractible, and we have an equivalence $F(A')\simeq F(A) \times F(B')$ inducing a bijection $F^\cl(A') \cong F^\cl(A) \times F^\cl(B')$.
\end{proof}
In light of \Cref{def:fmp}, 
\Cref{cor:artinian-small} immediately implies that we can detect equivalences of formal moduli problems  on trivial square-zero extensions:

\begin{proposition}[Conservativity] \label{prop:conservativity}
A map  $F\rightarrow F'$ in $\Moduli_{\Lambda}^{\an}$ is an equivalence if and only if  the map
\[
F(\sqz k[n]) \rightarrow F'(\sqz k[n])
\]
is an equivalence for all $n\geq 0$.   
\end{proposition}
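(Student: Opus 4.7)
One direction is immediate from the definition of an equivalence. For the converse, suppose $F(\sqz_k k[n]) \to F'(\sqz_k k[n])$ is an equivalence for every $n \geq 0$. The plan is to show $F(A) \to F'(A)$ is an equivalence for every $A \in \CAlg^{\an,\art}_\Lambda$ by applying \Cref{cor:artinian-small} to the augmentation $A \to k$, and then inducting on the length of the resulting tower.

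First I would handle the base case $A = k$, where $F(k)$ and $F'(k)$ are both contractible by axiom (\ref{item:FMP1}) of \Cref{def:fmp}. For the inductive step, \Cref{cor:artinian-small} produces a tower $A = A_n \to \cdots \to A_1 \to A_0 = k$ where each step fits into a pullback square with $k \to \sqz_k(V_i[n_i+1])$ and $\pi_0 A_i \to \pi_0 \sqz_k(V_i[n_i+1]) = k$ surjective. By axiom (\ref{item:FMP2}), both $F$ and $F'$ convert these squares into pullbacks in $\cat{S}$, so it suffices to check that $F(\sqz_k(V_i[n_i+1])) \to F'(\sqz_k(V_i[n_i+1]))$ is an equivalence at each stage and then conclude by the two-out-of-three principle for equivalences of pullback diagrams.

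The remaining point is therefore to reduce from arbitrary $V_i$ to the case $V_i = k$. Since $V_i$ is a finite-dimensional $k$-vector space, $V_i \cong k^{m_i}$, and because $\sqz_k(-) \colon \Mod_k^\cn \to \CAlg^{\an}_{/k}$ is a right adjoint (see \Cref{prop:cotangent-adjoint}), it preserves finite limits; in particular $\sqz_k(V_i[n_i+1])$ is an $m_i$-fold iterated pullback over $k$ of copies of $\sqz_k(k[n_i+1])$. Applying axiom (\ref{item:FMP2}) once more --- the surjectivity on $\pi_0$ is automatic since each of the intermediate rings has residue field $k$ --- we see that $F(\sqz_k(V_i[n_i+1]))$ is an iterated pullback of copies of $F(\sqz_k(k[n_i+1]))$ over $F(k)$, and likewise for $F'$. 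The hypothesis that $F(\sqz_k(k[n_i+1])) \to F'(\sqz_k(k[n_i+1]))$ is an equivalence (together with contractibility of $F(k)$ and $F'(k)$) then gives the required equivalence on $\sqz_k(V_i[n_i+1])$.

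The main subtlety is making sure the pullback axiom applies at each step, which comes down to verifying the surjectivity hypotheses; this is essentially bookkeeping since all the relevant $\pi_0$'s are quotients of $\pi_0 A$ with residue field $k$.
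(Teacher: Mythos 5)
Your proof is correct and fills in exactly the argument the paper leaves implicit: the paper merely remarks that the statement follows "immediately" from Definition \ref{def:fmp} together with Corollary \ref{cor:artinian-small}, and your induction on the tower, the pullback-preservation under axiom (\ref{item:FMP2}), and the reduction from $\sqz_k(V[m])$ to $\sqz_k(k[m])$ via $\sqz_k$ preserving finite limits (as a right adjoint) are precisely the missing details. The $\pi_0$-surjectivity checks you flag are indeed routine and hold as you describe since $n_i+1\geq 1$ in the decomposition.
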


\begin{remark}
The analogous statement for classical deformation functors is false: if $F\rightarrow F'$ is a map of deformation functors inducing an isomorphism of tangent spaces $F(k[\epsilon]/\epsilon^2) \rightarrow F'(k[\epsilon]/\epsilon^2)$, then $F\rightarrow F'$ need not be an isomorphism. 
\end{remark}

\begin{remark}[Tangent fibre] \label{rem:tangentfibre}
One can construct a functor
\[
T \colon \Moduli_\Lambda^{\an} \rightarrow \Mod_k,\,
F \mapsto T_F 
\]
such that for all discrete finite-dimensional $k$-modules $V$ and all $n\geq 0$, we have  
\begin{equation} \label{unravel} F(\sqz(V [n])) \simeq  \Map_{ \Mod_{k }} (V^\vee[n], T_F ) \simeq 
	\Omega^{\infty-n}( T_F       \otimes_{k} V)   . \end{equation}

Indeed, given a formal moduli problem $F \in \Moduli_{\Lambda}$, the sequence 
\[	
\left(\ \ \ldots , \ \  F(\sqz_k(-^\vee[2])), \ \ F(\sqz_k(-^\vee[1])),   \ \ F(\sqz_k(-^\vee))\right)\]
defines an  object $T_F 
\in  \Mod_k$  -- the \textit{tangent fibre} of $F$.

Here we have used that  $\Mod_{k}^{\cn}$ is equivalent to $\EuScript{P}_\Sigma(\mathrm{Vect}_k^\omega)$, the 
$\infty$-category of finite-product-preserving presheaves on the category of discrete
finite-dimensional $k$-modules $\mathrm{Vect}_k^\omega$, and that 
$\Mod_{k}$ arises as the inverse limit of $\infty$-categories
\[\Mod_k \simeq \lim \left( \ldots \rightarrow \Mod_{k }^{\cn}\xrightarrow{\Omega} \Mod_{k }^{\cn} \xrightarrow{ \Omega}   \Mod_{k}^{\cn} \right),\]
where the transition functor $\Omega$ is given by $M\mapsto \tau_{\geq 0} (M[-1])$.

\Cref{prop:conservativity} states that the functor $T$ is conservative. With more work, one can show that the above tangent fibre functor can be lifted to an equivalence  
\[
\Moduli_{\Lambda}^\an \isomto \Lie_{\Lambda/k, \Delta}^{\pi}
\]
between formal moduli problems and $(\Lambda,k)$-partition Lie algebras, cf.\ \cite[Theorem 6.27]{BrantnerMathew2019}. For $\Lambda=k$  a field of characteristic~$0$, this was established  by Lurie~\cite{LurieDAGX} and Pridham~\cite{Pridham2010}, building on  work of Hinich~\cite{Hinich2001}, Deligne~\cite{DeligneLetter}, \mbox{Drinfel'd \cite{DrinfeldLetter}, and  others.}
\end{remark}

\subsection{Obstructions}
Let us fix a formal moduli problem $F\colon \CAlg^{\art,\an}_\Lambda \rightarrow \cat{S}$.
By \Cref{lemma:inductive-small-extension},   
any elementary  extension $A'\surjto A$  of discrete local Artinian $\Lambda$-algebras with residue field $k$ arises as a pullback  
\[
\begin{tikzcd}
A' \arrow{d} \arrow{r} \arrow[rd, phantom, "\lrcorner", at start] & A \arrow{d} \\
k \arrow{r} & \sqz_k(I[1])
\end{tikzcd}
\]
in $\CAlg_{\Lambda}^{\an,\art}$, where $I=\ker(A'\rightarrow A)$. 
Applying $F$ gives a fibre sequence of spaces
\[
F(A') \rightarrow F(A) \rightarrow F(\sqz_k(I[1])).
\] 

\begin{definition}[Obstruction classes]Given $x\in \pi_0 F(A)$,
the \emph{obstruction class} $\ob_F(x,A'\rightarrow A)$ to lifting \mbox{$x$ to $A'$} is given 
by the image of $x$ in $\pi_0 F(k\oplus I[1])$.
\end{definition}
Clearly $x$ admits a lift to $A'$ if and only if $\ob_F(x,A'\rightarrow A)$ vanishes. 

We can now  introduce a  central notion studied in this paper: 
\begin{definition}[Unobstructedness] \label{unob}
A deformation functor $F$ is \emph{unobstructed} if for all surjections  $A' \surjto  A$ 
in $\CRing_\Lambda^\art$ the map $F(A') \surjto  F(A)$ is a surjection. 
\end{definition}

Since every surjection $A'\surjto A$ of discrete Artinian   $\Lambda$-algebras factors as a composition of  elementary  extensions (cf.\ \Cref{cor:artinian-small}), we deduce:

\begin{proposition}
\label{prop:unobstructed-via-obstruction-classes}
Let $F\colon \CAlg^{\an,\art}_\Lambda \rightarrow \cat{S}$ be a formal moduli problem. Then $F^\cl$ is unobstructed if and only if for every elementary extension $A'\rightarrow A$ in $\CRing^\art_\Lambda$  and  every $x\in \pi_0 F(A)$, the obstruction class
$\ob_F(x,A'\rightarrow A)$ vanishes.  
\end{proposition}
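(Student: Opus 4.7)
The plan is to argue both directions directly. For ($\Rightarrow$), by construction the obstruction class $\ob_F(x, A' \to A)$ of an element $x \in \pi_0 F(A)$ is the image of $x$ in $\pi_0 F(\sqz_k(I[1]))$ under the fibre sequence $F(A') \to F(A) \to F(\sqz_k(I[1]))$, so $x$ admits a lift to $\pi_0 F(A')$ precisely when this class vanishes. Unobstructedness of $F^\cl$ therefore forces every such class to vanish.

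For ($\Leftarrow$), the strategy is to reduce an arbitrary surjection $A' \twoheadrightarrow A$ in $\CRing^\art_\Lambda$ to a tower of elementary extensions and to lift step by step. Concretely, I would invoke \Cref{cor:artinian-small} to obtain a factorisation $A' = A_n \to A_{n-1} \to \cdots \to A_0 = A$ into discrete Artinian $\Lambda$-algebras in which each step sits in a pullback square
\[
\begin{tikzcd}
A_{i+1} \arrow{r} \arrow{d} \arrow[rd, phantom, "\lrcorner", at start] & A_i \arrow{d}\\
k \arrow{r} & \sqz_k(V_i[1])
\end{tikzcd}
\]
for some finite-dimensional $k$-vector space $V_i$. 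Applying $F$ yields a fibre sequence $F(A_{i+1}) \to F(A_i) \to F(\sqz_k(V_i[1]))$, so a given lift of $x$ to $\pi_0 F(A_i)$ extends to $\pi_0 F(A_{i+1})$ as soon as the corresponding obstruction class vanishes. Iterating from $i=0$ upward produces a lift of $x$ to $\pi_0 F(A') = F^\cl(A')$.

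The one point requiring verification is that each intermediate step $A_{i+1} \to A_i$ is genuinely an elementary extension of discrete rings, so that the hypothesis of the proposition applies verbatim. This reduces to a short computation on underlying spectra: the fibre of $k \to \sqz_k(V_i[1])$ identifies with $V_i$ concentrated in degree zero, and the associated long exact sequence together with discreteness of $A_i$ yields a short exact sequence $0 \to V_i \to \pi_0(A_{i+1}) \to \pi_0(A_i) \to 0$ in which $V_i$ is a $k$-module, hence annihilated by the maximal ideal of $\pi_0(A_{i+1})$. In particular $A_{i+1}$ is itself discrete and $A_{i+1} \to A_i$ is an elementary extension. Once this identification is in place, the pullback-square presentation of the obstruction class coincides with the one defined just before the proposition statement, and the inductive lifting concludes the argument. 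I do not expect any conceptual obstacle beyond this bookkeeping.
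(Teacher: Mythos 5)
Your proof is correct and follows essentially the same route as the paper, which simply records before the proposition that every surjection of discrete Artinian $\Lambda$-algebras factors as a composition of elementary extensions (\Cref{cor:artinian-small}) and then states the proposition without further argument. Your verification that each step of the factorisation is itself an elementary extension of discrete rings — the kernel being a $k$-module and hence killed by the maximal ideal — is exactly the bookkeeping the paper leaves implicit.
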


\subsection{Deformations of modules}
In the remainder of this section, we will construct several   formal moduli problems  required to prove our main theorems. 

To begin with, we consider derived deformations of modules, following \cite[Section 16.5.2]{LurieSAG}.
Let  $\cat{C}$ be  a $\Lambda$-linear stable $\infty$-category, i.e.\ a left $\Mod_{\Lambda}$-module in  $\Pr^{\rL,\st}$, the $\infty$-category of presentable stable $\infty$-categories  with the Lurie tensor product. By \cite[Remark 4.1.8, Corollary 2.6.6]{LurieDAGII}, we see that  $\cat{C}$ is left-tensored over $\Mod_{\Lambda}$, and we write  $\Mod(\cat{C})$ for the $\infty$-category of pairs $(A,M)$ of associative algebra object $A$  in $\Mod_{\Lambda}$ and left $A$-modules $M$ in $\cat{C}$, see    [op.cit, Definition 2.1.4].\vspace{3pt}

Let us fix a  left $k$-module object  $M$ in $  \Mod_k(\cat{C})$ that we want to deform.\vspace{-2pt} 

\begin{construction}[The functor $\Def^\mod_{M}$]\label{cons:modfmp}
Consider 	the   $\infty$-category
\[
\cat{D} := \CAlg^{\an}_\Lambda \times_{\Alg_\Lambda} \Mod(\cat{C}), 
\]
of pairs $(A,N)$ with $A$ in $\CAlg^{\an,\art}_\Lambda$ and $N$ a left $A$-module object in $\cat{C}$. 
The projection $q\colon \cat{D} \rightarrow \CAlg^{\an}_\Lambda$ defines a cocartesian fibration corresponding to the functor $\CAlg^{\an}_\Lambda \rightarrow \widehat{\cat{S}}$ sending $A$ to  $\Mod_A({\cat{C}})^{\simeq}$, the maximal Kan complex contained in the $\infty$-category $\Mod_A({\cat{C}})$ of left $A$-modules.  A map 
$(A,N)\rightarrow (A',N')$ is cocartesian  iff it induces an equivalence $A'\otimes_A N \xrightarrow{\simeq } N'$. 

Let $\cat{D}^{\cocart}$ be the \textit{non-full} subcategory of $\cat{D}$ consisting of all objects and only those morphisms which are $q$-cocartesian.
By \cite[Corollary 2.4.2.5]{LurieHTT}, the restriction of $q$ to $\cat{D}^\cocart$ is a left fibration, and one readily  verifies that
 that the induced map
\[
\cM^{\cocart}_{/(k,M)} \rightarrow \CAlg^{\an}_{\Lambda//k}
\]
is also a left fibration. Straightening  then gives rise to  a functor $\CAlg^{\an}_{\Lambda//k} \rightarrow \widehat{\cat{S}}$, and restricting to Artinian objects  defines the functor $\Def_{M}^\mod \colon \CAlg^{\an,\art}_\Lambda \rightarrow \widehat{\cat{S}}$.
\end{construction}

\begin{remark}[Informal description]
Objects in   $\Def_{M}^\mod(A)\in \widehat{\cat{S}}$  correspond to
pairs consisting of an $A$-module $N$ in $\cat{C}$ and an equivalence $k\otimes_A N \xrightarrow{\simeq} M$. The $1$-simplices    correspond to compatible equivalences of $A$-modules $N\xrightarrow{\simeq} N'$.

\end{remark}

Throughout this paper, we will make use of the following useful notation:

\begin{notation}[$\Map'$]\label{not:map-prime}
Given a map  $f\colon y\rightarrow x$ in some $\infty$-category $\cat{C}$, we define   \[\Map'_{\cat{C}}(x,y) := \Map_{\cat{C}_{/x}}(x,y)\] as the space of sections of $f$, which sits in a 
  pullback square 
\[
\begin{tikzcd}
	\Map_{\cat{C}}'(x,y) \arrow{r}\arrow{d}\arrow[rd, phantom, "\lrcorner", at start]  &
	\Map_{\cat{C}}(x,y) \arrow{d}{f} \\
	\{\ast\} \arrow{r}{\id} & \Map_{\cat{C}}(x,x).
\end{tikzcd}
\]
The map $f$ is implicit in the notation, and should be clear from the context.
\end{notation}

Given a functor $F\colon \CAlg^{\an,\art}_\Lambda\rightarrow \widehat{\cat{S}}$, the natural section of $\sqz_k(V)\rightarrow k$  equips $F(\sqz_k(V))$ with a base point. Write  $\Omega F(\sqz_k(V))$ for the corresponding loop space. 

Note that for $F = \Def_{M}^\mod$ as in \Cref{cons:modfmp}, the  base point in $\Def_{M}^\mod(\sqz_k(V))$  corresponds to the trivial deformation $\sqz_k(V)\otimes_k M$ of $M$ to $\sqz_k(V)$.

\begin{proposition}\label{prop:defmod-tangent-space}
Let $V\in \Mod^\cn_k$ be perfect. Then there is an equivalence
\[
\Omega\Def_{M}^\mod(\sqz_k( V)) \simeq
\Map'_{\Mod_{k}(\cat{C})}( M,\, \sqz_k( V)\otimes_k M )
\]
where the implicit map $\sqz_k( V)\otimes_k M \rightarrow M$ is the one induced by tensoring $M$ with the canonical augmentation $\sqz_k( V)\rightarrow k$.
\end{proposition}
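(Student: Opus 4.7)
The plan is to identify both sides with a common fibre of mapping spaces. Since \Cref{cons:modfmp} realises $\Def_M^\mod$ as the straightening of a left fibration whose fibre over $A \to k$ consists of pairs $(N, k\otimes_A N \simeq M)$, we have a natural equivalence
\[
\Def_M^\mod(A) \;\simeq\; \Mod_A(\cat{C})^\simeq \times_{\Mod_k(\cat{C})^\simeq} \{M\},
\]
taken along $k\otimes_A(-)$, with base point $(A \otimes_k M, \mathrm{can})$. Setting $A=\sqz_k(V)$ and commuting loops with pullbacks, this expresses $\Omega \Def_M^\mod(A)$ as the fibre over $\id_M$ of the induced map $\Omega_{A\otimes_k M}\Mod_A(\cat{C})^\simeq \to \Omega_M \Mod_k(\cat{C})^\simeq$.

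The second step is to enlarge these loop spaces (which are subspaces of equivalences) to the full endomorphism mapping spaces and apply the tensor--restriction adjunction for the unit $k \to A$:
\[
\Map_{\Mod_A(\cat{C})}(A\otimes_k M,\, A\otimes_k M) \;\simeq\; \Map_{\Mod_k(\cat{C})}(M,\, A\otimes_k M).
\]
A direct unit--counit computation shows that under this equivalence, the map induced by $k \otimes_A (-)$ corresponds to post-composition with the canonical augmentation $A\otimes_k M \to k\otimes_k M \simeq M$. Passing to fibres over $\id_M$ and unwinding \Cref{not:map-prime} then yields exactly $\Map'_{\Mod_k(\cat{C})}(M,\, A\otimes_k M)$.

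The main point requiring justification is the enlargement step: any $A$-linear $\phi\colon A\otimes_k M \to A\otimes_k M$ with $k\otimes_A \phi \simeq \id_M$ must itself be an equivalence. Under the adjunction, such a $\phi$ corresponds to a section $\tilde\phi \simeq (\id_M, \psi)\colon M \to M \oplus V\otimes_k M$ of the augmentation, and on the $k$-linear splitting $A\otimes_k M \simeq M \oplus V\otimes_k M$, the $A$-linearity of $\phi$ combined with the square-zero relation $V\cdot V = 0$ in $A = \sqz_k(V)$ forces
\[
\phi \;\simeq\; \begin{pmatrix} \id_M & 0 \\ \psi & \id_{V\otimes_k M} \end{pmatrix},
\]
which is manifestly invertible with inverse obtained by negating $\psi$. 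Once this unitriangular normal form is established, the identification of $\Omega\Def_M^\mod(\sqz_k(V))$ with $\Map'_{\Mod_k(\cat{C})}(M,\, \sqz_k(V)\otimes_k M)$ follows.
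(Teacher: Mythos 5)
Your proof is correct and its skeleton matches the paper's: both first express $\Omega\Def_M^\mod(\sqz_k V)$ as the fibre over $\id_M$ of the map on automorphism spaces induced by $k\otimes_A(-)$, then enlarge to the full endomorphism space, then apply the free--forgetful adjunction for the unit $k\to A$. Where you diverge is in justifying the enlargement. The paper proves that $k\otimes_A(-)\colon\Mod_A(\cat{C})\to\Mod_k(\cat{C})$ is \emph{conservative} by tensoring the fibre sequence $V\to A\to k$ with an arbitrary $A$-module $M'$, obtaining $V\otimes_k(k\otimes_A M')\to M'\to k\otimes_A M'$, from which conservativity is immediate; this makes the square of mapping spaces with cores replaced by full mapping spaces automatically a pullback. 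You instead argue for the specific object $A\otimes_k M$ that any $A$-linear endomorphism reducing to $\id_M$ must, by the unit--counit description of the adjunction together with the fact that multiplication on $\sqz_k(V)$ annihilates $V\otimes_k V$, have the form $\id + n$ with $n$ strictly lower triangular and $n^2\simeq 0$, hence be invertible with inverse $\id - n$. Both arguments are valid (the "matrix" decomposition makes sense since $\Mod_k(\cat{C})$ is stable, and the square-zero relation is strict in the paper's definition of $\sqz$); your version is a hands-on instance of the paper's, with the fibre sequence $V\otimes_k(k\otimes_A M')\to M'\to k\otimes_A M'$ being the object-independent form of the lower-triangular filtration you exhibit. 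The paper's route has the modest advantage that it establishes conservativity of base change once and for all rather than re-deriving invertibility for the particular endomorphism, but the two arguments buy the same thing here.
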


\begin{proof}
Let us write $A=\sqz_k( V)$ for brevity. Unravelling the definitions, we see that $\Omega\Def_{M}^\mod(A)$ sits in a pullback square
\[
\begin{tikzcd}
	\Omega\Def_{M}^\mod(A) 
	\arrow{r} \arrow[rd, phantom, "\lrcorner",  at start]  \arrow{d}
	& \Map_{{A}(\cat{C})^\simeq}\big(
	A\otimes_k M,\, A \otimes_k M
	\big) \arrow{d}{k\otimes_A -} \\
	\{\ast\} \arrow{r}{\id} & \Map_{\Mod_k(\cat{C})^\simeq}( M, M)
\end{tikzcd}
\]
in $\widehat{S}$. Hence $\Omega\Def_{M}^\mod(A)$ is essentially small, and  the above square is equivalent to a pullback square in $\cat{S}$. Given $M'\in \Mod_A(\cat{C})$, tensoring over $A$ with the fibre sequence $V \rightarrow A \rightarrow k$ gives  a fibre sequence
\[
V\otimes_k (k \otimes_A M') \rightarrow M' \rightarrow k\otimes_A M'
\]
and hence the functor $M'\mapsto k\otimes_A M'$ is conservative. We obtain a pullback square
\[
\begin{tikzcd}
	\Map_{\Mod_{A}(\cat{C})^\simeq}\big(
	A\otimes_k M,\, A \otimes_k M
	\big)  \arrow[hook]{r} \arrow{d}{k\otimes_A -} \arrow[rd, phantom, "\lrcorner",  at start] 
	& \Map_{\Mod_{A}(\cat{C})}\big(
	A\otimes_k M,\, A \otimes_k M
	\big)  \arrow{d}{k\otimes_A -} \\
	\Map_{\Mod_k(\cat{C})^\simeq}( M, M) \arrow[hook]{r}
	& \Map_{\Mod_k(\cat{C})}( M, M). 
\end{tikzcd}
\]
Combining the two pullback squares  and using the  adjunction equivalence
\[
\Map_{\Mod_A(\cat{C})}(A\otimes_k M, A\otimes_k M) \simeq
\Map_{\Mod_k(\cat{C})}(M, A\otimes_k M)
\]
for the map $k\rightarrow A$  gives the asserted  equivalence.
\end{proof}

\begin{proposition}\label{prop:defmod}
If $\cat{C}$ admits a left complete $t$-structure for which $M\in \Mod_{k}(\cat{C})$ is connective, then $\Def_{M}^\mod$ is  equivalent to a formal moduli problem. In particular,  $\Def_{M}^\mod(A)$ is essentially small for all $A$.
\end{proposition}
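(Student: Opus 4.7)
The plan is to verify the two axioms of \Cref{def:fmp} for $\Def^{\mod}_{M}$ viewed as a functor to the $\infty$-category $\widehat{\cat{S}}$ of large spaces, and then to deduce essential smallness from these verifications. Pointedness is immediate: by construction $\Def^{\mod}_{M}(k)$ is the fibre over $M$ of the identity $\Mod_k(\cat{C})^{\simeq}\xrightarrow{\id}\Mod_k(\cat{C})^{\simeq}$, hence contractible.

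For the pullback axiom, given a pullback square $A'=A\times_B B'$ in $\CAlg^{\an,\art}_\Lambda$ with $\pi_0 A\twoheadrightarrow \pi_0 B$ and $\pi_0 B'\twoheadrightarrow\pi_0 B$ both surjective, I would invoke the gluing theorem for modules (the $\Lambda$-linear adaptation of \cite[Theorem 16.2.0.2]{LurieSAG}, which applies because $\cat{C}$ is presentable stable and left-tensored over $\Mod_\Lambda$) to obtain
\[\Mod_{A'}(\cat{C}) \simeq \Mod_A(\cat{C}) \times_{\Mod_B(\cat{C})} \Mod_{B'}(\cat{C}).\]
Passing to underlying Kan complexes and then to fibres over $M\in\Mod_k(\cat{C})^{\simeq}$ will give the required pullback decomposition of $\Def^{\mod}_{M}$.

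For essential smallness, combining the pullback axiom with \Cref{cor:artinian-small} applied to $A\to k$ exhibits any Artinian $A$ as an iterated pullback starting from $k$ along maps $k\rightarrow\sqz_k(V_i[n_i+1])$ with $V_i$ finite-dimensional and $n_i\geq 0$. Essential smallness of $\Def^{\mod}_{M}(A)$ will then follow by induction on the length (using that pullbacks of essentially small spaces are essentially small) from essential smallness of $\Def^{\mod}_{M}(\sqz_k(V[m]))$ for $V$ finite-dimensional and $m\geq 1$. For this base case, I would use the pullback square of commutative rings $\sqz_k(V[m])\simeq k\times_{\sqz_k(V[m+1])} k$ (both maps being the unit inclusion) together with the pullback axiom to identify
\[\Def^{\mod}_{M}(\sqz_k(V[m]))\simeq \Omega_{\mathrm{triv}}\Def^{\mod}_{M}(\sqz_k(V[m+1])),\]
and then apply \Cref{prop:defmod-tangent-space} to $V[m+1]$ (which is perfect and connective since $m\geq 0$) to identify this loop space with the essentially small mapping space $\Map'_{\Mod_k(\cat{C})}(M,\sqz_k(V[m+1])\otimes_k M)$.

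The main obstacle will be the gluing theorem for modules: Lurie's original formulation is stated for modules over $\bE_\infty$-ring spectra, whereas we require the analogous statement for $A$-module objects in a general $\Lambda$-linear presentable stable $\infty$-category, which demands a routine but careful adaptation (and one for which it is important that the hypothesis $\pi_0 A\twoheadrightarrow \pi_0 B$ and $\pi_0 B'\twoheadrightarrow\pi_0 B$ is respected). Once this is in place, the verification of the axioms and essential smallness is largely formal, and crucially does not require any use of the left-complete $t$-structure beyond the ambient presentability of $\cat{C}$; the $t$-structure hypothesis on $M$ will however be essential for later compatibility statements relating $\Def^{\mod}_{M}$ to the tangent fibre description in \Cref{rem:tangentfibre}.
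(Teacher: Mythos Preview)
Your essential smallness argument is fine and matches the paper's. The gap is in the pullback axiom. You invoke a ``$\Lambda$-linear adaptation of \cite[Theorem 16.2.0.2]{LurieSAG}'' to conclude
\[
\Mod_{A'}(\cat{C}) \simeq \Mod_A(\cat{C}) \times_{\Mod_B(\cat{C})} \Mod_{B'}(\cat{C}),
\]
and then assert that this ``crucially does not require any use of the left-complete $t$-structure''. This is incorrect: Theorem~16.2.0.2 is a statement about \emph{connective} modules, and its extension to module objects in a general stable $\Lambda$-linear $\cat{C}$ is false without further hypotheses. The relevant generalisation is \cite[Proposition 16.2.2.1]{LurieSAG}, which applies to module objects in a \emph{separated} Grothendieck prestable $\infty$-category; this is exactly what the left completeness of the $t$-structure buys you (it makes $\cat{C}_{\geq 0}$ separated in the sense of \cite[Definition C.1.2.12]{LurieSAG}), and the connectivity of $M$ is what lets you work inside $\cat{C}_{\geq 0}$. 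The paper follows precisely this route, invoking 16.2.2.1 via the argument of \cite[Proposition 16.5.7.1]{LurieSAG}.

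Concretely, the obstruction is that for the adjunction
\[
L\colon \Mod_{A'}(\cat{C}) \leftrightarrows \Mod_A(\cat{C}) \times_{\Mod_B(\cat{C})} \Mod_{B'}(\cat{C}) :\! R
\]
to be an equivalence one needs conservativity of $k\otimes_{A'}(-)$ on the relevant module category, and this can fail for unbounded objects in a stable $\cat{C}$ lacking a left complete $t$-structure. So the $t$-structure hypothesis is not cosmetic or deferred to later compatibility statements: it is doing real work in this very proposition.
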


\begin{proof} 
Since the $t$-structure on $\cat{C}$ is left complete, its connective part $\cat{C}_{\geq 0}$ is a separated prestable $\infty$-category in the sense of \cite[Definition C.1.2.12]{LurieSAG}. Just like in the proof of   Proposition 16.5.7.1 in [op.cit], we can therefore deduce from  Proposition 16.2.2.1   in [op.cit] that  $\Def_{M}^\mod$
satisfies \Cref{def:fmp} (2).

It remains to verify that each $\Def_{M}^\mod(A)$ is essentially small. 
For $A=\sqz_k(V)$ a trivial square-zero extension, this follows from  \Cref{prop:defmod-tangent-space} and the equivalence
\[
\Def_{M}^\mod (\sqz_k(V))\simeq \Omega\Def_{M}^\mod (\sqz_k( V[1]))
\]
induced by the pullback square
\[
\begin{tikzcd}
	\sqz_k(V) \arrow{r} \arrow{d} \arrow[rd, phantom, "\lrcorner",  at start] & k \arrow{d} \\
	k \arrow{r} & \sqz_k (V[1]).
\end{tikzcd}
\]
For general $A\in  \CAlg_{\Lambda}^{\an, \art}$, one uses \Cref{cor:artinian-small} to obtain  a factorisation 
\[
A = A_n \rightarrow \cdots \rightarrow A_0 = k,
\]
which implies  by induction that $\Def_{M}^\mod(A_i)$ is small for all~$i$. 
\end{proof}

We can now rephrase \Cref{prop:defmod-tangent-space} in its more traditional form:

\begin{corollary}\label{cor:defmod-tangent}
Assume that $\cat{C}$ admits a left complete $t$-structure for which $M\in \Mod_{k}(\cat{C}) $ is connective. Then for any perfect $V \in \Mod_k^{\cn}$, there is an equivalence
\[\Def_{M}^\mod (\sqz_k (V)) \simeq \Map_{\Mod_k(\cat{C})}(M, V[1]\otimes_k M).\]
\end{corollary}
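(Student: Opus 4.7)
The plan is to deduce the formula from \Cref{prop:defmod-tangent-space} by first shifting $V$ up by one degree, then using a loop-space identification coming from the formal moduli problem property, and finally identifying the relative mapping space with an absolute one via the splitting of the augmentation.

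First, I would apply \Cref{prop:defmod-tangent-space} with $V$ replaced by $V[1]$, which is still perfect and connective. This yields an equivalence
\[
\Omega\Def^\mod_M(\sqz_k(V[1])) \simeq \Map'_{\Mod_k(\cat{C})}\bigl(M,\, \sqz_k(V[1])\otimes_k M\bigr),
\]
where the implicit augmentation is $\sqz_k(V[1])\otimes_k M \rightarrow M$.

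Next, I would pass from $V[1]$ back to $V$ using the pullback square
\[
\begin{tikzcd}
\sqz_k(V) \arrow{r} \arrow{d} \arrow[rd, phantom, "\lrcorner", at start] & k \arrow{d} \\
k \arrow{r} & \sqz_k(V[1])
\end{tikzcd}
\]
in $\CAlg^{\an,\art}_\Lambda$, exactly as in the proof of \Cref{prop:defmod}. Since $\Def^\mod_M$ is a formal moduli problem (by \Cref{prop:defmod}) and $\Def^\mod_M(k)$ is contractible, applying $\Def^\mod_M$ to this square produces an equivalence $\Def^\mod_M(\sqz_k(V)) \simeq \Omega\Def^\mod_M(\sqz_k(V[1]))$. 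Combined with the previous step, this identifies $\Def^\mod_M(\sqz_k(V))$ with $\Map'_{\Mod_k(\cat{C})}(M, \sqz_k(V[1])\otimes_k M)$.

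Finally, I would identify this relative mapping space with $\Map_{\Mod_k(\cat{C})}(M, V[1]\otimes_k M)$. The augmentation $\sqz_k(V[1])\rightarrow k$ has a canonical section, namely the unit $k\rightarrow \sqz_k(V[1])$, so tensoring with $M$ over $k$ yields a split cofibre sequence
\[
V[1]\otimes_k M \longrightarrow \sqz_k(V[1])\otimes_k M \longrightarrow M
\]
in the stable $\infty$-category $\Mod_k(\cat{C})$. The splitting gives a decomposition $\sqz_k(V[1])\otimes_k M \simeq M \oplus (V[1]\otimes_k M)$, and mapping out of $M$ produces an equivalence of pointed spaces
\[
\Map_{\Mod_k(\cat{C})}(M, \sqz_k(V[1])\otimes_k M) \simeq \Map_{\Mod_k(\cat{C})}(M, M) \times \Map_{\Mod_k(\cat{C})}(M, V[1]\otimes_k M),
\]
under which the fibre $\Map'$ over $\id_M$ corresponds exactly to $\Map_{\Mod_k(\cat{C})}(M, V[1]\otimes_k M)$.

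The argument is essentially a routine assembly of earlier results, so there is no serious obstacle; the only point requiring a brief check is that the shifted module $V[1]$ remains perfect and connective, and that the section of the augmentation is genuinely a map of $k$-module objects in $\cat{C}$, both of which are immediate.
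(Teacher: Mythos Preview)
Your proof is correct and follows essentially the same approach as the paper: both combine the loop-space identification $\Def_M^\mod(\sqz_k(V)) \simeq \Omega\Def_M^\mod(\sqz_k(V[1]))$ from \Cref{prop:defmod} with \Cref{prop:defmod-tangent-space} applied at $V[1]$, and then use the splitting $\sqz_k(V[1])\otimes_k M \simeq M \oplus (V[1]\otimes_k M)$ to pass from $\Map'$ to the absolute mapping space. You have simply spelled out the final step in more detail than the paper does.
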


\begin{proof}
Propositions~\ref{prop:defmod} and~\ref{prop:defmod-tangent-space} give equivalences
\[
\Def_{M}^\mod (\sqz_k (V)) \simeq \Omega\Def_{M}^\mod (\sqz_k (V[1])) \simeq
\Map'_{\Mod_{k}(\cat{C})}\big( M,\, \sqz_k( V[1])\otimes_k M \big).
\]
We  deduce the result using  the  decomposition $
\sqz_k (V)\otimes_k M \simeq M \,\oplus\, (V\otimes_k M)$.
\end{proof}

\begin{remark}[Tangent fibre of $\Def_{M}^\mod $] \label{rem:tanDefM}
We will mostly be interested in the case where $\cat{C}=\Mod_{\bZ}$ is the derived $\infty$-category of $\bZ$  and $M \in \Mod_{k} \simeq \Mod_k(\cat{C})$ is   eventually connective (i.e.\ cohomologically bounded above).
 Writing $\underline{\smash{\Map}}$ for the mapping complex, 
\Cref{cor:defmod-tangent}   shows that  the tangent fibre   of $\Def_M^{\mod}$ \mbox{is   given by}
\[T_{\Def_M} \simeq \underline{\smash{\Map}}_{\hspace{1pt} k}(M,M)[1].\]

Indeed,    the connective chain complexes 
$\tau_{\geq {0}}(T_{\Def_M}[n])$ and
\mbox{$\tau_{\geq {0}}( \underline{\smash{\Map}}_{\hspace{1pt} k}(M,M)[1+n])$}
are equivalent for all $n\geq 0$, as they 
correspond to equivalent  presheaves under the identification $\Mod_k^{\cn}\simeq \cat{P}_{\Sigma}(\Mod_k^{\ff})$ from \Cref{ex:connchain}. These  send $V \in \Mod_k^{\ff}$ to 
\[\Def_M(\sqz_k (V^{\vee}[n])) \simeq \Map_{k}(M,V^\vee[n+1] \otimes_k M ) \simeq
\Map_{k}(V, \tau_{\geq 0}(\underline{\smash{\Map}}_{\hspace{1pt} k}(M,M)[1+n])) \]

The above identification of $T_{\Def_M}$ is  related to the classical fact that the tangent space to the (classical) deformation functor of $M$ is given by $\Ext^1(M,M)$, while the obstruction classes  live in $\Ext^2(M,M)$.

\end{remark}

\subsection{Deformations of sheaves of animated rings} \label{sec:defsheavesanring}
Fix a pair $X=(\cat{T},\cO_X)$ with $\cat{T}$ a small Grothendieck site and $\cO_X \in \Shv(\cat{T},\CAlg^\an_k)$.

\begin{construction}[The functor $\Def_{X}$]
\label{constr:def-an}
Consider the $\infty$-category
\[
\cat{C} := \CAlg^\an \times_{\Shv(\cat{T},\CAlg^\an)} 
\Fun(\Delta^1,\Shv(\cat{T},\CAlg^\an))
\]
whose objects are pairs $(A,A\rightarrow\cB)$ with $A \in\CAlg^\an$ and $A\rightarrow\cB$ a map in $\Shv(\cat{T},\CAlg^\an)$. 
Here we use the functor $\CAlg^\an  \rightarrow \Shv(\cat{T},\CAlg^\an)$ sending $A$ to the sheafification of the corresponding constant presheaf.

Consider the projection $q\colon \cat{C} \rightarrow \CAlg^\an$
and write $\cat{C}^\cocart \subset \cat{C}$ for the \mbox{\textit{non-full}} subcategory consisting of all objects and only the $q$-cocartesian morphisms. These are precisely the morphisms in $\cat{C}$ for which the underlying square
\[
\begin{tikzcd}
	A \arrow{r}\arrow{d} & A' \arrow{d} \\
	\cB \arrow{r} & \cB'\arrow[lu, phantom, "\ulcorner", at start]
\end{tikzcd}
\]
in $\Shv(\cat{T},\CAlg^\an)$ is a pushout. By \cite[Corollary 2.4.2.5]{LurieHTT}, the restriction of $q$ to $\cat{C}^\cocart$ is a left fibration, and the induced functor  
\[
\cat{C}^\cocart_{/(k,\cO_X)} \rightarrow \CAlg^\an_{/k}
\]
is also a left fibration. Straightening then gives rise to a functor  $\CAlg^\an_{/k}\rightarrow \widehat{\cat{S}}$. Composing with the forgetful functor $\CAlg^{\an,\art}_\Lambda \rightarrow \CAlg^\an_{/k}$, we obtain the functor \[\Def_X\colon \CAlg^{\an,\art}_\Lambda \rightarrow \widehat{\cat{S}}.\]
\end{construction}

\begin{remark}[Informal description]
Unravelling the construction, we see that the points of the space $\Def_X(A)$ correspond to pushout squares 
\[
\begin{tikzcd}
	A \arrow{r} \arrow{d} & k \arrow{d} \\
	\cB \arrow{r} & \cO_X \arrow[lu, phantom, "\ulcorner", at start]
\end{tikzcd}
\]
in $\Shv(\cat{T},\CAlg^\an)$, i.e.\ to pairs consisting of a $\cB\in \Shv(\cat{T},\CAlg^\an_A)$ and an equivalence $k\otimes_A \cB \xrightarrow{ \simeq } \cO_X$ in $\Shv(\cat{T},\CAlg^\an_k)$.
The $1$-simplices of the space $\Def_X(A)$ are given by compatible equivalences $\mathcal{B} \rightarrow \mathcal{B}'$  in $\Shv(\cat{T},\CAlg^\an_A)$.
\end{remark}

For $V\in \Mod^\cn_k$,  $k \rightarrow \sqz_k(V)$   induces a map $\ast \rightarrow \Def(\sqz_k(V))$  picking  out the `trivial deformation' $\cB\simeq \cO_X \otimes_k \sqz_k(V)$. We compute the based loop space:

\begin{proposition}\label{prop:defan-tangent-space}
Let $V\in \Mod^\cn_k$ be perfect. Then there is an equivalence 
\[
\Omega\Def(\sqz_k(V)) \simeq 
\Map'_{\Shv(\cat{T},\CAlg_k)}( \cO_X, \sqz_k(V)\otimes_k\cO_X)
\]
where the implicit map $\sqz_k(V)\otimes \cO_X \rightarrow \cO_X$ is the one induced by $\sqz_k(V)\rightarrow k$.
\end{proposition}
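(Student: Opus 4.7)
My plan is to mirror the strategy of \Cref{prop:defmod-tangent-space}, replacing left modules over an associative algebra by sheaves of animated commutative algebras. First, I unravel \Cref{constr:def-an}: a point of $\Def_X(\sqz_k(V))$ corresponds to a pair $(\cB, \alpha)$ with $\cB \in \Shv(\cat{T}, \CAlg^{\an}_{\sqz_k(V)})$ and $\alpha\colon k \otimes_{\sqz_k(V)} \cB \isomto \cO_X$ an equivalence in $\Shv(\cat{T}, \CAlg^{\an}_k)$, with the basepoint being the trivial deformation $\cB_0 := \sqz_k(V) \otimes_k \cO_X$ equipped with its canonical equivalence $\alpha_0$. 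Taking loops at this basepoint exhibits $\Omega \Def_X(\sqz_k(V))$ as the pullback
\[
\begin{tikzcd}
\Omega\Def_X(\sqz_k(V)) \arrow{r}\arrow{d}\arrow[rd, phantom, "\lrcorner", at start] & \Map_{\Shv(\cat{T},\CAlg^{\an}_{\sqz_k(V)})^{\simeq}}(\cB_0, \cB_0) \arrow{d}{k \otimes_{\sqz_k(V)} -} \\
\{\ast\}\arrow{r}{\id} & \Map_{\Shv(\cat{T},\CAlg^{\an}_k)^{\simeq}}(\cO_X, \cO_X).
\end{tikzcd}
\]

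Next, I verify that base change $k \otimes_{\sqz_k(V)} -$ is conservative on $\Shv(\cat{T}, \CAlg^{\an}_{\sqz_k(V)})$. Equivalences of sheaves of animated rings are detected on underlying sheaves of connective $\bZ$-modules by \Cref{prop:forgetful-an} applied sectionwise (compatibly with the sheafification from \Cref{cons:sheavesadjunctions}). The module case then follows as in \Cref{prop:defmod-tangent-space}: applying $- \otimes_{\sqz_k(V)} \cB$ to the cofibre sequence $V \to \sqz_k(V) \to k$ produces a cofibre sequence $V \otimes_k (k \otimes_{\sqz_k(V)} \cB) \to \cB \to k \otimes_{\sqz_k(V)} \cB$, so vanishing of $k \otimes_{\sqz_k(V)} \cB$ forces $\cB \simeq 0$. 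Conservativity then lets the inclusion of $(-)^{\simeq}$ into the full mapping space form a pullback square, yielding
\[
\Omega\Def_X(\sqz_k(V)) \simeq \Map_{\Shv(\cat{T},\CAlg^{\an}_{\sqz_k(V)})}(\cB_0, \cB_0) \times_{\Map_{\Shv(\cat{T},\CAlg^{\an}_k)}(\cO_X, \cO_X)} \{\id_{\cO_X}\}.
\]

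Finally, I would apply the base-change/restriction adjunction along $k \to \sqz_k(V)$ (which passes to an adjunction on sheaves of animated rings via \Cref{cons:sheavesadjunctions}) to identify
\[
\Map_{\Shv(\cat{T},\CAlg^{\an}_{\sqz_k(V)})}(\sqz_k(V)\otimes_k \cO_X,\, \sqz_k(V)\otimes_k \cO_X) \simeq \Map_{\Shv(\cat{T},\CAlg^{\an}_k)}(\cO_X,\, \sqz_k(V)\otimes_k \cO_X).
\]
Under this equivalence, the map $k \otimes_{\sqz_k(V)} -$ becomes postcomposition with the augmentation $\sqz_k(V) \otimes_k \cO_X \to \cO_X$ induced by $\sqz_k(V) \to k$. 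The resulting pullback is precisely $\Map'_{\Shv(\cat{T},\CAlg^{\an}_k)}(\cO_X, \sqz_k(V)\otimes_k \cO_X)$ in the sense of \Cref{not:map-prime}, producing the desired equivalence.

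The main obstacle is the sheaf-level conservativity of base change, since the forgetful-then-sheafify composition from \Cref{cons:sheavesadjunctions} must be compatible with the module-theoretic cofibre sequence argument. Once this is handled, everything else is formal and parallels the proof of \Cref{prop:defmod-tangent-space} essentially verbatim, with sheaves of animated rings playing the role of modules over an associative algebra.
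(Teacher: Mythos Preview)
Your proposal is correct and follows exactly the approach the paper intends: the paper's proof is the single line ``Analogous to the proof of \Cref{prop:defmod-tangent-space}'', and you have faithfully carried out that analogy, replacing $\Mod_A(\cat{C})$ by $\Shv(\cat{T},\CAlg^{\an}_A)$ throughout. Your concern about sheaf-level conservativity is well handled---the forgetful functor to sheaves of connective modules is conservative and commutes with base change (pushouts in $\CAlg^{\an}$ have the expected underlying module), so the cofibre-sequence argument from \Cref{prop:defmod-tangent-space} goes through verbatim.
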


\begin{proof}
Analogous to the proof of \Cref{prop:defmod-tangent-space}.
\end{proof}

\begin{proposition}\label{prop:defan}
The functor $\Def$ is a formal moduli problem. In particular,  $\Def(A)$ is essentially small for all $A$.
\end{proposition}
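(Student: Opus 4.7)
The plan is to mimic the structure of \Cref{prop:defmod}: verify the two axioms of \Cref{def:fmp} and then deduce essential smallness from \Cref{prop:defan-tangent-space} and \Cref{cor:artinian-small}.

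For axiom~(1), unravelling \Cref{constr:def-an} identifies $\Def(k)$ with the space of pairs $(\cB,\, k \otimes_k \cB \xrightarrow{\simeq} \cO_X)$ with $\cB \in \Shv(\cat{T},\CAlg^{\an}_k)$. Since base change along $k \rightarrow k$ is the identity, this is the fibre of $\Shv(\cat{T},\CAlg^{\an}_k)^{\simeq} \rightarrow \Shv(\cat{T},\CAlg^{\an}_k)^{\simeq}$ over $\cO_X$ along the identity, hence contractible.

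For axiom~(2), given a pullback $A' = A \times_B B'$ in $\CAlg^{\an,\art}_\Lambda$ with $\pi_0 A \surjto \pi_0 B$ and $\pi_0 B' \surjto \pi_0 B$, the main input is a sheafified gluing statement asserting that
\[
\Shv(\cat{T},\CAlg^{\an}_{A'}) \simeq \Shv(\cat{T},\CAlg^{\an}_A) \mytimes{\Shv(\cat{T},\CAlg^{\an}_B)} \Shv(\cat{T},\CAlg^{\an}_{B'}).
\]
The pointwise version of this pullback decomposition for animated rings follows from the surjectivity hypothesis via the standard gluing theorem for animated commutative rings (\cite[\S 16.2]{LurieSAG}). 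Since sheafification is a left-exact localisation of the presheaf $\infty$-category, the cartesian property descends to sheaves. Taking fibres over $\cO_X \in \Shv(\cat{T},\CAlg^{\an}_k)$ on all four corners and passing to maximal subgroupoids then yields the equivalence $\Def(A') \simeq \Def(A) \times_{\Def(B)} \Def(B')$.

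For essential smallness, we first treat the trivial square-zero case. For $V$ perfect connective, the pullback $\sqz_k(V) \simeq k \times_{\sqz_k(V[1])} k$ combined with axiom~(2) gives $\Def(\sqz_k(V)) \simeq \Omega \Def(\sqz_k(V[1]))$, and \Cref{prop:defan-tangent-space} identifies this loop space with a mapping space in the presentable $\infty$-category $\Shv(\cat{T},\CAlg_k)$ (presentability by \Cref{prop:presentable-cat-of-sheaves} together with \Cref{prop:calgsh}), hence essentially small. For general $A \in \CAlg^{\an,\art}_\Lambda$, \Cref{cor:artinian-small} yields a finite tower $A = A_n \rightarrow \cdots \rightarrow A_0 = k$ in which each step is a pullback against a trivial square-zero extension; inducting on $i$ using axiom~(2), each $\Def(A_i)$ is essentially small.

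The main obstacle is the sheafified gluing statement needed in axiom~(2); the subtlety is verifying that the pointwise fibre-product identification of $\CAlg^{\an}_{A'}$ survives sheafification (and that restricting to sheaves does not introduce new descent data). Once this is established, the loop-space and inductive arguments for essential smallness are routine, exactly as in \Cref{prop:defmod}.
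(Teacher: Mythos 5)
Your overall strategy is the same as the paper's: verify the two axioms of a formal moduli problem and then bootstrap essential smallness from \Cref{prop:defan-tangent-space} and \Cref{cor:artinian-small}. Axiom~(1), the loop-space reduction, and the length induction are all fine and match the paper. The difference — and the place where your argument has a real gap — is the verification of axiom~(2).

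You reduce axiom~(2) to the claimed equivalence
\[
\Shv(\cat{T},\CAlg^{\an}_{A'}) \simeq \Shv(\cat{T},\CAlg^{\an}_A) \mytimes{\Shv(\cat{T},\CAlg^{\an}_B)} \Shv(\cat{T},\CAlg^{\an}_{B'}),
\]
and propose to deduce it from the pointwise gluing $\CAlg^{\an}_{A'} \simeq \CAlg^{\an}_A \times_{\CAlg^{\an}_B} \CAlg^{\an}_{B'}$ by invoking left-exactness of sheafification. This does not work as stated. The functors in the fibre product of sheaf $\infty$-categories are the \emph{sheafified} base-change functors of \Cref{cons:sheavesadjunctions}, i.e.\ $(B\otimes_A - )^\#$, which are not the restrictions of the presheaf-level base-change functors to the subcategories of sheaves. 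Consequently, the fibre product of sheaf categories is not simply the fibre product of presheaf categories intersected with the sheaves: an object of $\Shv(\cat{T},\CAlg^{\an}_A) \times_{\Shv(\cat{T},\CAlg^{\an}_B)} \Shv(\cat{T},\CAlg^{\an}_{B'})$ is a pair of sheaves together with an equivalence of their \emph{sheafified} base changes, and such an equivalence need not come from an equivalence of the unsheafified base changes. "Sheafification is a left-exact localisation" (itself not obviously true for $\CAlg^{\an}$-valued presheaves) is a statement about preservation of finite limits of \emph{objects}, not about compatibility with fibre products of $\infty$-\emph{categories} whose transition functors involve the sheafification. So the cartesian property does not "descend" by formal nonsense, and this is precisely the subtlety you flag at the end but do not resolve.

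The paper takes a different, more hands-on route that avoids this issue entirely. It constructs the comparison functor $L\colon \Shv(\cat{T},\CAlg^{\an}_{A'}) \to \Shv(\cat{T},\CAlg^{\an}_A) \times_{\Shv(\cat{T},\CAlg^{\an}_B)} \Shv(\cat{T},\CAlg^{\an}_{B'})$ together with its right adjoint $R$ (pullback of sheaves of rings), and checks that the unit and counit are equivalences. The unit is handled by passing to the underlying sheaves of connective modules via the conservative forgetful functor $\Shv(\cat{T},\CAlg^{\an}_A) \to \Shv(\cat{T},\Mod^{\cn}_A)$ and using that $\cB \otimes_A -$ preserves pushouts; the counit is reduced in the same way to the module-level Milnor gluing of \cite[Theorem 16.2.0.2]{LurieSAG}. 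If you want to keep your more abstract formulation, you would need to supply an argument that identifies the two fibre products — in effect reproducing the adjoint-functor check — rather than appealing to left-exactness of sheafification.
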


\begin{proof} 
First, we show that $\Def_X$ satisfies the infinitesimal cohesiveness property in \Cref{def:fmp} (2). The argument is essentially the same as in \cite[Theorem 16.3.0.1]{LurieSAG}. 
For the sake of conciseness, let us write $\cat{C}_A := \Shv(\cat{T},\CAlg^\an_A)$ and $\cat{D}_A := \Shv(\cat{T},\Mod^\cn_A)$. It suffices to show that for every pullback diagram
\[
\begin{tikzcd}
	A \arrow{r} \arrow{d} \arrow[rd, phantom, "\lrcorner",  at start] & A_0 \arrow{d} \\
	A_1 \arrow{r} & A_{01} 
\end{tikzcd}
\]
in $\CAlg^\an$ with $\pi_0 A_0 \rightarrow \pi_0 A_{01}$ and $\pi_0 A_1 \rightarrow \pi_0 A_{01}$ surjective, the resulting functor
\[
L: \cat{C}_A \rightarrow\cat{C}_{A_0} \times_{\cat{C}_{A_{01}}} \cat{C}_{A_1}
\]
is an equivalence. Objects in the target can be identified with diagrams\vspace{-1pt}
\[
\begin{tikzcd}
	A_0 \arrow{r} \arrow{d} 
	& A_{01}  \arrow{d} 
	& A_1 \arrow{l} \arrow{d} \\
	\cB_0 \arrow{r} & \arrow[lu, phantom, "\ulcorner", at start] \cB_{01} \arrow[ru, phantom, "\urcorner", at start]& \cB_1 \arrow{l} 
\end{tikzcd}
\]
in $\Shv(\cat{T},\CAlg^\an)$, where both squares are pushouts. The functor $L$ has a right adjoint $R$, which maps the diagram above to the pullback of $\cB_{0} \rightarrow \cB_{01} \leftarrow \cB_1$. 

Using   that $\cB\otimes_A -$ preserves pushouts in  $\cat{D}_A$, and the fact that the forgetful functor $\cat{C}_A \rightarrow \cat{D}_A$ is conservative, one sees that the unit $u\colon \cB \rightarrow RL\cB$ is an equivalence. To see that the counit   $v\colon LR \cB_\ast \rightarrow \cB_\ast$ is an equivalence, it suffices to verify that it is an equivalence in $\cat{D}_{A_0}\times_{\cat{D}_{A_{01}}} \cat{D}_{A_1}$, which follows from \cite[Theorem 16.2.0.2]{LurieSAG}.

To see that $\Def(A)$ is essentially small, we proceed as in the proof of \Cref{prop:defmod} 
to deduce it from \Cref{prop:defan-tangent-space}.
\end{proof}

\begin{corollary}\label{cor:defan-tangent-space}
$\Def_X(\sqz_k(V))\simeq 
\Map_{\Mod(\cat{T},\cO_X)}(L_{X/k},\,V[1]\otimes_k \cO_X)$.
\end{corollary}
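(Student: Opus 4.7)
The plan is to combine \Cref{prop:defan-tangent-space} with \Cref{prop:cotangent-sheaf-adjunction}, using the formal moduli problem property of $\Def_X$ to shift by one.

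First, since $\Def_X$ is a formal moduli problem (by \Cref{prop:defan}), applying $\Def_X$ to the pullback square
\[
\begin{tikzcd}
\sqz_k(V) \arrow{r} \arrow{d} \arrow[rd, phantom, "\lrcorner", at start] & k \arrow{d} \\
k \arrow{r} & \sqz_k(V[1])
\end{tikzcd}
\]
in $\CAlg^{\an,\art}_\Lambda$ yields a natural equivalence $\Def_X(\sqz_k(V)) \simeq \Omega\Def_X(\sqz_k(V[1]))$. By \Cref{prop:defan-tangent-space} applied to $V[1]$, the right-hand side is equivalent to $\Map'_{\Shv(\cat{T},\CAlg_k)}(\cO_X,\,\sqz_k(V[1])\otimes_k\cO_X)$, with section map induced by the augmentation $\sqz_k(V[1])\rightarrow k$.

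Next, I would identify $\sqz_k(V[1])\otimes_k \cO_X$ with $\sqz_{\cO_X}(V[1]\otimes_k\cO_X)$ as objects of $\Shv(\cat{T},\CAlg^{\an}_k)_{/\cO_X}$. Both assignments $(R,M)\mapsto \sqz_R(M)\otimes_R \cO_X$ and $(R,M)\mapsto \sqz_{\cO_X}(M\otimes_R\cO_X)$ (viewed as functors $\CAlg^\an\Mod^\cn \rightarrow \Shv(\cat{T},\CAlg^\an)_{/\cO_X}$ via base change along $R\rightarrow k\rightarrow \cO_X$) preserve sifted colimits, and on $\PolyMod^{\ff}$ both send $(R,M)$ to $\cO_X\oplus(M\otimes_R\cO_X)$ with the obvious multiplication and augmentation. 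Hence they agree, naturally in the input and compatibly with the augmentation maps to $\cO_X$.

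Finally, I would apply \Cref{prop:cotangent-sheaf-adjunction} with $\cA=k$, $\cB=\cO_X$ and $\cM = V[1]\otimes_k\cO_X$, which yields
\[
\Map'_{\Shv(\cat{T},\CAlg_k)}\bigl(\cO_X,\,\sqz_{\cO_X}(V[1]\otimes_k\cO_X)\bigr)
\simeq \Map_{\cO_X}\bigl(L_{\cO_X/k},\,V[1]\otimes_k\cO_X\bigr),
\]
where the left-hand side is exactly $\Map_{\Shv(\cat{T},\CAlg^{\an}_k)_{k//\cO_X}}(\cO_X,\sqz_{\cO_X}(V[1]\otimes_k\cO_X))$ by the definition of $\Map'$ in \Cref{not:map-prime}. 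Recalling the notation $L_{X/k}=L_{\cO_X/k}$ from \Cref{def:cotangent-complex-X} and that $\Map_{\cO_X}(-,-)$ means mapping in $\Mod(\cat{T},\cO_X)$ gives the claimed equivalence. The only mildly delicate step is the sifted-colimit identification of $\sqz_k(V[1])\otimes_k\cO_X$ with $\sqz_{\cO_X}(V[1]\otimes_k\cO_X)$, but this is a formal consequence of the construction of $\underline\sqz$ and the fact that base change along $k\rightarrow\cO_X$ commutes with the defining colimits on each side.
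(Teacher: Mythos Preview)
Your proof is correct and follows exactly the same route as the paper: use the formal moduli problem property to pass from $\sqz_k(V)$ to $\Omega\Def_X(\sqz_k(V[1]))$, apply \Cref{prop:defan-tangent-space}, identify $\sqz_k(V[1])\otimes_k\cO_X\simeq\sqz_{\cO_X}(V[1]\otimes_k\cO_X)$, and finish with \Cref{prop:cotangent-sheaf-adjunction}. The only difference is that you spell out the sifted-colimit justification for the $\sqz$-identification, which the paper simply asserts.
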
 
\begin{proof}
This follows  from  the equivalence $\Def_X(\sqz_k(V))\simeq \Omega \Def_X(\sqz_k(V)[1])$ together with Propositions~\ref{prop:defan-tangent-space} 
and~\ref{prop:cotangent-sheaf-adjunction}, as $\cO_X \otimes_k \sqz_k(V) \simeq \sqz_{\cO_X}(\cO_X \otimes_k V )$.
\end{proof}

\begin{remark}[Tangent fibre of $\Def_X$]
When $X=(X, \cO_X)$ is a smooth scheme, then arguing as in \Cref{rem:tanDefM} gives an equivalence
$ T_{\Def_X} \simeq R\Gamma(X,T_X)[1].$

\Cref{cor:defan-tangent-space} is therefore a derived enhancement of classical Kodaira--Spencer theory, asserting that the tangent space of the (classical) deformation functor of $X$ is given by $H^1(X,T_X)$, while its obstruction classes belong to $H^2(X,T_X)$.
\end{remark}

\subsection{Deformations of schemes}\label{sec:defschemes}
For $X$  a derived scheme  over  $k$, consider
\begin{enumerate} 
\item deformations of the structure sheaf $\cO_X$ in Zariski sheaves;
\item deformations of $X$ in derived schemes;
\item deformations of the \'etale structure sheaf $\cO_{X,\et}$
in \'etale sheaves;
\item deformations of $X$ in derived Deligne--Mumford stacks.
\end{enumerate} 
We will now show that these   contexts
lead to equivalent formal moduli problems.

\subsubsection*{Zariski sheaves  and  schemes } We  formalise deformations of $X$ in \mbox{derived schemes}:

\begin{construction}[Kodaira--Spencer moduli] \label{KSmod}
Consider the $ \infty$-category 
	\[\cat{D}:=
	\CAlg^{\an}  \times_{  \Sch^{\der,\op}}	\Fun(\Delta^1, \Sch^{\der,\op})\]
	of pairs of animated rings $A \in \CAlg^\an$ and maps of derived schemes
	$Y \rightarrow \Spec(A)$.
	
	Consider the projection $ \cat{D} \rightarrow \CAlg^{\an} $, which is cocartesian as derived schemes admit pullbacks  (cf.\ \cite[Section 1.3.3]{TV08}, \cite[Proposition 2.3.21]{LurieDAGV}).
	These  are  computed in $\Topp_{\CAlg^{\an}}^{\loc} \subset \Topp_{\CAlg^{\an}} $, the (non-full) subcategory of animated ringed spaces $(Y, \cO_Y)$ such that $(Y, \pi_0\cO_Y)$ is locally ringed, and local maps between them.
	Cocartesian arrows  correspond to morphisms in $\cat{D}$ for which the underlying square 
	\[
	\begin{tikzcd}
		Y\arrow{r}\arrow{d}\arrow[rd, phantom, "\lrcorner",  at start]   & Y' \arrow{d} \\
		\Spec(A) \arrow{r} & \Spec(A') 
	\end{tikzcd}
	\]
	 is a pullback in $\Sch^{\der}$.	
Let $\cat{D}^{\cocart}$ be the ({non-full}) subcategory   of all objects and only $q$-cocartesian morphisms. By \cite[Corollary 2.4.2.5]{LurieHTT}, restricting $q$ to $\cat{D}^\cocart$ gives a left fibration.
Then   $X$ gives   an object $(k,X) \in \cat{D}$, and  the induced functor  
	\[
	\cat{D}^\cocart_{/(k,X)} \rightarrow \CAlg^{\an}_{ / k}
	\]
	is also a left fibration. Straightening then gives rise to a functor  $\CAlg^\an_{/k}\rightarrow \widehat{\cat{S}}$, and  composing with the forgetful functor $\CAlg^{\an,\art}_\Lambda \rightarrow \CAlg^\an_{/k}$, we obtain the  functor \[\Def^{\sch}_X\colon \CAlg^{\an,\art}_\Lambda \rightarrow \widehat{\cat{S}}.\]
\end{construction}
To establish the  equivalence
 $\Def_X^{\sch} \simeq \Def_X$, we need several  \mbox{preliminary results.}
\begin{proposition}\label{prop:homeo}
	Given a  deformation $(Y,\cO_Y) \in \Def^{\sch}_X(A)$ of $X$ over some $A$ in $\CAlg^{\an,\art}_{\Lambda}$, the underlying map of topological spaces $X \rightarrow Y$ is a homeomorphism.
\end{proposition}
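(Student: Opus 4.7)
The plan is to unravel the definition of $\Def_X^{\sch}$ from \Cref{KSmod} and reduce to the classical fact that quotienting by a nilpotent ideal induces a homeomorphism on spectra.

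First I would unpack the data: by the construction of $\Def^{\sch}_X$, a point $(Y,\cO_Y) \in \Def^{\sch}_X(A)$ consists in particular of a pullback square of derived schemes
\[
\begin{tikzcd}
X \arrow{r}\arrow{d} \arrow[rd, phantom, "\lrcorner", at start] & Y \arrow{d} \\
\Spec(k) \arrow{r} & \Spec(A),
\end{tikzcd}
\]
and the map $X \rightarrow Y$ in the statement is the top horizontal arrow. Since being a homeomorphism is local on the target, I would pick an open cover $\{U_\alpha = \Spec(B_\alpha)\}$ of the underlying space of $Y$ by affine derived subschemes (which exist by \Cref{def:dersch}) and check the claim after restricting to each $U_\alpha$. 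Pulling the square above back along $U_\alpha \hookrightarrow Y$, I identify $X \mytimes{Y} U_\alpha \rightarrow U_\alpha$ with $\Spec(k \otimes_A B_\alpha) \rightarrow \Spec(B_\alpha)$ via \Cref{affinederived}.

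Next I would identify the underlying map on topological spaces. Since $|\Spec(R)| = |\Spec(\pi_0 R)|$ for any animated ring $R$ (\Cref{affinederived}), and since $\pi_0$ of a relative tensor product of connective animated rings is computed by the underived tensor product of $\pi_0$'s, we have
\[
\pi_0(k \otimes_A B_\alpha) \;\cong\; k \otimes_{\pi_0(A)} \pi_0(B_\alpha) \;\cong\; \pi_0(B_\alpha)/\fm_A \pi_0(B_\alpha),
\]
where $\fm_A$ denotes the maximal ideal of the local Artinian ring $\pi_0(A)$. The restricted map on topological spaces is therefore induced by the surjection $\pi_0(B_\alpha) \twoheadrightarrow \pi_0(B_\alpha)/\fm_A \pi_0(B_\alpha)$.

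Finally, because $\pi_0(A)$ is local Artinian (\Cref{def:artinian-animated-ring}), the ideal $\fm_A$ is nilpotent, hence so is $\fm_A \pi_0(B_\alpha)$, so the classical fact that quotienting by a nilpotent ideal induces a homeomorphism of affine schemes concludes the local step. Gluing over the cover then yields the desired global homeomorphism $X \rightarrow Y$. The only mildly delicate point is the identification of $\pi_0$ of the relative tensor product with the classical tensor of $\pi_0$'s; everything else is formal.
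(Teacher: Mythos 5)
Your proof is correct but takes a genuinely different route than the paper's. The paper argues in two lines: $\Spec(k) \to \Spec(A)$ is a closed immersion that is a homeomorphism on underlying spaces, and by (the derived analogue of) Corollary~10.5 of \cite{LurieDAGIX} the pullback defining $Y$ is computed already in animated ringed spaces; hence $|X| \to |Y|$ is the topological pullback of $|Y| \to |\Spec(A)|$ along a homeomorphism. Your argument instead reduces to an affine open cover of $Y$, identifies the preimage of each $\Spec(B_\alpha)$ with $\Spec(k\otimes_A B_\alpha)$, and computes $\pi_0(k\otimes_A B_\alpha) \cong \pi_0 B_\alpha/\fm_A\pi_0 B_\alpha$, concluding by nilpotence of $\fm_A$. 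The trade-off: the paper's route is shorter but leans on a fairly heavy structural result about closed immersions of derived schemes being ``co-topological''; yours is more hands-on and essentially self-contained, at the cost of implicitly invoking two standard but nontrivial facts you don't flag --- that the affine opens of the underlying classical scheme of $Y$ are affine as derived schemes (cf.\ \cite[Prop.~1.1.6.2]{LurieSAG}), and that $\Spec$ takes pushouts of animated rings to pullbacks of derived schemes. Neither is a gap (both are in the standard toolkit and one is implicit in how \Cref{affinederived} is used throughout the paper), but since your proof is built precisely to avoid a black-box citation, it would be worth naming these explicitly rather than hiding them behind ``via \Cref{affinederived}.''
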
 
\begin{proof}
The map $\Spec(k) \rightarrow \Spec(A)$  is a closed immersion and a homeomorphism on underlying topological spaces.  By the derived analogue of \cite[Corollary 10.5]{LurieDAGIX},  
	the pullback corresponding to $Y$ is   a pullback in animated ringed spaces.
\end{proof}

\begin{proposition}\label{lemma:lift-is-derived-scheme}
	Let $A \in \CAlg^{\an,\art}_{\Lambda }$ and 
	consider corresponding  squares 
	\[
	\begin{tikzcd}
		A \arrow{r} \arrow{d} & k \arrow{d} \\
		\cB \arrow{r} & \cO_X 
	\end{tikzcd} \ \ \ \  \ \ \ \  \ \ \ \  \begin{tikzcd}
		(X,\cO_X)\arrow{r} \arrow{d}  &  \arrow{d} 	(X,\cB) \\
		\Spec(k)\arrow{r} &\Spec(A)   .
	\end{tikzcd}
	\]   
The left square is a pushout of $\CAlg^{\an}$-valued sheaves on $X$
if and only if 
 the right square is a pullback of derived schemes.  
\end{proposition}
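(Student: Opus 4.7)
The plan is to compute both sides of the equivalence inside $\Topp^{\loc}_{\CAlg^{\an}}$, where pullbacks in $\Sch^{\der}$ are computed according to \Cref{KSmod}, and to recognise the structure sheaf of that pullback as the pushout appearing in the left square.

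First, I would reduce everything to live on the fixed topological space $X$. By \Cref{prop:homeo} the underlying space of $(X,\cB)$ is $X$, and since $A$ is Artinian local with residue field $k$, both $|\Spec(A)|$ and $|\Spec(k)|$ are singletons. Hence the map $X\to|\Spec(A)|$ is the constant map and the topological pullback appearing on the right of the right square is canonically $X$. This puts both sides on the same underlying topological space.

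Next, I would compute the structure sheaf of the pullback in $\Topp_{\CAlg^{\an}}$ of $(X,\cB)\to(\{\ast\},A)\leftarrow(\{\ast\},k)$. The general recipe produces $p_X^{-1}\cB \otimes_{p_0^{-1}A} p_2^{-1}k$, where $p_X,p_0,p_2$ are the projections from the topological pullback; in our situation $p_X=\id_X$ while $p_0,p_2$ are the constant maps $X\to\{\ast\}$, so the inverse images of $A$ and $k$ are the sheafifications $\underline{A}$, $\underline{k}$ of the corresponding constant presheaves on $X$. Thus the pullback structure sheaf is identified with the pushout $\cB\otimes_{\underline{A}}\underline{k}$ in $\Shv(X,\CAlg^{\an})$, which is precisely the datum of the left square.

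Finally, I would verify that the local condition cutting $\Topp^{\loc}_{\CAlg^{\an}}$ out of $\Topp_{\CAlg^{\an}}$ is automatic here. Since $A\to k$ is a local map with nilpotent kernel $\fm_A$, the induced morphism $\cB\to\cB\otimes_{\underline{A}}\underline{k}$ is stalkwise surjective with nilpotent kernel, so it preserves the local-ring condition and is itself local. Combining the three steps gives the equivalence: the right square is a pullback in $\Sch^{\der}$ iff the pullback structure sheaf $\cB\otimes_{\underline{A}}\underline{k}$ equals $\cO_X$, which is exactly the assertion that the left square is a pushout in $\Shv(X,\CAlg^{\an})$. The main obstacle I anticipate is executing the formula for the pullback in $\Topp_{\CAlg^{\an}}$ precisely and reconciling the dual roles of $\underline{A}$ and $\underline{k}$ as constant sheaves of animated rings versus structure sheaves of one-point schemes; the rest is essentially book-keeping.
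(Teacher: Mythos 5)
Your computation of the pullback in $\Topp_{\CAlg^{\an}}$ is essentially the argument the paper uses (invoking a derived version of closed-immersion pullbacks being computed in animated ringed spaces, cf.\ the reference to \cite{LurieDAGIX} in \Cref{prop:homeo}), and your handling of the local condition is fine. But there is a substantial gap in the ``only if'' direction that the paper spends the bulk of its proof addressing: you never verify that $(X,\cB)$ is a derived scheme.

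Starting from the assumption that the left square is a pushout of $\CAlg^{\an}$-valued sheaves, all you know a priori is that $\cB$ is some sheaf of animated rings on $X$. For the statement ``the right square is a pullback of derived schemes'' to even be well-posed, you need $(X,\cB)$ to satisfy the three conditions of \Cref{def:dersch}: $(X,\pi_0\cB)$ must be a scheme, the homotopy sheaves $\pi_n\cB$ must be quasi-coherent over $\pi_0\cB$, and $\cB$ must be hypercomplete; you also need the map $(X,\cO_X)\to(X,\cB)$ to be \emph{local} so that it lives in $\Sch^{\der}$ at all. Your argument shows the structure sheaf of the pullback in $\Topp^{\loc}_{\CAlg^{\an}}$ is $\cB\otimes_{\underline A}\underline k\simeq\cO_X$, but this tells you nothing about whether the top-right corner $(X,\cB)$ is a derived scheme in the first place. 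The paper establishes this by factoring $A\to k$ as a tower of square-zero extensions via \Cref{cor:artinian-small}, propagating the scheme structure, quasi-coherence, and hypercompleteness up the tower inductively (using \cite[05YV]{stacks} and \cite[01LC]{stacks}, plus the long exact sequence on homotopy sheaves), and checking locality along the way. That inductive step is the real content of the ``only if'' direction and cannot be skipped.
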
 

\begin{proof}  
	The `if' direction follows as in \Cref{prop:homeo} from the fact that the pullback of  
	$\Spec(k) \rightarrow \Spec(A) \leftarrow (X,\mathcal{B})$ is computed in animated ringed spaces.  
	
	For the `only if' implication, we assume that the left square is a pushout. We begin by showing that $(X,\mathcal{B})$ is a derived scheme and that  $(X,\cO_X) \rightarrow (X,\cB)$ is a  local morphism.
	Choose a factorisation
	\[
	A = A_n \rightarrow \cdots \rightarrow A_r  \rightarrow \cdots \rightarrow A_0= k
	\]
	as in \Cref{cor:artinian-small}; we can arrange that  $n_i = 0$ for $ i \leq r$ and $n_i>0 $ for $i>r$.
	Set $\cB_n := A_n \otimes_A \cB$, so that we have fibre sequences
	\[
	V_i[n_i] \otimes_k \cO_X \rightarrow \cB_{i+1} \rightarrow \cB_i, \ \ n_i \geq 0.
	\]
	For each $i$, we obtain a short exact sequence
$
	0 \rightarrow I_i \rightarrow \pi_0 \cB_{i+1} \rightarrow \pi_0 \cB_{i} \rightarrow 0
$
	with $I_i$ a quasi-coherent sheaf  of $\pi_0\cO_X$-modules. It  follows by induction, using \cite[05YV]{stacks}, that $(X,\pi_0 \cB)$ is a scheme and that the map $(X,\pi_0\cO_X) \rightarrow (X,\pi_0\cB)$  is local. By \cite[01LC]{stacks}, $\cO_X$ is   quasi-coherent over $\pi_0\cB$.
	
	We also see by induction that  $ \cB$ is hypercomplete, and, using the long exact sequence on homotopy sheaves, that each $\pi_k \cB$  is a quasi-coherent $\pi_0 \cB$-module.  Hence $(X,\mathcal{B})$ is a derived scheme and the map $(X,\cO_X) \rightarrow (X,\mathcal{B})$  is local.
	
The right square is a pullback because as in the proof of \Cref{prop:homeo}, this pullback in derived schemes is computed in animated ringed spaces.  
\end{proof}

\begin{proposition}\label{lemma:lift-is-derived-scheme-morphisms}
Fix $A \rightarrow A'$   in $ \CAlg^{\an,\art}_{\Lambda }$ and  consider corresponding diagrams \[
\begin{tikzcd}
	A \arrow{r} \arrow{d} & 	\arrow[phantom, from=1-1, to=2-2, "\mathrm{(I)}", pos=0.5]	A' \arrow{r} \arrow{d} & k \arrow{d} \\
	\cB \arrow{r} &	\cB' \arrow{r}   &     \cO_X 
\end{tikzcd}   \ \ \ \  \ \ \ \   \ \    
\begin{tikzcd}
	(X,\cO_X)  \arrow{r} \arrow{d}  &  \arrow{d} 	(X,\cB') \arrow{r} \arrow{d}  &  \arrow{d} 	(X,\cB) \\
	\Spec(k)\arrow{r} &\Spec(A')   \arrow{r} &\Spec(A)   .	\arrow[phantom, from=1-2, to=2-3, "\mathrm{(II)}", pos=0.5]
\end{tikzcd}
\]   
 
\noindent
Assume that the right and outer square on the left are pushouts in $\Shv(X,\CAlg^{\an})$. 
Then $(I)$ is a pushout in $\Shv(X,\CAlg^{\an})$ if and only if  $(II)$ is a pullback in $\Sch^{\der}$.
\end{proposition}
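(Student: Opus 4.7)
The strategy is to compute the derived-scheme pullback in (II) explicitly and recognise the resulting structure-sheaf comparison as the map detecting whether (I) is a pushout. First, I would apply \Cref{lemma:lift-is-derived-scheme} to the right square and the outer rectangle on the left (both pushouts by hypothesis). This yields that $(X,\cB)$ and $(X,\cB')$ are derived schemes, that the analogous right and outer squares on the right diagram are pullbacks in $\Sch^{\der}$, and that the underlying topological spaces of $(X,\cO_X)$, $(X,\cB')$, and $(X,\cB)$ are all canonically identified with $X$ via \Cref{prop:homeo}.

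Next, I would form the derived-scheme pullback $P := \Spec(A') \times_{\Spec(A)} (X,\cB)$ together with the canonical comparison map $\phi \colon (X,\cB') \to P$ induced by the map $(X,\cB') \to \Spec(A')$ and the map $(X,\cB') \to (X,\cB)$ (the latter coming from the universal property of the outer pushout). By definition, (II) is a pullback iff $\phi$ is an equivalence; since the underlying continuous map of $\phi$ is the identity on $X$, this amounts to checking that the induced map of structure sheaves is an equivalence of sheaves of animated rings.

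The heart of the proof is the affine-local identification of the structure sheaf of $P$. Choosing an affine open cover $\{U_\alpha\}$ of $(X,\cB)$ --- by \Cref{lemma:lift-is-derived-scheme} applied to the outer pushout, such $U_\alpha$ are simultaneously affine opens of $(X,\cO_X)$ and $(X,\cB')$ --- one computes
\[
P|_{U_\alpha} \;\simeq\; \Spec(A') \times_{\Spec(A)} \Spec(\cB(U_\alpha)) \;\simeq\; \Spec(\cB(U_\alpha) \otimes_A A'),
\]
using that pullbacks of affine derived schemes correspond to tensor products of animated rings. Gluing identifies the structure sheaf of $P$ with the sheaf-level pushout $\cB \otimes_A A'$ of $\cB \leftarrow A \rightarrow A'$, and unravelling the universal property shows that the structure-sheaf map underlying $\phi$ is the canonical comparison $\cB \otimes_A A' \to \cB'$ determined by $\cB \to \cB'$ and $A' \to \cB'$ over $A \to \cB$. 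Hence $\phi$ is an equivalence iff this sheaf map is an equivalence, which is exactly the condition that (I) is a pushout in $\Shv(X,\CAlg^{\an})$.

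The main technical hurdle is justifying the affine-local gluing: one must verify that the pieces $\Spec(\cB(U_\alpha) \otimes_A A')$ assemble into a derived scheme whose global structure sheaf is $\cB \otimes_A A'$. This combines the fully faithful embedding from \Cref{affinederived} with Zariski descent for derived schemes, and relies on the hypercompleteness and quasi-coherence requirements built into \Cref{def:dersch}.
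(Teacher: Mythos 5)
Your strategy — compute the derived-scheme pullback $P$ affine-locally as $\Spec(\cB(U_\alpha)\otimes_A A')$, glue, and identify the result with the sheaf-level pushout $\cB\otimes_A A'$ — is close in spirit to the paper's approach, but there is a genuine gap at the assertion ``since the underlying continuous map of $\phi$ is the identity on $X$.'' That claim is not formal: the underlying topological space of $\Spec(\cB(U_\alpha)\otimes_A A')$ is $\Spec\bigl(\pi_0(\cB(U_\alpha))\otimes_{\pi_0(A)}\pi_0(A')\bigr)$, and for your gluing to produce a sheaf on $X$ (rather than on some other space) you must show this agrees with $U_\alpha$ via the canonical map. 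That in turn requires knowing that $\pi_0(\cB(U_\alpha))\to\pi_0(\cB(U_\alpha))\otimes_{\pi_0(A)}\pi_0(A')\to\pi_0(\cO_X(U_\alpha))$ induce homeomorphisms on prime spectra, which holds because the maximal ideals of $\pi_0(A)$ and $\pi_0(A')$ are nilpotent. This nilpotency argument is essentially the entire content of the paper's proof, and your proposal elides it.

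Relatedly, your final paragraph locates the ``main technical hurdle'' in the gluing, hypercompleteness, and quasi-coherence conditions, but those are routine once the topological identification is secured. The real subtlety is that pullbacks in $\Sch^{\der}$ are computed in $\Topp_{\CAlg^{\an}}^{\loc}$ (locally animated ringed spaces and local morphisms), which is a priori different from the naive pullback in $\Topp_{\CAlg^{\an}}$ (topological fibre product with sheaf pushout). The paper's proof shows precisely that these two pullbacks agree in this situation, reducing to the affine case and invoking nilpotency of the maximal ideals; your proof needs exactly this input and should state it.
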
 
\begin{proof}Both left   and   outer square on the right are  pullbacks in $\Sch^{\der}$ {by \Cref{lemma:lift-is-derived-scheme}.  }
Square (I) is a pushout if and only if  (II) is a pullback in $\Topp_{\CAlg}$. It  suffices to check that 
the pullback of $\Spec(A') \rightarrow \Spec(A) \leftarrow (X,\cB)$ in \textit{locally} animated ringed spaces is computed in $\Topp_{\CAlg}$.
As $(X,\cB)$ is a derived scheme, we can assume   without restriction that \mbox{$(X,\cB) = \Spec(R)$} is  affine. Here, the claim   follows as the maps \mbox{$\pi_0(R) \rightarrow \pi_0(A') \otimes_{\pi_0(A)} \pi_0(R) \rightarrow k \otimes_{\pi_0(A)} \pi_0(R)$} induce homeomorphisms  on prime spectra, as the maximal ideals of $\pi_0(A)$ and $\pi_0(A')$ \mbox{are nilpotent. }
\end{proof}

\begin{corollary}
	For $X$ a derived scheme over $k$, there is a canonical equivalence  
	\[\Def^{\sch}_X \simeq {\Def}_X.\]
	In particular, $\Def^{\sch}_X$ is a formal moduli problem.
\end{corollary}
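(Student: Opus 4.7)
The plan is to produce an equivalence between the two left fibrations from which $\Def_X$ and $\Def_X^{\sch}$ arise by straightening, which then descends to an equivalence of the underlying functors into $\widehat{\cat{S}}$. The inputs are already in place: \Cref{lemma:lift-is-derived-scheme} matches objects over a fixed $A \in \CAlg^{\an,\art}_\Lambda$, \Cref{lemma:lift-is-derived-scheme-morphisms} matches cocartesian morphisms, and \Cref{prop:homeo} ensures that the underlying topological space of any deformation is canonically $X$, removing the only obstruction to comparing the two setups pointwise.

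Concretely, I would construct a functor
\[
\Phi\colon \cat{D}^{\cocart}_{/(k,X)} \longto
\left(\CAlg^\an \mytimes{\Shv(X,\CAlg^\an)} \Fun(\Delta^1, \Shv(X,\CAlg^\an))\right)^{\cocart}_{/(k,\cO_X)}
\]
sending a deformation $(A,\, Y \rightarrow \Spec(A))$ over $A$ to $(A,\, A \rightarrow \cO_Y)$, where one uses the homeomorphism $X \isomto Y$ of \Cref{prop:homeo} to regard $\cO_Y$ as a sheaf on $X$, and the structure map $\Spec(A) \leftarrow Y$ provides $A \rightarrow \cO_Y$ after sheafifying the constant presheaf. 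The action of $\Phi$ on morphisms is given by precomposing with the homeomorphism on the target and taking the induced map of $\CAlg^\an$-valued sheaves on $X$. By construction $\Phi$ lies over $\CAlg^\an_{/k}$, so it is a map of left fibrations.

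To see $\Phi$ is an equivalence, I check it is a fibrewise equivalence over each $A \in \CAlg^{\an,\art}_\Lambda$. Essential surjectivity follows from \Cref{lemma:lift-is-derived-scheme}: given a pushout square of sheaves on $X$ with apex $\cO_X$, that proposition produces a derived scheme $(X,\cB)$ with a local map to $\Spec(A)$ whose pullback to $\Spec(k)$ recovers $X$, i.e.\ a preimage under $\Phi$. For fullness and faithfulness, given two deformations $Y, Y'$ over $A$, the mapping spaces $\Map_{\cat{D}^{\cocart}}((A,Y),(A,Y'))$ and $\Map((A,A\rightarrow \cO_Y),(A,A\rightarrow \cO_{Y'}))$ are both sections spaces over $\id_A$, and the equivalence between them at the level of $q$-cocartesian morphisms is exactly the content of \Cref{lemma:lift-is-derived-scheme-morphisms}: a square of derived schemes over $\Spec(A)\rightarrow \Spec(A')$ is a pullback iff the associated square of sheaves is a pushout. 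Since $\Phi$ restricts to an equivalence on each fibre of the left fibration, it is an equivalence; passing to straightenings yields $\Def^{\sch}_X \simeq \Def_X$, and the last clause then follows from \Cref{prop:defan}.

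The main obstacle is bookkeeping at the $\infty$-categorical level rather than any new geometric input. One must check carefully that $\Phi$ is well-defined as a functor of $\infty$-categories (not merely a bijection on objects/morphisms), that it respects the slice over $(k,X)$, and that \Cref{prop:homeo} can be promoted to a natural identification of underlying topological spaces that is coherent under composition of deformations. The cleanest way to organise this is to work with the full ambient subcategory of $\Topp_{\CAlg^\an}$ consisting of locally ringed spaces whose underlying space is identified with $X$, noting that for Artinian bases this subcategory contains all deformations by \Cref{prop:homeo} and is canonically equivalent to the relevant slice of $\Shv(X,\CAlg^\an)$ via the structure sheaf functor.
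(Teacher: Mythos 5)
Your proposal uses exactly the same three ingredients as the paper's proof — Propositions~3.17, 3.18, and 3.19 — and the same overall strategy of exhibiting an equivalence between the two left fibrations, so at the level of ideas the two arguments coincide. The one real difference is the direction of the functor: the paper constructs the comparison functor from sheaves to derived schemes, sending a pushout square with apex $\cO_X$ to the derived scheme $(X,\cB)$, which lives on the \emph{fixed} topological space $X$; no identification of underlying spaces is ever required, and Proposition~3.17 enters only pointwise, in the essential surjectivity step, where functoriality of the homeomorphism is irrelevant. You instead build the inverse functor $\Phi$ from derived schemes to sheaves, which forces you to pull back each structure sheaf $\cO_Y$ along a homeomorphism $X\isomto Y$ that varies with the object, and you are right that the resulting coherence is the genuine burden of your approach. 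Your proposed fix — restricting to locally ringed spaces ``whose underlying space is identified with $X$'' — gestures at the right idea but is not by itself a resolution: the identifications are data, not conditions, and organising them coherently amounts to unstraightening the assignment $Z\mapsto\Shv(Z,\CAlg^\an)$ over $\Topp$ and tracking the compatibilities that the paper's choice of direction renders vacuous. None of this is a mathematical error (the two functors are inverse to each other, so the comparison must go through), but the paper's orientation of the functor is the one that yields a clean write-up; if you keep yours, you should replace the sketch in your final paragraph with an honest construction of $\Phi$ using the cartesian fibration $\Topp_{\CAlg^\an}\to\Topp$ underlying \cite[Construction~1.1.2.2]{LurieSAG}.
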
 
\begin{proof}
Define $\cat{C}$ as in \Cref{constr:def-an} for $\cat{T}$ the Zariski site of $X$.
Write $\cat{C}^{\art}_{/(k,\cO_X)} \subset \cat{C}^{ }_{/(k,\cO_X)}$ for the full subcategory  on all objects over animated rings $A\in \CAlg_{/k}$ satisfying conditions (1) and (2) of \Cref{def:artinian-animated-ring}. We define $\cat{C}^{\cocart,\art}_{/(k,\cO_X)} $, $\cat{D}^{\art}_{/(k,X)}$, and 
$\cat{D}^{\cocart,\art}_{/(k,X)} $ \vspace{2pt} similarly.
By  \vspace{1pt} \Cref{lemma:lift-is-derived-scheme},  \Cref{lemma:lift-is-derived-scheme-morphisms}, and \Cref{prop:homeo}, 
the natural functor $ \cat{C}^{\cocart, \art}_{/(k,\cO_X)} \rightarrow  \cat{D}^{\art }_{/(k,X)} $ gives a \vspace{1pt} fully faithful and essentially surjective functor $ \cat{C}^{\cocart,\art}_{/(k,\cO_X)} \rightarrow  \cat{D}^{\cocart, \art }_{/(k,X)} $.  
\end{proof}
 
\begin{definition}
	A connective module $M$ over an animated   ring $A$  is \emph{flat}  if $\pi_0 M$ is flat as a $\pi_0 A$-module, and if for every $n$ the map
	$
	\pi_n A \otimes_{\pi_0 A} \pi_0 M \rightarrow \pi_n M
	$
	is an isomorphism. We say that a derived scheme $(X,\cO_X)$ over $A$ is \emph{flat} if for every affine open $U\subset (X,\pi_0 \cO_X)$ the $A$-module $\cO_X(U)$ is flat.
\end{definition}

\begin{lemma}\label{lemma:flatness-over-artinian}
	Let $A\rightarrow k$ be in $\CAlg_\Lambda^{\art,\an}$, and $M$ in $\Mod^{\cn}_{A}$. Then $M$ is flat if and only if $k\otimes_A M$ is discrete.
\end{lemma}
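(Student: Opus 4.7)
The ``if'' direction is formal: flatness of $M$ over $A$ means the functor $M \otimes_A (-)\colon \Mod_A \to \Mod_A$ is $t$-exact, and in particular sends the discrete $A$-module $k$ to a discrete $A$-module.

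For the ``only if'' direction, I would induct on the length of a factorization $A = A_n \to \cdots \to A_0 = k$ provided by \Cref{cor:artinian-small}, in which each stage sits in a pullback square with $k \to \sqz_k(V_i[n_i+1])$. Setting $M_i := A_i \otimes_A M$, the goal is to show that each $M_i$ is flat over $A_i$; the case $i=0$ is immediate since $M_0 = k \otimes_A M$ is discrete by hypothesis, hence flat over the field $k$. For the inductive step, the pullback yields a fibre sequence $V_i[n_i] \to A_{i+1} \to A_i$ in $\Mod_{A_{i+1}}$. Tensoring with $M_{i+1}$ over $A_{i+1}$ and using that $V_i \otimes_{A_{i+1}} M_{i+1} \simeq V_i \otimes_k M_0$ (since $V_i$ is a $k$-vector space), one obtains a fibre sequence
\[
V_i[n_i] \otimes_k M_0 \to M_{i+1} \to M_i.
\]
Comparing the resulting long exact sequence on homotopy groups with the one coming from $V_i[n_i] \to A_{i+1} \to A_i$ itself, together with the inductive hypothesis that $\pi_\ast M_i \cong \pi_\ast A_i \otimes_{\pi_0 A_i} \pi_0 M_i$, one reads off that $\pi_\ast M_{i+1} \cong \pi_\ast A_{i+1} \otimes_{\pi_0 A_{i+1}} \pi_0 M_{i+1}$.

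\textbf{Main obstacle.} The delicate point is verifying the flatness of $\pi_0 M_{i+1}$ over $\pi_0 A_{i+1}$ in the inductive step, which demands a case split on $n_i$. When $n_i > 0$, the ring-level fibre sequence gives $\pi_0 A_{i+1} = \pi_0 A_i$ and $\pi_0 M_{i+1} = \pi_0 M_i$, so flatness on $\pi_0$ passes through unchanged from the inductive hypothesis. When $n_i = 0$, the step $A_{i+1} \to A_i$ is a discrete square-zero extension whose kernel is a $k$-vector space, and one must invoke the local criterion of flatness over the Noetherian local ring $\pi_0 A_{i+1}$ (exploiting nilpotence of its maximal ideal) to deduce flatness of $\pi_0 M_{i+1}$ from the vanishing of $\pi_1(k \otimes_{A_{i+1}} M_{i+1}) = \pi_1(k \otimes_A M) = 0$, which translates into $\mathrm{Tor}^{\pi_0 A_{i+1}}_1(k, \pi_0 M_{i+1}) = 0$.
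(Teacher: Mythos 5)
Your proposal takes a genuinely different route from the paper's, and the comparison is instructive. The paper's proof is three lines: by \cite[Theorem 7.2.2.15]{LurieHA}, the connective module $M$ is flat over $A$ if and only if $N\otimes_A M$ is discrete for every discrete $A$-module $N$; since $\pi_0 A$ is Artinian local with residue field $k$, every such $N$ has a finite filtration whose graded pieces are discrete $k$-modules, and the associated cofibre sequences reduce the discreteness of $N\otimes_A M$ to that of $k\otimes_A M$; and flatness over a field is the same as discreteness. So the paper filters the \emph{test module} $N$ and leaves $A$ alone, whereas you filter the \emph{ring} $A$ along the tower of \Cref{cor:artinian-small} and propagate flatness stage by stage via the $\pi_\ast$-isomorphism characterisation. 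Your strategy can be made to work, but it commits you to considerably more bookkeeping precisely because that characterisation is harder to manipulate than ``tensoring preserves discreteness.''

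Two steps you treat as routine need real argument. First, ``one reads off'' $\pi_\ast M_{i+1}\cong\pi_\ast A_{i+1}\otimes_{\pi_0 A_{i+1}}\pi_0 M_{i+1}$ only once you already have $\pi_0 M_{i+1}$ flat over $\pi_0 A_{i+1}$ in hand: you then tensor the ring-level long exact sequence with $\pi_0 M_{i+1}$ (exactness survives by that flatness), identify $V_i\otimes_{\pi_0 A_{i+1}}\pi_0 M_{i+1}\cong V_i\otimes_k M_0$, check naturality of the boundary maps, and run the five lemma around degrees $n_i$ and $n_i+1$; so the two halves of your ``main obstacle'' are not independent and must be interleaved. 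Second, the ``translation'' of $\pi_1(k\otimes_{A_{i+1}}M_{i+1})=0$ into $\mathrm{Tor}_1^{\pi_0 A_{i+1}}(k,\pi_0 M_{i+1})=0$ is not a tautology, since $A_{i+1}$ and $M_{i+1}$ may have higher homotopy contributing to $\pi_1$. It does hold: in the Tor spectral sequence $E_2^{s,t}=\mathrm{Tor}_s^{\pi_\ast A_{i+1}}(k,\pi_\ast M_{i+1})_t\Rightarrow\pi_{s+t}(k\otimes_{A_{i+1}}M_{i+1})$, the corner term $E_2^{1,0}$ supports no nonzero differentials in or out and computes $\mathrm{Tor}_1^{\pi_0 A_{i+1}}(k,\pi_0 M_{i+1})$ (the degree-zero part of a graded free resolution of $k$ over $\pi_\ast A_{i+1}$ is a free resolution of $k$ over $\pi_0 A_{i+1}$), so it vanishes when $\pi_1(k\otimes_{A_{i+1}}M_{i+1})$ does. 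Neither gap is fatal, but both are exactly the sort of work the paper's filtration-of-$N$ argument avoids; you should either supply them or adopt the shorter route.
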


\begin{proof}
	By \cite[Theorem 7.2.2.15]{LurieHA}, the $A$-module $M$ is flat if and only if $M\otimes_A N$ is discrete for all discrete $A$-modules $N$. But every such $A$-module has a finite filtration whose graded pieces are discrete $k$-modules, so $M$ is flat if and only if $k\otimes_A M$ is flat. Since $k$ is a field, this is equivalent with $k\otimes_A M$ being discrete.
\end{proof} 

\begin{corollary}\label{cor:flat-scheme-over-artinian}
	Let $X$ be a quasi-compact and quasi-separated scheme and  assume 
 $\cB$ is as in \Cref{lemma:lift-is-derived-scheme}. Then $(X,\cB)$ is flat over $A$.
\end{corollary}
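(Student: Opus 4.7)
The plan is to fix an affine open $U \subset (X, \pi_0\cO_X)$ and argue that $\cB(U)$ is flat over $A$. By \Cref{lemma:flatness-over-artinian}, this reduces to showing that $k\otimes_A \cB(U)$ is discrete. The main point is to get a handle on $\cB(U)$ by exhibiting $(U,\cB|_U)$ as an affine derived scheme and then restricting the pullback square of \Cref{lemma:lift-is-derived-scheme} to $U$.

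First I would verify that $(U, \cB|_U)$ is affine. The proof of \Cref{lemma:lift-is-derived-scheme} exhibits $(X,\pi_0\cB)$ as an iterated square-zero (in particular nilpotent) thickening of the classical scheme $(X,\pi_0\cO_X)$, so the underlying topological space is unchanged and $(U,\pi_0\cB|_U)$ is a nilpotent thickening of the affine classical scheme $(U,\pi_0\cO_X|_U)$. A nilpotent thickening of an affine scheme is affine, and combined with the hypercompleteness of $\cB$ from \Cref{def:dersch}(3), this yields $(U,\cB|_U)\simeq \Spec(\cB(U))$; by the same token $(U,\cO_X|_U)\simeq \Spec(\cO_X(U))$.

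Next I would restrict the right-hand pullback square of \Cref{lemma:lift-is-derived-scheme} to $U$. Since $\pi_0 A$ and $k$ are local Artinian, both $\Spec(k)$ and $\Spec(A)$ have a single point, so the pullback $(X,\cO_X)\simeq \Spec(k)\mytimes{\Spec(A)}(X,\cB)$ has underlying topological space $|X|$, and pullbacks of derived schemes commute with open immersions on the factor $(X,\cB)$. Hence restricting to $U$ yields a pullback of derived schemes
\[
\begin{tikzcd}
\Spec(\cO_X(U)) \arrow{r} \arrow{d} & \Spec(\cB(U)) \arrow{d} \\
\Spec(k) \arrow{r} & \Spec(A).
\end{tikzcd}
\]
By the fully faithfulness of $\Spec$ recorded in \Cref{affinederived}, this corresponds to a pushout in $\CAlg^{\an}$, so $k\otimes_A \cB(U)\simeq \cO_X(U)$. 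Since $X$ is a classical scheme, $\cO_X(U)$ is discrete, and \Cref{lemma:flatness-over-artinian} concludes that $\cB(U)$ is flat over $A$.

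The main obstacle is the careful verification in the first step that $(U,\cB|_U)$ is an affine derived scheme and that the global sections functor correctly identifies $k\otimes_A \cB(U)$ with the base change of affine derived schemes; the quasi-compact and quasi-separated hypothesis on $X$ ensures that the affine opens in $(X,\pi_0\cO_X)$ form a well-behaved basis on which flatness can be tested, making the reduction to affines legitimate.
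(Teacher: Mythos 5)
Your proof is correct but takes a genuinely different route from the paper. The paper works entirely with sheaves: it first shows directly that the natural map $k\otimes_A \cB(U) \to (k\otimes_A \cB)(U)$ is an equivalence for affine opens $U$, by verifying (via \Cref{sheafifyingonbases}) that the presheaf $V\mapsto k\otimes_A \cB(V)$ already satisfies the sheaf condition on distinguished affines -- this works because $k$ is a perfect $A$-module, so $k\otimes_A (-)$ preserves the relevant finite limits -- and then concludes $k\otimes_A\cB(U)\simeq\cO_X(U)$ is discrete. You instead pass to the geometric picture via the pullback square of \Cref{lemma:lift-is-derived-scheme}, restrict it to $U$, identify all four corners as affine derived schemes, and invoke fully faithfulness of $\Spec$ to extract the pushout $k\otimes_A \cB(U) \simeq \cO_X(U)$ in $\CAlg^{\an}$. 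Both approaches are valid; yours is perhaps more geometric, but it leans on two inputs that are not explicitly recorded in the paper and deserve to be flagged: (i) that a derived scheme whose underlying classical scheme is affine is itself equivalent to $\Spec$ of its global sections (a result from Lurie's \emph{Spectral Algebraic Geometry}, not just a consequence of hypercompleteness as your phrasing suggests); and (ii) that fully faithfulness of $\Spec$ together with the fact that all corners of the pullback square are affine forces the corresponding square in $\CAlg^{\an}$ to be a pushout -- this follows by mapping out of an arbitrary $D\in\CAlg^{\an}$ and using both fully faithfulness and the universal property of the pullback, but the one-line appeal to fully faithfulness elides this small Yoneda argument. The paper's route avoids both of these by staying entirely on the sheaf side, and is correspondingly shorter.
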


\begin{proof}
	We first show that 
	the natural map
	$
	k\otimes_A \cB(U) \longto (k\otimes_A \cB)(U)
	$ is an equivalence for  
 $U\subset X$ affine open.
As sheafification is local, we may assume   that $U = (X,\cB)$. Here,  the claim follows from \Cref{sheafifyingonbases}, as the presheaf $V \mapsto 	k\otimes_A \cB(V)$ satisfies condition  \Cref{sheafesbases}(\ref{cond2}) on  distinguished affines.
Hence $k\otimes_A \cB(U) \simeq \cO_X(U)$, and  \Cref{lemma:flatness-over-artinian} shows that \mbox{$\cB(U)$ is flat over $A$}.
\end{proof} 
Hence  $\Def_X(A)$ is the space ($\infty$-groupoid) of flat derived schemes over $A$ \mbox{lifting $X$.} If $A$ is discrete,  $\Def_X(A)$ is the $1$-groupoid of flat \mbox{schemes over $A$ lifting $(X,\cO_X)$.}

\subsubsection*{Zariski sheaves and  \'{e}tale sheaves}

We proceed to  the equivalence between \mbox{(1) and (3).}
Let $(X_\et,\cO_{X,\et})$ be the pair of small \'etale site 
and \'etale structure sheaf of $X$ (\Cref{def:smet}).
Subsection \ref{sec:defsheavesanring}  constructs two formal moduli problems \mbox{$\Def_{X_{\zar}}$ and $\Def_{X_\et} $.}

\begin{lemma}  For $X$ a derived scheme over $k$, there is a canonical equivalence  \[\Def_{X_\et} \rightarrow \Def_{X_{\zar}}.\]  
\end{lemma}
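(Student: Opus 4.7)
The plan is to use conservativity of tangent fibres. First, I would construct a natural comparison morphism between the two formal moduli problems. The change-of-topology map between the small Zariski and \'etale sites of $X$ induces an adjoint pair of pullback and pushforward functors $(\iota^{\ast},\iota_{\ast})$ on sheaves of animated rings, with $\iota^{\ast}\cO_{X,\zar}\simeq\cO_{X,\et}$. Since pullback $\iota^{\ast}$ preserves colimits, it sends a deformation pushout square in the Zariski topology to a deformation pushout square in the \'etale topology, producing a natural map $\Def_{X_{\zar}}\rightarrow\Def_{X_{\et}}$; the equivalence asserted in the lemma is (or is the inverse of) this comparison.

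Next, by the conservativity result in \Cref{prop:conservativity}, it suffices to verify this map is an equivalence on values at $\sqz_{k}(V[n])$ for every finite-dimensional $k$-vector space $V$ and every $n\geq 0$. \Cref{cor:defan-tangent-space} identifies the Zariski value with
\[
\Map_{\Mod(X_{\zar},\cO_{X,\zar})}\bigl(L^{\zar}_{X/k},\, V[n+1]\otimes_{k}\cO_{X,\zar}\bigr),
\]
and analogously on the \'etale side. Using the identification $\iota^{\ast}L^{\zar}_{X/k}\simeq L^{\et}_{X/k}$ from \Cref{prop:cotzaret} together with the $(\iota^{\ast},\iota_{\ast})$-adjunction, the comparison reduces to showing that the natural map
\[
V[n+1]\otimes_{k}\cO_{X,\zar}\longrightarrow\iota_{\ast}\bigl(V[n+1]\otimes_{k}\cO_{X,\et}\bigr)
\]
is an equivalence in $\Mod(X_{\zar},\cO_{X,\zar})$.

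The main obstacle is this last equivalence. Since $V$ is a finite-dimensional $k$-vector space, $V[n+1]\otimes_{k}\cO_{X}$ is a shift of a finite direct sum of copies of the structure sheaf, so the claim reduces to showing that the derived pushforward $R\iota_{\ast}\cO_{X,\et}$ coincides with $\cO_{X,\zar}$. This is local on $X$, and on an affine open it follows from the classical theorem that higher \'etale cohomology of a quasi-coherent sheaf on an affine scheme vanishes and agrees with Zariski cohomology; the derived (animated) refinement follows from this underived statement by descent along the Postnikov tower of $\cO_{X}$, since each $\pi_{m}\cO_{X}$ is a quasi-coherent $\pi_{0}\cO_{X}$-module. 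Combining these ingredients yields the desired equivalence of formal moduli problems.
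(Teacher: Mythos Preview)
Your proposal is correct and follows essentially the same approach as the paper: both use conservativity of the tangent fibre (\Cref{prop:conservativity}), identify the tangent spaces via \Cref{cor:defan-tangent-space} and \Cref{prop:cotzaret}, and reduce to the unit of the $(\iota^{\ast},\iota_{\ast})$-adjunction being an equivalence on quasi-coherent sheaves, which is checked on affine opens via faithfully flat descent. The paper phrases this last step as full faithfulness of $\iota^{\ast}$ on quasi-coherent sheaves (citing \cite[D.6.3.1]{LurieSAG}) rather than your formulation $R\iota_{\ast}\cO_{X,\et}\simeq\cO_{X,\zar}$, but the content is the same.
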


\begin{proof}
	By conservativity of the tangent fibre (cf.\ \Cref{prop:conservativity}), it suffices to verify that for every perfect $V$ in $\Mod_k^{\cn}$, the natural map 
	\[
	\Def_{X_{\et}}(\sqz_k(V) ) \rightarrow 
	\Def_{X_{\zar}}(\sqz_k(V) )
	\]
	is an equivalence.
	By \Cref{cor:defan-tangent-space}, this map is given by the   natural map
	\[
	\Map_{\Mod(X_\et, \cO_{X,\et})}(L^{\et}_{X/k},V\otimes_k \cO^{\et}_X) \rightarrow
	\Map_{\Mod(X, \cO_{X})} (L_{X/k},V\otimes_k \cO_X).
	\]
	We now consider the usual pair of adjoint functors $$\iota^\ast: \Mod(X_\et, \cO_{X}) \leftrightarrows \Mod(X_\et, \cO_{X,\et}): \iota_\ast, $$
	where $\iota_\ast$ is obtained by evaluating an \'etale sheaf on Zariski opens.
	
	It is not hard to see that the unit $\mathcal{F} \xrightarrow{\simeq} \iota_\ast \iota^{\ast} \mathcal{F}$ is an equivalence for all quasi-coherent Zariski sheaves. Indeed, since sheafification is local, it suffices to check this on affine opens $U\subset X$, where the claim is an instance of faithfully flat  descent (cf.\ \cite[D.6.3.1]{LurieSAG}).
	We deduce that   $i^{\ast}$ is fully faithful on quasi-coherent sheaves, and so the claim follows from \Cref{prop:cotzaret}. 
\end{proof}

\subsubsection*{\'{E}tale sheaves and Deligne--Mumford stacks}
To construct our Serre-Tate period map in \Cref{sec:serre-tate}, 
we  need  a variant of   $\Def_{X_\et}$. 
Let $k$ be a perfect field of characteristic $p$ and set $\Lambda = W = W(k)$. Write $S_{\et}$ for the small  \'etale site of \mbox{$\Spec(k)$.}
\begin{notation}[The sheaf $\cW$] \label{not:cW}
		The sheaf $\cW \in\Shv(S_{\et}, \CAlg^{\an})$ sends an \'{e}tale map 
	$\coprod_{i} \Spec(L_i) \rightarrow \Spec(k)$ with $L_i/k$ a finite separable extension to  $\prod_i W(L_i)$.  
\end{notation}

\begin{notation}The structure map $f: X \rightarrow \Spec(k)$ gives  an inverse image  functor
	$$f^{-1}:  \Shv(S_{\et}, \CAlg^{\an}) \rightarrow \Shv(X_{\et}, \CAlg^{\an}).$$
As before, we 
denote all constant sheaves corresponding to \mbox{some  $A \in \CAlg^{\an}$ by $A$.}
\end{notation}

\begin{construction}[The functor $\widetilde{\Def}_{X_\et}$]
	\label{constr:def-an-tilde}
	Consider the $\infty$-category
	\[
	\widetilde{\cat{C}}_{\et} := \CAlg^{\an}_{W} \times_{  \Fun(\Delta^1, \Shv(X_{\et},\CAlg^\an))} 
	\Fun(\Delta^2,\Shv(X_{\et},\CAlg^\an))
	\]
	whose objects are pairs $(A\ , \   A \rightarrow f^{-1}\cW\otimes_W {A}  \rightarrow \cB)$ consisting of an animated $W$-algebra  $A$ and a composition of maps of  $\CAlg^{\an}$-valued sheaves on $X_{\et}$.
	
	We have a natural projection $\widetilde{q}\colon 	\widetilde{\cat{C}}_{\et}  \rightarrow \CAlg^{\an}_{W}$, 
	and  write $	\widetilde{\cat{C}}^{\cocart}_{\et}  \subset 	\widetilde{\cat{C}}$ for the \mbox{\textit{non-full}} subcategory consisting of all objects and only the $\widetilde{q}$-cocartesian morphisms. 
	These are   the morphisms in $\widetilde{\cat{C}}$ for which the underlying diagram
	\begin{equation}\label{diagram:tilde}
		\begin{tikzcd}
			A \arrow{r}\arrow{d} & A' \arrow{d} \\	
			f^{-1}\cW\otimes_W {A} \arrow{r}\arrow{d} & f^{-1}\cW\otimes_W {A}' \arrow{d}   \arrow[lu, phantom,  " \ \ \  \ulcorner", at start]\\
			\cB \arrow{r} & \cB'\arrow[lu, phantom, "\ulcorner", at start]
		\end{tikzcd}
	\end{equation}
	consists of pushouts.
By  \cite[Corollary 2.4.2.5]{LurieHTT},   the   restriction of $\widetilde{q}$ to $\widetilde{\cat{C}}^{\cocart}$ is a left fibration.
	As $ \cO_{S,\et} =  \cW\otimes_W {k}$ is the \'{e}tale structure sheaf of $\Spec(k)$, we obtain a map \mbox{$k \rightarrow f^{-1}\cW\otimes_W {k} \rightarrow \cO_{X,\et}$}
	and hence an object in $\widetilde{\cat{C}}$, which \mbox{we  denote by $(k,\cO_{X,\et})$.}

	The induced functor  
	$
	(\widetilde{\cat{C}}^\cocart)_{/(k,\cO_X)} \rightarrow \CAlg^{\an }_{W//k}
	$
	is also a left fibration. Straightening and restricting, we obtain  a functor 
	$ \widetilde{\Def}_{X_{\et}}:\CAlg^{\an,
		\art}_W \rightarrow \widehat{\cat{S}}.$
\end{construction}

\begin{proposition}\label{prop:equivetale}
	Restricting along $\Delta^{\{0,2\}}\subset \Delta^2$ induces  a canonical equivalence 
	$$\widetilde{\Def}_{X_{\et}} \xrightarrow{\simeq}   {\Def}_{X_{\et}}  $$
	of formal moduli problems  sending a diagram as in (\ref{diagram:tilde}) to the outer pushout square.
\end{proposition}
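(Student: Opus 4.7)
The strategy is to show that for each $A\in\CAlg^{\an,\art}_W$ the restriction map $\Phi_A\colon \widetilde{\Def}_{X_{\et}}(A)\to \Def_{X_{\et}}(A)$ is an equivalence of spaces by proving its fibres are contractible. Since $\Def_{X_{\et}}$ is a formal moduli problem by \Cref{prop:defan}, this will also yield that $\widetilde{\Def}_{X_{\et}}$ is one.

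Fix $[A \to \cB]\in \Def_{X_{\et}}(A)$, so $\cB\otimes_A k \simeq \cO_{X,\et}$. Unwinding \Cref{constr:def-an-tilde}, a lift to $\widetilde{\Def}_{X_{\et}}(A)$ is a 2-simplex $A\to f^{-1}\cW\otimes_W A \to \cB$ whose $\Delta^{\{0,2\}}$-edge is $A\to \cB$ and whose lower square in (\ref{diagram:tilde}) is a pushout; the upper square is automatic, being a base change along the left adjoint $(-)\otimes_W f^{-1}\cW$. The edge $A\to f^{-1}\cW\otimes_W A$ is canonical, and by the pushout property of $f^{-1}\cW\otimes_W A \simeq A\sqcup_W f^{-1}\cW$ the space of such 2-simplices is equivalent to the space of $W$-algebra maps $\phi\colon f^{-1}\cW\to \cB$ in $\Shv(X_{\et},\CAlg^{\an}_W)$. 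Using $\cB\otimes_A k\simeq \cO_{X,\et}$ one checks that the pushout identity $\cB\otimes_{f^{-1}\cW\otimes_W A}(f^{-1}\cW\otimes_W k)\simeq \cB\otimes_A k\simeq \cO_{X,\et}$ is automatic, so the remaining content of the lower-square condition is the commutativity constraint $(\cB\to\cO_{X,\et})\circ \phi = \phi_{\can}$, where $\phi_{\can}\colon f^{-1}\cW\to\cO_{X,\et}$ denotes the canonical composite $f^{-1}\cW\to f^{-1}\cW\otimes_W k\to\cO_{X,\et}$ induced by $X\to\Spec(k)$. Thus the fibre of $\Phi_A$ over $[A\to\cB]$ identifies with the space of lifts of $\phi_{\can}$ along the reduction $\cB\to \cO_{X,\et}$.

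To show this lift space is contractible, I will use that $\cW$ is formally étale over $W$: each $W\to W(L)$ (with $L/k$ finite separable) is finite étale, so $L_{W(L)/W}\simeq 0$; sheafifying over $S_{\et}$ and applying $f^{-1}$ gives $L_{f^{-1}\cW/W}\simeq 0$. By \Cref{cor:artinian-small} I factor $A = A_n\to\cdots\to A_0 = k$ as an iterated pullback along trivial square-zero extensions $k\to\sqz_k(V_i[n_i+1])$; base-changing with $\cB$ yields a corresponding filtration $\cB = \cB_n\to\cdots\to\cB_0 = \cO_{X,\et}$ in which each $\cB_i$ sits in a pullback square over a trivial square-zero extension of $\cO_{X,\et}$ by $V_i\otimes_k\cO_{X,\et}[n_i+1]$. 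By \Cref{prop:cotangent-sheaf-adjunction} (suitably unwound for the composite $W\to f^{-1}\cW\xrightarrow{\phi_{\can}}\cO_{X,\et}$), the space of $W$-algebra lifts of $\phi_{\can}$ through each such square-zero extension is controlled by $\Map_{\cO_{X,\et}}(L_{f^{-1}\cW/W}\otimes_{f^{-1}\cW}\cO_{X,\et},\,V_i\otimes_k\cO_{X,\et}[n_i+1])$, which is contractible by the vanishing of $L_{f^{-1}\cW/W}$. An induction on $n$ (with trivial base case $A=k$, $\cB=\cO_{X,\et}$) then yields contractibility of the fibre of $\Phi_A$.

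The main delicate step will be the first reformulation: translating the 2-simplex data together with its pushout constraint into a single lifting problem for $\phi_{\can}$ along the thickening $\cB\to\cO_{X,\et}$. Once this is set up, formal étaleness of $\cW/W$ combined with the square-zero filtration from \Cref{cor:artinian-small} closes the argument mechanically.
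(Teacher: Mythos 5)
Your proof is correct in outline and takes a genuinely different route from the paper. The paper's argument runs through derived Deligne--Mumford stacks: it first upgrades Propositions~\ref{prop:homeo}, \ref{lemma:lift-is-derived-scheme}, and \ref{lemma:lift-is-derived-scheme-morphisms} to the Deligne--Mumford setting (Propositions~\ref{prop:homeostackyx}--\ref{lemma:lift-is-derived-scheme-morphisms-etale}), thereby identifying $(\widetilde{\cat{C}}_{\et})^{\cocart,\art}_{/(k,\cO_X)}$ and $({\cat{C}}_{\et})^{\cocart,\art}_{/(k,\cO_X)}$ with categories of cocartesian arrows of pairs $\bigl[(\cat{Y},\cB)\to \Spet(A)\to(\cat{S},A)\bigr]$ resp.\ $\bigl[(\cat{Y},\cB)\to(\cat{S},A)\bigr]$, and then invokes the $\Gamma\dashv\Spet$ adjunction of Proposition~\ref{prop:adjunction} to conclude that restriction along $\Delta^{\{0,2\}}$ is an equivalence of cocartesian fibrations. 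Your argument bypasses the stacks machinery entirely: you identify the fibre of $\Phi_A$ over $[A\to\cB]$ with the space of $W$-algebra lifts of $\phi_{\can}\colon f^{-1}\cW\to\cO_{X,\et}$ through $\cB\to\cO_{X,\et}$, and then kill it via $L_{f^{-1}\cW/W}\simeq 0$ together with the square-zero tower from \Cref{cor:artinian-small}. This is in effect a hands-on re-derivation, in the special case at hand, of the universal property of $\Spet$ that the paper black-boxes; what it buys you is a shorter, more computational proof that avoids importing the theory of strictly Henselian $\infty$-topoi, at the cost of having to carefully establish two points you currently gloss over: (i) that $L_{f^{-1}\cW/W}\simeq 0$ in $\Mod(X_\et,f^{-1}\cW)$ (the natural route is $f^{-1}L_{\cW/W}\simeq L_{f^{-1}\cW/W}$ combined with the vanishing $L_{\cW(V)/W}\simeq 0$ on quasi-compact $V\in S_\et$, and both deserve a sentence or two of justification, since neither is quoted in the paper); and (ii) the identification of the fibre of $\Phi_A$, which involves untangling both the $\Delta^2$-filling data and the slicing over $(k,\cO_{X,\et})$ after passing to cocartesian arrows. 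The induction itself is clean: with $\cB_{i+1}=\cB_i\times_{\sqz_{\cO_{X,\et}}(M_i)}\cO_{X,\et}$ one gets a fibre sequence $F_{i+1}\to F_i\to \Map_{f^{-1}\cW}(L_{f^{-1}\cW/W},\phi_{\can}^{\ast}M_i)$ of lift spaces, whose base is contractible, so $F_{i+1}\simeq F_i$, and $F_0=\ast$.
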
 	

The proof of \Cref{prop:equivetale} relies on a  universal property of  
 the morphism \mbox{$A \rightarrow f^{-1}\cW  \otimes_W A$}. It is naturally formulated in the language of  derived Deligne--Mumford stacks, which we will now recall. As it  is not needed anywhere else in this article,  we encourage the  impatient reader to  \vspace{-2pt}  proceed directly to \Cref{sec:animated-divided-power}. 
\begin{notation}[Sheaves on $\infty$-topoi]
For $\cat{X}$ an 
 $\infty$-topos (cf.\   \cite[Ch.\ 6]{LurieHTT}),   write $\Shv(\cat{X} , \CAlg^{\an})$ for the $\infty$-category 
of limit-preserving functors $\cat{X}^{\op} \rightarrow  \CAlg^{\an} $.
\end{notation}
\begin{remark}\label{rem:sheavesontopoi}
Every small  Grothendieck site $\cat{T}$ gives an $\infty$-topos $\cat{X} = \Shv(\cat{T}, \cat{S})$.
By \cite[Corollary 1.3.1.8]{LurieSAG}, $\Shv(\cat{X}, \CAlg^{\an}) $ is   canonically equivalent to $\Shv(\cat{T}, \CAlg^{\an})$.
Note that the $\infty$-topos $\cat{S}$ of spaces corresponds to the topological space $\ast$.
\end{remark}
 
 \begin{construction}[Animated ringed $\infty$-topoi]
The functor
 	$\cat{X} \mapsto \Shv(\cat{X}, \CAlg^{\an})^{\op}$
 from the  $\infty$-category $\infty\Topp$ of $\infty$-topoi to the $\infty$-category $ \widehat{\Cat}_{\infty}$ of (large) $\infty$-categories  is classified by a   cocartesian fibration 
 	$q: \infty\Topp_{\CAlg^{\an}}  \rightarrow \infty\Topp.$
The fibre over   $\cat{X}$ is the $\infty$-category of pairs $(\cat{X}, \cO)$, where   $\cO$ is   a \mbox{$\CAlg^{\an}$-valued sheaf on $\cat{X}$.}
 \end{construction}
 
 \begin{example}[\'{E}tale spectrum]
 	For every animated ring $R \in \CAlg^{\an}$,    
 	the forgetful functor $\CAlg^{\an,\et}_{R} \rightarrow \CAlg^{\an}$ defines a sheaf $\cO_{R,\et}$ on the \'{e}tale site $\CAlg^{\an,\et}_{R}$.
 	We obtain an animated ringed $\infty$-topos 
 	$ \Spet(R) := (\Shv(\CAlg^{\an,\et}_{R}, \cat{S})  ,  \cO_{R,\et})$ and a natural map  
 	\mbox{$\alpha_R: \Spet(R) \rightarrow (\cat{S}, R) $ in $\infty\Topp_{\CAlg^{\an}}$.}
 \end{example}
  
 \begin{definition}[Global sections]
 	The  natural inclusion 
 	$(\CAlg^{\an})^{\op}  \subset \infty\Topp_{\CAlg^{\an}}  $
mapping  $A \in \CAlg^{\an}$ to the pair $(\cat{S}, A)$  
 admits a left adjoint $(\cat{X}, \cO) \mapsto \Gamma(\cat{X}, \cO) $.
\end{definition}

\begin{notation}[Strictly Henselian objects] Write $\infty\Topp_{\CAlg^{\an}}^{\sHen} \subset \infty\Topp_{\CAlg^{\an}}$
for the (non-full) subcategory whose objects are those  $(\cat{X},\cO)$ for which 
$\pi_0\cO$ is a strictly Henselian sheaf of commutative rings on the underlying topos $\cat{X}^{\heartsuit}$ of $\cat{X}$, cf.\  \cite[Definition 1.2.2.5]{LurieSAG}. The morphisms are 
those $\phi: (\cat{X}, \cO) \rightarrow (\cat{X}', \cO')$
  for which   $\pi_0 \phi^\ast \cO' \rightarrow \pi_0 \cO$  is  local, see \cite[Definition 1.2.1.4]{LurieSAG}.
\end{notation}
We can now characterise $\alpha_R$ by a universal property    \mbox{(cf.\ \cite[1.4.2.4]{LurieSAG}  \cite[2.2]{LurieDAGV}):}
\begin{proposition}\label{prop:adjunction}The assignment $R \mapsto \Spet(R)$  is part of an adjunction  $$
	\Gamma : 
	\infty\Topp_{\CAlg^{\an}}^{\sHen} 
	\leftrightarrows 	(	\CAlg^{\an} )^{\op}
	:  	\Spet$$
	with counit $\alpha$. For $(\cat{Y}, \cB)\in \infty\Topp_{\CAlg^{\an}}^{\sHen}$, postcomposing with  $\alpha_R$ \mbox{gives an equivalence}
	$$ \hspace{-10pt} \Map_{\infty\Topp_{\CAlg^{\an}}^{\sHen}}((\cat{Y}, \cB), \Spet(R)) \xrightarrow{\simeq}
	\Map_{\infty\Topp_{\CAlg^{\an}}}((\cat{Y}, \cB), (\cat{S}, R)) \simeq	\Map_{\CAlg^{\an}}(R, \Gamma(\cat{Y}, \cB)). 
	$$ 
\end{proposition}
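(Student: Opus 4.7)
The plan is to deduce this proposition as an instance of Lurie's universal property for the \'etale spectrum in structured $\infty$-topoi, translated into the setting of animated commutative rings. The proof is essentially a dictionary between our notation and the framework of $\cat{G}$-structured $\infty$-topoi developed in \cite[Chapter 1]{LurieSAG} and extended to animated rings in \cite[Section 2.2]{LurieDAGV}; I do not expect any genuinely new input to be required.

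The first step is to identify $(\Spet(R), \cO_{R,\et})$ with the spectrum of $R$ relative to the \'etale geometry $\cat{G}^{\an}_{\et}$ on animated commutative rings. In Lurie's framework, a $\cat{G}^{\an}_{\et}$-structure on an $\infty$-topos $\cat{Y}$ is a sheaf $\cB \in \Shv(\cat{Y}, \CAlg^{\an})$ whose truncation $\pi_0 \cB$ is strictly Henselian on $\cat{Y}^{\heartsuit}$, and a morphism of such structures is a map of animated ringed $\infty$-topoi for which the induced map on $\pi_0$ is local. This matches our $\infty\Topp_{\CAlg^{\an}}^{\sHen}$ on the nose; the only substantive verification is that Lurie's abstract definition (via sending admissible morphisms to pullback squares) coincides with the strict Henselization condition, which is the content of \cite[Proposition 1.4.3.9]{LurieSAG}.

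The second step is to invoke Lurie's universal property directly: by \cite[Theorem 1.4.2.4]{LurieSAG} and its animated generalisation in \cite[Section 2.2]{LurieDAGV}, postcomposition with the map $\alpha_R$ yields an equivalence
\[ \Map_{\infty\Topp_{\CAlg^{\an}}^{\sHen}}((\cat{Y}, \cB), \Spet(R)) \xrightarrow{\simeq} \Map_{\infty\Topp_{\CAlg^{\an}}}((\cat{Y}, \cB), (\cat{S}, R)), \]
and the right-hand side identifies with $\Map_{\CAlg^{\an}}(R, \Gamma(\cat{Y}, \cB))$ by the global-sections adjunction recalled immediately before the proposition. Varying $R$ assembles this into the asserted adjunction $\Gamma \dashv \Spet$ with counit $\alpha$. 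The conceptually hardest point, which is packaged inside Lurie's theorem, is that strict Henselness of $\pi_0 \cB$ is precisely what allows a single ring map $R \rightarrow \Gamma(\cat{Y}, \cB)$ to be promoted to a coherent family of maps on all \'etale $R$-algebras, via the unique lifting property of \'etale morphisms against strictly Henselian local rings; once this input is accepted, the remainder of the proof is purely a matter of unwinding notation.
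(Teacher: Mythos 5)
Your proposal matches the paper's approach exactly: the paper itself offers no independent argument, simply pointing to \cite[1.4.2.4]{LurieSAG} and \cite[2.2]{LurieDAGV} immediately before the statement. Your reconstruction of the required dictionary (identifying $\infty\Topp_{\CAlg^{\an}}^{\sHen}$ with the strictly Henselian structured $\infty$-topoi via \cite[Proposition 1.4.3.9]{LurieSAG}, then invoking Lurie's universal property for $\Spet$) is accurate and fills in the details the paper delegates to the references.
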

Following \cite[Section 1.4.4]{LurieSAG} and \cite[Definition 4.3.20]{LurieDAGV}, we define: 
\begin{definition}[Derived Deligne--Mumford stacks]
	A  \textit{derived Deligne--Mumford stack} is an animated ringed $\infty$-topos $(\cat{X}, \cO_{\cat{X}})$  which is locally an \'{e}tale  spectrum. More precisely, this means that 
	 there is a collection of objects $U_\alpha \in \cat{X}$  \mbox{such that}
	\begin{enumerate}
		\item the map  $\coprod_\alpha U_\alpha \rightarrow \mathbf{1}$ to the final object of $\cat{X}$ is an effective epimorphism;
		\item for all $\alpha$, there is an $R_\alpha \in \CAlg^{\an}$ such that  {$(\cat{X}_{/U_\alpha}, \cO_{\cat{X}}|_{U_\alpha})\simeq \Spet(R_\alpha)$.}
	\end{enumerate}
Let $\DM^{\der}\subset \infty\Topp_{\CAlg^{\an}}^{\sHen}$ be the full subcategory of \mbox{derived Deligne--Mumford stacks.}
\end{definition}
Given a   derived scheme $X$ over $k$,  write $(\cat{X},  \cO_{X,\et})$ for the corresponding derived Deligne--Mumford stack, where $\cat{X} = \Shv(X_{\et},\mathcal{S})$. 
We proceed to prove versions  of Propositions \ref{prop:homeo}, \ref{lemma:lift-is-derived-scheme}, and   \ref{lemma:lift-is-derived-scheme-morphisms}  in the setting of derived Deligne--Mumford stacks:
\begin{proposition}\label{prop:homeostackyx}
	Given 
	a pullback of derived Deligne--Mumford stacks
		\[
\begin{tikzcd}
	(\cat{X} , \cO_{X,\et})\arrow{r} \arrow{d}  \arrow[rd, phantom, "\lrcorner", at start]  &  \arrow{d} 	(\cat{Y},\cB) \\
	\Spet(k)\arrow{r} &	\Spet(A),
\end{tikzcd}, \]
	with   $A\in \CAlg^{\an,\art}_{\Lambda}$, the underlying geometric morphism   $\cat{X}  \rightarrow \cat{Y}$ is an equivalence.  
\end{proposition}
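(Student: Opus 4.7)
The strategy parallels that of \Cref{prop:homeo}: I would first show that $\Spet(k) \to \Spet(A)$ by itself already induces an equivalence on underlying $\infty$-topoi, and then deduce that the same holds for the pulled-back map.

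\textbf{Step 1 (key lemma).} I would establish the following: for any animated ring $R$ and any nilpotent ideal $I \subseteq \pi_0(R)$ (i.e.\ $I^n = 0$ for some $n$), the canonical map $\Spet(R/I) \to \Spet(R)$ induces an equivalence on underlying $\infty$-topoi. This combines two facts. First, for any animated ring $R$, the base-change functor $\CAlg^{\an,\et}_R \to \CRing^{\et}_{\pi_0 R}$ is an equivalence of categories, so $\Spet(R)$ and $\Spet(\pi_0 R)$ have the same underlying $\infty$-topos (an animated étale $R$-algebra is determined by its $\pi_0$). Second, the classical topological invariance of the étale site gives an equivalence $\CRing^{\et}_{\pi_0 R} \simeq \CRing^{\et}_{\pi_0 R/I}$ when $I$ is nilpotent. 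Applied to $R = A$ and $I = \mathfrak{m} \subset \pi_0 A$ (which is nilpotent because $\pi_0 A$ is Artinian local), this gives that $\Spet(k) \to \Spet(A)$ induces an equivalence on underlying $\infty$-topoi.

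\textbf{Step 2 (reduction to affine charts).} By definition, I can cover $(\cat{Y}, \cB)$ by affine étale charts $\Spet(B_\alpha) \to (\cat{Y}, \cB)$ corresponding to objects $U_\alpha \in \cat{Y}$ with $\coprod U_\alpha \to \mathbf{1}$ an effective epimorphism. Since $\Spet$ is a right adjoint to $\Gamma$ by \Cref{prop:adjunction}, it preserves limits in $(\CAlg^{\an})^{\op}$, hence
\[
\Spet(k) \times_{\Spet(A)} \Spet(B_\alpha) \simeq \Spet(k \otimes_A B_\alpha).
\]
Moreover, the map $\pi_0(B_\alpha) \to \pi_0(k \otimes_A B_\alpha)$ is the quotient by the ideal $\mathfrak{m}\cdot \pi_0(B_\alpha)$, which is nilpotent. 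By Step 1, the resulting map $\Spet(k\otimes_A B_\alpha) \to \Spet(B_\alpha)$ induces an equivalence on underlying $\infty$-topoi.

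\textbf{Step 3 (gluing).} Finally, I would argue that the pullback in $\DM^{\der}$ is local on the target, so that restricting $(\cat{X}, \cO_{X,\et}) \to (\cat{Y}, \cB)$ to each chart $U_\alpha$ yields the map $\Spet(k \otimes_A B_\alpha) \to \Spet(B_\alpha)$ of Step 2. Since underlying $\infty$-topoi of derived Deligne--Mumford stacks satisfy étale descent, and since the restrictions $\cat{X}_{/\phi^{-1}U_\alpha} \to \cat{Y}_{/U_\alpha}$ are all equivalences by Step 2, the geometric morphism $\cat{X} \to \cat{Y}$ is itself an equivalence.

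The main obstacle will be Step 3: making precise that the underlying $\infty$-topos construction commutes suitably with pullback along $\Spet(k)\to\Spet(A)$ and with restriction to open charts. The cleanest formulation is that étale descent (Proposition~1.4.7.9 of \cite{LurieSAG} or its derived analogue) identifies the underlying $\infty$-topos of $(\cat{Y},\cB)$ with the colimit over the \v{C}ech nerve of the cover $\{\Spet(B_\alpha)\}$, and similarly for $(\cat{X}, \cO_{X,\et})$; the componentwise equivalences of Step 2 then induce the desired global equivalence. Step 1, by contrast, is essentially a formal consequence of the definition of the étale topology on animated rings and should not present real difficulty.
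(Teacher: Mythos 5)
Your Step 1 is precisely the key observation in the paper's proof: $\Spet(k) \to \Spet(A)$ induces an equivalence on underlying $\infty$-topoi via topological invariance of the small \'etale site. The proofs part ways after that. The paper never covers by charts: it observes that $\Spet(k)\to\Spet(A)$ is a closed immersion, invokes the derived analogue of \cite[Corollary 10.5]{LurieDAGIX} to conclude that the square is already a pullback in the ambient $\infty$-category $\infty\Topp_{\CAlg^{\an}}$, and then deduces the claim from formal properties of that pullback — equivalences of $\infty$-topoi are stable under base change, and \cite[Propositions 4.3.1.5, 4.3.1.9, 4.3.1.10]{LurieHTT} ensure the forgetful functor to $\infty\Topp$ interacts correctly with this limit. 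You instead localise to affine charts and glue by descent.

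The gap in your version is exactly where you flag it: Step 3. Your Step 2 identification $\Spet(k) \times_{\Spet(A)} \Spet(B_\alpha) \simeq \Spet(k\otimes_A B_\alpha)$ is a pullback taken in $\infty\Topp_{\CAlg^{\an}}^{\sHen}$, where $\Spet$ is the right adjoint, whereas the hypothesis of the proposition speaks of a pullback in $\DM^{\der}$; identifying the two (this needs \cite[Proposition 2.3.21]{LurieDAGV}) and then showing that the pullback in $\DM^{\der}$ restricts compatibly to the charts (essentially \cite[Proposition 2.3.22]{LurieDAGV} together with a comparison morphism) is exactly the machinery the paper deploys in the \emph{next} result, \Cref{lemma:lift-is-derived-scheme-morphisms-etale}, and deliberately avoids here. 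The chart-gluing route can be carried through, but for this particular statement the global argument via closed immersions is cleaner because it sidesteps the locality of pullbacks in $\DM^{\der}$ entirely.
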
 
\begin{proof}
	The lower horizontal map is a closed immersion by the derived analogue of \cite[Theorem 4.4]{LurieDAGIX}, and an equivalence on underlying $\infty$-topoi by  \cite[04DY]{stacks}.
By the  derived analogue of \cite[Corollary 10.5]{LurieDAGIX}, it is  a pullback  in  $\infty\Topp_{\CAlg^{\an}}$. The claim follows by combining Propositions 4.3.1.5, 4.3.1.9, and  4.3.1.10 in \cite{LurieHTT}. \end{proof} 
\begin{proposition}\label{lemma:lift-is-derived-scheme-etale}
	Let $A \in \CAlg^{\an,\art}_{W}$ and 
	consider  corresponding squares 
	\[
	\begin{tikzcd}
		A \arrow{r} \arrow{d} & k \arrow{d} \\
		\cB \arrow{r} &\cO_{X,\et}
	\end{tikzcd} \ \ \ \  \ \ \ \  \ \ \ \  \begin{tikzcd}
		(\cat{X}, \cO_{X,\et})\arrow{r} \arrow{d}  &  \arrow{d} 	(\cat{X},\cB) \\
		(\cat{S},k)\arrow{r} &	(\cat{S},A)
	\end{tikzcd}
	\]   
	The left square is a pushout in $\Shv(\cat{X}, \CAlg^{\an})$ 
	if and only if 
	the right square is a pullback  in $\infty\Topp_{\CAlg^{\an}}$.
	If this holds, then
	$(\cat{X}, \cO_{X,\et})\rightarrow 	(\cat{X},\cB)$   \mbox{belongs to $\DM^{\der}$.}
\end{proposition}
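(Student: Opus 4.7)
The plan is to mirror the proof of \Cref{lemma:lift-is-derived-scheme} in the \'etale setting, with the formal equivalence of the pullback/pushout conditions coming from the cocartesian fibration structure of $q\colon \infty\Topp_{\CAlg^{\an}} \to \infty\Topp$, and the main content being that $(\cat{X}, \cB)$ lies in $\DM^{\der}$. For the formal equivalence: since $q$ is cocartesian and pullbacks in the fibre $\Shv(-, \CAlg^{\an})^{\op}$ are pushouts in $\Shv(-, \CAlg^{\an})$, a pullback in $\infty\Topp_{\CAlg^{\an}}$ is computed by first taking the underlying pullback of $\infty$-topoi and then pushing out the pulled-back structure sheaves. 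Here the underlying cospan $\cat{S} \xrightarrow{\id} \cat{S} \leftarrow \cat{X}$ has pullback $\cat{X}$, and the pushout of the pulled-back structure sheaves in $\Shv(\cat{X}, \CAlg^{\an})$ is precisely the pushout appearing in the left square. Hence the two conditions are equivalent.

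\textbf{Verifying the derived DM property.} Factor $A \to k$ via \Cref{cor:artinian-small} as $A = A_n \to \cdots \to A_0 = k$, and set $\cB_i := A_i \otimes_A \cB$ (pushout in $\Shv(\cat{X}, \CAlg^{\an})$). Each transition fits in a fibre sequence
\[ V_i[n_i] \otimes_k \cO_{X,\et} \longto \cB_{i+1} \longto \cB_i \]
in $\Shv(\cat{X}, \CAlg^{\an})$. I induct on $i$: the base case $(\cat{X}, \cO_{X,\et})$ is the derived DM stack attached to $X$. For the step, assume $(\cat{X}, \cB_i) \in \DM^{\der}$ and choose an \'etale cover by affines $\{\Spet(R_\alpha) \to (\cat{X}, \cB_i)\}$. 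Taking global sections of the fibre sequence over each $\Spet(R_\alpha)$ produces a square-zero extension of animated rings $R'_\alpha \to R_\alpha$, and the universal property of $\Spet$ (\Cref{prop:adjunction}) identifies $(\cat{X}_{/\Spet(R_\alpha)}, \cB_{i+1}|_{\Spet(R_\alpha)}) \simeq \Spet(R'_\alpha)$. These affine pieces cover $(\cat{X}, \cB_{i+1})$ because the underlying $\infty$-topos is unchanged under square-zero extensions of the structure sheaf, so $(\cat{X}, \cB_{i+1}) \in \DM^{\der}$.

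\textbf{Main obstacle.} The central subtlety is in the inductive step: verifying that the pushout $\cB_{i+1}$, whose construction involves \'etale sheafification in $\Shv(\cat{X}, \CAlg^{\an})$, genuinely restricts on each affine piece $\Spet(R_\alpha)$ to the affine derived DM stack $\Spet(R'_\alpha)$ attached to the global-sections square-zero extension. This amounts to a local-to-global check combining the universal property of $\Spet$ (as in \Cref{prop:adjunction}) with the compatibility of pushouts and restriction to affine \'etale opens of a derived DM stack. Once this is in hand, the remainder of the argument is a routine transcription of the argument in \Cref{lemma:lift-is-derived-scheme}.
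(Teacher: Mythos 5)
The if-and-only-if part of your argument is correct and is essentially the paper's: both rest on the general results (HTT 4.3.1.5/9/10) about computing limits in the total space of a cartesian/cocartesian fibration in terms of fibrewise limits and the underlying base.

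For the DM property, your inductive structure (factoring $A\to k$ via \Cref{cor:artinian-small} and going one step at a time) matches the paper's, but the execution of the inductive step is where the proposal has a genuine gap that you yourself flag but do not close. You want to verify that if $(\cat{X},\cB_i)$ is a derived DM stack and $\cB_{i+1}\to \cB_i$ is a square-zero extension of sheaves with quasi-coherent fibre, then restricting $\cB_{i+1}$ to an affine $\Spet(R_\alpha)\subset (\cat{X},\cB_i)$ yields $\Spet$ of the global-sections square-zero extension $R'_\alpha\to R_\alpha$. This is not a consequence of the universal property of $\Spet$ alone: you would need to (a) show $(\cat{X}_{/U_\alpha},\cB_{i+1}|_{U_\alpha})$ is strictly henselian so the adjunction applies, (b) identify the underlying $\infty$-topos of $\Spet(R'_\alpha)$ with $\cat{X}_{/U_\alpha}$ via topological invariance of the \'etale site under nilpotent extensions, and (c) identify the structure sheaves (which requires comparing a sheaf-level square-zero extension with the \'etale-localized square-zero extension of $R_\alpha$ by $\Gamma$ of the ideal sheaf, in all simplicial degrees). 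None of these is automatic, and (c) in particular is not a "routine transcription" of \Cref{lemma:lift-is-derived-scheme}.

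The paper avoids exactly this affine-local analysis by taking a different route through the inductive step. Writing $A\simeq k\times_{\sqz_k V[n+1]} A'$, one expresses $(\cat{X},\cB)$ as a \emph{pushout} of animated ringed $\infty$-topoi
\[
(\cat{X},\cB\otimes_A\sqz_k V[n+1]) \;\longleftarrow\; (\cat{X},\cO_{X,\et}) \;\longrightarrow\; (\cat{X},\cB\otimes_A A'),
\]
where the left vertex is the \emph{trivial} square-zero extension $\cO_{X,\et}\otimes_k \sqz_k V[n+1]$ (which is visibly in $\DM^{\der}$, since $\Spet$ preserves finite limits and the relevant fibre product of affines is affine), and the right vertex is in $\DM^{\der}$ by induction. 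One then invokes a glueing theorem for derived DM stacks along closed immersions (the derived variant of SAG 16.1.0.1), using that the top horizontal map is local and a closed immersion. This packages the local-to-global content into an already-available theorem rather than re-deriving the affine-local statement for general square-zero extensions. If you want your version to go through, the missing lemma --- roughly, that a square-zero extension of an affine derived DM stack by a connective quasi-coherent module is again affine with the expected global sections --- would need to be stated and proved; with it in hand, your argument would give an alternative, if somewhat more hands-on, proof.
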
 

\begin{proof}  
The  equivalence follows by combining 	Propositions 4.3.1.5, 4.3.1.9, and 4.3.1.10 in \cite{LurieHTT}.
	To see that $	(\cat{X}, \cO_{X,\et})\rightarrow 	(\cat{X},\cB)$  lies in $\DM^{\der}$, pick a 
\mbox{factorisation}
\[
A=A_n \rightarrow \cdots \rightarrow A_1 \rightarrow A_0=k
\]
as in \Cref{cor:artinian-small}. Then $A':=A_{n-1}$ sits in a pullback  	$A \xrightarrow{\simeq } k \times_{\sqz_k(V[n+1])} A'$
with $n \geq 0$. We may assume the claim has been proven for $A'$.
The induced square 
	\begin{equation}\label{anotherpushout}	\begin{tikzcd}
(\cat{X}, \cB\otimes_A \sqz_kV[n+1] )
   \arrow{r}\arrow{d} & (\cat{X},	\cB\otimes_A A') \arrow{d} \\
(\cat{X},	\cO_{X,\et})\arrow{r} & 	(\cat{X}, \cB)
\end{tikzcd} \end{equation}
of animated ringed $\infty$-topoi  is a  pushout, cf.\ e.g.\ \cite[Sec. 21.4.4]{LurieSAG}, and the maps 
$$	(\cat{X}, \cO_{X,\et}) \rightarrow (\cat{X}, \cB\otimes_A \sqz_kV[n+1] ) \rightarrow (\cat{X},\cO_{X,\et})$$ 
 both belong to $\DM^{\der}$ since $ \cB\otimes_A \sqz_kV[n+1] \simeq  \cO_{X,\et}\otimes_k \sqz_kV[n+1]$.

	By induction, 
	$(\cat{X}, \cO_{X,\et} )  \rightarrow (\cat{X},	\cB\otimes_A A')$ belongs to $\DM^{\der}$, hence the top map in \eqref{anotherpushout} is local, see   \cite[Remark 1.2.1.7(b)]{LurieSAG}.
This top map is also a closed immersion. By the derived variant of  \cite[Theorem 16.1.0.1]{LurieSAG},  square \eqref{anotherpushout}
	 is a  pushout in $\DM^{\der}$.
\end{proof} 

\begin{proposition}\label{lemma:lift-is-derived-scheme-morphisms-etale}
	Fix $A \rightarrow A'$   in $ \CAlg^{\an,\art}_{\Lambda }$ and  consider corresponding diagrams 	\[
	\begin{tikzcd}
		A \arrow{r} \arrow{d} & 	\arrow[phantom, from=1-1, to=2-2, "\mathrm{(I)}", pos=0.5]	A' \arrow{r} \arrow{d} & k \arrow{d} \\
		\cB \arrow{r} &	\cB' \arrow{r}   &     \cO_X 
	\end{tikzcd}   \ \ \ \  \ \ \ \  \ \    \ \ \ \  
	\begin{tikzcd}
		(\cat{X},\cO_{X,\et})  \arrow{r} \arrow{d}  &  \arrow{d} 	(\cat{X},\cB') \arrow{r} \arrow{d}  &  \arrow{d} 	(\cat{X},\cB) \\
		(\cat{S}, k)\arrow{r} &		(\cat{S}, A')    \arrow{r} &		(\cat{S}, A).	\arrow[phantom, from=1-2, to=2-3, "\mathrm{(II)}", pos=0.5]
	\end{tikzcd}
	\]   
Assume
 both   right  and  outer square
in the left diagram are pushouts  in $\Shv(\cat{X}, \CAlg^{\an})$.

Square (I) is a pushout   \textit{ if and only if  } (II) is a \mbox{pullback in $\infty\Topp_{\CAlg^{\an}}$.} If this holds, then the morphism $(\cat{X},\cB') \rightarrow (\cat{X},\cB)$ belongs to $\DM^{\der}$ and the induced square
\[	\begin{tikzcd}
   \arrow{d} 	(\cat{X},\cB') \arrow{r} \arrow{d}  &  \arrow{d} 	(\cat{X},\cB) \\
 	\Spet(A')  \arrow{r} &	\Spet(A) \arrow[phantom, from=1-1, to=2-2, "\mathrm{(III)}", pos=0.5]
\end{tikzcd}
\]
is a pullback of derived Deligne--Mumford stacks.
\end{proposition}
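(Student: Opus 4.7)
The plan is to deduce this proposition in three stages, following the template of \Cref{lemma:lift-is-derived-scheme-etale} combined with the standard pasting laws for pushouts and pullbacks.

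First I would establish the ``iff''. Since the right square of the left diagram is assumed to be a pushout in $\Shv(\cat{X}, \CAlg^{\an})$, the pasting lemma for pushouts shows that (I) is a pushout if and only if the outer rectangle is a pushout. Applying \Cref{lemma:lift-is-derived-scheme-etale} to the right square (a pushout of sheaves) shows that its counterpart in $\infty\Topp_{\CAlg^{\an}}$ is a pullback; applying it to the outer rectangle shows that the outer rectangle is a pushout of sheaves if and only if its counterpart in $\infty\Topp_{\CAlg^{\an}}$ is a pullback. Since the right counterpart is a pullback, the pasting lemma for pullbacks identifies the outer counterpart being a pullback with (II) being a pullback. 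Chaining these equivalences yields the claim.

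For the DM claim, both $(\cat{X}, \cB)$ and $(\cat{X}, \cB')$ already lie in $\DM^{\der}$ by \Cref{lemma:lift-is-derived-scheme-etale}, so it suffices to check locality of the morphism between them. The underlying geometric morphism is the identity on $\cat{X}$, so locality reduces to verifying $\pi_0 \cB' \rightarrow \pi_0 \cB$ is local on each stalk. Both stalks are strictly henselian local rings, and their residue fields coincide with $k(\bar x)$ via the common augmentations to $\pi_0 \cO_{X,\et}$; thus the map between them sends maximal ideal to maximal ideal.

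Finally, to see (III) is a pullback in $\DM^{\der}$, I would verify the universal property directly in $\infty\Topp_{\CAlg^{\an}}^{\sHen}$. Given $(\cat{Y}, \cF)$ in that category, a map to $(\cat{X}, \cB')$ consists of a geometric morphism $\phi: \cat{Y} \rightarrow \cat{X}$ together with a local map $\phi^\ast \cB' \rightarrow \cF$. Since (I) is a pushout and $\phi^\ast$ preserves pushouts, $\phi^\ast \cB' \simeq \phi^\ast \cB \otimes_A A'$, so the second datum is equivalent to a local map $\phi^\ast \cB \rightarrow \cF$ together with an $A$-compatible map $A' \rightarrow \Gamma(\cat{Y}, \cF)$. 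By \Cref{prop:adjunction}, this is exactly the data of a map into $\Spet(A') \times_{\Spet(A)} (\cat{X}, \cB)$. Since $\DM^{\der}$ is closed under pullbacks in its ambient category (LurieSAG 1.4.11), the identification transfers to $\DM^{\der}$.

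The main obstacle is the last step: one must carefully verify that the pushout $\cB' \simeq \cB \otimes_A A'$ interacts correctly with pulling back along arbitrary geometric morphisms, that the induced map out of $\phi^\ast \cB'$ is automatically local (which should follow from locality of $\phi^\ast \cB \rightarrow \cF$ and the structure of the pushout), and that the adjunction-based identification lands in the strictly henselian subcategory. An étale-local reduction using an affine cover of $(\cat{X}, \cB)$ by $\Spet(R_\alpha)$'s, where the pullback is literally $\Spet(R_\alpha \otimes_A A')$ and matches $\cB'$ sectionwise, provides a concrete fallback if the universal-property argument encounters technicalities.
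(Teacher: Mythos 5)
Your plan is sound and hits the paper's key ideas, but one of your three proposed routes has a gap you yourself flag. Let me be specific about where you diverge from the paper and where the gap lives.

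For the ``iff'' part, your pasting argument (apply \Cref{lemma:lift-is-derived-scheme-etale} to the right square and the outer rectangle, then cancel by the pasting law) is correct and is essentially a repackaging of the paper's direct citation of Propositions 4.3.1.5, 4.3.1.9, and 4.3.1.10 in \cite{LurieHTT}. No issue there.

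For the claim that $(\cat{X},\cB')\to(\cat{X},\cB)$ belongs to $\DM^{\der}$, note a direction slip: since a morphism $(\cat{X},\cB')\to(\cat{X},\cB)$ in $\infty\Topp_{\CAlg^{\an}}^{\sHen}$ is a geometric morphism $\phi$ together with a \emph{local} map $\pi_0\phi^\ast\cB\to\pi_0\cB'$, the thing to check is locality of $\pi_0\cB\to\pi_0\cB'$, not the other way around. Your stalkwise argument (both stalks are strictly Henselian, and the shared augmentation to $\pi_0\cO_{X,\et}$ has nilpotent kernel, so maximal ideals are pulled back from the common residue field) is a fine way to see this once the direction is fixed. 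The paper instead asserts that locality is a pullback condition and checks it by reduction to an affine cover; both work.

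For (III), your primary argument -- a direct universal-property computation via the adjunction of \Cref{prop:adjunction} -- does indeed reduce to the datum of a local map $\phi^\ast\cB\to\cF$ together with $A'\to\Gamma(\cat{Y},\cF)$, but the identification of the two mapping spaces is only as stated if locality of $\phi^\ast\cB'\to\cF$ is \emph{equivalent} to locality of $\phi^\ast\cB\to\cF$, and the forward implication is the one that needs an argument: given $R\to S\to T$ local rings with $R\to S$ local and the composite local, $S\to T$ is not automatically local. You correctly flag this as the main obstacle, but do not close it. The paper sidesteps the issue entirely by reducing to the affine case $(\cat{X},\cB)=\Spet(R)$, where $\Spet$ is a right adjoint into $\infty\Topp_{\CAlg^{\an}}^{\sHen}$ and therefore $\Spet(R')=\Spet(R)\times_{\Spet(A)}\Spet(A')$ is automatically the pullback in $\sHen$ -- the locality bookkeeping is handled by the adjunction. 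What the paper then needs, and what your fallback does not mention, is to verify that this pullback in $\sHen$ agrees with the pullback in the larger $\infty\Topp_{\CAlg^{\an}}$: this requires knowing the underlying geometric morphism of $\Spet(R')\to\Spet(R)$ is an equivalence, which the paper deduces from the topological invariance of the \'etale site (Th\'eor\`eme 18.1.2 of \cite{EGA4}) applied to the nilpotent thickening $\pi_0(R)\to\pi_0(A')\otimes_{\pi_0(A)}\pi_0(R)$. If you go the affine-cover route, you should supply this step. Your citation ``LurieSAG 1.4.11'' for closure of $\DM^{\der}$ under pullbacks is also doubtful; the paper cites \cite[Proposition 2.3.21 and 2.3.22]{LurieDAGV} for the affine and globalisation steps, respectively.
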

\begin{proof}
The asserted equivalence between (I) being a pushout and (II) being a pullback follows again by combining 	Propositions 4.3.1.5, 4.3.1.9, and \mbox{4.3.1.10 in \cite{LurieHTT}.}

Hence  assume  that
(II) is a pullback  in $\infty\Topp_{\CAlg^{\an}}$, and let us show that 
the map $(\cat{X},\cB') \rightarrow	(\cat{X},\cB)$ is local and the induced square \mbox{(III) is a pullback in $\DM^{\der}$.}
  \Cref{lemma:lift-is-derived-scheme-etale} implies that
in the right diagram, both   left and   outer square are pullback squares in $\infty\Topp_{\CAlg^{\an}}$ whose top horizontal maps belong to $\DM^{\der}$.

 First,  let us assume that  $(\cat{X},\cB) = \Spet(R)$ is affine, and set $R' : = A' \otimes_A R$.
 As $\Spet$ is a right adjoint, we obtain a pullback square   in $\infty\Topp_{\CAlg^{\an}}^{\sHen}$
 \begin{equation}\label{affinespetpullback}
 	\begin{tikzcd}
 	\Spet(R') \arrow{r} \arrow{d}  &  \arrow{d} 	\Spet(R) \\
	\Spet(A')    \arrow{r} &	\Spet(A),
 \end{tikzcd}
 \end{equation} 
 which is also a pullback in $\DM^{\der}$ by  \cite[Proposition 2.3.21]{LurieDAGV}.

 As in the proof of \Cref{lemma:lift-is-derived-scheme-morphisms}, the map  $\pi_0(R) \rightarrow \pi_0(A') \otimes_{\pi_0(A)} \pi_0(R)  $ induces a homeomorphism on prime spectra.
By \cite[Corollary 4.3.12]{LurieDAGV} and the topological invariance of the \'etale site (cf.\ \cite[Théorème 18.1.2]{EGA4}), the induced map of $\infty$-topoi $ \Shv(\CAlg^{\an,\et}_{R'}, \cat{S}) \rightarrow \Shv(\CAlg^{\an,\et}_{R}, \cat{S})$ is therefore an equivalence, which implies that  \eqref{affinespetpullback} is a pullback in $\infty\Topp_{\CAlg^{\an}}$.
Pasting with the pullback square 
 \begin{equation}\label{affinespetpullback}
	\begin{tikzcd}
		\Spet(A')  \arrow{r} \arrow{d}  &  \arrow{d} 	\Spet(A) \\
	(\cat{S}, A')    \arrow{r} &		(\cat{S}, A),
	\end{tikzcd}
\end{equation} 
in $\infty\Topp_{\CAlg^{\an}}$ shows that   $(X,\cB') \rightarrow (X,\cB)$ is local and (III) is a \mbox{pullback in  $\DM^{\der}$.}

If $(X,\cB)$ is  a general derived Deligne--Mumford stack, we can  pick an effective epimorphism  $\coprod_\alpha U_\alpha \rightarrow \mathbf{1}$ to the final object of $\cat{X}$   such that for each $\alpha$, 
there is an equivalence
$(\cat{X}_{/U_\alpha}, \cB|_{U_\alpha})\simeq \Spet(R_\alpha)$ with  $R_\alpha \in \CAlg^{\an}$. 

A square of sheaves on $\cat{X}$ is a pullback if and only if it is a pullback after restriction to each affine $U_\alpha$.
As the locality of $\cB \rightarrow \cB'$ is a pullback condition, it is therefore implied by our earlier discussion.

We obtain an induced square  (III) in $\infty\Topp_{\CAlg^{\an}}^{\sHen}$ and a comparison morphism to the pullback $\Spet(A')\times_{\Spet(A)} (\cat{X},\cB)$ in $\DM^{\der}$.
By \cite[2.3.22]{LurieDAGV}, this morphism is an equivalence as its pullback to each $\Spet(R_\alpha)$ is one by our earlier discussion.
\end{proof}  
We are now ready to prove  \Cref{prop:equivetale}. We will use the following notation:

\begin{notation} 
As before, we will use the superscript $(-)^{\cocart}$ to denote   subcategories containing all objects and only cocartesian morphisms. We use   $(-)^{\art}$ to denote full subcategories 
containing only those objects  lying over animated rings   $A\in \CAlg_{/k}$ which satisfy  conditions (1) and (2) of  \Cref{def:artinian-animated-ring}.
\end{notation}
\begin{proof}[Proof of \Cref{prop:equivetale}]
	Define $\cat{C}_{\et}$ as in \Cref{constr:def-an} for $\cat{T}$ the \'{e}tale {site of the derived scheme $X$ over $k$.}
Consider the $\infty$-category 
$$\cat{E} =  \CAlg^{\an} \mytimes{\infty\Topp_{\CAlg^{\an}}^{\op}} \Fun(\Delta^1, \infty\Topp_{\CAlg^{\an}}^{\op}) \mytimes{ \infty\Topp_{\CAlg^{\an}}^{\op}} \DM^{\der,\op}$$
  opposite to the $\infty$-category of arrows $$(\cat{Y}, \cB) \rightarrow (\cat{S}, A) $$ with 
$(\cat{Y}, \cB) \in \DM^{\der}$, as well as the $\infty$-category
$$\hspace{-12pt} \widetilde{\cat{E}}  =  \CAlg^{\an} \mytimes{\Fun(\Delta^1,\infty\Topp_{\CAlg^{\an}}^{\op})} \Fun(\Delta^2, \infty\Topp_{\CAlg^{\an}}^{\op}) \mytimes{ \Fun(\Delta^1,  \infty\Topp_{\CAlg^{\an}}^{\op})} 
\Fun(\Delta^1,  \DM^{\der,\op})$$
 opposite to the  $\infty$-category of arrows  
$$(\cat{Y}, \cB) \rightarrow \Spet(A) \rightarrow (\cat{S}, A) $$
with $(\cat{Y}, \cB) \rightarrow \Spet(A)$ a morphism in $\DM^{\der}$.  
Projection to the first coordinate turns $\cat{E}$ and $\widetilde{\cat{E}}$ into   cocartesian fibrations over $\CAlg^{\an}$. The derived scheme $X$ gives two objects  in $\cat{E}$ and 
$\widetilde{\cat{E}}$, which we will both denote by $(k,X)$.

\Cref{prop:homeostackyx}, 
\Cref{lemma:lift-is-derived-scheme-etale},  and 
\Cref{lemma:lift-is-derived-scheme-morphisms-etale} combine to give equivalences $$	(\widetilde{\cat{C}}_{\et})^{\cocart, \art}_{/(k,\cO_X)} \simeq 
	\widetilde{\cat{E}}^{\cocart, \art}_{/(k,X)} \ \ \ \ \  \ \ \ \ \ 	( {\cat{C}}_{\et})^{\cocart, \art}_{/(k,\cO_X)} \simeq 
	{\cat{E}}^{\cocart, \art}_{/(k,X)}.$$
Here, we have used 
the equivalence $\Spet(A) \simeq (\Shv(S_\et, \cat{S}), \cW \otimes_W {A})$ induced by the equivalence of sites $S_{\et} = \Spec(k)_{\et} \simeq \Spec(A)_{\et}$.

	The claim then follows since restriction along $\Delta^{\{0,2\}} \subset \Delta^2$ gives an equivalence of cocartesian fibrations $\widetilde{\cat{E}} \xrightarrow{\simeq} \cat{E}$ by 
  \Cref{prop:adjunction}.
\end{proof}

\newpage
\section{Animated divided power rings}
\label{sec:animated-divided-power}
In this section, we will discuss the basic theory of animated divided power rings, following Mao \cite{Mao2021}. These   play a key role in our definition of the  period map. \vspace{-4pt}

\subsection{Divided power rings} To begin with, recall the following classical definition:

\begin{definition}[Divided power rings, 
{\cite[\href{https://stacks.math.columbia.edu/tag/07GL}{Definition 07GL}]{stacks}}] \label{classicalpdrings}
A \emph{divided power structure} on an ideal $I $ in a (commutative) ring $R$
is given by a collection of maps $(\gamma_n\colon I\rightarrow I)_{n\geq 0}$ such that
for all $n,m \geq 0$ and all $r \in R$ and $x,y \in I$, we have:
\begin{enumerate}
	\item $\gamma_0(x) = 1, \gamma_1(x) = x$;\vspace{2pt} 
	\item $\gamma_n(x) \gamma_m(x) = \frac{(n+m)!}{n!m!} \gamma_{n+m}(x) $;\vspace{2pt} 
	\item $\gamma_n(rx) = r^n \gamma_n(x) $;\vspace{2pt} 
	\item $\gamma_n(x+y) = \sum_{i = 0}^n \gamma_i(x) \gamma_{n-i}(y) $;\vspace{2pt} 
	\item $\gamma_n(\gamma_m(x)) = \frac{(nm)!}{n! (m!)^n } \gamma_{nm}(x) $.
\end{enumerate}
A \emph{divided power ring} is a triple $R=(R,I,\gamma)$. A morphism of divided power rings is a morphism of rings $f\colon R\rightarrow R'$ mapping $I\rightarrow I'$ and such that $\gamma'_n(fx)=f\gamma_n(x)$ for all $x\in I$ and $n\geq 0$. 
Denote the category of divided power rings by
$\PDRing$.
\end{definition}

The axioms express that $\gamma_n(x)$ `behaves like $\frac{x^n}{n!}$'. 

\begin{example}[Free divided power algebras]
\label{exa:free-divided-power-ring}
Given a   ring $R$ and an $R$-module $M$, we define 
the \textit{free divided power algebra $\Gamma_R(M)$ of $M$ over $R$} (cf.\ e.g.\ \cite[Section 3.1]{LurieDAG}) as the quotient of the free commutative $R$-algebra on symbols \[ \{y^{[n]} \  |  \ y \in M, n\geq 0 \}\] by the following relations 
\begin{enumerate}
	\item $y^{[0]}=1$;
	\item $y^{[n]} y^{[m]} =  \frac{(n+m)!}{n!m!} y^{[n+m]}$;
	\item $(r y)^{[n]} =r^ny^{[n]}$;
	\item $(y+ z)^{[n]} = \sum_{i+j = n} y^{[i]} z^{[j]} $;
\end{enumerate}

There is a natural surjection $\Gamma_R(M) \twoheadrightarrow R$ whose kernel $I$ is generated by the symbols $y^{[n]}$ with $n\geq 1$.
The ideal $I$ carries a unique divided power structure $\gamma$
satisfying $\gamma_n(y) := y^{[n]}.$
Hence, we obtain a divided power ring
\[
\Gamma_R(M) := \big(\Gamma_R(M), I, \gamma\big).
\]
Note that $	\Gamma_R(M) = \bigoplus_n \Gamma_R^n(M)$ admits a   multiplicative weight decomposition placing $y^{[n]}$ in weight $n$.
If $M$ is a free $R$-module on generators $y_1,\ldots, y_i$, then 
$\Gamma_R(y_1,\ldots, y_i):= \Gamma_R(M)$ has 
underlying  $R$ module   freely generated by symbols 
$y_1^{[n_1]} \cdot \ldots \cdot y_i^{[n_i]}$ with $n_i\geq 0$.
\end{example}

\begin{example}[Trivial divided power structures]\label{exa:trivialpd}
Given a   ring $R$, the zero ideal carries a unique divided power structure, making $R$ into a divided power ring $(R,0,0)$. This construction identifies the category of   rings with a full subcategory of the category of divided power rings.
\end{example}

\subsection{Animated divided power rings}

Write $\PDPoly\subset \PDRing$ for the full subcategory 
spanned by all divided power rings of the form
\[
\Gamma_{\bZ[x_1,\ldots,x_n]}(y_1,\ldots,y_m)
\]
with $n,m\geq 0$, see \Cref{exa:free-divided-power-ring}. These are the divided power rings of the form $\Sym_\bZ(N) \otimes \Gamma_\bZ (M)$ with $N$ and $M$ finite free $\bZ$-modules.
Following \cite{Mao2021}, we define:

\begin{definition}[Animated  divided power rings]
The $\infty$-category of \emph{animated divided power rings}   $\CAlg^\pd := \cat{P}_\Sigma(\PDPoly)$ is obtained by freely adjoining sifted colimits to $\PDPoly$.
\end{definition}

The  inclusion $\Poly \rightarrow \PDPoly$
induces a fully faithful functor $\CAlg^{\an} \injto \CAlg^{\pd}$. We will often identify $\CAlg^{\an}$ with its essential image, thinking of animated   rings as being equipped with the trivial divided power structure (see also \Cref{exa:trivialpd}).

\begin{notation}[Animated surjections]\label{D0def}
An \textit{animated surjection} is a map of animated rings $R\rightarrow R'$  which induces a surjection after applying $\pi_0$.
We write 	$\Fun(\Delta^{1}, \CAlg^{\an})_{\surj}\subset \Fun(\Delta^{1}, \CAlg^{\an})$ for the full subcategory spanned by all
animated surjections. 
\end{notation}

\begin{proposition}[Forgetful functors]
\label{prop:forgetful-functor}
There is an essentially unique sifted-colimit-preserving functor
\[
\forget\colon \CAlg^{\pd} \rightarrow \Fun(\Delta^1,\CAlg^{\an})_{\surj}
\]
mapping a
divided power ring $(A,I,\gamma)$ in $\PDPoly$ to the ring map $A\rightarrow A/I$. The functor $\forget$ is conservative, and preserves small limits and colimits. 
\end{proposition}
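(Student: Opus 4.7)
The plan is to construct $\forget$ via the universal property of $\cat{P}_\Sigma$ and then verify the remaining properties in turn.

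First, I would define $\forget$ on the compact projective generators $\PDPoly$ by sending $(A, I, \gamma)$ to the surjection $A \surjto A/I$. Since this is literally surjective on $\pi_0$, it lies in $\Fun(\Delta^1, \CAlg^{\an})_{\surj}$, a full subcategory closed under sifted colimits in $\Fun(\Delta^1, \CAlg^{\an})$ (because $\pi_0 \colon \CAlg^{\an} \rightarrow \CRing$ preserves sifted colimits, so sifted colimits of $\pi_0$-surjections are $\pi_0$-surjections). The universal property of $\CAlg^{\pd} = \cat{P}_\Sigma(\PDPoly)$ then yields an essentially unique sifted-colimit-preserving extension. For colimit preservation, it remains to verify finite coproducts, which reduces to a computation on $\PDPoly$: the coproduct of $\Gamma_R(M)$ and $\Gamma_{R'}(M')$ in $\PDPoly$ is $\Gamma_{R \otimes_\bZ R'}(M \oplus M')$ (readable off the $\bZ$-basis of divided-power monomials), and $\forget$ sends it to $\Gamma_{R \otimes_\bZ R'}(M \oplus M') \rightarrow R \otimes_\bZ R'$, matching the coproduct of $\Gamma_R(M) \rightarrow R$ and $\Gamma_{R'}(M') \rightarrow R'$ in $\Fun(\Delta^1, \CAlg^{\an})$.

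For preservation of small limits, I would exhibit a left adjoint $\Env^\pd \colon \Fun(\Delta^1, \CAlg^{\an})_{\surj} \rightarrow \CAlg^{\pd}$, the animated PD envelope. Writing $\SurjPoly$ for the polynomial surjections $\bZ[x_1, \ldots, x_n, y_1, \ldots, y_m] \surjto \bZ[x_1, \ldots, x_n]$ killing the $y_j$, a sifted-approximation argument (variant of Mao's Corollary 2.13 of \cite{Mao2021}) shows $\cat{P}_\Sigma(\SurjPoly) \simeq \Fun(\Delta^1, \CAlg^{\an})_{\surj}$; I then define $\Env^\pd$ on $\SurjPoly$ by sending $\bZ[x_1, \ldots, x_n, y_1, \ldots, y_m] \surjto \bZ[x_1, \ldots, x_n]$ to $\Gamma_{\bZ[x_1, \ldots, x_n]}(y_1, \ldots, y_m)$ and extend by sifted colimits. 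The adjunction identity is immediate on generators since both sides parametrise the same data: an $n$-tuple in $A$ together with an $m$-tuple in $I$, for $(A, I, \gamma) \in \CAlg^{\pd}$. Propagation under sifted colimits then yields the adjunction globally.

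Finally, for conservativity, a map $f$ in $\CAlg^{\pd} = \cat{P}_\Sigma(\PDPoly)$ is an equivalence iff it induces equivalences on $\Map_{\CAlg^{\pd}}(P, -)$ for every $P \in \PDPoly$. Since each such $P$ is a finite coproduct of the atomic generators $\bZ[x]$ and $\Gamma_\bZ(y)$, it suffices to take $P \in \{\bZ[x], \Gamma_\bZ(y)\}$. For $\tilde A \in \CAlg^{\pd}$ with $\forget(\tilde A) = (A \rightarrow A/I)$, these mapping spaces are $\Omega^\infty A$ and $\Omega^\infty \fib(A \rightarrow A/I)$ respectively (check on $\PDPoly$, then extend by sifted colimits), both of which are determined by the arrow $A \rightarrow A/I$. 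Hence $\forget(f)$ being an equivalence forces $f$ to be an equivalence. I expect the main technical obstacle to be the identification $\cat{P}_\Sigma(\SurjPoly) \simeq \Fun(\Delta^1, \CAlg^{\an})_{\surj}$ used in constructing the left adjoint, since Mao's corollary treats $\ArrPoly$ rather than $\SurjPoly$; one must show that arbitrary animated $\pi_0$-surjections can be approximated by sifted diagrams of polynomial surjections, or else bypass this by constructing the left adjoint only on the relevant generators and checking the hom-space bijection by hand.
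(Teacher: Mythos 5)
Your proof is correct, and it takes a genuinely different route from the paper's. The paper proceeds in the opposite order: it first defines the envelope functor $\Env$ as the sifted-colimit-preserving extension of $\SurjPoly \to \PDPoly \to \CAlg^\pd$ and then obtains $\forget$ as its right adjoint by invoking Mao's general machinery (Prop.\ 2.2, Prop.\ 3.34), which packages the conservativity and colimit-preservation statements; it then verifies, via Mao's Prop.\ 3.17, that the resulting right adjoint does send $(A,I,\gamma)$ to $A \to A/I$ on $\PDPoly$. You instead define $\forget$ directly as the sifted-colimit-preserving extension from $\PDPoly$, which makes the essential uniqueness and the formula on $\PDPoly$ true by fiat rather than something to verify, then establish finite coproduct preservation by the computation on generators, build $\Env^\pd$ as a left adjoint, and prove conservativity via the compact projective generators $\bZ[x]$ and $\Gamma_\bZ(y)$. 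Your hands-on conservativity argument is cleaner than citing a black box; your coproduct computation should really read the coproduct of $\Gamma_R(M)$ and $\Gamma_{R'}(M')$ as $\Gamma_{R\otimes R'}\big((M\otimes R') \oplus (R\otimes M')\big)$, but the $\bZ$-basis argument you sketch makes the intent unambiguous. The "technical obstacle" you flag at the end — the identification $\cat{P}_\Sigma(\SurjPoly) \simeq \Fun(\Delta^1,\CAlg^\an)_{\surj}$ — is indeed a real ingredient and is not Mao's Corollary 2.13 but rather Mao's Theorem 3.23, which the paper cites; with that reference in hand your concern dissolves. The one place you could tighten things is the adjunction check: "both sides parametrise the same data" identifies the mapping spaces pointwise on generators, but to get a natural equivalence you should construct a unit transformation (the evident map $(\bZ[\vec x, \vec y] \to \bZ[\vec x]) \to \forget(\Gamma_{\bZ[\vec x]}(\vec y))$) and check it is a unit transformation, exactly in the style of the paper's proof of Proposition \ref{prop:cotangent-adjoint}.
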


\begin{proof} 
This can be found in \cite[\S 3]{Mao2021}. Since our notation differs from \textit{loc.\,cit.}, we guide the reader in extracting these statements.

By \cite[Theorem 3.23]{Mao2021}, we can write $\Fun(\Delta^1,\CAlg^{\an})_{\surj} \simeq  \cat{P}_{\Sigma}(\SurjPoly)$, where $\SurjPoly \subset  \Fun(\Delta^{1}, \CAlg^{\an})_{\surj}$ consists of  all `standard
surjections' of ordinary polynomial rings of the form   $ \bZ[X_1,\ldots,X_n, Y_1,\ldots,Y_m ] \xrightarrow{}\bZ[X_1,\ldots,X_n].$

The  `classical PD-envelope'  
gives a functor $ \SurjPoly \rightarrow \PDPoly$ which is 
essentially surjective and  preserves coproducts.  
Extending    \mbox{$\SurjPoly \rightarrow  \PDPoly \rightarrow \CAlg^{\pd}$} in a sifted-colimit-preserving way gives  a functor \mbox{$\Env: \Fun(\Delta^1,\CAlg^{\an})_{\surj} \rightarrow  \CAlg^{\pd}$.}
By  \cite[Proposition 2.2]{Mao2021}, $\Env$ admits a right adjoint $$\forget: \CAlg^{\pd} \rightarrow  \Fun(\Delta^{1}, \CAlg^{\an})_{\surj}$$ which is conservative and preserves  small colimits, cf.\ \cite[Proposition 3.34]{Mao2021}. 

Finally, $F$ sends  $(A,I,\gamma)$ in $\PDPoly$ to $A\rightarrow A/I$ by 
\cite[ Proposition 3.17]{Mao2021}.
\end{proof}

\begin{corollary}[Underlying animated rings]
\label{prop:underlying-functor}
There is an essentially unique sifted-colimit-preserving functor
\[
\und\colon \CAlg^{\pd} \rightarrow  \CAlg^{\an}
\]
mapping $(A,I,\gamma)$ in $\PDPoly$ to $A$. It preserves small limits and colimits.  
\end{corollary}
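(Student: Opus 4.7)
The plan is to invoke the universal property of $\cat{P}_\Sigma$ to produce $\und$, then to identify $\und$ with a composite that transparently preserves small colimits, and finally to construct a left adjoint to $\und$ so as to obtain preservation of small limits. By \cite[Section 5.5.8]{LurieHTT}, the functor $\PDPoly \rightarrow \CAlg^{\an}$ sending $(A, I, \gamma)$ to $A$ admits an essentially unique sifted-colimit-preserving extension $\und: \CAlg^{\pd} \rightarrow \CAlg^{\an}$, establishing existence and essential uniqueness.

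To see that $\und$ preserves small colimits, I identify it with the composite
\[
\CAlg^{\pd} \xrightarrow{\forget} \Fun(\Delta^1, \CAlg^{\an})_{\surj} \injto \Fun(\Delta^1, \CAlg^{\an}) \xrightarrow{\ev_0} \CAlg^{\an}.
\]
Both this composite and $\und$ preserve sifted colimits and send $(A, I, \gamma) \in \PDPoly$ to $A$, so they agree by essential uniqueness of the sifted extension. Now $\forget$ preserves small colimits by \Cref{prop:forgetful-functor}; the full subcategory $\Fun(\Delta^1, \CAlg^{\an})_{\surj} \subset \Fun(\Delta^1, \CAlg^{\an})$ is closed under small colimits, since $\pi_0 : \CAlg^{\an} \rightarrow \CRing^{\heart}$ preserves colimits and a colimit of surjections in $\CRing^{\heart}$ (reducing to filtered colimits, finite coproducts, and pushouts, each routine) remains a surjection; and $\ev_0$ preserves colimits pointwise.

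For preservation of small limits, I exhibit a left adjoint to $\und$. The inclusion $\Poly \injto \PDPoly$, $R \mapsto (R, 0, 0)$, extends essentially uniquely to a sifted-colimit-preserving functor $\iota_{\triv}: \CAlg^{\an} \rightarrow \CAlg^{\pd}$. On the compact projective generators $R \in \Poly$ and $(A, I, \gamma) \in \PDPoly$, the bijection $\Hom_{\PDPoly}((R, 0, 0), (A, I, \gamma)) \cong \Hom_{\CRing}(R, A)$ is immediate, since any ring map $R \rightarrow A$ automatically respects the trivial divided power structure on the zero ideal. One then extends this natural equivalence to all of $\CAlg^{\an} \times \CAlg^{\pd}$ using that $\Map$ carries colimits in either variable to limits in $\cat{S}$, combined with sifted-colimit preservation of both $\iota_{\triv}$ and $\und$ and the fact that $\Poly$ and $\PDPoly$ generate the respective $\infty$-categories under sifted colimits. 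Hence $\und$ is a right adjoint, and in particular preserves small limits. The main bookkeeping subtlety is the colimit-closedness of the subcategory of surjections used above, but this presents no serious obstacle.
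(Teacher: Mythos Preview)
Your proof is correct and, for existence/uniqueness and colimit preservation, matches the paper's argument essentially verbatim: both define $\und$ as $\ev_0 \circ \forget$ and use \Cref{prop:forgetful-functor} together with colimit-closedness of the surjections.

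The only genuine difference is in the limit argument. The paper observes that $\ev_0\colon \Fun(\Delta^1,\CAlg^{\an})_{\surj} \to \CAlg^{\an}$ itself has a left adjoint, namely $A \mapsto (\id_A\colon A \to A)$, so $\ev_0$ preserves limits; combined with $\forget$ preserving limits, this finishes. You instead construct the left adjoint $\iota_{\triv}\colon \CAlg^{\an} \hookrightarrow \CAlg^{\pd}$ to $\und$ directly. Your route is perfectly valid and in fact reproves what the paper records separately as \Cref{prop:und-as-right-adjoint}; the paper's route is a touch slicker since it avoids re-deriving that adjunction. One phrasing caveat: your sentence ``$\Map$ carries colimits in either variable to limits'' is false as stated for the second variable; what actually makes the extension work is that objects of $\Poly$ and $\PDPoly$ are compact projective, so $\Map$ out of them commutes with \emph{sifted colimits} in the target. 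Your surrounding text suggests you have this in mind, but the claim as written should be corrected.
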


\begin{proof}
The functor is defined by postcomposing 	$F\colon \CAlg^{\pd} \rightarrow \Fun(\Delta^1,\CAlg^{\an})_{\surj}$ with $\ev_0: \Fun(\Delta^1,\CAlg^{\an})_{\surj} \rightarrow\CAlg^{\an} $, $(A\rightarrow A') \mapsto A$. The functor $\ev_0$ admits a left adjoint $A \mapsto (\id: A \xrightarrow{} A)$ and it preserves small colimits,  as this is true for the inclusion $\Fun(\Delta^1,\CAlg^{\an})_{\surj} \hookrightarrow \Fun(\Delta^1,\CAlg^{\an})$.
\end{proof}

\begin{remark}
The (underived) `underlying ring' functor from divided power rings to    rings does not preserve colimits, see \cite[07GY]{stacks} for an example.
\end{remark}

\begin{proposition}\label{prop:und-as-right-adjoint}
The functor $\und$ is right adjoint to $\CAlg^{\an} \injto \CAlg^{\pd}$.
\end{proposition}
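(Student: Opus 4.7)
The plan is to present $\und$ as a composite of two right adjoints and then identify the composite of their left adjoints with $\iota$. Recall from the proof of \Cref{prop:underlying-functor} that $\und = \ev_0 \circ \forget$, where $\forget \colon \CAlg^\pd \to \Fun(\Delta^1,\CAlg^\an)_{\surj}$ is right adjoint to $\Env$ by \Cref{prop:forgetful-functor}. Next, I would check that $\ev_0 \colon \Fun(\Delta^1,\CAlg^\an)_{\surj} \to \CAlg^\an$ is right adjoint to the functor $c \colon A \mapsto (A \xrightarrow{\id} A)$: this follows immediately from the universal property of $\Delta^1$ in $\CAlg^\an$, since a commutative square
\[
\begin{tikzcd}
A \arrow{r}{\id} \arrow{d} & A \arrow{d} \\
B \arrow{r} & B'
\end{tikzcd}
\]
is freely determined by the ring map $A \to B$ (the right vertical being then forced to be the composite $A \to B \to B'$). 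Composing the adjunctions yields that $\und$ is right adjoint to $\Env \circ c \colon \CAlg^\an \to \CAlg^\pd$.

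It remains to produce an equivalence $\Env \circ c \simeq \iota$. Both functors preserve sifted colimits: the left-hand side as a composite of left adjoints (and noting that $c$ preserves sifted colimits, since sifted colimits in $\Fun(\Delta^1, \CAlg^\an)$ are computed pointwise and identities are stable under them), and $\iota$ by its very definition via $\cat{P}_\Sigma$ applied to the fully faithful $\Poly \hookrightarrow \PDPoly$. Since $\CAlg^\an = \cat{P}_\Sigma(\Poly)$ is freely generated under sifted colimits by $\Poly$, it therefore suffices to verify the equivalence on compact projective generators $A = \bZ[x_1,\ldots,x_n]$. For such $A$, we have $c(A) = (A \xrightarrow{\id} A) \in \SurjPoly$, and the classical PD envelope of the zero ideal inside $A$ is just $A$ equipped with the trivial divided power structure, i.e.\ $(A,0,0) = \iota(A)$ inside $\PDPoly$. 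This identification is compatible with morphisms in $\Poly$, producing the desired natural equivalence.

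The only non-trivial input is the description of $\Env$ on standard surjections, which is the content of \cite[\S 3]{Mao2021} and is already invoked in \Cref{prop:forgetful-functor}; the rest is routine bookkeeping. The main potential obstacle is making the appeal to \emph{loc.\,cit.}\ fully explicit, in particular matching up the various conventions so that $\Env$ of an identity arrow is literally the trivial-PD ring object in $\PDPoly$ (as opposed to merely being canonically equivalent to it). Once this compatibility is confirmed, the two-step adjunction argument assembles into the claim that $\iota \dashv \und$.
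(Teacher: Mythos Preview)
Your argument is correct, but it takes a different route from the paper. The paper's proof is a single line: it applies \cite[Corollary~2.2]{Mao2021} directly to the coproduct-preserving inclusion $\Poly \hookrightarrow \PDPoly$. That result is a general statement to the effect that a coproduct-preserving functor between small categories with finite coproducts induces, via sifted-colimit extension, a left adjoint between the corresponding $\cat{P}_\Sigma$-categories. Applied here, it immediately gives $\iota \dashv \und$ without passing through the arrow category.

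Your approach instead factors $\und = \ev_0 \circ \forget$ through $\Fun(\Delta^1,\CAlg^\an)_{\surj}$, uses the adjunction $\Env \dashv \forget$ already established in \Cref{prop:forgetful-functor}, together with the elementary adjunction $c \dashv \ev_0$ (which the paper records in the proof of \Cref{prop:underlying-functor}), and then identifies $\Env \circ c \simeq \iota$ on generators. This is slightly longer but has the virtue of being entirely internal to the paper: you only use facts already stated and do not need to invoke the external reference again. The ``obstacle'' you flag about $\Env$ of an identity arrow is not a real issue---the PD envelope of a surjection with zero kernel is tautologically the ring with trivial PD structure, and naturality in $\Poly$ is immediate---so your argument is complete as written.
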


\begin{proof}This follows from \cite[Corollary 2.2]{Mao2021} applied to the coproduct preserving functor $\Poly \injto \PDPoly$.
\end{proof}

\begin{notation}[$I_A$ and $\overline{A}$]\label{not:I-and-Q}
We will refer to $\und(A)$ as the \emph{underlying animated ring of $A$}, and will usually just denote it by $A$. We write $\forget(A)=(A\rightarrow \overline{A})$, and denote the (connective) fibre of $A\rightarrow \overline{A}$ by $I_A$. 
There is a fibre sequence
\[
I_A \rightarrow A \rightarrow \overline{A}
\]
in $\Mod_\bZ^\cn$, functorial in $A\in \CAlg^\pd$. For $A=(A,I,\gamma)$ in $\PDPoly$, it is given by the short exact sequence
$
0 \rightarrow I \rightarrow A \rightarrow A/I \rightarrow 0.
$
As its formation preserves sifted colimits,  this  determines the fibre \mbox{sequence in general. }
\end{notation}

\begin{proposition}\label{prop:idempotent}
The construction $A \mapsto \overline{A}$ determines a colimit-preserving idempotent endofunctor of $ \CAlg^{\pd}$.
\end{proposition}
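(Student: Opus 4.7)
The natural interpretation is to write the functor $A\mapsto \overline{A}$ as the composite
\[
\CAlg^{\pd} \xrightarrow{\forget} \Fun(\Delta^1,\CAlg^{\an})_{\surj} \xrightarrow{\ev_1} \CAlg^{\an} \xrightarrow{\iota} \CAlg^{\pd},
\]
where $\iota$ is the fully faithful inclusion of animated rings (viewed as PD rings with trivial divided powers) into $\CAlg^{\pd}$, cf.\ \Cref{prop:und-as-right-adjoint}. On a generator $(A,I,\gamma)\in\PDPoly$, \Cref{prop:forgetful-functor} gives $\forget(A,I,\gamma) = (A\rightarrow A/I)$, so the composite yields $A/I$ equipped with the trivial PD structure, in agreement with \Cref{not:I-and-Q}.

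Colimit-preservation is checked factor by factor. The functor $\forget$ preserves small colimits by \Cref{prop:forgetful-functor}. Colimits in the full subcategory $\Fun(\Delta^1,\CAlg^{\an})_{\surj}$ are computed pointwise in $\Fun(\Delta^1,\CAlg^{\an})$, since the class of $\pi_0$-surjections is closed under pushouts and filtered colimits; hence $\ev_1$ preserves small colimits. Finally, $\iota$ is left adjoint to $\und$ by \Cref{prop:und-as-right-adjoint}, so it preserves small colimits.

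For idempotence, I claim it suffices to prove that the partial composite $L := \ev_1\circ \forget\colon \CAlg^{\pd}\rightarrow \CAlg^{\an}$ satisfies $L\circ \iota \simeq \id_{\CAlg^{\an}}$, since then
\[
\overline{(-)}\circ \overline{(-)} = \iota\circ L\circ \iota\circ L \ \simeq\  \iota\circ L = \overline{(-)}.
\]
Both $L$ and $\iota$ preserve sifted colimits (the latter because it sends $\Poly\subset \CAlg^{\an}$ into $\PDPoly\subset \CAlg^{\pd}$), so by the universal property of $\cat{P}_\Sigma(\Poly) \simeq \CAlg^{\an}$ it is enough to produce a natural equivalence on $\Poly$. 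On a polynomial ring $\bZ[x_1,\ldots,x_n]$, $\iota$ returns $(\bZ[x_1,\ldots,x_n],0,0)\in\PDPoly$, and \Cref{prop:forgetful-functor} then gives $\forget(\bZ[\mathbf{x}],0,0) = (\bZ[\mathbf{x}]\xrightarrow{\id}\bZ[\mathbf{x}])$, whose evaluation at the target is $\bZ[\mathbf{x}]$, naturally in maps of $\Poly$.

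The only mild obstacle is to make the equivalence $L\circ\iota\simeq\id_{\CAlg^{\an}}$ genuinely natural rather than merely pointwise; this is handled by reducing to $\Poly$ via $\cat{P}_\Sigma$-freeness and invoking the explicit description of $\forget$ on generators from \Cref{prop:forgetful-functor}.
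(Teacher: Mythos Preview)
Your proof is correct and follows essentially the same route as the paper: both factor $\overline{(-)}$ as $\iota\circ\ev_1\circ\forget$, check colimit-preservation factor by factor (using \Cref{prop:forgetful-functor} for $\forget$, closure of $\pi_0$-surjections under colimits for $\ev_1$, and the left-adjoint status of $\iota$ from \Cref{prop:und-as-right-adjoint}), and deduce idempotence from $L\circ\iota\simeq\id_{\CAlg^{\an}}$. Your version is simply more explicit, in particular spelling out the reduction to $\Poly$ via $\cat{P}_\Sigma$-freeness where the paper just asserts the identity.
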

\begin{proof}
The first claim holds by \Cref{prop:forgetful-functor} as the	inclusion $\Fun(\Delta^1,\CAlg^{\an})_{\surj} \rightarrow \Fun(\Delta^1,\CAlg^{\an})$  and the embedding  $\CAlg^{\an} \rightarrow \CAlg^{\pd} $ both
preserve small colimits. The idempotence follows as   $\CAlg^{\an} \rightarrow \CAlg^{\pd} \xrightarrow{\overline{(-)}} \CAlg^{\an}$ is the identity.
\end{proof}

By \cite[Lemma 3.13]{Mao2021} and \cite[Proposition 3.32]{Mao2021}, we have: 
\begin{lemma}[Classical and animated divided power rings]\label{classical}
The functor $\PDPoly \rightarrow \CAlg^{\pd}$ extends to a fully faithful embedding $\PDRing \rightarrow \CAlg^{\pd}$ whose image consists of all $A\in  \CAlg^{\pd}$ for which both $\und(A)$ and $\overline{A}$ are discrete.

\end{lemma}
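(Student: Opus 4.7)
The plan is to build the functor $\iota\colon \PDRing \to \CAlg^{\pd}$ via restricted Yoneda, identify $\PDRing$ with the heart $\cat{P}_\Sigma(\PDPoly)^{\heartsuit}$ of the natural $t$-structure on $\CAlg^{\pd} = \cat{P}_\Sigma(\PDPoly)$, and then characterise this heart internally in terms of the functors $\und$ and $\overline{(-)}$. First, I would observe that the inclusion $\PDPoly \hookrightarrow \PDRing$ preserves finite coproducts: on both sides, the coproduct of two free divided power rings is obtained by tensoring their underlying commutative rings and equipping the canonical sum of the two PD-ideals with the unique compatible divided power structure. Restricted Yoneda then defines a functor
\[
\iota \colon \PDRing \longrightarrow \Fun^{\Pi}(\PDPoly^{\op},\Set), \qquad R \longmapsto \Hom_{\PDRing}(-,\,R),
\]
landing in finite-product-preserving set-valued presheaves; by \cite[Section 5.5.8]{LurieHTT}, this subcategory is precisely the heart $\cat{P}_\Sigma(\PDPoly)^{\heartsuit}\subset \CAlg^{\pd}$.

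To show $\iota$ is an equivalence onto the heart, I would verify that the objects of $\PDPoly$ form a collection of compact projective generators for $\PDRing$. Compactness holds because filtered colimits in $\PDRing$ are computed at the level of underlying sets of the ring and of the PD-ideal, while every PD-map out of $\Gamma_{\bZ[x_1,\ldots,x_n]}(y_1,\ldots,y_m)$ is determined by finitely many elements. Projectivity follows from the standard lifting property inherited from the free-forgetful monadic adjunction between $\PDRing$ and $\Set\times\Set$, whose left adjoint sends $(X,Y)$ to $\bZ[X]\otimes_\bZ \Gamma_\bZ(Y) \in \PDPoly$. The associated bar resolution then writes every $R \in \PDRing$ as a sifted colimit in $\PDRing$ of objects of $\PDPoly$, so the recognition criterion of \cite[Proposition 5.5.8.22]{LurieHTT} upgrades $\iota$ to an equivalence onto $\cat{P}_\Sigma(\PDPoly)^{\heartsuit}$.

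It remains to identify this heart with the full subcategory of $A \in \CAlg^{\pd}$ for which $\und(A)$ and $\overline{A}$ are both discrete. An object $A\in \cat{P}_\Sigma(\PDPoly)$ is discrete iff $\Map_{\CAlg^{\pd}}(P,A)$ is a discrete space for every $P \in \PDPoly$. Writing each $P = \Sym_\bZ(\bZ^n)\otimes_\bZ \Gamma_\bZ(\bZ^m)$ as a finite coproduct in $\PDPoly$ of copies of $\bZ[x]$ and $\Gamma_\bZ(y)$, this reduces to discreteness of
\[
\Map_{\CAlg^{\pd}}(\bZ[x],A) \simeq \Omega^{\infty}\und(A) \quad \text{and} \quad \Map_{\CAlg^{\pd}}(\Gamma_\bZ(y),A) \simeq \Omega^{\infty} I_A.
\]
The first equivalence combines the universal property of $\bZ[x] \in \CAlg^\an$ with the adjunction of \Cref{prop:und-as-right-adjoint}; the second comes from writing $A$ as a sifted colimit of $\PDPoly$-objects and using that a classical PD-map $\Gamma_\bZ(y)\to(B,J,\gamma)$ is the same as choosing an element of $J$, together with the fact that $I_{(-)}$ preserves sifted colimits (as a fibre of two sifted-colimit-preserving functors). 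Since \Cref{prop:forgetful-functor} guarantees that $\und(A)\to\overline{A}$ is surjective on $\pi_0$, the long exact sequence of homotopy for the fibre sequence $I_A \to \und(A) \to \overline{A}$ in $\Mod_\bZ^{\cn}$ shows that $\und(A)$ and $I_A$ are simultaneously discrete iff $\und(A)$ and $\overline{A}$ are.

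The main obstacle is the essential surjectivity step in the second paragraph: one must verify the hypotheses of the recognition theorem, in particular that the bar resolution for the monadic adjunction $\PDRing \leftrightarrows \Set\times\Set$ genuinely presents every PD-ring as a sifted colimit of $\PDPoly$-objects, and that these generators are compact. This ultimately reflects the classical fact that the operations $(\gamma_n)_{n\geq 0}$ of \Cref{classicalpdrings} are all finitary and commute with underlying-set filtered colimits, so that the forgetful functors $\PDRing \to \Set$ remembering either the ring or the ideal create such colimits.
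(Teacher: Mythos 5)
There is a fundamental gap, and it is precisely the subtlety that the paper's remark immediately following \Cref{classical} is warning about: the theory of divided power rings is \emph{not} a Lawvere theory, because of the injectivity requirement built into $I \subseteq R$. Your second paragraph claims that $\iota$ is an equivalence from $\PDRing$ onto the full subcategory of discrete objects $\cat{P}_\Sigma(\PDPoly)^{\heartsuit}$, citing the recognition criterion for algebras over a Lawvere theory. But the objects of $\PDPoly$ do not form a set of compact projective generators for $\PDRing$ — the finite-product-preserving functors $\PDPoly^{\op}\to\Set$ are models of the Lawvere theory obtained by \emph{dropping} the injectivity of $I \to R$, a category strictly larger than $\PDRing$. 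This is exactly the content of the sentence "$\PDRing \to \tau_{\leq 0}\CAlg^{\pd}$ is \emph{not} an equivalence" in the remark after the lemma. The bar resolution via the free–forgetful adjunction with $\Set\times\Set$, which you lean on for essential surjectivity, yields the larger "pre-PD ring" category precisely because Barr–Beck cannot see the injectivity condition.

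Your third paragraph then commits a second error that happens to cancel the first, which is why the conclusion still reads correctly. You assert that the fibre sequence $I_A \to \und(A) \to \overline{A}$ shows "$\und(A)$ and $I_A$ are simultaneously discrete iff $\und(A)$ and $\overline{A}$ are." Only the forward implication is true. If $\und(A)$ and $I_A$ are discrete, the long exact sequence gives $\pi_1\overline{A} \cong \ker\big(\pi_0 I_A \to \pi_0\und(A)\big)$, which vanishes if and only if $\pi_0 I_A \to \pi_0\und(A)$ is injective — precisely the extra condition distinguishing genuine PD-rings from models of the ambient Lawvere theory. A correct proof (as in Mao's \cite[Lemma 3.13, Proposition 3.32]{Mao2021}) keeps track of this: full faithfulness of $\iota$ can indeed be set up by restricted Yoneda as you do, but the essential image must be cut out inside the discrete objects by the injectivity of $\pi_0 I_A \to \pi_0\und(A)$, and it is only after imposing that condition that "$\und(A)$ and $I_A$ discrete" becomes equivalent to "$\und(A)$ and $\overline{A}$ discrete." Your own final paragraph, flagging the essential surjectivity step as "the main obstacle," is a correct instinct — that step does not merely need verification, it fails, and the conclusion has to be reached by a different route.
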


\begin{remark}
The above functor factors over $ \tau_{\leq 0}\CAlg^{\pd}$, the category  of finite-product-preserving functors $\PDPoly \rightarrow \Set$. Note  that   $\PDRing \rightarrow \tau_{\leq 0}\CAlg^{\pd}$ is \emph{not} an equivalence and $\CAlg^{\pd}$ is \emph{not} the `animation' of  $\PDRing$, see also \cite[\S 3.2]{Mao2021}. 

This is related to the fact that the theory of divided power rings is not an algebraic theory in the style of Lawvere. A divided power ring consists of sets $I$ and $R$ together with various operations satisfying various relations, and the additional condition that the operation $I\rightarrow R$ is injective. If one drops the injectivity condition, then one obtains a Lawvere theory, and its animation is $\CAlg^\pd$. 
\end{remark}

\begin{notation}[Pushouts as tensor products]We will denote the pushout of maps $A\rightarrow B$ and $A\rightarrow C$ in $\CAlg^{\pd}$ by $B\otimes_A C$. By \Cref{prop:underlying-functor}, the underlying animated ring is the pushout in $\CAlg^{\an}$, whose underlying object in $\Mod^{\cn}_\bZ$ is the relative tensor product $B\otimes_A C$.
\end{notation}

\begin{construction}[$\CAlg^{\pd}_A$]\label{constr:functor-CAlg-pd}
For $A$ in $\CAlg^{\pd}$, we write $\CAlg^{\pd}_A := \CAlg^{\pd}_{A/}$. For every map $A\rightarrow A'$ in $\CAlg^{\pd}$, the functor $\CAlg^{\pd}_{A'} \rightarrow \CAlg^{\pd}_A$ admits a left adjoint, given by the pushout $A'\otimes_A -$. Proceeding as in \Cref{constr:functor-CAlg}, we obtain a functor $\CAlg^{\pd}\rightarrow \PrL$ mapping $A$ to $\CAlg^{\pd}_A$. 
\end{construction}

\subsection{Induced divided power structure}\label{subsec:induced-pd-structure}

If $(A,I,\gamma)$ is a classical divided power ring and $A\rightarrow B$ a \emph{flat} morphism of  rings, then the ideal $BI \cong B\otimes_A I$ inherits a divided power structure, given by
\[
\gamma_n(b\otimes x) = b^n \otimes \gamma_n(x),
\]
see \emph{e.g.} \cite[07H1]{stacks}. In this section, we give a derived analogue of this construction.

\begin{notation} [$A_{\triv}$] Consider the composite
\[
\CAlg^{\pd} \overset{\und}\longto \CAlg^{\an} \injto
\CAlg^{\pd},\, A \mapsto A_\triv.
\]
It is the sifted-colimit-preserving extension of the functor mapping an
$A=(A,I,\gamma)$ in $\PDPoly$ to the divided power ring $A_\triv=(A,0,0)$.
The counit of the adjunction of \Cref{prop:und-as-right-adjoint} gives a natural transformation $(-)_\triv \rightarrow \id$. For $(A,I,\gamma)$ in $\PDPoly$, it is given by the identity map $(A,0,0)\rightarrow (A,I,\gamma)$. 
\end{notation}
\begin{definition}[The functor $\ind$]\label{def:induced-divided-power-structure}
Consider the $\infty$-category
\[
\CAlg^{\pd} \times_{\CAlg^{\an}} \Fun(\Delta^{1},\CAlg^{\an})
\]
whose objects are pairs consisting of an animated divided power ring $A$ and a morphism of animated rings $\und(A)\rightarrow B$. We have a functor
\begin{eqnarray*}
	\ind\colon 
	\CAlg^{\pd} \times_{\CAlg^{\an}} \Fun(\Delta^{1},\CAlg^\an),
	&\rightarrow& \Fun(\Delta^1,\CAlg^{\pd}),\, \\
	(A,\und(A)\rightarrow B) &\mapsto& (A\rightarrow \ind_A(B))
\end{eqnarray*}
given by sending $(A,\und(A) \rightarrow B)$ to the following pushout in $\CAlg^{\pd}$
\[
\begin{tikzcd}
	A_\triv \arrow{d} \arrow{r} & B \arrow{d} \\
	A \arrow{r} & \ind_A(B), \arrow[lu, phantom, "\ulcorner", at start]
\end{tikzcd}
\]
where we identify $B$ with an object of $\CAlg^{\pd}$ via  $\CAlg^\an \injto \CAlg^{\pd}$ as usual.
\end{definition}

\begin{lemma}\label{lemma:ideal-of-induced}
Let $A\in \CAlg^{\pd}$ and let $\und(A) \rightarrow B$ be a map in $\CAlg^{\an}$. Then $B':=\ind_A(B)$  satisfies
\begin{enumerate}
	\item the map $B\rightarrow \und_A B'$ is an equivalence in $\CAlg^\an$;
	\item the map $B\otimes_A \overline{A} \rightarrow \overline{B'}$ is an equivalence in $\CAlg^\an$;
	\item the map $B\otimes_A I_A \rightarrow I_{B'}$ is an equivalence in $\Mod_B$.
\end{enumerate}
\end{lemma}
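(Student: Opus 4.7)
My plan is to deduce all three assertions from the defining pushout square for $B' = \ind_A(B)$ in $\CAlg^\pd$ by applying various colimit-preserving functors. First, I apply $\und \colon \CAlg^\pd \to \CAlg^\an$, which preserves colimits by \Cref{prop:underlying-functor}. On the counit $A_\triv \to A$ this functor restricts to $\id_A$, so the resulting pushout in $\CAlg^\an$ collapses to yield $\und(B') \simeq B$, giving (1). Next, I apply the idempotent endofunctor $\overline{(-)}$, which preserves colimits by \Cref{prop:idempotent} and acts as the identity on the two trivial-ideal corners $A_\triv$ and $B$ (both already in $\CAlg^\an$). The resulting pushout in $\CAlg^\pd$ has left edge the quotient $A \to \overline{A}$ and top edge the given map $A \to B$. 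Since the inclusion $\CAlg^\an \hookrightarrow \CAlg^\pd$ is itself a left adjoint by \Cref{prop:und-as-right-adjoint}, this pushout can equivalently be computed in $\CAlg^\an$, giving $\overline{B'} \simeq B \otimes_A \overline{A}$ and establishing (2).

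Finally, for (3), I will compare two fibre sequences in $\Mod_\bZ^\cn$. The first is the canonical fibre sequence for $B'$ from \Cref{not:I-and-Q}, which by (1) and (2) takes the form $I_{B'} \to B \to B\otimes_A \overline{A}$. The second is obtained by applying the exact base change functor $B \otimes_A (-) \colon \Mod_A \to \Mod_B$ to the canonical fibre sequence $I_A \to A \to \overline{A}$ for $A$, producing $B\otimes_A I_A \to B \to B\otimes_A \overline{A}$. Naturality of $I_{(-)} \to \und \to \overline{(-)}$ under the map $A \to B'$ in $\CAlg^\pd$, combined with $B$-linearity (using $\und(B') \simeq B$), yields a comparison morphism between these two fibre sequences in which two of the three components are identities; passing to fibres then forces the remaining arrow $B\otimes_A I_A \to I_{B'}$ to be the desired equivalence.

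The only mild subtlety lies in (3), namely in correctly identifying the canonical comparison map as the one arising from base change along $A \to B$ together with naturality of the fibre sequence $I_{(-)} \to \und \to \overline{(-)}$. Steps (1) and (2) are essentially formal, using only that the relevant functors preserve colimits in order to reduce the pushout to a trivial one.
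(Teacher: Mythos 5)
Your proof is correct and follows essentially the same route as the paper's (which simply says ``Follows from \Cref{prop:forgetful-functor}''): you are unpacking the colimit-preservation of $\forget$ into its two constituents $\und$ and $\overline{(-)}$, and then reading off $(3)$ by comparing the resulting fibre sequences, exactly as intended.
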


\begin{proof}
Follows from \Cref{prop:forgetful-functor}.
\end{proof}

Alternatively, one could define the functor $\ind$ as the sifted-colimit-preserving extension of the functor that maps a pair consisting of an $A=(A,I,\gamma)$ in $\PDPoly$, and a $B$ in $\Poly_A$ to the divided power ring
\[
\ind_A(B) := (B, B\otimes_A I, \tilde\gamma)
\]
with $\tilde\gamma_n(b\otimes x) = b^n\otimes \gamma_n(x)$,  equipped with the obvious map $A\rightarrow \ind_A(B)$.

\subsection{Divided power cotangent complex}
We recall 	{\cite[\href{https://stacks.math.columbia.edu/tag/07HQ}{07HQ}]{stacks}}:
\begin{definition}[Divided power derivations] \label{def:pd-derivation}
Let $A$ be a   ring and $B=(B,I,\gamma)$ a (classical) divided power $A$-algebra.  An $A$-linear \emph{divided power derivation} of $B$ into a $B$-module $M$ is an $A$-linear derivation $\theta: B \rightarrow M$ satisfying
\[
\theta(\gamma_n(x)) = \gamma_{n-1}(x) \theta(x)
\]
for all $n\geq 1$ and all $x \in I$.
\end{definition}

There is a universal $A$-linear divided power derivation
\[
d = d_{B/A}\colon B \rightarrow \Omega^{1,\pd}_{B/A}
\]
such that for all $B$-modules $M$, precomposition with $d$ identifies the set of $A$-linear divided power derivations $B \rightarrow M$ with $\Hom_B(\Omega^{1,\pd}_{B/A},  M)$, cf.~\cite[\href{https://stacks.math.columbia.edu/tag/07HQ}{Tag 07HQ}]{stacks}.

\begin{remark}
The module $\Omega^{1,\pd}_{B/A}$ is a quotient of the module $\Omega^{1}_{B/A}$ of K\"ahler differentials of the underlying   $A$-algebra $B$ by all relations $d(\gamma_n(x))-\gamma_{n-1} d(x)$.
If $B$ carries  the trivial divided power structure, $\Omega^{1}_{B/A}\rightarrow \Omega^{1,\pd}_{B/A}$ is an isomorphism. 
\end{remark}

\begin{example}\label{exa:pd-differentials-free}
If $B=\Gamma_{A[x_1,\ldots,x_n]}(y_1,\ldots, y_m)$ then $\Omega^{1,\pd}_{B/A}$ is the free $B$-module generated
by $dx_i$ and $dy_i$, 
and the universal derivation satisfies $d y_i^{[n]} = y_i^{[n-1]} dy_i$.
\end{example}

Any commutative square of  rings
\[
\begin{tikzcd}
A   \arrow{r}{ } \arrow{d}{ } & B \arrow{d}{ }  \\
A'   \arrow{r}{ }  & B' 
\end{tikzcd}
\]
with $B\rightarrow B'$ a map of divided power rings induces
a homomorphism \[\Omega^{1,\pd}_{B/A} \rightarrow \Omega^{1,\pd}_{B'/A'}\]
and this makes $\Omega^{1,\pd}_{B/A}$ into a functor of $A\rightarrow B$.  To derive this functor, we will use  compact projective  arrows of animated divided power rings:
\begin{definition}[$\ArrPDPoly$] 
Write  $\ArrPDPoly$ for the ordinary category of maps of divided power rings
of the form
\[
\Gamma_{\bZ[x_1,\ldots, x_n]}(y_1,\ldots, y_m) \longrightarrow
\Gamma_{\bZ[x_1,\ldots, x_{n+n'}]}(y_1, \ldots, y_{m+m'}).
\]
\end{definition}

\begin{lemma}The natural functor $\ArrPDPoly \rightarrow \Fun(\Delta^{1}, \CAlg^{\pd})$ induces an equivalence $\cat{P}_\Sigma(\ArrPDPoly) \simeq  \Fun(\Delta^{1}, \CAlg^{\pd})$.
\end{lemma}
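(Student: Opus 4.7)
The plan is to mimic Mao's proof of the analogous statement $\cat{P}_\Sigma(\ArrPoly) \simeq \Fun(\Delta^{1}, \CAlg^{\an})$ (recalled in the excerpt from \cite[Corollary 2.13]{Mao2021}). The key tool is a standard recognition principle for $\cat{P}_\Sigma$: a sifted-colimit-preserving functor $\Phi\colon \cat{P}_\Sigma(\cat{E}) \to \cat{X}$ is an equivalence provided its restriction $\cat{E} \to \cat{X}$ is fully faithful, factors through compact projective objects, and these generate $\cat{X}$ under sifted colimits. I would apply this to the essentially unique sifted-colimit-preserving extension $\Phi$ of the canonical inclusion $\iota\colon \ArrPDPoly \hookrightarrow \Fun(\Delta^{1}, \CAlg^{\pd})$.

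First I would check that $\ArrPDPoly$ admits finite coproducts, given by pooling polynomial and divided-power generators with initial object $\id_{\bZ}$, and that $\iota$ preserves them. For full faithfulness of $\iota$, note that mapping spaces in $\Fun(\Delta^1, \CAlg^{\pd})$ are pullbacks
\[
\Map_{\Fun(\Delta^1, \CAlg^{\pd})}(f, g) \simeq \Map_{\CAlg^{\pd}}(A, C) \times_{\Map_{\CAlg^{\pd}}(A, D)} \Map_{\CAlg^{\pd}}(B, D)
\]
for arrows $f\colon A \to B$ and $g\colon C \to D$. Since $A, B, C, D$ lie in $\PDPoly \subset \CAlg^{\pd} = \cat{P}_\Sigma(\PDPoly)$, the Yoneda embedding makes all pairwise mapping spaces discrete, so the pullback above reduces to the Hom set $\Hom_{\ArrPDPoly}(f, g)$.

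For compact projectivity of an arrow $f\colon A \to B$ in $\ArrPDPoly$, I observe that $f$ is a free extension: writing $A = \Gamma_{\bZ[x_1,\ldots,x_n]}(y_1,\ldots,y_m)$ and $A' = \Gamma_{\bZ[x_{n+1},\ldots,x_{n+n'}]}(y_{m+1},\ldots,y_{m+m'})$, one has $B \simeq A \otimes_{\bZ} A'$ as a pushout in $\CAlg^{\pd}$, with $f$ the inclusion of the first factor. Consequently, for any $g\colon C \to D$,
\[
\Map(f, g) \simeq \Map_{\CAlg^{\pd}}(A, C) \times \Map_{\CAlg^{\pd}}(A', D),
\]
a finite product of mapping spaces out of compact projective objects of $\CAlg^{\pd}$. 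Since sifted colimits in $\Fun(\Delta^1, \CAlg^{\pd})$ are computed levelwise and sifted colimits commute with finite products in $\cat{S}$, the functor $\Map(f, -)$ preserves sifted colimits, so $f$ is compact projective.

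The main obstacle is the generation step: every arrow $h\colon R \to S$ in $\Fun(\Delta^1, \CAlg^{\pd})$ must be exhibited as a sifted colimit of arrows from $\ArrPDPoly$. The plan is to write $R = \colim_\alpha R_\alpha$ as the canonical sifted colimit of representables $R_\alpha \in \PDPoly$, and for each $\alpha$ factor the composite $R_\alpha \to R \to S$ through free extensions $R_\alpha \to R_\alpha \otimes_{\bZ} A'_\beta$ obtained by lifting compatible finite tuples of polynomial and divided-power generators of $S$; the resulting comma-type index category of such factorisations is sifted and has colimit $h$. This step is formally identical to Mao's argument in the animated setting with $\PDPoly$ substituted for $\Poly$ throughout, the only genuinely new input being that $\PDPoly$ generates $\CAlg^{\pd}$ under sifted colimits, which is automatic from the definition $\CAlg^{\pd} = \cat{P}_\Sigma(\PDPoly)$.
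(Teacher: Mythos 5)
Your proposal is correct. The paper's own proof of this lemma is simply a citation to \cite[Proposition A.16]{Mao2021}, so there is no independent argument in the paper to compare against; your reconstruction via the standard $\cat{P}_\Sigma$-recognition principle (full faithfulness of $\iota$, preservation of finite coproducts, compact projective image generating under sifted colimits) is the expected route and is essentially what the cited reference carries out.
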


\begin{proof}See \cite[Proposition A.16]{Mao2021}.  
\end{proof}

\begin{definition}[Cotangent complex for animated divided power rings]\label{def:cotanipd}
Let us write $\CAlg^{\pd}\Mod^{\cn}$ for the $\infty$-category of pairs $(A,M)$ consisting of an animated divided power ring $A$ and an object $M$ of $\Mod_A^{\cn}$. More formally, we have
\[
\CAlg^{\pd}\Mod^{\cn} := 
\CAlg^{\pd} \times_{\CAlg^{\an}} \CAlg^{\an}\Mod^{\cn}
\]
The \textit{divided power cotangent complex functor}
\[
\underline{L}^\pd: \Fun(\Delta^{1}, \CAlg^{\pd}) 
\rightarrow \CAlg^{\pd}\Mod^{\cn}
\]  
is the unique sifted-colimit-preserving 
extension of the  functor  sending
$A \rightarrow B$ in $\ArrPDPoly$ to the pair $(B, \Omega^{1,\pd}_{B/A})$.
\end{definition}

\begin{remark}	\label{thirdrem}
As  in \Cref{not:extend}, 
we see that  the composition
\[
\Fun(\Delta^{1}, \CAlg^{\pd}) \xrightarrow{\underline{L}^\pd} \CAlg^{\pd}\Mod^{\cn} \xrightarrow{} \CAlg^{\pd}
\]
sends $A\rightarrow B$ to $B$.  
Hence, we can  write $\underline{L}^\pd_{B/A} = (B, L_{B/A}^\pd)$ for some   \mbox{$L_{B/A}^\pd\in \Mod_B^{\cn}$.}
\end{remark}

\begin{definition}[Hurewicz Map] \label{rem:hurewicz}
	Given a morphism $A \rightarrow B$ in $\CAlg^{\pd}$, there is a canonical map $B \rightarrow L_{B/A}^{\pd}$, the \textit{universal derivation},   together with a nullhomotopy of the composite $A \rightarrow B \rightarrow L_{B/A}^{\pd}$. Both are constructed by extending from $\ArrPDPoly$  under sifted colimits. The \textit{Hurewicz map} is the induced map  of $A$-modules $B/A \rightarrow L_{B/A}^{\pd}$, where $B/A$ denotes the cofibre of $A\rightarrow B$.
\end{definition} 
\begin{proposition}\label{prop:non-pd-cotangent-complex}
If $A\rightarrow B$ is a map of animated rings, interpreted as a map of animated divided power rings, then ${L}_{B/A}^\pd \simeq L_{B/A}$. 
\end{proposition}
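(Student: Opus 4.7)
The plan is to realize both $L_{B/A}$ and $L^{\pd}_{B/A}$ as values of sifted-colimit-preserving functors on $\Fun(\Delta^1, \CAlg^{\an})$, then compare them on the compact projective generators $\ArrPoly$ and invoke the universal property of $\cat{P}_\Sigma$. The key observation is that for rings with trivial divided power structure, PD differentials coincide with ordinary K\"ahler differentials, so the comparison on generators is immediate.

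Concretely, I would consider the composite
\[
\Phi \colon \Fun(\Delta^1, \CAlg^{\an}) \xrightarrow{\iota} \Fun(\Delta^1, \CAlg^{\pd}) \xrightarrow{\underline{L}^{\pd}} \CAlg^{\pd}\Mod^{\cn} \longrightarrow \CAlg^{\an}\Mod^{\cn},
\]
whose last arrow sends $(A, M) \mapsto (\und(A), M)$. The inclusion $\iota$ preserves all colimits, since $\CAlg^{\an} \hookrightarrow \CAlg^{\pd}$ is a left adjoint by \Cref{prop:und-as-right-adjoint}; the functor $\underline{L}^{\pd}$ preserves sifted colimits by construction (\Cref{def:cotanipd}); and the last projection preserves all colimits by \Cref{prop:underlying-functor} combined with the pullback description $\CAlg^{\pd}\Mod^{\cn} = \CAlg^{\pd} \times_{\CAlg^{\an}} \CAlg^{\an}\Mod^{\cn}$. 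Hence $\Phi$ preserves sifted colimits.

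On any generator $A\to B$ in $\ArrPoly$, both source and target carry trivial PD ideal, so the extra condition in \Cref{def:pd-derivation} is vacuous and the universal PD derivation coincides with the universal K\"ahler derivation; thus $\Omega^{1,\pd}_{B/A} \cong \Omega^1_{B/A}$ in $\Mod_B^{\heartsuit}$. Therefore $\Phi|_{\ArrPoly} \simeq \underline{L}|_{\ArrPoly}$, and since both sides preserve sifted colimits and $\Fun(\Delta^1, \CAlg^{\an}) \simeq \cat{P}_\Sigma(\ArrPoly)$, the universal property of $\cat{P}_\Sigma$ yields $\Phi \simeq \underline{L}$. Extracting the second (module) component gives the claimed equivalence $L^{\pd}_{B/A} \simeq L_{B/A}$ in $\Mod_B^{\cn}$. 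I do not foresee a genuine obstacle; the only real bookkeeping is verifying that the projection $\CAlg^{\pd}\Mod^{\cn} \to \CAlg^{\an}\Mod^{\cn}$ preserves sifted colimits, which follows formally from its pullback description.
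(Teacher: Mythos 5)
Your proof is correct and takes essentially the same approach as the paper's: both define the same composite functor to $\CAlg^{\an}\Mod^{\cn}$, observe it preserves sifted colimits, and compare with $\underline{L}$ on $\ArrPoly$ (where trivial divided powers make PD differentials coincide with K\"ahler differentials). You simply spell out the sifted-colimit verifications in more detail than the paper, which just asserts them.
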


\begin{proof}
By construction, the composition
\[
\Fun(\Delta^1,\CAlg^{\an}) \injto \Fun(\Delta^{1},\CAlg^{\pd})
\overset{\underline{L}^\pd}\rightarrow \CAlg^{\pd}\Mod^{\cn} \rightarrow \CAlg^{\an}\Mod^{\cn}
\]
agrees with $\underline{L}$ on $\ArrPoly$ and both functors preserve sifted colimits.
\end{proof}

\begin{proposition}\label{prop:cotangent-preserves-colimits}
The functor $\underline{L}^\pd$ preserves small colimits.
\end{proposition}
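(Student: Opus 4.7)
The plan is to reduce the claim to checking that finite coproducts are preserved on the compact projective generators $\ArrPDPoly$. By construction (\Cref{def:cotanipd}), $\underline{L}^\pd$ already preserves sifted colimits. Moreover, both the source $\Fun(\Delta^1,\CAlg^{\pd}) \simeq \cat{P}_\Sigma(\ArrPDPoly)$ and the target $\CAlg^{\pd}\Mod^{\cn}$ are presentable (for the target one can argue as in \Cref{cons:LM} by realising it as $\cat{P}_\Sigma$ of the category of pairs of divided power polynomial rings and finite free modules). Hence by \cite[Proposition 5.5.8.15]{LurieHTT}, a functor preserving sifted colimits out of $\cat{P}_\Sigma(\ArrPDPoly)$ preserves all small colimits if and only if its restriction to $\ArrPDPoly$ preserves finite coproducts.

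I would then verify the latter directly. The initial object of $\ArrPDPoly$ is the identity map $\bZ \to \bZ$, and $\underline{L}^{\pd}$ sends it to the pair $(\bZ, 0)$, which is the initial object of $\CAlg^{\pd}\Mod^{\cn}$. For binary coproducts, the coproduct in $\ArrPDPoly$ of two arrows
\[
\Gamma_{\bZ[x]}(y) \longto \Gamma_{\bZ[x,x']}(y,y') \qquad \text{and} \qquad \Gamma_{\bZ[\tilde x]}(\tilde y) \longto \Gamma_{\bZ[\tilde x, \tilde x']}(\tilde y, \tilde y')
\]
is obtained by simply concatenating the polynomial and divided power generators on both sides, producing another arrow in $\ArrPDPoly$. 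Using the explicit description of \Cref{exa:pd-differentials-free}, $\Omega^{1,\pd}$ of the coproduct arrow is the free module on the disjoint union of the differentials of the `new' generators of each factor. This module manifestly decomposes as the direct sum of the pullbacks of $\Omega^{1,\pd}_{B/A}$ and $\Omega^{1,\pd}_{B'/A'}$ along the coproduct inclusions, and so the canonical comparison map with the coproduct in $\CAlg^{\pd}\Mod^{\cn}$ is an equivalence.

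The main step to justify is the description of coproducts in $\CAlg^{\pd}\Mod^{\cn}$ that I used above, namely that
\[
(A,M) \sqcup (A',M') \;\simeq\; \bigl(A\otimes A', \; (A\otimes A')\otimes_A M \,\oplus\, (A\otimes A')\otimes_{A'} M'\bigr).
\]
This is a general fact about coproducts in the total space of a cocartesian fibration whose base and fibres admit coproducts: one forms the coproduct on the base, then direct-sums the cocartesian pushforwards of the fibres. Here it applies to the cocartesian fibration $\CAlg^{\pd}\Mod^{\cn} \to \CAlg^{\pd}$ arising from \Cref{constr:functor-CAlg-pd} and its module-theoretic analogue. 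Once this identification is in place, the coproduct verification on $\ArrPDPoly$ is a routine direct computation, completing the proof.
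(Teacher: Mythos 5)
Your proof takes the same approach as the paper: reduce to preservation of finite coproducts on the compact projective generators $\ArrPDPoly$, then verify this via the explicit description of $\Omega^{1,\pd}$ from \Cref{exa:pd-differentials-free}. The paper's proof is a two-sentence version of exactly this argument, so your additional details (the formula for coproducts in $\CAlg^{\pd}\Mod^{\cn}$, the concatenation description of coproducts in $\ArrPDPoly$) are just the unwinding that the paper leaves implicit.
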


\begin{proof}
It suffices to show that the functor preserves finite coproducts in $\ArrPDPoly$. This follows from an explicit computation using \Cref{exa:pd-differentials-free}.
\end{proof}

As observed by Mao (see \cite[Examples ~2.32,2.25]{Mao2021}), 
\Cref{transitivity0} and \Cref{cotcxbasechange0} admit the following immediate generalisations:
\begin{corollary}[Transitivity sequence] \label{transitivity}
Given a diagram $A \rightarrow B \rightarrow C$ of animated divided power rings, there is a cofibre sequence in $\Mod_C$:
\[
C \otimes_{B} L_{B/A}^\pd \rightarrow L_{C/A}^\pd 
\rightarrow L_{C/B}^\pd.
\]
\end{corollary}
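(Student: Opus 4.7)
The strategy is to mimic the proof of \Cref{transitivity0}, using the fact that $\underline{L}^\pd$ preserves small colimits (\Cref{prop:cotangent-preserves-colimits}). Concretely, in $\Fun(\Delta^1, \CAlg^\pd)$ consider the pushout diagram
\[
(A \rightarrow C) \longleftarrow (A \rightarrow B) \longrightarrow (B \xrightarrow{\id} B),
\]
whose pushout, computed pointwise, is the arrow $(B \rightarrow C)$. Applying the colimit-preserving functor $\underline{L}^\pd$ yields that $\underline{L}^\pd_{C/B}$ is the pushout of
\[
\underline{L}^\pd_{C/A} \longleftarrow \underline{L}^\pd_{B/A} \longrightarrow \underline{L}^\pd_{B/B}
\]
in $\CAlg^\pd\Mod^\cn$.

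Next I would identify $\underline{L}^\pd_{B/B} \simeq (B,0)$: indeed, $\id \colon B\rightarrow B$ is a retract of $A\rightarrow B$ over itself, so its cotangent complex is a retract of a zero object, and this is already visible on compact projective generators where $\Omega^{1,\pd}_{B/B}=0$. With this, the pushout in $\CAlg^\pd\Mod^\cn$ has underlying animated divided power ring $C\otimes_B B \simeq C$ (since $\und$ preserves colimits, cf.\ \Cref{prop:underlying-functor}), and its module component is computed as a pushout of $C$-modules, which I expect to be
\[
\cofib\bigl(C \otimes_B L^\pd_{B/A} \longrightarrow L^\pd_{C/A}\bigr).
\]

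The only step that requires a moment's thought is the computation of pushouts in $\CAlg^\pd\Mod^\cn$. This I would justify via the description of $\CAlg^\an\Mod^\cn$ as $\cat{P}_\Sigma(\PolyMod^\ff)$ together with \Cref{constr:functor-CAlg-pd}: the forgetful functor to $\CAlg^\pd$ is a cocartesian fibration whose fibre over a ring $R$ is $\Mod_R^\cn$, and pushouts are computed by pushing all module components forward along the structure maps and then taking a pushout in the $\infty$-category of modules over the pushout ring. Applied to our diagram, this realises the pushout as $(C, \cofib(C\otimes_B L^\pd_{B/A} \to L^\pd_{C/A}))$, giving precisely the desired cofibre sequence in $\Mod_C$. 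No new ingredient beyond colimit-preservation of $\underline{L}^\pd$ and of $\und$ is needed.
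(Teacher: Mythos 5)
Your proof is correct and follows the same strategy the paper uses: the paper states the result as an "immediate generalisation" of \Cref{transitivity0} (citing Mao), and the implicit argument is exactly the one you give — apply the colimit-preserving functor $\underline{L}^\pd$ to the pushout of $(A\to C) \leftarrow (A\to B) \rightarrow (B\xrightarrow{\id}B)$ and use $\underline{L}^\pd_{B/B}\simeq (B,0)$. Your remark on computing pushouts in $\CAlg^\pd\Mod^\cn$ via pushforward-then-pushout is the right idea (one should cite the cocartesian-fibration description of $\CAlg^\pd\Mod^\cn\to\CAlg^\pd$ rather than \Cref{constr:functor-CAlg-pd}, which is about slice categories, but this is a minor misattribution and the step is standard).
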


\begin{corollary}[Base change] \label{cor:cotangent-base-change}
Let 
\[
\begin{tikzcd}
	A \arrow{r} \arrow{d} & B \arrow{d} \\
	A' \arrow{r} & B' \arrow[lu, phantom, "\ulcorner", at start]
\end{tikzcd}
\]
be a pushout square in $\CAlg^{\pd}$. Then following natural map is an equivalence:
\[
A'\otimes_A L_{B/A}^\pd \rightarrow L_{B'/A'}^\pd.
\]

\end{corollary}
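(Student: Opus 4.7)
The plan is to mimic the proof of the non-divided-power base change property (\Cref{cotcxbasechange0}), substituting the fact that $\underline{L}^\pd$ preserves small colimits (\Cref{prop:cotangent-preserves-colimits}) for the analogous property of $\underline{L}$. Concretely, I would form the following commutative square in $\Fun(\Delta^1,\CAlg^{\pd})$:
\[
\begin{tikzcd}
(A \xrightarrow{\id} A) \arrow{r} \arrow{d} & (A' \xrightarrow{\id} A') \arrow{d} \\
(A \to B) \arrow{r} & (A' \to B').
\end{tikzcd}
\]
Since colimits in a functor category are computed pointwise, one checks that this is a pushout: evaluation at $0$ yields the pushout of $A \xleftarrow{\id} A \to A'$ giving $A'$, while evaluation at $1$ yields $B \leftarrow A \to A'$, whose pushout in $\CAlg^{\pd}$ is $B' = A'\otimes_A B$ by the hypothesis of the proposition.

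Next I would apply $\underline{L}^\pd$ to the square. By \Cref{prop:cotangent-preserves-colimits}, the result is a pushout square in $\CAlg^{\pd}\Mod^{\cn}$:
\[
\begin{tikzcd}
(A,0) \arrow{r} \arrow{d} & (A',0) \arrow{d} \\
(B,\,L_{B/A}^{\pd}) \arrow{r} & (B',\,L_{B'/A'}^{\pd}).
\end{tikzcd}
\]
Here I use that $\underline{L}^\pd$ sends any identity morphism $R\xrightarrow{\id} R$ to $(R,0)$, which is clear on compact projective generators in $\ArrPDPoly$ and extends under sifted colimits (exactly as in \Cref{not:extend}).

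Finally, I would compute the pushout of $(B,L_{B/A}^{\pd}) \leftarrow (A,0) \to (A',0)$ directly in $\CAlg^{\pd}\Mod^{\cn}$. Under the forgetful functor to $\CAlg^\pd$ (which preserves colimits by \Cref{prop:underlying-functor}), the first component is $B'$, as expected. The second component is computed by base change of modules along $B \to B'$, giving $B'\otimes_B L_{B/A}^{\pd} \simeq A'\otimes_A L_{B/A}^{\pd}$. Combining this with the uniqueness of the pushout in the previous display yields the natural equivalence $A'\otimes_A L_{B/A}^{\pd} \xrightarrow{\simeq} L_{B'/A'}^{\pd}$.

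I do not anticipate any substantive obstacle; the one small bookkeeping point is to verify that the pushout in $\CAlg^{\pd}\Mod^{\cn}$ has second component given by the indicated relative tensor product, but this is standard for the fibred $\infty$-category of module pairs and follows from the analogous statement for $\CAlg^{\an}\Mod^{\cn}$ together with the definition $\CAlg^{\pd}\Mod^{\cn} := \CAlg^{\pd} \times_{\CAlg^{\an}} \CAlg^{\an}\Mod^{\cn}$.
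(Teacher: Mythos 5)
Your proposal is correct and takes essentially the same route as the paper. The paper does not spell out the proof but states that \Cref{cor:cotangent-base-change} is an "immediate generalisation" of \Cref{cotcxbasechange0} (citing Mao), and your argument is exactly the one the paper gives for \Cref{cotcxbasechange0} — apply the colimit-preserving functor $\underline{L}^\pd$ to the pushout $(A\to B) \leftarrow (A\xrightarrow{\id}A) \to (A'\xrightarrow{\id}A')$ — transported verbatim to the divided power setting using \Cref{prop:cotangent-preserves-colimits}. One tiny citation nit: the claim that the forgetful functor $\CAlg^{\pd}\Mod^{\cn}\to\CAlg^{\pd}$ preserves colimits is not what \Cref{prop:underlying-functor} says (that proposition concerns $\und\colon\CAlg^{\pd}\to\CAlg^{\an}$); what you actually want is that the analogous forgetful functor $\CAlg^{\an}\Mod^{\cn}\to\CAlg^{\an}$ preserves colimits together with the fibre-product definition of $\CAlg^{\pd}\Mod^{\cn}$ — but you already invoke exactly this in your closing sentence, so the proof is fine.
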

Together with \Cref{localisation0}, these two propositions imply:

\begin{proposition}[Localisation]\label{localisation1}
Given  $A \in \CAlg^{\pd}$  and $ f\in \pi_0(A)$, we have  $L_{A[f^{-1}] /A } = 0, $ where 
$A[f^{-1}] = A_{\triv}[f^{-1}]\otimes_{A_{\triv}}A$.
\end{proposition}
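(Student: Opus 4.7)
The plan is to reduce the statement to the animated case (\Cref{localisation0}) by writing $A[f^{-1}]$ as a pushout along a map whose base is the image of an animated ring under $\CAlg^{\an} \injto \CAlg^{\pd}$, and then invoking base change (\Cref{cor:cotangent-base-change}) together with the comparison between $\und{L}^{\pd}$ and $\und{L}$ on animated rings (\Cref{prop:non-pd-cotangent-complex}).

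More precisely, by the defining formula $A[f^{-1}] = A_{\triv}[f^{-1}] \otimes_{A_{\triv}} A$, we have a pushout square in $\CAlg^{\pd}$
\[
\begin{tikzcd}
A_{\triv} \arrow{r} \arrow{d} & A_{\triv}[f^{-1}] \arrow{d} \\
A \arrow{r} & A[f^{-1}], \arrow[lu, phantom, "\ulcorner", at start]
\end{tikzcd}
\]
where the left vertical map is the counit $A_{\triv}\rightarrow A$ and both $A_{\triv}$ and $A_{\triv}[f^{-1}]$ lie in the essential image of the fully faithful embedding $\CAlg^{\an} \injto \CAlg^{\pd}$.

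Applying \Cref{cor:cotangent-base-change} to this pushout yields an equivalence
\[
L^{\pd}_{A[f^{-1}]/A} \simeq A \otimes_{A_{\triv}} L^{\pd}_{A_{\triv}[f^{-1}]/A_{\triv}}.
\]
Since $A_{\triv}\rightarrow A_{\triv}[f^{-1}]$ is the image under $\CAlg^{\an} \injto \CAlg^{\pd}$ of the localisation map $\und(A)\rightarrow \und(A)[f^{-1}]$ in $\CAlg^{\an}$, \Cref{prop:non-pd-cotangent-complex} identifies $L^{\pd}_{A_{\triv}[f^{-1}]/A_{\triv}}$ with the ordinary cotangent complex $L_{\und(A)[f^{-1}]/\und(A)}$, which vanishes by \Cref{localisation0}. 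Hence $L^{\pd}_{A[f^{-1}]/A} \simeq 0$, as desired.

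There is no real obstacle here: the proof is a three-line diagram chase once the basic formalism of $\underline{L}^{\pd}$ (base change, compatibility with $\underline{L}$ on animated rings) is in place. The only minor subtlety is that one must use the definition $A[f^{-1}] := A_{\triv}[f^{-1}]\otimes_{A_{\triv}} A$ rather than trying to localise $A$ directly within $\CAlg^{\pd}$, since divided powers do not in general extend along arbitrary localisations; pushout along the trivial-$\pd$ map $A_{\triv}\rightarrow A$ is precisely what packages the localisation into the divided power setting.
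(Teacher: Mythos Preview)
Your proof is correct and follows essentially the same route as the paper: use base change for the divided power cotangent complex on the defining pushout square to reduce to the animated localisation $A_{\triv}\rightarrow A_{\triv}[f^{-1}]$, and then invoke \Cref{localisation0}. The paper's one-line proof cites transitivity (\Cref{transitivity}) alongside base change (\Cref{cor:cotangent-base-change}) and \Cref{localisation0}, whereas you instead invoke \Cref{prop:non-pd-cotangent-complex} to identify $L^{\pd}_{A_{\triv}[f^{-1}]/A_{\triv}}$ with the ordinary cotangent complex; this is a harmless variation, since both are immediate formal consequences of the definitions and either route closes the argument in a couple of lines.
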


We  exhibit a right adjoint to the functor $\underline{L}^\pd$, analogous to \Cref{prop:cotangent-adjoint}.

\begin{definition}[Divided power trivial square-zero functor]
\label{def:pd-square-zero}
The functor
\[
\underline{\sqz}^\pd \colon \CAlg^{\pd}\Mod^{\cn} \rightarrow 
\Fun(\Delta^1,\CAlg^{\pd})
\]
is the sifted-colimit-preserving extension of the functor that maps a pair consisting of $(R,I,\gamma)$ in $\PDPoly$ and $M$ a finite free $R$-module to
\[
(R,I,\gamma) \rightarrow (R\oplus M, I\oplus M, \gamma'),
\]
where the multiplication vanishes on $M$, and the divided powers are given by
\[
\gamma'_n(x+y) := \gamma_n(x) + \gamma_{n-1}(x) y, \ \ x \in I, y\in M.
\] 
\end{definition}

We write $\underline{\sqz}^\pd(R,M) = (R \rightarrow \sqz_R^\pd M)$.

\begin{proposition}\label{prop:pd-cotangent-adjoint}
The functor $\underline{\sqz}^\pd$ is a right adjoint to $\underline{L}^\pd$. 

\end{proposition}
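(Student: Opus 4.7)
The plan is to mirror the proof of \Cref{prop:cotangent-adjoint}, replacing ordinary K\"ahler differentials by their divided power counterparts and exploiting the universal property of $\Omega^{1,\pd}_{B/A}$ from \Cref{def:pd-derivation}. Applying \cite[Proposition 5.2.2.8]{LurieHTT}, I will construct an explicit unit transformation $\underline{u}^\pd\colon \id \rightarrow \underline{\sqz}^\pd \circ \underline{L}^\pd$ and verify that, for every $(A\rightarrow B)$ in $\Fun(\Delta^1,\CAlg^\pd)$ and every $(R,M)$ in $\CAlg^\pd\Mod^\cn$, the induced comparison map
\[
\varphi\colon \Map\big(\underline{L}^\pd_{B/A},\,(R,M)\big) \longrightarrow \Map\big((A\rightarrow B),\, \underline{\sqz}^\pd(R,M)\big)
\]
is an equivalence. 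Both constructions will be defined on compact projective generators and extended under sifted colimits.

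For the unit, I take $A\rightarrow B$ in $\ArrPDPoly$ and consider the set-theoretic map $\iota + d\colon B \rightarrow \sqz^\pd_B(\Omega^{1,\pd}_{B/A})$, $b\mapsto (b,db)$, with $d$ the universal PD-derivation. The crucial point is that this really is a morphism in $\CAlg^\pd$: if $b$ lies in the PD-ideal of $B$, the divided powers of $(b,db)$ prescribed by \Cref{def:pd-square-zero} are $\gamma_n(b) + \gamma_{n-1}(b)\,db$, and this coincides with $\gamma_n(b) + d(\gamma_n(b))$ by the defining identity of a PD-derivation. Together with the composite $A\rightarrow B\rightarrow \sqz^\pd_B(\Omega^{1,\pd}_{B/A})$, this yields a morphism $(A\rightarrow B) \rightarrow \underline{\sqz}^\pd(\underline{L}^\pd_{B/A})$ natural in $\ArrPDPoly$, and sifted-colimit extension produces $\underline{u}^\pd$.

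To check that $\varphi$ is an equivalence, I first restrict to $A\rightarrow B$ in $\ArrPDPoly$ and to pairs $(R,M)$ with $R \in \PDPoly$ and $M$ a finite free $R$-module; these form compact projective generators of $\CAlg^\pd\Mod^\cn$. Both mapping spaces are then discrete, and $\varphi$ unravels to the standard bijection sending $(f\colon B\rightarrow R,\,\phi\colon \Omega^{1,\pd}_{B/A}\rightarrow f^*M)$ to the $A$-algebra map $s = f + \phi\circ d\colon B\rightarrow \sqz^\pd_R(M)$; the inverse decomposes $s$ into its projection $f$ to $R$ together with the $A$-linear PD-derivation $B\rightarrow M$ it extracts, which factors uniquely through $d$ by the universal property of $\Omega^{1,\pd}_{B/A}$. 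From here, sifted-colimit extension, first in $(R,M)$ (using compact projectivity of $\underline{L}^\pd_{B/A}$ and $(A\rightarrow B)$) and then in $(A\rightarrow B)$, gives the adjunction in full generality. The one genuinely new ingredient relative to \Cref{prop:cotangent-adjoint} is the PD-compatibility of $\iota+d$ in the first step, but this is essentially built into the definitions of $\sqz^\pd$ and of PD-derivations and should pose no real obstacle.
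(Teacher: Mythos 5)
Your proposal is correct and follows essentially the same route as the paper: both invoke \cite[Proposition 5.2.2.8]{LurieHTT}, construct the unit via $\iota+d$ (with the PD-compatibility check you spell out, which the paper leaves implicit in asserting the square lies in divided power rings), reduce to compact projective generators, and extend by sifted colimits. The only cosmetic difference is that the paper verifies the bijection on generators by writing out both sides explicitly as finite products of $R$, $I$, $R\oplus M$, $I\oplus M$, whereas you appeal directly to the universal property of $\Omega^{1,\pd}_{B/A}$; these are two phrasings of the same computation.
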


\begin{proof}
The proof is completely analogous to the proof of \Cref{prop:cotangent-adjoint}. For every map $A\rightarrow B$ of (discrete) divided power rings, we have a commutative square
\[
\begin{tikzcd}
	A \arrow{d} \arrow{r} & B \arrow{d}{\iota + d} \\
	B \arrow{r}{\iota} & \sqz_B^{\pd}( \Omega^{1,\pd}_{B/A})
\end{tikzcd}
\]
of divided power rings, where $\sqz_B^{\pd}( \Omega^{1,\pd}_{B/A}) = B\oplus \Omega^{1,\pd}_{B/A}$ has the divided power structure used in \Cref{def:pd-square-zero}. 
For $A\rightarrow B$ in $\ArrPDPoly$, the vertical maps in the square 
define a map
\[
\underline{u}\colon (A\rightarrow B) \rightarrow \underline{\sqz}^\pd (\underline{L}^\pd_{B/A}),
\]
functorial in $A\rightarrow B$, and extending by sifted colimits, we obtain a   transformation  $\id \rightarrow \underline{\sqz}^\pd \circ \underline{L}^{\pd}$  defined on all of $\Fun(\Delta^1,\CAlg^\pd)$.

It now suffices to show for all $A\rightarrow B$ in $\Fun(\Delta^1,\CAlg^\pd)$ and $(R,M)$ in $\CAlg^\pd\Mod^\cn$ the induced map
\[
\varphi\colon \Map_{}\big(\underline{L}_{B/A},\,(R,M)\big) 
\xrightarrow{\underline{u}\circ \underline{\sqz}^\pd}
\Map_{}\big((A\rightarrow B),\, \underline\sqz^\pd(R,M)\big)
\]
is an equivalence. First assume that $A\rightarrow B$ is of the form 
\[
\Gamma_{\bZ[X_1,\ldots, X_n]}(Y_1,\ldots, Y_m) \longrightarrow
\Gamma_{\bZ[X_1,\ldots, X_{n+n'}]}(Y_1, \ldots, Y_{m+m'}),
\]
and also that $(R,I,\gamma)$ belongs to $\PDPoly$ and that $M$ is finite free. By \Cref{exa:pd-differentials-free}, we have $\Omega^{1,\pd}_{B/A} \simeq B^{n'+m'}$, and the source of $\varphi$ is the discrete space
\[
R^{n+n'} \times I^{m+m'} \times M^{n'+m'}.
\]
Similarly, the target computes to the discrete space
\[
R^n\times I^m \times (R\oplus M)^{n'} \times (I\oplus M)^{m'}.
\]
Chasing the definitions, we see that $\varphi$ corresponds to the obvious bijection. It   follows as in  \Cref{prop:cotangent-adjoint} that $\varphi$ is an equivalence 
for all $A\rightarrow B$ and $(R,M)$.
\end{proof}
\begin{notation}
Denote the counit of the above adjunction by $\underline{v}: \underline{L}^{\pd}\circ \underline{\sqz}^\pd \rightarrow \id$.  As in \Cref{rem:firstcounit}, its first component 
on some $(B,M) \in \CAlg^\pd\Mod^\cn$
is the augmentation $\epsilon:  \sqz^{\pd}_B(M) \rightarrow B$. We write $v\colon L_{\sqz^\pd_B(M)/A}^\pd \rightarrow \epsilon^\ast M$ for the map in $\Mod_{\sqz^{\pd}_B(M)}$ induced by the second component of the counit.
\end{notation}
With the same proof as in \Cref{cor:derivations-sqz},  we obtain:

\begin{corollary}\label{cor:pd-derivations-sqz}
Given $A\rightarrow B$ in $\CAlg^{\pd}$ and $M\in \Mod_B^{\cn}$, there is an equivalence
\[ 	\Map_{\CAlg^{\pd}_{A//B}}(B,\,\sqz^{\pd}_B(M))    \simeq \Map_{B}(L^{\pd}_{B/A},\,M)\]
sending  \vspace{2pt}  $\phi: L^{\pd}_{B/A} \rightarrow M$ to the composite $B \xrightarrow{\id \oplus d} \sqz^{\pd}_B(L^{\pd}_{B/A})  \xrightarrow{\sqz(\phi)} \sqz^{\pd}_B(M)$ and 
$s: B \rightarrow \sqz^{\pd}_B(M)$ to
\mbox{$L^{\pd}_{B/A} \rightarrow s^\ast L^{\pd}_{\sqz^{\pd}_B(M)/A} \rightarrow s^\ast L^{\pd}_{\sqz^{\pd}_B(M)/B}  \rightarrow s^\ast \epsilon^\ast M \simeq M $.}
\end{corollary}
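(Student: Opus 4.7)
The plan is to transpose the proof of \Cref{cor:derivations-sqz} verbatim, replacing the non-divided power adjunction $(\underline{L},\underline{\sqz})$ by its divided power counterpart $(\underline{L}^\pd,\underline{\sqz}^\pd)$ from \Cref{prop:pd-cotangent-adjoint}. Write $F^\pd \colon \CAlg^{\pd}\Mod^{\cn} \rightarrow \CAlg^{\pd}$ for the forgetful functor. By \Cref{thirdrem}, the composite $F^\pd \circ \underline{L}^\pd \colon \Fun(\Delta^1,\CAlg^\pd) \rightarrow \CAlg^\pd$ sends $A\rightarrow B$ to $B$, and (by the divided power analogue of \Cref{rem:firstcounit}, which the authors have already recorded in the notation following \Cref{prop:pd-cotangent-adjoint}) the first component of the counit $\underline{v}$ on $(B,M)$ is the augmentation $\epsilon \colon \sqz^\pd_B(M)\rightarrow B$.

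The adjunction equivalence of \Cref{prop:pd-cotangent-adjoint} then assembles into a commutative triangle
\[
\begin{tikzcd}[column sep=-24pt]
\Map\big(A\rightarrow B,\,\underline{\sqz}^\pd(B,M)\big) \arrow{rr}{\simeq} \arrow{rd} & & \Map\big(\underline{L}^\pd_{B/A},\,(B,M)\big) \arrow{ld} \\
& \Map_{\CAlg^\pd}(B,B) &
\end{tikzcd}
\]
in which the right diagonal is induced by $F^\pd$, and the left diagonal is $\ev_1$ postcomposed with $\epsilon$; passing to fibres over $\id_B$ gives the claimed equivalence $\Map_{\CAlg^{\pd}_{A//B}}(B,\sqz^\pd_B(M)) \simeq \Map_B(L^\pd_{B/A},M)$. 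The explicit descriptions of the two directions are obtained by unwinding the unit $\underline{u}$ constructed in the proof of \Cref{prop:pd-cotangent-adjoint} (which provides the universal derivation $d$) together with the factorisation of $L^\pd_{B/A} \rightarrow s^\ast L^\pd_{\sqz^\pd_B(M)/B}$ through $L^\pd_{B/A}\rightarrow s^\ast L^\pd_{\sqz^\pd_B(M)/A}$ coming from the transitivity sequence of \Cref{transitivity}.

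I do not expect any substantive obstacle here: the only point requiring genuine verification is the description of the first component of the counit as the augmentation, and this is handled exactly as in \Cref{rem:firstcounit} by restricting both $\underline{L}^\pd$ and $\underline{\sqz}^\pd$ to the full subcategories of pairs $(B,M)$ with $M\simeq 0$ (resp.\ of identity arrows), on which both functors are equivalent to the identity, and then using naturality of the counit along $(B,M) \rightarrow (B,0)$.
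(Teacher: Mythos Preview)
Your proposal is correct and matches the paper's approach exactly: the paper's proof consists solely of the phrase ``With the same proof as in \Cref{cor:derivations-sqz}, we obtain,'' and you have spelled out precisely that transposition. The ingredients you invoke (the adjunction of \Cref{prop:pd-cotangent-adjoint}, the identification of the first component of the counit as $\epsilon$, and the factorisation through $s^\ast L^\pd_{\sqz^\pd_B(M)/A}$) are exactly those used implicitly by the paper.
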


\subsection{Cotangent complex of sheaves of divided power rings} Let $\cat{T}$  be a 
 small $\infty$-category   with a Grothendieck topology. 
Proceeding as in \S\ref{subsec:cotangent-sheaves}, we \mbox{obtain a functor}
\[
\underline{L}^\pd \colon
\Fun(\Delta^1,\Shv(\cat{T},\CAlg^\pd)) \rightarrow 
\Shv(\cat{T},\CAlg^\pd\Mod),
\]
and we have $\underline{L}^\pd_{\cB/\cA} = (\cB, L^\pd_{\cB/\cA})$. Completely analogously to \Cref{prop:cotangent-sheaf-adjunction}, we obtain the following sheafy version of \Cref{cor:pd-derivations-sqz} (using analogous notation):
\begin{proposition}\label{prop:pd-cotangent-sheaf-adjunction}
Let $\cA\rightarrow \cB$ be a map in $\Shv(\cat{T},\CAlg^{\pd})$, and $\cM\in \Mod^{\cn}_\cB$, there is an equivalence
\[ 		\Map_{\Shv(\cat{T},\CAlg^{\pd})_{\cA//\cB}}\big(
\cB, \sqz^{\pd}_\cB (\cM)
\big)  \simeq 	\Map_{{\cB}}(L^{\pd}_{\cB/\cA},\, \cM)\]
sending  \vspace{2pt}  $\phi: L_{\cB/\cA}^{\pd} \rightarrow \cM$ to the composite $\cB \xrightarrow{\id \oplus d} \sqz^{\pd}_{\cB}(L^{\pd}_{\cB/\cA})  \xrightarrow{\sqz(\phi)} \sqz^{\pd}_{\cB}(\cM)$ and 
$s: \cB \rightarrow \sqz_{\cB}^{\pd}(\cM)$ to
\mbox{$L^{\pd}_{\cB/\cA} \rightarrow s^\ast L^{\pd}_{\sqz^{\pd}_\cB(\cM)/\cA} \rightarrow s^\ast L^{\pd}_{\sqz^{\pd}_\cB(\cM)/\cB}  \rightarrow s^\ast \epsilon^\ast \cM \simeq \cM $.}
\end{proposition}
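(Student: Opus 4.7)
The plan is to mimic the proof of \Cref{prop:cotangent-sheaf-adjunction} (which itself follows the proof of \Cref{cor:derivations-sqz}), but in the divided-power setting, relying on the divided-power cotangent/square-zero adjunction \Cref{prop:pd-cotangent-adjoint}.

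First I would apply \Cref{cons:sheavesadjunctions} to the adjoint pair
$(\underline{L}^\pd,\underline{\sqz}^\pd)$ of \Cref{prop:pd-cotangent-adjoint} in order to obtain an adjunction
\[
\underline{L}^\pd : \Fun(\Delta^{1},\Shv(\cat{T},\CAlg^\pd)) \leftrightarrows \Shv(\cat{T},\CAlg^\pd\Mod^\cn) : \underline{\sqz}^\pd,
\]
which is exactly the adjunction already named in \Cref{subsec:cotangent-sheaves} and reproduced above the statement of the proposition. The formal properties of the forgetful functor $F\colon\Shv(\cat{T},\CAlg^\pd\Mod^\cn)\rightarrow\Shv(\cat{T},\CAlg^\pd)$ mapping $(\cB,\cM)\mapsto\cB$ and of the evaluation-at-1 functor $\ev_1 \colon \Fun(\Delta^1,\Shv(\cat{T},\CAlg^\pd))\rightarrow\Shv(\cat{T},\CAlg^\pd)$ are the same as in the non-pd case, so both compositions $F\circ \underline{L}^\pd$ and $\ev_1 \circ \underline{\sqz}^\pd$ recover the target sheaf.

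Next, exactly as in the proof of \Cref{cor:pd-derivations-sqz} (the non-sheafy divided-power analogue), I would exhibit the commutative triangle
\[
\begin{tikzcd}[column sep=-16pt]
\Map\bigl(\cA\rightarrow \cB,\ \underline{\sqz}^\pd(\cB,\cM)\bigr) \arrow{rr}{\simeq} \arrow{rd} &&
\Map\bigl(\underline{L}^\pd_{\cB/\cA},\ (\cB,\cM)\bigr) \arrow{ld} \\
& \Map(\cB,\cB) &
\end{tikzcd}
\]
in which the horizontal equivalence is the adjunction equivalence of the sheafified adjunction, the right-slanted map is induced by $F$, and the left-slanted map is obtained by applying $\ev_1$ and then postcomposing with the augmentation $\epsilon\colon \sqz^\pd_\cB(\cM)\rightarrow \cB$. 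That the first component of the counit $\underline{v}$ of the sheafified adjunction is precisely this augmentation is the content of the paragraph immediately preceding the proposition in the paper, which was already justified by sheafifying \Cref{rem:firstcounit}. Passing to fibres of both diagonal maps over $\id_\cB$ then yields the asserted equivalence.

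Finally, I would read off the two explicit descriptions by unwinding the adjunction. In one direction, a derivation $\phi\colon L^\pd_{\cB/\cA}\rightarrow \cM$ is sent to $\sqz^\pd(\phi)\circ (\id\oplus d)\colon \cB \rightarrow \sqz^\pd_\cB(\cM)$. In the other, a section $s\colon \cB\rightarrow \sqz^\pd_\cB(\cM)$ corresponds to the composite $L^\pd_{\cB/\cA}\rightarrow s^\ast L^\pd_{\sqz^\pd_\cB(\cM)/\cA}\rightarrow s^\ast L^\pd_{\sqz^\pd_\cB(\cM)/\cB}\rightarrow s^\ast\epsilon^\ast \cM \simeq \cM$, where the middle arrow comes from the transitivity cofibre sequence of \Cref{transitivity} for $\cA\rightarrow \cB \xrightarrow{\id \oplus d}\sqz^\pd_\cB(\cM)$, and the last arrow is the map $v$ induced by the second component of the counit. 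There is no essential obstacle here; the only delicate point is verifying that the required factorisation through $s^\ast L^\pd_{\sqz^\pd_\cB(\cM)/\cA}$ is canonical, which again follows from transitivity together with the nullhomotopy of $\cA\rightarrow \cB\rightarrow L^\pd_{\cB/\cA}$ recorded in \Cref{rem:hurewicz}.
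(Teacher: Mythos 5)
Your proposal is correct and follows precisely the approach the paper intends. The paper does not spell out a proof of this proposition at all: it simply records that the sheafified adjunction $(\underline{L}^\pd,\underline{\sqz}^\pd)$ exists and then asserts the result holds ``completely analogously to \Cref{prop:cotangent-sheaf-adjunction}''. You have correctly supplied exactly the steps that analogy points to: sheafifying the pd adjunction via \Cref{cons:sheavesadjunctions}, identifying the first component of the counit with the augmentation by sheafifying \Cref{rem:firstcounit}, building the commutative triangle as in the proof of \Cref{prop:cotangent-sheaf-adjunction}, passing to fibres over $\id_\cB$, and then unwinding the adjunction to recover the explicit formulas. The one point worth tightening is your closing remark: the canonical factorisation of $L^\pd_{\cB/\cA}\rightarrow s^\ast L^\pd_{\sqz^\pd_\cB(\cM)/\cB}$ through $s^\ast L^\pd_{\sqz^\pd_\cB(\cM)/\cA}$ is most directly a consequence of the functoriality of $\underline{L}^\pd$ on morphisms of the arrow category (applying $\underline{L}^\pd$ to the square with rows $\cA\rightarrow\cB$ and $\cB\rightarrow\sqz^\pd_\cB(\cM)$), which is how the paper phrases it in the proof of \Cref{cor:derivations-sqz}; invoking the Hurewicz nullhomotopy and transitivity gets you there too but is a slightly roundabout route.
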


\begin{notation}\label{not:affinepdstructuresheaf}
Given a  map $A \rightarrow B $ in $\CAlg^{\pd}$, write $\cO^{ }_B$ for the $\CAlg^{\pd}_A$-valued sheaf on $\Spec(B)$ sending a distinguished affine $U_f$ to $B[f^{-1}]$, cf.\ \Cref{sheafifyingonbases}.
\end{notation}

We proceed as in \Cref{cotforaffine} to check consistency with \Cref{def:cotanipd}: 
\begin{proposition} \label{cotforaffinepd}
Fix a  map of animated  divided power  rings $A \rightarrow B $. Then  $L^{\pd}_{\cO^{ }_B/A }$ is the quasi-coherent sheaf on $\Spec(B) $ with global sections 
$L^{\pd}_{B/A }$ .
\end{proposition}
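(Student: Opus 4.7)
The plan is to transliterate the proof of \Cref{cotforaffine}, using the pd-analogues of the three key inputs: transitivity (\Cref{transitivity}), base change (\Cref{cor:cotangent-base-change}), and localisation (\Cref{localisation1}). Write $q\colon \Spec(B)\to \Spec(A)$ for the underlying map of topological spaces.

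First, I would show that for every open $V \subset \Spec(A)$, we have $L^{\pd}_{\cO_A(V)/A} \simeq 0$. Covering $V$ by distinguished affines $U_f$, it suffices to show that $\cO_A(V)[f^{-1}] \otimes_{\cO_A(V)} L^{\pd}_{\cO_A(V)/A}$ vanishes for each such $U_f$. The limit-cube presentation of $\cO_A(V)$ along its distinguished cover gives $\cO_A(V)[f^{-1}] \simeq A[f^{-1}]$, so by the transitivity sequence (\Cref{transitivity}) applied to $A \to \cO_A(V) \to A[f^{-1}]$, together with the vanishing results $L^{\pd}_{A[f^{-1}]/A} \simeq 0$ and $L^{\pd}_{\cO_A(V)[f^{-1}]/\cO_A(V)} \simeq 0$ from \Cref{localisation1}, we conclude. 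Next, for any open $U \subset \Spec(B)$, the animated pd-ring $(q^{-1}\cO_A)(U)$ is a filtered colimit of $\cO_A(V)$ over opens $V \supset q(U)$. Since $\underline{L}^{\pd}$ preserves sifted colimits by \Cref{def:cotanipd}, we deduce $L^{\pd}_{(q^{-1}\cO_A)(U)/A} \simeq 0$, and transitivity then gives an equivalence $L^{\pd}_{\cO_B(U)/A} \xrightarrow{\simeq} L^{\pd}_{\cO_B(U)/(q^{-1}\cO_A)(U)}$.

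I would then verify that for any inclusion $U_f \subset U_g$ of distinguished affines in $\Spec(B)$, the commutative square
\[
\begin{tikzcd}
B[f^{-1}] \otimes_{B[g^{-1}]} L^{\pd}_{B[g^{-1}]/A} \arrow{r} \arrow{d} & L^{\pd}_{B[f^{-1}]/A} \arrow{d} \\
\cO_B(U_f) \otimes_{\cO_B(U_g)} L^{\pd}_{\cO_B(U_g)/(q^{-1}\cO_A)(U_g)} \arrow{r} & L^{\pd}_{\cO_B(U_f)/(q^{-1}\cO_A)(U_f)}
\end{tikzcd}
\]
consists of equivalences. The top map is an equivalence by \Cref{localisation1} and \Cref{transitivity}; the vertical maps are equivalences by the previous paragraph; and then the bottom map follows by two-out-of-three.

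Let $\cat{U}_e$ denote the poset of distinguished affine opens of $\Spec(B)$. Specialising the previous square to $g = 1$ shows that the restriction to $\cat{U}_e$ of the presheaf $U \mapsto \underline{L}^{\pd}_{\cO_B(U)/(q^{-1}\cO_A)(U)}$ satisfies condition~(\ref{cond2}) of \Cref{sheafesbases}. By \Cref{sheafifyingonbases}, sheafification preserves the values on distinguished affines, so
\[
\underline{L}^{\pd}_{\cO_B/A}(U_f) \simeq (\cO_B(U_f), L^{\pd}_{\cO_B(U_f)/(q^{-1}\cO_A)(U_f)}) \simeq (B[f^{-1}], L^{\pd}_{B[f^{-1}]/A}).
\]
In particular, taking $f = 1$ gives global sections $L^{\pd}_{B/A}$, and the base-change formula \Cref{cor:cotangent-base-change} together with \Cref{affineqccrit} yields quasi-coherence. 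The only real obstacle is bookkeeping: one must confirm that all the pd-analogues of localisation, transitivity, and base change compose correctly on sheaves of animated pd-rings. Since these are already established (and $\underline{L}^{\pd}$ preserves sifted colimits), no essential new input is needed beyond what was used in the non-pd case.
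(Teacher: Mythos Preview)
Your proposal is correct and is exactly the approach the paper takes: the paper does not spell out a proof of \Cref{cotforaffinepd} at all, merely indicating that one should proceed as in \Cref{cotforaffine}, and your writeup faithfully executes that transliteration using the pd-analogues of transitivity, base change, and localisation.
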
 

\subsection{Divided power cotangent complex of a surjective map}
Let $I\subset R$ be an ideal in a  ring and $\gamma$ a divided power structure on $I$. Given  $n\geq 1$,    write $I^{[n]}$ for the ideal generated by all divided powers $\gamma_m(x)$ with $x\in I $ and $m\geq n$. 

\begin{proposition}\label{prop:pi1-of-pd-cotangent-complex}
Let $A\rightarrow B$ be a surjective map in $\PDRing$ such that the induced map $A/I_A\xrightarrow{\cong} B/I_B$ is an isomorphism. Denote the kernel of \mbox{$A\rightarrow B$ by $I$.}
Then $\pi_0 L_{B/A}^\pd\cong0$ and $\pi_1 L_{B/A}^\pd\cong I/I^{[2]}$, functorially in $A\rightarrow B$.  

Moreover, the map $I = \pi_1(B/A) \rightarrow \pi_1 L_{B/A}^\pd\cong I/I^{[2]}$
induced by the Hurewicz map
(see \Cref{rem:hurewicz})  is given by projection.  
\end{proposition}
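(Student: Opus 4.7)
The plan is to prove both identifications along with the Hurewicz compatibility by testing against arbitrary discrete $B$-modules $M$ via the adjunction of \Cref{cor:pd-derivations-sqz}, exploiting that $A$ and $B$ are discrete PD rings. For $\pi_0 L^\pd_{B/A}=0$: for discrete $M$, $\Map_B(L^\pd_{B/A},M) \simeq \Map_{\CAlg^\pd_{A//B}}(B,\sqz^\pd_B(M))$, and by \Cref{classical} the right side is the set of classical $A$-linear PD-derivations $\theta\colon B \to M$. Any such $\theta$ vanishes on the image of $A$ in $B$, which is all of $B$ by surjectivity, so $\theta=0$; varying $M$ and applying Yoneda to discrete $B$-modules gives $\pi_0 L^\pd_{B/A}=0$.

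For $\pi_1 L^\pd_{B/A} \cong I/I^{[2]}$ and Hurewicz compatibility: using $\pi_0=0$, for discrete $M$ we have
\[
\Hom_B(\pi_1 L^\pd_{B/A},M) \cong \pi_0 \Map_{\CAlg^\pd_{A//B}}(B,\sqz^\pd_B(M[1])).
\]
I will identify this naturally in $M$ with $\Hom_B(I/I^{[2]}, M)$. The strategy is to interpret $\pi_0$-classes of sections $s\colon B \to \sqz^\pd_B(M[1])$ in $\CAlg^\pd_{A//B}$ as isomorphism classes of classical split PD $A$-algebra square-zero extensions of $B$ by $M$ (with trivial PD on $M$). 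Such data amounts to a set-theoretic ``derivation'' $\phi\colon A \to M$ measuring the difference between two liftings of $A \to B$ through $A \to B\oplus M \to B$. It must be an $A$-linear PD derivation, so since $A \to B$ is surjective, $\phi$ is determined by its restriction to $I = \ker(A \to B)$, and the PD compatibility $\phi(\gamma_n(x)) = \gamma_{n-1}(\bar x)\phi(x)$ combined with $\bar x = 0$ for $x \in I$ forces $\phi(\gamma_n(x)) = 0$ for $x \in I$, $n \geq 2$. Hence $\phi$ descends to a $B$-linear map $I/I^{[2]} \to M$; conversely, any such map extends to a split PD square-zero extension. Yoneda then gives $\pi_1 L^\pd_{B/A} \cong I/I^{[2]}$. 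For the Hurewicz claim, the map $I = \pi_1(B/A) \to \pi_1 L^\pd_{B/A}$ associated to $s$ is read off via the fibre sequence $M[1] \to \sqz^\pd_B(M[1]) \to B$ in $\Mod^\cn_A$ (using $B/A \simeq I[1]$); unwinding shows it recovers $\phi|_I = \phi \circ (I \twoheadrightarrow I/I^{[2]})$, so the Hurewicz map is exactly the projection.

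The main technical obstacle is justifying the identification of $\pi_0 \Map_{\CAlg^\pd_{A//B}}(B,\sqz^\pd_B(M[1]))$ with isomorphism classes of classical split PD square-zero extensions. This requires a Dold--Kan-style statement for $2$-truncated animated PD rings concentrated in homotopical degrees $\{0,1\}$, which one can establish via \Cref{classical} and truncation arguments applied to the forgetful functor $\forget\colon \CAlg^\pd \to \Fun(\Delta^1, \CAlg^\an)_{\surj}$ from \Cref{prop:forgetful-functor}, combined with the analogous classical statement for animated rings and square-zero extensions.
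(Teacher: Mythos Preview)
Your argument for $\pi_0 L^\pd_{B/A} = 0$ is fine: testing against discrete $M$ via \Cref{cor:pd-derivations-sqz}, the $A$-linear PD derivations $B \to M$ vanish on the image of $A$, which is all of $B$.

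The $\pi_1$ computation has a genuine gap, which you yourself flag but do not fill. The identification
\[
\pi_0 \Map_{\CAlg^\pd_{A//B}}\bigl(B,\sqz^\pd_B(M[1])\bigr)\ \cong\ \{\text{iso.\ classes of classical PD $A$-algebra extensions of $B$ by $M$}\}
\]
is asserted, not proven. Your appeal to a ``Dold--Kan-style statement for $2$-truncated animated PD rings'' via the forgetful functor of \Cref{prop:forgetful-functor} does not do the job: that functor is conservative, not fully faithful, and no Postnikov or extension theory for $\CAlg^\pd$ is set up in the paper. One direction (pulling back along a section to obtain a discrete PD extension) is plausible, but the converse---that every classical PD extension of $B$ by $M$ arises from a section $B \to \sqz^\pd_B(M[1])$---is exactly the content of \Cref{prop:artinian-pd-small-extension}, whose proof \emph{uses} the present proposition. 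So as written your route is circular. Your phrasing is also tangled: ``split'' extensions are the trivial ones, and the map $\phi\colon A \to M$ you describe is not an $A$-linear PD derivation of $A$ (those vanish) but rather the restriction to $I$ of the structure map $A \to E$, which then descends to $I/I^{[2]}$ for the reasons you indicate.

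The paper avoids this obstacle entirely by working with the monadic bar resolution $P_\bullet \to B$ in $\CAlg^{\pd,\aug}_A$ by objects of the form $\Gamma_A(A\{S\})$ (\Cref{lemma:augmented-divided-power-left-adjoint}). The key input is \Cref{lemma:pd-ideals-key-computation}, which for each level gives a short exact sequence
\[
0 \longrightarrow I/I^{[2]} \longrightarrow J_\bullet/J_\bullet^{[2]} \longrightarrow B\otimes_{P_\bullet}\Omega^{1,\pd}_{P_\bullet/A} \longrightarrow 0,
\]
with $J_\bullet = \ker(P_\bullet \to B)$. One then shows $|J_\bullet/J_\bullet^{[2]}|$ is $2$-connective by an explicit argument pushing surjectivity through $\Gamma_A^{\geq 2}$; the identifications of $\pi_0$ and $\pi_1$, together with the Hurewicz compatibility, drop out of the long exact sequence. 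No interpretation of mapping spaces as classical extensions is needed.
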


The rest of this subsection is dedicated to the proof of \Cref{prop:pi1-of-pd-cotangent-complex}. 
\begin{notation}
Given a divided power ring $A=(A,I_A,\gamma)$, we write $\PDRing^\aug_A$ for the category of divided power algebras $A\rightarrow B$ such that $A/I_A\rightarrow B/I_B$ is an isomorphism.	
More generally, given  an animated divided power ring $A \in \CAlg^{\pd}$, we write $\CAlg^{\pd,\aug}_A$ for the $\infty$-category of maps of animated divided power rings $A\rightarrow B$ such that the induced map $\overline{A} \rightarrow \overline{B}$ is an equivalence.
\end{notation}

Given an animated divided power ring $A \in \CAlg^{\pd}$, the composite  \[R:\CAlg^{\pd,\aug}_A \xrightarrow{I}\Mod_{\bZ}^{\cn} \xrightarrow{\Omega^\infty} \cat{S}\]
is conservative and  moreover preserves small limits and sifted colimits. 
It therefore admits a left adjoint $L$, and by the Barr--Beck--Lurie theorem \cite[4.7.3.5]{LurieHA}, the resulting adjunction is in fact monadic. Let us denote the associated monad by $T = RL$. For every $B \in \CAlg^{\pd,\aug}_A$, we obtain an  augmented simplicial diagram 
\begin{equation} \label{eq:monadicres}( \  \ldots \  \substack{\longrightarrow \vspace{-4pt} \\ \leftarrow   \vspace{-4pt} \ \\ \longrightarrow  \vspace{-4pt} \\ \leftarrow  \vspace{-4pt} \ \\ \longrightarrow} \  T(T(B))\   \substack{  \longrightarrow  \vspace{-4pt} \\ \leftarrow  \vspace{-4pt} \ \\ \longrightarrow} \ T(B)) \rightarrow B  \end{equation} which exhibits $B$ as a geometric realisation of objects in the image of $L$.

We will now describe this resolution more explicitly when  $A$ is discrete.
\begin{construction} Let $A \in \PDRing$ be a   divided power ring. 
Given a set $S$, we write $A\{S\}$ for the free $A$-module with generators $\{s \in S\}$. We 
then  equip the ring $\Gamma_A (A\{S\})$ (see \Cref{exa:free-divided-power-ring}) with the divided power ideal $I_A\oplus \Gamma^+_A (A\{S\})$, and the natural divided power structure $\gamma'$ given by
\[
\gamma'_n(x , y) = 
\bigg(\gamma_n(x),\, \sum_{i=0}^{n-1} \gamma_i(x) \gamma_{n-i}(y)
\bigg) \ ,  \ \ x\in I_A, y\in \Gamma^+_A (A\{S\}).
\] 

\end{construction}

\begin{lemma}\label{lemma:augmented-divided-power-left-adjoint}
The functor $S   \mapsto \Gamma_A (A\{S\})$ is naturally equivalent to the composite 
\[\Set \rightarrow \cat{S} \xrightarrow{L} \CAlg^{\pd,\aug}_A.\]
\end{lemma}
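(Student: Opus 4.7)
The lemma asserts that the restriction of the left adjoint $L$ along $\Set \hookrightarrow \cat{S}$ agrees with the free divided power algebra construction. To prove this, I will exhibit a natural equivalence
\[
\eta_{S,B}\colon \Map_{\CAlg^{\pd,\aug}_A}\bigl(\Gamma_A(A\{S\}),\, B\bigr) \xrightarrow{\;\simeq\;} \Map_{\cat{S}}\bigl(S,\,\Omega^\infty I_B\bigr)
\]
for all sets $S$ and all $B \in \CAlg^{\pd,\aug}_A$, where the map sends $f$ to the function $s \mapsto f(y_s)$ defined by evaluation on the canonical generators $y_s \in \Gamma^1_A(A\{s\}) \subset I_{\Gamma_A(A\{S\})}$.

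First I reduce to the case $S = \{*\}$. Since $L$ preserves colimits, and the pushout in $\CAlg^{\pd,\aug}_A$ coincides with the pushout in $\CAlg^\pd_A$ (the augmented subcategory is closed under colimits: if $\overline{A}\simeq\overline{B}$ and $\overline{A}\simeq\overline{B'}$, then $\overline{B\otimes_A B'} \simeq \overline{B}\otimes_{\overline{A}} \overline{B'} \simeq \overline{A}$ by the colimit-preservation of $\overline{(-)}$ established in \Cref{prop:idempotent}), it suffices to observe the identification $\Gamma_A(A\{S \sqcup T\}) \simeq \Gamma_A(A\{S\}) \otimes_A \Gamma_A(A\{T\})$. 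This is the classical formula $\Gamma_A(M \oplus N) \cong \Gamma_A(M) \otimes_A \Gamma_A(N)$ for free divided power algebras, which is compatible with the augmented divided power structures.

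For $S=\{*\}$, I must show that
\[
\eta_B\colon \Map_{\CAlg^{\pd,\aug}_A}\bigl(\Gamma_A(A\{1\}), B\bigr) \longrightarrow \Omega^\infty I_B, \qquad f \mapsto f(y),
\]
is an equivalence for every $B$. When $B \in \PDRing^\aug_A$ is discrete, this is the classical universal property: a morphism of augmented divided power $A$-algebras out of the free object $\Gamma_A(A\{1\})$ from \Cref{exa:free-divided-power-ring} is determined by the image of the generator $y$, which must lie in the divided power ideal $I_B$. To extend to arbitrary $B \in \CAlg^{\pd,\aug}_A$, I argue that both functors of $B$ preserve sifted colimits, so the equivalence on discrete inputs propagates via the presentation $\CAlg^{\pd}\simeq \cat{P}_\Sigma(\PDPoly)$. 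For the right hand side, $B \mapsto I_B = \fib(B \to \overline{B})$ preserves sifted colimits since $\forget$ does by \Cref{prop:forgetful-functor} (and $\overline{(-)} = \ev_1 \circ \forget$ does as well), and $\Omega^\infty\colon \Mod_\bZ^\cn \to \cat{S}$ preserves sifted colimits. For the left hand side, $\Gamma_A(A\{1\})$ is a compact projective object of $\CAlg^\pd_A$, arising as the base change of the compact projective $\Gamma_\bZ(\bZ\{1\}) \in \PDPoly$ along $\bZ \to A$ in $\CAlg^\pd$; since $\CAlg^{\pd,\aug}_A$ is a full subcategory of $\CAlg^\pd_A$ closed under sifted colimits, the mapping space out of $\Gamma_A(A\{1\})$ computed in $\CAlg^{\pd,\aug}_A$ also preserves sifted colimits.

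The main subtlety lies in this last extension step: one must verify that the classical augmented divided-power polynomial algebras over $A$ indeed generate $\CAlg^{\pd,\aug}_A$ under sifted colimits, and that the compact projectivity of $\Gamma_A(A\{1\})$ transfers cleanly from $\CAlg^\pd_A$ to the augmented subcategory. Both rely essentially on the colimit-preservation results of \Cref{prop:forgetful-functor}, \Cref{prop:underlying-functor}, and \Cref{prop:idempotent}, which together ensure that augmentation is preserved by the relevant sifted colimits and that one may freely pass between $\CAlg^\pd_A$ and $\CAlg^{\pd,\aug}_A$ when computing mapping spaces out of $\Gamma_A(A\{1\})$.
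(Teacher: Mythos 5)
Your reduction to $S=\{*\}$ and the subsequent scheme — verify the universal property on discrete $B\in\PDRing^\aug_A$, then extend by sifted colimits — diverges from the paper's argument, which never attempts to show that discrete (or any specific class of) objects generate $\CAlg^{\pd,\aug}_A$ under sifted colimits. This is exactly where your proposal has a genuine gap. The equivalence
\[
\Map_{\CAlg^{\pd,\aug}_A}\bigl(\Gamma_A(A\{1\}),\,B\bigr) \;\simeq\; \Omega^\infty I_B
\]
is established only for $B$ discrete, and the inference to all $B$ would require that the discrete augmented divided power $A$-algebras generate $\CAlg^{\pd,\aug}_A$ under sifted colimits. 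You flag this as the ``main subtlety'' in your closing paragraph but then dispatch it by appeal to \Cref{prop:forgetful-functor}, \Cref{prop:underlying-functor}, and \Cref{prop:idempotent}. Those propositions concern colimit-preservation of $\forget$, $\und$, and $\overline{(-)}$; none of them says anything about a generating set for $\CAlg^{\pd,\aug}_A$, and no such result is established anywhere in the paper. It is also not the kind of thing that follows formally: $\CAlg^{\pd,\aug}_A$ is carved out inside the slice $\CAlg^\pd_A$ by the \emph{condition} that $\overline{A}\rightarrow\overline{B}$ is an equivalence, and one cannot conclude from ``$\overline{(-)}$ preserves colimits'' that sifted-colimit presentations of an arbitrary $B$ by discrete objects exist within this subcategory.

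The paper's route avoids the issue entirely. It first establishes the adjunction at the level of $\CAlg^\pd$, where the presentation $\CAlg^\pd\simeq\cat{P}_\Sigma(\PDPoly)$ is available: the universal property $\Map_{\CAlg^\pd}(\Gamma_\bZ(\bZ\{S\}),B)\simeq\Map_{\cat{S}}(S,\Omega^\infty I_B)$ is checked directly on $B\in\PDPoly$ and extends under sifted colimits because $\Gamma_\bZ(\bZ\{S\})$ is compact projective \emph{and} $\PDPoly$ generates. It then factors the right adjoint $R\colon\CAlg^{\pd,\aug}_A\rightarrow\cat{S}$ as $\CAlg^{\pd,\aug}_A\hookrightarrow\CAlg^\pd_A\rightarrow\CAlg^\pd\xrightarrow{\Omega^\infty I}\cat{S}$ and computes the composite left adjoint step by step, using that $A\otimes_\bZ\Gamma_\bZ(\bZ\{S\})\simeq\Gamma_A(A\{S\})$ (since $\Gamma_\bZ(\bZ\{S\})$ is a free $\bZ$-module) lands in the augmented subcategory. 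Your proof can be repaired along the same lines: since $\CAlg^{\pd,\aug}_A\subset\CAlg^\pd_A$ is full, $\Map_{\CAlg^{\pd,\aug}_A}(\Gamma_A(A\{1\}),B)=\Map_{\CAlg^\pd_A}(\Gamma_A(A\{1\}),B)$, and using $\Gamma_A(A\{1\})\simeq A\otimes_\bZ\Gamma_\bZ(\bZ\{1\})$ with the adjunction $A\otimes_\bZ(-)\dashv\mathrm{forget}$ this becomes $\Map_{\CAlg^\pd}(\Gamma_\bZ(\bZ\{1\}),B)$ — and there the extension from $\PDPoly$ to all of $\CAlg^\pd$ is legitimate. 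As written, however, the extension step in your proposal does not go through.
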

\begin{proof} First,  note that the functor $\Fin \rightarrow \CAlg^{\pd}$, $S \mapsto \Gamma_{\bZ}(\bZ\{S\})$ preserves finite coproducts, where $\Fin $ is the category of finite sets.		Left Kan extending along $\Fin \rightarrow \cat{S} \simeq \cat{P}_{\Sigma}(\Fin)$
gives the left adjoint of an 
adjoint pair
$\cat{S} \leftrightarrows \CAlg^{\pd},$
and the right adjoint sends $B \in  \CAlg^{\pd}$ to $\Map_{ \CAlg^{\pd}}( \Gamma_{\bZ}(\bZ\{S\}), B)$, see e.g.\ \cite[Lemma 2.1]{Mao2021}.
But 	$\Map_{ \CAlg^{\pd}}( \Gamma_{\bZ}(\bZ\{S\}), B) $ is naturally equivalent to $ \Map_{\cat{S}}(S, \Omega^{\infty}IB)$; this is checked by hand for $B \in \PDPoly$, and extends under sifted colimits as each $ \Gamma_{\bZ}(\bZ\{S\}) \in \CAlg^{\pd}$ is compact projective.

To prove the result, we factor the right adjoint $R$ as a composite of right adjoints \[\CAlg_A^{\pd,\aug} \subset \CAlg_A^{\pd} \rightarrow \CAlg^{\pd} \xrightarrow{B \mapsto \Omega^{\infty} I_B}\cat{S}.
\]
Using that $\Gamma_{\bZ}(\bZ\{S\}) $ is a free $\bZ$-module, we deduce that the left adjoint sends a finite set $S\in \Fin$ to $A\otimes \Gamma_{\bZ}(\bZ\{S\}) \simeq \Gamma_{A}(A\{S\})$. Extending under filtered colimits, we obtain the result for general sets.\vspace{-2pt} 
\end{proof}

We prove an analogue of   \cite[08RA]{stacks}
in the divided power setting:\vspace{-2pt} 
\begin{lemma}\label{lemma:pd-ideals-key-computation}
Let $A\rightarrow P \rightarrow B$ be surjective maps in $\PDRing$, with $P\simeq \Gamma_A(M)$ for $M = A\{S\}$ a free $A$-module on a set $S$.
Assume that $A/I_A\rightarrow B/I_B$ is an isomorphism. Denote the kernel of $A\rightarrow B$ by $I$, and the kernel of $P\rightarrow B$ by $J$. Then the universal divided power derivation $d\colon P \rightarrow \Omega^{1,\pd}_{P/A}$ induces a short exact sequence of \vspace{-3pt} $B$-modules: 
\[
0 \rightarrow I/I^{[2]} \rightarrow J/J^{[2]} \xrightarrow{[x] \mapsto 1 \otimes dx} B\otimes_P \Omega^{1,\pd}_{P/A} \rightarrow 0.
\]
\end{lemma}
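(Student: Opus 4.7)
The plan is to adapt the classical conormal exact sequence for a polynomial surjection to the divided power setting. The key input is the hypothesis $A/I_A \xrightarrow{\cong} B/I_B$, which forces $I_A \twoheadrightarrow I_B$ to be surjective. Since $P\to B$ is a pd-morphism and each generator $s\in S$ lies in $I_P$, its image $\bar s$ lies in $I_B$, so we can lift each $\bar s$ to some $a_s\in I_A$ and set $\tilde s := s-a_s\in J$. Because $a_s\in I_A$, the divided powers $a_s^{[n]}$ exist in $A$, and the identity $s^{[n]} = \sum_{k=0}^n \tilde s^{[k]} a_s^{[n-k]}$ (coming from axiom (4) of \Cref{classicalpdrings}) exhibits the natural map of free pd-$A$-algebras $A\langle \tilde s : s\in S\rangle_{\pd}\to P$ as an isomorphism. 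This change of variables furnishes a pd-retraction $\pi\colon P\to A$ over $A$ sending each $\tilde s$ to $0$.

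Both maps of the sequence are straightforwardly well-defined. The left map is induced by the pd-inclusion $A\hookrightarrow P$, which carries $I^{[2]}$ into $J^{[2]}$, and it lands in $\ker(d)$ because $A$-linear pd-derivations annihilate $A$. For the right map, the divided-power derivation rule $d\gamma_n(y) = \gamma_{n-1}(y)\,dy$ for $y\in J$, $n\geq 2$, together with $\gamma_{n-1}(y)\in J$, shows that $J^{[2]}$ maps to zero in $B\otimes_P \Omega^{1,\pd}_{P/A}$. By \Cref{exa:pd-differentials-free}, extended to arbitrary $S$ via sifted colimits, $\Omega^{1,\pd}_{P/A}$ is the free $P$-module on $\{ds\}_{s\in S}$, so $B\otimes_P \Omega^{1,\pd}_{P/A}$ is the free $B$-module on $\{1\otimes ds\}$. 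Since $d\tilde s = ds$, the assignment $1\otimes ds \mapsto [\tilde s]$ both establishes surjectivity and yields a canonical $B$-linear section $\sigma$.

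Injectivity of the left map follows from the retraction $\pi$. Since $\pi$ is a pd-morphism with $\pi|_A = \mathrm{id}_A$, the composition $A\to P\xrightarrow{\pi} A\to B$ agrees with $A\to B$ on generators (one checks $\pi(s) = a_s$ with $\bar a_s = \bar s$), so $J = \pi^{-1}(I)$. Hence $\pi(J)\subseteq I$ and $\pi(J^{[2]})\subseteq I^{[2]}$, producing a $B$-linear retraction $\bar\pi\colon J/J^{[2]}\to I/I^{[2]}$ of the left map.

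For exactness in the middle it suffices to prove $\mathrm{Im}(\sigma) = \ker(\bar\pi)$: combining the two direct-sum decompositions $J/J^{[2]} = \ker(d)\oplus \mathrm{Im}(\sigma)$ (from the section) and $J/J^{[2]} = \mathrm{Im}(I/I^{[2]})\oplus \ker(\bar\pi)$ (from the retraction) then forces $\ker(d) = \mathrm{Im}(I/I^{[2]})$. The plan to identify these subspaces is to use the $A$-basis $P = \bigoplus_\alpha A\cdot \tilde m^{[\alpha]}$, with $\tilde m^{[\alpha]} = \prod_s \tilde s^{[\alpha_s]}$, provided by the change of variables: expand any $x\in J$ as $x = \sum_\alpha c_\alpha\tilde m^{[\alpha]}$, and observe that every monomial with $|\alpha|\geq 2$ lies in $J^{[|\alpha|]}\subseteq J^{[2]}$ by the pd-filtration property $J^{[a]} J^{[b]}\subseteq J^{[a+b]}$ applied to $\tilde s^{[\alpha_s]} = \gamma_{\alpha_s}(\tilde s)\in J^{[\alpha_s]}$. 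Modulo $J^{[2]}$ one then has $[x]\equiv c_0 + \sum_s \bar c_s\,[\tilde s]$ with $c_0 = \pi(x)\in I$, identifying $\ker(\bar\pi)$ with the $B$-span of the $[\tilde s]$. The main technical obstacle is this last step: carefully organising the interplay between the $A$-module basis of $P$ and the divided-power filtration on $J$ to verify that higher-weight monomials all land in $J^{[2]}$.
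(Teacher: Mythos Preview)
Your proof is correct and shares its key idea with the paper's: lift each $\bar s$ to $a_s\in I_A$ and change variables so that the generators lie in $J$, then exploit the weight grading on $\Gamma_A(M)$. The paper's execution of the final step is more direct than yours: once $P\to B$ sends each generator to $0$, it simply writes down $J = I \oplus M \oplus \Gamma^{\geq 2}_A(M)$ and $J^{[2]} = I^{[2]} \oplus IM \oplus \Gamma^{\geq 2}_A(M)$, so that $J/J^{[2]} \cong I/I^{[2]} \oplus (B\otimes_A M)$ and the sequence is visibly split exact; your section/retraction argument arrives at the same splitting by a slightly longer route.
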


\begin{proof}

Given $s\in S$, we write $x_s \in P$ for the corresponding generator.

By the surjectivity assumption, we can pick an element $\lambda_s \in A  $ whose image under $A \rightarrow B$ agrees with the image of $x_s$ under $P \rightarrow B$.
Hence $(A \rightarrow P \rightarrow B)$ is isomorphic to $(A \rightarrow P \xrightarrow{x_s \mapsto 0} B)$ via the isomorphism $P \rightarrow P$ sending $x_s $ to $x_s + \lambda_s$, and we may assume without restriction that 
$P\rightarrow B$ sends each   $x_s$ to zero.

Note that $\Omega^{1,\pd}_{P/A} \cong P\otimes_A M$. 
We also observe that $
J^{[2]} = I^{[2]} \oplus IM \oplus \Gamma^{\geq 2}_A(M)$
inside
$
J = I \oplus M \oplus \Gamma^{\geq 2}_A(M).$
Hence the sequence in the lemma  is isomorphic to\vspace{-2pt} 
\[
0 \rightarrow I/I^{[2]} \rightarrow I/I^{[2]} \oplus (B\otimes_A M) \rightarrow
B\otimes_A M \rightarrow 0.\vspace{-14pt}
\] 
\end{proof}

\begin{proof}[Proof of \Cref{prop:pi1-of-pd-cotangent-complex}]
Given   $B \in \PDRing^\aug_A$, \Cref{lemma:augmented-divided-power-left-adjoint} shows that each term  in the monadic resolution   $P_\bullet \rightarrow B$ of $B$ in \eqref{eq:monadicres}  is of the form $\Gamma_A(M)$ for $M = A\{S\}$ a free $A$-module.
\Cref{lemma:pd-ideals-key-computation} gives a   short exact sequence
\begin{equation} \label{eq:auxses}
	0 \rightarrow I/I^{[2]} \rightarrow J_\bullet/J_\bullet^{[2]} \rightarrow B\otimes_{P_\bullet} \Omega^{1,\pd}_{P_\bullet/A} \rightarrow 0
\end{equation}
of simplicial $B$-modules. To prove $\pi_0(L_{B/A}^{\pd}) \cong 0$ and 
$\pi_1(L_{B/A}^{\pd}) \cong I/I^{[2]}$, 
it suffices to show that the geometric realisation $|J_\bullet/J_\bullet^{[2]}|$  is $2$-connective.

First,   note that  $|J_\bullet| \simeq 0$ as $|P_\bullet| \rightarrow B$ is an equivalence; in particular   $\pi_0(|J_\bullet| )\cong 0$.

Applying the normalised chain complex functor to $J_\bullet$, we deduce   that the map $d_1: \ker(d_0:J_1 \rightarrow J_0) \rightarrow J_0$ is surjective.
Using \Cref{exa:free-divided-power-ring}, this shows that \[\Gamma_A^{\geq 2}(d_1): \ker(\Gamma_A^{\geq 2}(d_0):\Gamma_A^{\geq 2}(J_1) \rightarrow \Gamma_A^{\geq 2}(J_0)) \xrightarrow{\ \ \ \  } \Gamma_A^{\geq 2}(J_0)
\]
is also surjective, which in turn implies that  $\pi_0(|\Gamma_A^{\geq 2}(J_\bullet)|) \cong 0$.

But for each $n$, we have a surjection $\Gamma_A^{\geq 2} J_n \rightarrow J_n^{[2]}$. Denoting its kernel by $K_n$, we obtain a short exact sequence of simplicial $A$-modules
\[
0 \rightarrow K_\bullet \rightarrow  \Gamma_A^{\geq 2} J_\bullet
\rightarrow J_\bullet^{[2]} \rightarrow 0.
\]
Using the associated long exact sequence, we deduce that 
$\pi_0|J_\bullet^{[2]}| \cong 0$.

Using the 
long exact sequence associated to  
\[
0 \rightarrow J^{[2]}_\bullet \rightarrow J_\bullet \rightarrow J/J^{[2]}_\bullet \rightarrow 0, 
\]
we conclude from $|J_\bullet| \simeq 0$ and  $\pi_0|J_\bullet^{[2]}| \cong 0$ 
that $|J/J^{[2]}|$ is $2$-connective. 

The final claim about compatibility with projection follows from a diagram chase as \eqref{eq:auxses} receives a map from the short exact sequence  $ 0 \rightarrow I \rightarrow J_\bullet \rightarrow P_{\bullet}/A 
\rightarrow 0$.
\end{proof}

\section{Derived divided power de Rham complex}
\label{sec:derived-pd-de-rham}
In this section, we define the derived de Rham cohomology of divided power rings and compare it with crystalline cohomology, following  Mao~\cite{Mao2021} \mbox{and Bhatt \cite{Bhatt2012}.}
We will  set out the theory only in the level of generality needed for our main theorems; a more comprehensive  study of the derived de Rham cohomology of derived divided power schemes will appear in \cite{PDPeriod24}.

\subsection{Definition and basic properties}
In  general, derived de Rham cohomology
can behave badly, 
and 	we cannot control it  via the cotangent complex as the 
Hodge filtration need not be complete.
We obtain a  nicer theory by    {either} completing with respect to the Hodge filtration {or}  working in a $p$-completed setting, where 
the Cartier isomorphism is at our disposal.  The former approach is more suitable  in characteristic zero, whereas the latter provides a well-behaved	  theory in the mixed characteristic setting of interest to us.

So let us fix a prime  $p$ and work in a $p$-completed setting.

\begin{notation}
Given an animated ring $A \in \CAlg^{\an}$, 
write \[\Mod_{A}^\wedge \subset \Mod_{A}\] for the full subcategory spanned by all (derived) $p$-complete $A$-modules, i.e.\ those  $M$ for which the morphism $M \rightarrow \lim_n (M/p^n)$ is an equivalence (see    
\mbox{\cite[Section 7.3]{LurieSAG}}).	The inclusion $\Mod_{A}^\wedge  \subset \Mod_{A}$ admits a left adjoint, the (derived) completion functor $(-)^{\wedge}$, and 
\cite[Corollary 7.3.5.2]{LurieSAG} shows that there  is an essentially unique symmetric monoidal structure  ${\otimes}_A$ on $\Mod_{A}^\wedge$ for which $(-)^{\wedge}$ is  symmetric monoidal.

Let $\CAlg^{\an} \Mod_{ }^{ \wedge } \subset \CAlg^{\an} \Mod_{ }^{ }$ be the full subcategory spanned by all $(A,  B)$ for which $B \in \Mod_{A}^\wedge$ is 	  $p$-complete.  
\end{notation}

\begin{convention}\label{convention:completion}
In Sections 5-7, we will  often suppress the completion functor $(-)^{\wedge}$ on objects from our notation, 
denoting $M \in \Mod_A$ and  $M ^{\wedge} \in \Mod_A^{\wedge}$  by the same name. 
We will, however,  carefully distinguish between  $ \Mod_A$ and $ \Mod_A^{\wedge}$. 
\end{convention}
\begin{remark}
If $\pi_0(A)$ is a local Artinian ring with residue field $k$  of characteristic $p$, then every $A$-module is automatically $p$-complete since $p$ is nilpotent.
Completing at $p$ is,  however, a nontrivial operation over  bases like $W(k)$, where it `corrects' pathological behaviours exhibited by the uncompleted derived de Rham complex.
\end{remark}

\begin{definition}[Filtered $p$-complete modules]
Let $\bN$ be the partially ordered set of non-negative integers. 
We consider  the $\infty$-category
\[
\CAlg^\an\Mod^{\wedge \fil} := 
\CAlg^\an \times_{\Fun(\bN^{\op},\CAlg^\an)} \Fun(\bN^{\op},\CAlg^\an \Mod^{\wedge})
\]
of pairs $(A,M)$ with $A$ in $\CAlg^\an$ and 
\[
M = \big[ \cdots \rightarrow \Fil^1 M \rightarrow \Fil^0 M \ ] \in \Mod_A^{\wedge \fil}:= \Fun(\bN^{\op}, \Mod^{\wedge}_A).
\] 
\end{definition}

\begin{definition}[de Rham complex of classical divided power rings]\label{classdr}
Let $A\rightarrow B$ be a morphism of classical divided power rings. Given $n\geq 0$, set $\Omega^{n,\pd}_{B/A} := \Lambda^n_B \Omega^{1,\pd}_{B/A}$. 
The (uncompleted) \emph{de Rham complex} of $A\rightarrow B$ is the	
  chain complex of $A$-modules
\[
\Omega^{\bullet,\pd}_{B/A} = \left[ \cdots \rightarrow 0 \rightarrow
B \overset{d}\rightarrow \Omega^{1,\pd}_{B/A} 
\overset{d}\rightarrow \Omega^{2,\pd}_{B/A} 
\overset{d}\rightarrow \cdots
\right]
\]
with the usual de Rham differential 
\[
d(f_0 \wedge df_1 \wedge \ldots \wedge df_n) = df_0 \wedge df_1 \wedge \ldots \wedge df_n.
\]
We equip this complex with the \emph{Hodge filtration}, whose $n^{th}$ piece $\Fil^m \Omega^{\bullet,\pd}$ is  
\[
\left[ \  0 \rightarrow \cdots 0 \rightarrow 
\Omega^{m,\pd}_{B/A} \rightarrow \Omega^{m+1,\pd}_{B/A} \rightarrow \cdots
\right].
\]
The filtered  chain complex $\Omega^{\bullet,\pd}_{B/A}$ is functorial in the morphism $A\rightarrow B$. 
\end{definition}

\begin{definition}[Derived de Rham complex for animated divided power rings \cite{Mao2021}]\label{deRhamaffine}
The ($p$-completed)  \textit{Hodge-filtered de Rham complex}
\[
\underline{\dR}^\pd  \colon  \Fun(\Delta^{1}, \CAlg^{\pd}) \rightarrow \CAlg^{\an}\Mod^{\wedge\fil}
\]
is the unique sifted-colimit-preserving extension of the composition 
\[
\ArrPDPoly \xrightarrow{} \CAlg^{\an}\Mod^\fil \rightarrow \CAlg^{\an}\Mod^{\wedge \fil}
\] 
of the functor $(A\rightarrow B) \mapsto (A,  \Omega^{\bullet,\pd}_{B/A})$ in \Cref{classdr} and the $p$-completion functor.
\end{definition}

\begin{remark}	\label{thirdremprime}
As in \Cref{thirdrem}, we check that the composition
\[
\Fun(\Delta^{1}, \CAlg^{\pd}) \rightarrow \CAlg^{\an}\Mod^{\wedge \fil} 
\rightarrow \CAlg^{\an}
\]
sends $A\rightarrow B$ to $A$. We can therefore write 
$\underline{\dR}^\pd_{B/A} = (A, \dR_{B/A}^\pd)$. 
\end{remark}

\begin{convention}\label{convention:filtration}
We denote the Hodge filtration on $\dR_{B/A}^\pd$ by $\Fil^\bullet \dR_{B/A}^\pd$.
We will sometimes suppress the  functor $\Fil^0$ on objects from our notation, 
but will 	carefully distinguish between $ \Mod_A^{\wedge \fil}$ and $ \Mod_A^{\wedge}$.
\end{convention}

\begin{warning}
In \cite{Mao2021}, the above filtration is referred to as the `stupid filtration' (and the term `Hodge filtration' is reserved for something else). Moreover, note that  the symbol $\dR_{B/A}^{\pd}$ in  \cite{Mao2021} stands for the \textit{un-$p$-completed} derived de Rham complex.
\end{warning}

Proceeding as in  the proof of \Cref{prop:non-pd-cotangent-complex}, we obtain:
\begin{proposition}\label{prop:non-pd-dR-complex}
If $A\rightarrow B$ is a map of animated rings (seen as animated divided power rings), then $\dR_{B/A}^\pd$ agrees with
Bhatt's $p$-completed variant of Illusie's 
 Hodge-filtered derived de Rham cohomology $\dR_{B/A}$ (cf.\ \cite[Definition 2.1]{Bhatt2012}, \cite[VIII.2]{Illusie2}). 
\end{proposition}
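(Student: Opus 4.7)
The plan is to reduce the comparison to the level of compact projective generators, where both functors agree tautologically, and then extend by the universal property of sifted colimit completions.

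First, I would observe that the fully faithful embedding $\CAlg^{\an} \hookrightarrow \CAlg^{\pd}$ is itself defined as the sifted-colimit-preserving extension of $\Poly \hookrightarrow \PDPoly$, where a polynomial ring is viewed as an animated divided power ring with trivial divided power structure on the zero ideal (cf.\ \Cref{exa:trivialpd}). In particular, this inclusion preserves sifted colimits, and hence so does the induced inclusion on arrow categories $\Fun(\Delta^1,\CAlg^{\an}) \hookrightarrow \Fun(\Delta^1,\CAlg^{\pd})$. Combined with the equivalences $\cat{P}_\Sigma(\ArrPoly) \simeq \Fun(\Delta^1,\CAlg^{\an})$ and $\cat{P}_\Sigma(\ArrPDPoly) \simeq \Fun(\Delta^1,\CAlg^{\pd})$, it suffices to exhibit a canonical equivalence of the two functors after restriction to $\ArrPoly \subset \ArrPDPoly$, compatible with sifted colimits.

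Next, on an arrow $A \rightarrow B$ in $\ArrPoly$, viewed as a morphism of divided power rings with trivial divided power structures, the classical divided power module of differentials $\Omega^{1,\pd}_{B/A}$ coincides with $\Omega^1_{B/A}$: the defining relations $d\gamma_n(x) = \gamma_{n-1}(x)dx$ (\Cref{def:pd-derivation}) are vacuous when $I_A = 0 = I_B$. Taking exterior powers and assembling the Hodge-filtered complex then gives $\Omega^{\bullet,\pd}_{B/A} = \Omega^\bullet_{B/A}$ as filtered $A$-modules, and this identification is visibly functorial in the arrow. After $p$-completion, this matches the compact projective input for Bhatt's $\dR_{B/A}$ as defined in \cite[Definition 2.1]{Bhatt2012}.

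Finally, I would invoke the universal property of $\cat{P}_\Sigma(\ArrPoly)$: since Bhatt's $\underline{\dR}$ is by construction the sifted-colimit-preserving extension of the classical Hodge-filtered $p$-completed de Rham complex on $\ArrPoly$, and the restriction of $\underline{\dR}^\pd$ along $\ArrPoly \hookrightarrow \ArrPDPoly \hookrightarrow \Fun(\Delta^1,\CAlg^{\pd})$ is another sifted-colimit-preserving functor agreeing with it on $\ArrPoly$, essential uniqueness of the extension yields a canonical equivalence $\dR^\pd_{B/A} \simeq \dR_{B/A}$ in $\CAlg^{\an}\Mod^{\wedge\fil}$. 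The only subtle point worth spelling out is the compatibility of $p$-completion with the two sifted-colimit-preserving extensions, which is immediate since $p$-completion is a left adjoint and hence commutes with sifted colimits in the appropriate sense; everything else is formal once the identification on $\ArrPoly$ is in place.
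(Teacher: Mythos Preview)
Your proposal is correct and matches the paper's approach exactly: the paper simply says ``Proceeding as in the proof of \Cref{prop:non-pd-cotangent-complex}'', which is precisely the argument you spell out---both functors are sifted-colimit-preserving and agree on $\ArrPoly$ (where the divided power relations are vacuous), so essential uniqueness of the extension gives the identification. Your final remark about $p$-completion is unnecessary, since both functors are by definition sifted-colimit-preserving with target $\CAlg^{\an}\Mod^{\wedge\fil}$, so no separate compatibility check is needed.
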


\begin{proposition}[Base change]\label{prop:dR-base-change}
Every pushout square 
\[
\begin{tikzcd}
	A \arrow{r} \arrow{d} & B \arrow{d} \\
	A' \arrow{r} & B'\arrow[lu, phantom, "\ulcorner", at start]
\end{tikzcd}
\]
in $\CAlg^{\pd}$ induces a natural equivalence $A'\   {\otimes}_A \dR_{B/A}^\pd \rightarrow \dR_{B'/A'}^\pd$  in $\Mod_{A'}^{\wedge \fil}$.
\end{proposition}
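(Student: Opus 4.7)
The plan is to reduce to compact projective generators via sifted-colimit preservation on both sides, in direct analogy with the proof of base change for the divided power cotangent complex (\Cref{cor:cotangent-base-change}). Let $\cat{E}$ denote the $\infty$-category of pushout corners $(A' \leftarrow A \to B)$ in $\CAlg^\pd$, realised as
\[
\cat{E} := \Fun(\Delta^1,\CAlg^\pd) \times_{\CAlg^\pd,\,\ev_0} \Fun(\Delta^1,\CAlg^\pd).
\]
Define two functors $\cat{E} \to \CAlg^\an\Mod^{\wedge\fil}$ by
\[
F \colon (A' \leftarrow A \to B) \longmapsto (A',\, A' \otimes_A \dR_{B/A}^\pd),
\]
\[
G \colon (A' \leftarrow A \to B) \longmapsto (A',\, \dR_{B'/A'}^\pd), \qquad B' := A' \otimes_A B.
\]
The natural morphism of arrows $(A \to B) \to (A' \to B')$, combined with $A'$-linearity, supplies a comparison transformation $F \to G$; the claim is that this is an equivalence.

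Both $F$ and $G$ preserve sifted colimits. For $F$, this combines sifted-colimit-preservation of $\underline{\dR}^\pd$ (see \Cref{deRhamaffine}) with the fact that the $p$-completed, filtered relative tensor product on $\Mod^{\wedge\fil}$ is a left adjoint. For $G$, one additionally uses that the formation of pushouts in $\CAlg^\pd$ is itself a colimit operation and therefore preserves sifted colimits in the diagram variable (sifted colimits in $\cat{E}$ are computed component-wise since $\ev_0$ preserves them). By the universal property of $\cat{P}_\Sigma$, it then suffices to verify the claim on compact projective generators of $\cat{E}$, which may be taken to be pushout corners in which $A \to B$ and $A \to A'$ are both standard morphisms in $\ArrPDPoly$.

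For such a standard corner, write $A = \Gamma_{\bZ[\vec{x}]}(\vec{y})$, $B = \Gamma_{\bZ[\vec{x},\vec{u}]}(\vec{y},\vec{v})$, and $A' = \Gamma_{\bZ[\vec{x},\vec{x}']}(\vec{y},\vec{y}')$. Since the inclusion $\PDPoly \hookrightarrow \CAlg^\pd$ preserves finite coproducts, the pushout computes explicitly as $B' = \Gamma_{\bZ[\vec{x},\vec{x}',\vec{u}]}(\vec{y},\vec{y}',\vec{v})$, and \Cref{exa:pd-differentials-free} identifies $\Omega^{1,\pd}_{B'/A'}$ with the free $B'$-module on $\{d u_i, d v_j\}$, which canonically equals $B' \otimes_B \Omega^{1,\pd}_{B/A}$. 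Taking wedge powers preserves this base-change identification, and the de Rham differential on $\Omega^{\bullet,\pd}_{B'/A'}$ is the $A'$-linear extension of that on $\Omega^{\bullet,\pd}_{B/A}$ since the new generators in $A'$ are not differentiated further. This yields an isomorphism of filtered complexes $A' \otimes_A \Omega^{\bullet,\pd}_{B/A} \xrightarrow{\simeq} \Omega^{\bullet,\pd}_{B'/A'}$, functorial in the triple; applying $p$-completion produces the desired equivalence in $\Mod^{\wedge\fil}_{A'}$.

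The main obstacle is bookkeeping rather than conceptual: one must set up $F$ and $G$ as honest functors on $\cat{E}$ so that a comparison is literally a natural transformation, correctly identify compact projective generators of the fibre product $\cat{E}$, and confirm that the filtered, $p$-completed tensor product together with $p$-completion interact correctly with sifted colimits in the target $\Mod^{\wedge\fil}$ so that both $F$ and $G$ are indeed sifted-colimit-preserving.
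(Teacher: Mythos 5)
Your argument is essentially the paper's own proof, fleshed out. The paper also notes (and cites \Cref{cor:cotangent-base-change} as a template for) a slightly slicker route: rather than parameterising pushout squares by a fibre product category $\cat{E}$ and reducing to its compact projective generators, one can observe that the square $(A\to B')$ is the pushout $(A\to B)\sqcup_{(A\to A)}(A'\to A')$ in $\Fun(\Delta^1,\CAlg^\pd)$ and apply the colimit-preserving functor $\underline{\dR}^\pd$ directly (using the colimit-preservation of $\dR^\pd$ as in \Cref{prop:refinement}, or Mao's Lemma 4.12); this bypasses the need to identify compact projective generators of $\cat{E}$ and to check that sifted colimits there are computed component-wise, which is the only part of your write-up that requires a little care to nail down precisely. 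Both routes are correct, and your explicit computation on $\ArrPDPoly$ corners is accurate.
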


\begin{proof}The $\infty$-category of such pushout squares is generated under sifted colimits by those diagrams for which both $A\rightarrow A'$ and $A\rightarrow B$ belong to  $\ArrPDPoly$ (it then follows that $A'\rightarrow B'$ and $B\rightarrow B'$ also lie in $\ArrPDPoly$). On such diagrams, the claim follows from an explicit computation. Since everything in sight preserves sifted colimits, the proposition follows.
Alternatively, the proposition follows from \cite[Lemma 4.12]{Mao2021} and \cite[Example  2.25]{Mao2021}, see also \Cref{cor:cotangent-base-change}.
\end{proof}

Applying this to the construction of \Cref{def:induced-divided-power-structure}, we obtain:

\begin{corollary}\label{cor:dR-induced-PD-structure}
For $A\in \CAlg^{\pd}$ and $A\rightarrow B$ a map in $\CAlg^\an$, the natural map
\[
\dR_{B/A_\triv}^{\pd} \rightarrow \dR^{\pd}_{\ind_A(B)/A}
\]
is an equivalence in $\Mod_{A}^{\wedge \fil}$. 
\end{corollary}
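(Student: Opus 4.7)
The plan is to deduce this directly from the base-change statement \Cref{prop:dR-base-change}. Recall that by \Cref{def:induced-divided-power-structure}, $\ind_A(B)$ fits into a pushout square
\[
\begin{tikzcd}
A_\triv \arrow{d} \arrow{r} & B \arrow{d} \\
A \arrow{r} & \ind_A(B) \arrow[lu, phantom, "\ulcorner", at start]
\end{tikzcd}
\]
in $\CAlg^{\pd}$. Applying \Cref{prop:dR-base-change} to this pushout yields a natural equivalence
\[
A \, {\otimes}_{A_\triv} \dR_{B/A_\triv}^{\pd} \xrightarrow{\ \simeq\ } \dR^{\pd}_{\ind_A(B)/A}
\]
in $\Mod_A^{\wedge\fil}$.

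It remains to identify the left-hand side with $\dR_{B/A_\triv}^{\pd}$. By \Cref{thirdremprime}, the object $\dR_{B/A_\triv}^{\pd}$ lies in $\Mod^{\wedge\fil}_{\und(A_\triv)}$, and the asserted base change is computed along the underlying map of animated rings $\und(A_\triv)\rightarrow \und(A)$. However, $A_\triv$ was defined as the image of $A$ under the composite $\und\colon \CAlg^{\pd} \rightarrow \CAlg^\an \hookrightarrow \CAlg^{\pd}$, and since the inclusion $\CAlg^\an \hookrightarrow \CAlg^{\pd}$ is fully faithful with right adjoint $\und$ (cf.\ \Cref{prop:und-as-right-adjoint}), the composite $\und \circ \iota$ is the identity on $\CAlg^\an$. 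The counit map $A_\triv \rightarrow A$ therefore induces the identity on underlying animated rings, so the base change $A\, {\otimes}_{A_\triv} (-)$ is canonically equivalent to the identity functor on $\Mod^{\wedge\fil}_{\und(A)}$.

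There is no serious obstacle here: the argument is essentially a tautology once \Cref{prop:dR-base-change} is in hand, and the only point worth spelling out is that $\und(A_\triv) = \und(A)$ as animated rings, which makes the base change trivial. Composing the two identifications gives the desired equivalence $\dR_{B/A_\triv}^{\pd} \xrightarrow{\simeq} \dR^{\pd}_{\ind_A(B)/A}$.
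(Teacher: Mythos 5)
Your proof is correct and takes exactly the approach the paper intends (the paper's proof is implicit, stated only as "Applying this to the construction of \Cref{def:induced-divided-power-structure}"). You correctly apply \Cref{prop:dR-base-change} to the defining pushout square of $\ind_A(B)$ and observe that the base change $A\otimes_{A_\triv}(-)$ is trivial because the counit $A_\triv\rightarrow A$ is the identity on underlying animated rings.
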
 
\begin{definition}[Derived exterior powers]
Let 
\[
\Lambda^n \colon \CAlg^\an\Mod^{\cn} \rightarrow \CAlg^{\an}\Mod^{\cn}
\]
be the unique sifted-colimit-preserving extension of the functor that maps a pair $(A,M)$ in $\PolyProj$ to $(A,\Lambda^n_A M)$. We write $\Lambda^n(A,M) = (A, \Lambda^n_A M)$.
\end{definition}

\begin{remark}
By \cite[Section 3]{BrantnerMathew2019}, this can be extended to non-connective modules.
\end{remark}
Using \Cref{convention:completion} as always, we have: 

\begin{proposition}[Associated graded of the Hodge filtration]
\label{prop:hodge-graded} 
Given a map of animated divided power rings $A \rightarrow B$, there is an equivalence
\[
\gr^n \dR_{B/A}^\pd \simeq   \Lambda^n(L_{B/A}^\pd)[-n]
\]
in $\Mod_A^{\wedge}$. This defines an equivalence of functors
$\Fun(\Delta^{1},\CAlg^{\pd}) \rightarrow \CAlg^{\an}\Mod^{\wedge}$.
\end{proposition}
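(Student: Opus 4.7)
The plan is to argue via the universal property of $\cat{P}_{\Sigma}(\ArrPDPoly) \simeq \Fun(\Delta^1, \CAlg^{\pd})$, constructing a natural equivalence on the generating subcategory $\ArrPDPoly$ and extending by sifted colimits. Both assignments
\[
(A \to B) \mapsto \gr^n \dR^\pd_{B/A}, \qquad (A \to B) \mapsto \Lambda^n(L^\pd_{B/A})[-n]
\]
define functors $\Fun(\Delta^1, \CAlg^{\pd}) \to \CAlg^{\an}\Mod^{\wedge}$ which preserve sifted colimits: the first because $\underline{\dR}^\pd$ does by \Cref{deRhamaffine}, passage to the $n^{\text{th}}$ graded piece of a filtered object is a colimit construction, and $p$-completion is a left adjoint; the second because $\underline{L}^\pd$ preserves sifted colimits by \Cref{prop:cotangent-preserves-colimits}, $\Lambda^n$ is defined as a sifted-colimit-preserving extension, and again $p$-completion is a left adjoint. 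It therefore suffices to give a natural equivalence between the two functors $\ArrPDPoly \to \CAlg^{\an}\Mod^{\wedge}$.

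On a generating arrow
\[
A = \Gamma_{\bZ[x_1,\ldots,x_n]}(y_1,\ldots,y_m) \longrightarrow B = \Gamma_{\bZ[x_1,\ldots,x_{n+n'}]}(y_1,\ldots,y_{m+m'}),
\]
one computes via \Cref{exa:pd-differentials-free} that $L^\pd_{B/A}$ is the free $B$-module on the symbols $dx_{n+1},\ldots, dx_{n+n'}, dy_{m+1}, \ldots, dy_{m+m'}$. Since $B$ is discrete and $L^\pd_{B/A}$ is a finite free $B$-module, the derived exterior power $\Lambda^n_B(L^\pd_{B/A})$ coincides with the classical exterior power $\Omega^{n,\pd}_{B/A}$, by definition of $\Lambda^n$ as sifted-colimit-preserving extension of its restriction to $\PolyProj$. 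On the other hand, the Hodge filtration of \Cref{classdr} is the brutal filtration on the chain complex $\Omega^{\bullet,\pd}_{B/A}$, so its $n^{\text{th}}$ graded piece is precisely the single-degree complex $\Omega^{n,\pd}_{B/A}$ placed in degree $n$, i.e.\ $\Omega^{n,\pd}_{B/A}[-n]$; $p$-completion preserves this identification. The naturality of the resulting isomorphism $\gr^n \Omega^{\bullet,\pd}_{B/A} \cong \Lambda^n_B(L^\pd_{B/A})[-n]$ in morphisms of $\ArrPDPoly$ is immediate from the construction of both the de Rham differential and the functors $\Lambda^n$, $L^\pd$ on this compact projective generating subcategory.

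The main point requiring attention is ensuring that passage to graded pieces really is sifted-colimit-preserving in the $p$-completed setting, i.e.\ that the $n^{\text{th}}$ graded piece (computed as the cofibre of $\Fil^{n+1} \to \Fil^n$ inside $\Mod_A^{\wedge \fil}$) commutes with sifted colimits. This follows because cofibres in $\Mod_A^{\wedge}$ are computed by $p$-completing the underlying cofibre, and $p$-completion is a left adjoint on $\Mod_A$ that commutes with all colimits. With this observation, the pointwise equivalence on $\ArrPDPoly$ promotes to a natural equivalence between two sifted-colimit-preserving functors on $\Fun(\Delta^1,\CAlg^{\pd})$, yielding the claimed equivalence of functors.
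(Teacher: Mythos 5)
Your proof is correct and takes the same route as the paper, which dispatches the result in one line by noting that both functors preserve sifted colimits and agree on $\ArrPDPoly$. You spell out exactly the content the paper leaves implicit: the sifted-colimit preservation of each side (tracking the graded piece and $p$-completion), and the pointwise agreement on generators via \Cref{exa:pd-differentials-free}; the only place where your wording is slightly loose is the claim that $\Lambda^n_B$ of a finite free module over a discrete $B$ coincides with the classical exterior power "by definition" of $\Lambda^n$ --- this requires (a standard) resolution argument since $(B,\Omega^{1,\pd}_{B/A})$ need not lie in $\PolyProj$, but the conclusion is of course correct.
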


\begin{proof}
Both functors preserve sifted colimits, and they agree on $\ArrPDPoly$.
\end{proof} 

In particular, we have maps $\pi^0\colon \dR_{B/A}^\pd \rightarrow B$ and $\pi^1\colon \Fil^1 \dR_{B/A}^\pd \rightarrow L_{B/A}^\pd[-1]$.

\begin{proposition}\label{prop:dR-differential}
There is a map of fibre sequences
\[
\begin{tikzcd}
	\Fil^1\dR_{B/A}^\pd \arrow{r} \arrow{d}{\pi^1}
	& \dR_{B/A}^\pd \arrow{r}{\pi^0} \arrow{d} 
	& B \arrow{d}{d} \\
	L_{B/A}^\pd[-1] \arrow{r} & 0 \arrow{r} & L_{B/A}^\pd
\end{tikzcd}
\]
in $\Mod_A^{\wedge}$,  functorial in $A\rightarrow B$ in $\Fun(\Delta^{1}, \CAlg^{\pd})$.
\end{proposition}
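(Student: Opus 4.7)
The plan is to reduce the claim to the commutativity of the right-hand square, construct that square by hand on $\ArrPDPoly$, and extend to all of $\Fun(\Delta^1, \CAlg^{\pd}) \simeq \cat{P}_\Sigma(\ArrPDPoly)$ by sifted colimits. A map of fibre sequences as in the statement is equivalent to a commutative square with $\pi^0$ on top, $d$ on the right, and zero terms in the lower-left: the vertical map $\Fil^1 \dR^{\pd}_{B/A} \to L^{\pd}_{B/A}[-1]$ between horizontal fibres is then forced, and one must separately identify it with the $\pi^1$ of \Cref{prop:hodge-graded}.

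On $\ArrPDPoly$, every object is classical: $\dR^{\pd}_{B/A}$ is the explicit divided-power de Rham complex $\Omega^{\bullet, \pd}_{B/A}$ of \Cref{classdr}, $L^{\pd}_{B/A} = \Omega^{1, \pd}_{B/A}$ sits in cohomological degree $0$, $\pi^0$ is the projection onto the degree-$0$ part of the de Rham complex, and $d$ is the classical universal divided-power derivation. The composite chain map $d \circ \pi^0 \colon \Omega^{\bullet, \pd}_{B/A} \to \Omega^{1, \pd}_{B/A}$ then admits the explicit natural nullhomotopy $h$ defined by $h_1 = \id_{\Omega^{1, \pd}_{B/A}}$ in cohomological degree $1$ and $h_n = 0$ otherwise; the chain homotopy identity reduces in degree $0$ to $\id \circ d_B = d_B$ and is trivial in higher degrees. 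Packaging this as a functor $\ArrPDPoly \to \mathcal{E}$, where $\mathcal{E}$ is a suitable $\infty$-category (fibred over $\CAlg^{\an}$) of commutative squares of $p$-complete modules of the prescribed shape, and extending by sifted colimits, produces the required square functorially in $A \to B \in \Fun(\Delta^1, \CAlg^{\pd})$. The induced map on horizontal fibres agrees by construction with the Hodge projection $\Fil^1 \dR^{\pd}\to \gr^1 \dR^{\pd} \simeq L^{\pd}[-1]$ on $\ArrPDPoly$, hence equals $\pi^1$ by uniqueness of sifted-colimit-preserving extensions.

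The main technical obstacle is the $\infty$-categorical bookkeeping: defining $\mathcal{E}$ and the forgetful functor $\mathcal{E} \to \Fun(\Delta^1, \CAlg^{\pd})$ so that sifted colimits in $\mathcal{E}$ are computed level-wise in each $\Mod_A^{\wedge}$, and so that the "commutative square" and "fibre sequence" conditions are preserved under such colimits. This is routine once one uses stability of $\Mod_A^{\wedge}$ and the general fact that sifted colimits commute with finite limits (and in particular with fibres) in stable $\infty$-categories, following the Grothendieck-construction strategy of \Cref{prop:moduleeq}.
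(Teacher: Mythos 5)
Your proposal is correct and takes essentially the same approach as the paper: construct the square by hand on $\ArrPDPoly$ (where the explicit nullhomotopy you write down is the device for organizing the diagram, which the paper leaves implicit in "construct by hand, using that $d$ is the universal derivation"), and then extend under sifted colimits, with the identification of the induced map on fibres with $\pi^1$ following on generators and being preserved under the extension.
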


\begin{proof}
For $A\rightarrow B$ in $\ArrPDPoly$, we  construct this diagram by hand, using  that the differential $d\colon B\rightarrow \Omega^{1,\pd}_{B/A}$ in the cochain complex $\Omega^{\bullet,\pd}_{B/A}$ is the universal divided power derivation $B\rightarrow \Omega^{1,\pd}_{B/A}$.  We then extend under sifted colimits.
\end{proof}

The  de Rham complex 
in \Cref{deRhamaffine} admits a multiplicative refinement $$\dR^{\pd}:\Fun(\Delta^1,\CAlg^{\pd}) \rightarrow 
\CAlg^{\an} \CAlg^{\wedge \fil}_{\bZ_p}, $$
where $\CAlg^{\an} \CAlg^{\wedge \fil}_{\bZ_p} \subset \CAlg^{\an} \times_{\CAlg} \Fun(\Delta^1, \CAlg_{\bZ}^{\fil})$ denotes the 
 full subcategory spanned by pairs $(A,B)$ of an animated ring $A$ and a $p$-complete filtered $\EE_\infty$-$A$-algebra $B$.
We first construct this functor on objects in $\ArrPDPoly$ using the wedge product of forms, and then extend under sifted colimits as usual.

We will give a more refined treatment of multiplicative refinements in \cite{PDPeriod24}. All we need in this paper is the following result, which follows from \cite[Lemma 4.12]{Mao2021}:
\begin{proposition}\label{prop:refinement}
	The functor $\dR^{\pd}:\Fun(\Delta^1,\CAlg^{\pd}) \rightarrow 
	\CAlg^{\an} \CAlg^{\wedge \fil}_{\bZ_p}$ preserves small colimits.
\end{proposition}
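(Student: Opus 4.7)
The plan is to reduce the claim to preservation of finite coproducts on the generating subcategory $\ArrPDPoly$, and then verify this by an explicit Künneth-type computation at the level of the classical divided power de Rham complex.

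First, recall that a functor between presentable $\infty$-categories preserves small colimits if and only if it preserves finite coproducts and sifted colimits. Since $\dR^{\pd}$ is obtained as the sifted-colimit-preserving extension of its restriction to $\ArrPDPoly$, preservation of sifted colimits is automatic. Moreover, the equivalence $\cat{P}_\Sigma(\ArrPDPoly) \simeq \Fun(\Delta^1,\CAlg^{\pd})$ together with the fact that $\ArrPDPoly$ is closed under finite coproducts in $\Fun(\Delta^1,\CAlg^{\pd})$ shows that it suffices to check that the restriction of $\dR^{\pd}$ to $\ArrPDPoly$ preserves finite coproducts and sends the initial object to the initial object (and that the $p$-completion and Day-convolution on the target is compatible with sifted colimits in each variable).

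Second, I would check the case of the initial object directly: the object $(\bZ \xrightarrow{\id} \bZ)$ is initial in $\Fun(\Delta^1,\CAlg^{\pd})$ and $\Omega^{\bullet,\pd}_{\bZ/\bZ} = \bZ$ concentrated in filtration weight $0$, whose $p$-completion $\bZ_p$ is the initial object of $\CAlg^{\an}\CAlg^{\wedge\fil}_{\bZ_p}$. For binary coproducts, let $(P_1\rightarrow Q_1)$ and $(P_2\rightarrow Q_2)$ be objects in $\ArrPDPoly$ with coproduct $(P_1\otimes P_2 \rightarrow Q_1\otimes Q_2)$ in $\CAlg^{\pd}$. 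Using the explicit description of $\Omega^{1,\pd}_{Q_i/P_i}$ as free $Q_i$-modules on the differentials of the $X$ and $Y$ variables (\Cref{exa:pd-differentials-free}), together with the  behaviour of divided power polynomial rings under coproducts in $\CAlg^{\pd}$, one directly computes an isomorphism of filtered commutative dg-algebras
\[
\Omega^{\bullet,\pd}_{Q_1\otimes Q_2/P_1\otimes P_2} \;\cong\; \Omega^{\bullet,\pd}_{Q_1/P_1} \otimes_{\bZ} \Omega^{\bullet,\pd}_{Q_2/P_2}
\]
(with the usual Koszul sign convention and tensor-product filtration). Applying $p$-completion, and using that the symmetric monoidal structure $\otimes$ on $\Mod^{\wedge}_{\bZ}$ is defined precisely so that $(-)^{\wedge}$ is symmetric monoidal, we obtain the Künneth equivalence inside $\CAlg^{\an}\CAlg^{\wedge\fil}_{\bZ_p}$.

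Third, I would conclude by identifying this Künneth equivalence with the canonical comparison map into the coproduct. Coproducts in $\CAlg^{\an}\CAlg^{\wedge\fil}_{\bZ_p}$ are computed fibrewise: given $(A_i, B_i)$, the coproduct is $(A_1\otimes A_2,\, B_1 \otimes_{\bZ_p}^{\fil} B_2)$, where the second tensor is the $p$-completed Day-convolution product of filtered $\EE_\infty$-$\bZ_p$-algebras.

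The main obstacle will be the bookkeeping in the last identification: one must check that the Künneth formula above represents precisely the coproduct in the filtered $p$-complete $\EE_\infty$ setting, rather than only agreeing with it after forgetting the multiplicative or filtered structure. This is essentially a reformulation of the base change property (\Cref{prop:dR-base-change}), applied twice — once along $\bZ \rightarrow P_1\otimes P_2$ viewed as a pushout of $\bZ \rightarrow P_1$ against $\bZ\rightarrow P_2$ — and it is exactly this compatibility that is extracted from \cite[Lemma 4.12]{Mao2021}. Once the Künneth formula is known to compute the coproduct on compact projective generators, colimit-preservation of the resulting functor follows formally from the universal property of $\cat{P}_\Sigma(\ArrPDPoly)$.
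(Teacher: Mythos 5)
Your proof plan is correct, and — although the paper does not spell out an argument for this proposition, instead simply citing \cite[Lemma~4.12]{Mao2021} — your strategy is exactly the one the paper itself uses for the analogous statement about the divided power cotangent complex (\Cref{prop:cotangent-preserves-colimits}): reduce, via the equivalence $\cat{P}_\Sigma(\ArrPDPoly) \simeq \Fun(\Delta^1,\CAlg^{\pd})$, to verifying finite coproducts on the compact projective generators, and then do an explicit computation on objects of $\ArrPDPoly$ using the description of $\Omega^{1,\pd}$ from \Cref{exa:pd-differentials-free}. Your two-step reduction (sifted colimits are automatic by construction; finite coproducts suffice by Lurie's criterion for $\cat{P}_\Sigma$-extensions) is correct, your check for the initial object and the K\"unneth computation for $\Omega^{\bullet,\pd}_{Q_1\otimes Q_2/P_1\otimes P_2}$ are right (and unproblematic since divided power polynomial rings are free $\bZ$-modules, so the underived tensor of filtered dg-algebras computes the derived one), and you correctly identify the genuinely delicate point — matching your K\"unneth equivalence with the coproduct in the slightly awkwardly-presented target category $\CAlg^{\an}\CAlg^{\wedge\fil}_{\bZ_p}$, where one must push $B_1$ and $B_2$ forward along $A_i^\circ \to (A_1\otimes A_2)^\circ$ and check that the result collapses to $B_1\otimes_{\bZ_p}^{\fil,\wedge}B_2$. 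Your argument buys a self-contained proof; the paper's citation buys brevity and an appeal to a pre-existing general statement in \cite{Mao2021}. Both are legitimate; yours has the added benefit of making visible exactly where flatness over $\bZ$ and compatibility of $p$-completion with the symmetric monoidal structure enter.
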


\subsection{Global version}
Let $\cat{T}$ be a Grothendieck site. Then the divided power de Rham functor induces a functor
\[
\underline{\dR}^\pd\colon \Fun(\Delta^1,\Shv(\cat{T},\CAlg^{\pd})) \rightarrow
\Shv(\cat{T},\CAlg^\an\Mod).
\]
The image of $\underline{\dR}_{\cB/\cA}^\pd$ in $\Shv(\cat{T},\CAlg^\an)$ is $\cA$, and we can  write $\underline{\dR}_{\cB/\cA}^\pd = (\cA,\dR_{\cB/\cA}^\pd)$. 
\begin{notation}[$\dR^\pd_{X/A}$]\label{not:dR-X}
Let $A$ be an animated divided power ring. Given a 
 pair $X=(\cat{T},\cO_X)$ of a small Grothendieck site $\cat{T}$ and a sheaf $\cO_X$ in $\Shv(\cat{T},\CAlg^\pd_A)$, we set $\dR^\pd_{X/A} := \dR^\pd_{\cO_X/A}$. 
\end{notation}
Using \Cref{not:affinepdstructuresheaf}, we check that this  is consistent with \Cref{deRhamaffine}:
\begin{proposition}\label{affdeRhamchains}
Fix a map  $A \rightarrow B$ in $\CAlg^{\pd}$, 
and write \mbox{$q:\Spec(B) \rightarrow \Spec(A)$} for the induced map of affine derived schemes.	
Then $q_\ast \dR^{\pd}_{\cO_B/A}$ is the quasi-coherent $p$-complete filtered $\cO_A$-module  with  \mbox{global sections  $\dR^{\pd}_{B/A}$.}
\end{proposition}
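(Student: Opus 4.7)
My strategy will closely mirror the proof of \Cref{cotforaffinepd}, which itself mirrors \Cref{cotforaffine}. The key input is a localisation property for the divided power de Rham complex: for any $A\in \CAlg^{\pd}$ and $f\in \pi_0(A)$, I claim
\[ \dR^{\pd}_{A[f^{-1}]/A} \simeq A[f^{-1}] \]
as objects of $\Mod_A^{\wedge \fil}$, where the right-hand side is concentrated in filtration degree $0$. Indeed, $L^{\pd}_{A[f^{-1}]/A} \simeq 0$ by \Cref{localisation1}, so by \Cref{prop:hodge-graded} all graded pieces $\gr^n \dR^{\pd}_{A[f^{-1}]/A}$ with $n\geq 1$ vanish, while $\gr^0 \simeq A[f^{-1}]$; completeness of the Hodge filtration after $p$-completion then promotes this graded statement to the filtered statement.

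First I would use this localisation property, together with the cubical limit decomposition of $\cO^{ }_A(V)$ associated with a finite cover of an open $V\subset \Spec(A)$ by distinguished affines, to conclude that $\dR^{\pd}_{\cO^{ }_A(V)/A}\simeq \cO^{ }_A(V)$ in filtration $0$; writing $(q^{-1}\cO^{ }_A)(U)$ as a filtered colimit of such $\cO^{ }_A(V)$ and invoking sifted-colimit-preservation of $\underline{\dR}^{\pd}$ gives the same conclusion for $\dR^{\pd}_{(q^{-1}\cO^{ }_A)(U)/A}$. Together with \Cref{transitivity} (so that $L^{\pd}_{\cO^{ }_B(U)/A}\simeq L^{\pd}_{\cO^{ }_B(U)/(q^{-1}\cO^{ }_A)(U)}$) and \Cref{prop:hodge-graded}, this yields an equivalence $\dR^{\pd}_{\cO^{ }_B(U)/A} \simeq \dR^{\pd}_{\cO^{ }_B(U)/(q^{-1}\cO^{ }_A)(U)}$, reducing the computation to the fibres.

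Next, for any inclusion $U_f \subset U_g$ of distinguished affines in $\Spec(B)$, I would establish the base change compatibility
\[ \cO^{ }_B(U_f)\,\widehat{\otimes}_{\cO^{ }_B(U_g)} \dR^{\pd}_{\cO^{ }_B(U_g)/A} \xrightarrow{\ \simeq\ } \dR^{\pd}_{\cO^{ }_B(U_f)/A} \]
in $\Mod_A^{\wedge \fil}$. This uses that $B[f^{-1}]$ is the pushout in $\CAlg^{\pd}$ of $B[g^{-1}] \leftarrow \bZ[t] \rightarrow \bZ[t^{\pm 1}]$ (with $t\mapsto f/g$, trivial divided powers on the polynomial rings), and that $\dR^{\pd}_{B[g^{-1}]/A}$ carries the structure of a $B[g^{-1}]$-algebra via the multiplicative refinement of \Cref{prop:refinement}; combined with \Cref{prop:dR-base-change} applied to this pushout and the fact that $\dR^{\pd}_{\bZ[t^{\pm 1}]/\bZ[t]}$ is trivial (since $\bZ[t]\rightarrow \bZ[t^{\pm 1}]$ is \'etale), one obtains the claim.

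With these two ingredients in place, the restriction to the basis $\cat{U}_e$ of distinguished affines of the presheaf $U\mapsto \underline{\dR}^{\pd}_{\cO^{ }_B(U)/A}$ satisfies the sheaf condition \Cref{sheafesbases}(\ref{cond2}), so by \Cref{sheafifyingonbases} its sheafification agrees with its values on such basic opens. This proves
\[ \dR^{\pd}_{\cO^{ }_B/A}(U_f) \simeq \dR^{\pd}_{B[f^{-1}]/A} \]
for every distinguished affine $U_f\subset \Spec(B)$; in particular the global sections are $\dR^{\pd}_{B/A}$, and quasi-coherence of $q_\ast\dR^{\pd}_{\cO^{ }_B/A}$ follows from \Cref{affineqccrit}. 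The main obstacle in this argument is the base-change step, since \Cref{prop:dR-base-change} provides base change for the absolute de Rham complex, whereas what is ultimately required is a compatibility with tensor products over a varying base $\cO^{ }_B(U)$; bridging this gap is what forces us to invoke the multiplicative refinement of $\underline{\dR}^{\pd}$.
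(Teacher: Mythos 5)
Your overall scaffolding (mimic \Cref{cotforaffinepd}, localise, pass to fibres, verify the sheaf condition on distinguished affines, sheafify, conclude quasi-coherence) follows the same skeleton as the paper's argument, but two of your key steps contain genuine gaps.

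\textbf{The Hodge filtration is not complete after $p$-completion.} Your claim that the graded-piece computation ``promotes to the filtered statement'' by ``completeness of the Hodge filtration after $p$-completion'' is false, and the paper explicitly flags this: $p$-completion does \emph{not} complete the Hodge filtration (the introduction to \Cref{sec:derived-pd-de-rham} says exactly that the Hodge filtration ``need not be complete'' and that the $p$-complete theory is well-behaved because of Cartier, not because of Hodge completeness). So knowing $\gr^n \dR^{\pd}_{A[f^{-1}]/A}\simeq 0$ for $n\ge 1$ and $\gr^0\simeq A[f^{-1}]$ does not yet pin down $\Fil^0$. The correct replacement is \Cref{cotcriteriondR} (or its proof): to control $\Fil^0$ one reduces mod $p$ by $p$-completeness and uses the \emph{exhaustive} conjugate filtration, whose graded pieces are controlled by $L_{\overline{B}/\overline{A}}$. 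Applying \Cref{cotcriteriondR} with $A'=B=A[f^{-1}]$ (using $L^{\pd}_{A[f^{-1}]/A}\simeq 0$ and $L_{\overline{A[f^{-1}]}/\overline{A}}\simeq 0$) gives your localisation statement for free — so the claim is true, but your justification is not.

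\textbf{The base change step does not follow from \Cref{prop:dR-base-change} as you invoke it.} You want to prove an ``upper'' base change
$\cO_B(U_f)\,\widehat{\otimes}_{\cO_B(U_g)}\dR^{\pd}_{\cO_B(U_g)/A}\simeq \dR^{\pd}_{\cO_B(U_f)/A}$,
i.e.\ base change along the $B$-direction for $\dR^{\pd}_{-/A}$. But \Cref{prop:dR-base-change} only gives base change along the $A$-direction: for the pushout $\bZ[t]\to B[g^{-1}]$, $\bZ[t]\to\bZ[t^{\pm1}]$ with pushout $B[f^{-1}]$, it produces $\bZ[t^{\pm 1}]\otimes_{\bZ[t]}\dR^{\pd}_{B[g^{-1}]/\bZ[t]}\simeq\dR^{\pd}_{B[f^{-1}]/\bZ[t^{\pm1}]}$ — the tensor is over $\bZ[t]$, not over $B[g^{-1}]$, and the de Rham complexes are relative to $\bZ[t]$, not $A$. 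There is no obvious way to trade these for the $B[g^{-1}]$- and $A$-versions. The paper sidesteps the need for an explicit quasi-coherence statement over $\Spec(B)$: it verifies the sheaf condition on distinguished affines directly, splitting into (a) graded pieces of the Hodge filtration, handled by the already-known quasi-coherence of $\Lambda^n L^{\pd}$ (\Cref{cotforaffinepd} plus \cite[Prop. 25.2.3.1]{LurieSAG}), and (b) $\Fil^0$ modulo $p$, handled via the conjugate filtration and Mao's $\phi^\ast$ construction. If you want a direct étale base change for $\dR^{\pd}_{-/A}$ you would need a separate argument; the cleanest seems to be \Cref{cotcriteriondR} applied to $A\to B[g^{-1}]\to B[f^{-1}]$ to identify $\dR^{\pd}_{B[f^{-1}]/A}$ with $\dR^{\pd}_{B[f^{-1}]/B[g^{-1}]}$ composed with the localisation statement — but that again routes through the conjugate filtration, not Hodge completeness.

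Finally, for the quasi-coherence of $q_\ast\dR^{\pd}_{\cO_B/A}$ over $\Spec(A)$, you cite \Cref{affineqccrit} directly but that proposition concerns sheaves on $\Spec(B)$; the paper's last paragraph gives the needed computation $q_\ast\dR^{\pd}_{\cO_B/A}(U_g)\simeq\dR^{\pd}_{B/A}\otimes_A A[g^{-1}]$ using \Cref{prop:dR-base-change} applied to the pushout of $A\to B$ along $A\to A[g^{-1}]$ (this time the base-change direction does match), after which \Cref{affineqccrit} applies.
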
 
To prove this result, the following lemma will be useful:\vspace{-1pt}
\begin{lemma}[Cotangent criterion for de Rham complexes] \label{cotcriteriondR}
Fix    maps $A\rightarrow A' \rightarrow B$ in $\CAlg^{\pd}$.
If 	$L_{\overline{A'}/\overline{A}} \simeq 0 $ and
$L^{\pd}_{A'/A} \simeq  0 $, then   $ \dR_{B/A}   \xrightarrow{   }   \dR_{B/A'}  $
is   an equivalence. 
\end{lemma}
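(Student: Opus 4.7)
The plan is to check the map on graded pieces using the first hypothesis, and then promote this to an equivalence of underlying $p$-complete objects using the conjugate filtration modulo $p$, which is where the second hypothesis plays its role.

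First I will apply the transitivity sequence for the divided power cotangent complex (\Cref{transitivity}) to the composite $A \to A' \to B$, yielding the cofibre sequence
\[
B \otimes_{A'} L^{\pd}_{A'/A} \longto L^{\pd}_{B/A} \longto L^{\pd}_{B/A'}.
\]
The vanishing $L^{\pd}_{A'/A} \simeq 0$ then forces $L^{\pd}_{B/A} \xrightarrow{\simeq} L^{\pd}_{B/A'}$. By \Cref{prop:hodge-graded}, the natural map $\dR^{\pd}_{B/A} \to \dR^{\pd}_{B/A'}$ induces equivalences on every Hodge-graded piece $\gr^n \simeq \Lambda^n L^{\pd}_{B/(-)}[-n]$, with $\gr^0 \simeq B$ on both sides.

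Next I will upgrade this equivalence of Hodge-gradeds to an equivalence of the underlying $p$-complete objects. Since both sides are $p$-complete, it is enough to check after tensoring with $\bF_p$. Here I will invoke the conjugate filtration on $\dR^{\pd}_{B/A} \otimes_{\bZ} \bF_p$, extending Bhatt's construction in \cite{Bhatt2012} to the animated divided power setting via Mao~\cite{Mao2021}. This filtration is exhaustive and complete, with associated gradeds given by Frobenius twists of $\Lambda^n L_{\overline{B}/\overline{A}}[-n]$. Applying the ordinary transitivity sequence (\Cref{transitivity0}) to the chain $\overline{A} \to \overline{A'} \to \overline{B}$ and using $L_{\overline{A'}/\overline{A}} \simeq 0$ gives $L_{\overline{B}/\overline{A}} \simeq L_{\overline{B}/\overline{A'}}$. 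Hence the map is an equivalence on conjugate gradeds, and completeness of the conjugate filtration yields the equivalence modulo $p$. By $p$-completeness, the full statement follows.

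The main obstacle will be the careful setup and invocation of the conjugate filtration in the animated divided power setting: establishing its existence, completeness and exhaustiveness, and identifying its associated gradeds in terms of the reduced cotangent complex $L_{\overline{B}/\overline{A}}$. Once this is in place, the roles of the two hypotheses become transparent: the first controls the Hodge-graded behaviour (and thus the filtered object), while the second controls the conjugate-graded behaviour (and thus the underlying $p$-complete object), and together they force the full equivalence.
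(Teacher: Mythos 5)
Your proof follows the paper's argument exactly: PD transitivity plus \Cref{prop:hodge-graded} handle the Hodge graded pieces from $L^{\pd}_{A'/A} \simeq 0$, then the reduction mod $p$ and the conjugate filtration (with graded pieces functorial in $L_{\overline{B}/\overline{A}}$, controlled by $L_{\overline{A'}/\overline{A}} \simeq 0$) finish off $\Fil^0$. The only cosmetic difference is that you assert completeness of the conjugate filtration where exhaustiveness of an increasing filtration starting from zero already suffices, and the paper is deliberately vaguer ("a functor of $L_{\overline{B}/\overline{A}}$") about the exact form of the conjugate gradeds, but neither affects the argument.
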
 
\begin{proof} 
As $L^{\pd}_{A'/A}\simeq 0$,  the transitivity sequence \Cref{transitivity} and \Cref{prop:hodge-graded} show that the  map induces an equivalence on  graded pieces of the Hodge filtration.

It therefore suffices to prove that $\Fil^0 	\dR_{B/A}   \xrightarrow{   }  \Fil^0  \dR_{B/A'}  $ is an equivalence. As we work in a $p$-completed setting, it is by \Cref{prop:dR-base-change}
enough to check that  \[\Fil^0 \dR_{B\otimes_{ } \FF_p/A\otimes_{ } \FF_p}   \xrightarrow{   } \Fil^0   \dR_{B\otimes_{ } \FF_p/A'\otimes_{ } \FF_p}\] is an equivalence.
Domain and target admit compatible 
exhaustive  {conjugate filtrations} (\cite[3.4]{Mao2021}).
The associated graded pieces  of this filtration are a functor of the map $L_{\overline{B}/\overline{A} } \rightarrow L_{\overline{B}/\overline{A}'}$, which is an equivalence as 
 $L_{\overline{A'}/\overline{A}} \simeq 0 $ by assumption.
\end{proof}

\begin{proof}[Proof of \Cref{affdeRhamchains}]

We first check that  the
presheaf $U \mapsto \mathcal{F}(U) := 	   \mathbf{\dR}_{\cO_{B}(U) / A} $ already satisfies the sheaf property on 
distinguished affines $U_1, \ldots, U_n \subset \Spec(B)$, i.e.\  that for $U = \cup_{i=1}^n U_i$,  the following  map  is an equivalence: \begin{equation} \displaystyle \mathcal{F}\left(U \right) \rightarrow \lim_{\emptyset \neq S \subset \{1,\ldots,n \}} \mathcal{F}\left(\cap_{i \in S} U_i \right).\end{equation}
The equivalence on graded pieces of the Hodge filtration follows from 
\Cref{prop:hodge-graded} , Proposition 25.2.3.1 in  \cite{LurieSAG}, and \Cref{cotforaffinepd}.

By $p$-completeness, it is enough to check 
the equivalence for  $\mathcal{F}^0 \simeq \Fil^0\mathbf{\dR}_{\cO_{ B}(-) / A} $ after applying $\FF_p \otimes (-)$.	
Here, it follows by observing that for a distinguished affine open $U_f$, we can identify $\FF_p \otimes   \Fil^0\mathbf{\dR}_{\cO_{B}(U_f) / A}\simeq   \Fil^0\mathbf{\dR}_{ \FF_p \otimes  B[f^{-1}] / \FF_p \otimes A} $ with
$$   \phi^\ast_{A \twoheadrightarrow \overline{A}}(\FF_p \otimes  \overline{B}[f^{-1}]) \otimes_{\phi^\ast_{A \twoheadrightarrow \overline{A}}(\FF_p \otimes \overline{B})} \Fil^0 \dR_{ \FF_p \otimes  B / \FF_p \otimes A}, $$
where the maps 	$ \phi^\ast$ are defined as  in \cite[Section 4.3]{Mao2021}. Indeed, this is readily checked on associated gradeds of the (exhaustive) conjugate filtration.

\Cref{sheafifyingonbases} therefore shows that sheafification does not change the value on  {distinguished affines,} and we obtain an equivalence
$\dR_{\cO_B/A}(U_f) \simeq    \mathbf{\dR}_{B[f^{-1}] / A}$.

To prove that $q_\ast \dR_{\cO_B/A}$ is  quasi-coherent  on $\Spec(A)$, 
fix an element $g \in \pi_0(A)$ and write $\overline{g} \in \pi_0(B)$ for the image of $g$ under $\pi_0(A) \rightarrow \pi_0(B)$.  
Using \Cref{prop:dR-base-change} and the equivalence  $\cO_{B}(  U_{\overline{g}})  \simeq  B[\overline{g}^{-1}]   \simeq   B \otimes_A A[g^{-1}]$,  we compute
\[ q_\ast \dR_{\cO_B/A} (U_g) \simeq 
\dR_{\cO_B/A} (U_{\overline{g}}) \simeq 
\dR_{ B \otimes_A A[{g}^{-1}]/A}  
\simeq  \dR_{ B/A} \otimes_A A[{g}^{-1}].
\]
Quasi-coherence  then follows from \Cref{affineqccrit}. 
\end{proof}

\subsection{Comparison with de Rham and crystalline cohomology}
We begin by recalling the following result (see  \cite[Corollary 3.10]{Bhatt2012}): 

\begin{proposition}[Bhatt]\label{prop:dR-comparison-bhatt}
If $A\rightarrow B$ is a smooth map of discrete commutative
$\bZ/p^n$-algebras, then $\dR_{B/A}^\pd \simeq \dR_{B/A} \simeq \Omega^\bullet_{B/A}$ in $\Mod_A^{\wedge \fil}$. 
\end{proposition}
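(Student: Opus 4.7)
The first equivalence $\dR^{\pd}_{B/A} \simeq \dR_{B/A}$ is immediate from \Cref{prop:non-pd-dR-complex}: since $A$ and $B$ are discrete commutative rings, we view them as animated divided power rings with trivial divided power structure (via the embedding $\CAlg^{\an}\injto \CAlg^{\pd}$), and the two variants of the Hodge-filtered derived de Rham complex coincide. This leaves the substantive comparison $\dR_{B/A}\simeq \Omega^\bullet_{B/A}$ with the classical Hodge filtration.

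To construct the comparison map, I would pick a simplicial resolution $P_\bullet\rightarrow B$ of $B$ by polynomial $A$-algebras. By \Cref{deRhamaffine}, $\dR_{B/A}$ is computed (after $p$-completion) as the geometric realisation $|\Omega^{\bullet}_{P_\bullet/A}|$ of the simplicial filtered complex of K\"ahler de Rham complexes, and the augmentation $P_\bullet\rightarrow B$ yields a natural filtered map $|\Omega^{\bullet}_{P_\bullet/A}|\rightarrow \Omega^\bullet_{B/A}$. Note that $p$-completion is a no-op here because $p^n=0$ in $A$. The multiplicative refinement of \Cref{prop:refinement} ensures this is a map in $\Mod_A^{\wedge\fil}$.

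Second, I would verify the comparison map is an equivalence on associated gradeds of the Hodge filtration. By \Cref{prop:hodge-graded} we have $\gr^n\dR_{B/A}\simeq \Lambda^n(L_{B/A})[-n]$. Since $A\rightarrow B$ is smooth, the cotangent complex $L_{B/A}$ is concentrated in degree $0$ and finite projective, with $H^0(L_{B/A})\cong \Omega^1_{B/A}$; hence the derived exterior power $\Lambda^n L_{B/A}$ coincides with the classical one $\Omega^n_{B/A}$. On the classical side, $\gr^n\Omega^\bullet_{B/A}=\Omega^n_{B/A}[-n]$ for the stupid filtration, and the comparison map identifies these graded pieces by construction.

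Finally, I would upgrade the equivalence on gradeds to an equivalence of filtered objects. Since smoothness is local on $\Spec(B)$ and both functors are sheafy on the Zariski topology (their graded pieces are quasi-coherent), we may reduce to the case where $B$ has bounded relative dimension $d$ over $A$; equivalently, by smoothness, we may assume $B$ is \'etale over a polynomial algebra $A[x_1,\ldots,x_d]$. Then $\Omega^n_{B/A}=0$ for $n>d$, so both Hodge filtrations are finite, hence complete, and an equivalence on gradeds forces an equivalence on the filtered objects. The main obstacle is ensuring that the comparison map is genuinely filtered---this reduces to the compatibility on polynomial $A$-algebras, where both the derived and the classical de Rham constructions tautologically agree with the K\"ahler de Rham complex equipped with the stupid filtration, and this compatibility is preserved under sifted colimits.
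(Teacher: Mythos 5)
Note first that the paper does not prove this statement: it quotes it from Bhatt, deferring to \cite[Corollary 3.10]{Bhatt2012}, so there is no in-paper argument to compare against. Your proposed proof, however, has a genuine gap.

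Your set-up is fine: the identification $\dR^\pd_{B/A}\simeq\dR_{B/A}$ is indeed immediate from \Cref{prop:non-pd-dR-complex}, and by \Cref{prop:hodge-graded} together with smoothness of $A\to B$ (so that $L_{B/A}\simeq\Omega^1_{B/A}$ is finite projective in degree $0$), the comparison map $\dR_{B/A}\to\Omega^\bullet_{B/A}$ does induce an equivalence on each $\gr^m$. The problem is your last sentence: from $\gr^m\dR_{B/A}\simeq 0$ for $m>d$ you conclude that the Hodge filtration on $\dR_{B/A}$ is ``finite, hence complete.'' This does not follow. Vanishing of the higher graded pieces only says the tower stabilises, $\Fil^d\dR_{B/A}\simeq\Fil^{d+1}\dR_{B/A}\simeq\cdots$; it does \emph{not} force $\Fil^{d+1}\dR_{B/A}\simeq 0$, which is what completeness requires. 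An equivalence on associated gradeds of filtered objects upgrades to a filtered equivalence only once both filtrations are known to be complete, and the Hodge-completeness of $\dR_{B/A}$ for smooth $A\to B$ over a $p$-nilpotent base is exactly the nontrivial content of Bhatt's Corollary~3.10 --- it fails without the smoothness hypothesis, and it fails without $p$-nilpotence of the base. Bhatt supplies this input via the conjugate filtration (the derived Cartier isomorphism): after reducing mod~$p$, one obtains an increasing, exhaustive filtration on $\dR_{B/A}$ whose graded pieces are shifted differentials of the Frobenius twist of $B$ and vanish above degree $d$. This shows $\dR_{B/A}$ is bounded (first mod $p$, then over $\bZ/p^n$ by nilpotence), yields the unfiltered comparison $\dR_{B/A}\simeq\Omega^\bullet_{B/A}$, and only from that does one deduce $\Fil^{d+1}\dR_{B/A}\simeq 0$. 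Without the conjugate filtration, the crystalline comparison, or some equivalent input, your argument cannot close on its own.
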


Next, we state a remarkable `crystalline' property of the functor $\dR$, due to Mao. Recall from \Cref{not:I-and-Q} that for any animated divided power ring $B$, we have an animated ring $\overline{B}$. Interpreting $\overline{B}$ as an object of $\CAlg^{\pd}$ (equipped with the trivial divided power ideal), we obtain a map $B\rightarrow \overline{B}$ in $\CAlg^{\pd}$. This defines a functor
\[
\Fun(\Delta^{1},\CAlg^{\pd}) \rightarrow \Fun(\Delta^{1},\CAlg^{\pd}),\,
(A\rightarrow B) \mapsto (A\rightarrow \overline{B}).
\]
More formally, this can be defined as the sifted-colimit-preserving extension of the functor that maps $(A,I,\gamma) \rightarrow (B,J,\delta)$ in $\ArrPDPoly$ to $(A,I,\gamma) \rightarrow (B/J,0,0)$.
There is a natural map $(A\rightarrow B) \rightarrow (A\rightarrow \overline{B})$, functorial in $A\rightarrow B$.

\begin{theorem}[{\cite[4.16]{Mao2021}}]\label{thm:crystalline-invariance-dR}
For every map $A\rightarrow B$ in $\CAlg^{\pd}$, the induced map
\[
\dR_{B/A}^\pd \rightarrow   \dR_{\overline{B}/A}^\pd
\]
is an equivalence in $\Mod_A^{\wedge}$.   
\end{theorem}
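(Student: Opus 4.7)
The plan is to reduce modulo $p$ and then exploit the conjugate filtration on the $p$-completed divided power de Rham complex, together with the idempotence $\overline{\overline{B}}\simeq \overline{B}$ from \Cref{prop:idempotent}.

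First, since both $\dR^\pd_{B/A}$ and $\dR^\pd_{\overline{B}/A}$ are $p$-complete $A$-modules, derived Nakayama lets us verify the map is an equivalence after base change to $\FF_p$. Applying \Cref{prop:dR-base-change} identifies $\FF_p\otimes\dR^\pd_{B/A}\simeq \dR^\pd_{\FF_p\otimes B/\FF_p\otimes A}$ and similarly for $\overline{B}$, reducing us to the case where $A\rightarrow B$ lives over $\FF_p$.

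In this characteristic $p$ setting, the divided power de Rham complex carries an exhaustive and complete conjugate filtration (the same filtration invoked in the proof of \Cref{cotcriteriondR}, developed systematically in Section 3.4 of \cite{Mao2021}). Its $n$-th associated graded is a Frobenius-twisted shift of $\Lambda^n L_{\overline{B}/\overline{A}}$, via a divided power analogue of the Cartier isomorphism; crucially, the graded pieces depend only on the underlying reduced map $\overline{A}\rightarrow \overline{B}$, not on the divided power structure of $B$ itself or on $L^\pd_{B/A}$. Since the canonical map $B\rightarrow \overline{B}$ in $\CAlg^\pd$ induces an equivalence $\overline{B}\xrightarrow{\simeq}\overline{\overline{B}}$ by idempotence of $\overline{(-)}$, the induced map on each conjugate graded is an equivalence. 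Exhaustiveness and completeness of the filtration then yield the desired equivalence in $\Mod_A^\wedge$.

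The principal obstacle is setting up the conjugate filtration in the full generality of animated PD rings and verifying the divided power Cartier identification of its gradeds; this is essentially the substantive technical content of Mao's original proof. An alternative route would be to reduce to compact projective generators $A\rightarrow B$ in $\ArrPDPoly$ (using that both $(A\rightarrow B)\mapsto \dR^\pd_{B/A}$ and $(A\rightarrow B)\mapsto \dR^\pd_{\overline{B}/A}$ preserve sifted colimits, the latter via the colimit-preservation of $\overline{(-)}$) and then argue directly via the classical divided power Poincar\'e lemma of Illusie on explicit free divided power algebras; however, the bookkeeping for this reduction is roughly comparable in effort to the conjugate filtration approach, which is cleaner because the required invariance under $B\mapsto \overline{B}$ becomes manifest at the level of associated gradeds.
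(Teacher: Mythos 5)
The paper does not give its own proof of this statement; it is imported verbatim from \cite[4.16]{Mao2021}, and your sketch reproduces precisely Mao's strategy (reduce mod $p$ by derived Nakayama and base change, then use the conjugate filtration whose associated gradeds are functors of $L_{\overline{B}/\overline{A}}$ alone, so idempotence of $\overline{(-)}$ gives equivalences on gradeds — the same device the present paper invokes in proving \Cref{cotcriteriondR}). One small terminological slip: the conjugate filtration is an increasing, exhaustive, bounded-below filtration, so the conclusion needs only exhaustiveness together with $\Fil_{-1}\simeq 0$, not ``completeness'' (which would be the relevant condition for a decreasing filtration), but this does not affect the validity of the argument.
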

\begin{warning}
  \Cref{thm:crystalline-invariance-dR} is a statement about the unfiltered de Rham complex, and fails if we take Hodge  filtrations into account.
\end{warning}

Now let $k$ be a perfect field of characteristic~$p$, and denote by $W$ the ring of Witt vectors of $k$. Equip $W$ with the unique divided power structure on the ideal $pW$. 

\begin{definition}[Derived crystalline cohomology, {\cite[Definition 4.17]{Mao2021}}]
\emph{Derived crystalline cohomology} is the functor
$
\rR\Gamma_{\cris}(-/W)\colon \CAlg^{\an}_{k} \rightarrow \Mod_W^{\wedge}
$ 
defined as  
\[
\CAlg^{\an}_{k} \injto \CAlg^{\pd}_{k} \longto
\CAlg^{\pd}_{W} \xrightarrow{\ \dR^\pd\ }\Mod_W^{\wedge},
\]
where the middle map is obtained by precomposing with $W\rightarrow k$.
\end{definition}

We then have the following comparison  (see
\cite[Proposition 4.87, 4.90]{Mao2021}):

\begin{proposition}\label{prop:crystalline-comparison}
The restriction of $\rR\Gamma_{\cris}(-/W)$ to smooth $k$-algebras is isomorphic to the crystalline cohomology functor of Berthelot  \cite{Berthelot}. 
\end{proposition}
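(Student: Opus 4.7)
The plan is to reduce the derived statement to Berthelot's classical comparison: for a smooth $k$-algebra $A_0$, choose a smooth $W$-lift $A$, show that $\dR^\pd_{A_0/W}$ equals the $p$-adically completed classical de Rham complex of $A$, and then invoke Berthelot's theorem identifying the latter with crystalline cohomology. The two main ingredients to bridge the derived and classical sides are Theorem~\ref{thm:crystalline-invariance-dR} (crystalline invariance), which lets us replace $A_0$ by the lift $A$ inside $\dR^\pd$, and Proposition~\ref{prop:dR-comparison-bhatt} (Bhatt), which identifies $\dR^\pd_{A/W}$ with the honest de Rham complex modulo~$p^n$.

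\textbf{Constructing the lift.} Assume first that $A_0$ is affine smooth of finite type over $k$ (the case relevant for Berthelot's theory). The obstruction to lifting $A_0$ along $W/p^{n+1}\twoheadrightarrow W/p^n$ lies in $H^2(\Spec A_0, T_{A_0/k})$, which vanishes as $A_0$ is affine, so inductively we produce a compatible system $\{A_n\}$ with $A_n$ smooth over $W/p^n$ and $A_n/p \simeq A_0$. Setting $A := \lim_n A_n$ yields a $p$-complete, formally smooth, $p$-torsion-free $W$-algebra with $A/pA \simeq A_0$. The $p$-torsion-freeness ensures that the divided powers on $pW$ extend uniquely to divided powers $\gamma_n(pa) = p^n a^n/n!$ on $pA$, so $(A,pA,\gamma)$ is a divided power ring receiving a PD-morphism from $(W,pW,\gamma)$. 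Under the identification of \Cref{not:I-and-Q}, we have $\overline{A}=A_0$, and the induced map $A\rightarrow \overline{A}$ in $\CAlg^\pd$ is the projection.

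\textbf{Applying the two comparison theorems.} By \Cref{thm:crystalline-invariance-dR}, the map
\[
\dR^\pd_{A/W}\longto \dR^\pd_{\overline{A}/W} = \dR^\pd_{A_0/W}
\]
is an equivalence in $\Mod_W^\wedge$. On the other hand, each reduction $W/p^n \to A/p^n$ is smooth in the classical sense and inherits a compatible divided power structure; applying \Cref{prop:dR-comparison-bhatt} to each level and passing to the inverse limit (using that both sides are $p$-complete) yields
\[
\dR^\pd_{A/W} \;\simeq\; \lim_n \Omega^\bullet_{(A/p^n)/(W/p^n)} \;\simeq\; \widehat{\Omega^\bullet_{A/W}},
\]
the $p$-adic completion of the classical de Rham complex of the smooth lift $A$.

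\textbf{Identification with Berthelot.} Berthelot's own comparison theorem asserts that for a smooth affine $A_0/k$ with smooth formal lift $A/W$, crystalline cohomology $\rR\Gamma_\cris^{\mathrm{Ber}}(A_0/W)$ is computed by $\widehat{\Omega^\bullet_{A/W}}$; composing with our equivalence furnishes the desired natural isomorphism $\rR\Gamma_\cris(A_0/W) \simeq \rR\Gamma_\cris^{\mathrm{Ber}}(A_0/W)$. The main subtlety I expect to face is naturality: one must verify that the composite equivalence does not depend on the chosen lift $A$, which follows because any two lifts are related by morphisms inducing the identity on $A_0$ and both sides are functorial in $A_0$ alone (the invariance under change of lift being precisely the content one typically deduces from \Cref{thm:crystalline-invariance-dR}). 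A secondary, purely bookkeeping, task is to extend from affine smooth algebras to general smooth $k$-algebras by a Čech argument, using \Cref{prop:refinement} so that derived divided power de Rham cohomology glues correctly along open covers.
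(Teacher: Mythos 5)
The paper does not actually prove this proposition --- it cites Mao (\cite[Prop.~4.87, 4.90]{Mao2021}) and moves on --- so your job was to reconstruct the argument, and your overall strategy (lift $A_0$ smoothly to $A/W$, use Theorem~\ref{thm:crystalline-invariance-dR} to identify $\rR\Gamma_\cris(A_0/W)=\dR^\pd_{A_0/W}$ with $\dR^\pd_{A/W}$, then recognize the latter as Berthelot's complex) is the right one. However, there is a genuine gap at the step where you pass from $\dR^\pd_{A/W}$ to $\widehat{\Omega^\bullet_{A/W}}$.

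When you reduce mod $p^n$ and invoke Proposition~\ref{prop:dR-comparison-bhatt}, you are implicitly discarding the nontrivial divided power ideals on $W$ and $A$. Proposition~\ref{prop:dR-comparison-bhatt} is stated for a map of \emph{animated rings}, i.e., objects of $\CAlg^{\an}$ carried into $\CAlg^\pd$ with the trivial PD ideal; it does not directly compute $\dR^\pd_{B/A}$ when $A$ and $B$ carry the divided power ideals $pW/p^nW$ and $pA/p^nA$ you describe. Your phrase ``inherits a compatible divided power structure'' is where the mismatch hides: the complex you want to apply Bhatt's result to is $\dR^\pd_{A/W}$ with PD ideals $(pA,pW)$, but Proposition~\ref{prop:dR-comparison-bhatt} speaks only about the $(0,0)$ case. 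The missing tool is precisely Corollary~\ref{cor:dR-induced-PD-structure}: because your PD structure on $pA$ is the one \emph{induced} from the flat map $W\to A$ (so $A = \ind_W(\und A)$ in $\CAlg^\pd$), it gives a natural equivalence
\[
\dR^\pd_{\und(A)/W_\triv} \xrightarrow{\ \simeq\ } \dR^\pd_{A/W}
\]
in $\Mod_W^{\wedge\fil}$. The left-hand side is the $p$-completed de Rham complex of a map of animated rings, which by Proposition~\ref{prop:non-pd-dR-complex} is Bhatt's functor $\dR_{A/W}$. At that point your reduction mod $p^n$, Proposition~\ref{prop:dR-comparison-bhatt}, and the limit argument apply without any divided-power bookkeeping (the base change $\dR_{A/W}\otimes_W W/p^n \simeq \dR_{(A/p^n)/(W/p^n)}$ is now an ordinary $\CAlg^\an$ base change, cf.\ Proposition~\ref{prop:dR-base-change}).

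Two smaller remarks. First, the \v Cech step at the end is a red herring: the proposition concerns smooth $k$-\emph{algebras}, which are affine; the globalization to smooth separated $k$-schemes is done separately in Corollary~\ref{cor:crystalline-comparison} and is not part of this proposition. Second, the lift $A=\lim_n A_n$ you construct is a $p$-adically complete formally smooth $W$-algebra rather than a smooth one in the usual (finite-type) sense; this is harmless because everything is $p$-completed, but it is worth saying explicitly so that the reader does not misapply the word ``smooth'' to the lift itself.
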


We can globalise this result:
\begin{proposition}\label{cor:crystalline-comparison}
If $X/k$ is a smooth and separated scheme over $k$, then $\RGamma(X,\dR_{X/W}^\pd)$ is quasi-isomorphic to Berthelot's  \cite{Berthelot} crystalline cohomology $\RGamma_{\cris}(X/W)$.
\end{proposition}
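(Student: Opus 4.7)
The plan is to reduce the comparison to the affine case via Zariski descent, using separatedness to guarantee that intersections of affines remain affine, and then to invoke the affine crystalline comparison.

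First, I would choose a finite affine open cover $X = \bigcup_{i \in I} U_i$ with $U_i = \Spec(B_i)$. Since $X$ is separated over $k$, every finite intersection $U_S = \bigcap_{i \in S} U_i$ is again an affine open of the form $\Spec(B_S)$. By \Cref{affdeRhamchains}, the restriction of the sheaf $\dR^\pd_{X/W}$ to any such $U_S$ is the quasi-coherent sheaf with global sections $\dR^\pd_{B_S/W}$. Since $\dR^\pd_{X/W}$ is an $\mathbf{N}^{\op}$-indexed diagram of quasi-coherent $\cO_X$-modules (each $\Fil^n \dR^\pd_{X/W}$ being quasi-coherent by \Cref{affdeRhamchains} together with the base change \Cref{prop:dR-base-change}), and $X$ is separated, I can compute $\RGamma(X,\dR^\pd_{X/W})$ via the \v{C}ech totalisation
\[
\RGamma(X,\dR^\pd_{X/W}) \simeq \rightarrowt\Bigl( \, [S \subseteq I] \mapsto \dR^\pd_{B_S/W} \, \Bigr).
\]

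Next, on each affine $U_S$, I would identify $\dR^\pd_{B_S/W}$ with Berthelot's crystalline cohomology. Since $B_S$ is smooth over $k$, combining \Cref{prop:dR-comparison-bhatt} (which identifies $\dR^\pd_{B/A}$ with the ordinary de Rham complex for smooth maps) with \Cref{prop:crystalline-comparison} yields a natural equivalence $\dR^\pd_{B_S/W} \simeq \RGamma_{\cris}(\Spec(B_S)/W)$. The key point is that this identification is functorial in $B_S$ (functoriality is built into the construction of $\dR^\pd$ as a sifted-colimit-preserving functor and into Berthelot's crystalline functor), so the comparison extends to a morphism between the two \v{C}ech diagrams indexed by subsets $S \subseteq I$.

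Finally, I would invoke Zariski descent for crystalline cohomology (which holds because the crystalline topos admits a well-behaved restriction along Zariski opens and $\RGamma_{\cris}(-/W)$ takes \v{C}ech covers of smooth separated schemes to limit diagrams, see \cite[V.3.5]{Berthelot}) to write
\[
\RGamma_{\cris}(X/W) \simeq \rightarrowt\Bigl( \, [S \subseteq I] \mapsto \RGamma_{\cris}(\Spec(B_S)/W) \, \Bigr).
\]
Passing to totalisations in the two \v{C}ech diagrams then produces the desired quasi-isomorphism. The main obstacle is verifying that the pointwise equivalence of \Cref{prop:crystalline-comparison} genuinely upgrades to an equivalence of functors on the category of smooth affine $k$-schemes with flat maps between them; this is where the care taken in \cite[\S 4]{Mao2021} in constructing the comparison morphism pays off, as the equivalence is exhibited by a natural transformation (not merely an abstract isomorphism), and so commutes with the restriction maps appearing in the \v{C}ech complex.
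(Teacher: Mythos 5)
Your proposal is correct and takes a genuinely different route from the paper's own proof, so let me compare the two. The paper introduces the affine crystalline site $\Cris(X/W)$ and the right Kan extension $u_{X/S\ast}$ along its projection to the Zariski site, then shows (using \Cref{affdeRhamchains} and the crystalline invariance \Cref{thm:crystalline-invariance-dR}) that the image of the $\dR^\pd$-sheaf under $u_{X/S\ast}$ is the Zariski sheaf $U \mapsto \dR^\pd_{\cO_X(U)/W}$, which coincides with Berthelot's crystalline cohomology on affines by \Cref{prop:crystalline-comparison}. This gives a single map of Zariski sheaves, and separatedness is invoked at the very last step. You instead bypass the crystalline site entirely and go via a finite affine \v{C}ech nerve: you compute both $\RGamma(X,\dR^\pd_{X/W})$ and $\RGamma_\cris(X/W)$ as totalisations over intersections (all affine, by separatedness), compare term by term using \Cref{prop:crystalline-comparison}, and then pass to the limit. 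This is more elementary and concrete, and relies on the same two essential inputs, so it is a valid alternative. What the paper's route buys is (a) avoiding having to invoke Zariski descent for crystalline cohomology as a separate black box, and (b) as the remark after the proposition hints, an approach that generalises to the non-separated and quasi-syntomic settings, where a na\"ive \v{C}ech argument would break down.

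Two small inaccuracies in your write-up are worth flagging, though neither invalidates the argument. First, the filtration pieces $\Fil^n\dR^\pd_{X/W}$ are \emph{not} $\cO_X$-modules (only the graded pieces $\gr^n \simeq \Lambda^n L^\pd_{X/W}[-n]$ are); they are sheaves of $p$-complete $W$-modules. You do not actually need quasi-coherence over $\cO_X$: \v{C}ech descent for $\RGamma(X,-)$ holds for any sheaf on $X$ once you know its values on the affine opens $U_S$, and that is exactly what \Cref{affdeRhamchains} supplies. Second, you should add quasi-compactness (or, equivalently, finiteness of the cover) as a working hypothesis; the stated proposition does not assume it. For the Calabi--Yau applications in the paper $X$ is proper so this is automatic, but for general smooth separated $X$ you would need a mild extra limit argument, whereas the paper's sheaf-theoretic proof handles this without modification. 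Finally, your concern about functoriality of \Cref{prop:crystalline-comparison} is the right one to have, and it is indeed addressed in \cite[\S 4]{Mao2021}: the comparison there is an isomorphism of functors on smooth affines, not merely a pointwise identification, which is what makes the \v{C}ech diagram comparison legitimate.
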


\begin{proof}
The affine crystalline site $\Cris(X/W)$ of $X$ has objects
\begin{equation}\label{crisdiag}
	(U/S) = \left( \begin{tikzcd}
		\Spec(\overline{A}) \arrow{r} \arrow[hookrightarrow]{d}    & \Spec(A) \arrow{dd} \\
		X  \arrow{d}  & \\
		\Spec(k) \arrow[hookrightarrow]{r} & \Spec(W) 
	\end{tikzcd}\right).
\end{equation}
Here  $(A,I,\gamma)$ is a divided power ring under $W$ with  $\overline{A} = A/I$, the underlying map of topological spaces of 
$\Spec(\overline{A}) \rightarrow \Spec(A)$ is a homeomorphism, and 
$  \Spec(\overline{A}) \subset X$ is a Zariski open subscheme.
The $0^{th}$ piece of the Hodge filtration  gives  a map
\begin{equation}\label{eq:trunc} (U/S) \ \ \mapsto \ \   (\dR_{A/W}^\pd \rightarrow A ) \end{equation}
of $\Mod_W^{\wedge}$-valued  sheaves on $\Cris(X/W)$.

We have a natural map $u_{X/S}: \Cris(X/S) \rightarrow \Zar(X)$ to the affine Zariski site, sending the diagram $(U/S)$ as in \eqref{crisdiag} to $\Spec(\overline{A})
\subset  X$.  
Right Kan extending along   $u_{X/S}$ gives a
functor 
$u_{X/S\ast}$ from crystalline to Zariski sheaves. It sends    $\mathcal{F} $ to the sheaf
$ U \mapsto \lim_{V/S \in \Cris(U/S)^{\op}} \mathcal{F}(V)$.

By \Cref{affdeRhamchains} and  \Cref{thm:crystalline-invariance-dR}, applying  $u_{X/S\ast}$ to \eqref{eq:trunc} gives a map of sheaves on the affine Zariski site sending $ U\subset X$ to  
$
\dR_{\cO_X(U)/W}^\pd  \xrightarrow{\simeq } \RGamma_{\cris}(U/W),
$ which is an equivalence by  \Cref{prop:crystalline-comparison}.
The result follows by separability. 
\end{proof}

\begin{remark}
This result in fact holds without the separability assumption, and  for locally quasi-syntomic morphisms of finite type; we refer to \cite{PDPeriod24} for   details.
\end{remark}

\section{The period map}
\label{sec:period-map} 
In this section, we will use deformations of  the Hodge filtration to 
construct a  period map, and moreover compute its effect on tangent fibres. This will play a crucial role in the proof of  \Cref{mainthm:btt}, where it will enable us to   `transport' 
divided power unobstructedness from one formal moduli problem to another.

\subsection{Divided power formal moduli problems}

Let $\Lambda$ be a local Noetherian ring with residue field $k$ and a  chosen divided power structure on \mbox{its augmentation ideal.}

\begin{definition}[Artinian divided power algebras]
We call $A\in \CAlg^{\pd}_\Lambda$ \mbox{ \emph{Artinian} if}
\begin{enumerate}
	\item the map $k \cong \overline{\Lambda} \rightarrow \overline{A}$ is an equivalence;
	\item the underlying object in $\CAlg^{\an}_{\Lambda//k}$ is Artinian (see \Cref{def:artinian-animated-ring}).
\end{enumerate}
Let  $\CAlg^{\pd,\art}_\Lambda \subset \CAlg^\pd_\Lambda$ be the full subcategory spanned by the Artinian objects 
\end{definition}

Informally, an object of $\CAlg^\pd_{\Lambda}$ is an Artinian animated   $\Lambda$-algebra with 
residue field $k$, equipped with a divided power structure on its augmentation ideal. 

\begin{definition}\label{def:pd-fmp}
A functor $F\colon \CAlg_\Lambda^{\pd,\art} \rightarrow \cat{S}$ is a \emph{divided power formal moduli problem} if it satisfies:
\begin{enumerate}
	\item $F(k)$ is contractible;
	\item for every pullback square  		
	\[
	\begin{tikzcd}
		A' \arrow{r}{} \arrow{d}  \arrow[rd, phantom, "\lrcorner",  at start]  &A \arrow{d} \\
		B'  \arrow{r} & B
	\end{tikzcd}
	\]
	in $\CAlg^{\pd, \art}_W$ for which the maps $\pi_0 A \rightarrow \pi_0 B$ and $\pi_0 B' \rightarrow \pi_0 B$ are surjective, applying $F$ gives a pullback square in $\cat{S}$.
\end{enumerate}
Write $ \Moduli^\pd_\Lambda \subset\Fun(\CAlg_\Lambda^{\pd,\art},\cat{S}) $ for the full subcategory of divided power formal moduli problems.
\end{definition}

Since the functor $\und\colon \CAlg^\pd \rightarrow \CAlg^\an$ preserves limits, we have:

\begin{lemma}\label{lemma:restriction-of-fmp-to-pd-rings}
If $F\colon \CAlg_\Lambda^{\an,\art}\rightarrow \cat{S}$ is a formal moduli problem, then 
\[
\CAlg_{\Lambda}^{\pd,\art} \rightarrow \CAlg_\Lambda^{\an,\art} \overset{F}\rightarrow \cat{S}
\]
is a divided power formal moduli problem.  
\end{lemma}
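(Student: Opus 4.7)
The plan is to verify the two axioms of \Cref{def:pd-fmp} directly by reducing to the corresponding axioms for $F$ on $\CAlg^{\an,\art}_\Lambda$, using that the forgetful functor $\und\colon \CAlg^{\pd}\rightarrow \CAlg^{\an}$ preserves all small limits (\Cref{prop:underlying-functor}).

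First, I would check that the composite is well-defined, i.e.\ that $\und$ sends $\CAlg^{\pd,\art}_\Lambda$ into $\CAlg^{\an,\art}_\Lambda$; this is immediate from condition (2) in the definition of Artinian pd-algebra. The first axiom $F(k)\simeq \ast$ is then clear, since $\und$ carries the trivial divided power structure on $k$ to $k\in \CAlg^{\an,\art}_\Lambda$.

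For the second axiom, fix a pullback square
\[
\begin{tikzcd}
A' \arrow{r} \arrow{d} \arrow[rd, phantom, "\lrcorner", at start] & A \arrow{d} \\
B' \arrow{r} & B
\end{tikzcd}
\]
in $\CAlg^{\pd,\art}_\Lambda$ such that $\pi_0 A \rightarrow \pi_0 B$ and $\pi_0 B' \rightarrow \pi_0 B$ are surjective. Since $\CAlg^{\pd,\art}_\Lambda\subset \CAlg^{\pd}_\Lambda$ is a full subcategory that is evidently stable under pullbacks (all four objects lie in $\CAlg^{\pd,\art}_\Lambda$ by assumption), this is also a pullback square in $\CAlg^{\pd}_\Lambda$, and hence in $\CAlg^{\pd}$. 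Applying $\und$, which preserves limits, yields a pullback square in $\CAlg^{\an}$ and hence in $\CAlg^{\an,\art}_\Lambda$. Moreover, the $\pi_0$'s of $\und(A), \und(B), \und(A'), \und(B')$ agree tautologically with $\pi_0 A, \pi_0 B, \pi_0 A', \pi_0 B'$, so the required surjectivity hypotheses hold for the underlying square of animated rings. Invoking the formal moduli problem axiom for $F$ produces the desired pullback square in $\cat{S}$.

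There is no serious obstacle; the only subtlety worth double-checking is that the inclusion $\CAlg^{\pd,\art}_\Lambda \hookrightarrow \CAlg^{\pd}_\Lambda$ creates the pullback, which in turn rests on the trivial observation that the four pd-algebras in the diagram are already Artinian, so no stability lemma for Artinianness under pullback is needed beyond what is given by the hypothesis.
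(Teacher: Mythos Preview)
Your proof is correct and follows exactly the approach the paper takes: the paper's entire proof is the single observation that $\und\colon \CAlg^{\pd}\to\CAlg^{\an}$ preserves limits, which is precisely what you unpack in detail. Your version is more explicit (and your caveat about the full subcategory creating the pullback is easily resolved since both $\und$ and $\overline{(-)}$ preserve limits, forcing the ambient pullback back into $\CAlg^{\pd,\art}_\Lambda$), but there is no difference in strategy.
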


Deforming sheaves of animated divided power rings gives another example.

\begin{construction}[The functor $\Def_X^\pd$]\label{constr:def-pd}
Let $X=(\cat{T},\cO_X)$ with $\cat{T}$ be a small Grothendieck site and $\cO_X$ 
an object of $\Shv(\cat{T},\CAlg^\pd_k)$. Consider the $\infty$-cagegory
\[
\cat{D} := \CAlg^\pd \times_{\Shv(\cat{T},\CAlg^\pd)} 
\Fun(\Delta^1,\Shv(\cat{T},\CAlg^{\pd}))
\]
and the projection $q\colon \cat{D}\rightarrow \CAlg^{\pd}$. Analogously to \Cref{constr:def-an}, we obtain a left fibration 
$q\colon (\cat{D}^{\cocart})_{/(k,\cO_X)} \rightarrow \CAlg^\pd_{/k}$
and a functor $\Def^\pd_X \colon \CAlg^{\pd,\art} \rightarrow \widehat{\cat{S}}.$
\end{construction}

\begin{remark}[Informal description]
The objects of the space $\Def^\pd_X(A)$ are    pushout squares in $\Shv(\cat{T},\CAlg^\pd)$  of the form
\[
\begin{tikzcd}
	A \arrow{r} \arrow{d} & k \arrow{d} \\
	\cB \arrow{r} & \cO_X\arrow[lu, phantom, "\ulcorner", at start].
\end{tikzcd}
\]
\end{remark}
\noindent
Recall from \Cref{not:map-prime} that  $\Map'(x,y)$ is the space of sections of a given  $y\rightarrow x$.

\begin{proposition}\label{prop:defpd}
The functor $\Def^\pd_{X}$ is a divided power formal moduli problem, and for every perfect $V$ in $\Mod_k^\cn$ there is a natural equivalence
\[
\Omega\Def^\pd_{X}(\sqz_k^\pd V) \simeq 
\Map'_{\Shv(\cat{T},\CAlg^{\pd}_k)}\big( \cO_X,\,
\sqz_k^\pd V\otimes_k \cO_X \big),
\]
where   $\sqz_k^\pd V$ is defined as in \Cref{def:pd-square-zero}.
\end{proposition}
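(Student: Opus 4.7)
The proof will mirror \Cref{prop:defan} and \Cref{prop:defan-tangent-space}, with care taken to lift the cohesiveness argument from animated rings to animated divided power rings. The basic plan has three moving parts: (i) verify infinitesimal cohesiveness, (ii) compute the based loop space, and (iii) deduce essential smallness from the tangent computation by induction along the factorisations in \Cref{cor:artinian-small}.

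For cohesiveness, I fix a pullback square in $\CAlg^{\pd,\art}_{\Lambda}$
\[
\begin{tikzcd}
A \arrow{r}\arrow{d}\arrow[rd, phantom, "\lrcorner", at start] & A_0 \arrow{d} \\
A_1 \arrow{r} & A_{01}
\end{tikzcd}
\]
with $\pi_0 A_i \to \pi_0 A_{01}$ surjective for $i=0,1$, and I have to show that base change defines an equivalence
\[
L\colon \Shv(\cat{T},\CAlg^{\pd}_A) \longto \Shv(\cat{T},\CAlg^{\pd}_{A_0}) \times_{\Shv(\cat{T},\CAlg^{\pd}_{A_{01}})} \Shv(\cat{T},\CAlg^{\pd}_{A_1}).
\]
The right adjoint $R$ is given by sectionwise pullback in $\CAlg^{\pd}$. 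The strategy is exactly as in \Cref{prop:defan}: the unit is an equivalence because $\cB\otimes_A -$ preserves pushouts in $\Shv(\cat{T},\CAlg^{\pd}_A)$ (combine \Cref{prop:underlying-functor} with \Cref{constr:functor-CAlg-pd}), and because the forgetful functor down to sheaves of underlying $A$-modules is conservative; the counit is an equivalence because this same forgetful reduces the claim to \cite[Theorem 16.2.0.2]{LurieSAG}. The key technical input that must be checked is that the two forgetful functors $\CAlg^{\pd} \xrightarrow{\und} \CAlg^{\an} \xrightarrow{U} \Mod_{\bZ}^\cn$ of Propositions~\ref{prop:forgetful-an} and~\ref{prop:underlying-functor} both preserve limits and sifted colimits and are conservative — the conservativity of $\und$ itself follows from \Cref{prop:forgetful-functor} (since $\forget$ is conservative and $\ev_0\circ\forget = \und$ detects equivalences jointly with $\ev_1\circ\forget=\overline{(-)}$, and $\overline{A}=A/I_A$ is computable from $A$).

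For the loop space formula, I unravel the construction of \Cref{constr:def-pd} exactly as in the proof of \Cref{prop:defan-tangent-space}. The based loop space sits in a pullback square
\[
\begin{tikzcd}
\Omega\Def^\pd_X(\sqz^\pd_k V) \arrow{r}\arrow{d}\arrow[rd, phantom, "\lrcorner", at start]
& \Map_{\Shv(\cat{T},\CAlg^\pd_{\sqz^\pd_k V})^\simeq}\!\big(\sqz^\pd_k V\otimes_k\cO_X,\ \sqz^\pd_k V\otimes_k\cO_X\big) \arrow{d} \\
\{\ast\} \arrow{r}{\id} & \Map_{\Shv(\cat{T},\CAlg^\pd_k)^\simeq}(\cO_X,\cO_X).
\end{tikzcd}
\]
Using the fibre sequence $V\to \sqz^\pd_k V\to k$ of underlying modules (cf.\ \Cref{not:I-and-Q}) and conservativity of $k\otimes_{\sqz^\pd_k V}-$ on the underlying module level, I can drop the $(-)^{\simeq}$ from the right-hand column, and then the usual base-change adjunction
\[
\Map_{\CAlg^{\pd}_{\sqz^\pd_k V}}\!\big(\sqz^\pd_k V\otimes_k\cO_X,\ \sqz^\pd_k V\otimes_k\cO_X\big) \simeq \Map_{\CAlg^{\pd}_{k}}\!\big(\cO_X,\ \sqz^\pd_k V\otimes_k\cO_X\big)
\]
identifies the square with the defining pullback of $\Map'_{\Shv(\cat{T},\CAlg^{\pd}_k)}\!\big(\cO_X,\sqz^\pd_k V\otimes_k\cO_X\big)$.

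The main obstacle I expect is verifying that $\Def^\pd_X(A)$ is essentially small. For $A=\sqz^\pd_k V$ this is immediate from the formula just established together with the equivalence $\Def^\pd_X(\sqz^\pd_k V)\simeq \Omega\Def^\pd_X(\sqz^\pd_k V[1])$ coming from the pullback $\sqz^\pd_k V = k\times_{\sqz^\pd_k V[1]} k$ in $\CAlg^{\pd}$. For general Artinian $A\in\CAlg^{\pd,\art}_\Lambda$, I will need an analogue of \Cref{cor:artinian-small} for animated divided power rings: a factorisation $A=A_n\to\cdots\to A_0=k$ whose steps are pullbacks against $k\to\sqz^\pd_k(V_i[n_i+1])$. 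This runs in exactly the same way as in \Cref{cor:artinian-small}, replacing the trivial square-zero extension by its divided power analogue and using the divided power cotangent complex of \Cref{def:cotanipd} together with the connectivity estimate inherent in the Hurewicz map of \Cref{rem:hurewicz}; the induction on $\len(A\to A_i)$ then concludes, proving cohesiveness of $\Def^\pd_X$ on successive steps and smallness simultaneously.
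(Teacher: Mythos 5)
Your plan mirrors the paper's at a high level: mimic Propositions~\ref{prop:defan} and~\ref{prop:defan-tangent-space}, reduce the cohesiveness check to a forgetful functor, and handle essential smallness by a factorisation argument. But the justification of the central reduction step contains a genuine error. You assert (and rely on, both in the unit/counit verification and again when you claim conservativity of $k\otimes_{\sqz^\pd_k V}-$) that $\und\colon\CAlg^\pd\to\CAlg^\an$ is conservative, ``since $\overline{A}=A/I_A$ is computable from $A$.'' This is false: Proposition~\ref{prop:forgetful-functor} only gives conservativity of $\forget\colon\CAlg^\pd\to\Fun(\Delta^1,\CAlg^\an)_\surj$, i.e.\ $\und$ and $\overline{(-)}$ \emph{jointly} detect equivalences. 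The divided power ideal $I_A$ is extra data not determined by the underlying animated ring $\und(A)$; for example $\Gamma_\bZ(y)$ with its canonical divided power ideal and the same ring with the zero divided power ideal have equivalent $\und$ but different $\overline{(-)}$. Consequently ``the forgetful functor down to sheaves of underlying $A$-modules is conservative'' is not available, and as written your cohesiveness and loop-space arguments both have a hole.

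The correct reduction is the one the paper makes explicit. By conservativity of $\forget$ it suffices to check that the unit and counit become equivalences both after $\und$ and after $\overline{(-)}$. After $\und$ the argument is exactly as in Proposition~\ref{prop:defan}, as you wanted. After $\overline{(-)}$ there is genuinely nothing to check, but for a reason you did not supply: the definition of $\CAlg^{\pd,\art}_\Lambda$ forces $\overline{A}\simeq k$ for every Artinian divided power ring $A$, so all the arrows in the pullback square (and in the induced square of slice $\infty$-categories) become equivalences after $\overline{(-)}$. This observation is the only point at which the proof differs substantively from the animated case, and it is precisely what rescues the reduction you attempted to justify by the false conservativity claim. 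The same observation makes $k\otimes_{\sqz^\pd_k V}-$ conservative (since $\overline{(-)}$ is unchanged by it and $\und$ converts it into the animated $k\otimes_{\sqz_k V}-$), so the loop-space argument is repaired at the same stroke. Your closing remark that a divided power analogue of Corollary~\ref{cor:artinian-small} would be needed for smallness is reasonable; just note that cohesiveness itself is not proved ``on successive steps'' by such a factorisation — cohesiveness is the pullback condition above, and the factorisation serves only the smallness induction.
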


\begin{proof}
The proof is essentially the same as for Propositions~\ref{prop:defan-tangent-space} and~\ref{prop:defan}. We only elaborate on the point in which the argument differs. Consider a pullback diagram
\[
\begin{tikzcd}
	A \arrow{r} \arrow{d}  \arrow[rd, phantom, "\lrcorner",  at start]& A_0 \arrow{d} \\
	A_1 \arrow{r} & A_{01}
\end{tikzcd}
\]
in $\CAlg^\pd$ with $\pi_0 A_1 \rightarrow \pi_0 A_{01}$ and $\pi_0 A_1 \rightarrow \pi_0 A_{01}$ surjective, and such that all maps induce equivalences after applying the functor $\overline{(-)}$. 
Writing $\cat{C}_A := \Shv(\cat{T},\CAlg^\pd_A)$, we have an adjunction
\[
L\colon \cat{C}_A \leftrightarrows \cat{C}_{A_0} \times_{\cat{C}_{A_{01}}} \cat{C}_{A_1} : R,
\]
and we need to verify that the unit and counit are equivalences. By \Cref{prop:forgetful-functor}, it suffices to check that they are equivalences on underlying sheaves of animated rings, and that they are equivalences after applying the functor $\overline{(-)}$. The first follows as in the proof of \Cref{prop:defan}, and the second follows trivially since all maps involved are equivalences.
\end{proof}
Using \Cref{prop:pd-cotangent-sheaf-adjunction}, we deduce the following analogue of  \Cref{cor:defan-tangent-space}:

\begin{corollary}\label{cor:defpd-tangent}
$\Def^\pd_{X}(\sqz_k^\pd V)\simeq
\Map_{\Mod(X,\,\cO_X)}(L^{\pd}_{X/k},\,V[1]\otimes_k \cO_X)$.  
\end{corollary}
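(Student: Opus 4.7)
The plan is to mimic the proof of Corollary~\ref{cor:defan-tangent-space} verbatim, with everything replaced by its divided power counterpart. Concretely, I would assemble three ingredients: a looping equivalence for $\Def^{\pd}_X$ on trivial PD square-zero extensions, the formula of Proposition~\ref{prop:defpd}, and the sheafified PD cotangent complex adjunction of Proposition~\ref{prop:pd-cotangent-sheaf-adjunction}.

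First I would establish that for perfect $V\in \Mod_k^{\cn}$ there is a pullback square
\[
\begin{tikzcd}
\sqz_k^{\pd} V \arrow{r} \arrow{d} \arrow[rd, phantom, "\lrcorner", at start] & k \arrow{d} \\
k \arrow{r} & \sqz_k^{\pd}(V[1])
\end{tikzcd}
\]
in $\CAlg^{\pd}$. This is immediate from the fact that $\underline{\sqz}^{\pd}$ is a right adjoint (Proposition~\ref{prop:pd-cotangent-adjoint}), applied to the evident pullback square $V \simeq 0 \times_{V[1]} 0$ in $\Mod_k^{\cn}$. The $\pi_0$-surjectivity condition needed to apply Definition~\ref{def:pd-fmp}(2) is trivial, since $\pi_0$ is $k$ throughout. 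Applying $\Def^{\pd}_X$ and using that $\Def^{\pd}_X(k)\simeq \ast$ then yields
\[
\Def^{\pd}_X(\sqz_k^{\pd} V) \simeq \Omega\,\Def^{\pd}_X(\sqz_k^{\pd}(V[1])).
\]

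Next I would invoke Proposition~\ref{prop:defpd} to identify the right-hand side with the section space $\Map'_{\Shv(\cat{T},\CAlg^{\pd}_k)}(\cO_X,\,\sqz_k^{\pd}(V[1])\otimes_k \cO_X)$. At this point I would identify $\sqz_k^{\pd}(V[1])\otimes_k \cO_X$ with $\sqz^{\pd}_{\cO_X}(V[1]\otimes_k \cO_X)$; on compact projective generators this is the elementary formula $(R,0)\sqcup_{(k,0)}(k,M) \simeq (R,R\otimes_k M)$ applied pointwise, and both sides commute with sifted colimits and sheafification. Finally, Proposition~\ref{prop:pd-cotangent-sheaf-adjunction} converts the section space of the augmentation $\sqz^{\pd}_{\cO_X}(V[1]\otimes_k \cO_X)\rightarrow \cO_X$ into $\Map_{\cO_X}(L^{\pd}_{\cO_X/k},\,V[1]\otimes_k \cO_X)$, which is the desired expression in view of the sheaf-level notation $L^{\pd}_{X/k} := L^{\pd}_{\cO_X/k}$.

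The only step that is not immediately formal is the base change identity $\sqz_k^{\pd} V \otimes_k \cO_X \simeq \sqz^{\pd}_{\cO_X}(V\otimes_k \cO_X)$ in $\Shv(\cat{T},\CAlg^{\pd})$. I expect this to be the main (but still mild) obstacle: one checks it by hand on the compact projective generators $(R,M)\in \PolyMod^{\ff}$ using Definition~\ref{def:pd-square-zero}, extends by sifted colimits via \Cref{prop:forgetful-functor} and \Cref{prop:underlying-functor}, and then sheafifies using the formalism of \Cref{cons:sheavesadjunctions}. Everything else is a line-by-line transcription of the non-PD proof.
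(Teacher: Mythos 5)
Your proposal is correct and follows essentially the same route as the paper, which states Corollary~\ref{cor:defpd-tangent} as an immediate analogue of Corollary~\ref{cor:defan-tangent-space}: loop once via the pullback square $\sqz_k^\pd V \simeq k\times_{\sqz_k^\pd(V[1])} k$, apply Proposition~\ref{prop:defpd}, identify $\sqz_k^\pd V\otimes_k\cO_X\simeq\sqz^\pd_{\cO_X}(V\otimes_k\cO_X)$, and finish with the adjunction of Proposition~\ref{prop:pd-cotangent-sheaf-adjunction}. The extra care you take over the base-change identity (reducing to $\PolyMod^{\ff}$, extending by sifted colimits, checking the comparison map is an equivalence after applying the conservative functor of Proposition~\ref{prop:forgetful-functor}, then sheafifying) is sound and merely spells out what the paper leaves implicit.
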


\begin{construction}[$\ind\colon \Def_X \rightarrow \Def_X^\pd$]
\label{constr:ind-an-to-pd}
Consider the $\infty$-category
\[
\cat{C} := \CAlg^\pd \times_{\Shv(\cat{T},\CAlg^\an)} 
\Fun(\Delta^1,\Shv(\cat{T},\CAlg^\an))
\]
whose objects are given by pairs $(A,\cB)$ with $A$ in $\CAlg^\pd$ and $\cB$ in $\Shv(\cat{T},\CAlg^\an_{\und (A)})$. Similarly, consider the $\infty$-category
\[
\cat{D} := \CAlg^\pd \times_{\Shv(\cat{T},\CAlg^\pd)} 
\Fun(\Delta^1,\Shv(\cat{T},\CAlg^\pd))
\]
whose objects are pairs $(A,\cB)$ with $A$ in $\CAlg^\pd$ and $\cB$ in $\Shv(\cat{T},\CAlg^\pd_A)$.

The functor $\ind$ defined in Section \ref{subsec:induced-pd-structure} induces a functor $\ind\colon \cat{C} \rightarrow\cat{D}$ over $\CAlg^\pd$ mapping $(A,\cB)$ to $(A,\ind_A \cB)$,  where $\ind_A \cB$ is the sheafification of the presheaf $U \mapsto \ind_A \cB(U)$. For every $A\rightarrow A'$ in $\CAlg^\pd$ and $\cB$ in $\Shv(\cat{T},\CAlg^\pd_A)$, the   map
\[
A'\otimes_A \ind_A(\cB) \rightarrow \ind_{A'}(\und(A')\otimes_{\und(A)}\cB)
\]
is an equivalence, so  $\ind\colon \cat{C} \rightarrow \cat{D}$ preserves  arrows that are cocartesian over $\CAlg^\pd$.

Now if $X=(\cat{T},\cO_X)$ with $\cO_X$ in $\Shv(\cat{T},\CAlg^\an_k)$, then using \Cref{constr:def-an} and \Cref{constr:def-pd}, we obtain a map $\ind\colon \Def_X \rightarrow \Def_X^\pd$ 
in   $\Moduli_W^\pd$. 
\end{construction}

\subsection{Construction of the period map}
The functor $\overline{(-)}$ induces a functor
\begin{equation}\label{eq:functorial-Q}
\Fun(\Delta^1,\CAlg^\pd) \rightarrow \Fun(\Delta^1,\CAlg^\pd)
\end{equation}
informally given by mapping $A\rightarrow B$ to the composite $A \rightarrow B \rightarrow \overline{B}$. More formally, this can be defined by extending by sifted colimits from $\ArrPDPoly$.

\begin{setup}\label{setup:coherentspace}
Throughout this section, we fix a   \textit{coherent} topological space $X$, i.e.\ a   topological space 
for which the collection of quasi-compact open subsets is closed under finite intersections and 
forms a basis of the topology.
\end{setup}

\begin{construction}[The functor $\theta$]
Write $\cK$ for the category $\bullet \overset{\sim}\leftarrow \bullet \rightarrow \bullet$ \mbox{and consider}
\[
\cat{E}:= \CAlg^\pd \times_{\Fun(\cK,\Shv(X,\CAlg^\an))}
\Fun(\cK,\Shv(X,\CAlg^\an\Mod^{\wedge})), 
\]
the $\infty$-category   of pairs consisting of an $A$ in $\CAlg^\pd$ and a diagram $N_0 \overset{\sim}\leftarrow N \rightarrow M$ in $\Shv(X,\Mod_A^{\wedge})$. Using  \eqref{eq:functorial-Q}, one constructs a functor $\theta\colon \cat{D} \rightarrow \cat{E}$
\mbox{informally given by}
\[
\theta\colon 
\big(A,\,\cB\big) \mapsto 
\big( A,\, \dR^{\pd}_{\overline{\cB}/A} \isomfrom 
\dR^{\pd}_{\cB/A} \rightarrow \cB \big),
\]
where $\overline{\cB}$ denotes the sheafification of the presheaf $U \mapsto \overline{\mathcal{B}(U)}$.

To see that the left map is an equivalence, we note that by 
\Cref{cons:sheavesadjunctions}
and \Cref{prop:refinement}, the map $
\dR^{\pd}_{\cB/A} \rightarrow  \dR^{\pd}_{\overline{\cB}/A} $ is obtained by sheafifying the map 
of presheaves $U \mapsto 
 (\dR^{\pd}_{\cB(U)/A} \rightarrow  \dR^{\pd}_{\overline{\cB(U)}/A})$, which is an equivalence by \vspace{2pt}
\Cref{thm:crystalline-invariance-dR}.

We then consider also the $\infty$-category
\[
\cat{F} := \CAlg^\pd \times_{\Fun(\cK,\CAlg^\an)}
\Fun(\cK,\CAlg^\an\Mod^{\wedge}),
\]
whose objects are pairs   of an $A$ in $\CAlg^\pd$ and a diagram
$N_0 \overset{\sim}\leftarrow N \rightarrow M$ in $\Mod_A^{\wedge}$. 

Taking global sections induces a functor $\RGamma\colon \cat{E}\rightarrow \cat{F}$, where we have precomposed with the canonical map  from  $R$ to  the global sections $R\Gamma(X,R)$ of the corresponding constant sheaf on $X$. We obtain two functors lying over $\CAlg^\pd$:
\[
\cat{D} \overset{\theta}\longto \cat{E}\overset{\RGamma}\longto \cat{F}.
\] 
\end{construction}

\begin{construction}[The fundamental diagram] 
Thinking of  $\overline{(-)}$  from  \Cref{not:I-and-Q} as a functor 
$: \CAlg^\pd \rightarrow \CAlg^\pd$,  we construct the $\infty$-category
\[
\cat{D}' := \CAlg^\pd \times_{\Shv(X,\CAlg^\pd)} 
\Fun(\Delta^1,\Shv(X,\CAlg^\pd))
\]
whose objects are pairs $(A,\cB)$ with $A\in \CAlg^\pd$ and $\cat{B}$ in $\Shv(X,\CAlg^{\pd}_{\overline{A}})$.	
We have a  functor 
$\overline{(-)} : \cat{D} \rightarrow \cat{D}'$
sending a pair $(A,\cB)$   to the pair 
$(A,\overline{\cB})$.
Let 
\[
\cat{E}':=  \CAlg^\pd \times_{\Shv(X,\CAlg^\an)}
\Shv(X,\CAlg^\an\Mod^{\wedge})
\]
be the    $\infty$-category of pairs $(A,N_0)$ with $A\in\CAlg^\pd$ and $N_0$  in $\Shv(X,\Mod_A^{\wedge})$, and 
\[
\dR^{\pd}: \cat{D}' \rightarrow \cat{E}'
\]
the functor informally given by $(A,\cB) \mapsto \dR^{\pd}_{\cB/A}$. Finally, consider the $\infty$-category
\[
\cat{F}' := \CAlg^\pd \times_{\CAlg^\an}
\CAlg^\an\Mod^{\wedge} \simeq \CAlg^\pd\Mod^{\wedge}
\]
and the functors $\ev_0\colon \cat{E}\rightarrow \cat{E}'$ and $\cat{F}\rightarrow\cat{F}'$ that map $(A,N_0 \overset{\sim}{\leftarrow} N \rightarrow M)$ to $(A,N_0)$. Summarising all constructions so far gives a \mbox{commutative diagram over $\CAlg^\pd$:}
\begin{equation}\label{eq:fundamental-diagram-of-fibrations}
	\begin{tikzcd}
		\cat{C}\arrow{r}{\ind} 
		& \cat{D} \arrow{d}{\overline{(-)}} \arrow{r}{\theta}
		& \cat{E}\arrow{d}{\ev_0} \arrow{r}{\RGamma}
		& \cat{F} \arrow{d}{\ev_0} \\
		& \cat{D}' \arrow{r}{\dR^{\pd}} 
		& \cat{E}' \arrow{r}{\RGamma}
		& \cat{F}'
	\end{tikzcd}.
\end{equation} 
\end{construction}

Tracing a pair $(A,\cB)$ in $\cat{D}$ under the maps in the left square in (\ref{eq:fundamental-diagram-of-fibrations}) gives
\[
\begin{tikzcd}
(A,\, \cB) \arrow[|->]{r} \arrow[|->]{d} & 
\big( A,\, \dR^{\pd}_{\overline{\cB}/A} \isomfrom
\dR^{\pd}_{\cB/A} \rightarrow \cB \big) \arrow[|->]{d} \\
(A,\, \overline{\cB}) \arrow[|->]{r} & (A,\, \dR^{\pd}_{\overline{\cB}/A} ).
\end{tikzcd}
\]

\begin{lemma}\label{lemma:fundamental-diagram-cocartesian}
If $f\colon A\rightarrow A'$ is a map in $\CAlg^\pd$ such that  $\overline{A} \rightarrow \overline{A'}$ is an equivalence, then all functors in diagram~(\ref{eq:fundamental-diagram-of-fibrations}) preserve cocartesian arrows over $f$.\vspace{-2pt}
\end{lemma}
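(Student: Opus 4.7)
The plan is to verify preservation of cocartesian arrows separately for each of the seven functors in diagram~(\ref{eq:fundamental-diagram-of-fibrations}). Several do so over any morphism in $\CAlg^\pd$, with no hypothesis needed: $\ind\colon \cat{C}\to\cat{D}$ by the equivalence $A'\otimes_A \ind_A(\cB) \simeq \ind_{A'}(\und(A')\otimes_{\und(A)}\cB)$ noted in \Cref{constr:ind-an-to-pd}; $\overline{(-)}\colon \cat{D}\to\cat{D}'$ because $\overline{(-)}$ is colimit-preserving (\Cref{prop:idempotent}); and the two $\ev_0$ projections trivially, as they come from fibre products. The non-trivial content therefore concerns $\theta$, $\dR^\pd\colon \cat{D}'\to\cat{E}'$, and the two $\RGamma$'s.

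For $\theta$ and $\dR^\pd$, I would combine base change for pd de Rham cohomology (\Cref{prop:dR-base-change}) with crystalline invariance (\Cref{thm:crystalline-invariance-dR}) and the colimit-preservation of $\overline{(-)}$. Given a cocartesian arrow $(A,\cB) \to (A', A'\otimes_A\cB)$ in $\cat{D}$, the triple $\theta(A,\cB) = (\dR^\pd_{\overline{\cB}/A} \isomfrom \dR^\pd_{\cB/A} \to \cB)$ base-changes component-wise: the rightmost entry by definition of the cocartesian arrow; the middle by \Cref{prop:dR-base-change}; and the leftmost by \Cref{prop:dR-base-change} applied to $\overline{\cB}$ together with the identification $\overline{A'\otimes_A \cB} \simeq \overline{A'}\otimes_{\overline{A}}\overline{\cB}$ (\Cref{prop:idempotent}) and a final application of \Cref{thm:crystalline-invariance-dR} to erase the distinction between the two natural pushouts of $\overline{\cB}$ at the level of $\dR^\pd$. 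The backward equivalence in $\cat{E}$ is preserved again by \Cref{thm:crystalline-invariance-dR}. The same reasoning applies verbatim to $\dR^\pd\colon\cat{D}'\to\cat{E}'$. Notably, none of this requires the hypothesis $\overline{A}\simeq\overline{A'}$.

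The main obstacle, and the only step where the hypothesis $\overline{A}\simeq\overline{A'}$ is genuinely used, concerns the two $\RGamma$ functors. Here one must show that for the sheaves $\cM$ of $p$-complete $A$-module spectra arising as outputs of $\theta$ and $\dR^\pd$, the canonical map $A'\otimes_A \RGamma(X,\cM) \to \RGamma(X,A'\otimes_A\cM)$ is an equivalence. Since $X$ is coherent (\Cref{setup:coherentspace}), $\RGamma(X,-)$ can be computed as a finite limit on a basis of quasi-compact opens, so the question reduces to showing that $-\otimes_A A'$ preserves such finite limits, equivalently, that the cofibre of $A\to A'$ is a dualisable $A$-module. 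The plan is to factor $f$ via \Cref{cor:artinian-small} into a sequence of pullbacks against trivial square-zero extensions by finite-dimensional $k$-vector spaces; the hypothesis $\overline{A}\simeq\overline{A'}$ ensures such a factorisation exists without enlarging $\overline{A}$, so at each step the cofibre is a shifted finite-dimensional $k$-vector space whose tensor product manifestly commutes with finite limits. The hardest technical point is propagating this inductive compatibility consistently across the various de Rham and completion functors, for which \Cref{prop:dR-base-change} and \Cref{cor:dR-induced-PD-structure} supply the necessary bookkeeping.
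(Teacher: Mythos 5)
Your handling of $\ind$, $\overline{(-)}$, $\ev_0$, $\theta$, and $\dR^\pd$ matches the paper's argument and is correct, including the observation that the hypothesis $\overline{A}\simeq\overline{A'}$ plays no role in those cases. The error is in the $\RGamma$ step. You assert that "the question reduces to showing that $-\otimes_A A'$ preserves such finite limits, equivalently, that the cofibre of $A\to A'$ is a dualisable $A$-module." This equivalence is false: $\Mod_A^\wedge$ is stable and $A'\otimes_A-\colon\Mod_A^\wedge\to\Mod_{A'}^\wedge$ is colimit-preserving, hence exact, hence preserves \emph{finite} limits automatically. Dualisability (or perfectness) would be what one needs to preserve \emph{arbitrary} small limits, which is not what coherence of $X$ requires. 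Because of this misapprehension you introduce an unnecessary detour via \Cref{cor:artinian-small}, which is moreover unavailable: that corollary applies to maps in $\CAlg^{\an,\art}_\Lambda$ with $\pi_0$-surjective structure map, whereas $f\colon A\to A'$ in the lemma is a general map in $\CAlg^\pd$ (and even in the intended application to $\CAlg^{\pd,\art}_W$, $f$ need not be surjective on $\pi_0$, e.g.\ $k\to\sqz_k V$).

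The intended argument, matching the paper's, is simpler: coherence of $X$ (\Cref{setup:coherentspace}, \Cref{sheafesbases}) guarantees that on the basis of quasi-compact opens the sheaf condition is a \emph{finite}-limit condition. Since $A'\otimes_A-$ is exact, it preserves this condition, so the objectwise base change of a sheaf is again a sheaf on the basis, and by \Cref{sheafifyingonbases} sheafification does not alter its values on quasi-compact opens. As $X$ itself is quasi-compact, one obtains $\RGamma(X, A'\otimes_A\cM)\simeq A'\otimes_A\RGamma(X,\cM)$ directly. Also note that it is \emph{not} $\RGamma(X,-)$ that is "computed as a finite limit"; global sections on a coherent space is simply evaluation at $X$, an element of the basis — the finiteness lives in the sheaf condition, not in $\RGamma$.
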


In particular, this applies to all maps $f\colon A\rightarrow A'$ in $\CAlg^{\pd,\art}_W$. \vspace{-1pt}

\begin{proof}[Proof of \Cref{lemma:fundamental-diagram-cocartesian}]  For  $\ind$, this has already been observed in \Cref{constr:ind-an-to-pd}.	

Given any    $ \cB \in \Shv(X,\CAlg^{\pd}_A)$, we 
note that by  \Cref{prop:idempotent}, all maps in the following square become equivalences after applying the functor $\overline{(-)}$: 
\begin{equation} \label{sq:bars}
	\begin{tikzcd}[column sep=36pt]
		\arrow{r} \arrow{d}  
		A' \otimes_A  \cB	\arrow{r}{} &  A' \otimes_A \overline{ \cB}
		\arrow{d}{ } \\
		\overline{A'} \otimes_{\overline{A}}  \cB\arrow{r}{} 
		&  \overline{A'} \otimes_{\overline{A}}  \overline{\cB} \simeq  \hspace{-10pt}&\hspace{-32pt} \overline{A' \otimes_A \cB}
	\end{tikzcd}
\end{equation}
Together with \Cref{prop:dR-base-change} and \Cref{thm:crystalline-invariance-dR}, this implies the claim for    $\theta$.

Note that an arrow $(A,\cB)\rightarrow (A',\cB')$ in $\cat{D}'$ is cocartesian over $\CAlg^\pd$ if and only if the map $\overline{A'} \otimes_{\overline{A}} \cB \rightarrow \cB'$ is an equivalence.
\Cref{prop:idempotent} therefore implies that the functor $\overline{(-)}$ in~(\ref{eq:fundamental-diagram-of-fibrations}) preserves cocartesian arrows over $A\rightarrow A'$.

To prove the statement about $\dR^{\pd}$, we need to verify that the composition
\[
A'\otimes_A \dR^{\pd}_{\cB/A} \rightarrow \dR^{\pd}_{A'\otimes_A \cB/A'} \rightarrow
\dR^{\pd}_{\cB'/A'}
\]
is an equivalence  as soon as $(A,\cB)\rightarrow(A',\cB')$ in $\cat{D}'$ is cocartesian. The first map is an equivalence by \Cref{prop:dR-base-change},  the second  by \eqref{sq:bars} and  \Cref{thm:crystalline-invariance-dR}.

To see that the functors $R\Gamma$ preserve cocartesian edges, we 
note that as $A' \otimes_A -$ preserves finite limits and $X$ is coherent, the functor  $A' \otimes_A -$ preserves sheaves by  \Cref{sheafifyingonbases}. 	The functors $\ev_0$ evidently preserve cocartesian edges.
\end{proof}

Now fix a quasi-compact quasi-separated   scheme $X=(X,\cO_X)$ over $k$. By \cite[0A4G]{stacks}, the underlying topological space is coherent (cf.\  \Cref{setup:coherentspace}).

In analogy with \Cref{def:cotangent-complex-X} and~\ref{not:dR-X},  we write $\theta_{X/k}$ for $\theta_{\cO_X/k}$. Taking slice categories in (\ref{eq:fundamental-diagram-of-fibrations}), using \Cref{lemma:fundamental-diagram-cocartesian}, and applying the straightening functor we obtain a commutative diagram in $\Fun(\CAlg_\Lambda^{\pd,\art},\widehat{\cat{S}})$:
\begin{equation}\label{eq:fundamental-diagram-of-fmp}
\begin{tikzcd}
	\Def_X \arrow{r}{\ind}
	& \Def_{X}^\pd \arrow{r}{\theta} \arrow{d} 
	& \Def^\mod_{\theta_{X/k}} \arrow{d}{\ev_0} \arrow{r}{\RGamma} 
	& \Def^\mod_{\RGamma(X,\theta_{X/k})} \arrow{d}{\ev_0} \\
	& \{\ast \} \arrow{r}{\dR^\pd_{X/-}} 
	& \Def^\mod_{\dR^\pd_{X/k}} \arrow{r}{\RGamma}
	& \Def^\mod_{\RGamma(X,\dR^\pd_{X/k})}.
\end{tikzcd}
\end{equation}

When $X$ is moreover smooth and proper,   the two right corners of the above diagram 
are formal moduli problems by \Cref{prop:defmod}, since $\RGamma(X,\dR_{X/k})$ is  eventually connective   (cf.\ \cite[0FM0]{stacks}).  Under these conditions, we can define: 
\begin{definition}[Period domain and period map]\label{def:perdommap}
The \emph{period domain} of $X$
is the divided power formal moduli problem
$\Per_X \colon \CAlg^{\pd,\art}_W \rightarrow \cat{S}$ obtained as the following pullback square in $\Moduli_\Lambda^\pd$:
\[
\begin{tikzcd}[column sep=36pt]
	\Per_{X}^{}  \arrow{r} \arrow{d} \arrow[rd, phantom, "\lrcorner",  at start] 
	& \Def^{\mod}_{\RGamma(X,\theta_{X/k})} 
	\arrow{d}{\ev_0} \\
	\{\ast\} \arrow{r}{\RGamma(\dR^\pd_{X/-})} 
	& \Def^{\mod}_{\RGamma(X,\dR_{X/k})} .
\end{tikzcd}
\]
The \emph{period map} is the\vspace{-3pt} map  
\[
\per_X\colon \Def_X \rightarrow \Per_X
\]
in $\Moduli_\Lambda^\pd$ induced
by the commutative diagram (\ref{eq:fundamental-diagram-of-fmp}).
\end{definition}

\begin{remark}[Informal description]
\Cref{cor:crystalline-comparison} provides an equivalence   
\[
A\otimes_W \RGamma_\cris(X/W) \rightarrow \RGamma(X, \dR_{X/A}^\pd)
\]
for any $A$ in $\CAlg_\Lambda^{\pd,\art}$. Hence $\Per_{X}(A)$ is the space of maps \mbox{$
	A\otimes_W \RGamma_{\cris}(X/W) \rightarrow P
	$}
in $\Mod_A$ lifting the map
$
k\otimes_W \RGamma_\cris(X/W) \simeq \RGamma_{\dR}(X/k) 
\rightarrow \RGamma(X,\cO_X).$
The image of a lift $(X,\cB)$ under $\per_X$ can then be identified with the composite
\[
A\otimes_W \RGamma_\cris(X/W) \simeq \RGamma(X,\dR_{X/A}^\pd) 
\simeq \RGamma(X,\dR^\pd_{\cB/A}) \rightarrow \RGamma(X,\cB),
\]
where the second equivalence follows from \Cref{thm:crystalline-invariance-dR}. 
\end{remark}

\begin{remark}[Analogy with classical period map]
\label{rmk:classical-period-map}
The map $\per_X$ is an analogue of the (formal) classical period map, which associates to a lift $(X,\cB)$ of a smooth variety $X/\bC$ over some local Artinian $\bC$-algebra $A$ the composite
\[
A\otimes_{\bZ}\RGamma(X(\bC),\bZ) \simeq \RGamma(X,\Omega^\bullet_{\cB/A}) \rightarrow
\RGamma(X,\cB).
\]
Here the equivalence is an avatar of the Gauss--Manin connection.
\end{remark}

\begin{remark}[Higher parts of the Hodge filtration]
We have restricted ourselves to the variation of the first step of the Hodge filtration. One could similarly define period maps tracking the variation of the higher pieces of the Hodge filtration. 
\end{remark}

\begin{remark}[Non-formal deformations]
The period map can be extended to non-formal deformations over divided power rings; we will explore this 
\mbox{further in \cite{PDPeriod24} .}
\end{remark}

\subsection{Tangent to the period map}Let $X=(X,\cO_X)$ be a    smooth and proper scheme over $k$.
Given $V\in \Mod_k^\cn$ perfect, we constructed $\sqz_k^\pd V \in \CAlg^{\pd,\art}_W$ with underlying animated ring $\sqz_kV \in \CAlg^{\an,\art}_W$. In this section, we will compute 
\[
\per_X(\sqz_k^\pd V)\colon
\Def_{X}^{\an}(\sqz_k^\pd  V) \rightarrow \Per_{X}(\sqz_k^\pd V).
\] 

\begin{theorem}\label{thm:tangent-to-period-map}
The map $\per_X(\sqz_k^\pd V)$ sits in a commutative square
\[
\begin{tikzcd}
	\Def_X(\sqz_k^\pd  V) \arrow{d}{} \arrow{r}{\simeq} 
	& \Map_{\cO_X}(L_{X/k},\, V[1]\otimes_k \cO_X) \arrow{d} \\
	\Per_X(\sqz_k^\pd V) \arrow{r}{\simeq} 
	& \Map_{k}\!\big(\RGamma(X,\Fil^1\dR_{X/k}),\, 
	V\otimes_k \RGamma(X,\cO_X)\big),
\end{tikzcd}
\]
where the right map is induced by the map $\pi^1\colon\Fil^1\dR_{X/k} \rightarrow L_{X/k}[-1]$, and the horizontal maps are equivalences.
\end{theorem}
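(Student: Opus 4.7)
My plan is to identify both sides of the square independently and then verify commutativity. The top equivalence is immediate from \Cref{cor:defan-tangent-space}, since $\Def_X$ depends only on the underlying animated ring $\sqz_k V$. For the bottom side, $\Per_X(\sqz_k^\pd V)$ is by \Cref{def:perdommap} the fibre of $\ev_0 \colon \Def^\mod_{\RGamma(X,\theta_{X/k})}(\sqz_k^\pd V) \to \Def^\mod_{\RGamma(X,\dR_{X/k})}(\sqz_k^\pd V)$ over the lift $\RGamma(X,\dR^\pd_{X/\sqz_k^\pd V})$ picked out by the functor $\RGamma(\dR^\pd_{X/-})$.

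The first key observation is that this basepoint equals the trivial lift $\sqz_k V \otimes_k \RGamma(X,\dR_{X/k})$. Indeed, base change (\Cref{prop:dR-base-change}) applied to the pushout of pd-rings $k \to \sqz_k^\pd V$ along $k \to \cO_X$ yields $\sqz_k^\pd V \otimes_k \dR^\pd_{\cO_X/k} \simeq \dR^\pd_{\sqz_k^\pd V \otimes_k \cO_X/\sqz_k^\pd V}$; since $\overline{\sqz_k^\pd V \otimes_k \cO_X} \simeq \cO_X$, Mao's crystalline invariance (\Cref{thm:crystalline-invariance-dR}) identifies the right-hand side with $\dR^\pd_{\cO_X/\sqz_k^\pd V}$. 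Applying $\RGamma$ and using that $V$ is perfect yields the claim. Now \Cref{cor:defmod-tangent} applied to $M = \RGamma(X,\dR_{X/k})$ and to the arrow $f\colon \RGamma(X,\dR_{X/k}) \to \RGamma(X,\cO_X)$ converts the forgetful map into the domain projection $\Map_{\Fun(\Delta^1,\Mod_k)}(f, V[1] \otimes f) \to \Map_k(M, V[1] \otimes M)$, whose fibre over $0$ is $\Map_k(\cofib(f), V[1] \otimes_k \RGamma(X,\cO_X))$; by \Cref{prop:dR-differential} we have $\cofib(f) \simeq \RGamma(X,\Fil^1 \dR_{X/k})[1]$, which rewrites the fibre as the bottom right of the square.

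For the commutativity, trace a deformation $\cB$ classified by $\alpha \colon L_{X/k} \to V[1] \otimes_k \cO_X$. Since $\overline{\sqz_k^\pd V} = k$, we have $\overline{\cB} \simeq \cO_X$, so \Cref{thm:crystalline-invariance-dR} canonically identifies $\dR^\pd_{\cB/\sqz_k^\pd V}$ with the trivial lift $\dR^\pd_{\cO_X/\sqz_k^\pd V}$; under the fibre identification, the image of $\cB$ thus records precisely the tangent variation of the Hodge augmentation $\pi^0 \colon \dR^\pd_{\cO_X/\sqz_k^\pd V} \to \cB$. Naturality of the map of fibre sequences in \Cref{prop:dR-differential} along $(k \to \cO_X) \to (\sqz_k^\pd V \to \cB)$, combined with the interpretation of $\alpha$ as a derivation via \Cref{prop:pd-cotangent-sheaf-adjunction}, forces this variation to equal $\alpha[-1] \circ \pi^1$ after applying $\RGamma$. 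The main technical hurdle is precisely this last identification — matching the deformation of $\pi^0$ with the composite $\alpha[-1] \circ \pi^1$ — which requires using the naturality of $\pi^1$ along the map of pd-ring pairs and tracking how $\alpha$ enters through the induced deformation of $L^\pd$.
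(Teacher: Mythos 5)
Your outline tracks the right ingredients, but it skips or compresses the two places where the actual work happens, and the paper isolates exactly these into its Lemmas~\ref{lemma:local-period-1}--\ref{lemma:key-lemma}.

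First, when you "trace a deformation $\cB$ classified by $\alpha$" and immediately form $\dR^\pd_{\cB/\sqz_k^\pd V}$, you are silently treating $\cB$ as a sheaf of \emph{divided power} rings over $\sqz_k^\pd V$, but a point of $\Def_X(\sqz_k^\pd V)$ only hands you a sheaf of animated rings over $\und(\sqz_k^\pd V)=\sqz_k V$. The missing step is the passage through $\ind$, i.e.\ the map $\Def_X\to\Def_X^\pd$ from \Cref{constr:ind-an-to-pd}, and verifying that on based loop spaces it corresponds (via \Cref{prop:defan-tangent-space} and \Cref{prop:defpd}) to postcomposition with the counit $v\colon \cB_\triv\to\cB$. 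This is precisely the paper's \Cref{lemma:local-period-1}, and it is what lets one use the $\CAlg^\pd$-adjunction $\Map'(\cO_X,\cB)\simeq\Map_{\cO_X}(L_{X/k},V\otimes_k\cO_X)$ (note: via $L_{X/k}$, not $L^\pd_{X/k}$, because the source still carries the trivial pd ideal) rather than getting stuck with a mismatch between $L$ and $L^\pd$.

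Second, the commutativity itself. You correctly identify $\Per_X(\sqz_k^\pd V)$ with $\Map_k(\RGamma(\Fil^1\dR_{X/k}),\,V\otimes_k\RGamma(\cO_X))$ — the basepoint triviality, the cofibre of $\dR_{X/k}\to\cO_X$, and \Cref{prop:dR-differential} all check out, and your fibre-over-$\ev_0$ computation is in effect the cofibre comparison of $\alpha$ and $\alpha_0$ in \Cref{lemma:local-period-2}. But you then declare that "naturality of the map of fibre sequences ... combined with the interpretation of $\alpha$ as a derivation ... forces this variation to equal $\alpha[-1]\circ\pi^1$", and flag this as the technical hurdle without giving the diagram. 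That diagram \emph{is} \Cref{lemma:key-lemma}: one must show that the map $\theta$ intertwines the pd-cotangent adjunction equivalence $\Map'_{\Shv(X,\CAlg^\pd_k)}(\cO_X,\cB)\simeq\Map_{\cO_X}(L_{X/k},V\otimes_k\cO_X)$ with the map $\Psi_{X/k}$ induced by the square~(\ref{eq:theta-to-L}), using that the projection $\cB\to V\otimes_k\cO_X$ factors as $\cB\xrightarrow{d}L_{\cB/k}\xrightarrow{\delta}V\otimes_k\cO_X$ because $\cB\simeq\sqz^\pd_{\cO_X}(V\otimes_k\cO_X)$. Only then does pasting with the map of cofibre sequences in \Cref{prop:dR-differential} yield the map induced by $\pi^1$. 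So the route you propose is the same as the paper's, just with \Cref{lemma:local-period-1} elided and \Cref{lemma:key-lemma} named but not proved; as written it leaves the genuinely nontrivial square unverified.
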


\begin{remark}The statement is entirely analogous to the classical computation of the derivative of the complex period map due to Griffiths \cite{Griffiths1968}.
\end{remark}

The rest of this section is devoted to the proof of \Cref{thm:tangent-to-period-map}.

\begin{notation}In   this section,  we write $A:=\sqz_k^\pd(V)$  for $V \in \Mod_k^{\cn}$ perfect and $\cB := A\otimes_k \cO_X$. Abusing notation,  write $A_\triv 
= \und(A)\otimes_k \sqz_k(V)  = A\otimes_k \sqz_k(V) $, interpreted as an object of either $\CAlg^\an$ or $\CAlg^\pd$. Set \mbox{$\cB_\triv := A_\triv \otimes_k \cO_X$.  }
Informally, $\cB$ is equipped with the divided power  ideal $V\otimes_k \cO_X$, while $\cB_\triv$ is equipped with the zero divided power ideal.
\end{notation}
 
The counit of the adjunction of \Cref{prop:und-as-right-adjoint} defines a map $v\colon A_\triv \rightarrow A$ in $\CAlg^\pd$ and hence a map $v\colon \cB_\triv \rightarrow \cB$ 
in $\Shv(X,\CAlg^{\pd}_k)$. 

\begin{lemma}\label{lemma:local-period-1}
There is a commutative square
\[
\begin{tikzcd}
	\Omega\Def_X(A) \arrow{d}{\ind} \arrow{r}{\simeq}
	& \Map'_{\Shv(X,\CAlg^{\pd}_k)}\big(
	\cO_X,\cB_{\triv} \big) \arrow{d}{v} \\
	\Omega\Def^{\pd}_X(A) \arrow{r}{\simeq} 
	& \Map'_{\Shv(X,\CAlg^{\pd}_k)}\big(
	\cO_X,\,\cB \big)
\end{tikzcd}
\]
in which the horizontal maps are the equivalences from \Cref{prop:defan-tangent-space} and \Cref{prop:defpd}, respectively, and the right map is induced by the map $v\colon \cB_\triv \rightarrow \cB$.
\end{lemma}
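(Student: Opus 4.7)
The plan is to prove the square commutes by a careful unravelling of the equivalences on each side, reducing everything to the naturality of an adjunction unit. First, I would identify the base points: the canonical point of $\Def_X(A)$ corresponds to the trivial deformation $\cB_\triv = \und(A)\otimes_k \cO_X$ in $\Shv(X,\CAlg^\an_{\und(A)})$, while the canonical point of $\Def^\pd_X(A)$ corresponds to $\cB = A\otimes_k \cO_X$ in $\Shv(X,\CAlg^\pd_A)$. The functor $\ind$ preserves base points: by \Cref{def:induced-divided-power-structure}, $\ind_A(\cB_\triv) \simeq A\otimes_{A_\triv}\cB_\triv \simeq A\otimes_k \cO_X \simeq \cB$, where the middle equivalence uses that $\cB_\triv \simeq A_\triv \otimes_k \cO_X$.

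Next, I would unravel the pullback square of \Cref{prop:defmod-tangent-space} (used for both \Cref{prop:defan-tangent-space} and \Cref{prop:defpd}) to identify $\Omega\Def_X(A)$ with the space of self-equivalences of $\cB_\triv$ in $\Shv(X,\CAlg^\an_{\und(A)})$ whose base change to $k$ is $\id_{\cO_X}$, and similarly $\Omega\Def^\pd_X(A)$ with the analogous space of self-equivalences of $\cB$ in $\Shv(X,\CAlg^\pd_A)$. Applying the adjunctions $\und(A)\otimes_k -\dashv \text{forget}$ and $A\otimes_k - \dashv \text{forget}$ translates these into the section spaces appearing in the statement; for the animated side I would further invoke full faithfulness of $\CAlg^\an \injto \CAlg^\pd$ (\Cref{prop:und-as-right-adjoint}) to rewrite the mapping space in $\Shv(X,\CAlg^\pd_k)$.

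Under these identifications, the map induced by $\ind$ on loop spaces is given by sending a self-equivalence $f \colon \cB_\triv \to \cB_\triv$ in $\CAlg^\an_{\und(A)}$ to its base change $f' = \id_A \otimes_{A_\triv} f \colon \cB \to \cB$ in $\CAlg^\pd_A$. Concretely, if $\eta \colon \cO_X \to \cB_\triv$ and $\eta' \colon \cO_X \to \cB$ are the units of the two adjunctions, the associated sections are $s_f = f\circ \eta$ and $s_{f'} = f'\circ \eta'$. The commutativity then reduces to the identity $s_{f'} = v\circ s_f$, which follows from two facts: (i) $\eta' = v\circ \eta$, since $v$ is the map $A_\triv \to A$ tensored with $\cO_X$ and the units are the structure maps; and (ii) $f'\circ v = v\circ f$, because $f' = \id_A \otimes_{A_\triv} f$ is by construction the base change of $f$ along $v$. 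Combining these yields $s_{f'} = f'\circ v \circ \eta = v\circ f\circ \eta = v\circ s_f$, as required.

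The main obstacle, if any, is purely bookkeeping: making precise that the loop space $\Omega \Def_X(A)$ is indeed computed at the trivial deformation identified via $\ind$, and that the straightening of the left fibration $\cat{D}^\cocart \to \CAlg^\an$ (or its pd analogue) is compatible with these adjunctions in the way described. Once the relevant cocartesian edges are traced through, the argument reduces to the naturality statement in the final paragraph, which is tautological.
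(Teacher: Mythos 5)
Your argument is correct and follows the same route as the paper: identify the loop spaces with $\Map'$ section spaces via the base-change adjunctions $k\to A_\triv$ and $k\to A$, then observe that $\ind$ becomes $A\otimes_{A_\triv}-$ on self-maps and that commutativity reduces to naturality of the counit $v$. The paper packages this as one commutative square of mapping spaces living over $\Map(\cO_X,\cO_X)$ and takes fibres over the identity, whereas you unwind the adjunction units more explicitly, but the content is identical.
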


\begin{proof}This is rather formal from the definitions. 
We  have a commutative square
\[
\begin{tikzcd}
	\Map_{\Shv(X,\CAlg^{\pd}_{A_\triv})}(\cB_\triv,\cB_\triv)
	\arrow{d}{A\otimes_{A_\triv}-}
	\arrow{r}{\simeq}
	& \Map_{\Shv(X,\CAlg^\pd_k)}(\cO_X,\cB_\triv) \arrow{d}{v} \\
	\Map_{\Shv(X,\CAlg^\pd_A)}(\cB,\cB) \arrow{r}{\simeq} 
	& \Map_{\Shv(X,\CAlg^\pd_k)}(\cO_X, \cB).
\end{tikzcd}
\]
Here the horizontal equivalences are the base change  adjunctions for $k\rightarrow A_\triv$ and $k\rightarrow A$, respectively. The diagram lies over  $\Map_{\Shv(X,\CAlg^\pd_k)}(\cO_X,\cO_X)$, and taking the fibres over the identity map yields the desired commutative square.
\end{proof}	
 
\begin{lemma}\label{lemma:local-period-2}
There is a commutative diagram
\[
\begin{tikzcd}
	\Omega\Def^{\pd}_X(A) \arrow{d}{} \arrow{r}{\simeq}
	& \Map'_{\Shv(X,\CAlg^{\pd}_k)}\big(\cO_X,\cB \big)
	\arrow{d}{\RGamma\circ \theta} \\
	\Omega\Per_X(A) \arrow{r}{\simeq} 
	& \Map'_{\Fun(\cK,\Mod_k)}\big(
	\RGamma(X,\theta_{X/k}),\,
	\RGamma(X,\theta_{\cB/k}) \big),
\end{tikzcd}
\]
in which the top horizontal  map is the equivalence from \Cref{prop:defpd}, and the lower horizontal 
map is an equivalence. 
\end{lemma}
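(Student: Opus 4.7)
The top equivalence is supplied by \Cref{prop:defpd}, so the work lies in producing the bottom equivalence and checking that the square commutes. For the bottom equivalence, I would unpack the defining pullback of $\Per_X$ from \Cref{def:perdommap}: since loop space commutes with finite limits, we obtain
\[
\Omega\Per_X(A)\simeq
\Omega\Def^{\mod}_{\RGamma(X,\theta_{X/k})}(A)
\times_{\Omega\Def^{\mod}_{\RGamma(X,\dR_{X/k})}(A)} \{\ast\},
\]
where the right-hand point is the image of $\{\ast\}$ under $\RGamma(\dR^{\pd}_{X/-})$ and, by base change (\Cref{prop:dR-base-change}), picks out the trivial lift $A\otimes_k \RGamma(X,\dR_{X/k})\simeq \RGamma(X,\dR^{\pd}_{X/A})$. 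Applying \Cref{prop:defmod-tangent-space} to each factor, with ambient $k$-linear stable $\infty$-categories $\Fun(\cK,\Mod_k)$ and $\Mod_k$ (equipped with the evident pointwise $t$-structures), then rewrites this pullback as the space of sections $s\colon\RGamma(X,\theta_{X/k})\to A\otimes_k \RGamma(X,\theta_{X/k})$ in $\Fun(\cK,\Mod_k)$ whose $\ev_0$-component is the canonical section induced by $k\to A$.

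Next, I would identify this space of sections with the target of the claimed equivalence. Set $\cB:=A\otimes_k\cO_X$; since $\overline A\simeq k$ we have $\overline{\cB}\simeq \cO_X$, and crystalline invariance (\Cref{thm:crystalline-invariance-dR}) gives $\RGamma(X,\dR^{\pd}_{\cB/k})\simeq \RGamma(X,\dR^{\pd}_{\overline{\cB}/k})\simeq \RGamma(X,\dR_{X/k})$. Thus the two leftmost vertices of $\RGamma(X,\theta_{\cB/k})$ already coincide with those of $\RGamma(X,\theta_{X/k})$, and the natural forgetful arrow $\RGamma(X,\theta_{\cB/k})\to\RGamma(X,\theta_{X/k})$ is the identity on these vertices and the augmentation $A\otimes_k\RGamma(X,\cO_X)\to\RGamma(X,\cO_X)$ on the rightmost. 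Imposing triviality of the $\ev_0$-component of $s$ forces its components on the two left vertices to be the canonical inclusions, leaving precisely the datum of a section of the augmentation on the rightmost vertex that is compatible with the rightward arrow of $\theta_{X/k}$. This identifies the fiber with $\Map'_{\Fun(\cK,\Mod_k)}(\RGamma(X,\theta_{X/k}),\RGamma(X,\theta_{\cB/k}))$ and produces the bottom equivalence.

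For the commutativity of the square, I would trace a class in $\Omega\Def^{\pd}_X(A)$, represented by a lift $\cB'$, through both compositions. Going down then right applies \Cref{prop:defpd} and the right vertical induced by the fundamental diagram~\eqref{eq:fundamental-diagram-of-fibrations}; going right then down applies $\RGamma\circ\theta$ pointwise, and the naturality of $\theta$ in $\cB'$ together with the identifications of the previous paragraph makes both tangent maps agree. The principal technical obstacle will be the second step: organising the pullback of module tangent spaces as a mapping space into $\RGamma(X,\theta_{\cB/k})$ requires careful bookkeeping inside $\Fun(\cK,\Mod_k)$ and repeated use of \Cref{thm:crystalline-invariance-dR} to ensure that the leftmost two vertices of $\theta_{\cB/k}$ are aligned with those of $\theta_{X/k}$ rather than with their $A$-linearly tensored counterparts.
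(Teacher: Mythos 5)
Your skeleton is the one the paper follows: unwind the defining pullback of $\Per_X$ from \Cref{def:perdommap}, apply $\Omega$ and the formula of \Cref{prop:defmod-tangent-space}, and identify the resulting fiber with $\Map'_{\Fun(\cK,\Mod_k)}(\RGamma\theta_{X/k},\RGamma\theta_{\cB/k})$. The gap is in the second paragraph, and it is not quite the one you flag at the end. You are right that the two left vertices of $\theta_{\cB/k}$ are identified with those of $\theta_{X/k}$ via \Cref{thm:crystalline-invariance-dR}; that is the easy part. What is left unaddressed is that the compatibility constraint on the rightmost component $s_2$ is a priori a constraint against the \emph{relative} Hodge projection $A\otimes_k(\dR_{X/k}\to\cO_X)\simeq\dR^{\pd}_{\cB/A}\to\cB$ in your fiber description, whereas in $\Map'(\RGamma\theta_{X/k},\RGamma\theta_{\cB/k})$ it is a constraint against the \emph{absolute} Hodge projection $\dR^{\pd}_{\cB/k}\to\cB$. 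Simply declaring that both sides consist of ``sections of the augmentation compatible with the rightward arrow of $\theta_{X/k}$'' conflates two different compatibility data; you never produce a map between the two sides, so there is nothing to be an equivalence.

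The fix is to introduce the comparison map explicitly: let $\alpha\colon\theta_{\cB/k}\to\theta_{\cB/A}\simeq A\otimes_k\theta_{X/k}$ be induced by $k\to A$ and by the base change identification of \Cref{lemma:fundamental-diagram-cocartesian}. This is the identity on the right vertex, and postcomposition with $\alpha$ gives a map $\Map'(\RGamma\theta_{X/k},\RGamma\theta_{\cB/k})\to\Map'(\RGamma\theta_{X/k},A\otimes_k\RGamma\theta_{X/k})$ landing over the canonical $\ev_0$-section. To show this is an equivalence onto the fiber, the paper compares cofibers: because $\dR^{\pd}_{\cB/k}\to\dR_{X/k}$ is an equivalence, the cofiber of $\alpha$ on mapping complexes is $\Map\big(\RGamma\theta_{X/k},\;V\otimes_k\RGamma\dR_{X/k}\isomfrom V\otimes_k\RGamma\dR_{X/k}\to 0\big)$, while $\cofib(\alpha_0)\simeq\Map_k(\RGamma\dR_{X/k},V\otimes_k\RGamma\dR_{X/k})$, and $\ev_0$ identifies the two because $0$ is final in $\Mod_k$. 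This single observation replaces your ``careful bookkeeping'' and also discharges the third paragraph: once the square with $\alpha$ is recognised as a pullback, the commutativity is a comparison of pullback diagrams rather than a pointwise trace through lifts. (Incidentally, a class of $\Omega\Def_X^\pd(A)$ is an automorphism of the trivial lift $\cB$, not ``a lift $\cB'$'', so that part of your diagram chase is also not quite set up as written.)
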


\begin{proof}\Cref{lemma:fundamental-diagram-cocartesian} gives an equivalence $\theta_{\cB/A} \simeq A\otimes_k \theta_{\cO_X/k}$. Consider
the \mbox{composite}
\[
\alpha: 	\theta_{\cB/k} \rightarrow \theta_{\cB/A} \simeq 
A\otimes_k \theta_{X/k}.
\]
in $\Fun(\cK,\Shv(X,\Mod_k))$  and write  
$\alpha_0:	\dR^{\pd}_{X/k} \rightarrow \dR^{\pd}_{X/A} \simeq A\otimes_k \dR_{X/k}$
for its  component in $\Shv(X,\Mod_k)$.  Under the equivalences
$
\Omega\Def_?(A) \simeq \Map'(?, A\otimes_k ?)
$
of Propositions~\ref{prop:defpd} and~\ref{prop:defmod-tangent-space}, the commutative square
\[
\begin{tikzcd}[column sep=36]
	\Omega\Def_X^{\pd}(A) 
	\arrow{d}{\RGamma\circ\theta} \arrow{r}
	& \{\ast\} \arrow{d}{\RGamma(\dR^{\pd}_{X/A})} \\
	\Omega\Def_{\RGamma(\theta_{X/k})}^\mod(A)
	\arrow{r}{\ev_0} 
	& \Omega\Def_{\RGamma(\dR_{X/k})}^\mod(A)
\end{tikzcd}
\]
 becomes equivalent to the outer square in the commutative diagram
\begin{equation*}
	\begin{tikzcd}[column sep=small]
		\Map'\big(\cO_X,\,\cB\big)
		\arrow{d}{\RGamma\circ \theta} \arrow{r}
		& \{*\} \arrow{d}{\simeq} \\
		\Map'\big(\RGamma(\theta_{X/k}),\,
		\RGamma(\theta_{\cB/k})\big)
		\arrow{d}{\alpha} \arrow{r}{\ev_0} \arrow[rd, phantom, "\lrcorner",  at start]
		&  \Map'\big(\RGamma(\dR_{X/k}),\,
		\RGamma(\dR_{X/k}) \big) 
		\arrow{d}{\alpha_0} \\
		\Map'\big(
		\RGamma(\theta_{X/k}), A\otimes_k \RGamma(\theta_{X/k})
		\big) \arrow{r}{\ev_0} 
		& \Map'\big(\RGamma(\dR_{X/k}),\, 
		A\otimes_k \RGamma(\dR_{X/k}) \big).
	\end{tikzcd}
\end{equation*}
Here  $\Map'$ is computed in $\Shv(X,\CAlg^{\pd}_k)$, $\Fun(\cK,\Mod_k)$, and $\Mod_k$, respectively.

Note that the top right map is an equivalence. 
We claim that the bottom square is a pullback.  To verify this, we compare the cofibres of $\alpha$ and $\alpha_0$.
As  $\dR_{\cB/k}\xrightarrow{\simeq}  \dR_{X/k}$, we find
\[
\cofib(\alpha) \simeq 
\Map\!\big(
\RGamma(\theta_{X/k}),\, 
V\otimes_k \RGamma(\dR_{X/k}) \isomfrom
V\otimes_k \RGamma(\dR_{X/k}) \rightarrow 0  
\big),
\]
with the mapping space computed in $\Fun(\cK,\Mod_k)$, and 
\[
\cofib(\alpha_0) \simeq 
\Map_{k}\!\big(
\RGamma(\dR_{X/k}),\, V\otimes_k \RGamma(\dR_{X/k})
\big).
\]
Since $0$ is final in $\Mod_k$, the induced map $\ev_0\colon \cofib(\alpha) \rightarrow \cofib(\alpha_0)$  is an equivalence, and we conclude that the lower square above is a pullback.

The claim now follows by comparing the above pullback diagram with the pullback square in \Cref{def:perdommap}. 
\end{proof}

By \Cref{prop:dR-differential}, we have a commutative square
\begin{equation}\label{eq:theta-to-L}
\begin{tikzcd}
	\dR_{X/k} \arrow{r} \arrow{d} & \cO_X \arrow{d}{d} \\
	0 \arrow{r} & L_{X/k},
\end{tikzcd}
\end{equation}
which induces a map
\[
\Psi_{X/k}\colon  \theta_{X/k} \rightarrow 
\big[ 0 \isomfrom 0 \rightarrow L_{X/k} \big]
\]
in $\Fun(\cK,\Mod(X,k))$. 
The trivial section $\cO_X \rightarrow \cB$ induces a decomposition
\begin{equation}\label{eq:theta-decomposition}
\theta_{\cB/k} \simeq \theta_{X/k} \, \oplus \,\,
\big[ 0 \isomfrom 0 \rightarrow V\otimes_k \cO_X \big]
\end{equation}
in $\Fun(\cK,\Mod(X,k))$. The following lemma is the key to the proof of \Cref{thm:tangent-to-period-map}:
\begin{lemma}\label{lemma:key-lemma}
There is a commutative square
\[
\begin{tikzcd}
	\Map'_{\Shv(X,\CAlg^\pd_k)}\!\big(\cO_X,\, \cB \big)
	\arrow{r}{\simeq} \arrow{d}{\RGamma\circ\theta} 
	& \Map\!_{\Mod(X,\cO_X)}\big(L_{X/k},\, V\otimes_k \cO_X \big)
	\arrow{d}{\Psi_{X/k}} \\
	\Map'\!
	\big( \RGamma(\theta_{X/k}),\, \RGamma(\theta_{\cB/k}) \big) 
	\arrow{r}{\simeq} 
	& \Map\!\big(
	\RGamma(\theta_{X/k}),\, 
	0 \overset{\sim}\leftarrow 0 \rightarrow V\otimes_k \RGamma(\cO_X) \big) 
\end{tikzcd}
\]
in which the top map is the equivalence from \Cref{prop:pd-cotangent-sheaf-adjunction}, and the bottom equivalence
is  induced by ~(\ref{eq:theta-decomposition}). The lower mapping spaces   are computed in $\Fun(\cK,\Mod_k)$.
\end{lemma}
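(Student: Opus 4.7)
The strategy is to first establish the analogous commutative square at the sheaf level (before taking $\RGamma$), and then obtain the statement of the lemma by applying $\RGamma$. More precisely, we will construct a commutative square
\[
\begin{tikzcd}[column sep=0pt]
\Map'_{\Shv(X,\CAlg^\pd_k)}\!\big(\cO_X, \cB\big)
\arrow{r}{\simeq}\arrow{d}{\theta}
& \Map_{\cO_X}\!\big(L_{X/k}, V\otimes_k \cO_X\big)
\arrow{d}{\Psi_{X/k}} \\
\Map'_{\Fun(\cK,\Shv(X,\Mod_k))}\!\big(\theta_{X/k}, \theta_{\cB/k}\big) \arrow{r}{\simeq}
& \Map\!\big(\theta_{X/k},\,0 \isomfrom 0 \rightarrow V\otimes_k \cO_X\big)
\end{tikzcd}
\]
of spaces, where the top horizontal arrow is the adjunction of \Cref{prop:pd-cotangent-sheaf-adjunction} (using $L^\pd_{X/k}\simeq L_{X/k}$ from \Cref{prop:non-pd-cotangent-complex}), the bottom horizontal arrow is induced by the decomposition \eqref{eq:theta-decomposition}, and the vertical maps are the sheaf-level analogues of those in the statement of the lemma. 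Applying $\RGamma$ to this square then yields the lemma.

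To check commutativity of the sheaf-level square, fix a section $s\in \Map'_{\Shv(X,\CAlg^\pd_k)}(\cO_X,\cB)$, and let $\phi\colon L_{X/k}\rightarrow V\otimes_k \cO_X$ be its image under the adjunction. By \Cref{prop:pd-cotangent-sheaf-adjunction}, the section $s$ is the composite $\cO_X \xrightarrow{\id + d} \sqz^\pd_{\cO_X}(L_{X/k}) \xrightarrow{\sqz(\phi)} \cB$ (where we identify $\cB$ with $\sqz^\pd_{\cO_X}(V\otimes_k \cO_X)$ via the trivial section $s_0$). The trivial section also induces a splitting $\cB \simeq \cO_X \oplus V\otimes_k \cO_X$ in $\Mod_{\cO_X}$, and in this splitting the component of $s$ in the second summand is $\phi\circ d\colon \cO_X \rightarrow L_{X/k}\rightarrow V\otimes_k \cO_X$, while the first summand gives $\id_{\cO_X}$ (since $s$ is a section of $\pi\colon \cB\rightarrow \cO_X$).

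We next identify $\theta(s)$ under the decomposition \eqref{eq:theta-decomposition}. Since $\overline{s} = \id_{\cO_X}$, the first slot of $\theta(s)$ is $\id\colon \dR_{X/k}\rightarrow \dR_{X/k}$. Combining this with the commutative square
\[
\begin{tikzcd}
\dR^\pd_{X/k} \arrow{r}{\dR^\pd(s)}\arrow[equal]{d} & \dR^\pd_{\cB/k}\arrow{d}{\simeq} \\
\dR^\pd_{X/k}\arrow{r}{\dR^\pd(\pi\circ s)} & \dR^\pd_{\overline{\cB}/k}=\dR_{X/k}
\end{tikzcd}
\]
and the identity $\pi\circ s = \id_{\cO_X}$, we see that the middle slot of $\theta(s)$ is also the identity $\dR_{X/k}\rightarrow \dR_{X/k}$ under the crystalline identification of \Cref{thm:crystalline-invariance-dR}. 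Hence under the decomposition \eqref{eq:theta-decomposition}, the projection of $\theta(s)$ onto the first summand $\theta_{X/k}$ is the identity, while the projection onto the summand $[0\isomfrom 0\rightarrow V\otimes_k \cO_X]$ is the map whose third slot is $\phi\circ d$ and whose other slots are zero. Unwinding the definition of $\Psi_{X/k}$ from~\eqref{eq:theta-to-L}, the composite $\Psi_{X/k}^\ast(\phi_\ast)$ is the map $\theta_{X/k}\rightarrow [0\isomfrom 0\rightarrow V\otimes_k \cO_X]$ whose third slot is precisely $\phi\circ d$ and whose other slots vanish. The two paths therefore produce the same map, proving the commutativity of the sheaf-level square.

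The main subtlety in this argument lies in verifying the decomposition \eqref{eq:theta-decomposition} and, in particular, that the map $\dR^\pd_{\cB/k}\rightarrow \cB$ lands in $\cO_X\subset \cB$ under the crystalline identification induced by the trivial section $s_0$; this follows from the naturality square
\[
\begin{tikzcd}
\dR^\pd_{X/k}\arrow{r}{\pi^0_X}\arrow{d}{\dR^\pd(s_0)} & \cO_X\arrow{d}{s_0} \\
\dR^\pd_{\cB/k}\arrow{r}{\pi^0} & \cB,
\end{tikzcd}
\]
together with the fact that $\dR^\pd(s_0)$ is inverse to the crystalline equivalence $\dR^\pd_{\cB/k}\xrightarrow{\simeq}\dR^\pd_{\overline{\cB}/k}=\dR_{X/k}$ of \Cref{thm:crystalline-invariance-dR}.
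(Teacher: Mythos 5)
Your overall strategy — establish the commutative square at the sheaf level and then apply $\RGamma$ — matches the paper's, but the way you establish the sheaf-level square is quite different, and the paper's route is both slicker and, in the $\infty$-categorical context, actually necessary. The paper observes that the square~\eqref{eq:theta-to-L} is \emph{functorial} in the sheaf $\cO_X$, i.e.\ that $\Psi_{\cA/k}\colon \theta_{\cA/k}\rightarrow[0\isomfrom 0\rightarrow L_{\cA/k}]$ is a natural transformation in $\cA$. Applying this naturality to the data of a section $\cO_X\rightarrow\cB$ produces, all at once, a commutative square of mapping spaces (the paper's diagram~\eqref{eq:key-lemma-sq1}) whose top-right corner is $\Map(L_{X/k}, L_{\cB/k})$; one then post-composes with the map $\delta\colon L_{\cB/k}\rightarrow V\otimes_k\cO_X$ (coming from the identification $\cB\simeq\sqz^\pd_{\cO_X}(V\otimes_k\cO_X)$ and the counit of the adjunction) and identifies the resulting top map with the adjunction equivalence.

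Your argument instead fixes a section $s$ and chases it through both composites, checking slot-by-slot that $\theta(s)$ and $\Psi_{X/k}^\ast(\phi)$ have the same third component $\phi\circ d$ and vanishing first and second components. This is a correct calculation on objects, and the two supporting observations (that $\overline{s}=\id$, and that $\dR^\pd(s_0)$ is inverse to the crystalline equivalence by \Cref{thm:crystalline-invariance-dR}) are both right. But ``both paths produce the same map for each $s$'' does not by itself construct a commutative square of \emph{spaces}: you need a coherent homotopy between the two composite morphisms, varying naturally in $s$, not just a pointwise agreement in $\pi_0$. The paper's naturality argument supplies that homotopy for free, because the square is literally obtained by applying the corepresented functor $\Map(\theta_{X/k},-)$ (resp.\ $\Map(L_{X/k},-)$) to the naturality square for $\Psi$, whereas your unwinding would still need to be upgraded to a coherent diagram. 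The missing ingredient is exactly the observation that $\Psi$ is a natural transformation, which is the mechanism the paper uses to sidestep the pointwise check entirely. In short: your computation is a plausibility check of the claim, but as a proof it needs the naturality of~\eqref{eq:theta-to-L} to be made rigorous, and once you invoke that naturality the pointwise chase becomes unnecessary.
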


\begin{proof} 
The formation of the square (\ref{eq:theta-to-L}) is functorial in the sheaf $\cO_X$. In particular, it
provides a commutative square
\begin{equation}\label{eq:key-lemma-sq1}
	\begin{tikzcd}
		\Map'_{\Shv(X, \CAlg^{\pd}_k)}\big(\cO_X,\, \cB \big) 
		\arrow{r}{L} \arrow{d}{\theta} 
		& \Map_{\Shv(X,\Mod_k)}\big(L_{X/k},\, L_{\cB/k}\big) 
		\arrow{d}{\Psi_{X/k}}  \\
		\Map'
		\big( \theta_{X/k},\, \theta_{\cB/k} \big) 
		\arrow{r}{\Psi_{\cB/k}} 
		& \Map
		\big(\theta_{X/k},\, 
		0 \isomfrom 0\rightarrow L_{\cB/k}\big)
	\end{tikzcd}
\end{equation}

By the adjunction of \Cref{prop:pd-cotangent-sheaf-adjunction}, the projection $\cB \rightarrow V\otimes_k \cO_X$ factors as
\[
\cB \overset{d}\longto L_{\cB/k} 
\overset{\delta}\longto V\otimes_k \cO_X,
\]
where we have used that $\cB\simeq \sqz^{\pd}_{\cO_X}(V\otimes_k \cO_X)$.
Composing the square (\ref{eq:key-lemma-sq1}) on the right with the map induced by $\delta$, we obtain a commutative square 
\[
\begin{tikzcd}
	\Map'_{\Shv(X, \CAlg^{\pd}_k)}\big(\cO_X,\, \cB \big) 
	\arrow{r} \arrow{d}{\theta} 
	& \Map_{\Mod(X,k)}\big(L_{X/k},\, V\otimes_k \cO_X \big) 
	\arrow{d}{\Psi_{X/k}} \\
	\Map'
	\big( \theta_{X/k},\, \theta_{\cB/k} \big) 
	\arrow{r} 
	& \Map
	\big(\theta_{X/k},\, 0 \isomfrom 0\rightarrow V\otimes_k\cO_X \big)
\end{tikzcd}
\]
By construction, the top map is the adjunction equivalence of \Cref{prop:pd-cotangent-sheaf-adjunction}. Composing with $\RGamma(X,-)$ gives the desired commutative square.
\end{proof}

\begin{proof}[Proof of \Cref{thm:tangent-to-period-map}]
Lemmas~\ref{lemma:local-period-1}, \ref{lemma:local-period-2}, and \ref{lemma:key-lemma} give the following diagram:
\[
\begin{tikzcd}
	\Omega\Def_X(A) 
	\arrow{d}{\per_X} \arrow{r}{\simeq}
	& \Map_{\Mod(X,\cO_X)}\!\big(L_{X/k},\, V\otimes_k \cO_X \big)
	\arrow{d}{\Psi_{X/k}} \\
	\Omega\Per_X(A) \arrow{r}{\simeq} 
	& \Map_{\Fun(\cK,\Mod_k)}\big(
	\RGamma(\theta_{X/k}),\, 
	0 \isomfrom 0 \rightarrow V\otimes_k \RGamma(\cO_X) \big)
	\arrow{d}{\simeq} \\
	& \Map_{k}\!\big(
	\RGamma(\Fil^1\dR_{X/k})[1],\,V\otimes_k \RGamma(\cO_X) \big).
\end{tikzcd}
\]
The bottom right vertical map is the equivalence induced by passing to cofibres in the domain. To compute the composition of the right vertical maps, note that by \Cref{prop:dR-differential}, diagram~(\ref{eq:theta-to-L}) extends to a map of cofibre sequences
\[
\begin{tikzcd}
	\dR_{X/k} \arrow{r} \arrow{d} 
	& \cO_X \arrow{d}{d} \arrow{r} 
	& \Fil^1\dR_{X/k}[1] \arrow[dashed]{d}{\pi^1} \\
	0 \arrow{r} & L_{X/k} \arrow{r} & L_{X/k}
\end{tikzcd}
\]
This gives a commutative square
\[
\begin{tikzcd}
	\Omega\Def_X(A) \arrow{d}{\per_X} \arrow{r}{\simeq} 
	& \Map_{\cO_X}(L_{X/k},\, V\otimes_k \cO_X) \arrow{d}{\pi^1} \\
	\Omega\Per_X(A) \arrow{r}{\simeq} 
	& \Map_{k}(\RGamma(\Fil^1\dR_{X/k})[1],\, 
	V\otimes_k \RGamma(\cO_X)).
\end{tikzcd}
\]
The claim follows as  $\Omega F(\sqz^\pd V[1]) \simeq F(\sqz^\pd V)$ for $F$ in $\Moduli^\pd_W$.
\end{proof}

\section{Unobstructedness}
\label{sec:unobstructedness}

In this section, we leverage the period map of Section \ref{sec:period-map}
to prove  \Cref{mainthm:btt}. 
First, we establish conditions under which    the period domain $\Per_X$   is   unobstructed. Next, we   check that $\per_X$ 
often induces an injection on `divided power obstruction classes', which lets us deduce that $\Def_X$ is `divided power unobstructed'. Finally, we use a bootstrapping technique (inspired by Schr\"{o}er ~\cite{Schroeer2003}) to establish a general unobstructedness criterion in \Cref{thm:unobstructed}, from   which we then deduce \Cref{mainthm:btt} (using a result of Achinger--Suh \cite{AchingerSuh2023} at $p=2$).

Fix a perfect\vspace{0pt} field $k$ of characteristic $p$  and write $W$ for its ring of Witt vectors.

\subsection{Unobstructedness of the period domain}

Let $M$ and $N$ be eventually connective objects in $\Mod_W$, and write $M_0 := k\otimes_W M$
and $N_0 := k\otimes_W N$.
Using \Cref{cons:modfmp}, we consider the formal moduli problem $F\colon \CAlg^{\an,\art}_W \rightarrow \cat{S}$ whose $A$-points are given by the pullback square
\[
\begin{tikzcd}[column sep=42pt]
F(A) \arrow{d} \arrow{r} \arrow[rd, phantom, "\lrcorner",  at start] & 
\Def^\mod_{M_0 \oplus N_0 \rightarrow N_0}(A) \arrow{d}{\ev_0} \\
\{\ast\} \arrow{r}{A\otimes_W (M\oplus N)} 
& \Def^\mod_{M_0 \oplus N_0}(A).
\end{tikzcd}
\]
One can think of $F$ as the formal neighbourhood of a point in a derived  Grassmannian; the following lemma provides local coordinates on this Grassmannian: 

\begin{lemma}\label{lemma:local-coordinates-grassmanian}
There is a fibre sequence
\[
F(A) \rightarrow \Map_{W}(M,A\otimes_W N) \rightarrow \Map_{W}(M,N_0),
\]
functorial in $A\in \CAlg^{\an,\art}_W$.
\end{lemma}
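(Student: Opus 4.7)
The plan is to unravel the pullback defining $F(A)$ in concrete module-theoretic terms, and then split off the $N$-component via derived Nakayama to extract the desired fibre sequence.

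First I would unpack the definition: an $A$-point of $F$ consists of an $A$-module $\tilde Q$ together with an equivalence $k\otimes_A\tilde Q\simeq N_0$, plus an $A$-linear arrow $\phi\colon A\otimes_W(M\oplus N)\to\tilde Q$ whose reduction is identified with the projection $M_0\oplus N_0\to N_0$. Applying the base-change adjunction $\Map_A(A\otimes_W X,\tilde Q)\simeq\Map_W(X,\tilde Q)$ and decomposing the direct sum, $\phi$ corresponds to a pair $(f\colon M\to\tilde Q,\ g\colon N\to\tilde Q)$ of $W$-linear maps, where $f$ lifts the zero map $M_0\to N_0$ and $g$ lifts $\mathrm{id}_{N_0}$. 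All of this is clearly natural in $A\in\CAlg^{\an,\art}_W$.

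Next I would introduce the auxiliary space $F_N(A)$ of pairs $(\tilde Q,g)$ with $g$ lifting the identity and show that $F_N(A)$ is contractible. The adjoint of $g$ is an $A$-linear map $A\otimes_W N\to\tilde Q$ whose reduction is $\mathrm{id}_{N_0}$; by derived Nakayama, which applies since $A$ is Artinian and both sides are eventually connective, this adjoint is automatically an equivalence. Thus $F_N(A)$ is the space of deformations of $N_0$ equipped with a trivialisation by $A\otimes_W N$, which is contractible: the constant assignment $\ast\mapsto(A\otimes_W N,\mathrm{id})$ is an equivalence because the trivialisation rigidifies $\tilde Q$ so that each point has contractible automorphism space.

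Taking the fibre of the forgetful map $F(A)\to F_N(A)$ over $(A\otimes_W N,\mathrm{id})$ then identifies $\tilde Q$ canonically with $A\otimes_W N$, leaving precisely a $W$-linear map $f\colon M\to A\otimes_W N$ together with a nullhomotopy of its reduction $M_0\to N_0$. This is tautologically the fibre of $\Map_W(M,A\otimes_W N)\to\Map_W(M,N_0)$ over the zero map, which yields the asserted fibre sequence. The main point requiring care is derived Nakayama for Artinian animated $W$-algebras, which I would verify by induction along the factorisation of \Cref{cor:artinian-small}: at each elementary square-zero extension $A_{i+1}\to A_i$, a map of eventually connective $A_{i+1}$-modules that becomes an equivalence after tensoring with $A_i$ is automatically an equivalence by a standard connectivity estimate, and iterating down to $A_0=k$ yields the claim.
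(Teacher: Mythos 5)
Your proof is correct and follows essentially the same route as the paper. The paper factors the defining pullback square for $F(A)$ through the subspace $\Mod^{\simeq,\ast}_{A,\,A\otimes_W(M\oplus N)/}$ of pairs $(Q, g)$ for which the $N$-component $A\otimes_W N\to Q$ is already an equivalence, invoking conservativity of $k\otimes_A(-)$ to see the resulting square is a pullback, and then identifies this subspace with $\Map_A(A\otimes_W M, A\otimes_W N)\simeq\Map_W(M,A\otimes_W N)$. Your introduction of the auxiliary space $F_N(A)$ and the observation that it is contractible is a minor repackaging of the same move: $F_N(A)$ is exactly the subspace of data involving only the $g$-component, and its contractibility is the reason the paper's factorisation is harmless.

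One small point worth tightening: your justification of derived Nakayama via "induction along the factorisation of \Cref{cor:artinian-small} plus a standard connectivity estimate" is a bit muddled. A connectivity estimate is not quite what you need at each elementary step (the shifts $n_i$ in \Cref{cor:artinian-small} may all be zero). The cleanest argument here is either (a) for bounded-below $M$, $\pi_n(k\otimes_A M)\cong k\otimes_{\pi_0 A}\pi_n M$ in the lowest nonvanishing degree, so classical Nakayama applied to the Artinian local ring $\pi_0 A$ gives the result; or (b) for arbitrary $M$, filter $A$ as an $A$-module by a finite chain with subquotients that are shifts of $k$ (this exists because $\bigoplus_i\pi_i A$ has finite length) and peel off the filtration to see $k\otimes_A M\simeq 0\Rightarrow A\otimes_A M\simeq 0$. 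Version (b) is what actually makes "conservativity of $k\otimes_A(-)$" a statement about all $A$-modules, which is how the paper uses it; you restrict to eventually connective $M$, which also works for this lemma but is slightly more fragile. Finally, your contractibility claim for $F_N(A)$ ("each point has contractible automorphism space") only shows $F_N(A)$ is discrete; to conclude contractibility one should add that it is non-empty and connected, which follows because every object $(\tilde Q,g)$ is, via the equivalence $g$, identified with the base point $(A\otimes_W N,\mathrm{id})$.
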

\begin{notation}
If $c$ is an object  in an $\infty$-category $\cat{C}$,   let $\cat{C}_{c/}^{\simeq}$ be the  \mbox{subcategory of $  \cat{C}_{c/}$} whose objects are arrows $c\rightarrow d$  and whose morphisms are equivalences \mbox{between them.}
\end{notation}
\begin{proof}[Proof of \Cref{lemma:local-coordinates-grassmanian}]
Chasing the definitions, 
we see that $F(A)$ sits in a pullback\vspace{-2pt}
\[
\begin{tikzcd}
	F(A) \arrow{r} \arrow{d} \arrow[rd, phantom, "\lrcorner",  at start] 
	& \Mod_{A, A\otimes_W(M\oplus N)/} \arrow{d}{k\otimes_A -}^\simeq \\
	\{\ast\} \arrow{r}{N_0} 
	& \Mod_{k, (M_0\oplus N_0)/}^\simeq,
\end{tikzcd}
\]
functorially in $A$. Here the bottom map picks out the projection \mbox{$\pr_1 \colon M_0\oplus N_0 \rightarrow N_0$.}

Let $\Mod_{A,A\otimes_W (M\oplus N)/}^{\simeq,\ast}$ be the full subcategory  of those $A\otimes_W (M\oplus N) \rightarrow Q$ such that the induced map $A\otimes_W N \rightarrow Q$ is an equivalence. By the conservativity of $k\otimes_A -$, the above square factors as the composition of two pullback squares:
\[
\begin{tikzcd}
	F(A) \arrow{r} \arrow{d} \arrow[rd, phantom, "\lrcorner",  at start] 
	& \Mod_{A, A\otimes_W(M\oplus N)/}^{\simeq,\ast} 
	\arrow{d}{k\otimes_A -} \arrow[hook]{r} \arrow[rd, phantom, "\lrcorner",  at start] 
	& \Mod_{A, A\otimes_W(M\oplus N)/}^{\simeq}
	\arrow{d}{k\otimes_A -} \\
	\{\ast\} \arrow{r}{N_0}
	& \Mod_{k, (M_0\oplus N_0)/}^{\simeq,\ast} \arrow[hook]{r}
	& \Mod_{k, (M_0\oplus N_0)/}^{\simeq}.
\end{tikzcd}
\]
We note that the middle vertical map is given by the map
\[
\Map_{A}(A\otimes_W M, A\otimes_W N) 
\rightarrow
\Map_{k}(M_0, N_0),
\]
which in turn identifies with the map
\[
\Map_{W}(M, A\otimes_W N) \rightarrow \Map_{W}(M, N_0).\vspace{-14pt}
\]\end{proof}

\begin{lemma}
Assume moreover that the $W$-module $\pi_0 \Map_{W}(N,M)$ is projective and that $N$ is perfect. Then $F^\cl\colon \CRing^\art_W \rightarrow \Set$ (cf.\ \Cref{not:fmp-underlying-classical}) is unobstructed. 
\end{lemma}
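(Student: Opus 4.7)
The plan is to proceed as follows. By \Cref{prop:unobstructed-via-obstruction-classes}, unobstructedness of $F^{\cl}$ is equivalent to the vanishing, for every elementary extension $A'\twoheadrightarrow A$ in $\CRing^{\art}_W$ with kernel $I$ (a finite-dimensional $k$-vector space) and every $x\in \pi_0 F(A)$, of the obstruction class $\ob_F(x, A'\to A)\in \pi_0 F(\sqz_k(I[1]))$.

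First, I would use the preceding lemma together with the perfectness of $N$ to rewrite
\[
F(A) \;\simeq\; \fib\bigl(\Map_W(M,A\otimes_W N)\to \Map_W(M,N_0)\bigr)\;\simeq\; \Map_W(P,\mathfrak{m}_A),
\]
where $P := M\otimes_W N^\vee \simeq \Map_W(N,M)$, using the tensor--hom adjunction available because $N$ is perfect. Under this identification the hypothesis that $\pi_0\Map_W(N,M)$ is projective becomes the statement that $\pi_0 P$ is projective as a $W$-module. Next, the cofibre sequence $I\to \mathfrak{m}_{A'}\to \mathfrak{m}_A$ in $\Mod_W$ gives the long exact sequence
\[
\pi_0\Map_W(P,\mathfrak{m}_{A'})\longrightarrow \pi_0\Map_W(P,\mathfrak{m}_A)\xrightarrow{\ \delta\ }\pi_{-1}\Map_W(P,I)=\Ext^1_W(P,I),
\]
and comparing with the fibre sequence $F(A')\to F(A)\to F(\sqz_k(I[1]))$ identifies $\delta$ with the obstruction map (since $F(\sqz_k(I[1]))\simeq \Map_W(P,I[1])$). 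It therefore suffices to prove $\Ext^1_W(P,I)=0$.

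To establish this vanishing, I would use the fundamental fact that $\pi_i P = \Ext^{-i}_W(N,M)$. Because $\Ext^j_W=0$ for $j<0$ (no negative Ext) and $\Ext^j_W=0$ for $j\geq 2$ (since $W$ is a discrete valuation ring of global dimension one), the spectrum $P$ is concentrated in topological degrees $-1$ and $0$: $\pi_i P=0$ unless $i\in\{-1,0\}$. The Postnikov tower thus reduces to a single cofibre sequence
\[
\pi_0 P \longrightarrow P \longrightarrow \pi_{-1}P[-1],
\]
and applying $\Map_W(-,I)$ and taking the long exact sequence of $\pi_\ast$ computes $\pi_{-1}\Map_W(P,I)$ as an extension of $\pi_{-1}\Map_W(\pi_0 P,I)=\Ext^1_W(\pi_0 P,I)$ and $\pi_{-1}\Map_W(\pi_{-1}P[-1],I)=\Ext^2_W(\pi_{-1}P,I)$. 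The first vanishes because $\pi_0 P$ is projective, and the second vanishes because $W$ has global dimension one, so $\Ext^1_W(P,I)=0$ as desired.

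The main obstacle is the careful bookkeeping identifying the obstruction map of the formal moduli problem $F$ with the boundary map $\delta$ coming from $I\to \mathfrak{m}_{A'}\to \mathfrak{m}_A$; once this is set up, the vanishing is a short Postnikov computation that exploits only the projectivity hypothesis and the low cohomological dimension of $W$. I expect no further subtleties, and in particular no connectivity hypothesis on $M$ or $N$ is needed, because the identification $\pi_i P=\Ext^{-i}_W(N,M)$ automatically forces $P$ into the two-step degree range $\{-1,0\}$.
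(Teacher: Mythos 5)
Your reduction to showing $F(A)\simeq\Map_W(P,\mathfrak m_A)$ with $P=\Hom_W(N,M)$ matches the paper. However, the proof then breaks at the Postnikov step: the claim that $P$ is concentrated in degrees $\{-1,0\}$ is \emph{false}, because $M$ and $N$ are general (eventually connective) objects of $\Mod_W$, not discrete modules. The identity $\pi_i\Map_W(N,M)=\Ext^{-i}_W(N,M)$ is a tautological use of derived Ext of \emph{complexes}; the bound $\Ext^j=0$ for $j<0$ or $j\geq 2$ over a hereditary ring only applies when both arguments are concentrated in a single degree. Take, for instance, $M=W[1]$, $N=W$: then $P=W[1]$ has $\pi_1 P=W\neq 0$, and $\Ext^1_W(P,I)=\pi_{-1}\Map_W(W[1],I)=I$, which is nonzero for any nonzero $I$. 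So your target group $\Ext^1_W(P,I)$ need not vanish under the hypotheses of the lemma. (The lemma is still true there — $\pi_0 F(A)=\pi_{-1}(I_A)=0$ for every discrete Artinian $A$, so the obstruction map has zero source — but your argument cannot see this.)

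The statement you would need to make the reduction work is not $\Ext^1_W(P,I)=0$ but the weaker assertion that the boundary map $\pi_0\Map_W(P,\mathfrak m_A)\to\Ext^1_W(P,I)$ vanishes, equivalently that $\pi_0\Map_W(P,\mathfrak m_{A'})\to\pi_0\Map_W(P,\mathfrak m_A)$ is surjective. The paper establishes exactly this: it applies the universal coefficient theorem over the hereditary ring $W$ to $\pi_0\Map_W(P,-)$ for the surjection $\mathfrak m_{A'}\twoheadrightarrow\mathfrak m_A$, producing a map of short exact sequences
\[
0 \to \Ext^1_W(\pi_{-1}P,\mathfrak m_{A'}) \to \pi_0\Map_W(P,\mathfrak m_{A'}) \to \Hom_W(\pi_0 P,\mathfrak m_{A'}) \to 0
\]
mapping onto the corresponding sequence for $\mathfrak m_A$; the outer vertical arrows are surjective (the right one since $\pi_0 P$ is projective, the left one since $\Ext^2_W$ vanishes), hence so is the middle one. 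This only requires control of $\pi_0 P$ and $\pi_{-1}P$, and works regardless of what $\pi_i P$ looks like for $i>0$. To repair your argument you should discard the claim that $P$ lives in degrees $\{-1,0\}$ and instead run this two-out-of-three argument on the UCT for $\pi_0\Map_W(P,-)$ along the surjection $\mathfrak m_{A'}\to\mathfrak m_A$.
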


\begin{proof}
Since $N$ is perfect, \Cref{lemma:local-coordinates-grassmanian} gives an equivalence
\[
F(A) \simeq \Map_{W}(\Hom_W(N,M), I_A),
\]
where $I_A$ denotes the fibre of the augmentation map $A\rightarrow k$. The lemma now follows from the following general fact applied to the mapping complex \mbox{$H:= \Hom_W(N,M)$. }

Let $H$ be an object of $\Mod_W$ with $\pi_0 H$ a projective $W$-module, and assume $P\rightarrow Q$ is a surjective map of discrete $W$-modules. Then the induced map
\[
\pi_0 \Map_{W}(H,P) \rightarrow \pi_0 \Map_{W}(H,Q)
\]
is surjective. Indeed,   by the universal coefficient theorem, it sits in a commutative diagram
\[
\begin{tikzcd}[column sep=16pt]  
	0 \arrow{r} 
	& \Ext^1_W(\pi_{-1} H, P) \arrow{r} \arrow[two heads]{d} 
	& \pi_0 \Map_{W}(H,P) \arrow{r} \arrow{d} 
	& \Hom_W(\pi_0 H, P) \arrow[two heads]{d} \arrow{r}
	& 0 \\
	0 \arrow{r}
	& \Ext^1_W(\pi_{-1} H, Q) \arrow{r}
	& \pi_0 \Map_{W}(H,Q) \arrow{r}  
	& \Hom_W(\pi_0 H, Q)  \arrow{r}
	& 0 
\end{tikzcd}
\]
with exact rows. The left vertical map is surjective as $\Ext^2_W(-.-)$ vanishes since $W$ is a principal ideal domain. 
The right vertical map is surjective since $\pi_0 H$ is projective. We conclude 
that also the middle vertical map must be surjective.
\end{proof}
Using that finitely generated torsion-free modules over PIDs are free,  {we   deduce:}
\begin{corollary}\label{cor:period-domain-unobstructed}
Let $X$ be a   smooth and proper scheme over $k$. Assume that there exists a decomposition
\[
\RGamma_\cris(X/W)\simeq M \oplus N
\]
in $\Mod_W^\wedge$ such that the following composition is an equivalence:
\[
k\otimes_W N \rightarrow k\otimes_W  \RGamma_\cris(X/W) 
\isomto \RGamma_{\dR}(X/k) \rightarrow \RGamma(X,\cO_X).
\]
Assume  that $\pi_0 \Map_{W}(N,M)$ is torsion-free. Then $\Per_X$ is unobstructed.  
\end{corollary}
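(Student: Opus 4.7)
The plan is to identify $\Per_X$ with (the restriction to divided power Artinian rings of) the formal moduli problem $F$ appearing in the preceding unnamed lemma built on \Cref{lemma:local-coordinates-grassmanian}, and then apply that lemma once the projectivity hypothesis is verified. The unobstructedness argument itself is module-theoretic and already handled there, so the work lies mostly in the identification.

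For the identification, using the crystalline comparison (\Cref{cor:crystalline-comparison}) together with base change for derived divided power de Rham cohomology (\Cref{prop:dR-base-change}), for every $A\in\CAlg^{\pd,\art}_W$ I obtain a natural equivalence
\[
\RGamma(X,\dR^{\pd}_{X/A}) \simeq A\otimes_W\RGamma_{\cris}(X/W)
\]
in $\Mod_A^{\wedge}$. Composing with the splitting $\RGamma_{\cris}(X/W)\simeq M\oplus N$ identifies the right-hand side with $A\otimes_W(M\oplus N)$. The arrow appearing in the $\theta$-diagram of \Cref{def:perdommap} reduces modulo the maximal ideal of $A$ to $\RGamma(X,\dR_{X/k})\to\RGamma(X,\cO_X)$, which by hypothesis factors (up to equivalence) as the projection $(k\otimes_W M)\oplus(k\otimes_W N)\to k\otimes_W N$. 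This arrow therefore lifts to the projection $A\otimes_W(M\oplus N)\to A\otimes_W N$. Unwinding the pullback definition of $\Per_X$ then yields a natural equivalence $\Per_X(A)\simeq F(\und A)$.

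To invoke the preceding lemma I need both $N$ perfect and $\pi_0\Map_W(N,M)$ projective. I will use the standard fact that for $X$ smooth and proper over a perfect field, $\RGamma_{\cris}(X/W)$ is a perfect $W$-complex; hence both $M$ and $N$ are perfect, which makes $\pi_0\Map_W(N,M)$ a finitely generated $W$-module. Over the discrete valuation ring $W$, any finitely generated torsion-free module is free, hence projective. The lemma then delivers unobstructedness of $F^{\cl}$, which immediately transfers to $\Per_X^{\cl}$ through the identification above.

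The main obstacle will be pinning down the naturality of $\Per_X(A)\simeq F(\und A)$ in $A$, requiring careful bookkeeping of the crystalline comparison through the $\theta$-construction and the passage to global sections on a coherent site. A subsidiary point is that the preceding lemma produces unobstructedness of $F^{\cl}$ on $\CRing^{\art}_W$, while the corollary concerns unobstructedness of $\Per_X$ as a divided power formal moduli problem; this transfer is automatic since the identification factors through $\und$ and every surjection of classical divided power Artinian rings is a surjection of the underlying classical rings.
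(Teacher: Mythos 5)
Your proposal is correct and matches the paper's (implicit) argument: the corollary is intended to follow by identifying $\Per_X$ with the restriction of the formal moduli problem $F$ from the preceding unnamed lemma, noting that $N$ is perfect (since $\RGamma_\cris(X/W)$ is a perfect $W$-complex) and that finitely generated torsion-free over the PID $W$ implies projective. Your observation that unobstructedness transfers from $\CRing^{\art}_W$ to divided power Artinian rings via $\und$ is exactly the intended bookkeeping, and the "up to equivalence" caveat about the arrow being a projection is harmless since the unobstructedness argument in the lemma's proof — surjectivity of $\pi_0\Map_W(H,-)$ along surjections of discrete modules, for $H=\Map_W(N,M)$ with $\pi_0 H$ projective — applies uniformly to any basepoint of $\Map_W(M,N_0)$, not just the zero map.
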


We record the following special case: 

\begin{corollary}\label{cor:torsion-free-period-domain-unobstructed}
Let $X$ be a   smooth and proper  scheme over $k$   such that
\begin{enumerate}
	\item $\rH^i_\cris(X/W)$ is torsion-free for all $i$;
	\item the map $\rH_{\dR}^i(X/k) \rightarrow \rH^i(X,\cO_X)$ is 
	surjective for all $i$.
\end{enumerate}
Then $\Per_X$ is unobstructed. 
\end{corollary}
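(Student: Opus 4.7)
The plan is to derive the statement from \Cref{cor:period-domain-unobstructed} by explicitly constructing the required splitting of $\RGamma_\cris(X/W)$. The first step is to exploit torsion-freeness: since $\rH^i_\cris(X/W)$ is torsion-free and finitely generated over the discrete valuation ring $W$, each cohomology group is a free $W$-module of finite rank, and $\RGamma_\cris(X/W)$ is a perfect complex. Over a principal ideal domain, a perfect complex with projective (= free) cohomology is formal, so there is a (non-canonical) equivalence
\[
\RGamma_\cris(X/W) \simeq \bigoplus_i \rH^i_\cris(X/W)[-i]
\]
in $\Mod_W^{\wedge}$. By the universal coefficient theorem in the torsion-free case, we also have $\rH^i_\cris(X/W)\otimes_W k \xrightarrow{\simeq} \rH^i_\dR(X/k)$, so hypothesis (2) says that for every $i$ the composite
\[
\rH^i_\cris(X/W) \twoheadrightarrow \rH^i_\cris(X/W)\otimes_W k \twoheadrightarrow \rH^i(X,\cO_X)
\]
is surjective.

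The second step is to split off a copy of the coherent cohomology inside crystalline cohomology, one degree at a time. For each $i$, pick a $k$-basis of $\rH^i(X,\cO_X)$, lift it to elements $\tilde b_1,\dots,\tilde b_n\in \rH^i_\cris(X/W)$ (possible by the surjection above), and use that these reduce to linearly independent vectors modulo $p$ to extend to a $W$-basis $\tilde b_1,\dots,\tilde b_n,\tilde c_1,\dots,\tilde c_m$ of the free module $\rH^i_\cris(X/W)$. Let $N_i$ be the $W$-span of the $\tilde b_j$ and $M_i$ the $W$-span of the $\tilde c_j$; then $\rH^i_\cris(X/W)=N_i\oplus M_i$, and by construction the map $k\otimes_W N_i \to \rH^i(X,\cO_X)$ is an isomorphism.

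In the third step, set $N := \bigoplus_i N_i[-i]$ and $M := \bigoplus_i M_i[-i]$ in $\Mod_W^{\wedge}$. Combining these with the formal decomposition of step one yields $\RGamma_\cris(X/W)\simeq M\oplus N$. Using that $\RGamma(X,\cO_X)\simeq \bigoplus_i \rH^i(X,\cO_X)[-i]$ in $\Mod_k$ (every object of $\Mod_k$ is formal), the induced map $k\otimes_W N \to \RGamma(X,\cO_X)$ is a direct sum of the isomorphisms from step two, hence an equivalence. Finally,
\[
\pi_0 \Map_W(N,M) \simeq \prod_{i,j} \pi_0 \Map_W(N_i[-i],M_j[-j]) \simeq \bigoplus_i \Hom_W(N_i,M_i),
\]
since $N_i$ is free (so higher $\Ext^\ast_W(N_i,M_j)$ vanishes) and shifted mapping spaces in incompatible degrees contribute zero to $\pi_0$. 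Each $\Hom_W(N_i,M_i)$ is a free $W$-module, so the whole group is torsion-free, and \Cref{cor:period-domain-unobstructed} applies.

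The main subtleties are ensuring the formality statement in step one and verifying that the map $k\otimes_W N \to \RGamma(X,\cO_X)$ is an equivalence \emph{of complexes} rather than merely an isomorphism on each cohomology group; both rely on formality over the respective bases ($W$ because of free cohomology, $k$ because it is a field). Everything else is a direct bookkeeping exercise reducing to \Cref{cor:period-domain-unobstructed}.
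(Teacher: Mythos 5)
Your proof is correct and follows essentially the same route as the paper: use freeness of the $\rH^i_\cris(X/W)$ over the PID $W$ to get a formal decomposition $\RGamma_\cris(X/W)\simeq\bigoplus_i\rH^i_\cris(X/W)[-i]$, split each $\rH^i_\cris(X/W)=M_i\oplus N_i$ using the surjectivity hypothesis, and assemble $M=\bigoplus M_i[-i]$, $N=\bigoplus N_i[-i]$ to feed into \Cref{cor:period-domain-unobstructed}. The extra detail you supply (lifting a $k$-basis, computing $\pi_0\Map_W(N,M)$ explicitly) is all correct and merely unpacks what the paper leaves implicit.
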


\begin{proof}
As $W$ is a PID and $X$ is smooth and proper, $\rH^i_\cris(X/W)$  is a finitely generated free $W$-module and  there is an equivalence
\[
\RGamma_\cris(X/W)\simeq \bigoplus_i \rH^i_\cris(X/W)[-i]
\]
in $\Mod_W$. For all $i$,  there is a canonical isomorphism $k\otimes_W \rH^{i}_{\cris}(X/W) \rightarrow \rH^i_{\dR}(X/k)$.  
Moreover, we can pick a splitting $\rH^i_\cris(X/W) = M_i \oplus N_i$ such that the induced map $k\otimes_W N_i \rightarrow \rH^i_{\dR}(X/k) \rightarrow \rH^i(X,\cO_X)$ is an isomorphism. Taking $M=\oplus_i M_i[-i]$ and $N=\oplus_i N_i[-i]$ gives a decomposition as in \Cref{cor:period-domain-unobstructed}.  
\end{proof}

\subsection{Injectivity on obstruction classes}

\begin{proposition}\label{prop:cy-obstruction-injectivity}
Let $X$ be a   smooth and proper scheme over $k$  such that
\begin{enumerate}
	\item the map
	$
	\rH^{d-1}(X,\Fil^1\Omega_{X/k}) \rightarrow
	\rH^{d-2}(X,\Omega^1_{X/k})
	$
	is surjective;
	\item the map
	$
	\Ext^{2}_{\cO_X}(\Omega^1_{X/k},\cO_X) \rightarrow 
	\Hom_k\big(\rH^{d-2}(X,\Omega^1_{X/k}),\,
	\rH^{d}(X,\cO_X) \big)
	$
	\mbox{is injective.}
\end{enumerate}
Then for all finite (discrete) $k$-modules $V$ the map
\[
\pi_0 \Def_X(\sqz_k V[1]) \rightarrow
\pi_0 \Per_X(\sqz^\pd_k V[1])
\]
induced by $\per_X$ is injective.
\end{proposition}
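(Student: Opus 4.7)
The plan is to leverage \Cref{thm:tangent-to-period-map} to translate the assertion into a cohomological one, and then apply hypotheses~(1) and~(2) to the $(d-1)$-th component of the natural product decomposition of the target.

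First, I would apply \Cref{thm:tangent-to-period-map} with $V$ replaced by $V[1]$. Since $X/k$ is smooth, $L_{X/k} \simeq \Omega^1_{X/k}$, and so the map $\pi_0 \per_X(\sqz_k^\pd V[1])$ is identified with
\[
\Ext^2_{\cO_X}\!\bigl(\Omega^1_{X/k},\, V \otimes_k \cO_X\bigr) \longrightarrow
\pi_0 \Map_k\!\bigl(\RGamma(X, \Fil^1\dR_{X/k}),\, V[1] \otimes_k \RGamma(X, \cO_X)\bigr),
\]
induced by $\pi^1\colon \Fil^1\dR_{X/k} \to \Omega^1_{X/k}[-1]$. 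Concretely, a class $\phi \in \Ext^2$, viewed as a morphism $\phi\colon \Omega^1_{X/k} \to V[2] \otimes_k \cO_X$, is sent to the class of the composite $\RGamma(\phi[-1]) \circ \RGamma(\pi^1)\colon \RGamma(\Fil^1\dR_{X/k}) \to V[1] \otimes_k \RGamma(\cO_X)$.

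Next, since $k$ is a field and the complexes in question are bounded, the target decomposes as a product over cohomology components, with the $n$-th factor given by $\Hom_k\!\bigl(\rH^n(X, \Fil^1\dR_{X/k}),\, V \otimes_k \rH^{n+1}(X, \cO_X)\bigr)$. I would project onto the $n = d-1$ component. Unwinding the explicit description above, the projection sends $\phi$ to the composite $\phi_* \circ \pi^1_*$, where $\pi^1_*\colon \rH^{d-1}(X, \Fil^1\dR_{X/k}) \to \rH^{d-2}(X, \Omega^1_{X/k})$ is induced by $\pi^1$ and $\phi_*\colon \rH^{d-2}(X, \Omega^1_{X/k}) \to V \otimes_k \rH^d(X, \cO_X)$ is induced by $\phi$.

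Finally, by $k$-linearity in $V$ (both source and target of the projection are obtained from the $V = k$ case by tensoring with $V$), it suffices to treat $V = k$. In that case the projection factors as
\[
\Ext^2_{\cO_X}(\Omega^1_{X/k}, \cO_X) \xrightarrow{\alpha}
\Hom_k(\rH^{d-2}(X, \Omega^1_{X/k}), \rH^d(X, \cO_X)) \xrightarrow{(\pi^1_*)^\vee}
\Hom_k(\rH^{d-1}(X, \Fil^1\dR_{X/k}), \rH^d(X, \cO_X)),
\]
where $\alpha$ is the evaluation-on-$\rH^{d-2}$ map from condition~(2) and $(\pi^1_*)^\vee$ is precomposition with $\pi^1_*$. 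Condition~(1) makes $\pi^1_*$ surjective, so $(\pi^1_*)^\vee$ is injective; condition~(2) makes $\alpha$ injective. The composite is therefore injective, and hence so is the full map $\pi_0 \per_X(\sqz_k^\pd V[1])$. The main obstacle is the verification that the $n=d-1$ projection of the map constructed in the first step really is $\phi_* \circ \pi^1_*$, which requires tracing through the commutative square of \Cref{thm:tangent-to-period-map} together with the splitting of $\RGamma(X, \cO_X)$ into its cohomology; this is mechanical given the explicit form stated there.
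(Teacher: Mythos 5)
Your proposal is correct and takes essentially the same route as the paper: reduce to $V=k$, invoke \Cref{thm:tangent-to-period-map} to identify the tangent map, project onto the $n=d-1$ cohomological factor of the target, and observe that this projected map factors as precomposition with the surjection $\pi^1_\ast$ from hypothesis~(1) after the injection from hypothesis~(2). The paper packages this factorization as a commutative square with vertical map $\pi_{1-d}\circ R\Gamma$, but the underlying argument is the same as your explicit $(\pi^1_\ast)^\vee\circ\alpha$.
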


\begin{proof}
Using that $\per_X$ is a map of divided power formal moduli problems, we reduce to the case   $V=k$. 
By \Cref{thm:tangent-to-period-map}, the map then identifies with the map
\[
\Hom_X\big(\Omega^1_{X/k}[-1],\,\cO_X[1]\big) \longto
\Hom_k\big(\RGamma(\Fil^1\Omega^\bullet_{X/k}),\, 
\RGamma(\cO_X)[1] \big)
\]
induced by $\Fil^1\Omega^{\bullet}_{X/k} \rightarrow \Omega^1_{X/k}[-1]$. 
This is the top map in the commutative square
\[
\begin{tikzcd}
	\Hom_{\cO_X}\big(
	\Omega^1_{X/k}[-1],\,\cO_X[1]
	\big) \arrow{r}{  } \arrow[hook]{d}{\pi_{1-d} \circ R\Gamma}
	& \Hom_k\big(
	\RGamma(\Fil^1\Omega^\bullet_{X/k}),\, 
	\RGamma(\cO_X[1])
	\big) \arrow{d}{\pi_{1-d} } \\
	\Hom_k\big(\rH^{d-2}(\Omega^1_X),\,
	\rH^{d}(\cO_X) \big) \arrow[hook]{r}
	& \Hom_k\big( \rH^{d-1}(\Fil^1\Omega^\bullet_X),\,
	\rH^{d}(\cO_X) \big),
\end{tikzcd}
\]
where the horizontal maps are induced by $\Fil^1 \Omega^\bullet_{X/k}\rightarrow \Omega^1_{X/k}[-1]$. 
As the left and bottom maps are injective by assumption, so is the top map.
\end{proof}

\subsection{Bootstrapping from divided power rings}
\label{subsec:bootstrap}
We  will now prove  a criterion which allows us to test unobstructedness of a deformation functor  on its restriction to divided power rings. This criterion is a variation on a theorem of Schr\"oer~\cite{Schroeer2003}.

\begin{definition}\label{def:divided-power-small}
A map $A\rightarrow B$ in $\PDRing^{\art}_W$ is called a \emph{divided power elementary  extension} if it is surjective, and if the kernel $I:=\ker(A\rightarrow B)$ satisfies
\begin{enumerate}
	\item $\fm_A I=0$;
	\item $\gamma_n(x)=0$ for all $x\in I$ and $n\geq 2$.
\end{enumerate}
\end{definition}

The first condition guarantees that the $A$-module $I$ is in fact a $k$-module. Note that the second condition is not implied by the first. For example, $\bZ/4\bZ \rightarrow \bF_2$ is not a divided power elementary  extension, since $\gamma_2(2)=2$. This is in some sense the only counterexample relevant to us. Indeed, we have the following result:

\begin{proposition}\label{prop:pd-bootstrap}
Assume that $k$ is infinite.
Let $F\colon \CRing_W^\art \rightarrow \Set$ be a deformation functor (see \Cref{def:schlessinger}) such that 
\begin{enumerate}
	\item $F(k[\epsilon]/\epsilon^2)$ is finite-dimensional;
	\item for all divided power elementary  extensions $A'\rightarrow A$ in $\PDRing^{\art}_W$, the induced map $F(A')\rightarrow F(A)$ is surjective;
	\item if $p=2$ then $F(W/4)$ is non-empty.
\end{enumerate}
Then $F$ is unobstructed. 
\end{proposition}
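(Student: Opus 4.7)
The plan is to follow the bootstrap strategy of Schr\"oer~\cite{Schroeer2003}, promoting the divided power lifting property of hypothesis~(2) to unobstructedness on arbitrary $\CRing_W^{\art}$, with the Achinger--Suh theorem~\cite{AchingerSuh2023} closing a remaining $p=2$ gap enabled by hypothesis~(3).

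First I would perform a standard Schlessinger reduction: by the discrete version of \Cref{cor:artinian-small}, every surjection in $\CRing_W^{\art}$ factors as a sequence of elementary extensions, and using the $(S_2)$-cohesion of the deformation functor $F$ together with hypothesis~(1) to filter the kernel by $1$-dimensional sub-$k$-vector spaces, one reduces to proving surjectivity of $F(A')\to F(A)$ for each elementary extension $A'\twoheadrightarrow A$ in $\CRing_W^{\art}$ with one-dimensional kernel $I\cong k$.

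The main construction would produce, for each such elementary extension, a commutative square
\[
\begin{tikzcd}
B' \arrow{r}\arrow{d} & A' \arrow{d} \\
B \arrow{r} & A
\end{tikzcd}
\]
in which $B',B\in\PDRing_W^{\art}$, the map $B'\to B$ is a divided power elementary extension in the sense of \Cref{def:divided-power-small}, and the horizontal maps are surjective enough that hypothesis~(2) applied to $B'\to B$, combined with cohesion of $F$, forces surjectivity of $F(A')\to F(A)$. Concretely, $B'$ arises as a divided power envelope of a carefully chosen finite flat thickening of $A'$, engineered so that the kernel $I$ of $A'\to A$ becomes the image of a trivial-divided-power $k$-line inside $B'$. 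The infiniteness of $k$ is used to pick a regular system of parameters for $A'$ in general position, avoiding pathological identifications modulo $\fm_{A'}^2$. For $p$ odd, this envelope construction is straightforward, because the $p$-adic estimate $v_p(n!)<n/(p-1)<n$ ensures that the higher divided powers automatically vanish on the image of $I$ in $B'$, making $B'\to B$ divided-power-elementary without further intervention.

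The principal obstacle is the case $p=2$, where the canonical divided powers on $W$ satisfy $\gamma_2(2)=2$ and the envelope above fails to annihilate $\gamma_2$ on the piece of $I$ lying in $2A'$. Here hypothesis~(3) provides a class in $F(W/4)$, which feeds into the Achinger--Suh lifting theorem~\cite{AchingerSuh2023}: the latter, formulated as a criterion for lifting classes along elementary extensions of $W/2^n$-algebras once a compatible lift over $W/4$ is given, dispatches precisely the single characteristic-$2$ step where the PD-envelope construction breaks down. Assembling the Schlessinger reduction, the envelope construction in odd characteristic, and the Achinger--Suh input at $p=2$ yields surjectivity of $F(A')\to F(A)$ for every elementary extension in $\CRing_W^{\art}$, and hence the unobstructedness of $F$.
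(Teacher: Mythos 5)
Your proposal misses the actual mechanism of the proof, and the two places where it gets concrete contain real errors.

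The paper does not prove unobstructedness by reducing to elementary extensions in $\CRing^\art_W$ and then trying to dominate each one by a divided power elementary extension. The central move (\Cref{lemma:pd-bootstrap-heart}) is instead a \emph{detection} argument at the level of the pro-representing hull: one writes the hull as $R=W[[x_1,\dots,x_m]]/I$ with $I\subset (x_1,\dots,x_m)^2+(p)$, supposes $0\neq f\in I$, picks a leading term $p^e g$ of $f$ with $g$ homogeneous of degree $d$ nonzero mod $p$, and uses the infiniteness of $k$ to choose $a_i\in k$ with $g(a_1,\dots,a_m)\not\equiv 0\bmod p$; then the map $x_i\mapsto a_i t$ into $k[t]/t^2$ is shown, by lifting through the specific divided power rings $W_{n,d}=W_n\langle t\rangle/(\gamma_i(t):i\geq d)$, to detect that $f\neq 0$, a contradiction. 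The burden is then shifted to proving that the maps $F(W_{n,d})\to F(W_{1,2})$ are surjective, which is done by factoring $W_{n,d}\to W_{1,2}$ into divided power elementary extensions (Corollaries \ref{cor:type-I-square-zero} and \ref{cor:type-II-square-zero}). Your ``envelope'' construction has a different logical shape and a gap: a square with surjections $B'\twoheadrightarrow A'$ and $B\twoheadrightarrow A$ and a PD elementary $B'\to B$ does not, by any cohesion axiom, transfer surjectivity of $F(B')\to F(B)$ to surjectivity of $F(A')\to F(A)$; for that you would need maps \emph{from} $A'$ into $B'$ (or a pushout square in the relevant category), and you haven't produced one.

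The second error is the role of Achinger--Suh. That theorem (\Cref{AchingerSuh}) is a geometric lifting statement used \emph{elsewhere} in the paper, to verify hypothesis~(5) of \Cref{thm:unobstructed} for Calabi--Yau $X$ when $p=2$; it is not part of the proof of \Cref{prop:pd-bootstrap} and could not be, since $F$ here is an arbitrary deformation functor with no geometry attached. Hypothesis~(3) is simply an \emph{input}: the paper handles the single bad $p=2$ step ($W_{2,2}\to W_{1,2}$) by factoring it through $W_2[\epsilon]/(\epsilon^2,2\epsilon)$, observing that this intermediate object is a pullback of $W_2$ and $k[\epsilon]/\epsilon^2$ over $k$, and applying Schlessinger's axiom $(S_2)$ together with the given point of $F(W_2)=F(W/4)$ to lift. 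No external theorem is needed, and quoting one here is a conceptual mismatch.
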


\begin{remark}
It is crucial to consider mixed characteristic divided power rings, even if one is only interested in equal characteristic deformations. For example, let $F\colon \CRing_k^\art \rightarrow \Set$
be the functor represented by $k[t]/t^p$. Then 
$F(A') \rightarrow F(A)$ is surjective for all surjective $A'\rightarrow A$ in $\PDRing^{\art}_k$, yet $F$ is clearly obstructed.
\end{remark}

The remainder of this section is devoted to the proof of \Cref{prop:pd-bootstrap}. 

\begin{notation}
Let us  denote the quotient $W/p^n$ by $W_n$, and write $W_{n,d}$ for the truncated divided power polynomial ring
\[
W_{n,d} = W_n\langle t \rangle / (\gamma_i(t) : i \geq d ).
\]
\end{notation}

Note that as a $W_n$-module, $W_{n,d}$ is free of rank $d$ on generators  $1,t,\ldots,\gamma_{d-1}(t)$. We proceed to the technical heart of the proof of \Cref{prop:pd-bootstrap}:

\begin{lemma}\label{lemma:pd-bootstrap-heart}
Assume that $k$ is infinite.
Let $F\colon \CRing^{\art}_W \rightarrow \Set$ be a deformation functor. Assume that $F(k[\epsilon]/\epsilon^2)$ is finite-dimensional, and assume that for all $n\geq 1$ and $d\geq 2$ the map $F(W_{n,d}) \rightarrow F(W_{1,2})$
is surjective. Then $F$ is unobstructed.
\end{lemma}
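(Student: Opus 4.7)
This result is a classical-style $T^1$-lifting statement in the spirit of Kawamata--Ran and Fantechi--Manetti: we bootstrap unobstructedness from the hypothesis that tangent vectors lift to arbitrarily large divided-power thickenings. The argument has three main parts.

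\emph{Step 1 (Reduction to elementary extensions).} By the classical analogue of \Cref{cor:artinian-small}, every surjection in $\CRing^{\art}_W$ decomposes into a chain of elementary extensions with one-dimensional kernel. It therefore suffices to prove surjectivity of $F(A') \to F(A)$ for each elementary extension $A' \twoheadrightarrow A$ with kernel $I = k\epsilon$. Fix such an extension and a class $x \in F(A)$.

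\emph{Step 2 (Pullback diagram with $W_{n,d}$).} Choose $n$ with $p^n = 0$ in $A'$. The geometric heart of the argument constructs, for some $d \geq 2$, a cartesian square
\[
\begin{tikzcd}
A' \arrow{r}\arrow{d} & A \arrow{d}{\varphi} \\
W_{n,d} \arrow{r} & W_{n,d-1}
\end{tikzcd}
\]
in $\CRing^{\art}_W$, whose bottom arrow is the standard divided-power elementary extension with kernel $k\cdot\gamma_{d-1}(t)$. The map $\varphi$ encodes a generator of $I$ together with a compatible divided-power thickening structure on $A$. The space of such $\varphi$ forms a constructible $k$-variety, and a bound coming from $\dim V < \infty$ controls the ``bad'' loci; the infinity of $k$ then guarantees the existence of a generic valid choice.

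\emph{Step 3 (Schlessinger and hypothesis).} Schlessinger's axiom (S2) applied to the cartesian square reduces lifting $x$ to $F(A')$ to producing $z \in F(W_{n,d})$ with image equal to $\varphi_*(x) \in F(W_{n,d-1})$. Using that the tangent space $V = F(W_{1,2})$ acts on the fibres of $F(A) \to F(W_{n,d-1})$ and $F(W_{n,d}) \to F(W_{n,d-1})$, one can adjust $\varphi$ by an element of $V$ so that $\varphi_*(x)$ lies in a $V$-orbit meeting the image of $F(W_{n,d}) \twoheadrightarrow V$ provided by the hypothesis. A dimension count using $\dim V < \infty$ and the infinity of $k$ ensures a compatible choice exists, producing the desired $z$ and hence a lift of $x$ to $F(A')$.

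\emph{Main obstacle.} The subtle point is Step 3: one must carefully track how adjustments of $\varphi$ by $V$-tangent vectors interact with both the pullback structure of Step 2 and the hypothesised lifting $F(W_{n,d}) \twoheadrightarrow V$. The finite-dimensionality of $V$ bounds the degrees of freedom and the locus of failure, while the infinity of $k$ provides the room to make a generic choice. Once these transversality arguments are carried out, the conclusion is a formal consequence of Schlessinger's axioms.
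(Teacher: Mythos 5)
Your Step~2 does not hold: not every elementary extension $A'\twoheadrightarrow A$ in $\CRing^{\art}_W$ can be written as a pullback against a map $W_{n,d}\to W_{n,d-1}$. First, a small but revealing slip: the kernel of $W_{n,d}\to W_{n,d-1}$ is $W_n\cdot\gamma_{d-1}(t)$, which has length $n$ over $W$; it is one-dimensional over $k$ only when $n=1$, so for $n\ge 2$ the bottom arrow of your square is not even an elementary extension. Suppose then you take $n=1$, so the bottom arrow is $W_{1,d}\to W_{1,d-1}$. Consider the elementary extension $W_3\twoheadrightarrow W_2$ (kernel $p^2W/p^3W\cong k$). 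Any $W$-algebra map $\varphi\colon W_2\to W_{1,d-1}$ must kill $p$, and the pullback $W_2\times_{W_{1,d-1}}W_{1,d}$ then contains the idempotent-defective pair $(p,0)$ with $(p,0)^2=(p^2,0)=(0,0)$, so $p^2=0$ in that pullback. In $W_3$ one has $p^2\ne 0$, so this pullback can never be $W_3$. Hence the cartesian square you assert does not exist, and with it the rest of the argument (Step~3's transversality sketch is carried out nowhere and cannot be patched against this obstruction).

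The paper's proof goes by a completely different route and never tries to realise an arbitrary elementary extension as a pullback. It first invokes Schlessinger to pass to a pro-representing hull $R=W[[x_1,\dots,x_m]]/I$ with $I\subset(x_1,\dots,x_m)^2+(p)$; it then argues by contradiction: given a nonzero $f\in I$, written as $p^e g+h$ with $g$ the lowest-degree form (nonzero mod $p$), one uses the infinitude of $k$ to pick $a_i\in k$ with $g(a)\ne 0$ mod $p$, producing a map $\varphi\colon R\to k[t]/t^2=W_{1,2}$; the hypothesis provides a lift $\widetilde\varphi\colon R\to W_{n,d+1}$ for $n$ large enough that $p^e d!\ne 0$ in $W_n$, and a direct computation shows $\widetilde\varphi(f)\ne 0$, contradicting $f\in I$. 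This is where the hypothesis on $W_{n,d}$ for \emph{all} $n\ge 1$ and $d\ge 2$ is used, as well as the finite-dimensionality of $F(k[\epsilon]/\epsilon^2)$ (needed to apply Schlessinger's criterion to get a hull). I'd suggest reworking your argument along these lines rather than through pullback squares.
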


\begin{proof}
First assume that $F$ is pro-represented by a complete Noetherian $W$-algebra $R$. By \cite[Lemma 1.1]{Schlessinger1968}, we can write $R=W[[x_1,\ldots,x_m]]/I$, and after possibly eliminating variables, 
we may further assume   that 
\[
I \subset (x_1,\ldots, x_m)^2 + (p).
\]
We then need to show that $I=0$. 

For the sake of contradiction, suppose that there exists a non-zero element $f\in I$. Write $f=p^eg + h$
with $g$ a homogeneous   degree $d$ polynomial which is 
non-zero mod $p$ and $h$ consisting of terms of higher degree. Choose $a_1,\ldots, a_m \in k$ such that \mbox{$g(a_1,\ldots, a_m)\not\equiv 0$ mod $p$.} By our assumption on $I$, the map
\[
W[[x_1,\ldots,x_m]] \rightarrow k[t]/t^2,\, x_i \mapsto a_i t
\]
factors through a map $\varphi\colon R\rightarrow k[t]/t^2$. Now choose $n\gg 0$ such that $p^e d!$ is non-zero in $W_n$.
By our hypothesis, $\varphi$ admits a lift $\widetilde\varphi \colon R \rightarrow W_{n,d+1}$.
This lift must satisfy
\[
\widetilde\varphi(x_i) = \widetilde{a}_i t + c_{i,2}\gamma_2(t) + \cdots
\]
where $\widetilde{a}_i$ reduces to $a_i$ mod $p$. As $\gamma_{d+1}(t)=0$ in $W_{n,d+1}$, we see that 
\[
\widetilde\varphi(f) = 
p^e t^d g(\widetilde{a}_1,\ldots,\widetilde{a}_m) =
p^e d! g(\widetilde{a}_1,\ldots,\widetilde{a}_m) \gamma_d(t)
\]
in $W_{n,d+1}$. By construction, $g(\widetilde{a}_1,\ldots,\widetilde{a}_m)$ is a $p$-adic unit, and $p^e d!$ is non-zero in $W_n$, so $\widetilde\varphi(f) \neq 0$ in $W_{n,d+1}$. But this  contradicts our assumption that $f$ lies in the ideal $I$. This finishes the proof in the pro-representable case.

Now if $F$ is not pro-representable, then Schlessinger's criterion \cite{Schlessinger1968} implies that $F$ admits a hull, i.e.\ a smooth map $G\rightarrow F$ with $G$ pro-representable by some $R$ as above, and such that $G(k[\epsilon]/\epsilon^2)\rightarrow F(k[\epsilon]/\epsilon^2)$ is a bijection. For every $n\geq1$ and $d\geq 2$, we have a commutative square
\[
\begin{tikzcd}
	G(W_{n,d}) \arrow{r} \arrow[two heads]{d}
	& G(W_{1,2}) \arrow{d}{\cong} \\
	F(W_{n,d}) \arrow[two heads]{r} 
	& F(W_{1,2}).
\end{tikzcd}
\]
The bottom map is surjective by assumption. The right map is a bijection since $G$ is a hull of $F$ (note that $W_{1,2}\cong k[\epsilon]/\epsilon^2$).
An easy induction on the length  shows that $G(A) \rightarrow F(A)$ is surjective for all $A $ in $\CRing^{\art}_W$. In particular, the left map is surjective. We conclude that the top map is surjective. Hence $G$ is unobstructed, and therefore   $F$ is also  unobstructed.
\end{proof}

To finish the proof of \Cref{prop:pd-bootstrap}, we observe that the maps $W_{n,d} \rightarrow W_{1,2}$ can (almost) be factored as a composition of  divided power elementary  extensions. 

\begin{lemma}
Let $A\rightarrow B$ be a surjective map in $\PDRing^{\art}_W$ with the property that $\gamma_m(x)=0$ for all $m\geq 2$ and all
$x$ in $I:=\ker(A\rightarrow B)$. Then $A\rightarrow B$ can be factored as a composition of divided power elementary  extensions.
\end{lemma}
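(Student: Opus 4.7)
The plan is to construct the factorization by refining the $\mathfrak{m}_A$-adic filtration of $I$. Before starting, I note that the hypothesis already forces $I^2 = 0$: for $x, y \in I$ the divided-power axioms give $0 = \gamma_2(x+y) = \gamma_2(x) + xy + \gamma_2(y) = xy$. So $I$ is a $k$-vector space of finite dimension, which simplifies the bookkeeping (though is not strictly required for the argument).

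First, I would choose any strictly decreasing chain of $A$-submodules
\[ I = I_0 \supsetneq I_1 \supsetneq \cdots \supsetneq I_n = 0 \]
refining the $\mathfrak{m}_A$-adic filtration $I \supseteq \mathfrak{m}_A I \supseteq \mathfrak{m}_A^2 I \supseteq \cdots$, arranged so that $\mathfrak{m}_A I_{j-1} \subseteq I_j$ for every $j$. This is possible because $A$ is Artinian (so the $\mathfrak{m}_A$-adic filtration terminates) and each successive quotient $\mathfrak{m}_A^\ell I/\mathfrak{m}_A^{\ell+1} I$ is a finite-dimensional $k$-vector space, hence admits arbitrary refinement by $k$-subspaces, which are automatically $A$-submodules since $\mathfrak{m}_A$ acts trivially on them.

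Second, I would observe that each $I_j$ is closed under the divided power operations, so that the divided power structure on $(A, I, \gamma)$ descends to a divided power structure on $(A/I_j, I/I_j)$. Indeed, $\gamma_1 = \mathrm{id}$ preserves $I_j$ trivially, while for $m \geq 2$ we have $\gamma_m(I_j) \subseteq \gamma_m(I) = 0 \subseteq I_j$ by hypothesis. Setting $A_j := A/I_j$ with its induced divided power structure, we obtain a factorization
\[ A = A_n \longto A_{n-1} \longto \cdots \longto A_0 = B \]
in $\PDRing^{\art}_W$.

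Finally, I would verify that each surjection $A_j \to A_{j-1}$ is a divided power elementary extension in the sense of \Cref{def:divided-power-small}. Its kernel is $I_{j-1}/I_j$. The condition $\mathfrak{m}_{A_j}\cdot(I_{j-1}/I_j) = 0$ is exactly our choice $\mathfrak{m}_A I_{j-1} \subseteq I_j$, and for $m \geq 2$ and $\bar x \in I_{j-1}/I_j$ represented by $x \in I_{j-1} \subseteq I$, we have $\gamma_m(\bar x) = \overline{\gamma_m(x)} = 0$ since $\gamma_m$ vanishes on $I$ by assumption. The proof poses no real obstacle: the key point is that the hypothesis $\gamma_m|_I = 0$ for $m \geq 2$ makes every $A$-submodule of $I$ automatically a sub-divided-power ideal, so the filtration argument is entirely formal.
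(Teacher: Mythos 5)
Your proof is correct and follows essentially the same route as the paper: both arguments rest on the observation that the hypothesis $\gamma_m\vert_I=0$ for $m\geq 2$ makes \emph{every} $A$-submodule of $I$ automatically a sub-divided-power ideal, so one is free to refine by any filtration with successive quotients annihilated by $\mathfrak{m}_A$. The paper does this by iteratively peeling off a nonzero socle submodule $J \subset I$ with $\mathfrak{m}_A J = 0$; you build the whole chain at once from the $\mathfrak{m}_A$-adic filtration. (A small simplification: the refinement to $k$-subspaces is superfluous, since a divided power elementary extension need not have length-one kernel -- taking $I_j = \mathfrak{m}_A^j I$ directly already works.)
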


\begin{proof}Assume $I \neq 0$. As $I$ is an Artinian $A$-module, there is a nonzero submodule $J\subset I$ with $\fm_{A} J =0$. The map $A\rightarrow B$ then factors over a divided power elementary  extension $A\rightarrow A/J$. Repeating this process gives the desired factorisation. 
\end{proof}

\begin{corollary}\label{cor:type-I-square-zero}
Let $d\geq 0$ and $n\geq 1$. Then the map $W_{n,d+1} \rightarrow W_{n,d}$ can be factored as a composition of divided power elementary  extensions.

\end{corollary}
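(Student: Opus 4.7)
My plan is to apply the immediately preceding lemma, which says that any surjection $A\rightarrow B$ in $\PDRing^{\art}_W$ whose kernel $I$ satisfies $\gamma_m(x)=0$ for all $m\geq 2$ and all $x\in I$ factors as a composition of divided power elementary extensions. So I only need to verify the vanishing hypothesis for the specific map $W_{n,d+1}\rightarrow W_{n,d}$.

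First I would identify the kernel. By definition, $W_{n,d}=W_n\langle t\rangle/(\gamma_i(t):i\geq d)$, so the kernel $I$ of the projection $W_{n,d+1}\rightarrow W_{n,d}$ is the $W_n$-submodule generated by $\gamma_d(t)$; every element is of the form $x=a\gamma_d(t)$ with $a\in W_n$. (The edge case $d=0$ is degenerate, since $\gamma_0(t)=1$ forces $W_{n,0}=0$ and the statement is trivial, so I focus on $d\geq 1$.)

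The heart of the proof is computing $\gamma_m$ on such an $x$. Using divided power axioms (3) and (5) from \Cref{classicalpdrings}, for $m\geq 2$ we get
\[
\gamma_m(a\gamma_d(t)) \;=\; a^m\,\gamma_m(\gamma_d(t)) \;=\; a^m\cdot\frac{(md)!}{m!\,(d!)^m}\,\gamma_{md}(t).
\]
Since $m\geq 2$ and $d\geq 1$ we have $md\geq 2d\geq d+1$, so $\gamma_{md}(t)=0$ in $W_{n,d+1}$. Hence $\gamma_m(x)=0$ for every $x\in I$ and every $m\geq 2$. In particular the ideal $I$ is automatically stable under the divided powers (trivially so), and the hypothesis of the preceding lemma is satisfied. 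Applying that lemma produces the desired factorisation of $W_{n,d+1}\rightarrow W_{n,d}$ as a composition of divided power elementary extensions.

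There is essentially no obstacle here beyond bookkeeping: the only thing to check is the inequality $md\geq d+1$, which is where the restriction $m\geq 2$ is used. The corollary is really a direct consequence of the explicit presentation of $W_{n,d}$ combined with the divided power identity $\gamma_m\circ\gamma_d\propto\gamma_{md}$.
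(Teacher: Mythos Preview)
Your proposal is correct and follows exactly the paper's intended approach: the corollary is stated without proof precisely because it is an immediate application of the preceding lemma, and you have supplied the verification the paper leaves implicit. The key computation $\gamma_m(a\gamma_d(t))=a^m\frac{(md)!}{m!(d!)^m}\gamma_{md}(t)=0$ in $W_{n,d+1}$ for $m\geq 2$, $d\geq 1$ is exactly right.
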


\begin{corollary}\label{cor:type-II-square-zero}
Let $d \geq 0$, and  $n\geq 1$ such that $p\neq 2$ or $n>1$. Then the map $W_{n+1,d} \rightarrow W_{n,d}$ is a 
composition of divided power elementary  extensions.
\end{corollary}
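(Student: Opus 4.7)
The plan is to invoke the preceding lemma by showing that for the kernel $I = \ker(W_{n+1,d} \to W_{n,d})$, all higher divided powers $\gamma_m(x)$ with $m \geq 2$ and $x \in I$ vanish. Since $W_{n,d}$ is obtained from $W_{n+1,d}$ by reducing modulo $p^n$, the ideal $I$ is simply the principal ideal $p^n W_{n+1,d}$. In particular every element of $I$ has the form $p^n a$ for some $a \in W_{n+1,d}$, and the element $p^n$ itself lies in the PD ideal of $W_{n+1,d}$ (which contains $pW_{n+1}$). Using axiom (3) of \Cref{classicalpdrings}, one has
\[
\gamma_m(p^n a) = a^m \, \gamma_m(p^n),
\]
so it suffices to show $\gamma_m(p^n) = 0$ in $W_{n+1}$ for all $m \geq 2$.

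The next step is to compute $\gamma_m(p^n) = p^{nm}/m!$, which is an element of $W \subset W_{n+1,d}$, and vanishes in $W_{n+1} = W/p^{n+1}$ if and only if
\[
n(m-1) \;\geq\; 1 + v_p(m!).
\]
Using the standard identity $v_p(m!) = (m - s_p(m))/(p-1)$, where $s_p(m)$ is the sum of base-$p$ digits of $m$, one then verifies this inequality by case analysis on $p$ and $m$. The case $m=2$ reduces to $n \geq 1 + v_p(2)$, which holds for $p$ odd and $n \geq 1$, or for $p = 2$ and $n \geq 2$ — this is precisely where the hypothesis $(p,n) \neq (2,1)$ enters. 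For $m \geq 3$, the inequality $n(m-1) \geq 1 + (m-1)/(p-1)$ is easily checked to follow from $n \geq 1$ when $p \geq 3$, and from $n \geq 2$ when $p = 2$ (using $v_2(m!) \leq m - 1$).

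Combining these computations with the preceding lemma produces a factorisation of $W_{n+1,d} \to W_{n,d}$ into divided power elementary extensions, as desired. The only genuine obstacle is verifying the arithmetic inequality at $p=2$, which accounts for the necessity of excluding the case $(p,n)=(2,1)$; this exclusion mirrors the classical observation that the ideal $2W$ admits a PD structure, but the map $W/4 \to \bF_2$ fails to be a divided power elementary extension since $\gamma_2(2) = 2 \neq 0$ in $W/4$.
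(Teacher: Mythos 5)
Your proof is correct and follows the same route as the paper: observe that the kernel is the principal ideal $(p^n)$, reduce via axiom (3) of Definition \ref{classicalpdrings} to showing $\gamma_m(p^n)=p^{nm}/m!$ vanishes in $W_{n+1}$ for $m\geq 2$, and verify the resulting valuation inequality $n(m-1)\geq 1+\nu_p(m!)$ via Legendre's formula before invoking the preceding lemma. The paper compresses the arithmetic into the single phrase ``readily verified using Legendre's formula''; you carry out that case analysis explicitly, which is a useful expansion but not a different argument. (One cosmetic slip: ``contains $pW_{n+1}$'' should read $pW_{n+1,d}$.)
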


\begin{proof}The corresponding ideal is generated by $p^n$, so the corollary follows from the fact that $\gamma_m(p^n)=p^{mn}/m!$ is divisible by $p^{n+1}$ as soon as $p^n\neq 2$ and $m \geq 2$, which can be readily verified using Legendre's formula for the valuation $\nu_p(m!)$.
\end{proof}

\begin{proof}[Proof of \Cref{prop:pd-bootstrap}]	
Observe that we can factor the map $W_{n,d} \rightarrow W_{1,2}$ as
\[
W_{n,d} \rightarrow W_{n,{d-1}} \rightarrow \cdots \rightarrow 
W_{n,2} \rightarrow W_{n-1,2} \rightarrow \cdots \rightarrow W_{1,2}.
\]

For  $p> 2$, the assumption on $F$ together with 
Corollaries~\ref{cor:type-I-square-zero} and~\ref{cor:type-II-square-zero} imply that $F(W_{n,d})\rightarrow F(W_{1,2})$ is surjective for all $n,d$. \Cref{lemma:pd-bootstrap-heart} then implies that $F$ is unobstructed.

For $p=2$,  it remains to show that $F(W_{2,2})\rightarrow F(W_{1,2})$ is surjective. The map $W_{2,2}\rightarrow W_{1,2}$ factors as
\[
W_{2,2} \cong W_{2}[\epsilon]/\epsilon^2 \surjto W_{2}[\epsilon]/(\epsilon^2,2\epsilon) \surjto k[\epsilon]/\epsilon^2 \cong W_{1,2}.
\]
The first map is divided power elementary, hence  $F(W_2[\epsilon]/\epsilon^2) \rightarrow F(W_2[\epsilon]/(2\epsilon,\epsilon^2)$ is surjective. The second map sits in a pullback square
\[
\begin{tikzcd}
	W_2[\epsilon]/(\epsilon^2,2\epsilon) \arrow{r} \arrow{d} \arrow[rd, phantom, "\lrcorner",  at start] 
	& W_2 \arrow{d} \\
	k[\epsilon]/\epsilon^2 \arrow{r} & k
\end{tikzcd}
\]
By the assumptions, there exists a lift $y$ in $F(W_2)$. Since $F$ is a deformation functor, for every $x\in F(k[\epsilon]/\epsilon^2)$ there exists an $\tilde{x}$ in $F(W_2[\epsilon]/(\epsilon^2,2\epsilon))$ lifting both $x$ and $y$. We conclude that $F(W_2[\epsilon]/(\epsilon^2,2\epsilon)) \rightarrow F(k[\epsilon]/\epsilon^2)$ is surjective, and therefore  $F(W_{2,2}) \rightarrow F(W_{1,2})$ is  surjective as well. 
\end{proof}

\subsection{Divided power elementary extensions as homotopy pullbacks}

In \Cref{def:divided-power-small}, we introduced the notation of a \emph{divided power elementary  extension}. We now relate this to divided power trivial square-zero functor $\sqz_k^\pd$ of \Cref{def:pd-square-zero}. 

\begin{proposition}
\label{prop:artinian-pd-small-extension}
Let $A\rightarrow B$ be a divided power elementary  extension in $\PDRing^{\art}_W$ with kernel $I$. Then $A\rightarrow B$ sits in a pullback square in $\CAlg^{\pd,\art}_W$:
\[
\begin{tikzcd}
	A \arrow{r} \arrow{d} \arrow[rd, phantom, "\lrcorner",  at start] & B \arrow{d} \\
	k \arrow{r} & \sqz^\pd_k(I[1]).
\end{tikzcd}
\]
\end{proposition}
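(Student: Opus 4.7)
The plan is to produce a classifying map $\kappa \colon B \to \sqz_k^{\pd}(I[1])$ making the diagram commute, and then to verify the pullback property using the conservativity and limit-preservation of the forgetful functor $\forget \colon \CAlg^{\pd} \to \Fun(\Delta^1, \CAlg^{\an})_{\surj}$ from \Cref{prop:forgetful-functor}. This reduces the problem to the analogous classical statement for animated elementary extensions (\Cref{lemma:inductive-small-extension}) together with a trivial observation on the quotients $\overline{(-)}$.

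To construct $\kappa$, I first observe that the divided power elementary hypothesis forces $I^{[2]} = 0$: since $I \subseteq \fm_A$, one has $I \cdot I \subseteq \fm_A I = 0$, while $\gamma_n(x) = 0$ for $x \in I$ and $n \geq 2$ by assumption. By \Cref{prop:pi1-of-pd-cotangent-complex}, the divided power cotangent complex $L_{B/A}^{\pd}$ is therefore $1$-connective with $\pi_1 L_{B/A}^{\pd} \cong I/I^{[2]} \cong I$, and the Hurewicz map $B/A \to L_{B/A}^{\pd}$ realises the identity $I \to I$ on $\pi_1$. Truncating yields a canonical map $L_{B/A}^{\pd} \to \tau_{\leq 1} L_{B/A}^{\pd} \simeq I[1]$, which by \Cref{cor:pd-derivations-sqz} corresponds to a section $s \colon B \to \sqz_B^{\pd}(I[1])$ of the augmentation in $\CAlg^{\pd}_{A//B}$. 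Composing $s$ with the base change map $\sqz_B^{\pd}(I[1]) \to \sqz_k^{\pd}(I[1])$ along $B \to k$ yields $\kappa$, and the commutativity of the resulting square is immediate from the ``under $A$'' property of $s$.

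For the pullback verification, write $P := B \times_{\sqz_k^{\pd}(I[1])} k$ for the homotopy pullback in $\CAlg^{\pd,\art}_W$, with the comparison map $A \to P$ supplied by the commuting square. By \Cref{prop:forgetful-functor}, it suffices to check that $\forget(A) \to \forget(P)$ is an equivalence, and since limits in $\Fun(\Delta^1, \CAlg^{\an})$ are computed pointwise this amounts to verifying that both $\und(A) \to \und(P)$ and $\overline{A} \to \overline{P}$ are equivalences. The latter is immediate from the Artinian hypothesis: $\overline{A} = \overline{B} = k = \overline{\sqz_k^{\pd}(I[1])}$, so $\overline{P} \simeq k \times_k k \simeq k \simeq \overline{A}$. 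For the former, $\und(P) \simeq B \times_{\sqz_k(I[1])} k$ with classifying map $\und(\kappa)$, and the desired equivalence $\und(A) \simeq \und(P)$ is the content of \Cref{lemma:inductive-small-extension} applied to the animated elementary extension $A \to B$. The main obstacle will be the identification of $\und(\kappa)$ with the classical animated classifying map of $A \to B$; I expect this to follow by comparing the divided power cotangent adjunction (\Cref{cor:pd-derivations-sqz}) with its animated counterpart (\Cref{cor:derivations-sqz}) along the natural map $L_{B/A} \to L_{B/A}^{\pd}$, using that both truncations to $I[1]$ induce the identity on $\pi_1 \cong I$ (since $I/I^2 = I = I/I^{[2]}$ in this setting).
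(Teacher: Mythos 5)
Your construction of $\kappa$ and your reduction via $\forget$ to the two components $\und$ and $\overline{(-)}$ match the paper's proof exactly; the paper likewise builds the map $B \to \sqz^{\pd}_k(I[1])$ from the truncation $L^{\pd}_{B/A} \to \tau_{\leq 1}L^{\pd}_{B/A} \simeq I[1]$ via \Cref{cor:pd-derivations-sqz} and uses \Cref{prop:forgetful-functor} for the pullback check. The one place you diverge is the verification of $\und(A) \simeq \und(P)$: you route through \Cref{lemma:inductive-small-extension} and correctly flag as an obstacle that you would need to identify $\und(\kappa)$ with the specific animated classifying map produced there. The paper sidesteps this comparison entirely: after reducing to the module level it directly checks that the cofibre map $B/A \to B/A' \simeq I[1]$ is an equivalence, and this drops out of the second part of \Cref{prop:pi1-of-pd-cotangent-complex}, which you already invoked — the Hurewicz map $B/A \to L^{\pd}_{B/A}$ induces the projection $I \to I/I^{[2]} = I$ on $\pi_1$, and the truncation is an isomorphism on $\pi_1$, so the composite is an equivalence of discrete modules shifted to degree one. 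Since you already have all these ingredients in hand, your detour through \Cref{lemma:inductive-small-extension} (and the attendant need to construct and compare a natural map $L_{B/A} \to L^{\pd}_{B/A}$, which the paper never introduces) is avoidable; the Hurewicz identification you quote from \Cref{prop:pi1-of-pd-cotangent-complex} is already the exact statement that closes the argument.
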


\begin{proof}
\Cref{prop:pi1-of-pd-cotangent-complex} provides a map
$
L_{B/A}^{\pd} \rightarrow \tau_{\leq 1} L_{B/A}^{\pd} \simeq I[1].
$
Using  \Cref{cor:pd-derivations-sqz} and the fact that $I$ comes from a $k$-module, we obtain 
a map 
\[
B\longto \sqz^\pd_B(I[1]) \longto \sqz^\pd_k(I[1])
\]
in $\CAlg^{\pd}_{A/}$. Define $A'$ as  the following pullback square in in $\CAlg^\pd_{A/}$:
\[
\begin{tikzcd}
	A' \arrow{r} \arrow{d} \arrow[rd, phantom, "\lrcorner",  at start] & B \arrow{d} \\
	k \arrow{r} & \sqz^\pd_k(I[1]).
\end{tikzcd}
\]
It   suffices to show that the induced map $f\colon A\rightarrow A'$ is an equivalence in $\CAlg^{\pd}$. By construction, we know that $A'$ is discrete and that 
$A/I_A \rightarrow A'/I_{A'}$ is an equivalence. 
By \Cref{prop:forgetful-functor}, it then suffices to verify that the induced map $B/A \rightarrow B/A'$ on cofibres is an equivalence. Unravelling the definitions, we see that it is equivalent to the canonical map $B/A \rightarrow L_{B/A}^{\pd} \rightarrow \tau_{\leq 1}  L_{B/A}^{\pd} $.

But \Cref{prop:pi1-of-pd-cotangent-complex} shows that this map induces an equivalence on homotopy groups, since
$I = \pi_1(B/A) \rightarrow \pi_1(L_{B/A}^{\pd} ) \cong I/I^{[2]} = I$ given by the identity.
\end{proof}

\subsection{Unobstructedness of Calabi--Yau varieties}
Our results on injectivity of the period map on tangent fibres, unobstructedness of the period domain, and bootstrapping combine to give the following general criterion for unobstructedness: 
\begin{theorem}\label{thm:unobstructed}
Let $X$ be a   smooth and proper scheme over $k$ such that:
\begin{enumerate}
	\item there is a decomposition $\RGamma_\cris(X/W) \simeq M\oplus N$ such that the induced map
	\[
	k\otimes_W N \rightarrow \RGamma_{\dR}(X/k) \rightarrow \RGamma(X,\cO_X)
	\]
	is an equivalence;
	\item the $W$-module $\pi_0\Map_{W}(N,M)$ is torsion-free;
	\item the map $\rH^{d-1}(X,\Fil^1\Omega_{X/k}) \rightarrow \rH^{d-2}(X,\Omega^1_{X/k})$ is surjective;
	\item the map
	$
	\Ext^2_{\cO_X}(\Omega^1_{X/k},\cO_X) \rightarrow 
	\Hom_k( \rH^{d-2}(X,\Omega^1_{X/k}),\, \rH^d(X,\cO_X) )
	$
	\mbox{	is injective;}
	\item If $p=2$ then $X$ admits a flat lift to $W_2 = W/4$.
\end{enumerate}
Then deformations of $X$ over local Artinian   $W$-algebras are unobstructed.
\end{theorem}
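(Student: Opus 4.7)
The plan is to combine the three main ingredients developed in Sections 6 and 7: unobstructedness of the period domain, injectivity of the period map on obstruction classes, and the bootstrapping principle. The period map $\per_X: \Def_X \rightarrow \Per_X$ of \Cref{def:perdommap} will serve as the bridge that transports unobstructedness from the (derived Grassmannian-like) period domain back to the deformation functor of $X$.

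First, I will observe that hypotheses (1) and (2) feed directly into \Cref{cor:period-domain-unobstructed}, which shows that $\Per_X$ is unobstructed as a divided power formal moduli problem. In particular, for every divided power elementary extension $A' \twoheadrightarrow A$ in $\PDRing^{\art}_W$ with kernel $I$, the induced map $\pi_0 \Per_X(A') \rightarrow \pi_0 \Per_X(A)$ is surjective. Second, I will use \Cref{prop:artinian-pd-small-extension} to realise $A' \twoheadrightarrow A$ as a pullback $A' \simeq A \times_{\sqz^\pd_k(I[1])} k$ in $\CAlg^{\pd,\art}_W$, so that applying the divided power formal moduli problems $\Def_X^\pd$ and $\Per_X$ yields compatible fibre sequences and obstruction classes in $\pi_0 \Def_X^\pd(\sqz^\pd_k(I[1]))$ and $\pi_0 \Per_X(\sqz^\pd_k(I[1]))$ respectively.

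Given a classical lift $x \in \pi_0 \Def_X^\cl(A)$, I will push it forward via $\ind$ (\Cref{constr:ind-an-to-pd}) to a pd-deformation, whose obstruction class maps under $\per_X$ to the obstruction class for $\Per_X$; the latter vanishes by the previous paragraph. Now \Cref{prop:cy-obstruction-injectivity}, applied with $V = I$ using hypotheses (3) and (4), shows that the map on $\pi_0$ of $\sqz^\pd_k(I[1])$ induced by $\per_X$ is injective. Hence the obstruction itself vanishes and $x$ lifts to $A'$ in the pd setting. Because $\ind$ is an equivalence on classical pd rings (both sides parametrise the same underlying classical deformations of $\cO_X$), this gives a classical lift $x' \in \pi_0 \Def_X^\cl(A')$, so that $\Def_X^\cl(A') \rightarrow \Def_X^\cl(A)$ is surjective for every divided power elementary extension.

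Finally, I will apply the bootstrapping \Cref{prop:pd-bootstrap} to $F = \Def_X^\cl$: the tangent space $F(k[\epsilon]/\epsilon^2) \cong \rH^1(X, T_X)$ is finite-dimensional because $X$ is smooth and proper, the divided power elementary hypothesis is exactly what was established in the previous step, and hypothesis (5) supplies the non-emptiness of $F(W/4)$ needed when $p=2$. For finite $k$, I will first reduce to the infinite (in fact algebraically closed) case by base changing to $W(\bar{k})$, observing that all five hypotheses are stable under this faithfully flat extension and that unobstructedness descends by standard fpqc arguments. The main obstacle will be the careful identification of obstruction classes across the three functors $\Def_X$, $\Def_X^\pd$, and $\Def_X^\cl$, ensuring that the $\ind$-induced comparison really does transform a classical obstruction into the pd-obstruction whose vanishing we control through the period map.
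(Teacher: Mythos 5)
Your proposal follows essentially the same architecture as the paper's proof: (1)--(2) give unobstructedness of $\Per_X$ via \Cref{cor:period-domain-unobstructed}; \Cref{prop:artinian-pd-small-extension} realises a divided power elementary extension as a pullback against $\sqz_k^\pd I[1]$; (3)--(4) with \Cref{prop:cy-obstruction-injectivity} give injectivity of $\per_X$ on obstruction classes; and (5) plus \Cref{prop:pd-bootstrap} complete the argument. Two small points of imprecision are worth flagging.

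First, the detour through $\ind$ and $\Def_X^\pd$ is unnecessary and slightly off. The period map $\per_X$ of \Cref{def:perdommap} is defined directly on $\Def_X$ restricted to $\CAlg^{\pd,\art}_W$ (it factors through $\Def_X^\pd$, but its source is $\Def_X$). The obstruction class of $x\in\pi_0\Def_X(A)$ along a dp elementary extension lives in $\pi_0\Def_X(\sqz_k^\pd I[1])$, which equals $\pi_0\Def_X(\sqz_k I[1])$ since $\Def_X$ only sees the underlying animated ring, and this is already what \Cref{prop:cy-obstruction-injectivity} compares with $\Per_X(\sqz_k^\pd I[1])$. There is no need to pass to $\Def_X^\pd$ and then come back, and the claim that ``$\ind$ is an equivalence on classical pd rings'' is not justified (nor is it the right statement): in general $\Def_X(A) \to \Def_X^\pd(A)$ need not be an equivalence even for classical $A$.

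Second, the reduction from finite $k$ to $\overline{k}$ via ``standard fpqc descent of unobstructedness'' is handwaving; descending unobstructedness along $W(k)\to W(\overline{k})$ is not immediate. The paper instead observes that the comparison maps $\Def_X(A)\to\Def_{\overline{X}}(W(\overline{k})\otimes_{W(k)}A)$ assemble into a map of formal moduli problems that is \emph{injective on obstruction spaces}, which is exactly what lets one run the obstruction-vanishing argument once more. You should replace the fpqc appeal by that concrete injection on obstruction classes.
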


\begin{proof}
Let us first assume that $k$ is infinite.
We will  verify that $\pi_0\Def_X$ satisfies the hypotheses of \Cref{prop:pd-bootstrap}.  

Let $A'\surjto A$ be an elementary extension in $\PDRing^{\art}_W$. By \Cref{prop:artinian-pd-small-extension}, we have a pullback square
\[
\begin{tikzcd}
	A' \arrow{r} \arrow{d} \arrow[rd, phantom, "\lrcorner",  at start] 
	& A \arrow{d} \\
	k \arrow{r} & \sqz_k^\pd I[1]
\end{tikzcd}
\]
in $\CAlg^{\pd,\art}_W$. The period map $\per_X$
induces a morphism of fibre sequences 
\[
\begin{tikzcd}
	\Def_X(A') \arrow{r}\arrow{d}
	& \Def_X(A) \arrow{r}\arrow{d}
	& \Def_X(\sqz_k I[1]) \arrow{d} \\
	\Per_X(A') \arrow{r} 
	& \Per_X(A) \arrow{r} 
	& \Per_X(\sqz_k I[1]) 
\end{tikzcd}
\]
Let  $x\in \pi_0 \Def_X(A)$ and consider its image  $y\in \pi_0 \Per_X(A)$. \Cref{cor:period-domain-unobstructed} and
conditions (1) and (2) guarantee that $\Per_X$ is unobstructed, so $y$ admits a lift to $\pi_0 \Per_X(A')$. Its image 
$\ob_{\Per_X}(y, A'\rightarrow A)$ in $\pi_0 \Per_X(\sqz_k I[1])$ therefore vanishes.

Conditions (3) and (4), together with \Cref{prop:cy-obstruction-injectivity} implies that the right vertical map is injective, so that also $\ob_{\Def_X}(x,A'\rightarrow A)$ vanishes. Using \Cref{prop:unobstructed-via-obstruction-classes},
we conclude that $x$ admits a lift to $\pi_0\Def_X(A')$.

When $k$ is finite,  
write $\overline{X}$ for the base change of $X$ to the algebraic closure   $\overline{k}$. Using flat base change  \cite[02KH]{stacks},  we check that  $\overline{X}$ satisfies the conditions of the theorem; it is therefore unobstructed.
The maps $\Def_X(A) \rightarrow \Def_{\overline{X}} (W(\overline{k}) \otimes_{W(k)} A)$  assemble into a map of formal moduli problems inducing an injection on obstruction spaces. We conclude by an  argument as above that $X$ is unobstructed.
\end{proof}

\begin{remark}\label{rmk:torsion-free-condition}
Using \Cref{cor:torsion-free-period-domain-unobstructed} instead of \Cref{cor:period-domain-unobstructed}, we can replace conditions (1) and (2) by the conjunction of the conditions:
\begin{enumerate}
	\item[(1')] $\rH^i_\cris(X/W)$ is torsion-free for all $i$;
	\item[(2')] the map $\rH^i_{\dR}(X/k) \rightarrow \rH^i(X,\cO_X)$ is surjective for all $i$.
\end{enumerate} 
\end{remark}

Condition (5) in \Cref{thm:unobstructed} is always satisfied for Calabi--Yau varieties with degenerating Hodge--de Rham spectral sequence. This follows from a more general result of Achinger--Suh; we thank  Sasha Petrov for bringing it to our attention.
\begin{theorem}[Achinger--Suh]  \label{AchingerSuh}
	Let $X$ be a smooth, proper, geometrically irreducible scheme of dimension $d$ over $k$  such that
	$\omega_{X/k} \cong \cO_X$. Assume moreover that   the Hodge--de Rham spectral sequence for $X$ degenerates at the $E_1$-page.  {Then $X$ admits a flat lift  to $W_2(k)$.}
\end{theorem}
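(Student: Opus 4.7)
The plan is to prove this by obstruction theory: the lifting obstruction $\ob(X) \in \rH^2(X, T_{X/k})$ must vanish. Classically (by Grothendieck--Illusie), a flat lift of a smooth proper $X/k$ to $W_2(k)$ exists if and only if this single class is zero, so the task reduces to showing that $\ob(X)$ vanishes under our hypotheses.

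Since $\omega_{X/k} \cong \cO_X$, a choice of trivializing global section $\eta$ of $\omega_{X/k}$ gives an isomorphism $T_{X/k} \simeq \Omega^{d-1}_{X/k}$ via contraction $\theta \mapsto \iota_\theta \eta$. Under this identification, $\ob(X)$ becomes a class in $\rH^2(X, \Omega^{d-1}_{X/k})$, which is a term on the $E_1$-page of the Hodge--de Rham spectral sequence converging to $\rH^{d+1}_{\dR}(X/k)$. The strategy is to show that, under the identifications above, $\ob(X)$ coincides (up to sign) with the $d_1$ or higher differential in this spectral sequence acting on the class $[\eta] \in \rH^0(X, \Omega^d_{X/k})$ -- and so vanishes once HdR degenerates at $E_1$.

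Concretely, I would describe $\ob(X)$ at the cochain level as follows. Choose an affine open cover $\{U_\alpha\}$ of $X$ and local flat lifts $\widetilde U_\alpha / W_2$. Over each $U_\alpha \cap U_\beta$ there exists an isomorphism between the two induced lifts; the resulting $2$-cocycle with values in $T_{X/k}$ computes $\ob(X)$. Lifting the volume form $\eta$ locally to $\widetilde \eta_\alpha \in \Omega^d_{\widetilde U_\alpha / W_2}$, the failure of these local lifts to glue modulo $d\Omega^{d-1}$ produces a $1$-cochain with values in $\Omega^{d-1}_{X/k}$ whose coboundary, after contracting against $\eta$, represents $\ob(X)$. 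The $E_1$-degeneration ensures every Hodge class (in particular $[\eta]$) is in the image of $\rH^d_{\dR}(X/k) \twoheadrightarrow \rH^0(X, \Omega^d_{X/k})$, which is precisely what is needed to straighten the $\widetilde\eta_\alpha$ to a globally coherent family, killing the obstruction cocycle.

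The main obstacle is making this identification of $\ob(X)$ with an HdR differential precise; this is the technical core of Achinger--Suh's theorem. The argument requires a careful bookkeeping of how the local-to-global comparison for lifts of $\eta$ interacts with the de Rham differential, and a delicate use of the fact that under $E_1$-degeneration the natural map $\rH^*_{\dR}(X/k) \to \rH^*(X, \Omega^\bullet_{X/k}) $ respects the Hodge filtration strictly. Once this compatibility is established, one reads off $\ob(X) = 0$ directly from the degeneration hypothesis applied to the class $[\eta]$; we refer to \cite{AchingerSuh2023} for the full execution.
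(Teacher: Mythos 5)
Your proposed mechanism — that $\ob(X)$, after contraction with $\eta$, coincides with a Hodge--de Rham spectral sequence differential emanating from $[\eta] \in H^0(X, \Omega^d_X) = E_1^{d,0}$ — cannot be correct. In the Hodge--de Rham spectral sequence the differentials $d_r$ have bidegree $(r, 1-r)$, so every differential out of $E_1^{d,0}$ lands in a column of index $> d$, where $\Omega^j_X = 0$; these differentials are thus zero for trivial degree reasons, independently of any degeneration hypothesis. Moreover there is no differential of bidegree $(-1,2)$, so the position $E_1^{d-1,2} = H^2(X, \Omega^{d-1}_X)$ where your $\ob(X)$ lives is not connected to $E_1^{d,0}$ by the spectral sequence at all. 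If your identification held, it would show that \emph{every} smooth proper variety with trivial $\omega_X$ lifts to $W_2(k)$ without using the $E_1$-degeneration hypothesis — which is false.

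The spectral sequence you need is the \emph{conjugate} one, whose $d_2$ has bidegree $(2,-1)$ and carries $E_2^{0,d} \ni [\eta^{(1)}]$ to $E_2^{2,d-1} = H^2(X, \Omega^{d-1}_X)$, precisely where the obstruction sits after contraction; this is the Deligne--Illusie circle of ideas that Achinger--Suh make precise. This exposes a second missing step: you would still need to argue that Hodge--de Rham degeneration at $E_1$ forces degeneration of the conjugate spectral sequence, which is the first move in the paper's proof. After that, the paper does not go through an explicit identification of $\ob(X)$ with a conjugate differential; it cites \cite[Theorem 1.3]{AchingerSuh2023}, which says that conjugate degeneration forces $\ob(X)$ into the kernel of the cup-product pairing $H^2(X, T_X) \to \Hom\bigl(H^{d-2}(X, \Omega^1_X),\, H^d(X, \cO_X)\bigr)$, and then concludes $\ob(X) = 0$ by Serre duality together with $\omega_X \cong \cO_X$. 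Your \v{C}ech-level sketch, once rerouted through the conjugate spectral sequence and the Cartier operator, is in the right spirit, but as written it does not yield a proof.
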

\begin{proof} 
Let $\ob(X,W_2(k)) \in H^2(X,T_X)$ be the obstruction class  to \mbox{lifting $X$  to $W_2(k)$.}
As the Hodge--de Rham spectral  sequence degenerates, the conjugate spectral  sequence also degenerates.
By \cite[Theorem 1.3]{AchingerSuh2023}, this implies that $\ob(X,W_2(k))$ lies in the kernel of the cup product map 
$  
H^2(X,T_X) \rightarrow \Map(H^{d-2}(X, \Omega^{1}_X) , H^{d}(X, \cO_X) )$, which implies the claim by Serre duality. 
\end{proof}

Combining \Cref{thm:unobstructed} and \Cref{AchingerSuh}, we obtain:
\begin{corollary}[\Cref{mainthm:btt}]
Let $X$ be a smooth and proper, geometrically irreducible scheme of dimension $d$ over $k$ such that
\begin{enumerate}
	\item $\omega_{X/k} \cong \cO_X$;
	\item $\rH^i_\cris(X/W)$ is torsion-free for all $i$;
	\item the Hodge--de Rham spectral sequence for $X$ degenerates at the $E_1$-page.
\end{enumerate}
Then deformations of $X$ over $W$ are unobstructed.
\end{corollary}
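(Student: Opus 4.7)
The plan is to deduce this from \Cref{thm:unobstructed}, applied in the variant of \Cref{rmk:torsion-free-condition} where hypotheses (1) and (2) are replaced by: (1') $\rH^i_{\cris}(X/W)$ is torsion-free for all $i$; and (2') the edge map $\rH^i_{\dR}(X/k) \to \rH^i(X,\cO_X)$ is surjective for all $i$. Condition (1') is precisely our hypothesis (2), while condition (5) (needed when $p=2$) is exactly the content of \Cref{AchingerSuh}, whose assumptions---namely $\omega_{X/k} \cong \cO_X$ together with $E_1$-degeneration of the Hodge--de Rham spectral sequence---are our hypotheses (1) and (3). It therefore suffices to verify (2'), (3), and (4).

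First, I would handle (2') and (3) simultaneously. The $E_1$-degeneration of the Hodge--de Rham spectral sequence is equivalent to strictness of the Hodge filtration, i.e., the maps $\rH^i(X, \Fil^p \Omega^\bullet_{X/k}) \to \rH^i_{\dR}(X/k)$ are injective for all $p$ and $i$. Combined with the long exact sequence associated to the cofibre sequence
\[
	\Fil^{p+1}\Omega^\bullet_{X/k} \to \Fil^p \Omega^\bullet_{X/k} \to \Omega^p_{X/k}[-p],
\]
this forces the edge map $\rH^i(X, \Fil^p \Omega^\bullet_{X/k}) \to \rH^{i-p}(X, \Omega^p_{X/k})$ to be surjective: its cokernel injects into the kernel of the injective map $\rH^{i+1}(\Fil^{p+1}\Omega^\bullet) \to \rH^{i+1}(\Fil^p \Omega^\bullet)$, and hence vanishes. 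Specialising to $(p,i) = (0,i)$ (using $\Fil^0 \Omega^\bullet = \Omega^\bullet$) gives (2'); specialising to $(p,i) = (1, d-1)$ gives (3).

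For condition (4), I would identify $\Ext^2_{\cO_X}(\Omega^1_{X/k}, \cO_X) \cong \rH^2(X, T_{X/k})$ using local freeness of $\Omega^1_{X/k}$, and then invoke Serre duality. Since $\omega_{X/k} \cong \cO_X$ and $X$ is geometrically irreducible of dimension $d$, Serre duality gives $\rH^d(X, \cO_X) \cong \rH^0(X, \omega_{X/k})^\vee \cong k$ and $\rH^{d-2}(X, \Omega^1_{X/k})^\vee \cong \rH^2(X, T_{X/k} \otimes \omega_{X/k}) \cong \rH^2(X, T_{X/k})$. Under these identifications, the natural map
\[
	\rH^2(X, T_{X/k}) \to \Hom_k\bigl(\rH^{d-2}(X, \Omega^1_{X/k}),\, \rH^d(X, \cO_X)\bigr)
\]
is the adjoint of the Serre duality pairing, and therefore an isomorphism---in particular, it is injective.

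The only genuinely substantial input is the Achinger--Suh lifting theorem at $p=2$, which is quoted above as \Cref{AchingerSuh}; all other verifications reduce to routine strictness of the Hodge filtration and Serre duality under the Calabi--Yau assumption. Once these five hypotheses of \Cref{thm:unobstructed} are in place, that theorem immediately provides unobstructedness of the formal deformations of $X$ over local Artinian $W$-algebras, completing the proof.
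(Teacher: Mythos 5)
Your proposal is correct and takes essentially the same approach as the paper: you verify conditions (1'), (2') of \Cref{rmk:torsion-free-condition} and (3)--(5) of \Cref{thm:unobstructed}, deducing (2') and (3) from $E_1$-degeneration via strictness of the Hodge filtration, (4) from Serre duality with $\omega_{X/k}\cong\cO_X$, and (5) from \Cref{AchingerSuh}. The only difference is that you spell out the long-exact-sequence argument for (2')/(3) and the identification of the map in (4) in more detail than the paper does.
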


\begin{proof}
We will verify that $X$ satisfies conditions (1') and (2') from \Cref{rmk:torsion-free-condition} and (3)--(5) from \Cref{thm:unobstructed}. 

Condition (1') is satisfied by assumption, and conditions (2') and (3) follow from the degeneration of the Hodge--de Rham spectral sequence. Finally, using the assumption that the canonical bundle of $X$ is trivial, we can identify the map in (4) with
the natural map
\[
\Ext^2_X(\Omega^1_{X/k}, \omega_{X/k}) \rightarrow
\Hom_k\big( \rH^{d-2}(X,\Omega^1_{X/k}),\, 
\rH^d(X,\omega_{X/k})\big),  
\]
which is an isomorphism by Serre duality.

For $p=2$, condition (5) follows from \cite[Theorem 1.3]{AchingerSuh2023}  by Serre duality, as the conjugate spectral sequence degenerates.
\end{proof}

\section{Serre--Tate coordinates}
\label{sec:serre-tate}
The main aim of this section is to prove that the deformation space of a Bloch--Kato $2$-ordinary Calabi--Yau variety is a formal group of multiplicative type. To this end, we will use sheaves of multiplicative units  to construct a second \mbox{period map.}

\subsection{The multiplicative group} We begin  by introducing the derived multiplicative group, following \cite[25.1.5]{LurieSAG}. 
Consider the   functor
$
\bZ[-]\colon \Mod_{\bZ}^\cn \rightarrow \CAlg^{\an}
$
that  preserves sifted colimits and maps a finite free abelian group $G$ to its group ring $\bZ[G]$. This functor preserves colimits, as it preserves coproducts of finite free abelian groups. We denote its right adjoint by
\[
\bG_m\colon \CAlg^\an \rightarrow \Mod_{\bZ}^\cn.
\] 

\begin{proposition}\label{prop:Gmproperties}
	The  multiplicative group $	\bG_m$  has the following properties:$~$
\begin{enumerate}
	\item the composition of $\bG_m$ with the forgetful functor $\Omega^{\infty}:\Mod_{\bZ}^\cn \rightarrow \cat{S}$ is corepresented by $\bZ[t,t^{-1}]$;
	\item  the canonical arrow  $A^\times \xrightarrow{\simeq }\bG_m(A) $  is an equivalence for all $A\in \CRing$;
	\item the functor $\bG_m$ is left Kan extended from its restriction to the category $\CRing^{\sm}$ of (discrete)  smooth   $\bZ$-algebras. 
\end{enumerate}
\end{proposition}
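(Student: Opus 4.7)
Parts (1) and (2) are formal consequences of the defining adjunction $\bZ[-] \dashv \bG_m$, while (3) requires a more delicate colimit argument.

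For (1), the forgetful functor $\Omega^\infty \colon \Mod_\bZ^\cn \to \cat{S}$ is corepresented by the unit $\bZ \in \Mod_\bZ^\cn$, so
\[
\Omega^\infty \bG_m(A) \simeq \Map_{\Mod_\bZ^\cn}(\bZ, \bG_m(A)) \simeq \Map_{\CAlg^\an}(\bZ[\bZ], A) \simeq \Map_{\CAlg^\an}(\bZ[t, t^{-1}], A),
\]
where the middle step is the defining adjunction and $\bZ[\bZ] \simeq \bZ[t, t^{-1}]$ follows from unravelling $\bZ[-]$ on a free abelian group of rank one. For (2), I would test the canonical map $A^\times \to \bG_m(A)$ in $\Mod_\bZ^\cn \simeq \cat{P}_\Sigma(\Mod_\bZ^\ff)$ by evaluating on finite free $\bZ^n$: iterating (1) gives $\Map_{\Mod_\bZ^\cn}(\bZ^n, \bG_m(A)) \simeq \Map_{\CAlg^\an}(\bZ[t_1^{\pm}, \dots, t_n^{\pm}], A)$, and since $\bZ[t_1^{\pm}, \dots, t_n^{\pm}]$ is a discrete smooth $\bZ$-algebra, for discrete $A$ this reduces to $\Hom_{\CRing}(\bZ[t_1^{\pm}, \dots, t_n^{\pm}], A) = (A^\times)^n$, matching $\Map_{\Mod_\bZ^\cn}(\bZ^n, A^\times)$.

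For (3), conservativity of $\Omega^\infty$ together with its preservation of sifted colimits (\Cref{prop:forgetful-chain}) reduces the problem to showing that $F := \Map_{\CAlg^\an}(\bZ[t, t^{-1}], -) \colon \CAlg^\an \to \cat{S}$ is left Kan extended from $\CRing^\sm$. Concretely, I must verify
\[
F(A) \simeq \colim_{B \in \CRing^\sm_{/A}} F(B) = \colim_{B \in \CRing^\sm_{/A}} B^\times
\]
for every $A \in \CAlg^\an$. The plan is: (i) observe that $\CRing^\sm_{/A}$ is sifted, since smooth $\bZ$-algebras are closed under $\otimes_\bZ$, which equips this comma category with finite coproducts; (ii) show that $\colim_{B \in \CRing^\sm_{/A}} B \simeq A$ in $\CAlg^\an$, using the standard fact that $\CAlg^\an = \cat{P}_\Sigma(\Poly)$ and that $\Poly \subset \CRing^\sm$ together generate $\CAlg^\an$ under sifted colimits; (iii) conclude by showing $F$ preserves this particular sifted colimit.

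The main obstacle is step (iii): $\bZ[t, t^{-1}]$ is compact but not compact-projective in $\CAlg^\an$, so $F$ does not preserve arbitrary sifted colimits. The key inputs will be that $\bZ[t, t^{-1}]$ itself lies in $\CRing^\sm$, so that every morphism $f \colon \bZ[t, t^{-1}] \to A$ is tautologically witnessed by $(\bZ[t, t^{-1}], f) \in \CRing^\sm_{/A}$, together with a cofinality argument exhibiting a filtered subcategory of $\CRing^\sm_{/A}$ on which the sifted colimit of (ii) is cofinally computed and on which compactness of $\bZ[t, t^{-1}]$ in $\CAlg^\an$ ensures preservation of the colimit.
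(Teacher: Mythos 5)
Parts (1) and (2) are fine and match the paper's argument, modulo the fact that you spell out (2) via an evaluation on finite free modules where the paper simply notes it is a formal consequence of (1).

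For (3), your initial reduction is correct and matches the paper: conservativity of $\Omega^\infty$ together with siftedness of the comma category $\CRing^\sm \times_{\CAlg^\an} \CAlg^\an_{/A}$ (which has finite coproducts since smooth $\bZ$-algebras are closed under $\otimes_\bZ$) reduces the claim to showing that $\Omega^\infty \bG_m \simeq \Map_{\CAlg^\an}(\bZ[t,t^{-1}],-)$ is left Kan extended from $\CRing^\sm$. However, your proposed step (iii) cannot work as stated: you aim to produce a cofinal \emph{filtered} subcategory of $\CRing^\sm_{/A}$, but if such a subcategory existed then $A$ would be a filtered colimit of discrete rings and hence discrete, which fails for general $A \in \CAlg^\an$. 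The compactness of $\bZ[t,t^{-1}]$ is a red herring here.

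The correct observation is one you in fact already wrote down — that $\bZ[t,t^{-1}]$ lies in $\CRing^\sm$ — but you did not realize it closes the argument on its own. If $\iota \colon \cat{C}_0 \hookrightarrow \cat{C}$ is a full subcategory and $c \in \cat{C}_0$, then $\Map_{\cat{C}}(c,-)$ is automatically the left Kan extension of its restriction $\Map_{\cat{C}_0}(c,-)$ along $\iota$: indeed both $\Lan_\iota \Map_{\cat{C}_0}(c,-)$ and $\Map_\cat{C}(c,-)$ corepresent the functor $F \mapsto F(c)$ on $\Fun(\cat{C},\cat{S})$, one by the universal property of left Kan extension combined with Yoneda in $\cat{C}_0$, the other by Yoneda in $\cat{C}$. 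This is precisely what the paper invokes when it writes ``as $\bZ[t,t^{-1}]$ is smooth,'' and it requires no filtered-cofinality or compactness input, nor the dubious steps (ii) and (iii) of your plan. Your step (ii), that the identity of $\CAlg^\an$ is left Kan extended from $\CRing^\sm$, happens to be true but is not what is needed; what is needed is the Kan extension statement for the corepresentable functor, and that is immediate from Yoneda.
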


\begin{proof}
The first statement follows from the equivalences
\[
\Omega^{\infty}	\bG_m(-)\simeq \Map_{\Mod_{\bZ}^\cn}(\bZ,\bG_m(-)) \simeq \Map_{\CAlg^\an}(\bZ[\bZ], -)
\]
in $\Fun(\CAlg^\an,\cat{S})$.  
The second statement is a formal consequence of the first. 
For the final statement,  first note that $\Omega^{\infty} \bG_m$ is left Kan extended from $\CRing^{\sm}$, as  $\bZ[t,t^{-1}]$ is smooth. The claim then follows from the colimit formula for left Kan extensions, using 
that $\Omega^{\infty}$ reflects sifted colimits and  that for any $B \in \CAlg^{\an}$, the $\infty$-category $\CRing^{\sm} \times_{ \CAlg^{\an}}  \CAlg^{\an}_{/B}$ is sifted, as it admits finite coproducts.
\end{proof}

For every smooth  $\bZ $-algebra $A\in \CRing^{\sm}$ and every discrete $A$-module $M$, we have a direct sum decomposition
\[
(\sqz_AM)^\times = (A\oplus M)^\times \longisomto A^\times \oplus M,\, \ 
(a,m)\mapsto (a,a^{-1}m),
\]
which is functorial in $(A,M)$. By left Kan extension, this induces decompositions
\begin{equation}\label{eq:multiplicative-square-zero}
\bG_m(\sqz_A M) \longisomto \bG_m(A) \oplus M,
\end{equation}
defining a direct sum decomposition in $\Fun(\CAlg^\an\Mod^\cn,\,\Mod^\cn_\bZ)$.

\begin{definition}[$\dlog$]
The map $\dlog\colon \bG_m \rightarrow L_{-/\bZ}$ in $\Fun(\CAlg^\an,\Mod^\cn_\bZ)$ is the left Kan extension of the following  natural transformation on smooth $\bZ$-algebras $A$:
\[
\dlog\colon A^\times \rightarrow \Omega^{1}_{A/\bZ}\colon 
a \mapsto \frac{da}{a}.
\] 
\end{definition}

Recall from \Cref{not:map-prime} that we write $\Map'(x,y)$ for the space of sections of a given map $y\rightarrow x$.

\begin{proposition}\label{prop:Gm-deformation}
Let $A$ be an animated ring, and $M$ a connective $A$-module. Then there is a commutative square 
\[
\begin{tikzcd}
	\Map'_{\CAlg^\an}(A,\sqz_A M) \arrow{r}{ \simeq} \arrow{d}{\bG_m}
	& \Map_{\Mod_A^\cn}(L_{A/\bZ}, M) \arrow{d}{\dlog} \\
\Map'_{\Mod_\bZ^\cn}(\bG_m(A),\bG_m(\sqz_A M))	 \arrow{r}{\simeq} & 
	\Map_{\Mod_\bZ^\cn}(\bG_m(A), M),
\end{tikzcd}
\]
functorial in $(A,M) \in \CAlg^\an\Mod^\cn$. Here the upper horizontal map is the equivalence from \Cref{cor:derivations-sqz}, and the lower horizontal map is the equivalence induced by the decomposition~(\ref{eq:multiplicative-square-zero}).
\end{proposition}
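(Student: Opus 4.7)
I would reduce the proposition to an identity of two natural transformations $\bG_m \to L_{-/\bZ}$ on $\CAlg^\an$, and then verify this identity by restricting to smooth $\bZ$-algebras, where it becomes a classical one-line computation.

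First, the reduction. Given a section $s : A \to \sqz_A(M)$, Corollary~\ref{cor:derivations-sqz} lets us write $s = \sqz_A(\phi) \circ (\id \oplus d)$ for a unique $\phi : L_{A/\bZ} \to M$. Applying $\bG_m$ and invoking the naturality of the decomposition~\eqref{eq:multiplicative-square-zero} in the module variable, $\bG_m(\sqz_A(\phi))$ corresponds to $\id_{\bG_m(A)} \oplus \phi$ under the splittings, so $\bG_m(s)$ factors as
\[
\bG_m(A) \xrightarrow{\,(\id,\, \delta_A)\,} \bG_m(A) \oplus L_{A/\bZ} \xrightarrow{\id \oplus \phi} \bG_m(A) \oplus M,
\]
where $\delta : \bG_m \to L_{-/\bZ}$ is the natural transformation whose value at $A$ is the $L_{A/\bZ}$-component of $\bG_m$ applied to the universal derivation $(\id \oplus d) : A \to \sqz_A(L_{A/\bZ})$. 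Chasing the equivalences, this shows that the square in the proposition commutes naturally in $(A,M)$ provided $\delta = \dlog$ as natural transformations $\bG_m \to L_{-/\bZ}$.

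Next, I would compare $\delta$ to $\dlog$. Both are morphisms in $\Fun(\CAlg^\an, \Mod_\bZ^\cn)$ with the same source and target. Since $\bG_m$ is left Kan extended from its restriction to $\CRing^{\sm}$ by \Cref{prop:Gmproperties}(3), the universal property of left Kan extension identifies $\Map(\bG_m, L_{-/\bZ})$ with the space of natural transformations $\bG_m|_{\CRing^{\sm}} \to L_{-/\bZ}|_{\CRing^{\sm}}$. It therefore suffices to check $\delta = \dlog$ on smooth $\bZ$-algebras, where $(\id \oplus d)$ sends $u \in A^\times$ to the unit $(u, du) \in (A \oplus \Omega^1_{A/\bZ})^\times$. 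Under the splitting $(a,m) \mapsto (a, a^{-1} m)$ of~\eqref{eq:multiplicative-square-zero}, this corresponds to $(u,\, u^{-1}\, du) = (u, \dlog u)$, as desired.

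The main obstacle is ensuring that the naturality of~\eqref{eq:multiplicative-square-zero} in the module variable is strong enough to carry the step $\bG_m(\sqz_A(\phi)) = \id \oplus \phi$ in the reduction. Since~\eqref{eq:multiplicative-square-zero} is defined by left Kan extension from pairs $(A,M)$ with $A$ smooth and $M$ discrete --- where the splitting $(a,m) \mapsto (a, a^{-1}m)$ is manifestly natural in $M$ --- this naturality is built into the construction and requires no separate verification. Functoriality of the square in $(A,M)$ likewise follows once both sides have been exhibited as compositions of natural transformations between functors on $\CAlg^\an\Mod^\cn$.
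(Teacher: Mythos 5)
Your proof is correct and in substance follows the same route as the paper's: verify the identity on smooth discrete data, then extend by left Kan extension. The one genuine improvement is that you make the Kan-extension step precise. The paper's proof says only that the diagram of sets commutes for $A$ smooth and $M$ discrete and that "the general statement follows by left Kan extension," but it does not say which functor is being Kan-extended or why a commuting square over the smooth-discrete subcategory propagates to all of $\CAlg^\an\Mod^\cn$ — the four corners (being mapping spaces) are not themselves left Kan extended from that subcategory. Your reduction resolves this: after precomposing the top arrow with the equivalence of Corollary~\ref{cor:derivations-sqz}, both composites become "precompose with a fixed natural transformation $\bG_m \to L_{-/\bZ}$" (namely $\dlog$ and your $\delta$), so commutativity of the square is equivalent to a homotopy $\delta \simeq \dlog$ in $\Fun(\CAlg^\an, \Mod_\bZ^\cn)$. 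There the source $\bG_m$ genuinely \emph{is} left Kan extended from $\CRing^{\sm}$ (\Cref{prop:Gmproperties}(3)), so the universal property of $\Lan$ applies cleanly, and the remaining check $u \mapsto u^{-1}du$ on smooth algebras is the one the paper performs. In short: same strategy, but your version supplies the intermediate step that the paper's phrase "follows by left Kan extension" is implicitly relying on.
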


\begin{proof}
If $A$ is smooth and $M$ discrete, then all the mapping spaces in the statement are discrete and the resulting diagram of sets commutes by a direct inspection. The general statement follows by left Kan extension.
\end{proof}

If $X$ is a scheme,
we denote the small \'etale site of $X$  by $X_\et$ (cf.\ \Cref{def:smet}).
In this section, we will simply write $\cO_X$ for the 
 \'{e}tale structure sheaf of $X$. Let
\[
\Mod(X_\et,\bZ_p) := \lim_n \Mod(X_\et,\bZ/p^n)
\]
be the stable $\infty$-category of $p$-adic sheaves on $X_\et$. 
\begin{remark} We slightly abuse notation, as $	\Mod(X_\et,\bZ_p) $ is different from the $\infty$-category of  sheaves of modules over the constant sheaf $\bZ_p$ in the sense of \Cref{sec:sheavesofmodules}.\end{remark}

Objects of $\Mod(X_\et,\bZ_p)$ are systems $M_\bullet$ with $M_n$ in $\Mod(X_\et,\bZ/p^n\bZ)$ together with equivalences $\bZ/p^{n-1}\bZ \otimes_{\bZ/p^n\bZ}M_n \simeq M_{n-1}$. Any map of schemes $f:X\rightarrow S$ defines a pushforward functor\vspace{-1pt}
\[
\rR f_\ast \colon \Mod(X_\et,\bZ_p) \rightarrow \Mod(S_\et,\bZ_p), 
\]
which admits a left adjoint 
\[
f^{-1} \colon \Mod(S_\et,\bZ_p) \rightarrow \Mod(X_\et,\bZ_p).
\]
We also  have a limit-preserving  completion functor\vspace{-1pt}
\[
(-)^\wedge\colon \Mod(X_\et,\bZ) \rightarrow \Mod(X_\et,\bZ_p),\,
\ M \mapsto (\bZ/p^n \bZ \otimes_{\bZ} M)_n.
\]
Completion also preserves colimits, and  restricts to an equivalence between  $\Mod(X_\et,\bZ_p)$ and the full subcategory of all $p$-complete objects in $\Mod(X_\et,\bZ)$, see  \cite[2.3.1.8]{GaitsgoryLurie}.
\begin{definition}  
The \textit{$p$-completed multiplicative units functor}  $\bG^{\wedge}_{m}$  is   the \mbox{composite}\vspace{-1pt}
\[
\Shv(X_\et,\CAlg^\an) \overset{\bG_m}\longto
\Mod(X_\et,\bZ) \overset{(-)^\wedge}{\longto}
\Mod(X_\et,\bZ_p).
\]\end{definition} 

We will need the following observation about the   sheaf $\cW$ from 	\Cref{not:cW}:
\begin{proposition} \label{prop:inverseandgm}
Given any $A \in \CAlg^{\an,\art}_W$, there are canonical equivalences 
{$\bG_m(f^{-1}\cW \otimes_WA) \simeq 
	f^{-1}\bG_m(\cW \otimes_WA) $}  and 
{$\bG^{\wedge}_m(f^{-1}\cW \otimes_WA) \simeq 
		f^{-1}\bG^{\wedge}_m(\cW \otimes_WA) $}. 
\end{proposition}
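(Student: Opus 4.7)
The plan is to prove the uncompleted equivalence first by a stalk-wise argument, then derive the $p$-completed version formally.

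First, I would construct the canonical comparison map $f^{-1}\bG_m(\cW \otimes_W A) \to \bG_m(f^{-1}(\cW \otimes_W A))$ as the mate of the equivalence $\bG_m \circ f_* \simeq f_* \circ \bG_m$ under the adjunction $(f^{-1}, f_*)$. This equivalence of compositions holds because $f_*$ is computed pointwise on sections and $\bG_m$ preserves limits, so applying $\bG_m$ to a sheaf of animated rings is again pointwise: $(\bG_m \mathcal{B})(U) = \bG_m(\mathcal{B}(U))$. Composing the unit of the adjunction with this identification, and using the base change $f^{-1}(\cW \otimes_W A) \simeq f^{-1}\cW \otimes_W A$ (which holds because $f^{-1}$ preserves colimits and constants), yields the sought comparison map.

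Next, I would verify the comparison is an equivalence by evaluating at an arbitrary geometric point $\bar x \to X$ with image $\bar x' \to \Spec(k)$. The étale pullback $f^{-1}$ commutes with stalks by the standard description, and $\bG_m = \Map_{\CAlg^\an}(\bZ[t, t^{-1}], -)$ commutes with filtered colimits---hence with stalks---because $\bZ[t, t^{-1}]$ is a compact object of $\CAlg^\an$. Both sides of the comparison then identify canonically with $\bG_m(\cW_{\bar x'} \otimes_W A)$, and tracing the mate construction through these identifications exhibits it as an equivalence.

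For the $p$-completed statement, I would unwind $\bG^\wedge_m$ as the object $(\bG_m(-)/p^n)_n$ of $\Mod(-, \bZ_p) = \lim_n \Mod(-, \bZ/p^n\bZ)$. The functor $f^{-1}$ on $p$-adic sheaves is defined level-wise on $\bZ/p^n$-modules, where each level is a left adjoint and therefore commutes with the cofiber functor $-/p^n$. Reducing the uncompleted equivalence modulo $p^n$ and assembling across $n$ then yields the desired equivalence in $\Mod(X_\et, \bZ_p)$, naturally in $A$.

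The main obstacle I anticipate is bookkeeping rather than conceptual difficulty: one must verify that the presheaf $U \mapsto \bG_m(\mathcal{B}(U))$ really is a sheaf (so that stalk computations are unambiguous), that the mate-theoretic construction of the comparison map matches the stalk-wise identification, and that the $p$-completed equivalence assembles canonically as an equivalence of pro-systems rather than only level-wise. None of these points is deep, but each warrants care in order to produce a genuinely canonical equivalence.
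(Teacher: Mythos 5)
Your argument is correct and, at its heart, rests on the same observation as the paper's proof: the functor $\bG_m$ is corepresented by the compact object $\bZ[t,t^{-1}]$, which is what makes it commute with the colimit-preserving functor $f^{-1}$. Where you differ from the paper is in how that commutation is established. The paper first reduces to showing that $\bG_m\colon \Shv(-,\CAlg^{\an})\to \Shv(-,\Mod^{\cn})$ commutes with $f^{-1}$ (by noting that sheafification and $p$-completion are compatible with inverse image), and then cites \cite[Proposition 4.1.7]{LurieDAG} to conclude, since $\bG_m$ is corepresented by a compact object. You instead build the comparison map via the mate construction and then verify it on geometric stalks, using that compactness of $\bZ[t,t^{-1}]$ makes $\bG_m$ commute with filtered colimits; the $p$-completed version then follows levelwise. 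Both routes are fine; the paper's is more a black-box citation, yours is a direct verification of the same abstract fact for the étale topology. One thing worth noting for a fully rigorous write-up of your version: the passage from ``equivalence on stalks'' to ``equivalence of sheaves'' requires the relevant sheaves to live in a part of the topos where stalks are jointly conservative (e.g.\ hypercomplete or bounded-below sheaves on a small étale site with enough points). In the situation at hand this is unproblematic, but it is more than pure bookkeeping and should be stated explicitly, whereas the paper's reduction to the presheaf level plus sheafification sidesteps the question entirely.
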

\begin{proof}
As  completion functor $ \Mod(X_\et,\bZ) \rightarrow \Mod(X_\et,\bZ_p)$
and the sheafification functor $\Shv(X_\et, \Mod_{\bZ}^{\cn}) \rightarrow \Shv(X_\et, \Mod_{\bZ}) \simeq  \Mod(X_\et,\bZ)  $
 are compatible with inverse image functors, it suffices to check that the following square commutes: 
\[
\begin{tikzcd}
	\Shv(S_\et, \CAlg^{\an}) \arrow{r} \arrow{d}{f^{-1}}
	&  \Shv(S_\et, \Mod^{\cn})  \arrow{d}{f^{-1}} \\
	\Shv(X_\et, \CAlg^{\an}) \arrow{r} & 	  \Shv(X_\et, \Mod^{\cn}) 
\end{tikzcd}
\]  
This follows from \cite[Proposition 4.1.7]{LurieDAG} as the functor $\bG_m$ is represented by  the compact object $\bZ[t,t^{-1}]$ by \Cref{prop:Gmproperties}.
\end{proof}	

\begin{remark}
Alternatively, one could work in the pro-\'etale topology, and define $\bG^{\wedge}_{m}(\cA)$ as the $p$-adic completion of $\bG_m(\cA)$ in $\Shv(X_{\textrm{pro\'et}},\bZ)$.
\end{remark}

\subsection{Bloch--Kato $n$-ordinary varieties}
Let $k$ be a perfect field of characteristic $p$, and $X$ a smooth and proper scheme over $k$. Write  $B^r_{X/k}$ for the image of $d\colon \Omega^{r-1}_{X/k}\rightarrow \Omega^r_{X/k}$. Following \cite{BlochKato86}, one says that $X$ is \emph{Bloch--Kato ordinary} if $\rH^i(X,B^r_{X/k})$ vanishes for all $i$ and $r$. Inspired by Achinger and Zdanowicz~\cite{AchingerZdanowicz2021}, we introduce a weaker ordinariness condition: 

\begin{definition}We call $X$ \emph{Bloch--Kato $n$-ordinary} if $\rH^\ast(X,B^r_{X/k})\cong 0$ \mbox{for all $r\leq n$.}
\end{definition}

	\begin{proposition}Assume that the first $n$ slopes of the Newton and the Hodge polygon of $X$ agree. 
Then $X$ is Bloch--Kato $n$-ordinary. If $\rH^\ast_\cris(X/W)$ is torsion-free, the  \mbox{converse   holds true.}
\end{proposition}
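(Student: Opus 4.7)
The plan is to deduce this from the theorems of Bloch--Kato \cite{BlochKato86} and Illusie--Raynaud relating Bloch--Kato ordinarity, Hodge--Witt cohomology, and Frobenius slopes on crystalline cohomology. First I would reduce to the case of $k$ algebraically closed using flat base change, since both the vanishing of $\rH^\ast(X,B^r_{X/k})$ and the Newton/Hodge polygons are compatible with extending to $\overline{k}$. Throughout, write $K = W(\overline{k})[1/p]$.

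For the direction ``Newton $=$ Hodge in first $n$ slopes $\Rightarrow$ Bloch--Kato $n$-ordinary'', I would exploit the conjugate spectral sequence
\[
E_2^{i,j} = \rH^i(X, \mathcal{H}^j(\Omega^\bullet_{X/k})) \;\Longrightarrow\; \rH^{i+j}_{\dR}(X/k),
\]
together with the two canonical short exact sequences induced by the Cartier isomorphism and the de Rham differential:
\[
0 \longrightarrow B^{r}_{X/k} \longrightarrow Z^{r}_{X/k} \overset{C}\longrightarrow \Omega^{r}_{X/k} \longrightarrow 0,
\qquad
0 \longrightarrow Z^{r}_{X/k} \longrightarrow \Omega^{r}_{X/k} \overset{d}\longrightarrow B^{r+1}_{X/k} \longrightarrow 0.
\]
An inductive diagram chase shows that the dimensions $\dim_k \rH^i(X, B^r_{X/k})$ for $r \leq n$ measure precisely the failure of the slope $< r$ part of $K \otimes_W \rH^{i+r}_{\cris}(X/W)$ to have the dimension predicted by the Hodge polygon. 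Agreement of the first $n$ slopes forces these dimensions to vanish.

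For the converse (assuming torsion-freeness), I would pass to the de Rham--Witt complex and use the slope spectral sequence
\[
E_1^{r,i} = \rH^i(X, W\Omega^{r}_{X/k}) \;\Longrightarrow\; \rH^{r+i}_{\cris}(X/W),
\]
which degenerates modulo torsion after tensoring with $K$ and identifies $K \otimes_W \rH^i(X, W\Omega^{r}_{X/k})$ with the slope $[r,r+1)$ part of $K \otimes_W \rH^{r+i}_{\cris}(X/W)$. Vanishing of $\rH^\ast(X, B^{r}_{X/k})$ for $r \leq n$ implies, via Illusie's fundamental exact sequences relating $W\Omega^r$ to $\Omega^r$ and the $B^r$'s, that the surjections $W\Omega^{r}_{X/k} \twoheadrightarrow \Omega^{r}_{X/k}$ induce isomorphisms on cohomology in the relevant range and that Frobenius acts topologically nilpotently on the kernels, so the slope $[r,r+1)$ rank equals the Hodge number $\dim_k \rH^i(X, \Omega^{r}_{X/k})$ for $r \leq n-1$. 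Torsion-freeness of $\rH^\ast_{\cris}(X/W)$ promotes this rational equality to the integral coincidence of the first $n$ slopes of the two polygons.

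The main obstacle will be carefully tracking indices and the ``boundary slope'' at $r=n$, where the Bloch--Kato condition involves $r \leq n$ while the slope condition is most naturally phrased as ``slope $< n$''; matching these up correctly requires verifying that vanishing of $\rH^\ast(X,B^n_{X/k})$ is precisely what forces the $n$-th slope to equal the $n$-th Hodge number (and not just bound it via Mazur's inequality). Once the indexing is pinned down, the argument is essentially a careful invocation of \cite[Prop.~7.3]{BlochKato86} and the standard Illusie--Raynaud machinery.
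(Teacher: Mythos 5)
Your proposal takes essentially the same route as the paper: both reduce the claim to the Bloch--Kato Section~7 machinery relating vanishing of $\rH^\ast(X,B^r_{X/k})$ to the Newton/Hodge slope comparison via the Cartier, conjugate, and de~Rham--Witt slope spectral sequences. The paper's own proof is just the one-line citation of \cite[Lemma~7.1(iv)]{BlochKato86} (rather than Prop.~7.3), and your sketch of what lies behind that citation is in the right shape, with the correct split of which direction needs the torsion-freeness hypothesis.
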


\begin{proof}This follows from \cite[Lemma~7.1(iv)]{BlochKato86}.
\end{proof}

\begin{remark}
In the terminology of Achinger and Zdanowicz \cite[Section 2.1]{AchingerZdanowicz2021}, this becomes the following statement: If $\rH^\ast_\cris(X/W)$ is torsion-free, then $X$ is Bloch--Kato $n$-ordinary if and only if the $F$-crystal $\rH^i_\cris(X/W)$ is $n$-ordinary\vspace{-1pt} for all $i$. 
\end{remark}

Bloch--Kato $1$-ordinary and $2$-ordinary varieties will play an important role in \Cref{mainthm:Serre-Tate}. Writing $S=\Spec (k)$, and
$f\colon X\rightarrow S$ for the structure map, we have:

\begin{proposition}\label{prop:1-ordinary}\label{prop:2-ordinary}
The smooth and proper  scheme  $X$ over $k$ is Bloch--Kato $1$-ordinary if and only if the following natural map is an equivalence in $\Mod(S_\et,k)$:
\[
\cO_S\otimes_{\bF_p }\rR f_\ast \bF_p \rightarrow \rR f_\ast \cO_X.
\]
It is  Bloch--Kato $2$-ordinary if and only if it is Bloch--Kato $1$-ordinary, and the map
\[
\cO_S\otimes_{\bZ }  
\rR f_\ast \bG_{m,X} \rightarrow \rR f_\ast \Omega^1_{X/k}
\]
induced by $\dlog$ is an equivalence.
Here $\cO_{S}$ is the \'{e}tale structure \mbox{sheaf of $S=\Spec(k)$.}
\end{proposition}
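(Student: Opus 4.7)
Since all assertions concern maps in $\Mod(S_\et, k)$, they can be verified after passing to a separable closure of $k$, so we may assume $k$ is algebraically closed. Write $\cM := \rR f_\ast \cO_X$, a complex of finite-dimensional $k$-vector spaces equipped with an $F_k$-semi-linear absolute Frobenius $F$. The main inputs on $X_\et$ are the two short exact sequences
\[ 0 \to \cO_X \xrightarrow{F} \cO_X \xrightarrow{d} B^1_{X/k} \to 0, \qquad 0 \to \bF_p \to \cO_X \xrightarrow{F-1} \cO_X \to 0, \]
the first coming from $\ker d = \cO_X^p$ for $X$ smooth over perfect $k$ (together with the induced isomorphism $\cO_X/\cO_X^p \xrightarrow{d} B^1_{X/k}$), the second being Artin--Schreier. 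Applying $\rR f_\ast$, the first sequence shows that $X$ is Bloch--Kato $1$-ordinary if and only if $F$ is an equivalence on $\cM$, while the second identifies $\rR f_\ast \bF_p$ with the fibre of $F-1: \cM \to \cM$.

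The $1$-ordinary statement then reduces to the following semi-linear descent lemma: for a complex $\cM$ of finite-dimensional $k$-vector spaces with $F_k$-semi-linear endomorphism $F$, the natural map
\[
\alpha \colon k \otimes_{\bF_p} \mathrm{fib}(F - 1 : \cM \to \cM) \longto \cM
\]
is an equivalence if and only if $F$ is an equivalence. If $\alpha$ is an equivalence then the right-hand side realises $F$ as $F_k \otimes \mathrm{id}$, which is invertible since $F_k \colon k \to k$ is a bijection. Conversely, if $F$ is an equivalence then on each cohomology group $\rH^i(\cM)$, Lang's theorem implies that $F-1$ is surjective and that fixed vectors span $\rH^i(\cM)$ over $k$, from which $\alpha$ being an equivalence on each $\rH^i$ follows immediately.

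For the $2$-ordinary statement, assume Bloch--Kato $1$-ordinariness. The Cartier sequence $0 \to B^1_{X/k} \to Z^1_{X/k} \xrightarrow{C} \Omega^1_{X/k} \to 0$ gives an equivalence $C \colon \rR f_\ast Z^1_{X/k} \xrightarrow{\sim} \rR f_\ast \Omega^1_{X/k}$; using this to transport the inclusion $\iota \colon Z^1_{X/k} \hookrightarrow \Omega^1_{X/k}$ defines a semi-linear endomorphism $\Phi := \iota \circ C^{-1}$ of $\rR f_\ast \Omega^1_{X/k}$, whose cofibre is $\rR f_\ast B^2_{X/k}$ by the sequence $0 \to Z^1_{X/k} \to \Omega^1_{X/k} \to B^2_{X/k} \to 0$. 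Thus Bloch--Kato $2$-ordinariness is equivalent to $\Phi$ being an equivalence. The Illusie--Milne logarithmic sequence $0 \to \Omega^1_{X/k,\mathrm{log}} \to Z^1_{X/k} \xrightarrow{1-C} \Omega^1_{X/k} \to 0$, combined with the isomorphism $\dlog \colon \bG_m/p \xrightarrow{\sim} \Omega^1_{X/k,\mathrm{log}}$ and the $p$-torsion-freeness of $\bG_m$ on $X_\et$ (yielding $\cO_S \otimes_\bZ \rR f_\ast \bG_{m,X} \simeq \cO_S \otimes_{\bF_p} \rR f_\ast(\bG_m/p)$), identifies the map $\cO_S \otimes_\bZ \rR f_\ast \bG_{m,X} \to \rR f_\ast \Omega^1_{X/k}$ of the proposition with $\cO_S \otimes_{\bF_p} \mathrm{fib}(\Phi - 1) \to \rR f_\ast \Omega^1_{X/k}$. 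Applying the semi-linear descent lemma to $\Phi$ concludes the argument.

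The main obstacle is the bookkeeping around the Cartier operator and the logarithmic differential sequence, both of which are exact on $X_\et$ (but not on the Zariski site); the Lang-theoretic semi-linear algebra at the heart of the argument is classical.
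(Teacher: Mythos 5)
Your proof is correct, and it takes a substantively different (more self-contained) route than the paper. The paper's proof is a two-line reduction: after passing to $\Spec(\overline{k})\to\Spec(k)$ and noting the relevant sheaves are coconnective (hence hypercomplete), it cites Bloch--Kato [BK86, Lemma~7.3(3)] directly. You instead unpack what lies behind that citation: the exact sequences $0\to\cO_X\xrightarrow{F}\cO_X\xrightarrow{d}B^1\to 0$ and the Artin--Schreier sequence on $X_\et$ give the $1$-ordinary case, and the Cartier isomorphism together with the Illusie--Milne logarithmic sequence and $p$-torsion-freeness of $\bG_m$ give the $2$-ordinary case, in both cases reducing to a semi-linear descent (Lang) lemma for the $F_k$-semi-linear endomorphism ($F$ or $\Phi=\iota\circ C^{-1}$, respectively). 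This makes the role of each hypothesis transparent — $1$-ordinariness $\Leftrightarrow$ $F$ invertible on $\rR f_\ast\cO_X$, $2$-ordinariness $\Leftrightarrow$ $\Phi$ invertible on $\rR f_\ast\Omega^1$ — at the cost of a fair amount of bookkeeping with the Cartier operator and the log sequence (and one should be slightly careful that $\Phi$, being $\iota$ composed with the $F_k$-semi-linear $C^{-1}$, is indeed $F_k$-semi-linear, as you tacitly use). The one small wrinkle worth stating explicitly is the homotopy $F\circ\alpha\simeq\alpha\circ(F_k\otimes\id)$ used in the forward direction of the descent lemma: it comes from the canonical nullhomotopy of $(F-1)\circ\iota$ built into $\fib(F-1)$, and without naming it the claim that "$\alpha$ realises $F$ as $F_k\otimes\id$" reads as an assertion rather than an argument. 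Otherwise the proposal is sound and is essentially a self-contained proof of the Bloch--Kato lemma the paper invokes.
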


\begin{proof}
It suffices to check that both maps induce  equivalences on   $\Spec(\overline{k}) \rightarrow \Spec( {k})$. Here, the claim 
follows from  \cite[Lemma~7.3(3)]{BlochKato86}. We have used that all sheaves in sight are coconnective and hence hypercomplete, so we can work in the derived $\infty$-category of the ordinary category of discrete sheaves \mbox{of $k$-modules on $S_\et$.}
\end{proof}
\begin{remark} 
For $k=\overline{k}$, the sheaf $\cO_S$ in  \Cref{prop:1-ordinary} is  \mbox{the constant sheaf $\overline{k}$.}
	\end{remark}

\subsection{Construction of Serre--Tate coordinates}\label{sec:stcoord}

Let $f:X\rightarrow \Spec(k)$ be a smooth, proper, and Bloch--Kato $1$-ordinary scheme over a perfect field $k$ of characteristic $p$. In this section, we will construct a map from the formal moduli problem $\Def_X$ to an auxiliary  (derived) formal group $\ST'_X$. In the subsequent \Cref{subsec:cancoord}, we will then construct the  Serre-Tate period domain $\ST_X$ from 
$\ST'_X$.
The key ingredient for our construction is the fact  for any deformation
$\mathcal{B} $ of $\cO_X$ over \mbox{$A \in \CAlg_W^{\an,\art}$}, 
the cohomology of the $p$-completed sheaf of principal units $\fib(\bG_m^{\wedge}(\mathcal{B}) \rightarrow \bG_m^{\wedge}(\cO_X))$
 {only depends on $A$}, see \Cref{rem:stinformal}.

To algebraise our formal lifts, we will also want to lift line bundles. To this end, we will construct a formal moduli problem $\Def_{X,\lambda}$ for any class $\lambda$ in $ \Pic(X)$, encoding deformations of the pair $(X,\lambda)$. We will then construct a map from $\Def_{X,\lambda}$ to an  
auxiliary  (derived) \mbox{formal group  $\ST'_{X,\lambda}$} (and its variant $\ST_{X,\lambda}$ in \Cref{subsec:cancoord}).

But first, let us construct the formal moduli problem $\widetilde{\Def}_{X,\lambda}$.

\begin{construction}[$\widetilde{\Def}_{X,\lambda}$]
Fix  a map  $\lambda\colon \bZ[-1] \rightarrow \bG_{m,X}$ in $\Shv(X_\et,\Mod_\bZ)$,
where  $ \bG_{m,X} := \bG_m(\cO_X)$.
Recall from \Cref{constr:def-an-tilde} the $\infty$-category\vspace{-3pt}
\[
\widetilde{\cat{C}}_{\et} := \CAlg^{\an}_{W} \times_{  \Fun(\Delta^1, \Shv(X_{\et},\CAlg^\an))} 
\Fun(\Delta^2,\Shv(X_{\et},\CAlg^\an))
\]
whose objects are pairs consisting of
\begin{enumerate}
	\item an object $A$ in $\CAlg^\an$;
	\item a  morphism $ A \rightarrow f^{-1}\cW\otimes_W {A}  \rightarrow \cB$ in $\Shv(X_\et,\CAlg^\an)$
\end{enumerate}

Using the functor $
\bG_m \colon \Shv(X_\et,\CAlg^\an) \rightarrow \Shv(X_\et,\Mod_\bZ)$ above,  we define
\[
\widetilde{\cat{C}}_{\et}' := \widetilde{\cat{C}}_{\et} \times_{\Shv(X_\et,\Mod_\bZ)} 
\Fun(\Delta^1,\Shv(X_\et,\Mod_\bZ)) \times_{\Shv(X_\et,\Mod_{\bZ})} 
\{\bZ[-1]\}
\]
as the  $\infty$-category  of triples consisting of $A$ and $ (A \rightarrow f^{-1}\cW\otimes_W {A}  \rightarrow \cB)$  as in (1)--(2) above, \mbox{together with}
\begin{enumerate}
	\item[(3)] a map $\bZ[-1] \rightarrow \bG_m(\cB)$ in $\Mod(X_\et,\bZ)$.
\end{enumerate} 
We have projections
$
\widetilde{\cat{C}}_{\et}' \overset{q'}\rightarrow \widetilde{\cat{C}}_{\et} \overset{q}\rightarrow \CAlg^\an,
$
and we readily check that every arrow in $\widetilde{\cat{C}}_{\et}' $ is $q'$-cocartesian.
The triple $(k, \cO_X, \lambda)$ fixed above determines an object of $\widetilde{\cat{C}}_{\et}'$, and we obtain left fibrations \vspace{-3pt}
\[
(\widetilde{\cat{C}}_{\et}')^\cocart_{/(k,\cO_X,\lambda)} \overset{q'}\longto 
(\widetilde{\cat{C}}_{\et} )^\cocart_{/(k,\cO_X)} \overset{q}\longto \CAlg^\an_{/k}
\]
by \cite[Corollary 2.4.2.5]{LurieHTT}; the notation $(-)^\cocart$ is used as in \Cref{constr:def-an}.
These  straighten to a morphism $\widetilde{\Def}_{X,\lambda} \rightarrow  \widetilde{\Def}_X \simeq {\Def}_X$
in $\Fun(\CAlg^{\an,\art}_W,\widehat{\cat{S}})$, where we have used the equivalence \vspace{-4pt} in \Cref{prop:equivetale}.
\end{construction}

\begin{remark}[Informal description]
Objects of $\widetilde{\Def}_{X,\lambda}(A)$ are
pairs consisting of a  composition
$\left(A \rightarrow f^{-1}\cW \otimes_WA \rightarrow \cB\right)$ lifting $\left(k \rightarrow f^{-1}\cW \otimes_Wk \rightarrow \cO_X)\right)$
   and a commutative triangle in $\Mod(X_\et,\bZ)$: 
\[
\begin{tikzcd}	
	& \bG_m(\cB) \arrow{d} \\
	\bZ[-1] \arrow[dashed]{ru}{\tilde\lambda} \arrow{r}{\lambda}
	& \bG_m(\cO_X).
\end{tikzcd}\vspace{-3pt}
\]

\end{remark}

\begin{lemma} \label{lemma:DefXlambda}
The functor $\widetilde{\Def}_{X,\lambda}$ is a formal moduli problem.
\end{lemma}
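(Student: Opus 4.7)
The plan is to exhibit $\widetilde{\Def}_{X,\lambda}$ as a fibred extension of $\widetilde{\Def}_X \simeq \Def_X$ (by \Cref{prop:equivetale} and \Cref{prop:defan}) whose fibres are spaces of lifts of $\lambda$, and then to deduce both axioms of \Cref{def:fmp} from the corresponding assertions for $\Def_X$ together with limit-preservation of $\bG_m$. Concretely, I first unwind the straightening of the left fibration $(\widetilde{\cat{C}}'_{\et})^{\cocart}_{/(k,\cO_X,\lambda)} \rightarrow (\widetilde{\cat{C}}_{\et})^{\cocart}_{/(k,\cO_X)}$ to identify, for each $A \in \CAlg^{\an,\art}_W$ and each $\cB \in \Def_X(A)$, the homotopy fibre of the forgetful map $p\colon \widetilde{\Def}_{X,\lambda}(A) \rightarrow \Def_X(A)$ over $\cB$ with the fibre over $\lambda$ of the natural restriction
\[
\Map_{\Shv(X_\et,\Mod_\bZ)}(\bZ[-1],\,\bG_m(\cB)) \longto \Map_{\Shv(X_\et,\Mod_\bZ)}(\bZ[-1],\,\bG_m(\cO_X)).
\]
This is immediate from the double pullback defining $\widetilde{\cat{C}}'_{\et}$ and the fact that every arrow there is $q'$-cocartesian.

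Contractibility of $\widetilde{\Def}_{X,\lambda}(k)$ is then immediate from the contractibility of $\Def_X(k)$ and the fact that the restriction above is an equivalence when $\cB = \cO_X$. For the infinitesimal cohesiveness axiom \Cref{def:fmp}(2), fix a pullback square of Artinian animated $W$-algebras satisfying the required surjectivity hypothesis. By \Cref{prop:defan}, for every compatible family of deformations the induced map $\cB \rightarrow \cB_0 \times_{\cB_{01}} \cB_1$ is an equivalence in $\Shv(X_\et,\CAlg^{\an})$. Since $\bG_m$ is right adjoint to the sheafified group ring functor $\bZ[-]$, it preserves all limits, so
\[
\bG_m(\cB) \;\xrightarrow{\simeq}\; \bG_m(\cB_0) \times_{\bG_m(\cB_{01})} \bG_m(\cB_1).
\]
Mapping out of $\bZ[-1]$ and passing to fibres over $\lambda$ yields an equivalence between the fibre of $p(A)$ at $\cB$ and the corresponding pullback of fibres at $(\cB_0,\cB_{01},\cB_1)$. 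Combining this with the pullback property for $\Def_X$ on the base gives the required pullback property for $\widetilde{\Def}_{X,\lambda}$; essential smallness of $\widetilde{\Def}_{X,\lambda}(A)$ is inherited from that of $\Def_X(A)$ together with the smallness of the relevant mapping spaces.

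The argument is in the end entirely formal: the only new input beyond the properties of $\Def_X$ already established in \Cref{prop:defan} is that $\bG_m$ preserves limits on sheaves, which follows from its description as a right adjoint in \Cref{cons:sheavesadjunctions}. I therefore do not anticipate any conceptual obstacle, only the routine bookkeeping inherent in unwinding the straightening construction.
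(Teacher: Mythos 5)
There is a genuine gap in the step where you invoke limit-preservation of $\bG_m$. The functor $\bG_m$ is right adjoint to $\bZ[-]$ as a functor into $\Mod_{\bZ}^{\cn}$, and accordingly the induced functor on sheaves preserves limits into $\Shv(X_\et,\Mod_\bZ^\cn)$. But the mapping spaces $\Map(\bZ[-1],\bG_m(\cB))$ that enter the fibre identification are computed in the stable category $\Shv(X_\et,\Mod_\bZ)$, and the inclusion $\Shv(X_\et,\Mod_\bZ^\cn)\hookrightarrow\Shv(X_\et,\Mod_\bZ)$ is a \emph{left} adjoint (its right adjoint is $\tau_{\geq 0}$); it does not preserve pullbacks. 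So from right-adjointness alone you only learn that $\bG_m(\cB)$ is the \emph{connective cover} of the stable pullback $\bG_m(\cB_0)\times_{\bG_m(\cB_{01})}\bG_m(\cB_1)$, not the stable pullback itself. Since $\bZ[-1]$ is coconnective, $\Map(\bZ[-1],\tau_{\geq 0}M)\to\Map(\bZ[-1],M)$ is in general not an equivalence, so the display you write down does not yield the pullback of fibres you need.

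What closes the gap is showing that the stable pullback is already connective, equivalently that one of its legs is surjective on $\pi_0$ as a map of sheaves. This is exactly the content of the paper's proof: for each \'etale $U\to X$, the $\pi_0$-surjectivity of $A_0\to A_{01}$ with nilpotent kernel implies that $\pi_0(A_0\otimes_A\cB(U))\to\pi_0(A_{01}\otimes_A\cB(U))$ is a surjection with nilpotent kernel, whence the induced map on unit groups is also surjective (the truncated geometric series argument). This gives that $\bG_m$ of the square is a pullback of \emph{presheaves} of spectra, and then one sheafifies using that in a stable $\infty$-category a pullback square is a pushout square, hence is preserved by the colimit-preserving sheafification. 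Your fibre identification and the contractibility of $\widetilde{\Def}_{X,\lambda}(k)$ are fine; the one ``new input'' you claim is needed is strictly weaker than what the argument actually requires, and the unit-group surjectivity step is genuinely load-bearing.
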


\begin{proof} 
This follows from \Cref{prop:defan}, \Cref{prop:equivetale}, 
and the fact that for any $A$ in $\CAlg^{\an, \art}_W$ and  $\cB$ in $\Shv(X_\et,\CAlg^\an_A)$, the functor
\[
\CAlg^{\an, \art}_A \rightarrow \Mod(X_\et,\bZ),\, A' \mapsto \bG_m(A'\otimes_A \cB)
\]
preserves pullbacks of pairs $A_0 \rightarrow A_{01}, A_1 \rightarrow A_{01}$ inducing surjections on $\pi_0$. Indeed,  let us fix such a pair; write $(\ast)$ for the  associated pullback square in 
$\CAlg^{\an, \art}_A $.
For any \'etale open $U \rightarrow X$,
$\pi_0$-surjectivity implies that $(\ast) \otimes_A \cB(U)$ is a pullback in $\CAlg^{\an}_{A}$ whose leg $A_0\otimes_A \cB(U) \rightarrow A_{01}\otimes_A \cB(U) $ (and also $A_1 \otimes_A \cB(U)\rightarrow A_{01}\otimes_A \cB(U)$) induces a  surjection with nilpotent kernels on $\pi_0$.

A  truncated geometric series shows  that \mbox{$\bG_m(A_0\otimes_A \cB(U)) \rightarrow\bG_m(A_{01}\otimes_A \cB(U))$} is  also surjective on $\pi_0$. This implies that $\bG_m((\ast) \otimes \cB(U))$ is a pullback, i.e.\ a pushout, in $\Mod_{\bZ}$-valued presheaves.
Its sheafification $\bG_m((\ast) \otimes \cB)$ is a pushout in $\Mod_{\bZ}$-sheaves and so a  pullback by stability.  
\end{proof}

\begin{construction}[$\ST'_{X,\lambda}$ and $\ST'_X$] \label{cons:aux}
Using the composite functor \vspace{-2pt}
\[
\CAlg^\an \xrightarrow{ \ A \mapsto f^{-1}\cW \otimes_WA\ }  \Shv(X_\et,\CAlg^\an)
\xrightarrow{\bG^{\wedge}_{m}}  \Mod(X_\et,\bZ_p)
\xrightarrow{\rR f_\ast \ } \Shv(S_\et,\bZ_p),
\]
we define the $\infty$-category 
\[
\cat{D} := \CAlg^\an \times_{\Mod(S_\et,\bZ_p)}
\Fun(\Delta^1,\Mod(S_\et,\bZ_p))
\]
whose objects are pairs consisting of 
\begin{enumerate}
	\item an object $A$ in $\CAlg^\an$;
	\item a map $\rR f_\ast \bG^{\wedge}_{m}(f^{-1} \cW \otimes_W A)\rightarrow N$ in 
	$\Mod(S_\et,\bZ_p)$.
\end{enumerate}
We also define the $\infty$-category \vspace{-1pt}
\[
\cat{D}' := \cat{D}\times_{\Mod(S_\et,\bZ_p)} 
\Fun(\Delta^1,\Mod(S_\et,\bZ_p)) 
\times_{\Mod(S_\et,\bZ_p)} 
\{ f_\ast \bZ_p[-1] \}
\]
whose objects are pairs as in (1)--(2) above, together with
\begin{enumerate} 
	\item[(3)] a map $\rR f_\ast \bZ_p[-1] \rightarrow N$ in $\Mod(S_\et,\bZ_p)$.
\end{enumerate}
Consider the projection maps $
\cat{D}' \overset{s'}\longto
\cat{D} \overset{s}\longto \CAlg^\an.$
A map in $\cat{D}$ is $s$-cocartesian if and only if the underlying square  
\[
\begin{tikzcd}
	\rR f_\ast\bG^{\wedge}_{m}(f^{-1}\cW \otimes_WA) \arrow{r} \arrow{d}
	& N \arrow{d} \\
	\rR f_\ast\bG^{\wedge}_{m}(f^{-1}\cW \otimes_WA') \arrow{r} & N'\arrow[lu, phantom, "\ulcorner", at start]
\end{tikzcd}
\]
is a pushout in $\Mod(S_\et,\bZ_p)$. Every map in $\cat{D}'$ is $s'$-cocartesian.  

Let $\cat{D}^\cocart$ be  the subcategory of $ \cat{D}$  spanned by all objects and only   $s$-cocartesian morphisms. Write $\cat{D}'^\cocart  $ for the subcategory of $\cat{D}'$ spanned by  $(s\circ s')$-cocartesian morphisms. Set $\bG^{\wedge}_{m,X}  := \bG_{m}^{\wedge}(\cO_X)$. 
As in \Cref{cons:modfmp}, we   have left fibrations
\[
\cat{D}'^\cocart_{/(k,0\rightarrow \rR f_\ast  \bG^{\wedge}_{m,X}  ,\widehat\lambda)}
\longto \cat{D}^\cocart_{/(k,0\rightarrow  \rR f_\ast  \bG^{\wedge}_{m,X}  )}
\longto \CAlg^{\an}_{/k};
\]
we have used \Cref{prop:inverseandgm} and that $k$ is perfect to see $\rR f_\ast\bG^{\wedge}_{m}(f^{-1}\cW \otimes_Wk) \simeq 0$.

Straightening and restricting gives a morphism  $ \ST_{X,\lambda}' \rightarrow \ST_{X}'$ in  $\Fun(\CAlg^{\an,\art}_W,\widehat{\cat{S}})$.
\end{construction}

\begin{remark}[Informal description]
Objects in $\ST'_{X}(A)$ are fibre sequences
\[
\rR f_\ast \bG^{\wedge}_{m}(f^{-1}\cW \otimes_W A)  \longto N \longto 
\rR f_\ast  \bG^{\wedge}_{m,X}  .
\]
in $\Mod(S_\et,\bZ_p)$. Objects in $\ST'_{X,\lambda}(A)$ are such fibre sequences  with a lift $\tilde{\lambda}$ of $\lambda$:
\[
\begin{tikzcd}
	& & \rR f_\ast \bZ_p[-1] \arrow{d}{\lambda} 
	\arrow[dashed,swap]{dl}{\tilde\lambda} \\
	\rR f_\ast\bG^{\wedge}_{m}(f^{-1}\cW \otimes_W A) \arrow{r} 
	& N \arrow{r}
	& \rR f_\ast  \bG^{\wedge}_{m,X}  .
\end{tikzcd}
\]
\end{remark}

We can give a more direct description of $\ST'_X$ and $\ST'_{X,\lambda}$. To this end, let us define  $P_{X,\lambda}$ in $\Mod(X_\et,\bZ_p)$ by the cofibre sequence:
\begin{equation}\label{eq:def-Q}
\bZ_p[-1] \overset{\lambda}\longto 
\bG^{\wedge}_{m,X}   \longto
P_{X,\lambda}.
\end{equation} 

\begin{lemma}\label{lemma:ST-explicit}
There is a commutative square
\[
\begin{tikzcd}
	\ST'_{X,\lambda}(-) \arrow{r}{\simeq} \arrow{d}
	& \Map_{\Mod(S_\et,\bZ_p)}\big(\rR f_\ast P_{X,\lambda},\,
	\rR f_\ast\bZ_p\otimes_\bZ \bG_m( \cat{W} \otimes_W-)[1]\big)
	\arrow{d} \\
	\ST'_X(-) \arrow{r}{\simeq}
	& \Map_{\Mod(S_\et,\bZ_p)}\big(\rR f_\ast  \bG^{\wedge}_{m,X}  ,\,
	\rR f_\ast \bZ_p  \otimes_\bZ \bG_m( \cat{W} \otimes_W-)[1] \big)
\end{tikzcd}
\]
in $\Fun(\CAlg^{\an,\art}_W,\widehat{\cat{S}})$, in which  the horizontal maps are equivalences.
\end{lemma}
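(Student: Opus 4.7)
\textbf{Proof plan for \Cref{lemma:ST-explicit}.}
First I would unpack the construction in \Cref{cons:aux} to identify $\ST'_X(A)$ with a space of fibre sequences. Given $A\in \CAlg^{\an,\art}_W$, a lift of $A\rightarrow k$ to the left fibration $(\cat{D}^{\cocart})_{/(k,0\rightarrow \rR f_\ast \bG^\wedge_{m,X})}\rightarrow \CAlg^\an_{/k}$ is the data of an arrow $\alpha\colon \rR f_\ast \bG^\wedge_m(f^{-1}\cW \otimes_W A)\rightarrow N$ together with an $s$-cocartesian comparison to the basepoint. Since $\rR f_\ast \bG^\wedge_m(f^{-1}\cW \otimes_W k)\simeq 0$, the cocartesian condition says exactly that the cofibre of $\alpha$ is canonically identified with $\rR f_\ast \bG^\wedge_{m,X}$. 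Thus $\ST'_X(A)$ is the space of fibre sequences $\rR f_\ast \bG^\wedge_m(f^{-1}\cW \otimes_W A)\rightarrow N\rightarrow \rR f_\ast \bG^\wedge_{m,X}$ in $\Mod(S_\et,\bZ_p)$, which by stability is equivalent to the space of boundary maps
\[
\Map_{\Mod(S_\et,\bZ_p)}\!\big(\rR f_\ast \bG^\wedge_{m,X},\,\rR f_\ast \bG^\wedge_m(f^{-1}\cW \otimes_W A)[1]\big).
\]

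Next I would rewrite the target using the projection formula for the proper morphism $f\colon X\rightarrow S$. By \Cref{prop:inverseandgm} we have $\bG^\wedge_m(f^{-1}\cW \otimes_W A)\simeq f^{-1}\bG^\wedge_m(\cW \otimes_W A)$, so the $p$-complete projection formula yields
\[
\rR f_\ast \bG^\wedge_m(f^{-1}\cW \otimes_W A) \simeq \rR f_\ast \bZ_p \otimes_{\bZ_p} \bG^\wedge_m(\cW \otimes_W A).
\]
Because $\rR f_\ast \bG^\wedge_{m,X}$ is $p$-complete, mapping out of it is insensitive to replacing $\rR f_\ast \bZ_p\otimes_\bZ \bG_m(\cW\otimes_W A)$ by its $p$-completion, which gives the lower horizontal equivalence of the lemma.

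For $\ST'_{X,\lambda}$, an object over $A$ is a fibre sequence as above equipped with a lift $\tilde\lambda\colon \rR f_\ast \bZ_p[-1]\rightarrow N$ of $\lambda$ through $N\rightarrow \rR f_\ast \bG^\wedge_{m,X}$. Under the fibre-sequence/boundary correspondence, such a lift is precisely a null-homotopy of the composite $\bar g\circ \lambda$, where $\bar g\colon \rR f_\ast \bG^\wedge_{m,X}\rightarrow \rR f_\ast \bG^\wedge_m(f^{-1}\cW \otimes_W A)[1]$ is the boundary of the fibre sequence. Since $\rR f_\ast$ preserves cofibre sequences, applying it to the defining sequence $\bZ_p[-1]\xrightarrow{\lambda}\bG^\wedge_{m,X}\rightarrow P_{X,\lambda}$ yields a cofibre sequence $\rR f_\ast \bZ_p[-1]\rightarrow \rR f_\ast \bG^\wedge_{m,X}\rightarrow \rR f_\ast P_{X,\lambda}$. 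The universal property of this cofibre identifies pairs $(\bar g,\,\text{null-homotopy of }\bar g\circ \lambda)$ with maps $\rR f_\ast P_{X,\lambda}\rightarrow \rR f_\ast \bG^\wedge_m(f^{-1}\cW \otimes_W A)[1]$; combined with the projection formula rewrite, this gives the top horizontal equivalence.

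Finally, the right vertical map on the target is precomposition with the quotient $\rR f_\ast \bG^\wedge_{m,X}\rightarrow \rR f_\ast P_{X,\lambda}$, which corresponds to forgetting the null-homotopy, and this is precisely the effect of the forgetful map $\ST'_{X,\lambda}\rightarrow \ST'_X$; commutativity of the square is then automatic from the naturality of the identifications. The main obstacle will be making the projection formula precise in the $p$-complete derived setting and keeping the distinction between $\bG_m$ and $\bG^\wedge_m$ straight at the level of tensor products; this reduces to a standard $p$-complete base-change/projection argument for the proper map $f$, together with the $p$-completeness of the source $\rR f_\ast \bG^\wedge_{m,X}$ of the mapping space.
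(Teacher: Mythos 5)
Your plan is essentially the paper's own argument: unpack the left fibration to identify $\ST'_X(A)$ with a space of fibre sequences, pass to boundary maps by stability, identify the target via \Cref{prop:inverseandgm} plus the projection formula, and handle $\ST'_{X,\lambda}$ by running the cofibre sequence $\bZ_p[-1]\to\bG^\wedge_{m,X}\to P_{X,\lambda}$ through $\rR f_\ast$. One small correction to the justification at the end of your second paragraph: the reason the uncompleted tensor $\rR f_\ast\bZ_p\otimes_\bZ\bG_m(\cW\otimes_W A)$ coincides with $\rR f_\ast\bG^\wedge_m(f^{-1}\cW\otimes_W A)$ is not the $p$-completeness of the source $\rR f_\ast\bG^\wedge_{m,X}$ (mapping out of a $p$-complete object does not let you replace the \emph{target} by its completion), but rather that $\bG_m(\cW\otimes_W A)$ is a perfect complex because $A$ is Artinian. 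Perfectness both makes the projection formula \cite[0F0F]{stacks} applicable and guarantees that tensoring the $p$-complete $\rR f_\ast\bZ_p$ with it lands again in $\Mod(S_\et,\bZ_p)$, so no further completion is needed; the paper records exactly this point.
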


\begin{proof}This follows from the definitions, together with the fact that the map
\[
(\rR f_\ast \bZ_p) \otimes_{\bZ} \bG_m(\cat{W} \otimes_W A) \longrightarrow
\rR f_\ast \bG^{\wedge}_{m}(f^{-1} \cat{W} \otimes_W A)
\]
is an equivalence in $\Mod(S_\et,\bZ_p)$ by \Cref{prop:inverseandgm} and the projection formula in \cite[0F0F]{stacks}. We have used that $\bG_{m}(A)$ is perfect as $A \in \CAlg^{\an,\art}_W$ is Artinian. 
\end{proof}

\begin{corollary}$\ST'_{X}$ and $\ST'_{X,\lambda}$ are formal moduli problems. 
\end{corollary}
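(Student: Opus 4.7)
The plan is to deduce both statements uniformly from the explicit mapping-space formulas furnished by Lemma~\ref{lemma:ST-explicit}. By that lemma, both $\ST'_X$ and $\ST'_{X,\lambda}$ take the form
\[
F(A) = \Map_{\Mod(S_\et,\bZ_p)}\!\big(C,\, G(A)[1]\big),\qquad G(A) := \rR f_\ast \bZ_p \otimes_\bZ \bG_m(\cW \otimes_W A),
\]
for some fixed source $C\in \Mod(S_\et,\bZ_p)$ (either $\rR f_\ast \bG^{\wedge}_{m,X}$ or $\rR f_\ast P_{X,\lambda}$). It therefore suffices to verify the two axioms of \Cref{def:fmp} uniformly for such an $F$.

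For contractibility at $A=k$, since $\Map(C,-)$ preserves limits, I need only show $G(k)\simeq 0$. The isomorphisms $W(L)\otimes_{W(k)}k \cong W(L)/p \cong L$, valid for every finite separable $L/k$ because $L$ is perfect, will yield $\cW \otimes_W k \simeq \cO_{S,\et}$. The stalks of $\bG_m(\cW \otimes_W k)$ are therefore the groups $L^\times$ of units in perfect fields of characteristic $p$. On each such $L^\times$ the Frobenius $x \mapsto x^p$ is bijective, so $L^\times/p^nL^\times \cong 0$ and the $p$-completion of $\bG_m(\cO_{S,\et})$ vanishes stalkwise. Tensoring with the $p$-complete sheaf $\rR f_\ast \bZ_p$ (as in the projection formula used in Lemma~\ref{lemma:ST-explicit}) then forces $G(k)\simeq 0$.

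For preservation of the infinitesimally cohesive pullbacks of $\CAlg^{\an,\art}_W$, it again suffices to check the claim for $G$ alone, which I will factor as
\[
\CAlg^{\an,\art}_W \xrightarrow{\cW\otimes_W -} \Shv(S_\et,\CAlg^\an) \xrightarrow{\bG_m} \Shv(S_\et,\Mod^{\cn}_\bZ) \xrightarrow{\rR f_\ast \bZ_p\otimes_\bZ -} \Mod(S_\et,\bZ_p),
\]
and verify that each factor preserves the relevant finite limits. The first factor does so because $\cW$ is \'etale-locally of the form $W(L)$ with $L/k$ finite separable, and $W(L)$ is free over $W$; the functor $\bG_m$, being right adjoint to $\bZ[-]$, preserves all limits; and $\rR f_\ast\bZ_p \otimes_\bZ -$ is an exact functor of stable $\infty$-categories and so preserves finite limits. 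The most delicate point will be the first factor, where one must verify that no sheafification anomaly disrupts the pullback property; this will be handled by the observation that $\cW$ is already a sheaf with free stalks $W(L)$, so $\cW \otimes_W A$ is computed pointwise and commutes with the finite limits of Artinian animated rings.
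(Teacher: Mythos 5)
Your reduction via Lemma~\ref{lemma:ST-explicit} to the functor $G$ is the same as the paper's, and your verification of the vanishing $G(k)\simeq 0$ is sound (the paper records it in \Cref{cons:aux}, using \Cref{prop:inverseandgm} and perfectness of $k$). The problem is in your treatment of the second axiom.

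Your third factor is presented as
\[
\rR f_\ast \bZ_p \otimes_\bZ -\colon \Shv(S_\et,\Mod^{\cn}_\bZ) \longrightarrow \Mod(S_\et,\bZ_p),
\]
and you claim it is ``an exact functor of stable $\infty$-categories.'' But the source $\Shv(S_\et,\Mod^{\cn}_\bZ)$ is \emph{not} stable: it is a prestable $\infty$-category of connective objects. Pullbacks there are computed by taking the pullback in $\Mod(S_\et,\bZ)$ and then passing to the connective cover, so the inclusion $\Shv(S_\et,\Mod^{\cn}_\bZ) \hookrightarrow \Mod(S_\et,\bZ)$ does \emph{not} preserve pullbacks in general. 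Since $\bG_m$ preserves limits only as a functor into the connective category, your chain of ``each factor preserves the relevant limits'' breaks exactly here: you have a pullback in the connective category, but you need it to remain a pullback after tensoring in the stable category $\Mod(S_\et,\bZ_p)$.

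This is precisely where the $\pi_0$-surjectivity hypothesis in \Cref{def:fmp} must be invoked, and it is the content the paper imports from the proof of \Cref{lemma:DefXlambda}: for each $U\in S_\et$, if $A\to A'$ in $\CAlg^{\an,\art}_W$ is $\pi_0$-surjective (hence has nilpotent kernel), then the induced map $\bG_m(\cW(U)\otimes_W A)\to \bG_m(\cW(U)\otimes_W A')$ is $\pi_0$-surjective (truncated geometric series). It follows that the pullback in $\Mod^{\cn}_\bZ$ of the square obtained by applying $\bG_m(\cW(U)\otimes_W-)$ is already a pullback in $\Mod_\bZ$ (its $\pi_{-1}$ vanishes), and then since pullbacks in sheaves are computed in presheaves, the pullback in $\Shv(S_\et,\Mod^{\cn}_\bZ)$ is also a pullback in $\Mod(S_\et,\bZ)$. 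Only at that point is your appeal to exactness of $\rR f_\ast \bZ_p \otimes_\bZ -$ legitimate. Without the $\pi_0$-surjectivity step your argument would prove that $\ST'_X$ sends \emph{all} pullbacks in $\CAlg^{\an,\art}_W$ to pullbacks, which is false (that would make the tangent fibre functor trivially additive in a way it is not). You should add the surjectivity-on-$\pi_0$ argument, either by direct computation as in \Cref{lemma:DefXlambda} or by citing that lemma.
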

\begin{proof}
As in the proof of  \Cref{lemma:DefXlambda}, the functor $\bG_m(\cat{W}(U) \otimes_W-) $ sends 
	$\pi_0$-surjective maps $A \rightarrow A'$ in $\CAlg^{\an,\art}_W$  to $\pi_0$-surjections in $\Mod_{\bZ}$ for each $U \in S_{\et}$. The claim follows as pullbacks in sheaves are computed in presheaves.
	\end{proof}

We now come to the construction of the maps $\st'_{X,\lambda}$ and $\st'_X$. The key input is:

\begin{proposition}\label{prop:serre-tate-pullback}
Let  $f\colon X\rightarrow S=\Spec (k)$ be smooth, proper, and Bloch--Kato $1$-ordinary. Fix
a morphism $A'\rightarrow A$  in $\CAlg^{\an,\art}_W$. Given  a \mbox{pushout square}	\begin{equation}\label{square:serre-tate-pullback}
	\begin{tikzcd}
	f^{-1} \cat{W} \otimes_W	A' \arrow{d} \arrow{r} &f^{-1} \cat{W} \otimes_W A  \arrow{d} \\
		\cB' \arrow{r} & \cB \arrow[lu, phantom, "\ulcorner", at start]
	\end{tikzcd}
\end{equation}
in $\Shv(X_\et,\CAlg^\an)$, the induced square 	in $\Mod(S_\et,\bZ_p)$ is a pullback:  
\[
\begin{tikzcd}
	\rR f_{\ast }\bG^{\wedge}_{m}(f^{-1} \cat{W} \otimes_WA') \arrow{r} \arrow{d} \arrow[rd, phantom, "\lrcorner",  at start] 
	& 	\rR f_{\ast }\bG^{\wedge}_{m}(f^{-1} \cat{W} \otimes_WA) \arrow{d} \\
	\rR f_\ast \bG^{\wedge}_{m}(\cB') \arrow{r}
	& 	\rR f_\ast \bG^{\wedge}_{m}(\cB).
\end{tikzcd}
\]

\end{proposition}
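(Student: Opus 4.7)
The plan is to reduce the general map $A'\to A$ to a single elementary extension and then carry out a direct fibre computation, exploiting the flatness of $\cW$ over $W$ together with the fact that $\cB'$ is a lift of $\cO_X$. Both $\bG_m^\wedge$ (a right adjoint followed by $p$-adic completion in the stable $\infty$-category $\Mod(X_\et,\bZ)$) and $\rR f_\ast$ preserve finite limits; pullback squares paste; and an inductive application of \Cref{cor:artinian-small} (together with the pushout of each intermediate step) reduces us to the case where $A' = A\times_{\sqz_k V[n+1]} k$ with $V$ a finite-dimensional $k$-vector space and $n\geq 0$.

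Writing $\alpha = f^{-1}\cW\otimes_W(-)$, the key input is that $\cW$ is flat over $W$ (because $W(L)$ is flat over $W(k)$ for every finite separable $L/k$) and that $f^{-1}$ is left exact. Hence $\alpha$ preserves finite limits of animated rings. Since $\alpha k = \cO_X$ and $\alpha\sqz_k V[n+1]\simeq \sqz_{\cO_X}\alpha V[n+1]$, we obtain a pullback square
\[
\begin{tikzcd}
\alpha A' \arrow{r}\arrow{d}\arrow[rd, phantom, "\lrcorner", at start] & \alpha A \arrow{d} \\
\cO_X \arrow{r} & \sqz_{\cO_X}\alpha V[n+1]
\end{tikzcd}
\]
in $\Shv(X_\et,\CAlg^{\an})$.

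We now base change this square along $\alpha A'\to \cB'$. Since $\cB'$ is an object of $\widetilde{\Def}_X(A')$, we have $\cB'\otimes_{\alpha A'}\cO_X\simeq \cO_X$, and because the map $\alpha A'\to \sqz_{\cO_X}\alpha V[n+1]$ factors through $\alpha A'\to \cO_X$, we also get $\cB'\otimes_{\alpha A'}\sqz_{\cO_X}\alpha V[n+1] \simeq \sqz_{\cO_X}\alpha V[n+1]$. The resulting square has top-right vertex $\cB$ by hypothesis, and its vertical fibres both compute to $\alpha V[n]\simeq \cO_X\otimes_k V[n]$: for the left-hand one, $\fib(\cB'\to\cB) \simeq \cB'\otimes_{\alpha A'}\alpha V[n]\simeq \cO_X\otimes_{\cO_X}\alpha V[n]\simeq \alpha V[n]$ using the lift property and the fact that $\alpha V[n]$ is a $\cO_X$-module. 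Hence this new square is again a pullback in $\Shv(X_\et,\CAlg^{\an})$.

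Applying the limit-preserving functor $\bG_m^\wedge$ to the two pullback squares, they share a common bottom edge $\bG_m^\wedge\cO_X\to \bG_m^\wedge\sqz_{\cO_X}\alpha V[n+1]$ whose fibre, by the decomposition~(\ref{eq:multiplicative-square-zero}) and automatic $p$-completeness of $k$-vector spaces, equals $\cO_X\otimes_k V[n]$. Consequently both vertical maps of the square in the proposition have the same fibre $\cO_X\otimes_k V[n]$, with induced comparison the identity, so the square is a pullback in $\Mod(X_\et,\bZ_p)$; applying $\rR f_\ast$ yields the desired pullback in $\Mod(S_\et,\bZ_p)$. The main subtle point is the base change step: verifying that pushing out the pullback square along $\alpha A'\to \cB'$ preserves the pullback property, which rests on $\cB'$ being a lift of $\cO_X$ and on the fact that the kernel $\alpha V[n]$ is annihilated by the augmentation ideal of $\alpha A'$, so that the tensor product collapses.
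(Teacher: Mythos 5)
Your argument contains a fatal error at the very first step and, tellingly, never uses the Bloch--Kato $1$-ordinariness hypothesis at all.

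The problematic identification is $\alpha k = \cO_X$, where $\alpha = f^{-1}\cW\otimes_W(-)$. We have $\alpha k = f^{-1}(\cW \otimes_W k) = f^{-1}\cO_{S,\et}$, the \emph{sheaf-theoretic} inverse image of the \'etale structure sheaf of $\Spec(k)$. This is not $\cO_X$: on a connected \'etale open $U \to X$, the sheaf $f^{-1}\cO_{S,\et}$ takes the value of the separable closure of $k$ inside $\Gamma(U,\cO_U)$, whereas $\cO_X$ takes the full ring of functions. The paper's Construction~\ref{constr:def-an-tilde} keeps careful track of the (non-invertible) map $f^{-1}\cW\otimes_W k \to \cO_{X,\et}$ precisely because these are different. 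Once you drop the identification, your two pullback squares (one with legs $\alpha A' \to \alpha A$, the other with $\cB' \to \cB$) \emph{do not} share a common bottom edge; rather they sit over $f^{-1}\cO_S \to f^{-1}\cW\otimes_W\sqz_k V[n+1]$ and $\cO_X \to \sqz_k V[n+1]\otimes_k\cO_X$ respectively. Comparing these two bottom edges is exactly where Bloch--Kato $1$-ordinariness enters: after applying $\rR f_\ast \bG_m^\wedge$ and passing to cofibres of the horizontal maps, one needs the map $V\otimes_k\cO_S\otimes_k\rR f_\ast k \to V\otimes_k \rR f_\ast\cO_X$ to be an equivalence, which is \Cref{prop:1-ordinary}. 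Your proof, as written, would establish the proposition with no ordinariness hypothesis, which is a strong signal that something has gone wrong.

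Separately, the reduction via \Cref{cor:artinian-small} only covers $\pi_0$-surjective morphisms $A'\to A$, whereas the proposition is asserted for arbitrary morphisms in $\CAlg^{\an,\art}_W$ (this is needed for $\rR f_\ast\bG_m^\wedge$ to preserve all cocartesian edges in \Cref{constr:st}). The paper factors a general $f\colon A'\to A$ through $A'\otimes_W A$ and handles that case with a further induction; your proposal omits this step.
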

\begin{proof}
Call a square in $\Shv(X_\et,\CAlg^\an)$  \textit{good} if 
applying $ \bG^{\wedge}_{m}$ to it  {gives a pullback.} 

First, assume that $A'\rightarrow A$ arises as in \Cref{cor:artinian-small} as a pullback of a cospan $A \rightarrow \sqz_k(V[n+1]) \leftarrow k$. 
Then
$A'\rightarrow A$,  $\cB'\rightarrow \cB$  sit in compatible \mbox{pullback squares}
\[\hspace{-15pt}
\begin{tikzcd}
	f^{-1} \cat{W} \otimes_W	A' \arrow{r} \arrow{d} \arrow[rd, phantom, "\lrcorner",  at start]  & 	f^{-1} \cat{W} \otimes_WA\arrow{d} 
	& &
	\cB' \arrow{r} \arrow{d} \arrow[rd, phantom, "\lrcorner",  at start]  & \cB \arrow{d} \\
	f^{-1} \cat{W} \otimes_W	k \arrow{r} &	f^{-1} \cat{W} \otimes_W \sqz_k(V[n+1]) 
	& &
	\cO_X \arrow{r} & \sqz_k(V[n+1]) \otimes_k\cO_X .
\end{tikzcd}
\]
These squares are  both good by the same argument as in \Cref{lemma:DefXlambda}, as  the functor $\rR f_\ast \bG_{m}(-):
\CAlg^{\an} \rightarrow \Mod^{\cn}$ preserves limits and the lower horizontal maps are  \mbox{$\pi_0$-surjective.} We now observe that the square
\[
\begin{tikzcd}
	\rR f_\ast \bG^{\wedge}_{m} (	f^{-1} \cat{W} \otimes_W k) \arrow{r} \arrow{d} & 
	\rR f_\ast \bG^{\wedge}_{m} ( 	f^{-1} \cat{W} \otimes_W \sqz_k(V[n+1]) ) \arrow{d} \\
	\rR f_\ast \bG^{\wedge}_{m}(\cO_X) \arrow{r} 
	& \rR f_\ast \bG^{\wedge}_{m}(\sqz_{k}(V[n+1])\otimes_k \cO_X)
\end{tikzcd}
\]
is a pullback square in $\Mod(S_\et, \bZ_p)$. Indeed,    the decomposition \eqref{eq:multiplicative-square-zero} above shows 
that the  induced map between the cofibres of the horizontal maps is given by
\[
V\otimes_k  \cO_S \otimes_{k} \rR f_\ast k \rightarrow V\otimes_k \rR f_\ast \cO_X,
\]
which is an equivalence by $1$-ordinariness, see \Cref{prop:1-ordinary}.
The `pasting law for pullbacks' shows that the proposition holds true for all $A'\rightarrow A$  as above.

Inducting on the length via \Cref{cor:artinian-small} proves proposition for all $\pi_0$-surjections.

We can write a general $f:A' \rightarrow A$ as a composition $ A' \rightarrow A' \otimes_W A \rightarrow A$, where the first map is the inclusion and the second map is  given by $(f,\id_A)$. By our previous considerations, it then suffices to prove that the   square 
\[	\begin{tikzcd}
		f^{-1} \cat{W} \otimes_W A' \arrow{d} \arrow{r} & 	f^{-1} \cat{W} \otimes_W (A' \otimes_W A)  \arrow{d} \\
	\cB' \arrow{r} & \cB'  \otimes_W A \arrow[lu, phantom, "\ulcorner", at start].
\end{tikzcd}\]
is good. Let us write $A$ as a pullback of a cospan   $A_0 \rightarrow \sqz_k(V[n+1]) \leftarrow k$ as in \Cref{cor:artinian-small}, where $A_0$ has shorter length.
We now consider the diagram 
\[	\begin{tikzcd}
	f^{-1} \cat{W} \otimes_W	A' \arrow{d} \arrow{r} & 	f^{-1} \cat{W} \otimes_W(A' \otimes_W A)   \arrow{d}\arrow{r} &
	f^{-1} \cat{W} \otimes_W (A' \otimes_W A_0) \arrow{d}  \\
	\cB'   \arrow{r} & \cB' \otimes_W A   \arrow{r} &
	\cB'\otimes_W A_0 .
\end{tikzcd}\]
The right square is  good by $\pi_0$-surjectivity of 
$A' \otimes_W A \rightarrow A' \otimes_W A_0$.
As the  outer  square is good by induction on the length, the left square is good.
\end{proof}

\begin{construction}[The maps   $\st_X'$ and $\st_{X,\lambda}'$]
\label{constr:st} 
Let $X\rightarrow S$ be smooth, proper and Bloch--Kato $1$-ordinary, and let $\lambda \colon \bZ[-1] \rightarrow \bG_{m,X}$ be a map in $\Mod(X_\et,\bZ)$.

By construction, we have the following  commutative square over $\CAlg^\an$:
\[
\begin{tikzcd}[column sep=36pt]
	\widetilde{\cat{C}}_{\et}' \arrow{r}{\rR f_\ast \bG^{\wedge}_{m}} \arrow{d}{q'}
	& \cat{D}' \arrow{d}{s'} \\
	\widetilde{\cat{C}}_{\et}\arrow{r}{\rR f_\ast \bG^{\wedge}_{m}} 
	& \cat{D}.
\end{tikzcd}
\]
The lower horizontal map first 
sends $(A \rightarrow f^{-1} \cW\otimes_W A \rightarrow \cB)$ to \mbox{$(f^{-1} \cW\otimes_W A \rightarrow \cB)$} and then applies   $\rR f_\ast \bG^{\wedge}_{m}$.

Restricting to fibres over  
$\CAlg^{\an,\art}_W$ and only  the cocartesian \mbox{morphisms}, we obtain a diagram
\[
\begin{tikzcd}[column sep=36pt]
	\widetilde{\cat{C}}_{\et}'^{\cocart, \art}
	\arrow{r}{\rR f_\ast \bG^{\wedge}_{m}} \arrow{d}{q'}
	& \cat{D}'^{\cocart, \art} \arrow{d}{s'} \\
	\widetilde{\cat{C}}_{\et}^{\cocart, \art}\arrow{r}{\rR f_\ast \bG^{\wedge}_{m}} 
	& \cat{D}^{\cocart, \art},
\end{tikzcd}
\] where we have used that by 
 \Cref{prop:serre-tate-pullback}, all functors in  the above square   preserve cocartesian edges over $\CAlg^{\an,\art}_W$.
Taking slice categories,   straightening, and using \Cref{prop:equivetale}, we obtain a commutative square of formal moduli problems in $\Moduli^\an_{W}$:
\begin{equation}\label{eq:Serre-Tate-square}
	\begin{tikzcd}[column sep=36pt]
		\widetilde{\Def}_{X,\lambda} 
		\arrow{d} \arrow{r}{\st'_{X,\lambda}}
		& \ST'_{X,\lambda} \arrow{d} \\
		\Def_X \simeq 	\widetilde{\Def}_{X} \arrow{r}{\st'_X} & \ST'_X.
	\end{tikzcd}
\end{equation}
\end{construction}

\begin{remark}[Informal description] \label{rem:stinformal}
The map $\st'_{X,\lambda}(A)$ takes a\vspace{-2pt}\vspace{-2pt}   pair 
\[
\big(\cB,\,
\bZ[-1] \overset{\tilde{\lambda}}\rightarrow \bG_m(\cB)
\big)
\]
in $\widetilde{\Def}_{X,\lambda}(A)$ to the resulting diagram in $\Mod(S_\et,\bZ_p)$: \vspace{-5pt}
\[
\begin{tikzcd}
	& & \rR f_\ast \bZ_p[-1] \arrow{d}{\lambda} 
	\arrow[swap]{dl}{\tilde\lambda} \\
	\rR f_\ast\bG^{\wedge}_{m}(f^{-1}\cW \otimes_W A) \arrow{r} 
	& \rR f_\ast \bG^{\wedge}_{m}(\cB) \arrow{r}
	& \rR f_\ast  \bG^{\wedge}_{m,X}  
\end{tikzcd}
\]
The horizontal sequence is a fibre sequence by \Cref{prop:serre-tate-pullback} as $\bG^{\wedge}_{m,X}  (k) \simeq 0$. 
\end{remark}

\subsection{Tangent to the Serre--Tate map} Let $X$ be as in \Cref{sec:stcoord}.
We will now compute the effect on tangent fibres of the maps 
$\st'_X$ and $\st'_{X,\lambda}$  in \Cref{constr:st}. 
 
\begin{notation}
	Fix $V \in \Mod_k^{\cn }$ perfect and 
write $A:=\sqz_k V$ for  and \mbox{$\cB := A\otimes_k \cO_X$.}
\end{notation}

The base point in the space $\ST'_X(A)$ corresponds to the fibre sequence 
\begin{equation}\label{eq:split-extension-ST}
\rR f_\ast \bG^{\wedge}_{m}(f^{-1}\cW \otimes_WA)  
\longto \rR f_\ast \bG^{\wedge}_{m}(\cB)
\longto \rR f_\ast  \bG^{\wedge}_{m,X}.
\end{equation}
The canonical section of $A\rightarrow k$ induces a section
$\rR f_\ast  \bG^{\wedge}_{m,X}   \rightarrow \rR f_\ast \bG^{\wedge}_{m}(\cB)$ of the right map in \eqref{eq:split-extension-ST}, 
which in turn induces an equivalence \vspace{-1pt}
\[
\Omega\ST'_X(A) \isomto 
\Map'_{\Mod(S_\et,\bZ_p)}\big(\rR f_\ast  \bG^{\wedge}_{m,X}  ,\,
\rR f_\ast \bG^{\wedge}_{m}(\cB) \big),
\]
Informally, it sends an automorphism $\alpha$ of the
extension (\ref{eq:split-extension-ST}) to the composite
\[
\rR f_\ast  \bG^{\wedge}_{m,X}   \rightarrow 
\rR f_\ast \bG^{\wedge}_{m}(\cB) \overset{\alpha}\rightarrow
\rR f_\ast \bG^{\wedge}_{m}(\cB)
\]
in $\Mod(S_\et,\bZ_p)$. Unravelling the construction of  $\st'_X$ (cf.\  \Cref{constr:st}),  \mbox{we find:}

\begin{lemma}\label{lemma:serre-tate-tangent}
There is a commutative square
\[
\begin{tikzcd}
	\Omega\Def_X(A) \arrow{r}{\simeq} \arrow{d}{\st'_X(A)} 
	& \Map'_{\Shv(X_\et,\CAlg^\an_k)}\big(\cO_X,\, \cB\big)
	\arrow{d}{\rR f_\ast \bG^{\wedge}_{m}} \\
	\Omega\ST'_X(A) \arrow{r}{\simeq} 
	& \Map'_{\Mod(S_\et,\bZ_p)}\big(
	\rR f_\ast  \bG^{\wedge}_{m,X}  , 
	\rR f_\ast \bG^{\wedge}_{m}(\cB) \big).
\end{tikzcd}
\]
The top  equivalence appears in \Cref{prop:defan-tangent-space}; the bottom  equivalence is as above.  
\end{lemma}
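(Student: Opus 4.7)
The plan is to extract the commutativity of the square by unpacking the construction of $\st'_X$ in \Cref{constr:st} and comparing the adjunction equivalences used to identify the two loop spaces with section spaces.

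First I would set up both loop spaces as automorphism spaces. Unravelling the left fibration in \Cref{constr:def-an} and \Cref{constr:def-an-tilde}, one sees that $\Omega\Def_X(A)\simeq \widetilde{\Omega\Def}_X(A)$ is the space of self-equivalences of the trivial deformation $(A\to f^{-1}\cW\otimes_W A \to \cB)$ that restrict to the identity on $(k\to f^{-1}\cW\otimes_W k \to \cO_X)$ after base change along $A\to k$. Similarly, an automorphism of the base point in $\ST'_X(A)$ is a self-equivalence of the fibre sequence
\[
\rR f_\ast \bG^{\wedge}_{m}(f^{-1}\cW \otimes_W A) \longto \rR f_\ast \bG^{\wedge}_{m}(\cB) \longto \rR f_\ast \bG^{\wedge}_{m,X}
\]
that restricts to the identity on the outer terms (here I am using \Cref{prop:serre-tate-pullback} to ensure this sequence is a fibre sequence and that composition with $\rR f_\ast \bG^{\wedge}_{m}$ produces cocartesian edges over $A\to k$). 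By construction of $\st'_X$, applying $\rR f_\ast\bG^{\wedge}_{m}$ to the underlying self-equivalence of $\cB$ yields the corresponding self-equivalence of this fibre sequence, giving the desired map on $\Omega$ directly.

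Next I would translate both automorphism spaces into section spaces. For the upper right corner, as in the proof of \Cref{lemma:local-period-1}, the base change adjunction along $k\to A$ provides a commutative square
\[
\begin{tikzcd}[column sep=18pt]
\Map_{\Shv(X_\et,\CAlg^\an_A)}(\cB,\cB) \arrow{r}{\simeq} \arrow{d}{k\otimes_A -} & \Map_{\Shv(X_\et,\CAlg^\an_k)}(\cO_X,\cB) \arrow{d} \\
\Map_{\Shv(X_\et,\CAlg^\an_k)}(\cO_X,\cO_X) \arrow[equal]{r} & \Map_{\Shv(X_\et,\CAlg^\an_k)}(\cO_X,\cO_X),
\end{tikzcd}
\]
and passing to fibres over the identity gives the upper horizontal equivalence in the statement. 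For the lower right corner, the canonical section $\rR f_\ast\bG^{\wedge}_{m,X}\to \rR f_\ast \bG^\wedge_m(\cB)$ induced by the augmentation $A\to k$ splits the extension, so self-equivalences fixing the outer terms are identified with $\Map'_{\Mod(S_\et,\bZ_p)}(\rR f_\ast \bG^\wedge_{m,X},\rR f_\ast \bG^\wedge_m(\cB))$ via precomposition with this section.

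Finally I would check that the identifications are compatible with $\rR f_\ast\bG^{\wedge}_{m}$. The key observation is that the section $\cO_X\to \cB$ used on the deformation side is the pullback along $A\to k$ of the identity of $\cB$, and its image under $\rR f_\ast \bG^\wedge_m$ coincides with the canonical splitting on the $\ST'_X$ side by naturality. The resulting diagram of mapping spaces therefore sits over $\Map(\rR f_\ast\bG^{\wedge}_{m,X},\rR f_\ast\bG^{\wedge}_{m,X})$ in a manner compatible with the base-change squares above, so passing to fibres over the identity yields the commutative square claimed. The main obstacle is purely bookkeeping: writing down the three-dimensional diagram of adjunctions and naturality squares carefully enough that passing to appropriate fibres produces exactly the stated square, with all of the equivalences being precisely the ones from \Cref{prop:defan-tangent-space} and the splitting discussed just before the lemma.
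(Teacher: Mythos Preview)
Your proposal is correct and matches the paper's approach: the paper offers no proof beyond the sentence ``Unravelling the construction of $\st'_X$ (cf.\ \Cref{constr:st}), we find:'' placed immediately before the lemma, so the argument is exactly the bookkeeping you describe. Your write-up simply makes explicit the automorphism-to-section identifications and the naturality of $\rR f_\ast\bG^{\wedge}_{m}$ with respect to the canonical splitting, which is precisely what ``unravelling'' amounts to here.
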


Using this we can now compute the\vspace{-2pt}   effect of $\st'_X$ on tangent fibres:
\begin{proposition}[Tangent to $\st'_X$]\label{prop:tangent-to-ST}
\label{prop:tangent-to-st-X}
Assume $k$ is algebraically closed. Then there is a commutative square
\[
\begin{tikzcd}
	\Omega\Def_X(A) \arrow{r}{\simeq} \arrow{d}{\st'_X(A)} 
	& \Map_{\Mod(X_\et,\cO_X)}(\Omega^1_{X/k},\, V\otimes_k \cO_X)
	\arrow{d}{  \dlog} \\
	\Omega\ST'_X(A) \arrow{r}{\simeq} 
	& \Map_{k}\!\big(k\otimes_\bZ\RGamma(X_\et,\bG_{m,X}),\,
	V \otimes_k \RGamma(X,\cO_X)\big)
\end{tikzcd}
\]
in which the horizontal maps are equivalences.
\end{proposition}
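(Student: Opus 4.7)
The plan is to combine \Cref{lemma:serre-tate-tangent} with sheaf-theoretic versions of \Cref{prop:cotangent-sheaf-adjunction,prop:Gm-deformation}. By \Cref{lemma:serre-tate-tangent}, the map $\st'_X(A)$ is identified with $\rR f_\ast \bG^{\wedge}_{m}$ applied to the section space $\Map'_{\Shv(X_\et,\CAlg^\an_k)}(\cO_X, \cB)$. It thus suffices to identify the source and the target of this morphism with the corresponding vertices in the statement of the proposition in such a way that the functor $\rR f_\ast \bG^{\wedge}_{m}$ corresponds to precomposition with $\dlog$.

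For the source, we note that $\cB \simeq \sqz_{\cO_X}(V\otimes_k \cO_X)$ in $\Shv(X_\et,\CAlg^\an_k)$, so \Cref{prop:cotangent-sheaf-adjunction} gives $\Map'(\cO_X, \cB) \simeq \Map_{\cO_X}(L_{X/k}, V\otimes_k \cO_X)$; smoothness of $X$ over $k$ then replaces $L_{X/k}$ by $\Omega^1_{X/k}$. To handle the target we sheafify the $k$-relative analogue of \Cref{prop:Gm-deformation}: as every functor involved preserves sifted colimits and is functorial in the pair $(A,M)$, left Kan extension from pairs of smooth affine $k$-algebras with finite free modules produces a commutative square of sheaves whose right-hand vertical map is precomposition with $\dlog\colon \bG_{m,X} \to \Omega^1_{X/k}$. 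Its lower horizontal equivalence is the sheafified form of the splitting \eqref{eq:multiplicative-square-zero}, which exhibits $\bG_m(\cB) \simeq \bG_{m,X} \oplus (V\otimes_k \cO_X)$ and identifies sections of $\bG_m(\cB) \to \bG_{m,X}$ with maps into the complementary summand.

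To finish, apply $\rR f_\ast \circ (-)^\wedge$ to the bottom row. Since $V\otimes_k \cO_X$ is a $k$-module (hence $p$-torsion), $p$-completion preserves the splitting, and the projection formula gives $\rR f_\ast(V\otimes_k \cO_X) \simeq V\otimes_k \rR f_\ast \cO_X$. Because $k$ is algebraically closed, $\Mod(S_\et,\bZ_p) \simeq \Mod_{\bZ_p}$ and $\rR f_\ast$ is global sections; the lower right vertex thus becomes $\Map_{\bZ_p}(\RGamma(X_\et,\bG_{m,X})^{\wedge},\, V\otimes_k\RGamma(X,\cO_X))$. Since the target is $p$-complete, this coincides with $\Map_{\bZ}(\RGamma(X_\et,\bG_{m,X}),\, V\otimes_k\RGamma(X,\cO_X))$, and the adjunction between $k\otimes_\bZ(-)$ and the forgetful functor rewrites it as $\Map_k(k\otimes_\bZ\RGamma(X_\et,\bG_{m,X}),\, V\otimes_k\RGamma(X,\cO_X))$, completing the identification.

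The main obstacle is the rigorous sheafification of \Cref{prop:Gm-deformation}. This will be essentially formal: by \Cref{prop:Gmproperties}(3), $\bG_m$ is left Kan extended from smooth $\bZ$-algebras, and the adjunction square of \Cref{prop:Gm-deformation} is commutative on pairs of smooth $k$-algebras with finite free modules by a direct inspection. One then extends under sifted colimits to arbitrary animated $k$-algebras, passes from $\CAlg^\an_k$ to $\Shv(X_\et,\CAlg^\an_k)$ via the formalism of \Cref{subsec:cotangent-sheaves}, and checks that sheafification preserves the relevant identifications.
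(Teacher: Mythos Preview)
Your proposal is correct and follows essentially the same approach as the paper: both combine \Cref{lemma:serre-tate-tangent} with a sheafified version of \Cref{prop:Gm-deformation}, then apply $\rR f_\ast$ with $p$-completion and use that $k$ is algebraically closed (and that $V\otimes_k\cO_X$ is $p$-torsion) to simplify the target. The paper treats the sheafification of \Cref{prop:Gm-deformation} as implicit, while you spell out this step more carefully; otherwise the arguments are the same.
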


\begin{proof}
\Cref{prop:Gm-deformation} gives rise to a commutative square
\[
\begin{tikzcd}
	\Map'_{\Shv(X_\et,\CAlg^\an_k)}(\cO_X,\cB) \arrow{d}{\bG_m} \arrow{r}{\simeq} 
	& \Map_{\cO_X}(L_{X/k},\,\cO_X) \arrow{d}{\dlog} \\
	\Map'_{\Mod(X_\et,\bZ)}(\bG_{m,X},\bG_m(\cB)) \arrow{r}{\simeq} 
	& \Map_{\Mod(X_\et,\bZ)}(\bG_{m,X},\, V\otimes_k \cO_X).
\end{tikzcd}
\]
Postcomposing with $Rf_\ast$, we obtain a   commutative square 
\[
\begin{tikzcd}
	\Map'_{\Shv(X_\et,\CAlg^\an_k)}(\cO_X,\cB) \arrow{d}{\rR f_\ast\bG^{\wedge}_{m}} \arrow{r}{\simeq} 
	& \Map_{\cO_X}(L_{X/k},\,\cO_X) \arrow{d}{} \\
	\Map'(\rR f_\ast  \bG^{\wedge}_{m,X}  ,
	\rR f_\ast \bG^{\wedge}_{m}(\cB)) \arrow{r}{\simeq} 
	& \Map(\rR f_\ast  \bG^{\wedge}_{m,X}  ,\, V\otimes_k \rR f_\ast \cO_X).
\end{tikzcd}
\]
Here the bottom mapping spaces are computed  in $\Mod(S_\et,\bZ_p)$. As $k$ is algebraically closed  and  $V\otimes_k \cO_X$ is killed by $p$, the bottom right space is   \mbox{equivalent to}
\[
\Map_{k}\!\big(
k\otimes_\bZ \RGamma(X_\et,\bG_{m,X}),\,
V\otimes_k \RGamma(X,\cO_X)
\big).
\]
Combining this with \Cref{lemma:serre-tate-tangent} gives the result.
\end{proof}

Next, we compute the effect of the map  $\st'_{X,\lambda}: \widetilde{\Def}_{X,\lambda} \rightarrow \ST'_{X,\lambda}$ on \mbox{tangent fibres.}

To begin with, note that the restriction of any formal moduli problem to $k$-algebras is naturally pointed, and therefore defines a functor $\CAlg^{\an,\art}_k \rightarrow \cat{S}_\ast$. We write $\widetilde{\Def}_\lambda$ for the fibre of the restriction of $\widetilde{\Def}_{X,\lambda} \rightarrow \widetilde{\Def}_{X}$ to $\CAlg^{\an,\art}_k$,  and   define $\ST_\lambda$ analogously. 
Evaluating at $A=\sqz_k V$, we obtain a commutative diagram
\begin{equation}\label{eq:fibre-sequences-X-lambda}
\begin{tikzcd}
	\widetilde{\Def}_\lambda(A) \arrow{r} \arrow{d}{\st'_\lambda(A)} 
	& \widetilde{\Def}_{X,\lambda}(A) \arrow{r} \arrow{d}{\st'_{X,\lambda}(A)} 
	& \widetilde{\Def}_X(A) \arrow{d}{\st'_X(A)} \\
	\ST'_\lambda(A) \arrow{r}  
	& \ST'_{X,\lambda}(A) \arrow{r} 
	& \ST'_X(A)  
\end{tikzcd}
\end{equation}
of pointed spaces, in which both rows are fibre sequences.  

\begin{proposition}[Tangent to $\st'_\lambda$]
\label{prop:tangent-to-st-lambda}
There is a commutative square
\[
\begin{tikzcd}
\widetilde{\Def}_{\lambda}(A) \arrow{r}{\simeq} \arrow{d}{\st'_\lambda(A)} 
	& \Map_{\cO_X}(\cO_X, V[1]\otimes_k \cO_X)
	\arrow{d}{\rR f_\ast} \\
	\ST'_\lambda(A) \arrow{r}{\simeq} 
	& \Map_{\cO_S}\big( \rR f_\ast \cO_X, V[1]\otimes_k \rR f_\ast\cO_X\big)
\end{tikzcd}
\]

in which the horizontal maps are equivalences.
\end{proposition}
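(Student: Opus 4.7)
The plan is to compute both $\widetilde{\Def}_\lambda(A)$ and $\ST'_\lambda(A)$ by exploiting that the basepoint corresponds to the trivial deformation, which provides canonical splittings of the relevant fibre sequences, and then to identify the map between them as $\rR f_\ast$.

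First, I would unravel $\widetilde{\Def}_\lambda(A)$. Since it is the fibre of $\widetilde{\Def}_{X,\lambda}(A) \to \widetilde{\Def}_X(A)$ over the basepoint $\cB = \sqz_k(V) \otimes_k \cO_X$, it is the space of lifts of $\lambda\colon \bZ[-1]\to \bG_{m,X}$ to $\bG_m(\cB)$ in $\Mod(X_\et,\bZ)$ living above the identity of $\bG_{m,X}$. The decomposition (\ref{eq:multiplicative-square-zero}) gives an equivalence $\bG_m(\cB) \simeq \bG_{m,X} \oplus (V\otimes_k \cO_X)$ in $\Mod(X_\et,\bZ)$ compatible with the projection to $\bG_{m,X}$, so that the space of lifts is naturally equivalent to $\Map_{\Mod(X_\et,\bZ)}(\bZ[-1], V\otimes_k\cO_X)$. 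Since $V\otimes_k\cO_X$ is an $\cO_X$-module, a shift-and-adjunction argument rewrites this as $\Map_{\cO_X}(\cO_X, V[1]\otimes_k \cO_X)$, yielding the top horizontal equivalence.

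Next, I would perform the analogous analysis for $\ST'_\lambda(A)$. Applying $\rR f_\ast \bG_m^\wedge$ to the splitting of $\cB$ and using that $V \otimes_k \cO_X$ is already $p$-complete (as $p$ acts as zero), together with the projection formula for the perfect $k$-module $V$, yields a natural splitting $\rR f_\ast \bG_m^\wedge(\cB) \simeq \rR f_\ast \bG_{m,X}^\wedge \oplus \bigl(V\otimes_k \rR f_\ast \cO_X\bigr)$ compatible with \eqref{eq:split-extension-ST}. Hence $\ST'_\lambda(A)$ is equivalent to $\Map_{\Mod(S_\et,\bZ_p)}(\rR f_\ast \bZ_p[-1], V\otimes_k \rR f_\ast\cO_X)$. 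Shifting and using that the target is $p$-torsion, the latter becomes $\Map_{\Mod(S_\et,\bF_p)}(\rR f_\ast \bF_p, V[1]\otimes_k \rR f_\ast \cO_X)$; extending scalars along $\bF_p \to k$ and invoking the Bloch--Kato $1$-ordinarity criterion of \Cref{prop:1-ordinary} in the form $\cO_S \otimes_{\bF_p} \rR f_\ast \bF_p \simeq \rR f_\ast \cO_X$ identifies this with $\Map_{\cO_S}(\rR f_\ast \cO_X, V[1]\otimes_k \rR f_\ast \cO_X)$.

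Finally, I would verify commutativity of the square. By \Cref{constr:st}, the map $\st'_{X,\lambda}$ is induced by postcomposing the chosen lift $\widetilde{\lambda}$ with $\rR f_\ast \bG_m^\wedge$; passing to fibres, $\st'_\lambda(A)$ therefore sends a lift $\bZ[-1]\to V\otimes_k \cO_X$ to its image under $\rR f_\ast$ composed with the splittings above. Under the identifications just described, this is exactly the $\rR f_\ast$-map on the right-hand side of the square, as both are obtained by tracking the unit of the $(f^{-1},\rR f_\ast)$-adjunction together with the projection formula. The main obstacle is precisely this last step: one must carefully check that the Bloch--Kato identification of $k\otimes_{\bF_p}\rR f_\ast \bF_p$ with $\rR f_\ast \cO_X$ intertwines the image of $\lambda$ under $\rR f_\ast$ with the canonical map $\rR f_\ast \cO_X \to V[1] \otimes_k \rR f_\ast \cO_X$ determined by the deformation class, but this reduces to the naturality of~\eqref{eq:multiplicative-square-zero} combined with the definition of the Bloch--Kato comparison.
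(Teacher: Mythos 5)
Your proof is correct and follows essentially the same route as the paper's: both use the splitting of $\bG_m(\cB)$ from~\eqref{eq:multiplicative-square-zero} to identify $\widetilde{\Def}_\lambda(A)\simeq\Map_{\Mod(X_\et,\bZ)}(\bZ[-1],V\otimes_k\cO_X)$ and $\ST'_\lambda(A)\simeq\Map_{\Mod(S_\et,\bZ_p)}(\rR f_\ast\bZ_p[-1],V\otimes_k\rR f_\ast\cO_X)$, observe that the vertical map is induced by $\rR f_\ast$ after $p$-completion, and invoke Bloch--Kato $1$-ordinariness (\Cref{prop:1-ordinary}) to rewrite the lower-right space as $\Map_{\cO_S}(\rR f_\ast\cO_X,V[1]\otimes_k\rR f_\ast\cO_X)$. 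The extra detail you supply on the $\bZ_p\to\bF_p\to k$ base change and the projection formula is implicit in the paper's terser statement but matches it.
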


\begin{proof}
Using the decomposition $\bG_m(\cB) \simeq \bG_{m,X} \oplus (V\otimes_k \cO_X)$ induced by (\ref{eq:multiplicative-square-zero}), we find a  commutative square
\[
\begin{tikzcd}
\widetilde{\Def}_{\lambda}(A) \arrow{r}{\simeq} \arrow{d}{\st'_\lambda(A)} 
	& \Map_{\Mod(X_\et,\bZ)}\big(\bZ[-1],\,
	V\otimes_k \cO_X \big)
	\arrow{d}{} \\
	\ST'_\lambda(A) \arrow{r}{\simeq} 
	& \Map_{\Mod(S_\et,\bZ_p)}\big(\rR f_\ast \bZ_p[-1],\,
	V\otimes_k \rR f_\ast \cO_X \big)
\end{tikzcd}
\]
in which the horizontal maps are equivalences, and the right map is induced by 
\[
\Mod(X_\et,\bZ) \longto \Mod(X_\et,\bZ_p)
\overset{\rR f_\ast}{\longto} \Mod(S_\et,\bZ_p).
\]
The lower right  space is equivalent to
$\Map_{\cO_S}\big( \rR f_\ast \cO_X, V[1]\otimes_k \rR f_\ast\cO_X\big)$ by 
  \Cref{prop:1-ordinary}.
\end{proof}

\subsection{Canonical coordinates for ordinary Calabi--Yau varieties}\label{subsec:cancoord}
Let $X/k$ be proper, smooth of relative dimension $d$, and Bloch--Kato $1$-ordinary, where $k$ is a perfect field of characteristic $p$. Let $\lambda\colon \bZ[-1] \rightarrow \bG_{m,X}$ in $\Mod(X_\et,\bZ)$. 
 Consider the truncation $H := \tau_{\leq -d} \rR f_\ast \bZ_p$ in $\Mod(S_\et,\bZ_p)$.  \Cref{prop:1-ordinary} shows that $H$ is concentrated in cohomological degree $d$, and 
 \mbox{that its stalk at $\bar{k}$ is $\rH^d(X_{\bar k,\et},\bZ_p)[-d]$.}
\begin{definition}[Serre--Tate period domain and period maps]
Consider the formal moduli problems $\ST_X$ and $\ST_{X,\lambda}$ defined by
\begin{eqnarray*}
\ST_X(A) &:=& \Map_{\Mod(S_\et,\bZ_p)}\big(
\rR f_\ast \widehat{\bG}_{m,X}, \,
H \otimes_{\bZ} {\bG}_m(   \cW \otimes_W A)[1] 
\big) \\
\ST_{X,\lambda}(A) &:=& \Map_{\Mod(S_\et,\bZ_p)}\big(
\rR f_\ast P_{X,\lambda}, \  \,
H \otimes_{\bZ} {\bG}_m(   \cW \otimes_W A)[1] 
\big),
\end{eqnarray*}
where $P_{X,\lambda}$ is defined by the cofibre sequence~(\ref{eq:def-Q}) above. 

The truncation map,  \Cref{lemma:ST-explicit}, and \Cref{prop:equivetale} give a commutative diagram in $\Moduli^\an_W$:
\[
\begin{tikzcd}
\widetilde{\Def}_{X,\lambda} \arrow{r}{\st'_{X,\lambda}} \arrow{d}
& \ST'_{X,\lambda} \arrow{d} \arrow{r}
& \ST_{X,\lambda} \arrow{d} \\
\Def_X \simeq \widetilde{\Def}_X \arrow{r}{\st'_X} & \ST'_X \arrow{r} & \ST_X.
\end{tikzcd}
\]
The horizontal compositions are the \textit{Serre--Tate period maps} $\st_{X,\lambda}$ and $\st_X$. 
\end{definition} 

\begin{theorem}[Theorem B]\label{thm:serre-tate}
 Let $X/k$ be smooth, proper, and geometrically irreducible  
 of dimension $d$ with $\omega_X \cong \cO_X$. Let $\lambda\colon \bZ[-1] \rightarrow \bG_{m,X}$ in $\Mod(X_\et,\bZ)$. Assume moreover that
\begin{enumerate}
	\item $X$ is Bloch--Kato $2$-ordinary;
	\item $\rH^d(X_{\bar k,\et},\bZ_p)$ is torsion-free.
\end{enumerate}
Then $\st_{X,\lambda}$ and $\st_X$ are equivalences.
\end{theorem}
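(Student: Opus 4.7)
The plan is to use the conservativity of the tangent fibre functor (\Cref{prop:conservativity}) to reduce to checking that the induced maps on tangent fibres are equivalences. Since all relevant constructions commute with base change from $k$ to $\bar k$ and the hypotheses are preserved (Bloch--Kato $n$-ordinariness being a condition on geometric fibres, and $\rH^d(X_{\bar k,\et},\bZ_p)$ appearing only after extension of scalars), I would first reduce to the case $k=\bar k$.

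Next, the fibre-sequence diagram \eqref{eq:fibre-sequences-X-lambda}, together with the cofibre sequence $\bZ_p[-1]\to\bG^\wedge_{m,X}\to P_{X,\lambda}$ (which identifies $\ST_\lambda$ as the fibre of $\ST_{X,\lambda}\to \ST_X$), exhibits $\st_{X,\lambda}$ as a map of fibre sequences. It therefore suffices to show that $\st_X$ and $\st_\lambda$ induce equivalences on tangent fibres.

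For $\st_X$, \Cref{prop:tangent-to-st-X} computes the tangent to $\st'_X$, and postcomposing with the truncation $\rR f_\ast\bZ_p\to H$ gives the tangent to $\st_X$. On the source, the Kodaira--Spencer identification $T_{\Def_X}\simeq \rR\Gamma(X,T_X)[1]$ combined with Serre duality (using $\omega_{X/k}\cong\cO_X$) yields $T_{\Def_X}\simeq \rR\Gamma(X,\Omega^1_{X/k})^\vee[1-d]$. On the target, Bloch--Kato $2$-ordinariness (\Cref{prop:2-ordinary}) gives $k\otimes_\bZ \rR f_\ast \bG^\wedge_{m,X}\simeq \rR f_\ast\Omega^1_{X/k}$, while Bloch--Kato $1$-ordinariness together with torsion-freeness of $\rH^d(X_{\bar k,\et},\bZ_p)$ yields $\rH^d(X_{\bar k,\et},\bZ_p)\otimes_{\bZ_p} k\cong \rH^d(X,\cO_X)\cong k$; the target tangent then computes to $\rR\Gamma(X,\Omega^1_{X/k})^\vee[1-d]$, matching the source. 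A completely analogous analysis handles $\st_\lambda$ using \Cref{prop:tangent-to-st-lambda}: both tangent fibres compute to $\rR\Gamma(X,\cO_X)^\vee[2-d]$ via the same ingredients, again using Serre duality and the ordinariness and torsion-freeness hypotheses.

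The main obstacle is verifying that the \emph{natural} maps between the abstractly matching source and target tangent fibres are indeed the Serre duality pairings twisted by the fundamental class in $\rH^d(X,\cO_X)\cong k$ coming from the Calabi--Yau structure. Concretely, one must trace the composite of $\dlog$, $\rR f_\ast$, the projection formula, and the truncation $\rR f_\ast\bZ_p\to H$ through the various adjunctions appearing in \Cref{lemma:serre-tate-tangent}, \Cref{prop:tangent-to-st-X}, and \Cref{prop:tangent-to-st-lambda}, and show that this composite realises the expected pairing. This compatibility is the higher-tangent-fibre analogue of the classical Serre--Tate identification for ordinary abelian varieties and K3 surfaces, and constitutes the technical heart of the argument.
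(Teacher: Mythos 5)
Your proposal is correct and follows essentially the same route as the paper: reduce via conservativity of tangent fibres, treat the algebraically closed case first and deduce the general case by base change, use the fibre-sequence diagram~\eqref{eq:fibre-sequences-X-lambda}, and apply \Cref{prop:tangent-to-st-X}, \Cref{prop:tangent-to-st-lambda}, \Cref{prop:2-ordinary}, the torsion-freeness hypothesis, and Serre duality. The only place you over-estimate the remaining work is the ``technical heart'': Propositions~\ref{prop:tangent-to-st-X} and~\ref{prop:tangent-to-st-lambda} already identify the tangent map concretely as the one induced by $\dlog$, $\RGamma$, and truncation, so after invoking $2$-ordinariness to turn $\dlog$ into an equivalence, the resulting map $\Map_{\cO_X}(\Omega^1_{X/k},V[1]\otimes\cO_X)\to\Map_k(\RGamma(\Omega^1_X),V\otimes\rH^d(\cO_X)[1-d])$ is literally the Serre duality pairing (composition with $\RGamma$ followed by truncation onto top cohomology), and no further compatibility needs to be traced.
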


\begin{proof}
Using \Cref{prop:conservativity} and the fibre sequences in~(\ref{eq:fibre-sequences-X-lambda}) it suffices to show that the maps $\st_{X}(\sqz_k V)$ and $\st_{\lambda}(\sqz_k V)$ are equivalences for all perfect $V \in \Mod_k^{\cn}$. First, let us assume that $k$ is algebraically closed.

By   (2)  and \Cref{prop:1-ordinary}, the natural map $k\otimes_{\bZ_p} H \rightarrow \rH^d(X,\cO_X)[-d]$ is an equivalence.
Using \Cref{prop:tangent-to-st-X}, we see that $\st_X(\sqz_kV)$ is given by the map
\[
\Map_{\cO_X}\!\big(\Omega^1_{X/k},\, 
V[1]\otimes_k \cO_X\big) \longto
\Map_{k}\!\big( k\otimes_{\bZ} \RGamma(\bG_{m,X}),\,
V\otimes_k \rH^d(\cO_X)[1-d] \big)
\]
induced by $\RGamma$, $\dlog$, and the truncation. By (1) and  \Cref{prop:2-ordinary}, the map
\[
\dlog \colon k\otimes_{\bZ} \RGamma(X_\et,\bG_m) \rightarrow 
\RGamma(X,\Omega^1_{X/k})
\]
is an equivalence in $\Mod_k$. Hence, $\st_X(\sqz_k V)$ can be identified with the map
\[
\Map_{\cO_X}\big(\Omega^1_{X/k},\, 
V[1]\otimes_k \cO_X\big) \longto
\Map_{k}\!\big( \RGamma(\Omega^1_X),\, 
V\otimes_k \rH^d(X,\cO_X)[1-d] \big)
\]
induced by $\RGamma$ and truncation. As the canonical bundle is trivial, this map is an equivalence by Serre duality, so $\st_X(\sqz_k V)$ is an equivalence.

Using \Cref{prop:tangent-to-st-lambda}, we see that $\st_\lambda(\sqz_k V)$ identifies with the map
\[
\Map_{\cO_X}(\cO_X, V[1]\otimes_k \cO_X) \longto
\Map_k\!\big( \RGamma(\cO_X),\, V \otimes_k \rH^d(\cO_X)[1-d] \big)
\]
induced by $\RGamma$ and truncation. This is again an equivalence by Serre duality, and the assumption that the canonical bundle is trivial. 

For a general perfect field $k$ of characteristic $p$, it suffices to check that $\st_{X}$ and $\st_{X,\lambda}$ induce equivalences on tangent fibres after applying the functor $\overline{k} \otimes_k (-)$.
By \Cref{rem:tangentfibre}, this amounts to showing that the transformations $\Omega\st_{X}(\sqz_k \overline{V})$ and $\Omega\st_{X,\lambda}(\sqz_k \overline{V})$ are equivalences for all perfect $\overline{V} \in \Mod_{\overline{k}}$.
But \Cref{prop:pd-cotangent-sheaf-adjunction}, \Cref{lemma:serre-tate-tangent}, \Cref{prop:tangent-to-st-lambda}, 
and \Cref{prop:2-ordinary} together imply that these transformations are equivalent to 
$\Omega\st_{\overline{X}}(\sqz_k \overline{V})$ and $\Omega\st_{\overline{X},\lambda}(\sqz_k \overline{V})$, respectively, where $\overline{X}$ is the base change of $X$ to $\overline{k}$.
Hence they are equivalences  by our previous discussion.
\end{proof}

\begin{remark}
If $\rH^{d-1}(X,\cO_X)=0$, then  
Condition (2) in 
 \Cref{thm:serre-tate} \mbox{is  satisfied.}
\end{remark}

\begin{remark}[Structure of $\pi_0\ST_X$ and $\pi_0 \ST_{X,\lambda}$] \label{rmk:underived-ST}
Assume that $k$ is algebraically closed.
If $X$ is Bloch--Kato $2$-ordinary, then by \cite[Lemma~7.1]{BlochKato86}, we have 
\[
\rH^\ast(X_\et,\bG^{\wedge}_{m}) \cong
\rH^\ast(X,W\Omega^1_{X/k})^{F=1}
\]
as $\bZ_p$-modules. The group $\pi_0 \ST_X(A)$ can then  be described explicitly as
\[
\pi_0 \ST_X(A) \cong
\Hom_{\bZ_p}\!\big( \rH^{d-1}(X,W\Omega^1_{X/k})^{F=1},\,
\rH^d(X_\et,\bZ_p) \otimes_{\bZ} \bG_m(A) \big).
\]
Moreover, the homomorphism $\varphi\colon \pi_0 \ST_{X,\lambda}(A) \rightarrow \pi_0 \ST_X(A)$ satisfies
\begin{eqnarray*}
	\ker(\varphi) &\cong& 
	\Hom_{\bZ_p}\!\big(\rH^{d-1}(X_\et,\bZ_p),\,
	\rH^d(X_\et,\bZ_p) \otimes_{\bZ} \bG_m(A) \big) \\
	\coker(\varphi) &\cong&
	\Hom_{\bZ_p}\!\big(\rH^{d-2}(X_\et,\bZ_p),\,
	\rH^d(X_\et,\bZ_p) \otimes_{\bZ} \bG_m(A) \big).
\end{eqnarray*}
If $X$ is a K3 surface, then this recovers Deligne's description \cite[2.2.2]{Deligne81} of the relation between canonical coordinates and deformations of line bundles on K3 surfaces.
\end{remark}

\begin{definition}[Canonical lifts]
Let $X/k$ be as in \Cref{thm:serre-tate}. For every $n$, the unit of $\ST_X(W/p^n)$ corresponds to a lift $X_n$ over $W/p^n$, and we denote the formal scheme $\lim_n X_n$ over $W$
by $X^\can$. We refer to it as the \emph{canonical lift} of $X$.
\end{definition}

\begin{corollary}[\Cref{mainthm:line-bundles}]If $X$ is projective, then so is $X^\can$.
\end{corollary}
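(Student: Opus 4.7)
The plan is to use the more refined statement, sketched in the paragraph after \Cref{mainthm:line-bundles}, that deformations of pairs $(X,\cL)$ are parametrised by the Serre--Tate torus $\ST_{X,\cL}$, and then algebraise by formal GAGA. Since $X$ is projective, pick an ample line bundle $\cL$ on $X$; this corresponds to a class in $\Pic(X) = \rH^1(X_\et,\bG_m)$, represented by a morphism $\lambda\colon \bZ[-1] \rightarrow \bG_{m,X}$ in $\Mod(X_\et,\bZ)$. By \Cref{thm:serre-tate}, the Serre--Tate period maps $\st_X$ and $\st_{X,\lambda}$ are both equivalences, and they fit into the commutative square~(\ref{eq:Serre-Tate-square}) over the forgetful map $\widetilde{\Def}_{X,\lambda} \rightarrow \Def_X$.

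The key observation is that the right vertical map $\ST_{X,\lambda} \rightarrow \ST_X$ in that square is induced by the morphism $\rR f_\ast \widehat{\bG}_{m,X} \rightarrow \rR f_\ast P_{X,\lambda}$ coming from the cofibre sequence~(\ref{eq:def-Q}), and hence is a homomorphism of formal groups of multiplicative type which carries the unit section to the unit section. Transporting this unit via $\st_{X,\lambda}^{-1}$, we obtain a canonical formal lift $(X^\can, \cL^\can)$ of the pair $(X,\cL)$, where the underlying lift of $X$ is precisely the canonical lift $X^\can$ of \Cref{mainthm:Serre-Tate} (this uses only the compatibility of the unit sections and commutativity of the square above).

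Concretely, this yields a compatible system $(X_n, \cL_n)$ of lifts to $W_n := W/p^n$ such that each $\cL_n$ is a line bundle on $X_n$ restricting to $\cL$ on $X$. Since $X_n \rightarrow \Spec W_n$ is proper and ampleness is a condition on the closed fibre for proper morphisms over a Noetherian base (\cite[III.4.7.1]{EGA3}), each $\cL_n$ is ample on $X_n$. By Grothendieck's algebraisation theorem (\cite[III.5.4.5]{EGA3}, applied to the $p$-adic formal scheme $X^\can$ together with the $p$-adically ample formal line bundle $\cL^\can$), the formal pair $(X^\can, \cL^\can)$ is algebraisable to a projective scheme over $\Spec W$. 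In particular, $X^\can$ is projective.

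The main point requiring care is the compatibility of the unit sections: one must check that the horizontal map $\ST_{X,\lambda} \rightarrow \ST_X$ is a morphism of (derived) formal groups sending unit to unit, and that the corresponding lift of $(X,\cL)$ really recovers $X^\can$ after forgetting $\cL^\can$. Both facts follow directly from the explicit description of $\ST_X$ and $\ST_{X,\lambda}$ via $\Map_{\Mod(S_\et,\bZ_p)}(-,\, H \otimes_\bZ \bG_m(\cW\otimes_W -)[1])$ (see \Cref{lemma:ST-explicit}) and the functoriality of the cofibre sequence~(\ref{eq:def-Q}). Once this is in place, the remainder is a standard formal GAGA argument.
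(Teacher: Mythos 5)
Your proof is correct and follows essentially the same route as the paper's: pick $\lambda$ corresponding to an ample $\cL$, use \Cref{thm:serre-tate} to identify $\widetilde{\Def}_{X,\lambda}\simeq\ST_{X,\lambda}$, take the unit sections over $W/p^n$ to obtain a compatible system of ample line bundles $\cL_n$ on the canonical lifts $X_n$, and conclude. You spell out two points the paper leaves implicit — that the map $\ST_{X,\lambda}\to\ST_X$ preserves unit sections (so the lifted pair really sits over $X^\can$), and the final algebraisation via Grothendieck's existence theorem — but the underlying argument is the same.
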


\begin{proof}
Assume $\lambda \colon \bZ[-1]\rightarrow \bG_{m,X}$ corresponds to an ample line bundle $\cL$ on $X$. For each $n$, the unit section of $\ST_{X,\lambda}(W/p^n)$ corresponds to a lift $\lambda_n \colon \bZ[-1] \rightarrow \bG_{m,X_n}$ of $\lambda$, and hence to a (necessarily ample) line bundle $\cL_n$ on $X_n$. These form a compatible system, giving rise to an ample line bundle $\cL^\can$ on $X^\can$.
\end{proof}

\begin{remark}[Canonical lifts of Brauer classes]
There is nothing special about the shift $[-1]$ in the definition of $\ST_{X,\lambda}$, and in the proof of \Cref{thm:serre-tate}. The same reasoning can be applied to deformations of maps $\bZ[-2]\rightarrow \bG_{m,X}$, and this shows that Brauer classes on $X$ admit canonical lifts to $X^\can$.
\end{remark}

\begin{remark}[Complex multiplication]\label{rmk:complex-multiplication}
Choose an embedding $W(k) \rightarrow \bC$. Then every $X/k$ satisfying the conditions of \Cref{mainthm:Serre-Tate} gives rise to a complex variety $X^\can_{\bC}$. If $X$ is an abelian variety or a K3 surface, then $X^\can_{\bC}$ is known to have complex multiplication. More generally, this should be the case for families parametrised by Shimura varieties. 
We do not expect canonical lifts of arbitrary ordinary Calabi--Yau varieties to be of CM type. In fact, this would contradict conjectures on CM points in variations of Hodge structures. Indeed, consider for example quintic threefolds. There exist an algebraically closed field $k$ of positive characteristic, and a quintic threefold $X/k$ for which the Hodge and Newton polygons coincide, and by Grothendieck's semicontinuity theorem \cite[Theorem 2.3.1]{Katz79}, this is true for an open dense subset of the quintic threefolds over $k$. By \Cref{mainthm:line-bundles}, they all have canonical lifts which are quintic threefolds. If these are all CM, then the moduli space of quintic threefolds over $\bC$ would have a dense collection of CM points, contradicting \emph{e.g.}\ a conjecture by Klingler \cite[Conjecture 5.6]{Klingler17}.
\end{remark}

\section{Spectral lifts}
\label{sec:spectral-lifts}
In the previous sections, we have used techniques from  {derived} algebraic geometry to prove that large classes of Calabi--Yau varieties admit lifts to the Witt vectors. One could ask if one can use  {spectral} algebraic geometry to prove that these even lift to the spherical Witt vectors, as is the case for ordinary abelian varieties. We will show that this rarely happens.

Let $k$ be a perfect field of characteristic~$p$. Let $\SW(k)$ be the $\bE_\infty$-ring spectrum of spherical Witt vectors \cite[\S 5.2]{LurieEllipticII}. This is a $p$-complete $\bE_\infty$-ring spectrum, flat over the sphere spectrum $\bS$,   such that $\bF_p\otimes_{\bS} \SW(k) \simeq k$. We have $\pi_0 \SW(k)\cong W(k)$.

\begin{proposition}\label{prop:no-spherical-lifts}
Let $X$ be a smooth projective irreducible scheme over an algebraically closed field $k$ of characteristic $p$ with $\omega_{X/k}\cong \cO_X$. If $X$ lifts to a formal spectral scheme over $\SW(k)$, then $X$ admits finite \'etale cover by an abelian variety.
\end{proposition}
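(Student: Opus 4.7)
The plan is to use the rigidity of a spectral lift to force $X$ to be Bloch--Kato ordinary, then to extract an \'etale cover by an ordinary abelian variety via the Albanese morphism.

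First, I would observe that any formal spectral lift $\mathfrak{X}/\SW(k)$ restricts, via the canonical surjection $\SW(k) \twoheadrightarrow W(k)$ given by $\pi_0$, to a formal lift of $X$ over $W(k)$, giving a $W(k)$-point of $\Def_X$. More substantively, the Postnikov truncations $\tau_{\leq n}\SW(k)$ give a compatible tower of spectral lifts whose existence imposes obstruction-theoretic constraints on $X$ controlled by $\pi_\ast \mathbb{S}$. The next, crucial step would be to convert these constraints into Bloch--Kato ordinariness. This proceeds in two moves: first, the spectral lift of $X$ induces spectral lifts of the Artin--Mazur formal groups attached to the cohomology functors of $X$; second, a rigidity theorem in the style of Lurie (\cite{LurieEllI, LurieEllII}) says that a formal group over $W(k)$ admits a lift to $\SW(k)$ only when it is of multiplicative type (the height-one case, corresponding to $\widehat{\bG}_m$). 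It follows that all relevant Artin--Mazur formal groups of $X$ are of multiplicative type, which is one characterisation of Bloch--Kato ordinariness.

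Having established Bloch--Kato $2$-ordinariness (with torsion-freeness of $\rH^d(X_{\bar k,\et}, \bZ_p)$ following from the compatibility of the spectral lift with \'etale cohomology), our Serre--Tate theorem \Cref{mainthm:Serre-Tate} then identifies $\Def_X$ with the formal multiplicative-type group $\ST_X$, and the spectral lift picks out a distinguished point. Together with the CY hypothesis $\omega_X \cong \cO_X$ and the spectral rigidity, this forces $h^{0,1}(X) = \dim X$, so that the Albanese variety $A := \mathrm{Alb}(X)$ is an ordinary abelian variety of the same dimension as $X$. The Albanese map $f\colon X \to A$ is then surjective by irreducibility of $X$, and its ramification divisor vanishes since $\omega_X$ and $f^\ast \omega_A$ are both trivial; hence $X \to A$ is finite \'etale. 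Pulling back along a suitable isogeny of $A$ trivialises the resulting torsor structure and produces a finite \'etale cover of $X$ by an ordinary abelian variety.

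The main obstacle is the second step above: converting the existence of a spectral lift into the rigidity statement that all Artin--Mazur formal groups of $X$ are of multiplicative type. This requires developing enough of the spectral deformation theory of $p$-divisible groups over $\SW(k)$ and transporting it into the geometric setting of $X$ via the comparison between deformations of cohomology and deformations of formal Picard/Brauer groups. The remaining steps are then relatively formal, combining Theorem B with the classical structure theory of the Albanese morphism for CY varieties in positive characteristic.
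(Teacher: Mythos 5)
Your proposal takes a fundamentally different route from the paper and contains gaps that make it unworkable. The paper's actual argument is short and concrete: one shows (via base change to Morava $E$-theory of the multiplicative formal group and the $\theta$-algebra structure on $K(1)$-local $\bE_\infty$-rings, cf.\ \cite{Hopkins2014, McClure86}) that $\pi_0$ of any flat $\SW(k)$-algebra carries a canonical Frobenius lift. A formal spectral lift of $X$ therefore yields a $W_2(k)$-lift of $X$ equipped with a Frobenius lift, and the conclusion follows from \cite{AchingerWitaszekZdanowicz2021} and \cite{MehtaSrinivas1997}, which say precisely that a smooth projective variety over $\overline{k}$ with trivial canonical bundle that is Frobenius-liftable mod $p^2$ is a finite \'etale quotient of an ordinary abelian variety.

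The central gap in your route is the purported ``rigidity theorem'' that a formal group over $W(k)$ lifts to $\SW(k)$ only when it is of multiplicative type. No such theorem holds: Lurie's spectral deformation theory is built precisely around the fact that formal groups of \emph{arbitrary} finite height admit canonical spectral deformations (this is what Morava $E$-theory of height $n$ \emph{is}). So the premise of your second step is false, and you cannot conclude Bloch--Kato ordinariness this way. The later step is also broken: even granting ordinariness, the claim that $\omega_X \cong \cO_X$ plus ``spectral rigidity'' forces $h^{0,1}(X) = \dim X$ is not justified and is false in the cases of greatest interest. An ordinary K3 surface or quintic threefold is Calabi--Yau with $h^{0,1} = 0$; the Albanese is a point and your argument produces no cover at all. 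But these are exactly the cases the proposition must rule out (a K3 surface is simply connected, so has no finite \'etale cover by an abelian variety), and your route never engages with them. Finally, the assertion that torsion-freeness of $\rH^d(X_{\bar k,\et},\bZ_p)$ ``follows from compatibility of the spectral lift with \'etale cohomology'' is unsubstantiated. You would do better to aim directly at a Frobenius-lift statement, which is where the actual rigidity of $\SW(k)$ lives.
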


This shows in particular that a simply connected Calabi--Yau variety such as a K3 surface or a quintic threefold cannot lift to $\SW(k)$.

The proof is based on the fact that flat commutative $\SW(k)$-algebras carry canonical Frobenius lifts. 
Let us write $\CAlg_{\SW(k)}^\fl \subset \CAlg_{\SW(k)}$ for the full subcategory spanned by all $\bE_\infty$-$\SW(k)$-algebras which are flat in the sense of \cite[7.2.2.10]{LurieSAG}.
Write $\CR_{W(k)}$ for the category of ordinary  (commutative) $W(k)$-algebras, and 
$\CR_{W(k)}^{\fl,\delta}$ for the  category of flat  $W(k)$-algebras $A$ equipped with a ring homomorphism $\phi\colon A\rightarrow A$ such that $\phi(x) \equiv x^p \bmod{p}$.

\begin{lemma}\label{lemma:frobenius-lift-from-spherical-witt}
The functor $\pi_0\colon \CAlg_{\SW(k)}^\fl \rightarrow \CR_{W(k)}$ admits a lift to $\CR^{\fl,\delta}_{W(k)}$.
\end{lemma}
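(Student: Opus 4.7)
The plan is to construct the canonical Frobenius lift via the Nikolaus--Scholze Tate-valued Frobenius. For any $\bE_\infty$-ring $R$, there is a natural map of $\bE_\infty$-rings $\phi_R \colon R \rightarrow R^{tC_p}$, obtained as the composition of the Tate diagonal $R \rightarrow (R^{\otimes p})^{tC_p}$ with the Tate construction applied to the multiplication $R^{\otimes p} \rightarrow R$. This is functorial in $R$ and symmetric monoidal. Applying this to $A \in \CAlg^\fl_{\SW(k)}$ yields a map $A \rightarrow A^{tC_p}$ that, after suitable identification of the target, will furnish the desired $\delta$-structure on $\pi_0(A)$.

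The key computation is to identify $A^{tC_p}$. For $A = \SW(k)$, one combines Lin's theorem (the Segal conjecture for $C_p$), which asserts that $\bS \rightarrow \bS^{tC_p}$ becomes an equivalence after $p$-completion, with the flatness of $\SW(k)$ over $\bS$. This produces a canonical equivalence $\SW(k) \isomto \SW(k)^{tC_p}$, under which $\phi_{\SW(k)}$ induces the Witt vector Frobenius $F$ on $\pi_0 = W(k)$. For a general flat $\SW(k)$-algebra $A$, the analogous identification $\pi_0(A^{tC_p}) \cong \pi_0(A)$ follows by base change along $\SW(k) \rightarrow A$, with the right-hand side carrying the $F$-twisted $W(k)$-module structure; this uses flatness in an essential way.

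Composing $\phi_A$ with this identification yields a ring homomorphism $\phi \colon \pi_0(A) \rightarrow \pi_0(A)$. To verify $\phi(x) \equiv x^p \bmod p$, reduce modulo $p$: the Tate-valued Frobenius specialises to the classical absolute Frobenius on the $\bF_p$-algebra $\pi_0(A/p)$, a general feature of the Nikolaus--Scholze construction coming from the unstable Tate diagonal. Flatness of $A$ over $\SW(k)$ ensures that $\pi_0(A)/p \cong \pi_0(A/p)$, giving the required congruence. Functoriality of $\phi$ in $A$ (in particular, $\phi$-equivariance of morphisms in $\CR^{\fl,\delta}_{W(k)}$) is automatic from the naturality of the Tate-valued Frobenius, yielding the desired lift of $\pi_0$.

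The main obstacle is the identification $\pi_0(A^{tC_p}) \cong \pi_0(A)$ for flat $\SW(k)$-algebras. This is a Segal-conjecture-type assertion, and flatness over $\SW(k)$ is precisely what allows one to bootstrap the identification for $\SW(k)$ itself to arbitrary $A \in \CAlg^\fl_{\SW(k)}$. A more pedestrian approach, avoiding a direct computation of $A^{tC_p}$, would instead define $\phi$ via $\bE_\infty$-power operations on $\pi_0(A)$ and verify the ring-homomorphism property and Frobenius congruence directly; one must then still appeal to the $\SW(k)$-algebra structure to see that these assemble into a Frobenius lift rather than merely a multiplicative map lifting the $p$-th power.
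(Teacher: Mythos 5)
Your proposed route through the Nikolaus--Scholze Tate-valued Frobenius is genuinely different from the paper's proof, which instead maps $\SW(k)$ into the Morava $E$-theory $K$ of the multiplicative formal group, base-changes $B$ to a $K(1)$-local $\EE_\infty$-$K$-algebra $B' = (K \otimes_{\SW(k)} B)^\wedge_p$, and then invokes the Hopkins--McClure $\theta$-algebra structure on $\pi_0$ of $K(1)$-local $\EE_\infty$-rings; flatness of $B$ over $\SW(k)$ gives $\pi_0 B \cong \pi_0 B'$. Your approach bypasses $K(1)$-localization entirely, which is appealing, but the central step -- identifying $\pi_0(A^{tC_p})$ with $\pi_0(A)$ -- is exactly where the argument breaks, and you correctly flag it as the main obstacle without resolving it.

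The gap is genuine, for two reasons. First, Lin's theorem gives $\bS^{tC_p} \simeq \bS_p^\wedge$, and this does propagate along finite base change (so $\SW(k)^{tC_p} \simeq \SW(k)$ when $k$ is a finite field, since $\SW(k)$ is then a finite free $\bS_p^\wedge$-module and Tate constructions preserve finite direct sums), but it does \emph{not} extend to infinite $k$: $\SW(k)$ is built as a $p$-completed filtered colimit over finite subfields, and the Tate construction famously fails to commute with filtered colimits (this failure is the whole reason the Segal conjecture is a hard theorem for finite spectra and false beyond them). Since \Cref{prop:no-spherical-lifts} requires $k$ algebraically closed, this is not an edge case you can ignore. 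Second, even granting the identification for $\SW(k)$ itself, "base change along $\SW(k)\to A$'' does not propagate it to general flat $A$: the Tate construction is only lax symmetric monoidal, giving a comparison map $\SW(k)^{tC_p} \otimes_{\SW(k)} A \to A^{tC_p}$ that has no reason to be an equivalence (and typically is not -- again because the Tate construction does not preserve colimits). So neither leg of your "Segal conjecture $+$ flatness $+$ base change'' justification holds up. To make the Tate-diagonal approach rigorous one would in effect be forced to pass through a localization (for instance, $L_{K(1)}$ of $A^{tC_p}$, where the comparison with $A$ becomes tractable), which is precisely what the paper's detour through $K(1)$-local power operations accomplishes. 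Your closing suggestion to work directly with $\EE_\infty$-power operations on $\pi_0(A)$ is essentially the paper's approach, but it needs the $K(1)$-local setting to turn "a multiplicative $p$-th-power lift'' into an honest ring homomorphism.
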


\begin{proof} Let $K$ be the Morava $E$-theory corresponding to the formal multiplicative group over $k$. We have $\pi_* K \cong W(k)[u,u^{-1}]$ with $|u|=2$. Using the universal property of the $\bE_\infty$-ring of spherical Witt vectors, we obtain   maps of $\bE_\infty$-rings
\[
\SW(k) \rightarrow \tau_{\geq 0} K  \rightarrow K
\]
inducing isomorphisms on $\pi_0$. 
Consider the functor $ \CAlg_{\SW(k)}^\fl \rightarrow \CAlg_K^{\wedge}$, 
$B \mapsto B' := (K \otimes_{\SW(k)} B)^{\wedge}_p$. Here $\CAlg_K^{\wedge}$ denotes  to the $\infty$-category of $p$-complete, i.e.\ $K(1)$-local, $\EE_\infty$-$K$-algebras.
By \cite[\S 3]{Hopkins2014} \cite{McClure86}, the ring $\pi_0(B')$ carries a natural Frobenius lift, functorial in $B'$. As $B'$ is flat over $\SW(k)$,  the  map $\pi_0(B)\rightarrow \pi_0(B')$ is an isomorphism.
\end{proof}

\begin{proof}[Proof of \Cref{prop:no-spherical-lifts}]
Let $(X,\cB)$ be a formal spectral scheme over $\SW(k)$ lifting
$(X,\cO_X)$. A similar argument as in \Cref{cor:flat-scheme-over-artinian} shows that the $\SW(k)$-module $\cB(U)$ is flat for every affine open $U \subset X$. By \Cref{lemma:frobenius-lift-from-spherical-witt}, the rings $\pi_0 \cB(U)$ acquire compatible lifts of Frobenius. In particular, $X$  lifts to a scheme \mbox{$(X,W_2(k) \otimes_{W(k)} \pi_0 \cB)$}  over $W_2(k)$, equipped with a Frobenius lift. But then it follows from \cite[Proposition 3.3.1.c]{AchingerWitaszekZdanowicz2021} and \cite[Theorem 2]{MehtaSrinivas1997} that $X$ admits a finite \'etale cover by an ordinary abelian variety. 
\end{proof}

\end{document}